\def\backgroundshade{beach!70}
\def\theocolor{pond}
\def\tableheadcolor{pond}
\def\tableheadcoloralt{beach!80}
\def\genarraystretch{1.1} 
\newcolumntype{C}{>{\columncolor{\backgroundshade}}c}
\newcolumntype{L}{>{\columncolor{\backgroundshade}}l}
\newcommand{\rmv}[1]{}
\newcommand{\ktv}[1]{k_{t,v}(#1)}
\newcommand{\Ntv}[1]{N_{t,v}(#1)}
\def\qmin{q_{min}}
\def\qmax{q_{max}}
\def\qcol{q_{cycl}}
\def\Z{\mathbb{Z}}
\def\J{\mathcal{J}}
\def\Rcal{\mathcal{R}}
\def\Bcal{\mathcal{B}}
\def\C{\mathcal{C}}
\def\S{\mathcal{S}}
\newcommand{\eqcl}[2]{\mathcal{S}_{#1}(#2)} 
\def\AA{\mathcal{A}}
\def\Zwx{\mathbb{Z}_{[t]_q}^{\times}}
\def\ZWx{\mathbb{Z}_{w}^{\times}}
\def\cipw{C_{p,[t]_q}^{i}}
\def\cipW{C_{p,w}^{i}}
\def\cjpw{C_{p,[t]_q}^{j}}
\def\cjpW{C_{p,w}^{j}}
\def\gpw{\Gamma_{p,[t]_q}}
\def\gpW{\Gamma_{p,w}}
\def\ord{\mathrm{ord}}
\def\neck{\mathrm{neck}}
\def\bin{\mathrm{bin}}
\def\gtst{\mathrm{GetSet}}
\def\charv{\mathrm{char}}
\def\im{\mathrm{Im}}
\def\spn{\mathrm{span}}
\def\nr{N_r(\a,I)}
\def\hr{h(\a,I,v,L; r)}
\def\lcm{\mathrm{lcm}}
\newcommand{\Mod}[1]{\ (\mathrm{mod}\ #1)} 
\def\bigo{\mathcal{O}}
\def\o{\omega}
\def\logw{\mathrm{log}_{\o}}
\def\extend{\textsc{ComputeCand}}
\newcommand{\cand}[1]{\mathrm{Candidates}(#1)} 
\def\xv{\chi_{v}}
\def\xov{\chi_{\o,v}}
\def\xvj{\chi_{v}^{j}}
\def\xji{\chi^{j_i}}
\def\xjone{\chi^{j_1}}
\def\xjtwo{\chi^{j_2}}
\def\xjthree{\chi^{j_3}}
\newcommand{\A}[1]{\mathcal{A}(#1)}
\newcommand{\ZA}[1]{\mathcal{A}_{\bm{0}}(#1)}
\newcommand{\M}[1]{\mathcal{A}_{q^t/q}(#1)}
\newcommand{\MV}[1]{\mathcal{A}_{q^t/q}(#1;v)}
\newcommand{\ZM}[1]{\mathcal{A}_{\bm{0}}(#1)}
\def\a{\alpha}
\def\b{\beta}
\def\bfa{\mathbf{a}}
\def\bfb{\mathbf{b}}
\def\bfc{\mathbf{c}}
\def\bfC{\mathbf{C}}
\def\bfl{\bm{\lambda}}
\def\bfs{\mathbf{s}}
\def\bfx{\mathbf{x}}
\def\bfy{\mathbf{y}}
\def\fqt{\mathbb{F}_{q^t}}
\def\fqtstar{\mathbb{F}_{q^t}^{\times}}
\def\fqx{\mathbb{F}_{q}[x]}
\def\fqthree{\mathbb{F}_{q^3}}
\def\fqfour{\mathbb{F}_{q^4}}
\def\fq{\mathbb{F}_q}
\def\fp{\mathbb{F}_p}
\def\fqstar{\mathbb{F}_q^{\times}}
\def\ftwo{\mathbb{F}_{2}}
\def\fnine{\mathbb{F}_{9}}
\def\ftwox{\mathbb{F}_{2}[x]}
\def\fthreex{\mathbb{F}_{3}[x]}
\def\ffour{\mathbb{F}_{4}}
\def\feight{\mathbb{F}_{8}}
\def\fseven{\mathbb{F}_{7}}
\def\fnine{\mathbb{F}_{9}}
\def\fsixteen{\mathbb{F}_{16}}
\def\ftwot{\mathbb{F}_{2^t}}
\def\fthree{\mathbb{F}_{3}}
\def\f{\mathbb{F}}
\def\T{\mathrm{Tr}}
\def\Tt{\mathrm{Tr}_{q^t/q}}
\newcommand{\seq}[1]{\mathrm{Seq}_{q^t/q}\left(#1\right)}
\newcommand{\sq}[1]{\mathrm{Seq}\left(#1\right)}
\newcommand{\seqv}[1]{\mathrm{Seq}_{q^t/q}\left(#1; v\right)}
\newcommand{\sqv}[1]{\mathrm{Seq}\left(#1; v\right)} 
\newcommand{\w}[1]{[#1]_q}
\def\OA{\mathrm{OA}}
\def\CA{\mathrm{CA}}
\def\CAN{\mathrm{CAN}}
\def\Sj{\sum_{j=1}^{v-1}}
\newcommand\restr[2]{{
  \left.\kern-\nulldelimiterspace 
  #1 
  \vphantom{\big|} 
  \right|_{#2} 
  }}
\titlespacing*{\chapter}{0cm}{1.7cm}{2cm} 
\begin{document}
\pagenumbering{gobble} 

\newpagecolor{beach!40}
\begin{tikzpicture}[remember picture, overlay, transform shape]
  \node [anchor=north west, inner sep=0pt]
    at (current page.north west)
    {
        \begin{tikzpicture}[scale=1.11]
        \node (first) at (0,0) [black!50,draw,minimum width=1.5cm,minimum height=1.5cm,fill=aoi] {};
        \node [right = 0cm of first,black!50,draw,minimum width=1.5cm,minimum height=1.5cm,fill=pond] {};
        \node [right = 1.5cm of first,black!50,draw,minimum width=1.5cm,minimum height=1.5cm,fill=goldfish] {};
        \node [right = 3cm of first,black!50,draw,minimum width=1.5cm,minimum height=1.5cm,fill=goldfish] {};
        \node [right = 4.5cm of first,black!50,draw,minimum width=1.5cm,minimum height=1.5cm,fill=aoi] {};
        \node [right = 6cm of first,black!50,draw,minimum width=1.5cm,minimum height=1.5cm,fill=goldfish] {};
        \node [right = 7.5cm of first,black!50,draw,minimum width=1.5cm,minimum height=1.5cm,fill=pond] {};
        \node [right = 9cm of first,black!50,draw,minimum width=1.5cm,minimum height=1.5cm,fill=beach] {};
        \node [right = 10.5cm of first,black!50,draw,minimum width=1.5cm,minimum height=1.5cm,fill=beach] {};
        \node [right = 12cm of first,black!50,draw,minimum width=1.5cm,minimum height=1.5cm,fill=goldfish] {};
        \node [right = 13.5cm of first,black!50,draw,minimum width=1.5cm,minimum height=1.5cm,fill=beach] {};
        \node [right =15cm of first,black!50,draw,minimum width=1.5cm,minimum height=1.5cm,fill=pond] {};
        \node [right =16.5cm of first,black!50,draw,minimum width=1.5cm,minimum height=1.5cm,fill=aoi] {};
        \node [right =18cm of first,black!50,draw,minimum width=1.5cm,minimum height=1.5cm,fill=aoi] {};
        \node [right =19.5cm of first,black!50,draw,minimum width=1.5cm,minimum height=1.5cm,fill=beach] {};

        \node (second) [below = 0cm of first,black!50,draw,minimum width=1.5cm,minimum height=1.5cm,fill=pond] {};
        \node [right =    0cm of second,black!50,draw,minimum width=1.5cm,minimum height=1.5cm,fill=goldfish] {};
        \node [right =  1.5cm of second,black!50,draw,minimum width=1.5cm,minimum height=1.5cm,fill=goldfish] {};
        \node [right =    3cm of second,black!50,draw,minimum width=1.5cm,minimum height=1.5cm,fill=aoi] {};
        \node [right =  4.5cm of second,black!50,draw,minimum width=1.5cm,minimum height=1.5cm,fill=goldfish] {};
        \node [right =    6cm of second,black!50,draw,minimum width=1.5cm,minimum height=1.5cm,fill=pond] {};
        \node [right =  7.5cm of second,black!50,draw,minimum width=1.5cm,minimum height=1.5cm,fill=beach] {};
        \node [right =    9cm of second,black!50,draw,minimum width=1.5cm,minimum height=1.5cm,fill=beach] {};
        \node [right = 10.5cm of second,black!50,draw,minimum width=1.5cm,minimum height=1.5cm,fill=goldfish] {};
        \node [right =   12cm of second,black!50,draw,minimum width=1.5cm,minimum height=1.5cm,fill=beach] {};
        \node [right = 13.5cm of second,black!50,draw,minimum width=1.5cm,minimum height=1.5cm,fill=pond] {};
        \node [right =   15cm of second,black!50,draw,minimum width=1.5cm,minimum height=1.5cm,fill=aoi] {};
        \node [right = 16.5cm of second,black!50,draw,minimum width=1.5cm,minimum height=1.5cm,fill=aoi] {};
        \node [right =   18cm of second,black!50,draw,minimum width=1.5cm,minimum height=1.5cm,fill=beach] {};

        \node (third) [below = 0cm of second,black!50,draw,minimum width=1.5cm,minimum height=1.5cm,fill=goldfish] {};
        \node [right =      0 of third,black!50,draw,minimum width=1.5cm,minimum height=1.5cm,fill=goldfish] {};
        \node [right =  1.5cm of third,black!50,draw,minimum width=1.5cm,minimum height=1.5cm,fill=aoi] {};
        \node [right =    3cm of third,black!50,draw,minimum width=1.5cm,minimum height=1.5cm,fill=goldfish] {};
        \node [right =  4.5cm of third,black!50,draw,minimum width=1.5cm,minimum height=1.5cm,fill=pond] {};
        \node [right =    6cm of third,black!50,draw,minimum width=1.5cm,minimum height=1.5cm,fill=beach] {};
        \node [right =  7.5cm of third,black!50,draw,minimum width=1.5cm,minimum height=1.5cm,fill=beach] {};
        \node [right =    9cm of third,black!50,draw,minimum width=1.5cm,minimum height=1.5cm,fill=goldfish] {};
        \node [right = 10.5cm of third,black!50,draw,minimum width=1.5cm,minimum height=1.5cm,fill=beach] {};
        \node [right =   12cm of third,black!50,draw,minimum width=1.5cm,minimum height=1.5cm,fill=pond] {};
        \node [right = 13.5cm of third,black!50,draw,minimum width=1.5cm,minimum height=1.5cm,fill=aoi] {};
        \node [right =   15cm of third,black!50,draw,minimum width=1.5cm,minimum height=1.5cm,fill=aoi] {};
        \node [right = 16.5cm of third,black!50,draw,minimum width=1.5cm,minimum height=1.5cm,fill=beach] {};

        \node (fourth) [below =      0 of third,black!50,draw,minimum width=1.5cm,minimum height=1.5cm,fill=goldfish] {};
        \node [right =    0cm of fourth,black!50,draw,minimum width=1.5cm,minimum height=1.5cm,fill=aoi] {};
        \node [right =  1.5cm of fourth,black!50,draw,minimum width=1.5cm,minimum height=1.5cm,fill=goldfish] {};
        \node [right =    3cm of fourth,black!50,draw,minimum width=1.5cm,minimum height=1.5cm,fill=pond] {};
        \node [right =  4.5cm of fourth,black!50,draw,minimum width=1.5cm,minimum height=1.5cm,fill=beach] {};
        \node [right =    6cm of fourth,black!50,draw,minimum width=1.5cm,minimum height=1.5cm,fill=beach] {};
        \node [right =  7.5cm of fourth,black!50,draw,minimum width=1.5cm,minimum height=1.5cm,fill=goldfish] {};
        \node [right =    9cm of fourth,black!50,draw,minimum width=1.5cm,minimum height=1.5cm,fill=beach] {};
        \node [right = 10.5cm of fourth,black!50,draw,minimum width=1.5cm,minimum height=1.5cm,fill=pond] {};
        \node [right =   12cm of fourth,black!50,draw,minimum width=1.5cm,minimum height=1.5cm,fill=aoi] {};
        \node [right = 13.5cm of fourth,black!50,draw,minimum width=1.5cm,minimum height=1.5cm,fill=aoi] {};
        \node [right =   15cm of fourth,black!50,draw,minimum width=1.5cm,minimum height=1.5cm,fill=beach] {};
        \node [right = 16.5cm of fourth,black!50,draw,minimum width=1.5cm,minimum height=1.5cm,fill=aoi] {};

    \end{tikzpicture}
    };
\end{tikzpicture}
\hspace{1.9cm}
\begin{minipage}{0.9\linewidth}
\raggedleft
\vspace{5cm}
    {\Huge\bfseries Covering arrays from maximal sequences over finite fields \par}
    \vspace{2cm}
    {\huge \bfseries {\color{black}Georgios Tzanakis\par}}
    \vspace{0.4cm}
    {\LARGE {\color{black}Ph.D. Thesis}}
\end{minipage}
\afterpage{\restorepagecolor}

\afterpage{\thispagestyle{empty}}\cleardoublepage

\frontmatter
\pagenumbering{roman}
\begin{center}
\begin{minipage}{0.85\linewidth}
    \centering
    \vspace{1.18cm}
    {\huge Covering arrays from maximal sequences over finite fields \par}
    \vspace{1cm}
    {\Large Georgios Tzanakis\par}
    \vspace{1cm}
    {\large A thesis submitted to the Faculty of Graduate and Postdoctoral Affairs\\
        in partial fulfillment of the requirements for the degree of 
        \\ \vspace{1cm} \Large Doctor of Philosophy
        \\ \vspace{0.3cm} \large in
        \\ \vspace{0.3cm} \Large Mathematics
    \par}
    \vspace{9.1cm}
    {School of Mathematics and Statistics \\
     Ottawa-Carleton Institute for Mathematics and Statistics\\
     Carleton University\\
     Ottawa, Ontario}\\
    {\vspace{0.5cm} 
    \copyright 2017 Georgios Tzanakis}
\end{minipage}
\end{center}
\thispagestyle{empty}

\def\chapterblockcolor{white} 

{

\chapter*{Abstract}
\thispagestyle{empty}
\textsc{The focus of this thesis} is the study and construction of covering arrays, relying on maximal period sequences and other tools from finite fields.
A \emph{covering array of strength $t$}, denoted $\CA(N; t, k,v)$, is an $N\times k$ array with entries from an alphabet $A$ of size $v$, with the property that in the $N\times t$ subarray defined by any $t$ columns, each of the $v^t$ vectors in $A^t$ appears at least once as a row.
Covering arrays generalize orthogonal arrays, which are classic combinatorial objects that have been studied extensively.
Constructing covering arrays with a small row-to-column ratio is important in the design of statistical experiments, however it is also a challenging mathematical problem.

\emph{Linear feedback shift register (LFSR) sequences} are sequences of elements from a finite field that satisfy a linear recurrence relation.
It is well-known that these are periodic; LFSR sequences that attain the maximum possible period are \emph{maximal (period) sequences}, often abbreviated to \mbox{\emph{m-sequences}} in the literature.
Arrays constructed from cyclic shifts of maximal sequences possess strong combinatorial properties and have been previously used to construct orthogonal and covering arrays  \cite{moura2016finite}, although only one of the known constructions is for covering arrays that are not orthogonal arrays \cite{raaphorst2014construction}. 
In this thesis we present several new such constructions.

The cornerstone of our results is a study of the combinatorial properties of arrays constructed from maximal sequences, where we make fundamental connections with concepts from diverse areas of discrete mathematics, such as orthogonal arrays, error-correcting codes, divisibility of polynomials and structures of finite geometry.

One aspect of our work involves concatenating arrays corresponding to different maximal sequences and finding subarrays that are covering arrays.
We express this as an optimization problem, to which we give an algorithmic solution based on backtracking, an underlying finite field theory and connections to other combinatorial objects.
The results of our experiments include 37 new covering arrays of strength $4$ and one of strength $5$.

For integers $v\geq 2$, we introduce cyclic trace arrays modulo $v$, a variation of arrays from maximal sequences
that we study using finite field characters - homomorphisms from the finite field to the unit circle of complex numbers. 
In particular, we use well-known bounds on character sums to derive conditions subject to which cyclic trace arrays modulo $v$ are covering arrays, and we present new infinite families of covering arrays of strengths $3$ and $4$, as well as one of arbitrary strength which appears to be the second such family in the known literature \cite{colbourn2010covering}.
We also express the number of times that different vectors appear in the rows of a cyclic trace array modulo $v$ as the solution of a linear program.


}

\begin{dedication}
{\large
  To my parents, Maro and Nikos
}
\end{dedication}

{

\chapter*{Acknowledgments}
\textsc{First and foremost}, I would like to thank Daniel Panario. 
Having him as my advisor was a privilege; the guidance, opportunities and friendship that he offered will always be deeply appreciated.
I am also most grateful for having met and worked with Lucia Moura and Brett Stevens.
I owe this piece of work and much more to the support, knowledge and enthusiasm of these three people.

I would like to extend my thanks to the members of my defence committee, Charlie Colbourn, Evangelos Kranakis, Mike Newman and David Thomson for their time, effort and insightful comments.
Special thanks to Charlie Colbourn whose work on covering arrays has been essential to my research.
I also wish to thank my undergraduate supervisor, Theodoulos Garefalakis, who introduced me to the area of finite fields and who was the one to encourage and help me to pursue my studies in Canada.

Completing my Ph.D.\ was a long process full of bright highs and dark lows; I would not have gotten through the latter without the endless love and support of my family.
It is hard to express what they mean to me without making an understatement.

I was extremely fortunate to be surrounded by numerous wonderful friends throughout my studies.
Ali Al-Bashabsheh has been a true and most invaluable friend.
Lorena Ibacache was the closest thing that I had to family while being away from home.
John Talboom and Ming-Ming Zhang where my companions along the way, from our first assignments to the celebration after my defence; I do not want to imagine how my long days and nights in Herzberg labs would be without them.
Also essential to surviving my Ph.D.\ was having Ioannis Nomidis as my housemate and much needed Greek company, along with Aris Leivadeas and Manos Skevakis.
Kate Trotter and Dragica Stanivuk have been my dearest friends since our university residence years and I cannot think of my time in Canada without thinking of them.
Whether it was a lazy evening or a hike in the middle of the Canadian winter, hanging out with Chris Dionne, Aras Erzurumluo\u{g}lu, Nevena Franceti\'{c} and Elizabeth Maltais was always a blast.
I also greatly appreciate the friendship of Caio Fernandes, Camelia Karimianpour, Jos\'{e} Matos, Patrick Niesink and Anna Rousso. 
Finally, I am thankful to Stelios Charitakis, the Gorgoraptis brothers, Isis Mouza and Christos Spatharakis, for being my beloved friends despite our distance.

\thispagestyle{empty}
}

\def\chapterblockcolor{goldfish} 

\tableofcontents

\mainmatter

\pagenumbering{arabic}
\chapter{Introduction}
\label{chapter:Introduction}

%

\textsc{This thesis is concerned with} the study and construction of covering arrays, a combinatorial object that lies in the intersection of diverse fields such as discrete mathematics, statistics, computer science and software engineering, and is the topic of active research, ranging from purely theoretical mathematical studies to software development and industrial applications.

The origins of covering arrays are orthogonal arrays, which were first studied by Rao in the second half of the 1940s \cite{rao1946hypercubes,rao1947factorial,rao1949class}, and since then have became a classic mathematical object due to their natural definition and fundamental connections to other areas, such as finite fields, finite geometry and error-correcting codes.
An \emph{orthogonal array of strength $t$ and index $\lambda$}, denoted $\OA_{\lambda}(t, k,v)$, is a $\lambda v^t\times k$ array with entries from an alphabet $A$ of size $v$, with the property that in the $\lambda v^t \times t$ subarray defined by any $t$ columns, each of the $v^t$ vectors in $A^t$ appears as a row precisely $\lambda$ times.
The requirement that every row appears $\lambda$ times is very restrictive and, in the context of certain applications, also unnecessary; a covering array is the result of relaxing this part of the definition.
More precisely, a \emph{covering array of strength~$t$}, denoted $\CA(N; t, k,v)$, is an $N\times k$ array with entries from an alphabet $A$ of size $v$, with the property that in the $N\times t$ subarray defined by any $t$ columns, each of the $v^t$ vectors in $A^t$ appears \emph{at least} once as a row.

The primary application of covering arrays is the design of experiments.
In areas such as software development and manufacturing, it is often infeasible to perform exhaustive system tests. 
However, empirical research shows that in many types of systems, errors are triggered only when a small number of factors interact
\cite{kuhn2004software}.
In these cases, a practical alternative is \emph{$t$-way interaction testing}, where the objective is to check every $t$-combination of factors.
This approach can dramatically reduce the number of tests that need to be performed, while still being extremely effective in detecting errors
\cite{cohen1996combinatorial,kuhn2004software}.
A $\CA(N; t, k,v)$ can be considered as a test suite for $t$-way interaction testing, where the rows provide a collection of $N$ configurations for a system with $k$ factors, represented by the columns, with each factor admitting $v$ possible values.
Performing experimental runs for all $N$ configurations corresponding to the rows of the covering array, guarantees that every possible interaction of $t$ factors is tested at least once.

In \Cref{table:ExampleOfSoftwareSystem} we give an example of 2-way interaction testing  using a covering array.
The software system in \Cref{table:ExampleOfSoftwareSystem_Factors} has 4 factors, namely a payment server, a web server, a browser, and a database.
Each of them admits 3 possible values as shown in the table.
Using a $\CA(10; 4, 3,2)$  by representing each factor value with $0$, $1$ or $2$, we obtain a collection of $10$ tests corresponding to its rows, as shown in \Cref{table:ExampleOfSoftwareSystem_TestingSuite}.
Testing these 10 configurations guarantees that every interaction of two factors has been considered.
The downside is that, if there exists a combination of $3$ or $4$ factor values that causes the system to fail, then this type of testing is not guaranteed to reveal it; for this to be the case, up to $3^4=81$ configurations should be tested.

\begin{figure}[t]
    \renewcommand{\arraystretch}{\genarraystretch}
    \begin{subfigure}[t]{\textwidth}\centering
        \begin{tabular}[]{LlLl}
            \rowcolor{\tableheadcolor}
            Payment Server & Web server & Browser & Database \\
            Visa (0)& IIS (0)& Mozilla (0)& MySQL (0)\\
            Master card (1)& Apache (1)& Chrome (1)& Oracle (1)\\
            American express (2)& Sun (2)& Opera (2)& PostgreSQL (2)
        \end{tabular}
        \caption{Factors of the system and their possible values}
        \label{table:ExampleOfSoftwareSystem_Factors}
    \vspace{1em}
    \end{subfigure}
    \begin{subfigure}[b]{\textwidth}\centering
    \rowcolors{1}{\backgroundshade}{white}
        $
        \begin{array}{cccc}
        \rowcolor{white}
            &&&\\
            2&1&0&1\\
            1&0&2&2\\
            0&2&2&1\\
            2&2&0&2\\
            2&0&1&1\\
            2&1&2&0\\
            1&2&1&0\\
            1&1&0&1\\
            0&1&1&2\\
            0&0&0&0
        \end{array}
        \quad
        \longrightarrow
        $
        \quad
        \begin{tabular}[]{lllll}
\rowcolor{\tableheadcolor}
\cellcolor{white}  & Payment Server & Web server & Browser & Database \\
\cellcolor{white} Test 1  & American express &Apache  &Mozilla &Oracle\\                                                                                              
\cellcolor{white} Test 2  & Master card      &IIS     &Opera   &PostgreSQL\\                                                                                     
\cellcolor{white} Test 3  & Visa             &Sun     &Opera   &Oracle\\                                                                                  
\cellcolor{white} Test 4  & American express &Sun     &Mozilla &PostgreSQL\\                                                                                          
\cellcolor{white} Test 5  & American express &IIS     &Chrome  &Oracle\\                                                                                              
\cellcolor{white} Test 6  & American express &Apache  &Opera   &MySQL\\                                                                                               
\cellcolor{white} Test 7  & Master card      &Sun     &Chrome  &MySQL\\                                                                                          
\cellcolor{white} Test 8  & Master card      &Apache  &Mozilla &Oracle\\                                                                                         
\cellcolor{white} Test 9  & Visa             &Apache  &Chrome  &PostgreSQL\\                                                                              
\cellcolor{white} Test 10 & Visa             &IIS     &Mozilla &MySQL                                                                                     
        \end{tabular}
        \caption{Using a $\CA(10; 2, 4,3)$ to obtain the configurations for
        2-way interaction testing of the system.}
        \label{table:ExampleOfSoftwareSystem_TestingSuite}
    \end{subfigure}
    \caption{Testing of a software system using a covering array.}
    \label{table:ExampleOfSoftwareSystem}
\end{figure}

Although testing 81 configurations for the system in the previous example may be feasible, for systems with a large number of values per factor, exhaustive testing can be an enormous task.
More precisely, a system with $k$ factors each admitting $v$ possible values requires $v^k$ tests, so the number of tests required in exhaustive testing increases exponentially with respect to the number of factors.
On the other hand, performing $t$-way interaction testing for the same system requires $N$ tests, where $N$ is the number of rows of a $\CA(N; t, k,v)$, which can be significantly smaller than $v^k$.
For example, for fixed $t$ and $v$, it is proven that a $\CA(N; t, k,v)$ with $N=\bigo(\log(k))$ can be constructed in polynomial time using a greedy algorithm \cite{bryce2009density}.
At the same time, empirical evidence has shown that $t$-way interaction testing, even for very small values of $t$, is sufficient for many types of systems in real life.
Indeed, a review of fifteen years of medical device recall data by the US Food and Drug Administration (FDA), shows that $t$-way interaction testing with $t$ equal to $2,3$ or $4$, could detect $97,99$ and $100$ percent of the defects, respectively~\cite{wallace2001failure}.
For studies on the effectiveness of $t$-way interaction testing, we refer to \cite{cohen1996combinatorial,kuhn2004software}.
We also note that the National Institute of Standards and Technology (NIST) maintains an online database of covering arrays as part of its Automated Combinatorial Testing for Software (ACTS) program~\cite{nistcoveringarrays}.  

It is evident from our above discussion that constructing covering arrays with a small row-to-column ratio is an important problem.
This leads to the definition of the \emph{covering array number} $\CAN(t,k,v)$, which is the smallest $n$ such that a $\CA(n;t,k,v)$ exists.
For a $\CA(N; t, k,v)$, comparing $N$ to $\CAN(t, k,v)$ provides a measure of how good the covering array is.
A covering array that attains the minimum number of rows is \emph{optimal}.
A trivial lower bound for $\CAN(t,k,v)$ is $v^t$, which is achieved by $\OA_{1}(t, k,v)$. 
However, such orthogonal arrays are rare and, beyond that, few optimal families of covering arrays are known \cite{bush1952orthogonal,katona1973two,kleitman1973families}. 
New constructions aim instead to improve upon the best currently known upper bounds for covering array numbers.
The previously mentioned greedy algorithm implies a logarithmic upper bound on the covering array number with respect to the number of columns.
Other upper bounds follow from numerous methods for obtaining covering arrays that exist in the literature. 
These include combinatorial and algebraic constructions, greedy and metaheuristic computer algorithms, and recursive methods for obtaining new covering arrays from existing ones.
Colbourn actively maintains an online database of the best known upper bounds for $\CAN(t,k,v)$, where $2\leq t\leq 6$ and $2\leq v\leq 25$~\cite{colbournwebsite}.
Other aspects of the research on covering arrays include existence results of probabilistic nature, and generalizations of covering arrays such as \emph{mixed-level covering arrays}, whose columns contain entries from different alphabets.
A very comprehensive survey on the subject is given by Colbourn \cite{colbourn2004combinatorial}; for a more recent albeit more brief presentation, we refer to Kuliamin and Petukhov \cite{kuliamin2011survey}.
We also give a short overview of many important results in \Cref{chapter:Preliminaries}.

In this thesis we focus on the following two problems related to covering arrays:
\begin{enumerate}
    \item find covering arrays of strength more than $3$ that improve upon the currently best known upper bounds for covering array numbers, and
    \item give a theoretical construction of covering arrays such that, for arbitrary $t,k,v$, a $\CA(N;t,k,v)$ can be constructed for some $N$.
\end{enumerate}
With regards to the first problem, we note that an increasing number of the best known upper bounds for covering array numbers for strengths $4$ or higher are obtained recursively from other bounds \cite{colbournwebsite}, while the direct theoretical constructions in Colbourn's survey \cite{colbourn2004combinatorial} are almost exclusively for strengths $2$ and $3$.
Regarding the second problem, it appears that prior to this work and to the best of our knowledge, the construction by Colbourn \cite{colbourn2010covering} is the only direct (not algorithmic, recursive or probabilistic) answer.
We address these two problems relying on powerful tools from the theory of finite fields.

A finite field - or Galois field, in honor of \'{E}variste Galois - is an algebraic field with a finite number of elements.
The theory of finite fields is a branch of algebra with a wide variety of applications in combinatorics, error-correcting codes, finite geometry, cryptography and other fields of mathematics. 
One area of finite fields that has gained a lot of interest in recent years is concerned with sequences of finite field elements that satisfy a linear recurrence relation.
Although the study of such sequences can be traced back to the works of Lagrange in the 18\textsuperscript{th} century, it came to fore again in the 1950s with the dawn of the digital age, when they were found to have applications of great importance in wireless communications.
Due to their straightforward and widespread implementation in digital circuits using feedback shift registers, they have come to be known as linear feedback shift register (LFSR) sequences.
\index{LFSR sequence}
From both a mathematical and engineering standpoint, the works of Golomb \cite{golomb1955sequences,golomb1967shift} played a crucial role in the revival of the topic during that time, and nowadays the algebraic properties of LFSR sequences have been studied extensively \cite{golomb1967shift,golomb2005signal}. 
For instance, it is well-known that they are periodic; LFSR sequences that attain the maximum possible period are \emph{maximal (period) sequences} - often abbreviated to \emph{m-sequences} in the literature.

Maximal sequences have been previously employed for the construction of orthogonal arrays.
As we show in \Cref{chapter:CombinatorialArraysFromMSequences}, some classic orthogonal array constructions can be translated into the language of maximal sequences.
However, the first construction that relies primarily on using their properties appears to be a 1998 work by Munemasa \cite{munemasa1998orthogonal}, who constructs binary orthogonal arrays of strength $2$ that are ``close to'' strength $3$.
This was followed by two generalizations about a decade later \cite{dewar2007division,panario2012divisibility}, that provide binary and ternary covering arrays of strength $2$ or $3$.
These results are based on the study of the divisibility properties of certain types of polynomials with coefficients from the finite fields with $2$ or $3$ elements.
A different approach was adopted by Raaphorst et al.\ \cite{raaphorst2014construction} (see also \cite{raaphorst2013variable}) who exploited connections with combinatorial designs to construct a family of covering arrays of strength~$3$. 
This is the only construction, prior to our work, that is based on LFSR sequences and produces covering arrays which are not orthogonal arrays.
Hence, despite the rich algebraic structure and many combinatorial properties of LFSR sequences, their use for the construction of covering arrays is a rather unknown territory.
This thesis is a step towards this promising direction.

Our main contributions can be summarized in the following.
\begin{itemize}
\item We give a comprehensive study of the connections between maximal sequences and diverse concepts related to orthogonal arrays, error-correcting codes, divisibility of polynomials and structures from finite geometry.
While these connections have been used or implied in previous research, we present them in a unified framework for the first time.
This also allows us to describe the previous and new constructions of orthogonal and covering arrays from maximal sequences using the language of different areas of discrete mathematics, which we hope will facilitate further research on this topic.

\item We develop a backtracking algorithm for the construction of covering arrays of arbitrary strength, that relies on an underlying finite field theory, properties of maximal sequences and connections to other combinatorial objects.
    Among our experimental results there are 37 new covering arrays of strength $4$ and one of strength $5$ that improve upon previously best known bounds listed in the online database by Colbourn \cite{colbournwebsite}.
    This work appears in \cite{tzanakis2016constructing}.

\item Relying on properties of maximal sequences and an argument involving character sums, we give new infinite families of covering arrays of strengths $3$ and $4$, as well as an infinite family of covering arrays of arbitrary strength.
This is one of the two known direct constructions of covering arrays of arbitrary strength along with
a recent one due to Colbourn \cite{colbourn2010covering}.
Furthermore, our constructions and that of Raaphorst et al.\ \cite{raaphorst2014construction} are the only ones that rely on LFSR sequences and yield covering arrays that are not orthogonal arrays.

%

\end{itemize}

We conclude by outlining the structure of the thesis.

\begin{itemize}
\item \Cref{chapter:Preliminaries} contains the necessary preliminaries to make our work self-contained.
The first part is dedicated to several aspects of the theory and applications of finite fields, such as the existence and construction of finite fields, characters, maximal sequences, finite geometry, combinatorial designs, and coding theory.
The second part gives the necessary background on orthogonal and covering arrays, as well as a presentation of the state of the current research on covering arrays.

\item \Cref{chapter:CombinatorialArraysFromMSequences} serves both to provide tools necessary for the rest of the thesis, as well as to present in detail previously established constructions of orthogonal and covering arrays using maximal sequences.
We start by proving fundamental connections between maximal sequences and concepts from other areas of discrete mathematics, and then we explore the previously established constructions in view of the above connections.

\item \Cref{chapter:CAsFirstPaper} is concerned with the construction of covering arrays of any strength from maximal sequences, from the point of view of an optimization problem to which we give an algorithmic solution.
The chapter starts with an in-depth look of the optimization problem we wish to solve, and continues with the development of the theory behind the algorithm, that requires several results from finite fields and the study of other combinatorial objects.
Finally, we present our experimental results and discuss their importance.

\item In \Cref{chapter:CAsFromMSequencesAndCharacterSums} we introduce cyclic trace arrays modulo $v$, a new type of array based on maximal sequences, discrete logarithms and integer congruences that we study using characters - homomorphisms from the finite field to the unit circle of complex numbers.
In particular, we use ideas from \cite{colbourn2010covering} to translate the problem of checking whether a cyclic trace array modulo $v$ is a covering array into the problem of finding a lower bound to an expression that involves character sums.
We then use properties of maximal sequences and well-known bounds of character sums to obtain conditions subject to which a cyclic trace array modulo $v$ is a covering array. 
Based on that, we present new infinite families of covering arrays.

\item \Cref{chapter:Index} is preliminary work on some aspects of the arrays introduced in the previous chapter, using different techniques.
More precisely, we count the number of different $t$-tuples that appear in the rows, and we express this count as the solution of a linear program. 
By providing computational lower bounds for this number, we hope to find cyclic trace arrays modulo $v$ that are covering arrays which are not revealed by the conditions in \Cref{chapter:CAsFromMSequencesAndCharacterSums}.
An extended abstract of this ongoing work has been submitted \cite{kalamata}.

\item \Cref{chapter:FutureDirections} discusses potential future directions related to the thesis.

\end{itemize}

\chapter{Preliminaries}
\label{chapter:Preliminaries}

\textsc{In this chapter we give} the minimum necessary background to make our thesis self-contained.
This consists of well-known definitions and results from various topics related to finite fields as well as orthogonal and covering arrays.
In \Cref{section:FiniteFields} we present some fundamental theory of finite fields, and then we touch on characters, sequences, finite geometry and linear codes.
In \Cref{section:OrthogonalAndCoveringArrays} we define orthogonal and covering arrays, and we give an overview of the research in this area, which includes a discussion about their applications and a presentation of some important results that have already been established.

\section{Finite fields}
\label{section:FiniteFields}

\subsection{Review of some basic finite field theory}
In this section we give a number of fundamental definitions and facts about finite fields.
Our presentation is brief as we only include things that we use in this thesis.
For a complete and thorough introduction to finite fields, we refer the reader to \cite{lidl1997finite}.
We also assume some basic knowledge of ring and field theory.

\subsubsection{Existence and uniqueness of finite fields}

It follows from elementary algebra that if $L$ is an extension field over a subfield $K$, then $L$ can be seen as a vector space over $K$, under the usual field operations.
For finite fields in particular, we have the following lemma.

\begin{lemma}
    {A finite field is a vector space over any subfield}
    {FFIsAVSpaceOverSubfield}
    Let $F$ be a finite field and $K$ be a subfield containing $q$ elements.
    Then $F$ is a vector space over $K$, and $|F|=q^t$, where $t$ is the dimension of $F$, viewed as a vector space over $\fq$.
\end{lemma}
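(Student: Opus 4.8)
The plan is to split the statement into its two assertions and dispatch each in turn. The first assertion --- that $F$ is a vector space over $K$ --- is just an instance of the general fact quoted immediately above the lemma, namely that any extension field is a vector space over any of its subfields. First I would make this precise by declaring scalar multiplication to be the restriction of the field multiplication of $F$ to $K \times F$, and vector addition to be the field addition of $F$. The vector space axioms then follow verbatim from the field axioms of $F$: associativity and commutativity of addition, existence of the additive identity and of additive inverses, compatibility of the two multiplications, the two distributive laws, and the action of the multiplicative identity of $K$. Since each of these is inherited directly from the ambient field structure, this part requires no real work beyond recording which field operation plays which role.

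For the second assertion I would first observe that $F$, being finite, is in particular a finitely generated (indeed finite) $K$-vector space, so it admits a finite basis; let $t = \dim_K F$ denote its size and fix a basis $b_1, \dots, b_t$. The heart of the argument is then a counting bijection. By the definition of a basis, every $x \in F$ has a \emph{unique} expression $x = c_1 b_1 + \dots + c_t b_t$ with coordinates $c_i \in K$, so the coordinate map $K^t \to F$ sending $(c_1, \dots, c_t) \mapsto \sum_{i=1}^{t} c_i b_i$ is a bijection. Counting the domain, each of the $t$ coordinates ranges independently over the $q$ elements of $K$, whence $|F| = |K^t| = q^t$.

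The only point demanding a little care --- and the closest thing to an obstacle in an otherwise routine argument --- is justifying that this coordinate map really is a bijection, which rests on the two defining properties of a basis: spanning yields surjectivity, while linear independence yields injectivity (that is, uniqueness of coordinates). Both are standard linear algebra valid over an arbitrary field, so the finiteness of $K$ plays no role there; it enters only at the final counting step, when we pass from $|K^t|$ to $q^t$. I would therefore keep the write-up short, citing the quoted remark for the vector space structure and presenting the coordinate bijection explicitly to obtain the cardinality $|F| = q^t$.
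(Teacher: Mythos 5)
Your proof is correct and is precisely the standard elementary argument the paper has in mind: the paper states this lemma without proof, deferring to ``elementary algebra,'' and your two steps (the vector-space structure inherited from the field operations of $F$, and the coordinate bijection $K^t \to F$ given by a basis) supply exactly that argument with no gaps.
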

\index{Finite field!as vector space}
Using \Cref{lemma:FFIsAVSpaceOverSubfield}, we can characterize the cardinality of finite fields.

\begin{theorem}
    {Every finite field has cardinality a prime power
     \cite[Theorem 2.2]{lidl1997finite}}
{CardinalityOfFFieldIsPrimePower}
    Let $F$ be a finite field.
    Its prime subfield $K$ has cardinality $p$, a prime number, which is also the characteristic of $F$.
    Then, $|F|=p^n$ where $n$ is the dimension of $F$ over $K$.
\end{theorem}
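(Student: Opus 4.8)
The plan is to identify the prime subfield $K$ concretely, determine that $|K| = p$ for a prime $p$ equal to $\charv F$, and then apply \Cref{lemma:FFIsAVSpaceOverSubfield} with $q = p$ to read off $|F| = p^n$. The only genuine content lies in establishing that the prime subfield has exactly $p$ elements with $p$ prime; the cardinality formula is then immediate from the lemma.

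First I would consider the ring homomorphism $\varphi \colon \Z \to F$ defined by $\varphi(m) = m \cdot 1_F$, where $1_F$ is the multiplicative identity of $F$. Since $F$ is finite, $\varphi$ cannot be injective, so its kernel is a nonzero ideal of $\Z$, say $(p)$ with $p > 0$. I would then argue that $p$ must be prime: if $p = ab$ with $1 < a, b < p$, then $\varphi(a)\varphi(b) = \varphi(p) = 0$ while $\varphi(a), \varphi(b) \neq 0$ (since $a, b$ lie strictly between $0$ and $p$), contradicting the fact that a field has no zero divisors. This $p$ is precisely the characteristic of $F$.

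Next, by the first isomorphism theorem, $\im \varphi \cong \Z/p\Z = \fp$, which is a field with exactly $p$ elements. Since $\im\varphi$ is generated by $1_F$, it is contained in every subfield of $F$, and being itself a field it is the smallest subfield of $F$ — that is, the prime subfield $K$. Hence $|K| = p$, as claimed.

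Finally, I would invoke \Cref{lemma:FFIsAVSpaceOverSubfield} with the subfield $K$ of cardinality $q = p$: the lemma gives that $F$ is a vector space over $K$ with $|F| = p^n$, where $n = \dim_K F$ is finite because $F$ is finite. I do not anticipate any serious obstacle here, as the bulk of the work is delegated to the lemma; the one point I would state carefully is the primality of $p$, which rests entirely on $F$ being an integral domain.
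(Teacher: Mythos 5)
Your proposal is correct and follows exactly the route the paper intends: the paper states this result as a citation and frames it as an application of \Cref{lemma:FFIsAVSpaceOverSubfield} to the prime subfield, which is precisely what you do, with the standard argument via the homomorphism $m \mapsto m\cdot 1_F$ supplying the fact that the prime subfield has prime cardinality $p = \charv\, F$. Your elaboration of the primality of $p$ and the identification of $\im\varphi$ as the prime subfield is sound and complete.
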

    \index{Finite field!cardinality of}

The converse of \Cref{theorem:CardinalityOfFFieldIsPrimePower} is also true.

\begin{theorem}
{Existence and uniqueness of finite fields
\cite[Theorem 2.5]{lidl1997finite}}
{ForEveryPrimeThereIsAFFieldWithThatCardinality}
    For every prime $p$ and positive integer $n$, there exists a finite field with $p^n$ elements.
    Furthermore, any finite field with $p^n$ elements is isomorphic to the splitting field of $x^{p^n}-x$ over $\Z_p$.
\end{theorem}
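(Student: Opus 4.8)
The plan is to treat the two assertions together, building both around the single polynomial $f(x) = x^{p^n} - x \in \Z_p[x]$ and its splitting field. For existence I would let $F$ be a splitting field of $f$ over $\Z_p$ and show that its set of roots is itself a finite field with exactly $p^n$ elements. First I would verify that $f$ has no repeated roots by computing its formal derivative $f'(x) = p^n x^{p^n-1} - 1 = -1$ in characteristic $p$; since $f'$ is a nonzero constant we have $\gcd(f, f') = 1$, so $f$ is separable and has precisely $p^n$ distinct roots in $F$. Next I would show that the root set $S = \{a \in F : a^{p^n} = a\}$ is a subfield of $F$. This is the heart of the argument and relies on the Frobenius map: iterating the characteristic-$p$ homomorphism $x \mapsto x^p$ gives $(a+b)^{p^n} = a^{p^n} + b^{p^n}$ and $(ab)^{p^n} = a^{p^n}b^{p^n}$, so $S$ is closed under addition and multiplication, while closure under negation and inversion, together with $0,1 \in S$, follows readily. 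Because $F$ is generated over $\Z_p$ by the roots of $f$, this forces $S = F$, whence $|F| = p^n$.

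For uniqueness I would take an arbitrary finite field $E$ with $p^n$ elements. By \Cref{theorem:CardinalityOfFFieldIsPrimePower} its characteristic is $p$, so $E$ contains a copy of $\Z_p$ as its prime subfield. The multiplicative group $E^{\times}$ has order $p^n - 1$, so Lagrange's theorem yields $a^{p^n-1} = 1$, hence $a^{p^n} = a$, for every nonzero $a \in E$; since $0^{p^n} = 0$ as well, every element of $E$ is a root of $f$. As $\deg f = p^n = |E|$, the elements of $E$ are exactly the roots of $f$, so $f = \prod_{a \in E}(x - a)$ splits completely over $E$ and $E$ is generated over $\Z_p$ by these roots. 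Thus $E$ is itself a splitting field of $f$ over $\Z_p$, and the standard uniqueness of splitting fields up to isomorphism then delivers the stated conclusion.

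I expect the main obstacle to be the closure argument establishing that $S$ is genuinely a subfield rather than merely a subset of roots. The separability check and the Lagrange argument are routine, but proving the freshman's-dream identity $(a+b)^{p^n} = a^{p^n} + b^{p^n}$ cleanly, via induction on the iterates of the Frobenius endomorphism and the vanishing of the binomial coefficients $\binom{p}{i}$ modulo $p$ for $0 < i < p$, is precisely where the characteristic-$p$ structure does the essential work.
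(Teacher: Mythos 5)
Your proof is correct and follows the intended approach: the paper states this result without proof, citing Lidl and Niederreiter, and your argument (separability of $x^{p^n}-x$ via its formal derivative, closure of the root set under the field operations through the Frobenius map for existence, and Lagrange's theorem in $E^{\times}$ plus uniqueness of splitting fields for the isomorphism claim) is exactly the classical proof found in that reference. The points you flag as delicate — the freshman's dream identity and the fact that a field of order $p^n$ must have characteristic $p$ — are handled correctly and pose no gap.
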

    \index{Finite field!existence and uniqueness of}

From
\Cref{theorem:CardinalityOfFFieldIsPrimePower,theorem:ForEveryPrimeThereIsAFFieldWithThatCardinality}
we conclude that every finite field has cardinality a prime power and, conversely, for every prime power $q$ there exists a finite field with $q$ elements.
Furthermore, all finite fields with $q$ elements are isomorphic since splitting fields of the same polynomial are isomorphic.
Hence, we speak of \emph{the} finite field with $q$ elements, which we denote $\fq$.

\subsubsection{Properties and construction of finite fields}
Having established the existence and uniqueness of finite fields, we list some of their basic properties and discuss their construction.
We begin by a characterization of the subfields of a finite field.
\begin{theorem}
{Subfield criterion \cite[Theorem 2.6]{lidl1997finite}}
{SubfieldCriterionOfFiniteFields}
    Let $p$ be a prime and $n$ be a positive integer.
    Then, every subfield of $\f_{p^n}$ has order $p^d$, where $d$ is a positive divisor of $n$.
    Conversely, if $d$ is a positive divisor of $n$, then there is exactly one subfield of $\fq$ with $p^d$ elements.
\end{theorem}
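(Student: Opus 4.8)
The plan is to handle the two directions separately, reducing in both cases to counting the roots of the polynomial $x^{p^d}-x$ inside $F=\f_{p^n}$. For the forward direction, suppose $K$ is a subfield of $F$. Since $K$ is itself a finite field sharing the prime subfield of $F$, \Cref{theorem:CardinalityOfFFieldIsPrimePower} gives $|K|=p^d$ for some positive integer $d$. I would then view $F$ as a vector space over $K$ via \Cref{lemma:FFIsAVSpaceOverSubfield}: if that dimension is $s$, then $|F|=|K|^s$, i.e. $p^n=p^{ds}$, so $n=ds$ and hence $d\mid n$. This part is essentially immediate once the two cited results are in hand.

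For the converse, fix a divisor $d$ of $n$ and define $K=\{a\in F : a^{p^d}=a\}$, the set of roots in $F$ of $x^{p^d}-x$. First I would check that $K$ is a subfield: closure under addition follows because the $p^d$-th power map is additive in characteristic $p$ (the ``freshman's dream'' identity, applied repeatedly), while closure under multiplication and inverses and membership of $0$ and $1$ are direct. The remaining task is to show $|K|=p^d$, which amounts to proving that $x^{p^d}-x$ has exactly $p^d$ distinct roots, all lying in $F$.

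The key obstacle is the splitting claim: I need $x^{p^d}-x$ to factor completely over $F$. By \Cref{theorem:ForEveryPrimeThereIsAFFieldWithThatCardinality}, $F$ is the splitting field of $x^{p^n}-x$, and since this degree-$p^n$ polynomial has $p^n$ roots in the field $F$ of size $p^n$, every element of $F$ is one of its roots. It therefore suffices to establish the divisibility $(x^{p^d}-x)\mid(x^{p^n}-x)$, for then $x^{p^d}-x$ splits in $F$ as well. Factoring out $x$, this reduces to $(x^{p^d-1}-1)\mid(x^{p^n-1}-1)$, which follows from the elementary fact that $(x^a-1)\mid(x^b-1)$ whenever $a\mid b$, combined with the implication $d\mid n\Rightarrow(p^d-1)\mid(p^n-1)$. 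Distinctness of the $p^d$ roots is then easy, since the derivative of $x^{p^d}-x$ is $-1$, so the polynomial is squarefree; this yields $|K|=p^d$.

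Finally, for uniqueness I would argue that any subfield $K'$ of order $p^d$ must coincide with $K$. Every nonzero element of $K'$ lies in a multiplicative group of order $p^d-1$, hence satisfies $a^{p^d-1}=1$ and so $a^{p^d}=a$, while $0$ trivially satisfies $a^{p^d}=a$; thus $K'\subseteq K$. Since $|K'|=|K|=p^d$, the two sets are equal, so the subfield of order $p^d$ is unique. I expect the divisibility argument of the previous paragraph to be the only genuinely nontrivial step; everything else is bookkeeping with the cited cardinality and splitting-field results.
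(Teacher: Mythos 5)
The paper does not prove this theorem at all: it is imported as a classical result, cited directly from \cite[Theorem 2.6]{lidl1997finite}, so there is no internal proof to compare against. Your argument is correct, complete, and is essentially the standard proof given in that reference: the forward direction by combining \Cref{theorem:CardinalityOfFFieldIsPrimePower} with the dimension count from \Cref{lemma:FFIsAVSpaceOverSubfield}, the converse by the divisibility chain $d\mid n \Rightarrow (p^d-1)\mid(p^n-1) \Rightarrow (x^{p^d}-x)\mid(x^{p^n}-x)$ together with separability, and uniqueness by observing that any subfield of order $p^d$ must consist of roots of $x^{p^d}-x$. One small tightening: your intermediate claim that every element of $F$ is a root of $x^{p^n}-x$ deserves its own justification (either the same derivative/squarefree remark you make for $x^{p^d}-x$, or the Lagrange argument in the multiplicative group); the paper proves exactly this statement as \Cref{lemma:PowerQEqualsElementMeansBelongingToFQ}, which you could cite to close that step without extra work.
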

    \index{Subfield!criterion}

To every field element, we associate a unique polynomial as follows.
\begin{definition}
{Minimal polynomial of a finite field element}
{MinimalPolynomialOfFiniteFieldElement}
    Let $K$ be a subfield of $F$, and $\a \in F$ be algebraic over $K$.
    Then, the uniquely determined monic irreducible polynomial $g \in K[x]$ generating the ideal
    $J=\{ f\in K[x] \mid f(\a)=0 \}$
    is the \emph{minimal polynomial of $\a$ over $K$}.
    We denote this polynomial $m_{\a,K}$, however we simply write $m_{\a}$ when it is clear from the context what $K$ is.
\end{definition}
\index{Polynomial!minimal|see{Minimal polynomial}}
\index{Minimal polynomial!of element}

We use the following lemma regularly in the next chapters.

\begin{lemma}{\cite[Lemma 2.12]{lidl1997finite}}
    {AlphaRootOfPolynomialIffItIsDivisibleByMinimal}
    Let $t$ be a positive integer and $\a\in \fqt$.
    Then, for a polynomial $f\in \fqx$, we have that $\a$ is a root of $f$ if and only if $f$ is divisible by $m_{\a}$.
\end{lemma}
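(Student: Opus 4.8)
The plan is to read off both directions directly from the defining property of $m_{\a}$ recorded in \Cref{definition:MinimalPolynomialOfFiniteFieldElement}, namely that $m_{\a}$ generates the ideal $\J = \{f \in \fqx \mid f(\a) = 0\}$. Essentially no new machinery is needed: the lemma is a restatement of the chain $f(\a) = 0 \iff f \in \J \iff m_{\a} \mid f$, and my task is just to make each equivalence explicit.

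For the easy direction I would suppose $m_{\a} \mid f$, writing $f = h\, m_{\a}$ with $h \in \fqx$. Because evaluation at $\a$ is a ring homomorphism, $f(\a) = h(\a)\, m_{\a}(\a)$, and since $m_{\a}(\a) = 0$ by the definition of the minimal polynomial, this forces $f(\a) = 0$, i.e.\ $\a$ is a root of $f$. For the converse I would suppose that $\a$ is a root of $f$, so $f(\a) = 0$, which by definition says precisely that $f \in \J$. As $m_{\a}$ generates $\J$ as an ideal of $\fqx$, every element of $\J$ is an $\fqx$-multiple of $m_{\a}$, and in particular $m_{\a} \mid f$.

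The one point that genuinely deserves care, and the part I would spell out if a referee pushed, is the claim underlying the definition itself: that $\J$ really is a principal ideal with $m_{\a}$ as a generator. Here I would invoke that $\fqx$ is a principal ideal domain. Concretely, $\J$ is the kernel of the evaluation homomorphism $\fqx \to \fqt$, $g \mapsto g(\a)$, hence is an ideal; it is nonzero since $\a$ is algebraic over $\fq$ (indeed every element of $\fqt$ is a root of $x^{q^{t}} - x$). A nonzero ideal in a PID is generated by its unique monic polynomial of least degree, which is exactly $m_{\a}$.

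In short, I do not expect any real obstacle here—the difficulty is purely organizational, not mathematical. The whole argument rests on two facts already available: that evaluation at $\a$ is a ring homomorphism, and that $m_{\a}$ generates $\J$ per \Cref{definition:MinimalPolynomialOfFiniteFieldElement}. Once those are in hand, both implications are immediate, so the proof should be short.
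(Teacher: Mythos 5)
Your proof is correct. The paper does not prove this lemma itself---it cites it directly from Lidl and Niederreiter---and your argument is the natural one given the paper's setup: since \Cref{definition:MinimalPolynomialOfFiniteFieldElement} already defines $m_{\a}$ as a generator of the ideal $\J=\{ f\in \fqx \mid f(\a)=0 \}$, both directions follow immediately, and your closing remark (that $\J$ is a nonzero ideal in the PID $\fqx$, hence genuinely principal with monic generator of least degree) correctly identifies and fills the only point that needs justification.
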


For integers $a,b$ with $a<b$, we denote
$[a,b]=\{ a, a+1, \dots, b \}$.
For a prime $p$, the field $\fp$ is isomorphic to $\Z_p=\Z/p\Z$, so we identify $\fp$ with the set $[0,p-1]$ along with the usual addition and multiplication modulo $p$.
Next, we consider the representation of finite fields whose cardinality is a prime power.

\begin{theorem}
{Roots of irreducible polynomials over finite fields
    {\cite[Theorem 2.14]{lidl1997finite}}
}
{RootsOfIrreduciblePolynomialsOverFFields}
If $f$ is an irreducible polynomial in $\fqx$ of degree $t$, then $f$ has a root $\a$ in $\fqt$.
Furthermore, all the roots of $f$ are simple and are given by the $t$ distinct elements $\a, \a^{q}, \dots, \a^{q^t}$.
\end{theorem}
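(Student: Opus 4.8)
The plan is to locate a single root of $f$ and then generate all the others by iterating the $q$-th power map. First I would take a root $\a$ of $f$ in some splitting field of $f$ over $\fq$. Since $f$ is irreducible and $\a$ is a root, \Cref{lemma:AlphaRootOfPolynomialIffItIsDivisibleByMinimal} forces the minimal polynomial $m_{\a}$ (\Cref{definition:MinimalPolynomialOfFiniteFieldElement}) to divide $f$, and irreducibility of $f$ then gives that $f$ is a constant multiple of $m_{\a}$, so $\deg m_{\a}=t$. Consequently $\{1,\a,\dots,\a^{t-1}\}$ is a basis of $\fq(\a)$ over $\fq$, and by \Cref{lemma:FFIsAVSpaceOverSubfield} we obtain $|\fq(\a)|=q^{t}$. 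By the uniqueness part of \Cref{theorem:ForEveryPrimeThereIsAFFieldWithThatCardinality}, $\fq(\a)\cong\fqt$, so $\a$ may be regarded as an element of $\fqt$; this establishes the first assertion.

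Next I would show that applying the Frobenius map $\sigma\colon x\mapsto x^{q}$ to a root produces another root. Writing $f=\sum_i c_i x^{i}$ with $c_i\in\fq$, and using $c_i^{\,q}=c_i$ together with the fact that $x\mapsto x^{q}$ is a ring homomorphism in characteristic $p$, I would compute
\[
f(\b^{q})=\sum_i c_i\,\b^{qi}=\sum_i c_i^{\,q}\,\b^{qi}=\Big(\sum_i c_i\,\b^{i}\Big)^{q}=f(\b)^{q}
\]
for every $\b\in\fqt$. Hence $f(\a)=0$ implies $f(\a^{q})=0$, and by induction every $\a^{q^{j}}$ is a root of $f$. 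Since $\a\in\fqt$ we also have $\a^{q^{t}}=\a$, which explains why the displayed list $\a,\a^{q},\dots,\a^{q^{t}}$ closes up on itself.

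The crux, and the step I expect to be the main obstacle, is proving that $\a,\a^{q},\dots,\a^{q^{t-1}}$ are pairwise distinct. I would argue by contradiction: if $\a^{q^{i}}=\a^{q^{j}}$ for some $0\le i<j\le t-1$, then injectivity of $\sigma$ on $\fqt$ lets me cancel $i$ applications to obtain $\a^{q^{m}}=\a$ with $m=j-i$ and $0<m<t$. Setting $d=\gcd(m,t)$, the cyclic subgroup of $\langle\sigma\rangle$ generated by $\sigma^{m}$ equals $\langle\sigma^{d}\rangle$, so $\a$ is fixed by $\sigma^{d}$ as well; thus $\a$ lies in the subfield of $\fqt$ of order $q^{d}$ provided by \Cref{theorem:SubfieldCriterionOfFiniteFields}. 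Because $d\le m<t$, this subfield is proper, which forces $\deg m_{\a}\le d<t$ and contradicts $\deg m_{\a}=t$. Therefore the $t$ elements are distinct. Finally, a polynomial of degree $t$ over a field has at most $t$ roots counted with multiplicity; having exhibited $t$ distinct ones, I conclude that these are all the roots and that each is simple, completing the proof.
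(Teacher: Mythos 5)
Your proof is correct and complete. Note that the paper itself offers no proof of this statement: it is quoted as background directly from Lidl and Niederreiter \cite{lidl1997finite}, so there is nothing internal to compare against; your argument is the standard textbook one — identify $\fq(\a)$ with $\fqt$ by a degree count, propagate roots with the Frobenius map, and establish distinctness of $\a, \a^q, \dots, \a^{q^{t-1}}$ — and you correctly recognize that the list $\a,\a^q,\dots,\a^{q^t}$ in the statement closes up because $\a^{q^t}=\a$. One citation nitpick: you invoke \Cref{lemma:AlphaRootOfPolynomialIffItIsDivisibleByMinimal} for a root $\a$ living in an abstract splitting field, whereas the paper states that lemma only for $\a\in\fqt$; the divisibility $m_{\a}\mid f$ that you need follows instead directly from \Cref{definition:MinimalPolynomialOfFiniteFieldElement}, where $m_{\a}$ is defined as the generator of the annihilating ideal of an arbitrary algebraic element, so the step stands once you cite the definition rather than the lemma. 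Your gcd/fixed-subfield route to distinctness is a mild variant of the usual degree-comparison argument (showing $\a^{q^{j-i}}=\a$ forces $f\mid x^{q^{j-i}}-x$ and hence $t\mid j-i$), and either works.
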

\index{Irreducible polynomial!roots of}
\index{Polynomial!irreducible|see {Irreducible polynomial}}

\begin{corollary}
{{Construction of finite fields \cite[Corollary 2.15]{lidl1997finite}}}
{ConstructionOfFiniteFields}
Let $f\in\fqx$ be irreducible with degree $t$.
Then $\fqt$ is the splitting field of $f$ over $\fq$.
\end{corollary}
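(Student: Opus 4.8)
The plan is to use \Cref{theorem:RootsOfIrreduciblePolynomialsOverFFields} to locate all the roots of $f$ inside $\fqt$, and then to argue that $\fqt$ is in fact the \emph{smallest} extension of $\fq$ containing them. First I would invoke that theorem to fix a root $\a \in \fqt$ of $f$ and to note that its complete set of roots consists of the distinct conjugates $\a, \a^q, \a^{q^2}, \dots$ obtained by repeatedly applying the Frobenius map $x \mapsto x^q$. Since $\fqt$ is a field that is closed under this map, every such conjugate $\a^{q^i}$ again lies in $\fqt$; hence $f$ splits completely over $\fqt$, and therefore the splitting field $L$ of $f$ over $\fq$, being the smallest field over which $f$ splits, satisfies $L \subseteq \fqt$.

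For the reverse inclusion I would establish $\fqt \subseteq L$ by a degree count. Because $f$ is irreducible of degree $t$ and vanishes at $\a$, it coincides up to a scalar factor with the minimal polynomial $m_{\a}$ of $\a$ over $\fq$ from \Cref{definition:MinimalPolynomialOfFiniteFieldElement}, so that $[\fq(\a):\fq] = \deg m_{\a} = t$. By \Cref{lemma:FFIsAVSpaceOverSubfield} this forces $|\fq(\a)| = q^t$, and the uniqueness part of \Cref{theorem:ForEveryPrimeThereIsAFFieldWithThatCardinality} then identifies $\fq(\a)$ with $\fqt$. Since $\a$ is a root of $f$ it lies in $L$, whence $\fqt = \fq(\a) \subseteq L$. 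Combining the two inclusions yields $L = \fqt$, as claimed.

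I do not anticipate a serious obstacle, since the statement is an immediate consequence of the preceding theorem. The only point requiring genuine care is the reverse inclusion: one must rule out the possibility that the roots of $f$ already generate a \emph{proper} subfield of $\fqt$, so that the splitting field could be strictly smaller. This is precisely what the cardinality computation $|\fq(\a)| = q^t$ guarantees, pinning $\fq(\a)$ down to all of $\fqt$ and preventing the splitting field from being any smaller.
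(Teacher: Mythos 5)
Your proposal is correct and follows essentially the same route as the paper, which states this corollary as an immediate consequence of \Cref{theorem:RootsOfIrreduciblePolynomialsOverFFields} (citing Lidl--Niederreiter) and, in the discussion immediately after, uses exactly your identification $\fqt=\fq(\a,\a^q,\ldots)=\fq(\a)$. The two inclusions you give --- $f$ splits over $\fqt$ since the conjugate roots stay in $\fqt$, and $\fq(\a)=\fqt$ by the degree/cardinality count, so the splitting field can be neither larger nor smaller than $\fqt$ --- are precisely the standard argument the paper relies on.
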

\index{Finite field!construction of}

We know (see for instance \cite[Corollary 2.11]{lidl1997finite}) that for every prime power $q$ and positive integer $t$, there exists an irreducible polynomial $f\in \fqx$ of degree $t$.
It follows from \Cref{theorem:RootsOfIrreduciblePolynomialsOverFFields} and \Cref{corollary:ConstructionOfFiniteFields} that for a root $\a\in \fqt$ of $f$, we have that
$\fqt=\fq(\a,\a^q, \ldots, \a^{q^t}) = \fq(\a)$,
therefore we can write
\begin{equation}
\label{equation:FundamentalRepresentationOfFiniteField}
\fqt
=
\left\{ c_{t-1}\a^{t-1}+c_{t-2}\a^{t-2}+\cdots+c_1\a+c_0 \mid c_{0},\ldots, c_{t-1} \in \fq \right\}.
\end{equation}

\begin{definition}
    {Additive and multiplicative groups of a finite field}
    {GroupsOfFiniteField}
    We refer to $(\fq,+)$ as the \emph{additive group of $\fq$}, and $(\fqstar,\cdot)$ as the \emph{multiplicative group of $\fq$}, where $\fqstar=\fq\setminus \{ 0 \}$.
    We denote $(\fqstar, \cdot)$ simply as $\fqstar$.
\end{definition}
    \index{Finite field!additive group of}
    \index{Finite field!multiplicative group of}

Since $\fq$ is a field, every nonzero element has a multiplicative inverse, hence $\fqstar$ consists of the $q-1$ nonzero elements of $\fq$.
The following theorem gives an important property of $\fqstar$.

\begin{theorem}
    {The multiplicative group of a finite field is cyclic \cite[Theorem 2.8]{lidl1997finite}.}
    {MultiplicativeGroupOfFQIsCyclic}
    For every prime power $q$, the multiplicative group $\fqstar$ of nonzero elements of $\fq$ is cyclic.
\end{theorem}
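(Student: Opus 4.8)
The plan is to exhibit an element of $\fqstar$ whose order equals the group order $q-1$; such an element then generates the whole group. I would treat $\fqstar$ as an abelian group of order $q-1$ and exploit a single field-theoretic input: a nonzero polynomial of degree $n$ over $\fq$ has at most $n$ roots in $\fq$.

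First, for each positive divisor $d$ of $q-1$, let $\psi(d)$ denote the number of elements of $\fqstar$ of order exactly $d$. The key observation is that $\psi(d)$ is either $0$ or $\phi(d)$, where $\phi$ is Euler's totient function. Indeed, if some $a$ has order $d$, then $a$ generates a cyclic subgroup $\langle a\rangle$ of size $d$, and every element of $\langle a\rangle$ satisfies $x^d=1$. Since $x^d-1$ has at most $d$ roots in $\fq$, the $d$ elements of $\langle a\rangle$ are precisely those roots, so every element of order $d$ already lies in $\langle a\rangle$. As the number of generators of a cyclic group of order $d$ is $\phi(d)$, we get $\psi(d)=\phi(d)$ in this case, and $\psi(d)=0$ otherwise.

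Next I would sum over divisors. Every nonzero element has some order dividing $q-1$, so $\sum_{d\mid q-1}\psi(d)=q-1$, while the classical identity $\sum_{d\mid q-1}\phi(d)=q-1$ holds. Since $\psi(d)\le\phi(d)$ for every $d$ and the two totals agree, equality $\psi(d)=\phi(d)$ must hold for all $d\mid q-1$. In particular $\psi(q-1)=\phi(q-1)\ge 1$, so an element of order $q-1$ exists and generates $\fqstar$, proving the group is cyclic.

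The main obstacle is the dichotomy $\psi(d)\in\{0,\phi(d)\}$: this is exactly where the field structure enters, through the bound on the number of roots of $x^d-1$. Without it the argument fails, as a generic abelian group of order $q-1$ need not be cyclic. An alternative route avoids the totient identity altogether: take an element $b$ of maximal order $h$ in $\fqstar$, use the $\lcm$-of-orders construction in a finite abelian group to show that the order of every element divides $h$, deduce that all $q-1$ elements are roots of $x^h-1$, and hence $q-1\le h\le q-1$, forcing $h=q-1$. I would present the counting argument as the primary proof and remark on this variant.
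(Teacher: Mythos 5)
Your proof is correct, but it is worth noting that the thesis itself does not prove this statement at all: it quotes it as Theorem 2.8 of Lidl and Niederreiter. The proof in that cited source takes a different route from yours: it factors $q-1 = p_1^{r_1}\cdots p_m^{r_m}$, uses the root bound to find, for each $i$, an element $a_i$ with $a_i^{(q-1)/p_i}\neq 1$, checks that $b_i = a_i^{(q-1)/p_i^{r_i}}$ has order exactly $p_i^{r_i}$, and takes the product $b_1\cdots b_m$, which has order $q-1$ because the orders of the factors are pairwise coprime. Your totient-counting argument is the other classical route; both proofs hinge on exactly the same field-theoretic input, namely that a nonzero polynomial of degree $n$ over a field has at most $n$ roots, and your dichotomy $\psi(d)\in\{0,\phi(d)\}$ is established correctly. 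What your version buys in addition is the exact count: it shows that $\fqstar$ contains precisely $\phi(d)$ elements of order $d$ for every divisor $d$ of $q-1$, and in particular exactly $\phi(q-1)$ primitive elements of $\fq$ --- a fact the thesis states separately as \Cref{lemma:NumberOfPrimitiveElementsOfFqm}, so your argument proves both results at once. The variant you sketch at the end (an element of maximal order $h$, the lcm argument showing every order divides $h$, hence $q-1$ roots of $x^h-1$ and $h=q-1$) is also sound and closer in spirit to the constructive flavour of the cited proof.
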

    \index{Finite field!multiplicative group of}

The next lemma gives a way to determine if an element belongs to a given finite field.

\begin{lemma}
{}
{PowerQEqualsElementMeansBelongingToFQ}
    For a prime power $q$ and a positive integer $t$, we have that $a\in \fqt$ if and only if $a^{q^t}=a$.
\end{lemma}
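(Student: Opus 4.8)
The plan is to read $a$ as an element of a fixed algebraic closure of $\fq$ (so that the expression $a^{q^t}=a$ is meaningful), and to prove both implications by studying the single polynomial $f(x)=x^{q^t}-x$. The whole argument hinges on the numerical coincidence $\deg f = q^t = |\fqt|$, which lets a degree count pin down the root set of $f$ exactly.

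First I would establish the forward implication, namely that every element of $\fqt$ is a root of $f$. If $a=0$ this is immediate, since $0^{q^t}=0$. Otherwise $a\in\fqtstar$, and by \Cref{theorem:MultiplicativeGroupOfFQIsCyclic} applied to the prime power $q^t$, the group $\fqtstar$ is cyclic of order $q^t-1$. Lagrange's theorem then gives $a^{q^t-1}=1$, whence $a^{q^t}=a$. This exhibits all $q^t$ elements of $\fqt$ as distinct roots of $f$.

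For the reverse implication I would invoke the fact that a nonzero polynomial of degree $d$ over a field has at most $d$ roots. Since $\deg f = q^t$ and the forward direction already produced $q^t$ distinct roots—exactly the elements of $\fqt$—these must account for \emph{all} roots of $f$. Hence if $a$ satisfies $a^{q^t}=a$, i.e.\ $a$ is a root of $f$, then $a$ lies in $\fqt$, and the two conditions are equivalent.

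There is no deep obstacle here; the only points requiring care are making the ambient field explicit so that the ``only if'' direction has content, and observing that the $q^t$ roots supplied by $\fqt$ are genuinely distinct (they are, being distinct field elements). The real content of the lemma is precisely that $\deg f$ equals the cardinality of $\fqt$, so that counting roots forces the inclusion rather than merely bounding it.
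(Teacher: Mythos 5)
Your proposal is correct, and the forward implication is argued exactly as in the paper: the case $a=0$ is dispatched trivially, and for $a\neq 0$ Lagrange's theorem in the group $\fqtstar$ of order $q^t-1$ gives $a^{q^t-1}=1$, hence $a^{q^t}=a$. Where you diverge is in the converse. The paper's proof is a one-line appeal to machinery: a root of $x^{q^t}-x$ lies in the splitting field of that polynomial, which \Cref{theorem:ForEveryPrimeThereIsAFFieldWithThatCardinality} identifies with $\fqt$. You instead run a degree count inside a fixed algebraic closure: $f(x)=x^{q^t}-x$ has at most $q^t$ roots, the forward direction already exhibits the $q^t$ distinct elements of $\fqt$ as roots, so the root set of $f$ is exactly $\fqt$ and any $a$ with $a^{q^t}=a$ must lie there. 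Your route is more self-contained (it needs only the bound on the number of roots of a polynomial over a field, not the existence-and-uniqueness theorem for finite fields), and your explicit fixing of the ambient algebraic closure is a point of care the paper glosses over; the paper's route is shorter because it delegates precisely this counting argument to the cited theorem, whose standard proof rests on the same coincidence $\deg f=|\fqt|$ that you isolate as the heart of the matter. Both are complete proofs.
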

\begin{proof}
The case $a=0$ is trivial.
If $a\neq 0$, then $a \in \fqtstar$ and therefore the multiplicative order of $a$ divides the group order $q^t-1$, hence
$a^{q^t-1}=1$; multiplying both sides by $a$ gives $a^{q^t}=a$.
Conversely, if $a$ satisfies $a^{q^t}=a$, then it is a root of $x^{q^t}-x$ and as such it belongs to its splitting field; by \Cref{theorem:ForEveryPrimeThereIsAFFieldWithThatCardinality}, this is $\fqt$.
\end{proof}

Generators of the multiplicative group of a finite field and their minimal polynomials are important in the area of finite fields and have been studied extensively in various contexts; see for example \cite[Chapter 4]{mullen2013handbook} for an overview of related topics.

\begin{definition}
{Primitive elements and polynomials}
{PrimitiveElement}
    For a prime power $q$, a generator of the cyclic group $\fqstar$ is a \emph{primitive element of $\fq$}.
    The minimal polynomial of a primitive element is a \emph{primitive polynomial}.
\end{definition}
    \index{Primitive element}
    \index{Element!primitive|see{Primitive element}}
    \index{Primitive polynomial}
    \index{Polynomial!primitive|see{Primitive polynomial}}
\begin{remark}
{Primitive polynomials are irreducible}
{PrimitivePolysAreIrreducible}
    Any primitive polynomial is also irreducible.
    Indeed, for a root $\a$ of a primitive polynomial of degree $t$, we have that $\fqt = \fq(\a)$, hence $f$ has to be irreducible.
\end{remark}

It follows from \Cref{definition:PrimitiveElement} that for primitive $\a\in\fq$, we have that 
\[\fqstar=\left\{ \a^i \mid i \in [0,q-2] \right\}.\] 
 
\begin{example}
{Construction of the finite fields with $4$ and $16$ elements}
{ConstructionFFourAndFSixteen}
For the  polynomial $f(x)=x^2+x+1 \in \ftwo[x]$ we have that $f(0)=f(1)=1$, so $f$ does not have a root in $\ftwo$ and therefore it is irreducible over $\ftwo$, as a polynomial of degree $2$ without roots.
Let $\a$ be a root of $f$ in $\ffour$.
From the above, we have that the field with $4$ elements satisfy
\begin{equation}
    \label{equation:ExampleConstructionOfFFour}
    \ffour
    =\ftwo(\a)
    =\left\{ c_0+c_1\a \mid c_0,c_1 \in \ftwo \right\}
    =\left\{ 0,1,\a,\a+1 \right\}.
\end{equation}
We note that since $\a$ is a root of $x^2+x+1$, we have that $\a^2+\a+1=0$, hence 
\[\a^2=-\a-1=\a+1,\] which we can use to
write a product of elements from $\ffour$ in the form $c_0+c_1\a$, $c_0, c_1 \in \ftwo$.
For example, the  product of $\a$ and $\a+1$ is given by 
\[\a(\a+1)=\a^2+\a=(\a+1)+\a=2\a+1=1.\]
Working this way, we create the multiplication table of $\ffour$, which we display along with the addition table below.
\[
\begin{array}[]{c|cccc}
+ & 0 & 1 & \a & \a+1\\
\hline
0&0&1&\a&\a+1\\
1&1&0&\a+1&\a\\
\a&\a&\a+1&0&1\\
\a+1&\a+1&\a&1&0
\end{array}
\quad
\quad
\quad
\begin{array}[]{c|cccc}
\cdot & 0 & 1 & \a & \a+1\\
\hline
0&0&0&0&0\\
1&0&1&\a&\a+1\\
\a&0&\a&\a+1&1\\
\a+1&0&\a+1&1&\a
\end{array}
\]

We also note that, since $\a^0=1$ and $\a^2=\a+1$, we have that
\[
    \left\{ \a^i \mid i \in [0,2] \right\}
=\left\{ 1,\a,\a+1 \right\} = \ffour^{\times},\]
hence $\a$ is a primitive element of $\ffour$ and $f$ is a primitive polynomial in $\ftwo[x]$.

We now construct $\f_{16}$ as an extension of $\ffour$.
Let $\beta$ be a root of $g(x)=x^2+x+\a \in \ffour[x]$.
We calculate that $g(0)=g(1)=\a$, and $g(\a)=g(\a+1)=\a+1$, therefore no element of $\ffour$ is a root of $g$ and hence $g$ is irreducible over $\ffour$, as a polynomial of degree $2$ without roots in $\ffour$.

Next, we show that $\beta$ is a primitive element of $\f_{16}$.
Since $\beta$ is a root of $g$, we have that $\beta^2+\beta+\a=0$, which means that $\beta^2=\beta+\a$.
Then,
\[\beta^3
    =\beta\beta^2
    =\beta(\beta+\a)
    =\beta^2 +\a\beta
    = \beta(\a+1)+\a
= \a + (\a+1)\beta.\]
Also,
\[\beta^5
    =\beta^2\beta^3
=(\beta+\a)(\a+(\a+1)\beta).\]
Expanding the last product and working out the powers of $\alpha$ and $\beta$ as before, we find that $\beta^5=\a$.
Since $|\f_{16}^{\times}|=15=3\cdot5$ and the multiplicative order of $\beta$ is neither $3$ or $5$, then it must be $15$.
This means that $\beta$ is a generator of $\f_{16}^{\times}$, that is, a primitive element of $\f_{16}$.
We present the first 15 powers of $\beta$ in \Cref{table:PowersOfBetaInExample}; we observe that every element of $\f_{16}^{\times}$ appears in the table.
\end{example}

\begin{table}
    \renewcommand{\arraystretch}{\genarraystretch}
    \centering
    \rowcolors{1}{\backgroundshade}{white}
    \[
    \begin{array}[]{c*{5}{!{\color{\backgroundshade}\vrule}c}}
    i&0&1&2&3&4\\
    \beta^i & 1& \beta& \a+\beta& \a+(\a+1)\beta& 1+\beta\\
    i&5&6&7&8&9\\
    \beta^i &\a& \a\beta &\a+1+\a\beta& (\a+1)+\beta& \a+\a\beta\\
    i&10&11&12&13&14\\
    \beta^i & \a+1& (\a+1)\beta& 1+(\a+1)\beta& 1+\a\beta&(\a+1)+(\a+1)\beta
    \end{array}
    \]
    \caption[The powers of a primitive element of $\f_{16}$]{The first 15 powers of the primitive element $\beta \in \f_{16}$
described in \Cref{example:ConstructionFFourAndFSixteen}.
These are also precisely the nonzero elements of $\f_{16}$.
}
\label{table:PowersOfBetaInExample}
\end{table}

For a positive integer $n$, \emph{Euler's function} $\phi(n)$ is the number of integers $k\in [1,n]$, with $\gcd(k,n)=1$.
\index{Euler's function}
\index{Function!Euler's|see{Euler's function}}
The following lemma follows from well-known facts in group theory.
\begin{lemma}
    {Number of primitive elements in $\fq$}
    {NumberOfPrimitiveElementsOfFqm}
    Let $q$ be a prime power, and $\a\in \fq$ be primitive.
    There exist exactly $\phi(q-1)$ primitive elements in $\fq$, given by $\a^i$, $i \in [0, q-2]$ such that $\gcd(i,q-1)=1$.
\end{lemma}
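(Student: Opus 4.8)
The plan is to reduce the statement to a purely group-theoretic count in the cyclic group $\fqstar$, which by \Cref{theorem:MultiplicativeGroupOfFQIsCyclic} has order $q-1$. Since $\a$ is primitive, \Cref{definition:PrimitiveElement} tells us that $\a$ generates $\fqstar$, so every element of $\fqstar$ is uniquely of the form $\a^i$ for some $i \in [0,q-2]$. The whole problem therefore becomes: for which exponents $i$ is $\a^i$ again a generator of a cyclic group of order $q-1$?

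First I would recall the standard fact that in a finite cyclic group of order $m$ generated by $g$, the element $g^i$ has order $m/\gcd(i,m)$. Applying this with $m=q-1$ and $g=\a$, the order of $\a^i$ equals $(q-1)/\gcd(i,q-1)$. Hence $\a^i$ has order $q-1$ — equivalently, $\a^i$ is itself a generator of $\fqstar$, i.e.\ a primitive element — precisely when $\gcd(i,q-1)=1$. This identifies the primitive elements with those powers $\a^i$, $i \in [0,q-2]$, for which $i$ is coprime to $q-1$, which is exactly the parametrization asserted in the statement.

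It remains to count these exponents. By the definition of \emph{Euler's function} given just above the lemma, the number of integers $k \in [1,q-1]$ with $\gcd(k,q-1)=1$ is $\phi(q-1)$. The range $[1,q-1]$ differs from $[0,q-2]$ only in that it replaces the endpoint $0$ by $q-1$; but $\gcd(0,q-1)=q-1 \neq 1$ for $q\geq 3$, and $\gcd(q-1,q-1)=q-1\neq 1$ as well, so neither endpoint is counted and the two ranges yield the same tally $\phi(q-1)$. (The degenerate case $q=2$, where $q-1=1$ and the single primitive element $\a=1$ corresponds to $i=0$ with $\gcd(0,1)=1$, is checked directly and also gives $\phi(1)=1$.) This establishes both that there are exactly $\phi(q-1)$ primitive elements and that they are the claimed powers of $\a$.

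The argument is routine, so there is no substantial obstacle; the only point requiring a little care is the bookkeeping about which residue range is used for the exponents and the verification that the order-of-a-power formula is being applied to a group that is genuinely cyclic of order $q-1$ — both of which are secured by \Cref{theorem:MultiplicativeGroupOfFQIsCyclic} and the primitivity of $\a$.
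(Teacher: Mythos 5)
Your proof is correct and is exactly the standard group-theoretic argument (order of $\a^i$ equals $(q-1)/\gcd(i,q-1)$ in the cyclic group $\fqstar$) that the paper itself invokes without writing out, since it states the lemma "follows from well-known facts in group theory." Your additional care with the exponent range $[0,q-2]$ versus $[1,q-1]$ and the case $q=2$ is sound and only makes explicit what the paper leaves implicit.
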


\subsubsection{The trace function}
An important function that we use extensively in this thesis, is defined as follows.

\begin{definition}
{The trace function over finite fields}
{TraceFunction}
    Let $q$ be a prime power, $t$ be a positive integer, and $a \in \fqt$.
    The \emph{trace of $a$ over $\fq$} is defined by
    \[
        \Tt(a) = a+a^q+\dots +a^{q^{t-1}}.
    \]
\end{definition}
\index{Element!trace\ of}
\index{Trace function}
\index{Function!trace|see{Trace function}}
We have that
$\Tt(a)\in \fq$ for all $a\in \fqt$
(see \cite[Section 2.3]{lidl1997finite}).
Furthermore, by \Cref{lemma:FFIsAVSpaceOverSubfield} we have that $\fqt$ is a vector space over $\fq$, and hence we can view the trace as a vector space mapping defined by

\begin{equation}
    \label{equation:TraceIsAVSpaceMapping}
    \begin{array}[]{ll}
        \Tt : & \fqt  \longrightarrow \fq \\
              &x \mapsto x+x^q+\dots+x^{q^{t-1}}.
    \end{array}
\end{equation}

\begin{theorem}
    {Properties of the trace function \cite[Theorem 2.23]{lidl1997finite}}
    {PropertiesOfTrace}
    For a prime power $q$ and positive integer $t$, the trace $\Tt$ satisfies the following properties:
    \begin{enumerate}
        \item $\Tt$ is a linear transformation from $\fqt$ onto $\fq$, where both $\fqt$ and $\fq$ are viewed as vector spaces over $\fq$;
            \label{item:TraceIsLinearTransformation}
        \item $\Tt(a)=ta$, for all $a\in\fq$;
        \item $\Tt(a^q)=\Tt(a)$, for all $a\in\fqt$.
    \end{enumerate}
    \index{Trace function!properties of}
\end{theorem}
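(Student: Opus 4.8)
The plan is to verify the three properties directly from the definition $\Tt(a)=a+a^q+\cdots+a^{q^{t-1}}$, handling the purely algebraic identities first and isolating surjectivity as the one genuinely non-formal step. Throughout I may use the fact, recorded just before the theorem, that $\Tt$ maps $\fqt$ into $\fq$.

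First I would establish additivity. Since $q=p^m$ for the characteristic $p$ of $\fq$, the Frobenius identity $(x+y)^p=x^p+y^p$ iterates to give $(a+b)^{q^i}=a^{q^i}+b^{q^i}$ for every $i$; summing over $i=0,\dots,t-1$ yields $\Tt(a+b)=\Tt(a)+\Tt(b)$. For $\fq$-homogeneity, note that any $c\in\fq$ satisfies $c^q=c$ (this is the case $t=1$ of \Cref{lemma:PowerQEqualsElementMeansBelongingToFQ}), hence $c^{q^i}=c$ for all $i$, so $(ca)^{q^i}=c\,a^{q^i}$ and therefore $\Tt(ca)=c\,\Tt(a)$. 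Combined with the codomain fact above, this shows $\Tt$ is an $\fq$-linear map into $\fq$.

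Properties (2) and (3) are then short index manipulations. For $a\in\fq$ I again use $a^{q^i}=a$ for all $i$, so $\Tt(a)=\sum_{i=0}^{t-1}a=ta$. For (3), I would write $\Tt(a^q)=\sum_{i=0}^{t-1}a^{q^{i+1}}=\sum_{j=1}^{t}a^{q^j}$ and then invoke \Cref{lemma:PowerQEqualsElementMeansBelongingToFQ} to replace $a^{q^t}$ by $a=a^{q^0}$; this reindexes the sum back to $\Tt(a)$, showing the trace is invariant under the Frobenius map $a\mapsto a^q$.

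The main obstacle is surjectivity. Because $\fq$ is one-dimensional over itself, the image of the linear map $\Tt$ is either $\{0\}$ or all of $\fq$, so it suffices to show $\Tt$ is not identically zero. Here I would argue by polynomial degree: the expression $x+x^q+\cdots+x^{q^{t-1}}$ is a nonzero polynomial of degree $q^{t-1}$, which can have at most $q^{t-1}$ roots in $\fqt$; since $|\fqt|=q^t>q^{t-1}$, it cannot vanish at every element of $\fqt$. Hence $\Tt$ is a nonzero, and therefore surjective, $\fq$-linear map onto $\fq$, completing property (1).
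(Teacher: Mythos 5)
Your proof is correct and complete: additivity via the iterated Frobenius identity, $\fq$-homogeneity via $c^q=c$, surjectivity by noting the image is an $\fq$-subspace of the one-dimensional space $\fq$ that is nonzero because the polynomial $x+x^q+\cdots+x^{q^{t-1}}$ of degree $q^{t-1}$ cannot vanish on all $q^t$ elements of $\fqt$, and the two index manipulations for properties (2) and (3) all go through exactly as you write them. The thesis itself supplies no proof of this statement --- it is quoted from Lidl and Niederreiter --- and your argument is precisely the standard one given in that reference, so there is nothing to reconcile.
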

\begin{example}
{}
{CalculateSomeTraces}
    For $\a$ and $\beta$ as in \Cref{example:ConstructionFFourAndFSixteen}, we calculate
    $\T_{16/2}(\a\beta)$ and $\T_{16/4}(\a\beta)$.
    Using \Cref{table:PowersOfBetaInExample} we calculate
    \begin{align*}
        \T_{16/2}(\a\beta)
        &=
        \a\beta+ (\a\beta)^2+ (\a\beta)^{2^2}+ (\a\beta)^{2^3}
        \\&=
        \beta^6+ \beta^{12}+ \beta^{24}+ \beta^{48}
        \\&=
        \beta^6+ \beta^{12}+ \beta^{9}+ \beta^{3}
        \\&=
        \a\beta+1+(\a+1)\beta+\a+\a\beta+\a+(\a+1)\beta
        \\&= 1,
    \end{align*}
    and similarly we can calculate
    \[
        \T_{16/4}(\a\beta)=\a.
    \]
    As expected from \Cref{theorem:PropertiesOfTrace}, we observe in the above that $\Tt(a) \in \fq$, for all $a \in \fqt$.
\end{example}

The fact that the trace defines a linear transformation over $\fq$ implies the following proposition.

\begin{proposition}
    {\cite[Corollary 5.1]{golomb2005signal}}
    {TraceDefinesQMinusOneToOneMapping}
    Let $q$ be a prime power, and $t$ be a positive integer.
    Then, the mapping defined in \Cref{equation:TraceIsAVSpaceMapping} is a $(q-1)$-to-$1$ mapping.
    Furthermore, for every $c\in\fq$ and $\beta\in\fqt$, the equation $\Tt(\beta x)=c$ has exactly $q^{t-1}$ solutions $x\in\fqt$.
\end{proposition}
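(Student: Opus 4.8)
The plan is to derive both assertions from the single fact, recorded in \Cref{theorem:PropertiesOfTrace}, that $\Tt$ is a surjective $\fq$-linear transformation from the $t$-dimensional $\fq$-vector space $\fqt$ onto the one-dimensional space $\fq$. Everything reduces to counting the fibers of this linear map, together with a change of variable for the second part.

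First I would apply the rank--nullity theorem to $\Tt$. Since $\Tt$ is onto $\fq$, its image has dimension $1$, so its kernel is a subspace of $\fqt$ of dimension $t-1$ and therefore contains exactly $q^{t-1}$ elements. For any $c\in\fq$, the preimage $\Tt^{-1}(c)$ is either empty or a coset of $\ker\Tt$; surjectivity rules out the empty case, so each fiber is a coset and hence has precisely $|\ker\Tt| = q^{t-1}$ elements. This shows that every value of $\Tt$ is attained the same number of times, which establishes the first assertion.

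For the second assertion I would reduce to the first one via a substitution. Taking $\beta\neq 0$, multiplication by $\beta$ is a bijection of $\fqt$ onto itself because $\beta$ is invertible in the field. Setting $y=\beta x$ transforms the equation $\Tt(\beta x)=c$ into $\Tt(y)=c$, whose solution set has $q^{t-1}$ elements by the first part; since $x\mapsto\beta x$ is a bijection, the number of solutions $x$ equals the number of solutions $y$, namely $q^{t-1}$.

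The argument is essentially routine once linearity and surjectivity are in hand, so I do not anticipate a genuine obstacle. The only point that demands care is the degenerate case $\beta=0$ in the second statement: there $\Tt(\beta x)=\Tt(0)=0$ for every $x$, so the number of solutions is $q^t$ when $c=0$ and $0$ otherwise, rather than $q^{t-1}$. Thus the clean count holds for $\beta\in\fqtstar$, and I would state this restriction explicitly.
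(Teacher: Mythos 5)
The thesis itself contains no proof of this proposition: it is imported by citation from Golomb and Gong, so there is no in-paper argument to compare yours against. Judged on its own, your argument is correct and is the standard one: surjectivity of the trace is supplied by \Cref{theorem:PropertiesOfTrace}, rank--nullity gives $\dim\ker\Tt=t-1$ and hence $\lvert\ker\Tt\rvert=q^{t-1}$, every fiber of a surjective linear map is a coset of the kernel, and the change of variable $y=\beta x$ (a bijection of $\fqt$ for $\beta\neq 0$) settles the second assertion.

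Two points of precision. First, what you actually prove is that $\Tt$ is a $q^{t-1}$-to-$1$ mapping, whereas the statement literally says $(q-1)$-to-$1$; these differ, and the literal version is impossible on cardinality grounds (the fibers of a surjection of $\fqt$ onto $\fq$ must have total size $q^t$, not $q(q-1)$, except in the degenerate case $q=2$, $t=1$). The ``$(q-1)$'' is evidently a typo for $q^{t-1}$ --- this is the reading forced by the second assertion and by how the proposition is used later, e.g.\ to conclude $\lvert\ker(\Tt)\rvert=q^{t-1}$ in the proofs of \Cref{lemma:SameCoverageOfPrimitiveElements,lemma:SameZerosThenGammInFqAndLOutSide} --- but you pass from one to the other silently when you write that your fiber count ``establishes the first assertion''; you should state explicitly that you are proving the corrected statement. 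Second, your caveat about $\beta=0$ is well taken and identifies a genuine imprecision in the statement as quoted: for $\beta=0$ the equation $\Tt(\beta x)=c$ has $q^t$ solutions when $c=0$ and none otherwise, so the hypothesis must be $\beta\in\fqtstar$, which is also how the result is actually invoked in the thesis.
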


\subsection{Characters}
Characters are an important class of group homomorphisms.
For groups related to finite fields, they are powerful tools which have been studied extensively and used in many applications \cite[Chapter 6]{mullen2013handbook}.

\subsubsection{Characters of arbitrary groups}
\begin{definition}
    {Group character and related notions}
    {Character}
    A \emph{character} of a finite Abelian group $G$ is a homomorphism from $G$ into the multiplicative group $\mathbb{C}^*$ of complex roots of unity.
    The set of all characters of $G$ is denoted $\widehat{G}$.
    The \emph{trivial character} $\chi_{0}$ is defined by $\chi_0(g)=1_G$, for all $g\in G$; all other characters are referred to as \emph{non-trivial}.
    For each character $\chi$ there is a conjugate character $\overline{\chi}$ defined by $\overline{\chi}(g) = \overline{\chi(g)}$ for all $g \in G$.
\end{definition}
    \index{Character}
    \index{Character!multiplicative}
    \index{Character!additive}
    \index{Character!trivial}
    \index{Character!non-trivial}
    \index{Character!conjugate}
    \index{Character!group}

Let $G$ be a finite Abelian group, and $\chi_1, \dots, \chi_n \in \widehat{G}$.
The \emph{product of $\chi_1, \dots, \chi_n$} is the character defined by
\[(\chi_1 \cdots \chi_n)(g)=\chi_1(g)\cdots\chi_n(g),\]
and $\widehat{G}$ is a group under this multiplication.
If $\chi_1, \dots, \chi_n$ are the same character $\chi$, then we write $\chi_1 \cdots \chi_n = \chi^n$.
When $G$ is cyclic, we can describe the characters of $\widehat{G}$ as
follows.

\begin{proposition}
    {Characters of a finite cyclic group}
    {DescriptionOfCharactersOfAFiniteCyclicGroup}
    Let $G$ be a finite cyclic group with a generator $g$.
    We consider the mapping on $G$ defined by
    \begin{equation}
        \label{equation:GeneratorOfMultiplicativeGroup}
        \chi(g^k)
        = e^{\frac{2\pi ik}{|G|}}, \quad k \in \Z,
    \end{equation}
    so that, for an integer $j$, we have
    \begin{equation}
        \label{equation:CharactersOfMultiplicativeGroup}
        \chi^j(g^k)= e^{\frac{2\pi ijk}{|G|}}, \quad k\in \Z.
    \end{equation}
    Then, $\widehat{G}$ is a cyclic group of order $|G|$, and
    \[\widehat{G}=\left\{ \chi^j \mid j \in [0, |G|-1]\right\}.\]
\end{proposition}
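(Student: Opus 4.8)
The plan is to set $n=|G|$ and to verify, in three stages, that the prescription in \Cref{equation:GeneratorOfMultiplicativeGroup} defines a character, that its powers are pairwise distinct, and that nothing else lies in $\widehat{G}$. First I would confirm that $\chi$ is well defined and multiplicative. Since $g$ has order $n$, an equality $g^k=g^{k'}$ forces $k\equiv k'\pmod{n}$; as the right-hand side of \Cref{equation:GeneratorOfMultiplicativeGroup} depends only on $k$ modulo $n$, the assignment is unambiguous. Multiplicativity is immediate from $e^{2\pi i(k+k')/n}=e^{2\pi ik/n}e^{2\pi ik'/n}$, and the image lies among the $n$-th roots of unity, so $\chi\in\widehat{G}$. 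Raising $\chi$ to the $j$-th power in $\widehat{G}$ then reproduces exactly \Cref{equation:CharactersOfMultiplicativeGroup}.

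Next I would pin down the order of $\chi$ in $\widehat{G}$. Because $g$ generates $G$, a power $\chi^j$ equals the trivial character $\chi_0$ precisely when $\chi^j(g)=1$, i.e.\ when $e^{2\pi ij/n}=1$, which holds if and only if $n\mid j$. Hence $\chi$ has order exactly $n$, the elements $\chi^0,\chi^1,\dots,\chi^{n-1}$ are pairwise distinct, and together they constitute the cyclic subgroup $\langle\chi\rangle\le\widehat{G}$ of order $n$.

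The crux — the one step requiring an idea rather than a routine computation, and thus the main obstacle — is to show that these powers exhaust $\widehat{G}$. Given an arbitrary $\psi\in\widehat{G}$, I would use that $\psi$ is completely determined by the single value $\psi(g)$, since every element of $G$ is a power of $g$. Evaluating at $g^n=1_G$ and using that $\psi$ is a homomorphism gives $\psi(g)^n=\psi(1_G)=1$, so $\psi(g)$ is an $n$-th root of unity and therefore equals $e^{2\pi ij/n}$ for some $j\in[0,n-1]$. Comparing values on each $g^k$ then yields $\psi=\chi^j$. This establishes $\widehat{G}=\{\chi^j\mid j\in[0,|G|-1]\}$, a set of exactly $n$ elements that coincides with $\langle\chi\rangle$, whence $\widehat{G}$ is cyclic of order $|G|$, as claimed.
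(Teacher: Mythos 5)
Your proof is correct and follows essentially the same route as the paper's: each $\chi^j$ is verified to be a character, and conversely any $\psi\in\widehat{G}$ is pinned down by its value $\psi(g)$, which must be an $|G|$-th root of unity, forcing $\psi=\chi^j$. You additionally spell out the well-definedness of $\chi$ and the pairwise distinctness of $\chi^0,\dots,\chi^{|G|-1}$ (needed to conclude the order of $\widehat{G}$ is exactly $|G|$), details the paper leaves implicit.
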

\begin{proof}
    For a fixed $j \in [0, |G|-1]$, the mapping in \Cref{equation:CharactersOfMultiplicativeGroup} is a character of $G$.
    Conversely, for any character $\psi \in \widehat{G}$ and any $g \in G$, $\psi(g)$ is a $|G|$-th root of unity and thus $\psi(g)=e^{2\pi ij/|G|}$, for some $j\in [0,|G|-1]$.
    Since $\psi$ is multiplicative and $g$ generates $G$, it follows that $\psi = \chi^j$, hence $\chi$ generates the cyclic group $\widehat{G}$.
\end{proof}

The following theorem is a classic result regarding character sums.
\begin{theorem}{Orthogonality relations for characters
    \cite[Section~5]{lidl1997finite}}
    {CharSumIsZeroClassicResult}
    Let $\psi$ be a character of a group $G$.
    Then
    \[ \sum_{g\in G}\psi(g)=
        \begin{cases}
            |G|& \text{ if } \psi \text{ is trivial;}\\
              0& otherwise.
        \end{cases}
    \]
   Furthermore, if $g \in G$ and $\widehat{G}$ is the group of all characters of $G$, we have
   \[ \sum_{\psi\in \widehat{G}}\psi(g)=
       \begin{cases}
         |G| & \text{ if } g=1_G;\\
           0 & otherwise.
       \end{cases}
   \]
\end{theorem}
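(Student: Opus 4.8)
The plan is to establish both statements with a single multiplicative averaging trick, reducing each ``$=0$'' case to the existence of a group element (respectively a character) on which a suitably chosen character (respectively element) acts non-trivially. Throughout I take $G$ to be finite Abelian, as in \Cref{definition:Character}.

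For the first sum, write $S=\sum_{g\in G}\psi(g)$. When $\psi$ is trivial every summand equals $1$, so $S=|G|$ immediately. When $\psi$ is non-trivial, I would pick some $h\in G$ with $\psi(h)\neq 1$ and exploit that multiplication by $h$ is a bijection of $G$: since $\psi$ is a homomorphism, $\psi(h)S=\sum_{g\in G}\psi(h)\psi(g)=\sum_{g\in G}\psi(hg)=S$, where the last equality reindexes the sum over the permuted group. Hence $(\psi(h)-1)S=0$, and as $\psi(h)\neq 1$ this forces $S=0$.

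For the second sum, set $T=\sum_{\psi\in\widehat{G}}\psi(g)$. If $g=1_G$, then every character sends $g$ to $1$, so $T=|\widehat{G}|$; here I would invoke $|\widehat{G}|=|G|$, which follows from \Cref{proposition:DescriptionOfCharactersOfAFiniteCyclicGroup} in the cyclic case and from the structure theorem for finite Abelian groups in general (decomposing $G$ into cyclic factors and forming products of the corresponding characters). If $g\neq 1_G$, I would run the exact dual of the first argument inside the group $\widehat{G}$: choose a character $\chi$ with $\chi(g)\neq 1$ and use that multiplication by $\chi$ permutes $\widehat{G}$ to obtain $\chi(g)T=\sum_{\psi\in\widehat{G}}(\chi\psi)(g)=T$, which again gives $(\chi(g)-1)T=0$ and hence $T=0$.

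The main obstacle is the existence of such a separating character in the case $g\neq 1_G$, that is, producing some $\chi\in\widehat{G}$ with $\chi(g)\neq 1$. For cyclic $G$ this is immediate from \Cref{proposition:DescriptionOfCharactersOfAFiniteCyclicGroup}, since the generating character $\chi$ there is faithful. For a general finite Abelian $G$ I would reduce to the cyclic case via the structure theorem: writing $g$ in coordinates relative to a cyclic decomposition, some coordinate is non-trivial, and the character acting as the corresponding cyclic character on that factor and trivially on the others separates $g$ from the identity. Everything else is the bijection and averaging bookkeeping, which is routine.
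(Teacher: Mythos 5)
Your proof is correct. Note that the paper does not actually prove this theorem; it states it as a classical result with a citation to Lidl and Niederreiter, and your argument — the translation/averaging trick for both sums, together with the structure theorem for finite Abelian groups to produce a separating character $\chi$ with $\chi(g)\neq 1$ and to justify $|\widehat{G}|=|G|$ — is precisely the standard proof found in that reference, so your approach agrees with the one the paper implicitly relies on.
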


\subsubsection{Characters of a finite field}

There are two kinds of characters associated to a finite field, corresponding to its additive and multiplicative groups; in this thesis we use exclusively characters of the latter kind.

\begin{definition}
    {Multiplicative character of a finite field}
    {MultiplicativeCharacterOfAFiniteField}
    For a prime power $q$, a character of the multiplicative group $\fqstar$ is a \emph{multiplicative character of $\fq$}.
    A multiplicative character $\chi$ of $\fq$ is extended to a function on $\fq$ by setting $\chi(0)=0$.
    The multiplicative character $\chi_0$ that maps every nonzero element to $1$ is the \emph{trivial multiplicative character of $\fq$}.
\end{definition}
    \index{Character!multiplicative}
    \index{Character!trivial}
    \index{Character!group}

    As per \Cref{theorem:MultiplicativeGroupOfFQIsCyclic}, the multiplicative group of a finite field is cyclic.
    Therefore, by \Cref{proposition:DescriptionOfCharactersOfAFiniteCyclicGroup}, we have the following characterization of multiplicative finite field characters.

\begin{corollary}
    {Characterization of multiplicative characters of a finite field}
    {DescriptionOfMultiplicativeCharactersOfAFiniteField}
    Let $q$ be a prime power, and $\a$ be a primitive element of $\fq$.
    We consider the mapping
    \begin{equation}
        \label{equation:GeneratorOfMultiplicativeCharactersOfAFF}
        \chi(\a^k)
        = e^{\frac{2\pi ik}{q-1}}, \quad k\in \Z,
    \end{equation}
    so that, for an integer $j$, we have
    \begin{equation}
        \label{equation:CharactersOfMultiplicativeCharacterGroupOfAFF}
        \chi^j(\a^k)
        = e^{\frac{2\pi ijk}{q-1}}, \quad k\in \Z.
    \end{equation}
    Then, the group $\widehat{\fqstar}$ of multiplicative characters of $\fq$ is a cyclic group of order $q-1$, and
    \[
        \widehat{\fqstar}
        =\left\{ \chi^j \mid j \in [0, q-2] \right\}.
    \]
\end{corollary}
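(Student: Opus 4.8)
The plan is to recognize this corollary as a direct specialization of \Cref{proposition:DescriptionOfCharactersOfAFiniteCyclicGroup}, which describes the character group of an arbitrary finite cyclic group. The entire task reduces to verifying that the multiplicative group of a finite field meets the hypotheses of that proposition and then matching up the data.

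First I would invoke \Cref{theorem:MultiplicativeGroupOfFQIsCyclic} to conclude that $\fqstar$ is a finite cyclic group. By \Cref{definition:PrimitiveElement}, a primitive element $\a$ of $\fq$ is precisely a generator of this cyclic group. Since $\fqstar = \fq \setminus \{0\}$ consists of the $q-1$ nonzero elements of $\fq$, we have $|\fqstar| = q-1$.

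With these identifications in hand, I would apply \Cref{proposition:DescriptionOfCharactersOfAFiniteCyclicGroup} with $G = \fqstar$, generator $g = \a$, and $|G| = q-1$. Under this substitution, the mapping $\chi(\a^k) = e^{2\pi i k/(q-1)}$ in \Cref{equation:GeneratorOfMultiplicativeCharactersOfAFF} is exactly the instance of \Cref{equation:GeneratorOfMultiplicativeGroup}, and likewise \Cref{equation:CharactersOfMultiplicativeCharacterGroupOfAFF} is the instance of \Cref{equation:CharactersOfMultiplicativeGroup}. The proposition then yields immediately that $\widehat{\fqstar}$ is cyclic of order $q-1$ and that $\widehat{\fqstar} = \{ \chi^j \mid j \in [0, q-2] \}$, which is the claim.

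Since this is a pure instantiation of an already-established general result, I expect no real obstacle. The only point worth flagging is a bookkeeping matter rather than a difficulty: the group under consideration is $\fqstar$ itself, so the convention in \Cref{definition:MultiplicativeCharacterOfAFiniteField} of extending a multiplicative character to all of $\fq$ by setting $\chi(0)=0$ plays no role in either the group structure of $\widehat{\fqstar}$ or in the count, and hence need not enter the argument.
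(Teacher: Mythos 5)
Your proposal is correct and coincides with the paper's own justification: the paper presents this corollary as an immediate consequence of \Cref{theorem:MultiplicativeGroupOfFQIsCyclic} (cyclicity of $\fqstar$) combined with \Cref{proposition:DescriptionOfCharactersOfAFiniteCyclicGroup}, instantiated with $G=\fqstar$, generator $\a$, and $|G|=q-1$, exactly as you do. Your remark that the extension $\chi(0)=0$ plays no role is also consistent with the paper, which likewise does not invoke it here.
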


We now look at products of multiplicative characters.
\begin{lemma}
{}
{directproductofcharacters}
Let $v$ be an integer with $v\geq 2$ and $j_0, j_1, \dots, j_{n-1} \in [1, v-1]$.
Then the mapping $X:( \fqstar )^n \rightarrow \mathbb{C}^*$ defined by 
\[
    X(x_1,\dots,x_n)=\prod_{i=0}^{n-1}\chi^{j_i}(x_i)
\]
is a nontrivial character of the group $( \fqstar )^n$, and 
\begin{equation}
    \label{equation:sumofproductofcharactersiszero}
    \sum_{x_0, \dots, x_{n-1} \in \fqstar} \prod_{i=0}^{n-1} \chi^{j_i}(x_i) = 0.
\end{equation}
\end{lemma}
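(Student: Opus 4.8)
The plan is to dispatch the two assertions of the lemma---that $X$ is a nontrivial character of $(\fqstar)^n$, and that the sum in \Cref{equation:sumofproductofcharactersiszero} vanishes---by reducing each to results already available in the excerpt. First I would check the homomorphism property directly: since each factor $\chi^{j_i}$ is a multiplicative character of $\fq$ in the sense of \Cref{corollary:DescriptionOfMultiplicativeCharactersOfAFiniteField}, hence multiplicative on $\fqstar$, the computation
\begin{align*}
X(x_0 y_0, \dots, x_{n-1} y_{n-1})
&= \prod_{i=0}^{n-1} \chi^{j_i}(x_i y_i)
= \prod_{i=0}^{n-1} \chi^{j_i}(x_i)\, \chi^{j_i}(y_i) \\
&= X(x_0, \dots, x_{n-1})\, X(y_0, \dots, y_{n-1})
\end{align*}
shows that $X$ is a character of the direct product $(\fqstar)^n$; this is just the standard fact that a coordinatewise product of characters of the factors is a character of the product group.

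Next I would establish nontriviality and then the character sum. For nontriviality I would use the hypothesis $j_i \in [1, v-1]$, which guarantees that each $\chi^{j_i}$ is distinct from the trivial character $\chi_0$ (no index in this range is a multiple of the order of $\chi$). Consequently there is some $x_0 \in \fqstar$ with $\chi^{j_0}(x_0) \neq 1$, and evaluating $X$ at $(x_0, 1, \dots, 1)$ yields a value different from $1$, so $X$ is non-trivial by \Cref{definition:Character}. The vanishing of the sum is then immediate: applying the first orthogonality relation of \Cref{theorem:CharSumIsZeroClassicResult} to the group $G = (\fqstar)^n$ and the nontrivial character $X$ gives $\sum_{g \in G} X(g) = 0$, and the left-hand side of \Cref{equation:sumofproductofcharactersiszero} is precisely this sum expanded over the coordinates. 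As a cross-check, one can instead factor the sum as $\prod_{i=0}^{n-1} \bigl( \sum_{x_i \in \fqstar} \chi^{j_i}(x_i) \bigr)$ using the independence of the coordinates and apply \Cref{theorem:CharSumIsZeroClassicResult} to each inner sum over $\fqstar$, each of which vanishes because $\chi^{j_i}$ is a nontrivial character.

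I expect the only genuinely delicate point---and thus the main obstacle, albeit a mild one---to be the verification that the range $j_i \in [1, v-1]$ really forces $\chi^{j_i} \neq \chi_0$. This depends on the precise meaning of $\chi$ and on the relationship between $v$ and $q$ fixed earlier in the chapter; one must confirm that no index in $[1, v-1]$ is a multiple of the order of $\chi$, which is exactly where the bound $v-1$ (rather than $q-2$) enters. Once that is settled, the homomorphism property and the reduction of the multi-index sum to the orthogonality relation are routine consequences of the definitions.
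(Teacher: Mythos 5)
Your proposal is correct and follows essentially the same route as the paper: both show that $X$ is a nontrivial character of $(\fqstar)^n$ and then conclude via the orthogonality relation of \Cref{theorem:CharSumIsZeroClassicResult}. The only cosmetic difference is how nontriviality is verified---the paper notes that the order of $X$ is $\lcm(\ord(\chi^{j_0}),\dots,\ord(\chi^{j_{n-1}}))>1$, whereas you evaluate $X$ at a tuple of the form $(x_0,1,\dots,1)$ with $\chi^{j_0}(x_0)\neq 1$; these are equivalent, and your flagged concern about $j_i\in[1,v-1]$ forcing $\chi^{j_i}\neq\chi_0$ is exactly the (implicit) hypothesis the paper also relies on.
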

\begin{proof}
    We have that $X$ is a homomorphism whose order is $\lcm(\ord(\chi^{j_0}),\dots, \ord(\chi^{j_{n-1}}))$, which is greater than $1$ since the orders of $\chi^{j_i}$, $i\in [0,n-1]$ are all greater than $1$.
    Thus $X$ is a non-trivial character, and \Cref{equation:sumofproductofcharactersiszero} follows from \Cref{theorem:CharSumIsZeroClassicResult}.
\end{proof}

Finally, we introduce Jacobi sums and a related important result.

\index{Jacobi sum}
\begin{definition}{Jacobi sum}{JacobiSum}
    For $\chi_1, \dots, \chi_n$ multiplicative characters on $\fq$, the \emph{Jacobi sum $J(\chi_1, \dots, \chi_n)$} over $\fq$ is defined by
    \[
      \J(\chi_1, \dots, \chi_n)
      = \sum_{\substack{ s_1, \dots, s_n \in \fq\\ s_1+\dots+s_n=1}}
        \chi_1(s_1)\dots \chi_n(s_n).\]
\end{definition}

\begin{theorem}
{Absolute value of Jacobi sum {\cite[Theorem 6.1.38]{mullen2013handbook}}}
{JacobiSumBound}
    Let $\chi_1, \dots, \chi_n$ be characters of $\fqstar$ such that $\chi_1, \dots, \chi_n$, and $\prod_{i=1}^n \chi_i$ are all nontrivial.
    Then,
    \[|\J(\chi_1, \dots, \chi_n)|=q^{\frac{n-1}{2}}.\]
\end{theorem}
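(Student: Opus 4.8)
The plan is to relate the Jacobi sum to Gauss sums, a tool not yet introduced in the excerpt but natural here. I first fix a nontrivial additive character $\psi$ of $\fq$ and define the Gauss sum $G(\chi) = \sum_{c \in \fqstar}\chi(c)\psi(c)$. The whole argument then rests on two facts: (i) $|G(\chi)| = \sqrt{q}$ for every nontrivial multiplicative character $\chi$, and (ii) the factorization
\[G(\chi_1)\cdots G(\chi_n) = \J(\chi_1, \dots, \chi_n)\, G(\chi_1 \cdots \chi_n),\]
valid whenever each $\chi_i$ and the product $\chi_1 \cdots \chi_n$ are nontrivial. Granting these, I take absolute values and apply (i) to each of the $n$ factors on the left and to $\chi_1 \cdots \chi_n$ on the right, which immediately gives $|\J(\chi_1, \dots, \chi_n)| = (\sqrt q)^n / \sqrt q = q^{(n-1)/2}$, as claimed.

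To establish (i), I would compute $|G(\chi)|^2 = G(\chi)\overline{G(\chi)}$ by expanding it as a double sum over $c, c' \in \fqstar$, substituting $c = c'd$, and isolating the inner sum over $c'$. The orthogonality relations of \Cref{theorem:CharSumIsZeroClassicResult}, applied to the additive group, collapse that inner sum to $q-1$ when $d = 1$ and to $-1$ otherwise; summing $\chi(d)$ against these weights and using $\sum_{d \in \fqstar}\chi(d) = 0$ (again \Cref{theorem:CharSumIsZeroClassicResult}, since $\chi$ is nontrivial) yields $|G(\chi)|^2 = q$.

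For the factorization (ii), I would expand $G(\chi_1)\cdots G(\chi_n)$ as a single sum over $(c_1, \dots, c_n)$ weighted by $\psi(c_1 + \cdots + c_n)$ and group the terms by the value $s = c_1 + \cdots + c_n$. On the locus $s \neq 0$ the substitution $c_i = s t_i$ factors each term as $(\chi_1\cdots\chi_n)(s)\,\chi_1(t_1)\cdots\chi_n(t_n)$ with $t_1 + \cdots + t_n = 1$, reproducing exactly $G(\chi_1\cdots\chi_n)\,\J(\chi_1,\dots,\chi_n)$. The crux — and the step I expect to be the main obstacle — is showing that the locus $s = 0$ contributes nothing; here the hypothesis that $\chi_1 \cdots \chi_n$ is nontrivial is essential. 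Rescaling $c_i \mapsto \lambda c_i$ by any $\lambda \in \fqstar$ preserves the constraint $\sum c_i = 0$ but multiplies the sum by $(\chi_1\cdots\chi_n)(\lambda)$, so a nontrivial product character forces this contribution to vanish. This homogeneity argument, together with careful bookkeeping of which terms survive the regrouping, is the delicate part; the remaining manipulations are routine once the two Gauss-sum facts are in hand.
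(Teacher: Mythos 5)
The paper never proves this statement---it is imported verbatim by citation to the Handbook of Finite Fields---so there is no internal proof to compare yours against; what you have written is correct, complete, and is in fact the classical argument for this result. Both of your Gauss-sum facts are established soundly. For (i), the expansion of $|G(\chi)|^2$ as a double sum, the substitution $c = c'd$, and \Cref{theorem:CharSumIsZeroClassicResult} applied to the additive group of $\fq$ (the inner sum equals $q-1$ when $d=1$ and $-1$ otherwise, since the full additive character sum vanishes) give $|G(\chi)|^2 = (q-1) - \sum_{d\neq 1}\chi(d) = q$, exactly as you describe. For (ii), the regrouping by $s = c_1 + \cdots + c_n$ is valid: for $s \neq 0$ the substitution $c_i = s t_i$ is a bijection onto tuples in $(\fqstar)^n$ summing to $1$, producing $\J(\chi_1,\dots,\chi_n)\,G(\chi_1\cdots\chi_n)$, and your homogeneity argument on the $s=0$ stratum is the right mechanism---that partial sum is invariant under $c_i \mapsto \lambda c_i$ yet gets multiplied by $(\chi_1\cdots\chi_n)(\lambda)$, so nontriviality of the product character forces it to vanish. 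Taking absolute values then uses both hypotheses exactly where they are needed: nontriviality of each $\chi_i$ for the $n$ factors on the left, and nontriviality of the product both for the $s=0$ vanishing and for $|G(\chi_1\cdots\chi_n)| = \sqrt{q}$ on the right. One bookkeeping point worth stating explicitly in a final write-up: with the paper's convention $\chi(0)=0$ from \Cref{definition:MultiplicativeCharacterOfAFiniteField}, the Jacobi sum of \Cref{definition:JacobiSum}, though indexed by all $s_i \in \fq$ with $s_1 + \cdots + s_n = 1$, receives contributions only from tuples in $(\fqstar)^n$, which is precisely what makes it coincide with the sum your substitution produces.
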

\index{Jacobi sum!absolute value of}

\subsection{Sequences over finite fields}
In this section we give the background required in this thesis about sequences over finite fields.
For a comprehensive presentation we refer the reader to \cite[Chapter 5]{lidl1997finite}, as well as \cite{golomb2005signal} for a textbook on the subject.

\subsubsection{LFSR sequences and their minimal polynomials}

\begin{definition}{(Ultimately) periodic sequence}{PeriodicSequence}
    Let $(s_i)_{i\geq 0}$ be a sequence of elements from a finite set.
    If there exist integers $r,u$ such that $r>0, u\geq 0$ and $s_{i+r} = s_i$, for all $i \geq u$, then the sequence is \emph{ultimately periodic}, with \emph{period} $r$.
    The smallest $r$ with that property is the \emph{least period} of the sequence.
    If $u=0$, the sequence is just called \emph{periodic}.
\end{definition}
    \index{Sequence!periodic}
    \index{Sequence!ultimately\ periodic}

\begin{definition}
{Linear feedback shift register (LFSR) sequence}
{LFSRSequence}
    Let $t$ be a positive integer and let $I=(c_0, c_1, \dots, c_{t-1})$ be a fixed $t$-tuple of elements in $\fq$.
    A sequence $\bfs=(s_0, s_1, \dots)$ of elements in $\fq$ satisfying the homogeneous linear recurrence relation
    \[
        s_{j+t}
        = \sum_{i=0}^{t-1} c_i s_{j+i}, \quad j =0, 1, \dots
    \]
    is a \emph{linear feedback shift register (LFSR) sequence over $\fq$}.
    The $t$-tuple $I$ is the \emph{initial state vector} of $\bfs$, the integer $t$ is its \emph{order}, and the polynomial
    \[f(x)=x^t- c_{t-1}x^{t-1} - \dots - c_1 x- c_0  \in \fq[x]\]
    is a \emph{characteristic polynomial} of $\bfs$.
    We say that $\bfs$ \emph{is generated by $f$ with initial state $I$}.
    If $\bfs$ is generated by $f$ with some initial state, then we simply say that \emph{$f$ generates $\bfs$}.
\end{definition}
\index{Sequence!LFSR|see{LFSR sequence}}
\index{Sequence!linear\ feedback\ shift\ register|see{LFSR sequence}}
\index{LFSR sequence!initial state vector of}
\index{LFSR sequence!characteristic polynomial of}
\index{LFSR sequence!order of}

In the following, we denote by $\overline{(s_0, \dots, s_{r-1})}$ the infinite periodic sequence with period $r$, whose first $r$ elements are $s_0, \dots, s_{r-1}$, that is,  the sequence $(s_0, \dots, s_{r-1}, s_0, \dots, s_{r-1}, \dots).$

\begin{example}
{}
{SomeLinearSequencesOverFtwo}
    The linear recurrence relation in $\ftwo$ given by
    \begin{equation}
    \label{equation:LinearReccurenceRelationExample}
        s_{i+3}=s_{i+2}+s_{i+1}+s_i, \quad i=0, 1, \dots
    \end{equation}
     generates the following sequences.
     \begin{itemize}
         \item
             The zero sequence
             $\overline{(0)}=(0,0,\dots)$, with
             initial state $(0,0,0)$;
         \item
             the constant sequence
             $\overline{(1)}=(1,1,\dots)$, with
             initial state $(1,1,1)$;
         \item
             the sequence
             $\overline{(0,0,1,1)}$, with
             initial state $(0,0,1)$, and
         \item
             the sequence
             $\overline{(1,0)}$, with
             initial state $(1,0,1)$.
     \end{itemize}
     All other initial states generate one of the sequences above, possibly after a shift.
     For example, the initial state $(1,1,0)$ yields the sequence $\overline{(1,1,0,0)}$, which is equal to the sequence constructed by $\overline{(0,0,1,1)}$ with the first two elements removed.
     We conclude that the polynomial $x^3-x^2-x-1=x^3+x^2+x+1\in\ftwox$ generates the sequences $\overline{(0)}$, $\overline{(1)}$, $\overline{(1,1,0,0)}$, $\overline{(0,0,1,1)}$, and $\overline{(1,0)}$ among others, so it is a characteristic polynomial for all of them.
     On the other hand, we observe that the sequence $\overline{(1,0)}$ is also
     generated by $x^2+1$ with initial state $(1,0)$, therefore both $x^3+x^2+x+1$ and $x^2+1$ are characteristic polynomials of the sequence $\overline{(1,0)}$ over $\ftwo$.
\end{example}

It follows from \Cref{example:SomeLinearSequencesOverFtwo} that a characteristic polynomial of an LFSR sequence is not unique.
However, for any LFSR sequence, a unique polynomial associated to it does exist; this is described in the following result, along with one of its most important properties.

\begin{theorem}
{\cite[Theorem 4.5 and Lemma 4.2]{golomb2005signal}}
{UniquenessOfMinimalPolynomial}
    Let $\bfs$ be an LFSR sequence over $\fq$, and $f\in \fqx$ be a characteristic polynomial of $\bfs$ that has the lowest degree.
    Then, the following statements hold:
    \begin{enumerate}
        \item The polynomial $f$ is unique.
        In other words, if $g\in\fqx$ is a monic polynomial of the lowest degree that generates $\bfs$,  then we have that $g=f$.
        \item A polynomial $h\in \fqx$ generates $\bfs$ if and only if it is divisible by $f$.
    \end{enumerate}
\end{theorem}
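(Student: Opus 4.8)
The plan is to recognize the polynomials that ``generate'' $\bfs$ as (essentially) an ideal of the principal ideal domain $\fqx$, and then read off both claims from the structure of that ideal. Concretely, I would introduce the shift operator $E$ on sequences over $\fq$, defined by $(E\bfs)_i = s_{i+1}$, and extend it to polynomials so that for $h(x)=\sum_j a_j x^j$ the operator $h(E)$ acts by $h(E)\bfs = \sum_j a_j E^j\bfs$. With the sign convention in the definition of a characteristic polynomial, a monic polynomial $h$ generates $\bfs$ exactly when $h(E)\bfs$ is the zero sequence: indeed $(f(E)\bfs)_j = s_{j+t} - c_{t-1}s_{j+t-1} - \dots - c_0 s_j$, which vanishes for all $j$ precisely when the recurrence holds. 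This translates the combinatorial recurrence condition into a purely algebraic annihilation condition, which is the crux of the argument.

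Next I would set $J = \{\, h \in \fqx : h(E)\bfs = \mathbf{0} \,\}$ and verify that $J$ is an ideal of $\fqx$. Closure under addition is immediate from the linearity of $h \mapsto h(E)$. The essential point --- and the step I expect to require the most care --- is closure under multiplication by arbitrary polynomials: since powers of the shift commute, we have $(gh)(E) = g(E)\,h(E)$ as operators on sequences, so if $h \in J$ then $(gh)(E)\bfs = g(E)\big(h(E)\bfs\big) = g(E)\mathbf{0} = \mathbf{0}$, giving $gh \in J$. One must check that the operator identity $(gh)(E)=g(E)h(E)$ genuinely holds on infinite sequences, which amounts to nothing more than the associativity and commutativity of composing shifts, but it is the linchpin that makes $J$ an ideal rather than merely an additive subgroup.

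Since $\fqx$ is a principal ideal domain, $J = (f_0)$ for a unique monic polynomial $f_0$, and every nonzero element of $J$ is a multiple of $f_0$, hence has degree at least $\deg f_0$. I would then observe that $f_0$ is itself a monic characteristic polynomial of minimal degree: being monic and lying in $J$, it generates $\bfs$, and any monic $h$ generating $\bfs$ lies in $J$, so $f_0 \mid h$ and $\deg h \geq \deg f_0$. This yields part~(1): every monic lowest-degree generator $g$ satisfies $g \in J$, so $f_0 \mid g$ with $\deg g = \deg f_0$, and comparing monic polynomials of equal degree forces $g = f_0$; in particular the given $f$ equals $f_0$, so the minimal polynomial is unique. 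Part~(2) is then immediate from $J = (f)$: a polynomial annihilates $\bfs$ if and only if it belongs to $J$, i.e.\ if and only if it is divisible by $f$.

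Finally, I would attend to the minor bookkeeping that the definition restricts characteristic polynomials to be monic, whereas $J$ also contains their nonzero scalar multiples. This causes no difficulty: over the field $\fq$ the condition $h(E)\bfs = \mathbf{0}$ is insensitive to scaling $h$, so a monic polynomial generates $\bfs$ precisely when it lies in $J$, and membership in $J$ is characterized by divisibility by $f$. The only genuinely nontrivial ingredient is the ideal property established above; both conclusions then follow from $\fqx$ being a principal ideal domain.
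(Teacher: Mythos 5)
Your proof is correct, but there is nothing in the paper to compare it against: the paper states this result purely as cited background (Theorem 4.5 and Lemma 4.2 of the Golomb--Gong book) and gives no proof of its own. Your route is the standard module-theoretic argument: sequences over $\fq$ form a module over $\fqx$ with $x$ acting as the shift $E$, the monic polynomials generating $\bfs$ are exactly the monic elements of the annihilator $J=\{h : h(E)\bfs=\mathbf{0}\}$, and since $\fqx$ is a principal ideal domain both claims drop out of $J=(f)$. Every step checks out: the operator identity $(gh)(E)=g(E)h(E)$ does hold on sequences (powers of the shift commute and the action is linear), so $J$ is genuinely an ideal and not merely a subgroup; the scaling remark at the end correctly disposes of the monicity bookkeeping forced by the paper's definition of a characteristic polynomial; and the uniqueness and divisibility statements follow as you say. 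The one point you should make explicit is that $J\neq\{0\}$, which is needed before one can speak of its monic generator $f_0$; this is immediate, since $\bfs$ is by hypothesis an LFSR sequence and therefore admits at least one characteristic polynomial, which lies in $J$.
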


\begin{definition}
{Minimal polynomial of an LFSR sequence}
{MinimalPolynomialOfLinearSequence}
    Let $\bfs$ be an LFSR sequence over $\fq$.
    The (unique) monic characteristic polynomial of $\bfs$ that has the lowest degree is the \emph{minimal polynomial of $\bfs$}.
\end{definition}
    \index{LFSR sequence!minimal polynomial of}
    \index{Minimal polynomial!of LFSR sequence}

\begin{example}
{}
{MinimalPolynomial}
    As stated in \Cref{example:SomeLinearSequencesOverFtwo}, both $x^2+1$ and $x^3+x^2+x+1$ are characteristic polynomials of $\overline{(1,0)}$.
    Since $x$ and $x+1$ do not generate $\overline{(1,0)}$, then $x^2+1$ is the characteristic polynomial of $\overline{(1,0)}$ that has the lowest degree, i.e. it is the minimal polynomial of $\overline{(1,0)}$.
    Furthermore, we have that $x^3+x^2+x+1=(x^2+1)(x+1)$, which agrees with the second statement of \Cref{theorem:UniquenessOfMinimalPolynomial}.
\end{example}

\begin{theorem}
{Period of an LFSR sequence \cite[Theorem 4.8]{golomb2005signal}, \cite[Theorems 8.7, 8.29]{lidl1997finite}}
{PeriodOfALinearSequence}
    Let $\bfs$ be an LFSR sequence over $\fq$ of order $t$, and minimal polynomial $f$.
    Then, we have the following results regarding the periodicity of $\bfs$:
    \begin{enumerate}
        \item $\bfs$ is ultimately periodic with least period $r\leq q^t-1$;
        \item $\bfs$ is periodic if and only if $f(0)\neq 0$;
        \item if $f$ is irreducible, then $\bfs$ is periodic and $r| q^t-1$;
        \item $\bfs$ is periodic with $r=q^t-1$ if and only if $f$ is primitive.
    \end{enumerate}
\end{theorem}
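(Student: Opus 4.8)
The plan is to translate every claim into the language of the state-vector map. Regarding $\bfs$ as generated by its minimal polynomial $f$ (which I take to have degree $t$, since $\bfs$ is equally generated by $f$), I set $\bfs_j=(s_j,s_{j+1},\dots,s_{j+t-1})\in\fq^t$ and let $T:\fq^t\to\fq^t$ be the linear shift map $\bfs_j\mapsto\bfs_{j+1}$ given by the companion matrix of $f$. For statement (1), since $\fq^t$ has only $q^t$ elements, among $\bfs_0,\dots,\bfs_{q^t}$ two states must coincide, say $\bfs_u=\bfs_{u+r}$; because $\bfs_{j+1}=T\bfs_j$ is determined by $\bfs_j$, this equality propagates forward and gives $s_{i+r}=s_i$ for all $i\ge u$, i.e.\ ultimate periodicity. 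To sharpen the bound to $r\le q^t-1$ I would note that the zero vector is an absorbing fixed point of $T$, so any cycle of length greater than $1$ avoids it and hence lives among the $q^t-1$ nonzero states.

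The engine for the remaining parts is a divisibility dictionary, which I would establish first: $s_{i+r}=s_i$ for all $i\ge 0$ says precisely that $x^r-1$ is a characteristic polynomial of $\bfs$, so by \Cref{theorem:UniquenessOfMinimalPolynomial} this holds iff $f\mid x^r-1$. Consequently $\bfs$ is purely periodic iff $f\mid x^r-1$ for some $r\ge 1$, and the least period is then the least such $r$. For statement (2): $f(0)\ne0$ is equivalent to $T$, whose determinant equals $(-1)^{t}f(0)$, being invertible; when $T$ is a bijection of $\fq^t$ every state lies on a cycle with no pre-period, so $\bfs$ is purely periodic. Conversely, if $f(0)=0$ then $x\mid f$, whereas $x\nmid x^r-1$ for every $r$ (since $-1\ne0$ in $\fq$), so $f\nmid x^r-1$ and the dictionary forbids pure periodicity.

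For statements (3) and (4) I would pass to a root $\a$ of $f$ in $\fqt$. If $f$ is irreducible then $f=m_{\a}$, so \Cref{lemma:AlphaRootOfPolynomialIffItIsDivisibleByMinimal} gives $f\mid x^r-1\iff\a^r=1\iff\ord(\a)\mid r$; hence the least period equals $\ord(\a)$, and since $\a\in\fqtstar$ we obtain $r=\ord(\a)\mid q^t-1$, proving (3). For the forward direction of (4), if $f$ is primitive then by \Cref{remark:PrimitivePolysAreIrreducible} it is irreducible and its root $\a$ is a primitive element (\Cref{definition:PrimitiveElement}), so $\ord(\a)=q^t-1$ and the previous computation yields $r=q^t-1$.

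The main obstacle is the converse of (4): deducing that $f$ is primitive from $r=q^t-1$. Here I cannot assume $f$ is irreducible, so I would argue that a least period of $q^t-1$ is the maximum allowed by part (1), which forces the single $T$-cycle through $\bfs_0$ to visit all $q^t-1$ nonzero states; I would then show that if $f$ were reducible or had a repeated irreducible factor, its least period (the order of the polynomial $f$) would be strictly smaller than $q^t-1$ --- for a coprime factorization $f=gh$ the period is a least common multiple bounded by $(q^{\deg g}-1)(q^{\deg h}-1)<q^t-1$, and repeated factors contribute only a factor that is a power of the characteristic, again keeping the order below $q^t-1$. Establishing this strict inequality cleanly, so that $r=q^t-1$ can hold only when $f$ is irreducible with a primitive root, is the delicate step; everything else reduces to the dictionary and the order of a single field element.
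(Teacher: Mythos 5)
The paper never proves this theorem: it is stated as a classical result with citations to Golomb and to Lidl--Niederreiter, so your proposal can only be measured against the standard textbook arguments, and that is essentially what you have reconstructed. Your skeleton is sound and is the classical one: companion-matrix dynamics give ultimate periodicity and the bound $r\le q^t-1$ (the zero state being a fixed point, so a cycle of length greater than one lives among the $q^t-1$ nonzero states); the dictionary ``$\bfs$ is purely periodic with period $r$ iff $x^r-1$ is a characteristic polynomial iff $f\mid x^r-1$'' via \Cref{theorem:UniquenessOfMinimalPolynomial}; the determinant/constant-term argument for part (2); and \Cref{lemma:AlphaRootOfPolynomialIffItIsDivisibleByMinimal} reducing part (3) and the forward half of part (4) to the order of a root $\a\in\fqtstar$. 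Two tacit assumptions deserve explicit mention, and both are the right reading of the statement rather than defects of your argument: one must take $t=\deg f$ (otherwise part (3) is simply false --- a sequence over $\ftwo$ satisfying an order-$3$ recurrence but with minimal polynomial $x^2+x+1$ has least period $3$, which does not divide $2^3-1$), and parts (3)--(4) require $f\neq x$, the unique irreducible polynomial whose root is $0$.

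The only genuine incompleteness is the converse of part (4), which you flag yourself. Your plan (coprime factorization plus repeated factors) does work, but the repeated-factor case as sketched needs the exact formula $\ord(p^e)=\ord(p)\,P^{\lceil\log_P e\rceil}$, where $P$ is the characteristic --- itself a nontrivial lemma; crude estimates of the extra power of $P$ are not sufficient when the repeated irreducible factor is linear. A cleaner way to close the argument avoids this case entirely: pure periodicity with least period $q^t-1$ gives $f\mid x^{q^t-1}-1$ by your dictionary, and $x^{q^t-1}-1$ is squarefree because it is coprime to its derivative $-x^{q^t-2}$; hence $f$ is squarefree, so if $f$ is not irreducible it admits a coprime factorization $f=gh$ into nonconstant factors. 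Then
\[
r=\lcm\bigl(\ord(g),\ord(h)\bigr)\le\bigl(q^{\deg g}-1\bigr)\bigl(q^{\deg h}-1\bigr)<q^t-1,
\]
a contradiction; here $\ord(g)\le q^{\deg g}-1$ holds because $g(0)\neq 0$ makes $\ord(g)$ the order of $x$ in the unit group of $\fqx/(g)$, a group of size at most $q^{\deg g}-1$, and the final strict inequality is your observation that $q^{\deg g}+q^{\deg h}>2$. So $f$ is irreducible, and your part-(3) computation then forces its roots to have order $q^t-1$, i.e.\ $f$ is primitive. With this patch the proof is complete and matches the cited sources in both structure and substance.
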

    \index{LFSR sequence!period}

\begin{example}
    {Periods of LFSR sequences}
    {PeriodsOfLFSRSequences}
    We examine the periodicity of some LFSR sequences and compare with \Cref{theorem:PeriodOfALinearSequence}.
    \begin{enumerate}
        \item
            The polynomial $f(x)=x^3+x^2+x \in \ftwox$ has a zero root and is the minimal polynomial of the sequence $(0,0,1,1,0,1,1,0,1,1,0,1,1,\dots)$, which is ultimately periodic, but not periodic.  On the other hand, the polynomial $x^2+1 \in \ftwo$ does not have a zero root and generates $\overline{(1,0)}$ which is periodic.
        \item
            The converse of the third statement of \Cref{theorem:PeriodOfALinearSequence} is not true.
            For example the sequence $\overline{(1,0)}$ over $\fthree$ is generated by $x^2+1 \in \fthree[x]$.
            This is also its minimal polynomial, since none of $x,x+1,x+2$ generate $\overline{(1,0)}$.
            Now $x^2+1=(x+1)(x+2)$, so that minimal polynomial is reducible, while the period of $\overline{(1,0)}$ is $2$, which divides $3^2-1$.
        \item
            The polynomial $g(x)=x^2+1 \in \fthreex$ is irreducible but not primitive; it generates the sequence $\overline{(1,0)}$, whose period $2$ divides $3^2-1$.  On the other hand, the polynomial $h(x)=x^2+x+2 \in \fthreex$ is primitive and, with initial state $(1,0)$, it generates the sequence $\overline{(1,0,1,2,2,0,2,1)}$, which has period exactly $3^2-1$.
    \end{enumerate}
\end{example}

\subsubsection{Maximal period sequences}
We now turn our focus on a type of sequence fundamental to our work.

\begin{definition}
{Maximal period sequence}
{MaximalPeriodSequence}
    An LFSR sequence over $\fq$ whose minimal polynomial is primitive is a \emph{maximal (period) sequence}.
    A maximal sequence is often referred to as an \emph{m-sequence} in the literature.
\end{definition}
\index{LFSR sequence!period}
\index{Maximal sequence}
\index{Maximal period sequence|see{Maximal sequence}}
\index{m-sequence|see{Maximal sequence}}

It follows from the last statement of \Cref{theorem:PeriodOfALinearSequence} that a maximal period sequence over $\fq$ of order $t$ is periodic with period $q^t-1$ which, as the name suggests, is the maximum that the period of an LFSR sequence of order $t$ can attain.
An example of a maximal sequence of order $2$ over $\fthree$ is given in part 3 of \Cref{example:PeriodsOfLFSRSequences}.
We present another example of a maximal sequence that we also use later in the thesis.

\begin{example}
{A maximal sequence over $\ffour$}
{AMaximalSequenceOverFFour}
    We recall that in \Cref{example:ConstructionFFourAndFSixteen} the finite field $\ffour$ is constructed using a root $\a$ of $x^2+x+1 \in \ftwox$, as shown in \Cref{equation:ExampleConstructionOfFFour}.
    Then, the polynomial $g(x)=x^2+x+\a$ is shown to be primitive and, as such, it is the minimal polynomial of a maximal sequence $\bfs=(s_i)_{i\geq 0}$, that has period $4^2-1$, and is generated by the linear recursion
    \[
        s_{i+2} =-s_{i+1}-\a s_i =s_{i+1}+\a s_i, \quad i \geq 0.
    \]
    With initial state $(1,0)$, this yields the maximal sequence over $\ffour$ of period $4^2-1$ given by
    \[
        \overline{(1,0,\a,\a,1,\a,0,\a+1,\a+1,\a,\a+1,0,1,1,\a+1)}.
    \]
\end{example}
The following theorem provides a very useful way of representing maximal sequences.

\begin{theorem}
{\cite[Corollary 4.6]{golomb2005signal},\cite[Theorem 8.24]{lidl1997finite}}
{TraceRepresentationOfMSequence}
    An LFSR sequence $\bfs=(s_i)_{i\geq 0}$ over $\fq$ is a maximal sequence with period $q^t-1$ if and only if the elements of $\bfs$ can be represented by
    \begin{equation}
        \label{equation:TraceRepresentation}
        s_{i} = \Tt(\theta \a^i), \quad i \geq 0,
    \end{equation}
    where $\a, \theta \in \fqt$, with $\a$ being primitive and $\theta$ nonzero.
    For each maximal sequence $\bfs$, the $\theta$ in \Cref{equation:TraceRepresentation} is unique, and $\a$ is a root of the minimal polynomial of $\bfs$ in $\fqx$.
\end{theorem}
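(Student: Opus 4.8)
The theorem characterizes maximal sequences via the trace: an LFSR sequence $\bfs=(s_i)_{i\geq 0}$ over $\fq$ has period $q^t-1$ (i.e.\ is maximal) if and only if $s_i = \Tt(\theta\a^i)$ for a primitive $\a\in\fqt$ and nonzero $\theta\in\fqt$, with $\theta$ unique and $\a$ a root of the minimal polynomial of $\bfs$.

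Let me think about how to prove this.

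**Setup.** A maximal sequence has a primitive minimal polynomial $f$ of degree $t$ by Definition (MaximalPeriodSequence). Let $\a\in\fqt$ be a root of $f$; since $f$ is primitive, $\a$ is a primitive element of $\fqt$ by the Remark/Definition of primitive polynomial. By Theorem RootsOfIrreduciblePolynomialsOverFFields, the roots of $f$ are $\a, \a^q, \dots, \a^{q^{t-1}}$, all distinct and lying in $\fqt$.

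**Forward direction.** Given a maximal sequence with primitive minimal polynomial $f$ and root $\a$, I want to show every solution of the recurrence can be written as $s_i = \Tt(\theta\a^i)$.

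The standard approach: The set of sequences satisfying the recurrence with characteristic polynomial $f$ forms a $\fq$-vector space of dimension $t$ (determined by the $t$-tuple initial state). I claim the sequences $\{(\Tt(\theta\a^i))_{i\geq 0} : \theta\in\fqt\}$ all satisfy this recurrence, and that this gives all of them.

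Key computation: for $s_i = \Tt(\theta\a^i)$, I need to verify the recurrence. Since $\a$ is a root of $f(x)=x^t - c_{t-1}x^{t-1}-\dots-c_0$, we have $\a^t = c_{t-1}\a^{t-1}+\dots+c_0$. Then
$$s_{j+t} = \Tt(\theta\a^{j+t}) = \Tt\Big(\theta\a^j \sum_{i=0}^{t-1} c_i \a^i\Big) = \sum_{i=0}^{t-1} c_i \Tt(\theta\a^{j+i}) = \sum_{i=0}^{t-1} c_i s_{j+i},$$
using $\fq$-linearity of the trace (Theorem PropertiesOfTrace) and $c_i\in\fq$. So each such sequence satisfies the recurrence.

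**Counting/injectivity.** The map $\theta \mapsto (\Tt(\theta\a^i))_{i\geq 0}$ is $\fq$-linear. The solution space has size $q^t$ (one solution per initial state). So it suffices to show this map is injective, hence bijective onto the solution space. If $\Tt(\theta\a^i)=0$ for all $i\geq 0$, then since $\{1,\a,\dots,\a^{t-1}\}$ is an $\fq$-basis of $\fqt$ and trace is nondegenerate, $\Tt(\theta\cdot x)=0$ for all $x\in\fqt$ forces $\theta=0$. (Nondegeneracy follows since $\Tt$ is onto $\fq$ by Theorem PropertiesOfTrace item 1, so the bilinear form $(\theta,x)\mapsto\Tt(\theta x)$ is nondegenerate.) This gives injectivity, hence bijectivity, establishing existence and uniqueness of $\theta$ for each maximal sequence.

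**Converse.** Suppose $s_i=\Tt(\theta\a^i)$ with $\a$ primitive, $\theta\neq 0$. The forward computation shows $\bfs$ satisfies the recurrence for the minimal polynomial $f$ of $\a$, which is primitive of degree $t$, hence $\bfs$ is an LFSR sequence generated by a primitive polynomial. Its least period divides $q^t-1$ (Theorem PeriodOfALinearSequence). I must rule out a smaller period: if $s_{i+r}=s_i$ for all $i$ with $r\mid q^t-1$, $r<q^t-1$, then $\Tt(\theta\a^i(\a^r-1))=0$ for all $i$; by nondegeneracy (as above, using that $\{\a^i\}$ spans), $\theta(\a^r-1)=0$, so $\a^r=1$, contradicting that $\a$ is primitive of order $q^t-1$. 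Hence $\bfs$ has exact period $q^t-1$ and is maximal.

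\textbf{Main obstacle.} The crux is the nondegeneracy argument: turning ``$\Tt(\theta\a^i)=0$ for all $i$'' into ``$\theta=0$.'' This rests on the trace bilinear form being nondegenerate, which I get from $\Tt$ being surjective (Theorem PropertiesOfTrace) together with $\{1,\a,\dots,\a^{t-1}\}$ being a basis of $\fqt$ over $\fq$ (from the representation in \Cref{equation:FundamentalRepresentationOfFiniteField}). I would lean on Proposition TraceDefinesQMinusOneToOneMapping, which already guarantees $\Tt(\beta x)=c$ has $q^{t-1}$ solutions for any nonzero $\beta$ — this directly encodes nondegeneracy and lets me conclude the vanishing condition forces $\theta=0$. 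The rest is bookkeeping: the linear-algebra dimension count and the identification of $\a$ as a root of the minimal polynomial, both of which are immediate from the cited results.
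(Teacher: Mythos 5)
The paper itself gives no proof of this theorem: it is quoted directly from the cited references (Golomb--Gong and Lidl--Niederreiter), so there is no internal argument to compare against. Your proof is correct and is essentially the standard one from those sources: trace sequences satisfy the recurrence of $m_{\a}$ by $\fq$-linearity of the trace, the map $\theta\mapsto\left(\Tt(\theta\a^i)\right)_{i\geq 0}$ is an injective (hence bijective) $\fq$-linear map from $\fqt$ onto the $q^t$-element solution space of the recurrence by nondegeneracy of the trace form (which the paper makes available via \Cref{theorem:PropertiesOfTrace} and \Cref{proposition:TraceDefinesQMinusOneToOneMapping}), and the least period is exactly $q^t-1$ because $\Tt\left(\theta\a^i(\a^r-1)\right)=0$ for all $i$ forces $\a^r=1$. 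The only points you leave implicit --- that a maximal sequence is nonzero, so the $\theta$ produced by your bijection is nonzero, and that in the converse the minimal polynomial of $\bfs$ equals $m_{\a}$ because it is a nontrivial divisor of the irreducible polynomial $m_{\a}$ --- are one-line observations, so the proof stands as a complete substitute for the external citation.
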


\begin{definition}
{The trace representation of a maximal sequence}
{TraceRepresentationOfMSequence}
    For a maximal sequence $\bfs=(s_i)_{i\geq 0}$ of order $t$ over $\fq$, the representation of $\bfs$ given by \Cref{equation:TraceRepresentation} is the \emph{trace representation of $\bfs$}.
\end{definition}
\index{Maximal sequence!Trace representation of}
\index{Trace representation}

The trace representation also applies to LFSR sequences that are not maximal, however we do not discuss this since we only use maximal sequences in this thesis.
For an in-depth presentation of the trace representation in general, we refer the reader to \cite[Section 6.2]{golomb2005signal}.

\begin{example}
{The trace representation of a maximal sequence over $\fthree$}
{TraceRepresentation}
    The recursion corresponding to the primitive polynomial $x^2+x+2 \in \fthreex$ is
    \[s_{i+2}=2s_{i+1}+s_i,\quad i \geq 0.\]
    With initial state $(0,1)$, this yields the maximal sequence
    \[\bfs=(s_i)_{i\geq 0}=\overline{(0,1,2,2,0,2,1,1)}.\]
    Let $\a$ be a root of $x^2+x+2$, so that $\a^2=2\a+1$.
    Denoting $\T_{3^2/3}=\T$, we compute
    \begin{align*}
        \T(\a)
        = \a+\a^3
        =\a+\a\a^2
        =\a+\a(2\a+1)
        =2\a+2\a^2
        =2\a+2(2\a+1)
        =6\a+2=2.
    \end{align*}
    Similarly, we have that
    \[ \T(\a^2)=0, \T(\a^3)=2, \T(\a^4)=1, \T(\a^5)=1, \T(\a^6)=0, \T(\a^7)=1,\] 
    and also $\T(\a^0)=\T(1)=2$.
    By \Cref{lemma:PowerQEqualsElementMeansBelongingToFQ}, we have that $\a^9=\a$ and hence $\T(a^{i+9})=\T(\a^i)$, for all $i\geq 0$.
    Therefore, the sequence over $\fthree$ given by $(\T(\a^i))_{i\geq 0}$ is periodic with period $9$, and it is equal to $\overline{(2,2,0,2,1,1,0,1)}$.
    We observe that this is a cyclic shift of $\bfs$ with $s_{i}=\T(\a^{i+6})$, $i\geq 0$.
    Thus, setting $\theta=\a^6$, we have that
    \[ \T(\theta \a^i)=\T(\a^6\a^i)=\T(\a^{i+6})=s_i,\]
    which gives the trace representation for $\bfs$.
\end{example}

In the following example we give the trace representation of the maximal sequence over $\ffour$ discussed in \Cref{example:AMaximalSequenceOverFFour}.  This is obtained by working similarly to \Cref{example:TraceRepresentation}; we omit the details and we just state the result for future reference.

\begin{example}
{The trace representation of a maximal sequence over $\ffour$}
{TraceRepresentationOfSequenceOverFFour}
    We assume the notation of \Cref{example:AMaximalSequenceOverFFour} and the results established thereof.
    Setting $\theta=\beta^{13}=1+\a\beta$, we have that the maximal sequence $\bfs=(s_i)$, $i\geq 0$ over $\ffour$ defined by
    \[
        \bfs=\overline{( 1,0,\a,\a,1,\a,0,\a+1,\a+1,\a,\a+1,0,1,1,\a+1),}
    \]
    has trace representation given by
    $s_i=\T_{4^2/4}(\theta\a^i)$, $i\geq 0$.
\end{example}

Maximal sequences have a rich algebraic structure and a variety of statistical properties that make them ideal for use in applications such as radar, sonar, and wireless communications \cite{golomb1982shift,golomb2005signal}; and cryptography \cite[Chapter 6]{menezes1996handbook}.
One of their most fundamental properties is the following.

\begin{proposition}
{Balance property of maximal sequences \cite[Property 5.3]{golomb2005signal}}
{BalancePropertyMSequences}
    Let $\bfs=(s_i)_{i\geq 0}$ be a maximal sequence over $\fq$ of order $t$ and, for every $a\in \fq$, let
    \[
        N_{\bfs}(a)
        = \left| \left\{ i| i \in [0,q^t-1], s_i=a \right\} \right|.
    \]
    Then, we have that
    \[
        N_{\bfs}(a)=
    \begin{cases}
        q^{t-1}, & \text{ if } a \neq 0;\\
        q^{t-1}-1, & \text{ otherwise }.
    \end{cases}
\]
\end{proposition}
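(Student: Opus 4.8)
The plan is to use the trace representation of $\bfs$ from \Cref{theorem:TraceRepresentationOfMSequence}, which tells us that $s_i = \Tt(\theta\a^i)$ for all $i\geq 0$, where $\a$ is primitive and $\theta\neq 0$. Since $\bfs$ has period $q^t-1$, counting how many $i\in[0,q^t-1]$ satisfy $s_i=a$ is the same as counting the elements of the index set modulo the period. Because $\a$ is primitive, as $i$ ranges over $[0,q^t-2]$ the element $\a^i$ ranges over all of $\fqtstar$, each value occurring exactly once. The key reformulation is therefore: $N_{\bfs}(a)$ equals the number of $x\in\fqtstar$ with $\Tt(\theta x)=a$, with possibly one extra index to account depending on how the range $[0,q^t-1]$ is handled.

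First I would pin down the index range carefully. The set in the statement is $\{i\mid i\in[0,q^t-1],\ s_i=a\}$, which has $q^t$ indices, namely $i=0,1,\dots,q^t-1$. Since the period is $q^t-1$, the indices $i=0$ and $i=q^t-1$ give the same value $s_0=s_{q^t-1}$, so this value is counted twice. I would split off these boundary indices and observe that $\{\a^i : i\in[0,q^t-2]\}$ is exactly $\fqtstar$ with each element hit once (as $\a$ has order $q^t-1$), so $\{\theta\a^i : i\in[0,q^t-2]\}=\fqtstar$ as well, since multiplication by the nonzero $\theta$ permutes $\fqtstar$.

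Next I would invoke \Cref{proposition:TraceDefinesQMinusOneToOneMapping}, which states the map $x\mapsto\Tt(x)$ is $(q-1)$-to-$1$, and more precisely that $\Tt(\theta x)=c$ has exactly $q^{t-1}$ solutions $x\in\fqt$ for each $c\in\fq$. Restricting to $x\in\fqtstar$ (i.e.\ removing $x=0$, which gives $\Tt(0)=0$), the equation $\Tt(\theta x)=a$ has exactly $q^{t-1}$ solutions in $\fqtstar$ when $a\neq 0$, and exactly $q^{t-1}-1$ solutions when $a=0$. Thus, over one full period $i\in[0,q^t-2]$, the value $a$ occurs $q^{t-1}$ times if $a\neq 0$ and $q^{t-1}-1$ times if $a=0$. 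Finally I would add back the boundary contribution from $i=q^t-1$: since $s_{q^t-1}=s_0=\Tt(\theta)$, this is a single fixed value of $\fq$, and I would need to reconcile it with the per-period count. The cleanest route is actually to note the counts I derived over the period $\{0,\dots,q^t-2\}$ already match the claimed formula, and then argue that including $i=q^t-1$ (which duplicates $s_0$) is compensated by the convention in the index set — or, more carefully, to verify that the stated set uses exactly one period's worth of distinct residues.

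The main obstacle I anticipate is precisely this off-by-one bookkeeping at the endpoints: the stated index range $[0,q^t-1]$ contains $q^t$ values while the period is $q^t-1$, so a naive count double-counts one position. I would resolve it by reducing indices modulo $q^t-1$ and checking that the intended count is over a single period, so that the clean application of \Cref{proposition:TraceDefinesQMinusOneToOneMapping} yields exactly $q^{t-1}$ for nonzero $a$ and $q^{t-1}-1$ for $a=0$. The substantive mathematical content — the $(q-1)$-to-$1$ property of the trace and the bijection $x\mapsto\theta x$ on $\fqtstar$ — is entirely supplied by earlier results, so the proof reduces to this careful indexing together with removing the single point $x=0$ from the fiber over $0$.
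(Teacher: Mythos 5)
Your proof is correct, and since the thesis does not prove this proposition at all — it is quoted from \cite[Property 5.3]{golomb2005signal} — there is no internal proof to compare against; your argument is the standard one and uses exactly the tools the paper supplies. The substance is sound: by \Cref{theorem:TraceRepresentationOfMSequence} you write $s_i=\Tt(\theta\a^i)$ with $\theta\neq 0$ and $\a$ primitive, note that $i\mapsto\theta\a^i$ is a bijection from $[0,q^t-2]$ onto $\fqtstar$ (primitivity plus the fact that multiplication by $\theta$ permutes $\fqtstar$), and then count fibers of the trace using \Cref{proposition:TraceDefinesQMinusOneToOneMapping}: each fiber has $q^{t-1}$ elements in $\fqt$, and removing $x=0$ subtracts one from the fiber over $0$ only. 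You are also right to flag the index range as the only delicate point, and your diagnosis is the correct resolution rather than a gap in your argument: the set $[0,q^t-1]$ as written contains $q^t$ indices, whereas the claimed counts sum to $(q-1)q^{t-1}+(q^{t-1}-1)=q^t-1$, so the statement can only hold verbatim over exactly one period of indices, $[0,q^t-2]$. This off-by-one is a slip in the statement itself (the cited source counts over a single period), and once the range is read that way your fiber count gives precisely $q^{t-1}$ for $a\neq 0$ and $q^{t-1}-1$ for $a=0$.
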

\index{Maximal sequence!Balance property of}
The balanced distribution of the elements of a maximal sequence goes beyond
\Cref{proposition:BalancePropertyMSequences}; for example, the following well-known generalization shows a similar property for pairs of elements.
We denote 
\[\w{t}=\frac{q^t-1}{q-1}, \label{equation:DefinitionW} \] 
a notation that we use extensively from now on.

\begin{proposition}
{Two-tuple balance property of maximal sequences \cite[Corollary 4.6]{golomb2005signal}}
{TwoTupleBalancePropertyOfMSequences}
    Let $\bfs=(s_i)_{i\geq 0}$ be a maximal sequence over $\fq$ of order $t$.
    Then, for any integer $\tau$ and pair $(a,b)$ of elements $a,b\in\fq$, we denote
    \[
        N_{\bfs,\tau}(a,b)
        = \left| \left\{ i \mid i\in [0,q^t-1], (s_i, s_{i+\tau})=(a,b) \right\} \right|
    \]
    Then, we have that
    \[
        N_{\bfs,\tau}(a,b)=
        \begin{cases}
            q^{t-2}     & \text{ if } \tau\not\equiv 0\Mod{\w{t}}\text{ and } (a,b)\neq (0,0);\\
            q^{t-2}-1   & \text{ if } \tau\not\equiv 0\Mod{\w{t}}\text{ and } (a,b) = (0,0);\\
            q^{t-1}     & \text{ if } \tau\equiv 0\Mod{\w{t}}\text{ and } b=ca \text{ for some } c \in \fqstar;\\
            0           & \text{ otherwise}.
        \end{cases}
    \]
\end{proposition}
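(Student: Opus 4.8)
The plan is to pass to the trace representation and thereby reduce the count to a problem in linear algebra over $\fq$. By the trace representation (\Cref{theorem:TraceRepresentationOfMSequence}) I may write $s_i=\Tt(\theta\a^i)$ with $\a$ primitive in $\fqt$ and $\theta\neq 0$. As $i$ runs over one full period, $x:=\theta\a^i$ runs bijectively over $\fqtstar$, and $s_{i+\tau}=\Tt(\theta\a^\tau\a^i)=\Tt(\b x)$ where $\b:=\a^\tau$. Hence
\[
N_{\bfs,\tau}(a,b)=\left|\left\{ x\in\fqtstar \mid \Tt(x)=a,\ \Tt(\b x)=b \right\}\right|,
\]
so the whole count is governed by the $\fq$-linear map $L\colon\fqt\to\fq^2$, $L(x)=(\Tt(x),\Tt(\b x))$, together with the single excluded point $x=0$.

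The first key step is the dichotomy $\b\in\fq \iff \tau\equiv 0 \pmod{\w{t}}$. Since $\a$ has order $q^t-1$, we have $\b\in\fqstar$ iff $\b^{q-1}=1$ iff $(q^t-1)\mid \tau(q-1)$ iff $\w{t}\mid\tau$, using $\w{t}=(q^t-1)/(q-1)$. When $\w{t}\mid\tau$ this simultaneously exhibits $\b=\a^\tau=(\a^{\w{t}})^{\tau/\w{t}}$ as a concrete element $c\in\fqstar$.

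Next I would analyse $L$ in the two cases. When $\tau\not\equiv 0\pmod{\w{t}}$ the elements $1,\b$ are $\fq$-linearly independent; combined with the nondegeneracy of the trace form (\Cref{proposition:TraceDefinesQMinusOneToOneMapping} shows $\Tt(\gamma\,\cdot)=0$ forces $\gamma=0$), the relation $\Tt((u+v\b)x)=0$ for all $x$ forces $u+v\b=0$ and hence $u=v=0$, so the functionals $\Tt(\cdot)$ and $\Tt(\b\,\cdot)$ are independent and $L$ is surjective. By rank--nullity every fibre then has $q^{t-2}$ points, and removing $x=0$ (which lies only in $L^{-1}(0,0)$) yields $q^{t-2}$ for $(a,b)\neq(0,0)$ and $q^{t-2}-1$ for $(a,b)=(0,0)$. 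When $\tau\equiv 0\pmod{\w{t}}$, writing $\b=c\in\fqstar$ and using $\fq$-linearity gives $\Tt(\b x)=c\,\Tt(x)$, so the image of $L$ is the line $\{(u,cu):u\in\fq\}$; the fibre over $(a,b)$ equals $\{x:\Tt(x)=a\}$, of size $q^{t-1}$, exactly when $b=ca$, and is empty otherwise.

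The main obstacle is the careful bookkeeping of the excluded point $x=0$ in the all-zero fibre: this is precisely where the count can drop by one, and it is the only place where the distinction between ``one full period'' and the stated index set must be reconciled with the balance property (\Cref{proposition:BalancePropertyMSequences}). I would close by computing $\sum_{a,b}N_{\bfs,\tau}(a,b)$ and matching it against the period length as a consistency check, which simultaneously pins down the correct value in the borderline case $(a,b)=(0,0)$ with $\tau\equiv 0$.
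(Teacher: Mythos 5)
Your proposal is correct, and it is worth noting that the paper does not prove this proposition at all --- it is imported from Golomb and Gong \cite[Corollary 4.6]{golomb2005signal} --- so your argument stands on its own rather than paralleling anything in the text. The reduction via the trace representation to counting fibres of the $\fq$-linear map $L\colon \fqt \to \fq^2$, $L(x)=(\Tt(x),\Tt(\b x))$ with $\b=\a^{\tau}$, restricted to $x\in\fqtstar$, is sound in every step: the equivalence $\b\in\fqstar \iff \w{t}\mid\tau$ follows exactly as you say from $q^t-1=(q-1)\w{t}$; when $\b\notin\fq$ the functionals $\Tt(\cdot)$ and $\Tt(\b\,\cdot)$ are independent (nondegeneracy of the trace together with independence of $\{1,\b\}$), so $L$ is surjective and rank--nullity gives fibres of size $q^{t-2}$; when $\b=c\in\fqstar$, the $\fq$-linearity of the trace collapses the image to the line $\{(u,cu)\mid u\in\fq\}$ with fibres of size $q^{t-1}$.

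Your bookkeeping of the excluded point $x=0$ is not just a detail: it exposes an off-by-one error in the proposition as stated. When $\tau\equiv 0 \Mod{\w{t}}$ and $(a,b)=(0,0)$, the statement's third case applies (since $0=c\cdot 0$) and assigns the value $q^{t-1}$, but the fibre $\{x\in\fqt \mid \Tt(x)=0\}$ contains $x=0$, which must be discarded, so the true count is $q^{t-1}-1$ --- precisely what your consistency check $\sum_{a,b}N_{\bfs,\tau}(a,b)=q^t-1$ forces. The paper's own \Cref{example:PropertiesOfMaximalSequences} confirms this: there $q=3$, $t=2$, $\tau=4$, and the displayed table yields $N_{\bfs,4}(0,0)=2=3^{2-1}-1$, not $3^{2-1}$. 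Relatedly, the clause ``$b=ca$ for some $c\in\fqstar$'' must be read with $c=\a^{\tau}$ fixed, as you do; read literally, with $c$ existentially quantified for each pair, the third case would also capture every pair of nonzero $(a,b)$, and the counts could not sum to the period length. So your proof is correct, and where it disagrees with the written statement, it is the statement that needs the correction.
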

\index{Maximal sequence!Two-tuple balance property of}

It follows from \Cref{proposition:TwoTupleBalancePropertyOfMSequences} that elements in a maximal sequence whose indexes are multiples of $\w{t}$ are constant multiples of each other.
This property is described in more detail in \Cref{proposition:ProjectivePropertyOfMaximalSequences}.
First, we need a lemma.

\begin{lemma}
    {Characterization of constant multiples in $\fqt$}
    {CharacterizationOfConstantMultiplesInFQM}
    Let $q$ be a prime power, $t$ be a positive integer, and $\a$ be a primitive element of $\fqt$.
    Then, $\a^{\w{t}}$ is a primitive element of $\fq$, and for any $i,j \in [0,q^t-2]$, we have that $\a^j=c\a^i$ for some $c \in \fqstar$ if and only if $ j \equiv i\Mod {\w{t}}$.
\end{lemma}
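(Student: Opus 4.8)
The plan is to reduce both assertions to the single numerical identity $q^t-1=\w{t}(q-1)$ together with the fact that $\a$, being primitive in $\fqt$, has multiplicative order exactly $q^t-1$. Throughout I would abbreviate $w=\w{t}$ and set $\gamma=\a^{w}$.

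First I would establish that $\gamma$ is a primitive element of $\fq$. Since $\gamma^{q-1}=\a^{w(q-1)}=\a^{q^t-1}=1$, the element $\gamma$ satisfies $\gamma^q=\gamma$; as $\gamma\neq 0$, \Cref{lemma:PowerQEqualsElementMeansBelongingToFQ} applied with $t=1$ gives $\gamma\in\fqstar$. To find its order I would use the standard fact that a power $\a^{w}$ of an element of order $q^t-1$ has order $(q^t-1)/\gcd(w,q^t-1)$; because $w\mid q^t-1$ this equals $(q^t-1)/w=q-1$. Since $|\fqstar|=q-1$, the element $\gamma$ generates $\fqstar$ and is therefore primitive, which settles the first claim.

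Next I would prove the equivalence for $i,j\in[0,q^t-2]$. As $\a^i$ is invertible, the relation $\a^j=c\a^i$ forces $c=\a^{j-i}$, so there is a unique candidate for $c$. For the forward direction, if $c=\a^{j-i}\in\fqstar$ then $c^{q-1}=1$, hence $\a^{(j-i)(q-1)}=1$; since $\ord(\a)=q^t-1$, this yields $(q^t-1)\mid(j-i)(q-1)$, and cancelling $q-1$ via the identity above gives $w\mid(j-i)$, i.e. $j\equiv i\Mod{\w{t}}$. Conversely, if $j\equiv i\Mod{\w{t}}$, write $j-i=kw$ for some integer $k$; then $c=\a^{j-i}=\gamma^{k}$ lies in $\fqstar$ because $\gamma\in\fqstar$ and $\fqstar$ is a group, and $\a^{j}=c\a^{i}$ holds by construction.

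The whole argument is essentially routine once the factorization $q^t-1=\w{t}(q-1)$ is in hand. The only point needing a little care is the divisibility bookkeeping: passing from $(q^t-1)\mid(j-i)(q-1)$ to $\w{t}\mid(j-i)$, and treating the exponent $j-i$ (which may be negative) correctly by working entirely inside the multiplicative group rather than with nonnegative representatives. I do not expect any genuine obstacle beyond this.
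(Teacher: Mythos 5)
Your proof is correct, and its skeleton matches the paper's: both establish primitivity of $\a^{\w{t}}$ via the order formula $\ord(\a^{\w{t}})=\ord(\a)/\gcd(\ord(\a),\w{t})=q-1$, and both treat the direction ``$j\equiv i\Mod{\w{t}}$ implies $\a^j=c\a^i$'' identically, taking $c=\a^{l\w{t}}$. Where you diverge is the harder direction. The paper uses the primitivity it just proved: given $c\in\fqstar$, it writes $c=(\a^{\w{t}})^l$, deduces $\a^{i-j+l\w{t}}=1$, hence $(q^t-1)\mid(i-j+l\w{t})$, and since $\w{t}\mid q^t-1$ concludes $\w{t}\mid(i-j)$. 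You instead observe that $c=\a^{j-i}$ is forced, apply Lagrange's theorem to get $c^{q-1}=1$, so $(q^t-1)\mid(j-i)(q-1)$, and cancel the factor $q-1$ using $q^t-1=\w{t}(q-1)$. Your version of this step is slightly more elementary and self-contained, needing only $\ord(\a)=q^t-1$ rather than the characterization of $\fqstar$ as the powers of $\a^{\w{t}}$. You are also more careful on a point the paper glosses over: before calling $\a^{\w{t}}$ a primitive element of $\fq$ one must know it lies in $\fq$ at all, which you verify via $(\a^{\w{t}})^{q}=\a^{\w{t}}$ and \Cref{lemma:PowerQEqualsElementMeansBelongingToFQ}, whereas the paper only computes its order. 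Both routes deliver the same statement with comparable effort; yours trades one use of the primitivity characterization for a divisibility cancellation and closes a small gap in the process.
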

\index{Primitive element}
\begin{proof}
    First, we show that $\a^{\w{t}}$ is a primitive element of $\fq$.
    Indeed, we have that
    \begin{align}
        \label{equation:OrderOfAlphaToTheW}
        \ord(\a^{\w{t}})
        =\frac{\ord(\a)}{\gcd\left(\ord(\a), \w{t}\right)}
        =\frac{q^t-1}{\gcd\left(q^t-1, \frac{q^t-1}{q-1}\right)}
        =\frac{q^t-1}{\frac{q^t-1}{q-1}}
        =q-1,
    \end{align}
    which proves our claim;
    it follows that $c\in \fqstar$ if and only if $c=\a^{l\w{t}}$ for some integer $l$.

    If $j\equiv i \Mod{\w{t}}$, then $j=i+l\w{t}$ for some $l$, thus 
    $\a^j=\a^{i}\a^{l\w{t}}=c\a^i$, for $c=\a^{l\w{t}} \in \fqstar$.
    Conversely, we suppose that $\a^j=c\a^i$, for some $c\in \fqstar$.
    Since $\a^{\w{t}}$ is a primitive element of $\fqstar$, $c=(\a^{\w{t}})^l=\a^{l \w{t}}$, for some $l$.
    Therefore, 
    $\a^j=c\a^i=\a^{l \w{t}}\a^i=\a^{i+l \w{t}},$
    which implies that $\a^{i-j+l \w{t}}=1$.
    Since $\a$ is a primitive element of $\fqtstar$, its order is $q^t-1$, so the last equation implies that $q^t-1$ divides $i-j +l \w{t}$.
    Furthermore, since $\w{t}$ divides $q^t-1$, we also have
    that $\w{t}$ divides $i-j +l \w{t}$, and therefore $i \equiv j \Mod{\w{t}}$.
\end{proof}

\begin{proposition}
{Projective property of maximal sequences \cite[Section 5.3.2]{golomb2005signal}}
{ProjectivePropertyOfMaximalSequences}
    Let $\bfs=(s_i)_{i\geq 0}$ be a maximal sequence of order $t$ over $\fq$, and let $\a\in \fqt$ be a root of its minimal polynomial.
    Denoting $w=\w{t}$ and
    \[
       P_i
       = \left( s_{o+iw}, s_{1+iw}, \dots, s_{w-1+iw} \right), \quad i\in [0,q-2],
    \]
    we have that $P_i=\a^{iw}P_0$, for all $i\in [0,q-2]$.
\end{proposition}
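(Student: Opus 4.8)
The plan is to work entirely through the trace representation of $\bfs$ and to exploit the $\fq$-linearity of the trace. By \Cref{theorem:TraceRepresentationOfMSequence}, there is a unique nonzero $\theta\in\fqt$ with $s_i=\Tt(\theta\a^i)$ for all $i\geq 0$, where $\a$ is the prescribed root of the minimal polynomial. Since the claimed identity $P_i=\a^{iw}P_0$ is componentwise, I would first reduce it to showing that $s_{j+iw}=\a^{iw}s_j$ for every $j\in[0,w-1]$ and every $i\in[0,q-2]$.

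Next I would record the one structural fact that makes the whole argument go through: $\a^{iw}$ is a scalar of the base field. Indeed, \Cref{lemma:CharacterizationOfConstantMultiplesInFQM} shows that $\a^{w}$ is a primitive element of $\fq$, so $\a^{iw}=(\a^{w})^i$ lies in $\fqstar\subseteq\fq$ for each $i$. With this in hand, the computation is immediate: using the trace representation and the splitting $\a^{j+iw}=\a^{iw}\cdot\a^j$, I get
\[
s_{j+iw}=\Tt(\theta\a^{j+iw})=\Tt\bigl(\a^{iw}\,(\theta\a^j)\bigr)=\a^{iw}\,\Tt(\theta\a^j)=\a^{iw}s_j,
\]
where the third equality is the $\fq$-linearity of the trace (\Cref{theorem:PropertiesOfTrace}), applied with the scalar $\a^{iw}\in\fq$ and the element $\theta\a^j\in\fqt$. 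Collecting the $w$ components $j=0,1,\dots,w-1$ then yields $P_i=\a^{iw}P_0$.

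The only subtlety, and thus the main obstacle — though a mild one — is justifying that $\a^{iw}$ genuinely acts as a field scalar so that linearity of the trace can be invoked; this is exactly what the preceding lemma supplies. Everything else is a single substitution, so I expect the proof to be very short once that point is isolated.
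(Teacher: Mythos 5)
Your proof is correct. The paper itself does not prove this proposition---it is quoted directly from \cite[Section 5.3.2]{golomb2005signal}---so there is no internal proof to compare against; your argument (the trace representation of $\bfs$ from \Cref{theorem:TraceRepresentationOfMSequence}, the fact from \Cref{lemma:CharacterizationOfConstantMultiplesInFQM} that $\a^{w}$ is a primitive element of $\fq$ so that each $\a^{iw}$ lies in $\fqstar$, and the $\fq$-linearity of the trace from \Cref{theorem:PropertiesOfTrace}) is exactly the standard proof and relies only on tools the thesis has already established.
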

    \index{Maximal sequence!projective property of}

In other words, \Cref{proposition:ProjectivePropertyOfMaximalSequences} states that if we arrange the sequence in a $(q-1)\times w$ array, and denoting $c_i=\a^{wi}$ so that
$\fqstar = \{ c_i\mid i\in [0,q-2] \}$,
we have that

\begin{align*}
    \left[
    \begin{array}{llll}
        s_0&s_1&\cdots&s_{w-1}\\
        s_{w}&s_{w+1}&\cdots&s_{2w-1}\\
        s_{2w}&s_{2w+1}&\cdots&s_{3w-1}\\
        \multicolumn{1}{l}{\vdots}&
        \multicolumn{1}{l}{\vdots}&
         \multicolumn{1}{l}{\ddots}&
         \multicolumn{1}{l}{\vdots}
         \\
         s_{(q-2)w}&s_{(q-2)w+1}&\cdots&s_{(q-2)w-1}
    \end{array}
    \right]
    &=
    \left[
    \begin{array}{rrrr}
        s_0&s_1&\cdots&s_{w-1}\\
        \a^{w} s_0&\a^{w} s_1&\cdots&\a^{w} s_{w-1}\\
        \a^{2w} s_0&\a^{2w} s_1&\cdots&\a^{2w} s_{w-1}\\
        \multicolumn{1}{r}{\vdots}&
        \multicolumn{1}{r}{\vdots}&
         \multicolumn{1}{c}{\ddots}&
         \multicolumn{1}{r}{\vdots}
         \\
         \a^{(q-2)w} s_0&\a^{(q-2)w} s_1&\cdots&\a^{(q-2)w} s_{w-1}
    \end{array}
    \right]
    \\
    &=
    \left[
    \begin{array}{r}
        P_0\\
        \a^w P_0\\
        \a^{2w} P_0\\
        \multicolumn{1}{r}{\vdots}\\
        \a^{w(q-2)}P_0
    \end{array}
    \right]
    =
    \left[
    \begin{array}{r}
        P_0\\
        c_1 P_0\\
        c_2 P_0\\
        \multicolumn{1}{r}{\vdots}\\
        c_{q-2}P_0
    \end{array}
    \right].
\end{align*}

\begin{example}
{}
{PropertiesOfMaximalSequences}
    We assume the notation used in \Cref{proposition:BalancePropertyMSequences,proposition:TwoTupleBalancePropertyOfMSequences,proposition:ProjectivePropertyOfMaximalSequences}, and consider the sequence
    \[
        \bfs=(s_i)_{i\geq 0}=\overline{(0,1,2,2,0,2,1,1)}
    \]
    which is the maximal sequence discussed in \Cref{example:TraceRepresentation}.
    We observe that $N_{\bfs}(1)=N_{\bfs}(2)=3^{2-1}$, and $N_{\bfs}(0)=3^{2-1}-1$, as per \Cref{proposition:BalancePropertyMSequences}.
    To demonstrate \Cref{proposition:TwoTupleBalancePropertyOfMSequences}, we construct the table below.
    \[
        \renewcommand{\arraystretch}{\genarraystretch}
        \begin{array}[]{c*{8}{!{\color{\tableheadcoloralt}\vrule}c}}
        \rowcolor{\tableheadcoloralt}
        i&0&1&2&3&4&5&6&7\\
        s_i& 0&1&2&2&0&2&1&1\\
        s_{i+1}& 1&2&2&0&2&1&1&0\\
        s_{i+4}& 0&2&1&1&0&1&2&2.
        \end{array}
    \]
    Using the first two rows of the table to count the pairs of the form $(s_i,s_{i+1})$, we see that $N_{\bfs,1}(a,b)=3^{2-2}=1$ for all nonzero $(a,b)$, and $N_{\bfs, 1}(0,0)=3^{2-2}-1=0$, as expected from \Cref{proposition:TwoTupleBalancePropertyOfMSequences} with $\tau=1$, which is not divisible by $[2]_3=4$.
    On the other hand, using the first and third rows of the table we count that $N_{\bfs,4}(1,2)=3=3^{2-1}$ and $N_{\bfs,4}(0,2)=0$, as expected from \Cref{proposition:TwoTupleBalancePropertyOfMSequences} with $\tau=4$, which is divisible by $[2]_3=4$.
    Finally, we arrange $\bfs$ in a $(q-1) \times \w{t} = 2\times 4$ array below.
    \[
        \left[
        \begin{array}[]{cccc}
        0&1&2&2\\
        0&2&1&1
        \end{array}
        \right]
        =
        \left[
        \begin{array}[]{cccc}
        s_0&s_1&s_2&s_3\\
        s_4&s_5&s_6&s_7
        \end{array}
        \right]
        =
        \left[
        \begin{array}[]{c}
            P_0\\P_1
        \end{array}
        \right].
    \]
    This shows that $P_0=(0,1,2,2)$, and $P_1=(0,2,1,1)=2P_0$.
    We note that a root $\a$ of the minimal polynomial $x^2+x+2 \in \fthreex$ satisfies $\a^{[2]_3}=\a^4=2$, hence $P_1=\a^{[2]_3}P_0$, as per \Cref{proposition:ProjectivePropertyOfMaximalSequences}.
\end{example}

\begin{example}
{}
{PropertiesOfMaximalSequencesAnotherExample}
    We assume the notation used in \Cref{proposition:BalancePropertyMSequences,proposition:TwoTupleBalancePropertyOfMSequences,proposition:ProjectivePropertyOfMaximalSequences}, and define the sequence
    \[\bfs=\overline{( 1,0,\a,\a,1,\a,0,\a+1,\a+1,\a,\a+1,0,1,1,\a+1)}\]
    which is the maximal sequence discussed in \Cref{example:TraceRepresentationOfSequenceOverFFour}.
    We observe that
    $N_{\bfs}(1)=N_{\bfs}(\a)= N_{\bfs}(\a+1)=4=4^{2-1}$,
    and
    $N_{\bfs}(0)=3=4^{2-1}-1$,
    as per \Cref{proposition:BalancePropertyMSequences}.
    To demonstrate \Cref{proposition:TwoTupleBalancePropertyOfMSequences}, we construct the table below.
    \[
        \begin{array}[]{c*{8}{!{\color{\tableheadcoloralt}\vrule}c}}
            \rowcolor{\tableheadcoloralt}
            i&0&1&2&3&4&5&6&7\\
            s_i&1&0&\a&\a&1&\a&0\\
            s_{i+1}&0&\a&\a&1&\a&0&\a+1&\a+1\\
            s_{i+10}&\a+1&0&1&1&\a+1&1&0&\a\\
            \rowcolor{\tableheadcoloralt}
            i&8&9&10&11&12&13&14&\\
            s_i&\a+1&\a&\a+1&0&1&1&\a+1&\\
            s_{i+1}&\a&\a+1&0&1&1&\a+1&1&\\
            s_{i+10}&\a&1&\a&0&\a+1&\a+1&\a&\\
        \end{array}
    \]
    Using the first two rows of the table to count the pairs of the form $(s_i,s_{i+1})$, we see that $N_{\bfs,1}(a,b)=4^{2-2}=1$ for all nonzero $(a,b)$, and $N_{\bfs, 1}(0,0)=4^{2-2}-1=0$; this is as expected from \Cref{proposition:TwoTupleBalancePropertyOfMSequences} with $\tau=1$, which is not divisible by $[2]_4=5$.
    On the other hand, from the first and third rows of the table we count that $N_{\bfs,10}(1,\a+1)=4=4^{2-1}$ and $N_{\bfs,4}(1,\a)=0$, as expected from \Cref{proposition:TwoTupleBalancePropertyOfMSequences} with $\tau=10$, which is divisible by $[2]_4=5$.
    Now, we see in \Cref{table:PowersOfBetaInExample} that $\beta^{[2]_4}=\beta^5=\a$ and $\beta^{[2]_4}=\beta^{10}=\a+1$.
    We arrange $\bfs$ in a $(q-1) \times \w{t} = 3\times 5$ array below.
    \[
        \left[
            \begin{array}[]{ccccc}
            s_0&s_1&s_2&s_3&s_4\\
            s_5&s_6&s_7&s_8&s_9\\
            s_{10}&s_{11}&s_{12}&s_{13}&s_{14}
            \end{array}
        \right]
        =
        \left[
            \begin{array}[]{l*{4}{!{\color{beach!90!black}\vrule}l}}
            1&0&\a&\a&1\\
            \a&0&\a+1&\a+1&\a\\
            \a+1&0&1&1&\a+1
            \end{array}
        \right]
        =
        \left[
            \begin{array}{l}
                P_0\\P_1\\P_2
            \end{array}
        \right].
    \]
    Indeed, $P_1=\a P_0=\beta^{[4]_2} P_0$ and $P_2=(\a+1)P_0=\beta^{2[2]_2} P_0$, as per \Cref{proposition:ProjectivePropertyOfMaximalSequences}.
\end{example}

\subsection{Finite geometry and combinatorial designs}
\label{section:FiniteGeometry}

We continue with some aspects of finite geometry that we need in the next chapters.

\begin{definition}
{Finite projective space}
{FiniteProjectiveSpace}
    An \emph{incidence structure} or \emph{finite geometry} is a pair of finite sets of \emph{points} and \emph{lines}, along with a reflexive and symmetric relation on the set of points, called an \emph{incidence relation}.
    A \emph{finite projective space} $P$ is a finite incidence structure that satisfies all of the following statements.
    \begin{enumerate}
        \item Any two distinct points of $P$ belong to exactly one line.
        \item Let $A,B,C,D$ be four distinct points of which no three are collinear.
            If the lines $AB$ and $CD$ intersect each other, then the lines
            $AD$ and $BC$ also intersect each other.
        \item Any line has at least three points.
    \end{enumerate}
\end{definition}

\begin{definition}
{The $(t-1)$-dimensional projective space $PG(t-1,q)$}
{TheFiniteProjectiveSpacePGDQ}
    Let $t$ be a positive integer, $q$ be a prime power, and $V$ be a vector space of dimension $t$ over $\fq$.
    The finite geometry $PG(t-1,q)$ that has as its points and lines the $1$-dimensional and $2$-dimensional vector subspaces of $V$, respectively, is \emph{the $(t-1)$-dimensional projective space over $\fq$}.
\end{definition}
\index{Projective space}
\index{Projective space!dimension}
\index{Projective space!point}
\index{Projective space!line}

For $v \in V\setminus \{ 0 \}$, let
\begin{equation}
    \label{equation:RepresentationOfPointsInPGAsClassesOfVectors}
\index{Projective space!point}
    [v]= \{ cv \mid c \in \fqstar \},
\end{equation}
where $cv$ is the scalar multiplication of $v$ by $c$.
We use the symbols $[v]$ to represent the points of $PG(t-1,q)$ and we write
$ PG(t-1,q)=\{ [v] \mid v \in V\setminus \{ 0 \} \}.$
For any $u\in [v]$, $u$ is a \emph{representative vector for the point $[v]$}.
For a subspace $S$ of $V$ with dimension $k+1$, the set of points
$\{ [v]\mid v \in S\setminus\{ 0 \} \}$
is a \emph{$k$-flat} of $PG(t-1,q)$.
A $k$-flat with $k=0,1,2,t-2$ is a \emph{point, line, plane, hyperplane} of $PG(t-1,q)$, respectively.
The flats have properties which give a fundamental connection between finite projective spaces and the designs in the next definition.
\index{Projective space!point}
\index{Projective space!line}
\index{Projective space!plane}
\index{Projective space!flat}
\index{Projective space!hyperplane}

\begin{definition}
{Balanced incomplete block design (BIBD)}
{BIBD}
    For integers $v,s$ with $2<s<v$, a \emph{$(v,s,\lambda)$-BIBD (balanced incomplete block design)} is a pair $(V,\Bcal)$ where $V$ is a set with $|V|=v$ and $\Bcal$ is a multiset consisting of $s$-sets of $V$, such that for every $2$-set $T\subset V$, there are exactly $\lambda$ sets $B\in \Bcal$ such that $T\subset B$.
    The sets $V$ and $\Bcal$ are the \emph{points} and \emph{blocks} of the BIBD, whereas $\lambda$ is its \emph{index}.
\end{definition}
\index{Balance incomplete block design|see{BIBD}}
\index{BIBD}
\index{BIBD!points of}
\index{BIBD!blocks of}
For positive integers $a,b$ and prime power $q$, the number
\[
    {a \brack b}_q
    =
    \begin{cases}
        \frac{(q^a-1)(q^{a-1}-1) \cdots (q^{a-b+1})} {(q^b-1)(q^{b-1}-1)\cdots (q-1)}, \text{ if } & a\geq b;\\
        0 & \text{ otherwise.}
    \end{cases}
\]
is a \emph{Gaussian binomial coefficient}.
We note that this is the number of subspaces of dimension $b$ in a vector space of dimension $a$ over $\fq$.
\begin{proposition}
{Flats as blocks of a BIBD \cite[Proposition 2.35]{colbourn2006handbook}}
{FlatsAsBlocksOfABIBD}
    Let $d$ be a positive integer and $q$ be a prime power.
    For $k\in [0,d-2]$, the points of $PG(d-1,q)$ with the $k$-flats as blocks form a $(v,s,\lambda)$-BIBD with parameters
    \[  v =\w{d+1}, \, s=\w{k+1},\, \lambda= {d \brack i}_q. \]
\end{proposition}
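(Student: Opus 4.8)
The plan is to verify directly the three defining conditions of a $(v,s,\lambda)$-BIBD from \Cref{definition:BIBD} for the incidence structure whose points are the points of $PG(d-1,q)$ and whose blocks are the $k$-flats. Throughout I would fix a vector space $V$ of dimension $d$ over $\fq$ and recall from \Cref{definition:TheFiniteProjectiveSpacePGDQ} that a point is a $1$-dimensional subspace $[v]$ of $V$, while a $k$-flat is the set $\{[v]\mid v\in S\setminus\{0\}\}$ of points arising from a $(k+1)$-dimensional subspace $S\subseteq V$. The key observation is that $S\mapsto\{[v]\mid v\in S\setminus\{0\}\}$ is a bijection between $(k+1)$-dimensional subspaces and $k$-flats, and that the points lying in such a flat are exactly the $1$-dimensional subspaces of $S$; this turns every incidence count into a subspace count, which I can evaluate using the fact recorded just before the statement that ${a\brack b}_q$ is the number of $b$-dimensional subspaces of an $a$-dimensional space over $\fq$.

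First I would settle the two easy parameters. The points are precisely the $1$-dimensional subspaces of $V$, so their number is $v={d\brack 1}_q=\w{d}$. A single block corresponds to a $(k+1)$-dimensional subspace $S$, whose points are the $1$-dimensional subspaces of $S$, giving block size $s={k+1\brack 1}_q=\w{k+1}$, matching the claimed value. I would also record the nondegeneracy $2<s<v$ required by \Cref{definition:BIBD}: for $k\geq 1$ we have $s=\w{k+1}\geq q+1>2$, and $s<v$ follows from $k+1<d$, i.e.\ $k\leq d-2$, which is the assumed range.

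The substance of the proof is the balance condition. Let $[u]\neq[w]$ be two distinct points. The associated $1$-dimensional subspaces are distinct, so $u,w$ are linearly independent and span a unique $2$-dimensional subspace $W=\spn\{u,w\}$. A block $S$ (a $(k+1)$-dimensional subspace) contains both points if and only if $u,w\in S$, which holds if and only if $W\subseteq S$. Hence $\lambda$ is the number of $(k+1)$-dimensional subspaces of $V$ containing the fixed $2$-dimensional subspace $W$. To count these I would pass to the quotient $V/W$, of dimension $d-2$: the correspondence theorem for vector spaces gives a dimension-shifting bijection $S\mapsto S/W$ between subspaces of $V$ containing $W$ and subspaces of $V/W$, under which the $(k+1)$-dimensional subspaces $S$ match the $(k-1)$-dimensional subspaces of $V/W$. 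Their number is ${d-2\brack k-1}_q$, and --- crucially --- this does not depend on the chosen pair $[u],[w]$, which is exactly the balance that makes the structure a BIBD; thus $\lambda={d-2\brack k-1}_q$. As a sanity check, for $k=1$ this gives ${d-2\brack 0}_q=1$, recovering the axiom that two points determine a unique line.

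The step I expect to be the main obstacle is this last count: one must justify that $S\mapsto S/W$ is a bijection shifting dimensions by $2$ (the lattice correspondence between subspaces over $W$ and subspaces of the quotient), and then apply the Gaussian binomial fact inside $V/W$. An equivalent route, should the quotient argument feel heavy, is a double count of incident (flat, $2$-subspace) pairs combined with the identity ${d\brack k+1}_q{k+1\brack 2}_q={d\brack 2}_q{d-2\brack k-1}_q$. I would finish by matching the derived triple $\bigl(\w{d},\,\w{k+1},\,{d-2\brack k-1}_q\bigr)$ against the parameters in the statement, taking care with the dimension bookkeeping in $v$ and with the index appearing in the stated Gaussian binomial for $\lambda$.
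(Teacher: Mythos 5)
The paper itself offers no proof of this proposition --- it is quoted from the Handbook of Combinatorial Designs \cite[Proposition 2.35]{colbourn2006handbook} --- so your argument is filling a genuine gap rather than paralleling an existing one. Your proof is correct and is the standard one: identifying points with $1$-dimensional and $k$-flats with $(k+1)$-dimensional subspaces of a $d$-dimensional space $V$, reducing every incidence count to a subspace count via the Gaussian binomial interpretation, and computing $\lambda$ by the correspondence theorem in the quotient $V/W$ (or, equivalently, by the double-count identity ${d\brack k+1}_q{k+1\brack 2}_q={d\brack 2}_q{d-2\brack k-1}_q$, which you rightly note gives the same answer).

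Two points deserve emphasis, both of which you flagged only obliquely as ``bookkeeping.'' First, your derived parameters are the correct ones and the stated ones are not: the number of points of $PG(d-1,q)$ is $\w{d}$, not $\w{d+1}$ (this is even consistent with the paper's own \Cref{proposition:ConstructionOfPGDQFromFF}, which says $PG(t-1,q)$ has exactly $\w{t}$ points), and the stated $\lambda={d\brack i}_q$ contains an index $i$ that is never defined; the correct value is your $\lambda={d-2\brack k-1}_q$. So you should state plainly that the proposition as printed contains typographical errors, rather than trying to ``match'' your triple against it. Second, the stated range $k\in[0,d-2]$ is too generous: for $k=0$ the blocks are single points, giving $s=1$ and $\lambda={d-2\brack -1}_q=0$, which violates the requirement $2<s<v$ of \Cref{definition:BIBD}; your nondegeneracy check already restricts to $k\geq 1$, and the hypothesis should be corrected accordingly.
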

\index{Projective space!flat}
\index{BIBD}

Then next lemma gives a translation of the notion of linear independence in a finite vector space over $\fq$ into the language of flats in the corresponding projective space.
\begin{lemma}
{}
{IndependenceAndGeneralPosition} 
Let $V$ be a $t$-dimensional vector space over $\fq$ and $v_0, \dots, v_{n-1} \in V$, where $v\leq t$.
Then, $v_0, \dots, v_{n-1}$ are linearly independent if and only if there is no $(n-2)$-flat in $PG(t-1,q)$ that contains $[v_0], \dots, [v_{n-1}]$.
\end{lemma}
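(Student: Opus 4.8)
The plan is to translate the geometric statement into a purely linear-algebraic one using the dictionary between flats of $PG(t-1,q)$ and subspaces of $V$. By \Cref{definition:TheFiniteProjectiveSpacePGDQ} and the discussion following it, an $(n-2)$-flat is exactly the set of points $\{[u] \mid u \in S \setminus \{0\}\}$ arising from a subspace $S \le V$ of dimension $n-1$; and since each point $[v_i] = \{cv_i \mid c \in \fqstar\}$ is the nonzero part of a $1$-dimensional subspace, such a flat contains $[v_i]$ if and only if $v_i \in S$. Hence the assertion to be proved is equivalent to the statement that $v_0, \dots, v_{n-1}$ are linearly independent if and only if no $(n-1)$-dimensional subspace $S \le V$ contains all of $v_0, \dots, v_{n-1}$. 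I would prove this equivalence by establishing the contrapositive of each implication.

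For the first direction I would assume that some $(n-1)$-dimensional subspace $S$ contains $v_0, \dots, v_{n-1}$. These are then $n$ vectors lying in a space of dimension $n-1 < n$, so they must be linearly dependent; this is the contrapositive of ``independent $\Rightarrow$ no such flat''. For the converse I would assume $v_0, \dots, v_{n-1}$ are linearly dependent and set $W = \spn\{v_0, \dots, v_{n-1}\}$, so that $\dim W \le n-1$. Because $n \le t$, we have $\dim W \le n-1 \le t$, so $W$ can be extended inside $V$ to a subspace $S$ of dimension exactly $n-1$. This $S$ contains every $v_i$ and corresponds to an $(n-2)$-flat containing all the points $[v_i]$, giving the contrapositive of ``no such flat $\Rightarrow$ independent''.

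The argument is essentially routine dimension-counting, so I do not expect any serious obstacle; the only points requiring care are, first, that the very definition of the points $[v_i]$ tacitly requires each $v_i \neq 0$ (automatic in the independent case and implicit in the hypothesis that the $[v_i]$ are points), and second, that the extension of $W$ to a subspace of dimension exactly $n-1$ relies on the hypothesis $n \le t$ to guarantee enough room inside $V$. I would state both of these explicitly so that the equivalence is airtight in every case.
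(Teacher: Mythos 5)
Your proof is correct and follows essentially the same route as the paper's: translate the $(n-2)$-flat into its underlying $(n-1)$-dimensional subspace of $V$, and then observe that containment of all $v_i$ in such a subspace is equivalent to linear dependence. The paper compresses this into a single sentence, whereas you supply the two details it leaves implicit (the dimension count for one direction, and the extension of $\spn\{v_0,\dots,v_{n-1}\}$ to a subspace of dimension exactly $n-1$, using $n\leq t$, for the other), so your write-up is simply a more careful version of the same argument.
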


\begin{proof}
    From the definition of flats, $[v_0], \dots, [v_{n-1}]$ belong in the same $(n-2)$-flat if and only if $v-0, \dots, v_{n-1}$ belong in the same $(n-1)$-dimensional subspace of $V$, which is true if and only if they are linearly dependent.
\end{proof}

In this thesis we use exclusively $\fqt$ as our $t$-dimensional vector space $V$ when constructing $PG(t-1,q)$.
To demonstrate this construction, we need the next lemma.

\begin{proposition}
{Construction of $PG(t-1,q)$ using $\fqt$}
{ConstructionOfPGDQFromFF}
    Let $q$ be a prime power, $t$ be a positive integer, and $\a$ be a primitive element of $\fqt$.
    Then, the set
    \[\{ [\a^i] \mid i \in [0,\w{t}-1] \}\]
    contains exactly all the $\w{t}$ points of $PG(t-1,q)$.
\end{proposition}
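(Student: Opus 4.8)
The plan is to pin down the total number of points of $PG(t-1,q)$ by a direct count, and then show that the $\w{t}$ listed classes are exactly these points by using the characterization of constant multiples already established. First I would recall from \Cref{equation:RepresentationOfPointsInPGAsClassesOfVectors} that a point of $PG(t-1,q)$ is a class $[v]=\{cv\mid c\in\fqstar\}$ with $v\in\fqt\setminus\{0\}$. Since $v\neq 0$, the map $c\mapsto cv$ is injective on $\fqstar$, so each class has exactly $q-1$ elements; as the classes partition the $q^t-1$ nonzero vectors, there are precisely $\frac{q^t-1}{q-1}=\w{t}$ points in total. This fixes the target cardinality.

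Next, because $\a$ is a primitive element of $\fqt$, \Cref{definition:PrimitiveElement} gives $\fqtstar=\{\a^i\mid i\in[0,q^t-2]\}$, so every nonzero vector -- and hence every point -- is of the form $[\a^i]$ for some $i$. The key step is to determine when two such classes coincide. By the definition of the classes, $[\a^j]=[\a^i]$ holds exactly when $\a^j=c\a^i$ for some $c\in\fqstar$, and \Cref{lemma:CharacterizationOfConstantMultiplesInFQM} translates this condition into $j\equiv i\Mod{\w{t}}$. This equivalence is the engine of the whole argument.

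With it in hand the result follows quickly by a double-counting argument. On the one hand, the indices $0,1,\dots,\w{t}-1$ lie in distinct residue classes modulo $\w{t}$, so the points $[\a^0],\dots,[\a^{\w{t}-1}]$ are pairwise distinct, giving $\w{t}$ distinct points. On the other hand, any point $[\a^i]$ with $i\in[0,q^t-2]$ satisfies $[\a^i]=[\a^{i'}]$, where $i'\in[0,\w{t}-1]$ is the residue of $i$ modulo $\w{t}$, so the listed set already contains every point. Since the total number of points is exactly $\w{t}$, the set of $\w{t}$ distinct classes $\{[\a^i]\mid i\in[0,\w{t}-1]\}$ must be all of them.

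I do not expect a genuine obstacle here: once \Cref{lemma:CharacterizationOfConstantMultiplesInFQM} is invoked, the argument is essentially bookkeeping, since that lemma does the real work of tying divisibility by $\w{t}$ to the scalar-multiple relation defining the points. The only step requiring a little care is confirming both distinctness and exhaustiveness at the end, but each half is immediate from the mod-$\w{t}$ criterion, so there is no substantive difficulty to overcome.
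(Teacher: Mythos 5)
Your proof is correct and follows essentially the same route as the paper's: count the points of $PG(t-1,q)$ as $(q^t-1)/(q-1)=\w{t}$ via the partition of $\fqtstar$ into scalar-multiple classes, then apply \Cref{lemma:CharacterizationOfConstantMultiplesInFQM} to show the classes $[\a^i]$, $i\in[0,\w{t}-1]$, are pairwise distinct. The only cosmetic difference is that you additionally verify exhaustiveness by reducing exponents modulo $\w{t}$, whereas the paper gets this for free from the pigeonhole count of $\w{t}$ distinct points among exactly $\w{t}$ total.
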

\index{Projective space}
\begin{proof}
    First, there are $q^t-1$ nonzero vectors in $\fqt$, considered as a vector space, and each has $q-1$ nonzero scalar multiples.
    Therefore, there are precisely $(q^t-1)/(q-1)= \w{t}$ distinct $1$-dimensional vector subspaces of $\fqt$, which means that $PG(t-1,q)$ has exactly $\w{t}$ points.
    It suffices to show that the elements $[\a^i]$, $i \in [0,\w{t}-1]$ are distinct.
    We assume by means of contradiction that this is not the case, so that there exist $i, j \in [0,\w{t}-1]$, $i\neq j$, such that $[\a^i]=[\a^j]$.
    By \Cref{equation:RepresentationOfPointsInPGAsClassesOfVectors}, this means that $\a^j=c\a^i$ for some $c\in \fqstar$ which, by \Cref{lemma:CharacterizationOfConstantMultiplesInFQM} is equivalent to $j\equiv i \Mod{\w{t}}$.
    Hence, we have that $\w{t}\mid j-i$, which implies that $\w{t}\leq j-i$ or $j-i=0$; this contradicts the definition of $i$ and $j$.
\end{proof}

\begin{example}
{The projective space $PG(2,3)$}
{ProjectiveSpacePG23}
    Let  $\a$ be a root of the primitive polynomial $x^3+2x+1$ over $\fthree$, so that $\a$ is a primitive element in $\f_{3^3}$.
    From \Cref{proposition:ConstructionOfPGDQFromFF} there are exactly $[3]_3=(3^3-1)/(3-1)=13$ points in $PG(2,3)$, given by $[\a^i], i \in [0,12]$.
    In the following table, we list these powers of $\a$ in the form $c_2\a^2+c_1\a+c_0$, as per \Cref{equation:FundamentalRepresentationOfFiniteField}, as well as the coordinate vector $(c_2,c_1,c_0)$ to emphasize how we view $\f_{3^3}$ as a $3$-dimensional vector space over $\fq$.
    \[
    \begin{array}{c*{7}{!{\color{\tableheadcoloralt}\vrule}c}}
    \rowcolor{\tableheadcoloralt}
    i&0& 1& 2& 3& 4& 5& 6\\
    \a^i&1& \a& \a^2& \a + 2& \a^2 + 2\a& 2\a^2 + \a + 2& \a^2 + \a + 1\\
    \text{Coord.}&(0,0,1)& (0,1,0)& (1,0,0)& (0,1,2)& (1,2,0)& (2,1,2)& (1,1,1)\\
    \rowcolor{\tableheadcoloralt}
    i&7& 8& 9& 10& 11& 12 &\\
    \a^i&\a^2 + 2\a + 2& 2\a^2 + 2& \a + 1& \a^2 + \a& \a^2 + \a + 2& \a^2 + 2 &\\
    \text{Coord.}&(1,2,2)& (2,0,2)& (0,1,1)& (1,1,0)& (1,1,2)& (1,0,2)& 
    \end{array}
\]

    Using the table, in \Cref{figure:ProjectiveSpacePG23} we provide a visual representation of the points and lines of $PG(2,3)$ that also demonstrates the incidence relations between the points.
    For example, the $1$-dimensional subspace of $\fthree^3$ spanned by $(0,0,1)$ corresponds to the point labeled $c$ in the figure.
    Similarly, the points labeled $b$ to $m$, correspond to the remaining $1$-dimensional subspaces of $\fthree^3$, as shown on the left of the figure.
    Next, we consider the $2$-dimensional subspace of $\fthree^3$, given by
    \[S=\{ (x,0,y) \mid x,y \in \fthree \}.\]
    This corresponds to the line ($1$-flat) of $PG(2,3)$ whose points correspond to the $1$-dimensional subspaces contained in $S$.
    There are four such subspaces, and they are spanned by the vectors
    $(0,0,1),(1,0,0),(1,0,2)$ and $(2,0,2)$, respectively.
    These are representative vectors for the points labeled $a,b,c$ and $k$, respectively, which are indeed shown to be collinear in the diagram.
\end{example}
\begin{figure}[t]
    \centering
    \begin{tikzpicture}[scale=1.8]
    \foreach \x in {0,...,2}
    \foreach \y in {0,...,2} {
          \pgfmathtruncatemacro\nk{\y*3+\x+1}
            \coordinate (\nk) at (\x,2-\y);
    }
    \coordinate (11) at ({3+sqrt(2)},1);
    \coordinate (13) at (1,{-1-sqrt(2)});
    \coordinate (10) at ($(2,2)+(45:2)$);
    \coordinate (12) at ($(2,0)+(-45:2)$);
    \coordinate (14) at (-3,0);
    \begin{knot}
    \strand (10) arc[radius={2+sqrt(2)},start angle=45,end angle=-90];
    \strand (1) -- (2) -- (3) to[out=0,in=135] (11);
    \strand (4) -- (5) -- (6) -- (11);
    \strand (7) -- (8) -- (9) to[out=0,in=-135] (11);
    \strand (1) -- (4) -- (7) to[out=-90,in=135] (13);
    \strand (2) -- (5) -- (8) -- (13);
    \strand (3) -- (6) -- (9) to[out=-90,in=45] (13);
    \strand (7) -- (5) -- (3) -- (10);
    \strand (9) .. controls +(-2.5,-2.5) and +(-2.5,-2.5) .. (4) -- (2) to[out=45,in=-180] (10);
    \strand (1) .. controls +(-2.5,-2.5) and +(-2.5,-2.5) .. (8) -- (6) to[out=45,in=-90] (10);
    \strand (1) -- (5) -- (9) -- (12);
    \strand (7) .. controls +(-2.5,2.5) and +(-2.5,2.5) .. (2) -- (6) to[out=-45,in=90] (12);
    \strand (3) .. controls +(-2.5,2.5) and +(-2.5,2.5) .. (4) -- (8) to[out=-45,in=180] (12);
    \end{knot}
        \tikzstyle{point}=[ball color=white, circle, draw=black,minimum width=24pt]
        \node[point] at (1)  {$a$};
        \node[point] at (2)  {$b$};
        \node[point] at (3)  {$c$};
        \node[point] at (4)  {$d$};
        \node[point] at (5)  {$e$};
        \node[point] at (6)  {$f$};
        \node[point] at (7)  {$g$};
        \node[point] at (8)  {$h$};
        \node[point] at (9)  {$i$};
        \node[point] at (10) {$j$};
        \node[point] at (11) {$k$};
        \node[point] at (12) {$l$};
        \node[point] at (13) {$m$};
        \node at (14) {
$
\begin{aligned}
{} 
[(0,0,1)]&=[1]	     =c\\
[(0,1,0)]&=[\a]      =m\\
[(1,0,0)]&=[\a^2]    =k\\
[(0,1,2)]&=[\a^{3}]  =i\\
[(1,2,0)]&=[\a^4]    =l\\
[(2,1,2)]&=[\a^{5}]  =h\\
[(1,1,1)]&=[\a^6]    =e\\
[(1,2,2)]&=[\a^7]    =d\\
[(2,0,2)]&=[\a^{8}]  =b\\
[(0,1,1)]&=[\a^9]    =f\\
[(1,1,0)]&=[\a^{10}] =j\\
[(1,1,2)]&=[\a^{11}] =g\\
[(1,0,2)]&=[\a^{12}] =a
\end{aligned}
$
};
\end{tikzpicture}
    \caption[Visual representation of the points and lines of $PG(2,3)$]{A visual representation of the points and lines of $PG(2,3)$, constructed using either $\fthree^3$ or $\f_{3^3}$ as discussed in \Cref{example:ProjectiveSpacePG23}.}
    \label{figure:ProjectiveSpacePG23}
\end{figure}
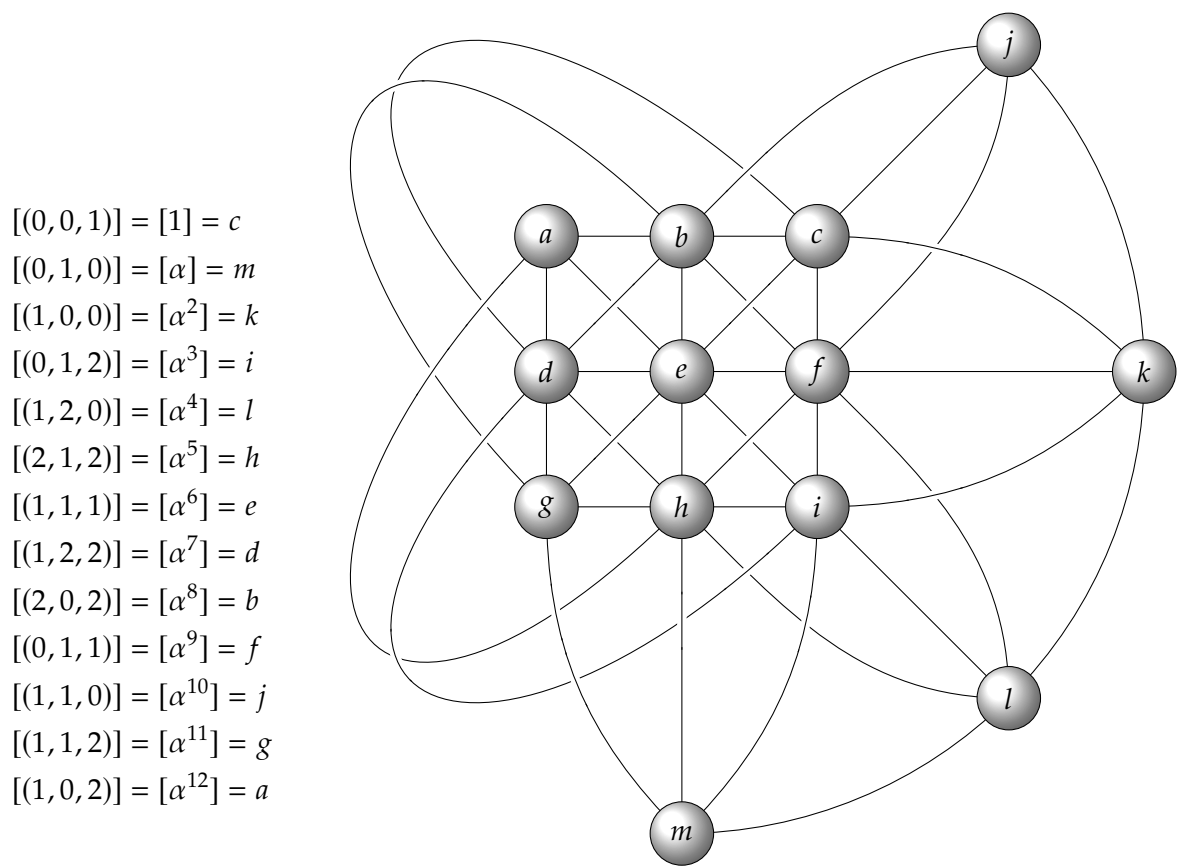

\begin{example}
{}
{GenPositionAndLinearIndepInPG23}
The points $a,b$ and $e$ in \Cref{figure:ProjectiveSpacePG23} are collinear, which means that they belong in the same $(3-2)$-flat.
    The vectors $(1,0,2), (2,0,2)$ and $(1,1,1)$, which are respective representative vectors for these points, are indeed linearly independent as per \Cref{lemma:IndependenceAndGeneralPosition}.
\end{example}
We close this section with a definition of a structure that is useful in the next chapter.
\begin{definition}
{Arcs, tracks and caps}
{NTrack}
    Let $S$ be a set of $k$ points in $PG(t-1,q)$. 
    \begin{itemize}
        \item $S$ is a \emph{$k$-arc} if there is no $(t-2)$-flat (hyperplane) in $PG(t-1,q)$ that contains $t$ points from $S$.
        \item $S$ is a \emph{$k$-track} if it is not a $k$-arc and there is no $(t-3)$-flat in $PG(t-1,q)$ that contains $t-1$ points from $S$. 
        \item $S$ is a \emph{$k$-cap} if no $3$ points in $S$ are collinear.
    \end{itemize}
\end{definition}
\index{Arc}
\index{Track}
\index{Cap}

\newpage
\begin{remark}
    {Arcs, tracks, caps and linear independence}
    {ArcsTracksCapsAndLI}
    Let $S=\{[v_0], \dots, [v_{k-1}]\}$ be a set of points from $PG(t-1,q)$. 
    The following are a straightforward implication of \Cref{lemma:IndependenceAndGeneralPosition} and \Cref{definition:NTrack}.
    \begin{itemize}
        \item $S$ is a $k$-arc if and only if every $t$ vectors among $v_0, \dots, v_{k-1}$ are linearly independent over $\fq$.
        \item $S$ is a $k$-track if and only if every $t-1$ vectors among $v_0, \dots, v_{k-1}$ are linearly independent over $\fq$ but there are some $t$ vectors that are linearly dependent.
        \item $S$ is a $k$-cap if and only if every $3$ vectors among $v_0, \dots, v_{k-1}$ are linearly independent over $\fq$.
    \end{itemize}
\end{remark}

\subsection{Linear codes}
In this section we give some background on linear codes that is essential for our thesis.

\begin{definition}
{Hamming distance and Hamming weight}
{HammingDistance}
    Let $\mathbf{x}=(x_0, \dots, x_{n-1}), \mathbf{y}=(y_0, \dots, y_{n-1}) \in \fq^n$.
    The \emph{Hamming distance between $\mathbf{x}$ and $\mathbf{y}$} is the cardinality of the set $\{ i \mid x_i \neq y_i \}$.
    The \emph{(Hamming) weight} of $\mathbf{x}$, denoted by $w(\mathbf{x})$, is the number of its nonzero coordinates.
\end{definition}
\index{Hamming!distance}
\index{Hamming!weight}

\begin{definition}
{Linear codes over a finite field and related notions}
{LinearCodeOverFF}
    Let $q$ be a prime power.
    A \emph{$q$-ary linear code} $C$ of \emph{length} $n$, \emph{dimension} $k$, and \emph{minimum distance} $d$, referred to as an $[n,k,d]_q$ code, is a subspace of $\fq^n$ with dimension $k$, such that the minimum Hamming distance between distinct elements is $d$.
    The elements of $C$ are referred to as the \emph{words} or \emph{codewords} of $C$.
    A $k\times n$ matrix whose rows are a basis for $C$ is a \emph{generating matrix} for $C$.
    The \emph{dual code} of $C$, denoted $C^{\perp}$, is the orthogonal complement of $C$ in $\fq^n$, i.e. it is the code of length $n$ and dimension $n-k$ given by
    \[
        C^{\perp}
        = \left\{ x \in \fq^{n} \mid x\cdot y =0 \text{ for all } y \in C \right\}.
    \]
    A $(n-k)\times n$ matrix that is a generator matrix for $C^{\perp}$ is a \emph{parity check matrix} for $C$.
\end{definition}
\index{Linear code}
\index{Linear code!length}
\index{Linear code!dimension}
\index{Linear code!minimum distance}
\index{Linear code!generating matrix}
\index{Linear code!dual of}
\index{Matrix!generating}
\index{Matrix!parity check}
\index{Dual code}

From the definition of the Hamming distance and weight it follows that for $\mathbf{x}, \mathbf{y} \in \fq^n$, we have
$d(\mathbf{x},\mathbf{y})= w(\mathbf{x}-\mathbf{y})$.
An immediate consequence is the following lemma.
\begin{lemma}
{}
{MinimumDistanceEqualsMinimumWeight}
    The minimum distance $d$ of a $q$-ary linear code satisfies
    \[d=\min\left\{w(\mathbf{x}) \mid x\in C, x\neq 0 \right \}.\]
\end{lemma}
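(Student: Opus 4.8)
The plan is to prove that the two sets of numbers being minimized are in fact equal, which immediately yields equality of their minima. The crucial ingredients are that $C$ is a \emph{linear} code---hence a subspace that contains $\mathbf{0}$ and is closed under subtraction---together with the identity $d(\mathbf{x},\mathbf{y})=w(\mathbf{x}-\mathbf{y})$ recorded just before the lemma statement.

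First I would set $A=\{d(\mathbf{x},\mathbf{y}) \mid \mathbf{x},\mathbf{y}\in C,\ \mathbf{x}\neq\mathbf{y}\}$ and $B=\{w(\mathbf{x}) \mid \mathbf{x}\in C,\ \mathbf{x}\neq\mathbf{0}\}$, so that the minimum distance is $\min A$ and the claim becomes $\min A=\min B$. To show $A\subseteq B$, I would take any distance $d(\mathbf{x},\mathbf{y})$ with $\mathbf{x},\mathbf{y}\in C$ distinct; since $C$ is a subspace, $\mathbf{z}=\mathbf{x}-\mathbf{y}\in C$ and $\mathbf{z}\neq\mathbf{0}$, and by the weight--distance identity $d(\mathbf{x},\mathbf{y})=w(\mathbf{z})\in B$. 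Thus every inter-codeword distance equals the weight of some nonzero codeword.

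For the reverse inclusion $B\subseteq A$, I would exploit that $\mathbf{0}\in C$: given any nonzero $\mathbf{x}\in C$, the pair $(\mathbf{x},\mathbf{0})$ consists of two distinct codewords, and $w(\mathbf{x})=w(\mathbf{x}-\mathbf{0})=d(\mathbf{x},\mathbf{0})\in A$. Combining the two inclusions gives $A=B$, hence $\min A=\min B$, which is exactly the asserted formula.

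There is no substantial obstacle here: the result is entirely formal once linearity of $C$ is invoked. The only point that genuinely requires the hypothesis---and the only place where a nonlinear code would break the argument---is the reverse inclusion $B\subseteq A$, where both closure under subtraction (to keep $\mathbf{z}$ inside $C$) and membership of $\mathbf{0}$ (to realize a weight as a distance) are needed. I would therefore make a point of flagging explicitly where linearity enters, since that is the entire conceptual content of the lemma.
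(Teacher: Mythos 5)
Your proof is correct and is exactly the argument the paper has in mind: the paper states this lemma as an immediate consequence of the identity $d(\mathbf{x},\mathbf{y})=w(\mathbf{x}-\mathbf{y})$ together with linearity of $C$, which is precisely the two-inclusion argument you spell out. Your write-up simply makes explicit (via the sets $A$ and $B$ and the role of $\mathbf{0}\in C$ and closure under subtraction) what the paper leaves to the reader.
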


An implication of \Cref{lemma:MinimumDistanceEqualsMinimumWeight} is the following classic result, that shows a connection between the minimum distance of a linear code with the linear independence of the columns of a parity check matrix.
\begin{proposition}
{Minimum weight and linear independence}
{MinWeightAndLinearIndependence}
    The minimum distance $d$ of a linear code $C$ over $\fq$ with parity check matrix $H$ satisfies
    \[
        d= \min\left\{ s \mid \text{there exist $s$ columns of $H$ that are linearly dependent} \right\}.
    \]
\end{proposition}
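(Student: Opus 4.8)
The plan is to reduce the claim to \Cref{lemma:MinimumDistanceEqualsMinimumWeight}, which already expresses $d$ as the least weight of a nonzero codeword, and then to translate the weight of a codeword into the language of linear dependences among the columns of $H$.

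First I would pin down membership in $C$ in terms of $H$. By \Cref{definition:LinearCodeOverFF} the rows of $H$ form a basis of $C^{\perp}$, so for $x\in\fq^{n}$ we have $x\in C=(C^{\perp})^{\perp}$ precisely when $x$ is orthogonal to every row of $H$, i.e.\ when $Hx^{T}=0$. Denoting the columns of $H$ by $h_0,\dots,h_{n-1}$, the product expands as $Hx^{T}=\sum_{i=0}^{n-1}x_i h_i$, so that
\[
    x\in C \iff \sum_{i=0}^{n-1} x_i h_i = 0.
\]
Under this equivalence a nonzero codeword of weight $s$ is exactly a nontrivial relation $\sum_{i\in S} x_i h_i = 0$ with $x_i\neq 0$ for each $i$ in a support set $S$ of size $s$; that is, it certifies that the $s$ columns indexed by $S$ are linearly dependent.

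With this dictionary in place, I would prove the two inequalities, writing $s^{*}$ for the minimum on the right-hand side of the statement. For $d\geq s^{*}$, take a minimum-weight nonzero codeword $x$ with $w(x)=d$ and support $S$; the relation above exhibits $d$ linearly dependent columns, so $s^{*}\leq d$. For $d\leq s^{*}$, take any $s^{*}$ columns that are linearly dependent, say indexed by $T$, together with coefficients $c_i$ (not all zero) satisfying $\sum_{i\in T}c_i h_i=0$; setting $x_i=c_i$ for $i\in T$ and $x_i=0$ otherwise yields a nonzero codeword with $w(x)\leq|T|=s^{*}$, whence $d\leq s^{*}$. Combining the two bounds gives $d=s^{*}$.

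The only step requiring genuine care is the identity $(C^{\perp})^{\perp}=C$ used to obtain the equivalence $x\in C\iff Hx^{T}=0$; this relies on the standard fact that, for the nondegenerate dot product on $\fq^{n}$, the orthogonal complement satisfies $\dim C + \dim C^{\perp}=n$ and $(C^{\perp})^{\perp}=C$. Everything else is a routine translation between codewords and their supports, so I expect no further obstacles.
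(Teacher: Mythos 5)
Your proposal is correct and follows essentially the same route as the paper's proof: both reduce the statement to \Cref{lemma:MinimumDistanceEqualsMinimumWeight} and then translate supports of nonzero codewords into linear dependences among the columns of $H$ in the two directions. The only difference is one of care rather than substance: you explicitly justify the equivalence $x\in C \iff Hx^{T}=0$ via $(C^{\perp})^{\perp}=C$, a step the paper uses silently when it asserts that the vector built from a column dependence lies in $C$.
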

\begin{proof}
    Let $n$ be the length of $C$ and denote the column vectors of $H$ by $H_0, \dots, H_{n-1}$.
    From \Cref{lemma:MinimumDistanceEqualsMinimumWeight}, it suffices to show that, for every positive integer $s$, there exists a nonzero word $x \in C$ of weight at most $s$ if and only if there exist $s$ columns of $H$ that are linearly dependent.

    Assume that $x=(x_0, \dots, x_{n-1}) \in C$ has weight at most $s$, so that there exists an $s$-set $I\subseteq[0,n-1]$ such that $x_i=0$ if $i \not\in I$.
    Every row $s$ of $H$ is a codeword of $C^\perp$, thus it satisfies $r\cdot \mathbf{x}=0$.
    Since this is true for every row, the column vectors $H_0, \dots, H_{n-1}$ of $H$ satisfy $\sum_{i=0}^{n-1}x_iC_i=0$, which implies that $\sum_{i\in I}x_iC_i=0$; this means that the columns $H_i, i \in I$ are linearly dependent.

    Conversely, assume there are $s$ linearly dependent columns in $A$, say $H_{i_0}, \dots, H_{i_{s-1}}$, for some $\{ i_0, \dots, i_{s-1} \}\subseteq [0,n-1]$.
    Then, $\sum_{j=0}^{s-1}x_{i_j}H_{i_j}=0$ for some $x_{i_j} \in \fq$, not all zero.
    Hence, the vector $(x_0, \dots, x_{n-1})$ so that $x_i=0$ if $i \not\in \{ i_0, \dots, i_{s-1} \}$, is in $C$ and has weight at most~$s$.
\end{proof}

\section{Orthogonal and covering arrays}
\label{section:OrthogonalAndCoveringArrays}
\subsection{Introduction}
We begin with some necessary notions.
\begin{definition}
{Covered vectors}
{CoveredColumns}
    Let $V$ be a finite set, $t,n$ be positive integers, $S$ be a set of $t$ vectors from $V^n$, and let $A_S$ be a $n\times t$ array whose columns are the vectors in $S$. 
    We have the following definitions for $S$.
    \begin{itemize}
        \item If every $t$-tuple in $V^t$ appears \emph{at least} $\lambda$ times as a row of $A_S$, then $S$ is \emph{$\lambda$-covered}.
        \item If every $t$-tuple in $V^t$ appears \emph{exactly} $\lambda$ times as a row of $A_S$, then $S$ is \emph{uniformly $\lambda$-covered}.
        \item If $S$ is $1$-covered, we simply refer to it as \emph{covered}.
        \item If $S$ is uniformly $1$-covered, we simply refer to it as \emph{uniformly covered}.
    \end{itemize}
    The vectors $v_1, \dots, v_s$ are (uniformly) $\lambda$-covered, if the set $\{v_1, \dots, v_s \}$ is (uniformly) $\lambda$-covered.
\end{definition}
\index{Covered set|see{Set}}
\index{Uniformly covered set|see{Set}}
\index{Set!covered}
\index{Set!uniformly covered}

In his 1943 Master's thesis, Rao introduced hypercubes of strength $d$.
In a series of subsequent papers \cite{rao1946hypercubes,rao1947factorial,rao1949class}, he extended these to a wider class of combinatorial objects, which became known as orthogonal arrays.
These are closely connected with other combinatorial objects, such as mutually orthogonal latin squares, transversal designs, projective geometries, and linear codes; we refer to \cite{colbourn2006handbook} for more on the subject and to \cite{hedayat2012orthogonal} for a textbook treatment.

\begin{definition}
{Orthogonal array}
{OrthogonalArray}
    Let $A$ be an $N\times k$ array with entries from an alphabet of cardinality $v$, $t\leq k$ be a positive integer such that $v^t|N$, and $\lambda = N/v^t$.
    If every $t$-set of column vectors of $A$ is uniformly $\lambda$-covered, then $A$ is an \emph{orthogonal array of strength $t$ and index $\lambda$}, denoted $\OA_{\lambda}(t, k,v)$.
    When $\lambda=1$ the orthogonal array has \emph{index unity}, and we simply write $\OA_{}(t, k,v)$.
\end{definition}
\index{Array!orthogonal|see{Orthogonal array}}
\index{Orthogonal array}
\index{Orthogonal array!index}
\index{Orthogonal array!index unity}
\index{Index!of orthogonal array|see{Orthogonal array}}
\index{Strength!of orthogonal array|see{Orthogonal array}}

\begin{definition}
{Linear orthogonal array}
{LinearOA}
    Let $q$ be a prime power.
    An $\OA_{\lambda}(t, k,q)$ with elements from the finite field $\fq$ is  \emph{linear} if its row vectors are distinct and form a subspace of $\fq^k$.
\end{definition}
\index{Orthogonal array!linear}

\begin{figure}[t!]
\centering
\[
\begin{array}[]{ccCccccCccccc}
1&1&1&0&1&1&0&1&0&0&0\\
1&1&0&1&1&0&1&0&0&0&1\\
1&0&1&1&0&1&0&0&0&1&1\\
0&1&1&0&1&0&0&0&1&1&1\\
1&1&0&1&0&0&0&1&1&1&0\\
1&0&1&0&0&0&1&1&1&0&1\\
0&1&0&0&0&1&1&1&0&1&1\\
1&0&0&0&1&1&1&0&1&1&0\\
0&0&0&1&1&1&0&1&1&0&1\\
0&0&1&1&1&0&1&1&0&1&0\\
0&1&1&1&0&1&1&0&1&0&0\\
0&0&0&0&0&0&0&0&0&0&0
\end{array}
\]
$\OA_3(2,11,2)$
\vspace{1em}

    \begin{center}
    \begin{minipage}{.23\linewidth}
        \[
        \begin{array}[]{CC}
            1&1\\
            \rowcolor{goldfish}
            0&0\\
            1&0\\
            1&0\\
            0&1\\
            1&1\\
            0&1\\
            \rowcolor{goldfish}
            0&0\\
            0&1\\
            1&1\\
            1&0\\
            \rowcolor{goldfish}
            0&0\\
        \end{array}
        \]
        (0,0) appears 3 times
    \end{minipage}
    \begin{minipage}{.23\linewidth}
        \[
        \begin{array}[]{CC}
            1&1\\
            0&0\\
            1&0\\
            1&0\\
            \rowcolor{goldfish}
            0&1\\
            1&1\\
            \rowcolor{goldfish}
            0&1\\
            0&0\\
            \rowcolor{goldfish}
            0&1\\
            1&1\\
            1&0\\
            0&0\\
        \end{array}
        \]
        (0,1) appears 3 times
    \end{minipage}
    \begin{minipage}{.23\linewidth}
        \[
        \begin{array}[]{CC}
            1&1\\
            0&0\\
            \rowcolor{goldfish}
            1&0\\
            \rowcolor{goldfish}
            1&0\\
            0&1\\
            1&1\\
            0&1\\
            0&0\\
            0&1\\
            1&1\\
            \rowcolor{goldfish}
            1&0\\
            0&0\\
        \end{array}
        \]
        (1,0) appears 3 times
    \end{minipage}
    \begin{minipage}{.23\linewidth}
        \[
        \begin{array}[]{CC}
            \rowcolor{goldfish}
            1&1\\
            0&0\\
            1&0\\
            1&0\\
            0&1\\
            \rowcolor{goldfish}
            1&1\\
            0&1\\
            0&0\\
            0&1\\
            \rowcolor{goldfish}
            1&1\\
            1&0\\
            0&0\\
        \end{array}
        \]
        (1,1) appears 3 times
    \end{minipage}
    \end{center}
    \caption[Example of an $\OA_{3}(2, 11,2)$]{Above, an example of an $\OA_{3}(2, 11,2)$; 
        below, checking that the highlighted columns are uniformly $3$-covered,
        as per the definition of an orthogonal array.}
    \label{figure:ExampleOfOA}
\end{figure}

\begin{example}
{}
{OrthogonalArrayExample}
    An example of an $\OA_{3}(2, 11,2)$ is given in \Cref{figure:ExampleOfOA}.
    By definition, every two columns of such an array are uniformly $3$-covered.
    As an example, we verify this with the two highlighted columns in the same figure: since every pair from $\{ 0,1 \}^2$ appears exactly 3 times as a row of the subarray corresponding to these columns, then they are uniformly $3$-covered.
    The same is true for any other  pair of columns, which means that this is an orthogonal array of strength $t=2$ and index $\lambda=3$.
\end{example}

Since their introduction, orthogonal arrays have been studied extensively in the context of different mathematical areas, and have been used in a variety
of applications, notably the design of statistical experiments.
In particular, the rows of an $\OA_{}(t, k,v)$ provide a collection of configurations for a system of $k$ factors represented by the columns, where each factor admits $v$ possible values.
Performing experimental runs for all $v^t$ configurations corresponding to the rows, guarantees that every combination of $t$ factors and their values is tested exactly once.
However, most applications require that each such combination is tested \emph{at least} once instead.
Moreover, there exist many combinations of parameters $t,k,v$ for which an $\OA_{}(t, k,v)$ does not exist.
This motivates the generalization of orthogonal arrays to covering arrays, for which the requirement that every $t$-tuple appears the same number of times is relaxed.

\begin{definition}{Covering array}{CoveringArray}
    Let $A$ be an $N\times k$ array with entries from an alphabet of cardinality $v$, $\lambda$ and $t$ be positive integers such that $t\leq k$ and $\lambda v^t \leq N$.
    If every $t$-set of column vectors of $A$ is $\lambda$-covered, then $A$ is a \emph{covering array of strength $t$ and index $\lambda$}, denoted $\CA_{\lambda}( N;,t,k,v)$.
    When $\lambda=1$, we simply write $\CA(N; t, k,v)$.
\end{definition}
\index{Array!covering|see{Covering array}}
\index{Covering array}
\index{Covering array!index}
\index{Covering array!number}
\index{Index!of covering array|see{Covering array}}
\index{Strength!of covering array|see{Covering array}}

\begin{figure}[t!]
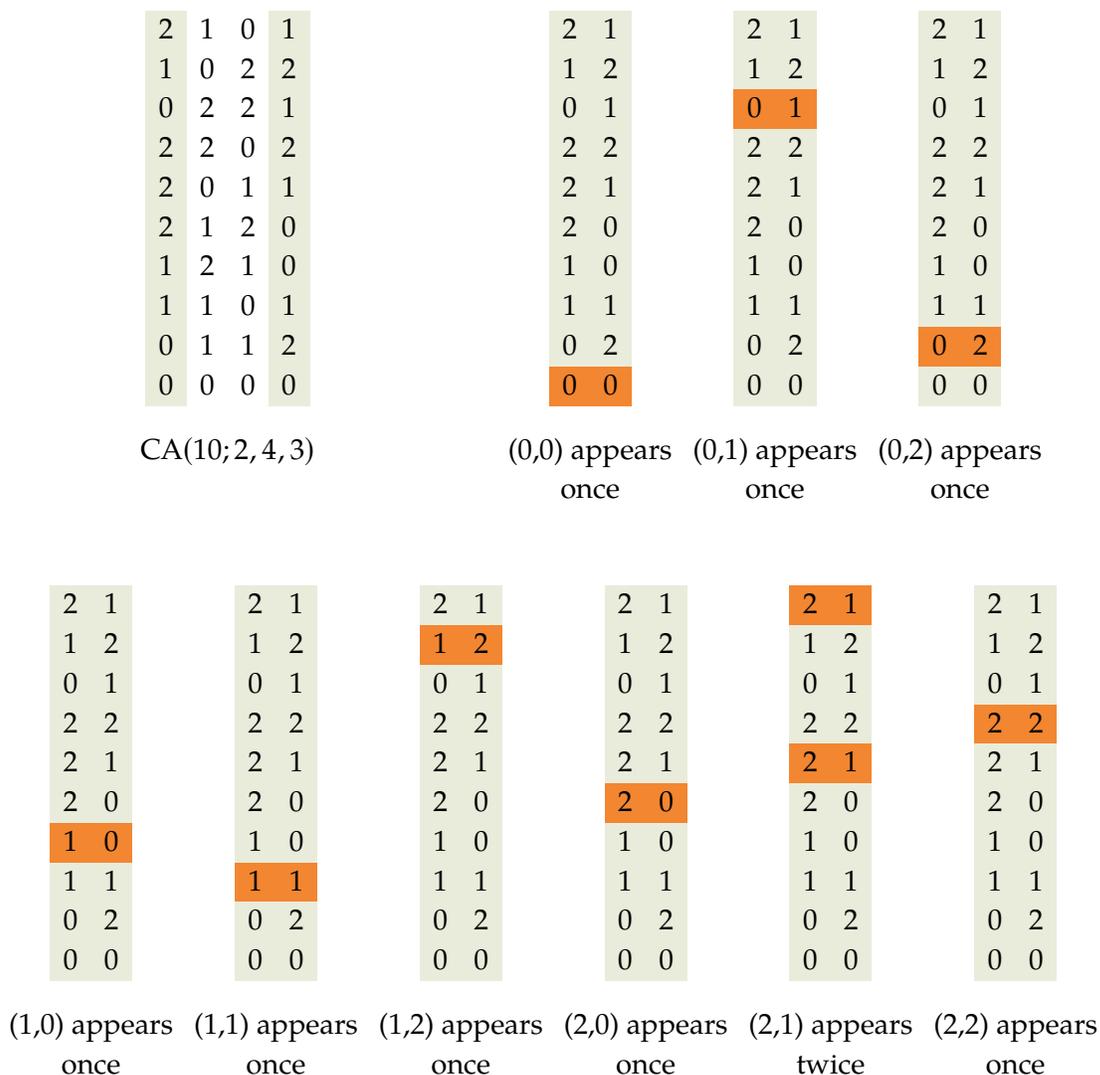

\begin{minipage}{.45\linewidth}
\centering
\[
\begin{array}[]{CccC}
2&1&0&1\\
1&0&2&2\\
0&2&2&1\\
2&2&0&2\\
2&0&1&1\\
2&1&2&0\\
1&2&1&0\\
1&1&0&1\\
0&1&1&2\\
0&0&0&0
\end{array}
\]
\noindent
$\CA(10;2,4,3)$

\mbox{}
\end{minipage}
\begin{minipage}{.15\linewidth}
    \centering
    \[
    \begin{array}[]{CC}
 2&1\\
 1&2\\
 0&1\\
 2&2\\
 2&1\\
 2&0\\
 1&0\\
 1&1\\
 0&2\\
 \rowcolor{goldfish}
 0&0
    \end{array}
    \]
    (0,0) appears once
\end{minipage}
\begin{minipage}{.15\linewidth}
    \centering
    \[
    \begin{array}[]{CC}
 2&1\\
 1&2\\
 \rowcolor{goldfish}
 0&1\\
 2&2\\
 2&1\\
 2&0\\
 1&0\\
 1&1\\
 0&2\\
 0&0
    \end{array}
    \]
    (0,1) appears
    once
\end{minipage}
\begin{minipage}{.15\linewidth}
    \centering
    \[
    \begin{array}[]{CC}
 2&1\\
 1&2\\
 0&1\\
 2&2\\
 2&1\\
 2&0\\
 1&0\\
 1&1\\
 \rowcolor{goldfish}
 0&2\\
 0&0
    \end{array}
    \]
    (0,2) appears
    once
\end{minipage}
\vspace{1em}
\begin{center}
\begin{minipage}{.15\linewidth}
    \centering
    \[
    \begin{array}[]{CC}
 2&1\\
 1&2\\
 0&1\\
 2&2\\
 2&1\\
 2&0\\
 \rowcolor{goldfish}
 1&0\\
 1&1\\
 0&2\\
 0&0
    \end{array}
    \]
    (1,0) appears
    once
\end{minipage}
\begin{minipage}{.15\linewidth}
    \centering
    \[
    \begin{array}[]{CC}
 2&1\\
 1&2\\
 0&1\\
 2&2\\
 2&1\\
 2&0\\
 1&0\\
 \rowcolor{goldfish}
 1&1\\
 0&2\\
 0&0
    \end{array}
    \]
    (1,1) appears
    once
\end{minipage}
\begin{minipage}{.15\linewidth}
    \centering
    \[
    \begin{array}[]{CC}
 2&1\\
 \rowcolor{goldfish}
 1&2\\
 0&1\\
 2&2\\
 2&1\\
 2&0\\
 1&0\\
 1&1\\
 0&2\\
 0&0
    \end{array}
    \]
    (1,2) appears
    once
\end{minipage}
\begin{minipage}{.15\linewidth}
    \centering
    \[
    \begin{array}[]{CC}
 2&1\\
 1&2\\
 0&1\\
 2&2\\
 2&1\\
 \rowcolor{goldfish}
 2&0\\
 1&0\\
 1&1\\
 0&2\\
 0&0
    \end{array}
    \]
    (2,0) appears
    once
\end{minipage}
\begin{minipage}{.15\linewidth}
    \centering
    \[
    \begin{array}[]{CC}
        \rowcolor{goldfish}
 2&1\\
 1&2\\
 0&1\\
 2&2\\
        \rowcolor{goldfish}
 2&1\\
 2&0\\
 1&0\\
 1&1\\
 0&2\\
 0&0
    \end{array}
    \]
    (2,1) appears
    twice
\end{minipage}
\begin{minipage}{.15\linewidth}
    \centering
    \[
    \begin{array}[]{CC}
 2&1\\
 1&2\\
 0&1\\
 \rowcolor{goldfish}
 2&2\\
 2&1\\
 2&0\\
 1&0\\
 1&1\\
 0&2\\
 0&0
    \end{array}
    \]
    (2,2) appears
    once
\end{minipage}
    \end{center}
    \caption[Example of a $\CA(10; 2, 4,3)$]{An example of a $\CA(10;2,4,3)$ and a demonstration that
    the highlighted columns are covered, but not uniformly covered.}
    \label{figure:ExampleOfCoveringArray}
\end{figure}

\begin{example}
{}
{CoveringArrayExample}
    An example of a $\CA(13; 3, 10,2)$ is given in \Cref{figure:ExampleOfCoveringArray}.
    By definition, every two columns of such an array are covered.
    We verify this with the two highlighted columns in the same figure.
    Since every pair from $\{ 0,1,2 \}^2$ appears at least once as a row of the subarray corresponding to these columns, then these are covered; however they are not uniformly covered, since the pair (0,0) appears twice.
    The same type of coverage is true for any other  pair of columns, which means that this is a covering array of strength $t=2$.
\end{example}

Similarly to orthogonal arrays, the rows of a $\CA(N; t, k,v)$ provide a collection of $N$ configurations for a system with $k$ factors, represented by the columns, where each factor admits $v$ possible values.
Performing experimental runs for all $N$ configurations corresponding to the rows of the covering array guarantees that every possible interaction of $t$ factors is tested at least once.
This is known as \emph{$t$-way interaction testing}.
An example of $2$-way interaction testing is given in \Cref{table:ExampleOfSoftwareSystem}; see also the related discussion in \Cref{chapter:Introduction}.
For studies on the effectiveness of $t$-way interaction testing, we refer the reader to \cite{cohen1996combinatorial,kuhn2004software}.

\subsection{Current state of research on covering arrays}
\label{section:OAsAndCAsCurrentStateOfResearch}
In order to discuss the research on covering arrays, we first need to explain one fundamental underlying problem.
First, we note that for any integers $t,k$, and $v$, a $\CA(N,t,k,v)$ can be constructed for large enough $N$.
Indeed, we consider the following naive construction: for each one of the $\binom{k}{t}$ $t$-sets of columns, and for each one of the $v^t$ $t$-tuples in $[0,v-1]^t$, we create a row whose elements in the position of these $t$ columns are the elements of the $t$-tuple, with the rest of the elements having any value.
This yields a $\CA(v^t\binom{k}{t}; t, k,v)$.

Having established the existence of a $\CA(N,t,k,v)$ for some $N$ which is a function of $t,k$ and $v$, a natural question to ask is how small this $N$ can be.
This is a question integral to the research on covering arrays, also because of the implications in testing applications, where a smaller number of rows means a smaller number of tests, and thus a reduction on the time and cost that is needed for a system to be tested.
This leads to the following definition.

\begin{definition}
{Covering array number}
{CAN}
    Let $t,k$ and $v$ be positive integers.
    The \emph{covering array number for $t$, $k$, and $v$}, denoted $\CAN(t,k,v)$, is the smallest integer $N$ for which a $\CA(N; t, k,v)$ exists.
    A covering array $\CA(N;t,k,v)$ with $N=\CAN(t,k,v)$ is \emph{optimal}.
\end{definition}

There is active research on covering arrays, where the focus is to improve upon previously known bounds for covering array numbers, either by providing constructions of covering arrays, or by establishing theoretical bounds, including asymptotic results.
In the following, we present an overview of some of the most important results from this research, that we categorize as follows:
\begin{itemize}
    \item Bounds and asymptotics
    \item Algebraic constructions
    \item Recursive constructions
    \item Algorithmic constructions
\end{itemize}
Part of our presentation is based on \cite{colbourn2004combinatorial}, a somewhat dated but very thorough survey on the subject.
We note that the vast majority of the research on covering arrays that are not orthogonal arrays, focuses on $\CA(N,t,k,v)$ rather than $\CA_{\lambda}(N; t, k,v)$ with $\lambda>1$.

\subsubsection{Bounds and asymptotics}
We start with some trivial bounds.
First, we have that
\[ v^t \leq \CAN(t,k,v)\leq \binom{k}{t} v^t, \]
where the lower bound comes from the covering array definition, and the upper bound comes from our discussion in the beginning of the section.

A rather straightforward recursive bound can be obtained as follows.
We choose any column $c$ of a $\CA(N; t, k,v)$ and any element $x$, and we create an array by removing that column and keeping only those rows whose element at the position of column $c$ is $x$.
Then, this array is a $\CA(N'; t-1, k-1,v)$, where $N'$ is the number of occurrences of $x$ in column $c$, which implies that
\[ \CAN(t-1,k-1,v) \leq \frac{1}{v}\CAN(t,k,v).  \]

Apart from the above, there exist more sophisticated bounds of an asymptotic nature.
For $t=2$ and $v>2$ Gargano et al.\ \cite{gargano1994capacities} show that
\[ N=\frac{v}{2} \log(k) (1+o(1)).  \]
Furthermore, the following classic result is obtained using a method by Stein \cite{stein1974two}, Lov\'{a}sz \cite{lovasz1975ratio} and Johnson \cite{johnson1974approximation}; see \cite[Theorem 1]{sarkar2016upper} for a proof.
\begin{equation*}
    \label{equation:odontopasta}
    \CAN(t, k, v) \leq \frac{t}{\log\frac{v^t}{v^t-1}}\log k(1+o(1)).
\end{equation*}
Improvements of this bound have been obtained using various techniques.
For example, 
Godbole et al.\ \cite{godbole1996t} give an improved bound by employing the Stein-Chen method on random $N\times k$ arrays with elements from an alphabet of size $v$, to obtain a Poisson approximation for the number of uncovered $t$-sets of columns.
For the binary case, an implication from the study of certain probability spaces \cite{azar1998approximating,naor1993small,naor1995splitters} implies that
\[ \CAN(t,k,2) \leq 2^t t^{\bigo(\log(t))}\log(k).  \]
A more recent improvement due to Franceti\'{c} and Stevens is based on the Lov\'{a}sz local lemma (see \cite{alon2008wiley}) -- also known as entropy compression.
Apart from a tighter bound, the authors give bounds in closed form for strengths $2$ and $3$.
A series of improvements on these bounds are given by Sarkar and Colbourn in \cite{sarkar2016upper}, where an overview of the recent advances on this topic is also presented.

Other results come from work on $t$-qualitatively independent partitions of sets, which are equivalent to a covering array of strength $t$ \cite{colbourn2004combinatorial}.
In \cite{gargano1993sperner} Gargano et al.\ determine the asymptotics of the largest size of a family of $2$-independent $k$-partitions of an $n$-set, which at the time was a longstanding problem in combinatorics posed in its generality by R{\'e}nyi in 1971 \cite{renyi1970foundations}.
In the context of covering arrays, their result implies that the ratio of $CAN(2,k,v)$ to $\log(k)$ is asymptotic to $v/2$.

For strengths higher than 2, it follows from \cite{poljak1983qualitatively,poljak1989maximum} that the largest $k$ for which a $\CA(N; t, k,v)$ exists satisfies
\[ \frac{et}{v}e^{\frac{N}{tv^t}} \leq k \leq \frac{K_{v,n}}{\sqrt{N}} 4^{\frac{N}{v^{t-1}}}, \]
where $K_{v,n}$ is a constant depending only upon $v$ and $N$.

\subsubsection{Orthogonal arrays}
\label{section:CurrentStateOAConstructions}
Since an $\OA_{\lambda}(t, k,v)$ is also a $\CA_{\lambda}(\lambda v^t;t,k,v)$, orthogonal arrays are also relevant here.
However, the research on orthogonal arrays is too big a subject to be in the scope of this section, so we limit our presentation to some important constructions that use finite fields.
For a thorough presentation on the subject, we refer the reader to \cite[Chapter III]{colbourn2006handbook}, as well as \cite{hedayat2012orthogonal} for a textbook treatment.

We recall that we denote $\w{n}=(q^n-1)/(q-1)$.
An early construction is for linear $\OA_{q^{n-2}}(2, \w{n},q)$ for all $n\geq 2$, where $q$ is a prime power.
These arrays, at the time named hypercubes of strength $2$, are due to Rao \cite{rao1946hypercubes} and they are optimal in the sense that, for all prime powers $q$ and $n\geq 2$, there do not exist $\OA_{q^{n-2}}(2, k,q)$ with $k > \w{n}$ \cite[Corollary 3.21]{hedayat2012orthogonal}.
Different equivalent constructions can be found in the literature, with a notable one being very similar to the construction of Hamming codes.
In fact, when constructed this way, the rows of such orthogonal arrays are precisely the codewords of the dual of a Hamming code.  Because of this, sometimes these arrays are referred to as \emph{Rao-Hamming} orthogonal arrays.
In \Cref{section:TwoClassicOAConstructions} we present a construction of Rao-Hamming orthogonal arrays using sequences over finite fields, which is fundamental to the results of this thesis.

For a prime power $q>t$, Bush \cite{bush1952orthogonal} constructs linear orthogonal arrays $\OA(t,k,q)$ for all $k \leq q+1$.  Such arrays are optimal, in the sense that they have the minimum possible number of rows.
In this construction, columns are indexed by the elements of $\fq$ and rows are indexed by the polynomials in $\fq$ of degree at most $t-1$; then, each entry of the array is the evaluation of the corresponding column polynomial at the corresponding row element.
The rows of such a linear orthogonal array are precisely the words of an extended Reed-Solomon code, although these codes were introduced several years after Bush's construction; see \cite[Section 5.5]{hedayat2012orthogonal}.

For an \emph{odd} prime power $q$ and integer $n \geq 2$, Addelman and Kempthorne \cite{addelman1961some} give a construction of $\OA_{2q^{n-2}}(2, 2 \w{n},q)$.  This is similar to Bush's construction, with certain functions being used instead of polynomials; however, the resulting arrays are not linear.

\subsubsection{Algebraic constructions}
R{\'e}nyi \cite{renyi1970foundations} determines covering array numbers for optimal binary covering arrays of strength 2 with an even number of rows.  Kleitman and Spencer \cite{kleitman1973families} and Katona \cite{katona1973two} independently generalized this to any number of rows.
In particular they show that, for any $N$, an optimal $\CA(N; 2, k,2)$ has $\binom{N-1}{\lceil N/2\rceil}$ columns.
The construction is given by forming an array whose columns consist of all distinct binary $N$-tuples of weight $\lceil N/2 \rceil$ that have zero in the first position.

Chateauneuf et al.\ \cite{chateauneuf1999covering} construct covering arrays of strength $3$ whose entries are the result of a finite group acting on the symbols, as follows.
Let $M$ be an $n\times k$ array with entries $M_{ij}$ from an alphabet $\Omega$ of size $v>2$, and $G$ be a subgroup of $Sym(\Omega)$, the symmetric group of permutations on the symbols in $\Omega$.
For $g \in G$, let $M^g$ be the $n\times k$ array whose $(i,j)$-th element is $M_{ij}^g$, the image of $M_{ij}$ under $g$.
Finally, let $M^G$ be the $(n|G|+v)\times k$ array that is the vertical concatenation of $M^g$, $g\in G$ and the constant row vectors $(x,...,x)\in \Omega^k$, for all $x\in \Omega$.
The array $M$ is the \emph{starter array} with respect to $G$.
The authors show that it is possible to choose a group $G$ and a starter array $M$ so that $M^G$ is a covering array of strength $3$.
Using further refinements, they construct $\CA(N; 3, 2v,v)$ for all $v>2$ and prime powers $q\geq v-1$.

Meagher and Stevens  \cite{meagher2005group} use a similar method to create covering arrays of strength $2$.
For a subgroup $G$ of $Sym(\Z_v)$, $k\geq 2$ and $a\in (\Z_v\cup \infty)^k$, they consider a circulant $k\times k$ matrix $M$ whose rows are precisely the cyclic shifts of $a$.
Then, $a$ is a \emph{starter vector} with respect to $G$ if $M$ has the property that, for any $k\times 2$ subarray, there exists as a row a representative of every orbit of the action of $G$ on $(\Z_v\cup \infty)^2$, where $\infty$ is fixed by the action.
The authors show that if a starter vector exists in $\Z_v^k$ with respect to $G=\langle (1,2,\dots, v-1) \rangle $, then the vertical concatenation of $M^g$, $g \in G$ is a $CA(k(v-1)+1; 2, k, v)$.
They provide starter vectors for various values of $v$ and $k$ using computer search.
This work has been generalized by Lobb et al.\ \cite{lobb2012cover} who use arbitrary groups up to certain size instead of $\Z_v$, as well as multiple fixed symbols (infinities).

A different effective technique that combines algebraic elements and computations is due to Sherwood et al.\ who introduce a type of array in a compacted form using permutation vectors, and connect the covering array definition with the existence of (covering) perfect hash families. 

\subsubsection{Recursive constructions}
A covering array can be obtained by combining one or more existing covering arrays, using recursive constructions.
The following product of strength 2 covering arrays is a construction of this nature.
Given $A=\CA(N; 2, k,v)$ and $B=\CA(M; 2, l,v)$, a $\CA(N+M; 2, kl,v)$ can be constructed by appending $k$ copies of $B$ to $l$ copies of $A$.
More precisely, denoting $A=(A_{ij})$ and $B=(B_{ij})$, this is the $(N+M)\times kl$ array $C=(C_{ij})$ given by
\[ C_{i,(f-1)k+g} = A_{i,g}, \text{ for } i\in [1,N], f\in [1,l], g\in [1,k] \]
and
\[ C_{N+i,(f-1)k+g} = B_{i,f}, \text{ for } i\in [1,M], f\in [1,l], g\in [1,k].  \]
This method is also the essence behind results on related combinatorial objects, such as qualitatively independent systems \cite{poljak1983qualitatively,cohen1998new} and transversal covers \cite{stevens1999new}; see also \cite[Section 4.1]{colbourn2004combinatorial} for extensions and further discussion on this recursive construction.

An important category of recursive constructions are the \emph{Roux-type constructions}.
In his PhD thesis, Roux \cite{roux1987k} shows that for integers $N,M,k$, arranging four covering arrays as in the following diagram, where the array at the bottom right is the bit complement of the array on the bottom left, is a $\CA(N+M; 3, 2k,2)$.
\[
    \begin{array}{|c|c|}
        \hline
        \CA(N; 3, k,2)&
        \CA(N; 3, k,2)\\ \hline
        &\\[-1.1em]
        \CA(M; 2, k,2)&
        \overline{\CA(M; 2, k,2)}\\
        \hline
    \end{array}
\]
This implies the following bound for covering array numbers.
\[\CAN(3,2k,2) \leq \CAN(3,k,2) + \CAN(2,k,2).\]
A generalization of this recursive construction that is not restricted to the binary alphabet is given by Chateauneuf and Kreher  \cite{chateauneuf2002state}.
Let $\pi$ be a cyclic permutation of $v$ symbols.
Then, the following arrangement of covering arrays yields a
$\CA(N+(v-1)M; 3, 2k,v)$.
\[
    \begin{array}{|c|r|}
        \hline
        \CA(N; 3, k,v)&
        \CA(N; 3, k,v)\phantom{)}\\
        \hline
        \CA(M; 2, k,v)&
        \pi\left(\CA(M; 2, k,v)\right)\\ \hline
        \CA(M; 2, k,v)&
        \pi^2\left(\CA(M; 2, k,v)\right)\\ \hline
        \vdots&\multicolumn{1}{c|}{\hspace{2.5em}\vdots}\\\hline
        \CA(M; 2, k,v)&
        \pi^{v-1}\left(\CA(M; 2, k,v)\right)\\ \hline
    \end{array}
\]
This implies the following bound for covering array numbers.
\[\CAN(3,2k,v) \leq \CAN(3,k,v) +(v-1)\CAN(2,k,v).\]
A number of generalizations exist for strengths greater than $3$ \cite{boroday1998determining,hartman2005software,hartman2004problems,martirosyan2004t} as well as a generalization due to Cohen et al.\ \cite{cohen2008constructing} that permits multiplying the number of columns by any $l\geq 2$.

In \cite{colbourn2010coveringandradius} Colbourn et al.\ give two recursive methods that they call \emph{fusion} and \emph{factor increase}.
\index{Fusion operation}
\index{Covering array!fusion operation on}
The first is a generalization of a result from \cite{colbourn2008strength} and is a method to obtain a $\CA(N-2; t, k,v-1)$ from a $\CA(N; t, k,v)$.
The procedure is as follows.
First we note that permuting the symbols in a column of a covering array does not affect the covering array property.
We assume without loss of generality that the alphabet is $[0,v-1]$.
For every column a suitable permutation of the symbols is applied so that a row consisting entirely of the symbol $v-1$ is created; this row is then dropped.
Then, another row $r=(r_0, \dots, r_{k-1})$ is chosen at random and subsequently for every column $c_i$, $i \in [1,k-1]$, every instance of $v-1$ is replaced by $r_i$ if $r_i\neq v-1$ and $0$ otherwise.
In the array resulting from the above, all the $t$-tuples that did not involve $v-1$ that were covered in row $r$, are covered in other rows.
Hence, removing row $r$ yields a $\CA(N-2,t,k,v-1)$.
This implies the following inequality of covering array numbers.
\begin{equation}
    \label{equation:FusionOperationInequality}
    \CAN(t,k,v-1) \leq \CAN(t,k,v)-2.
\end{equation}
We use the above inequality in \Cref{chapter:CAsFirstPaper} to improve upon previously best known bounds for covering array numbers.

For the factor increase construction, we start with two covering arrays $\CA(N;t,k,v)$ and $\CA(M; t-2, k-1,v)$, with columns $A_1, \dots, A_k$ and $B_1, \dots, B_{k-1}$, respectively.
We assume without loss of generality that the alphabet is $[0,v-1]$.
For $x \in [0,v-1]$, we denote $C_x$ to be the column vector with $M$ coordinates all equal to $x$.
Then, the vertical concatenation of the $N\times (k+1)$ arrays $[A_1|A_2|\dots | A_{k-1}|A_k|A_k]$ and $[B_1|B_2|\dots | B_{k-1}|C_x|C_y]$, for all pairs $(x,y) \in [0,v-1]^2$, $x\neq y$, is a $\CA(N+v(v-1)M; t, k,v)$.
This implies the following inequality of covering array numbers.
\[ \CAN(t, k + 1, v) \leq \CAN(t, k, v) + v(v-1)\CAN(t-2, k-1, v).  \]

Other combinatorial structures can also be used to obtain covering arrays recursively.  Cohen et al.\ \cite{cohen2008constructing} construct strength-3 covering arrays using ordered designs along with a covering array of strength 2; a refinement of this construction \cite[Section 4.3]{colbourn2004combinatorial} gives $\CA(N; 3, q+1,q+1)$ where $q$ is any prime power and $N=q^3-q+\binom{q+1}{2}\CAN(3,q+1,2)-q^2+1$.
See also \cite[Sections 4.4 and 4.5]{colbourn2004combinatorial} for a number of recursive constructions that rely on generalized Hadamard matrices and perfect hash families.

\subsubsection{Algorithmic constructions}
There exists a large number of greedy algorithms for computer generation of covering arrays.
As a detailed overview is beyond our scope, we only enumerate here some of the most well-known results.
For a survey we refer to \cite{colbourn2004combinatorial}; see also \cite{bryce2009density}
for a thorough analysis on some of these algorithms.
Greedy algorithms include AETG \cite{cohen1997aetg}, TCG \cite{tung2000automating}, DDA \cite{bryce2007density}, IPO \cite{lei1998parameter}, and BBA \cite{ronneseth2009merging}.
Notably in \cite{bryce2009density}, generalizing results from \cite{bryce2007density}, Bryce and Colbourn give an algorithm which, for fixed $t$ and $v$, constructs a $\CA(N;t,k,v)$ with $N=\bigo(\log(k))$ in polynomial time.
While the existence of covering arrays that satisfy this upper bound have been previously established \cite{colbourn2004combinatorial}, this is the first deterministic polynomial-time algorithm that provides such a construction.
Furthermore, there has been work with metaheuristic algorithms that use techniques such as tabu search \cite{nurmela2004upper}, and simulated annealing \cite{george2012constructing,stevens1998transversal,stardom2001metaheuristics,cohen2004designing,cohen2003variable,cohen2003constructing,cohen2003augmenting,cohen2008constructing}.

\titlespacing*{\chapter}{0cm}{0.87cm}{2cm} 
\chapter[Combinatorial arrays from maximal sequences]
{Combinatorial arrays from maximal sequences over finite fields}
\label{chapter:CombinatorialArraysFromMSequences}


\textsc{In this chapter we study} connections between maximal sequences and concepts from other areas of discrete mathematics, and we present previously established constructions of orthogonal and covering arrays using maximal sequences in view of these connections.


We first present two classic theorems that give the essence of the main results in this chapter.
The first theorem is stated explicitly by Bose \cite{bose1961some}, although its first half is due to Kempthorne \cite{kempthorne1947simple}.
It is also the special case of a generalization for the non-linear case due to Delsarte \cite{delsarte1973algebraic}; an in-depth presentation of Delsarte's result can be found in \cite[Section~4.4]{hedayat2012orthogonal}.

\begin{theorem}
{Linear codes and linear orthogonal arrays are equivalent objects \cite[Theorem 4.6]{hedayat2012orthogonal}}
{LinearCodesAndLinearOAsAreEquivalent}
    Let $q$ be a prime power, and $C$ be a linear code of length $k$ and dimension $n$, whose dual code has minimum distance $d^{\perp}$.
    Then, the $q^n \times k$ array whose rows are the codewords of $C$, is a linear $\OA_{q^{n-d^{\perp}+1}}(d^{\perp}-1, k,q)$.
    Conversely, the $q^n$ rows of a linear $\OA_{q^{n-t}}(t, k,q)$ are the codewords of a $q$-ary linear code with length $k$ and dimension $n$, whose dual code has minimum distance $d^{\perp}\geq t+1$.
    If the orthogonal array has strength $t$ but not $t+1$, then $d^{\perp}$ is precisely $t+1$.
    \index{Linear code}
\index{Orthogonal array}
\end{theorem}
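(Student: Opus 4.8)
The plan is to translate the covering properties of the array into linear-algebraic conditions on a generator matrix of $C$, and then to invoke the minimum-distance characterization already established in \Cref{proposition:MinWeightAndLinearIndependence}. First I would fix an $n\times k$ generator matrix $G$ for $C$ with columns $g_1,\dots,g_k\in\fq^n$, so that the rows of the array $A$ are exactly the vectors $\bfx G$ as $\bfx$ ranges over $\fq^n$; the $j$-th coordinate of such a row is the inner product $\bfx\cdot g_j$. The rows are distinct and form a subspace of $\fq^k$ because $G$ has rank $n$, so $A$ is linear in the sense of \Cref{definition:LinearOA}, and it has $N=q^n$ rows.

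The central step is to establish the following dictionary. For a $t$-set of columns indexed by $J=\{j_1,\dots,j_t\}$, the corresponding subarray has rows $(\bfx\cdot g_{j_1},\dots,\bfx\cdot g_{j_t})$, i.e. the image of the linear map $\bfx\mapsto \bfx G_J$, where $G_J$ is the $n\times t$ submatrix of $G$ on the columns in $J$. If $g_{j_1},\dots,g_{j_t}$ are linearly independent then this map is surjective onto $\fq^t$, and by rank--nullity every fibre has size $q^{n-t}$; hence each $t$-tuple occurs exactly $q^{n-t}$ times and the columns in $J$ are uniformly $q^{n-t}$-covered. Conversely, if those columns are dependent the image is a proper subspace, so some $t$-tuples are missed and $J$ is not even covered. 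Thus the $t$-set $J$ is uniformly $q^{n-t}$-covered if and only if the columns $g_j$, $j\in J$, are linearly independent.

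Then I would bridge to the dual code. Since $G$ is a generator matrix of $C$, it is a parity-check matrix of $C^{\perp}$, so \Cref{proposition:MinWeightAndLinearIndependence} applied to $C^{\perp}$ gives $d^{\perp}=\min\{s\mid \text{some } s \text{ columns of } G \text{ are linearly dependent}\}$. Therefore every $d^{\perp}-1$ columns of $G$ are independent while some $d^{\perp}$ columns are dependent. Combining with the dictionary, every $(d^{\perp}-1)$-set of columns of $A$ is uniformly $q^{n-(d^{\perp}-1)}$-covered, which is precisely the statement that $A$ is a linear $\OA_{q^{n-d^{\perp}+1}}(d^{\perp}-1,k,q)$; and since some $d^{\perp}$ columns are dependent, $A$ has strength exactly $d^{\perp}-1$, not $d^{\perp}$. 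For the converse, a linear $\OA_{q^{n-t}}(t,k,q)$ has $q^n$ distinct rows forming a subspace, hence is the codeword set of a code $C$ of length $k$ and dimension $n$; writing its rows as $\bfx G$ and using the dictionary, the strength-$t$ hypothesis forces every $t$ columns of $G$ to be independent, so $d^{\perp}\geq t+1$ by the same application of \Cref{proposition:MinWeightAndLinearIndependence}; if moreover the strength is not $t+1$, some $t+1$ columns are dependent, giving $d^{\perp}=t+1$.

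The main obstacle --- really the only non-bookkeeping point --- is establishing the dictionary cleanly, in particular matching the index $\lambda=q^{n-t}$ via the rank--nullity count of the fibres and checking that dependence genuinely destroys coverage rather than merely uniformity. Once this equivalence between uniform coverage and linear independence of generator-matrix columns is in hand, the two directions are symmetric and follow immediately from the minimum-distance characterization of \Cref{proposition:MinWeightAndLinearIndependence} applied to $C^{\perp}$.
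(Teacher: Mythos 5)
Your proof is correct. Note, however, that the paper itself contains no proof of this statement: it is quoted as a known result from Hedayat--Sloane--Stufken (with attribution to Kempthorne, Bose and Delsarte), so there is no in-paper argument to compare against; your write-up is essentially the standard proof of that classical theorem. It is also the exact linear-code analogue of arguments the thesis does carry out in full elsewhere: your ``dictionary'' (surjectivity of $\bfx\mapsto\bfx G_J$ plus rank--nullity fibre counting) is the same mechanism used in the proof of \Cref{proposition:SebastianExtended} for the map $\varphi(x)=\left(\Tt(x\a^{i_0}),\dots,\Tt(x\a^{i_{s-1}})\right)$, and your bridge from column independence of $G$ to $d^{\perp}$ via \Cref{proposition:MinWeightAndLinearIndependence} is the same device the paper uses in \Cref{proposition:MinDistanceAndNTSets}. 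Two steps you leave implicit are standard but worth flagging: first, the claim that a generator matrix $G$ of $C$ is a parity-check matrix of $C^{\perp}$ rests on the double-dual identity $(C^{\perp})^{\perp}=C$, which follows from $C\subseteq (C^{\perp})^{\perp}$ together with the dimension count $\dim C^{\perp}=k-n$; second, in the converse direction, concluding $d^{\perp}\geq t+1$ from ``every $t$ columns of $G$ are independent'' uses the (trivial) observation that a dependent set of fewer than $t$ columns extends to a dependent $t$-set. Finally, since a relation $\sum_{j}c_j A_j=0$ among columns of the array $A$ holds if and only if $\bfx\cdot\sum_j c_j g_j=0$ for all $\bfx\in\fq^n$, i.e.\ if and only if $\sum_j c_j g_j=0$, your dictionary on generator-matrix columns is equivalent to the statement about columns of $A$ itself, so your argument simultaneously recovers \Cref{theorem:OANecessaryAndSufficientConditionRelatedToLI}, which the paper likewise cites without proof.
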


The second theorem gives a necessary and sufficient condition for an array to be a linear orthogonal array, that is related to the linear independence of its columns.

\begin{theorem}
{Linear orthogonal arrays and linear independence \cite[Theorem 3.29]{hedayat2012orthogonal}}
{OANecessaryAndSufficientConditionRelatedToLI}
    Let $A$ be a $q^{n}\times k$ array whose rows form a $q$-ary linear code of length $k$ and dimension $n$.
    Then, $A$ is a linear $\OA_{q^{n-t}}(t, k,q)$ if and only if every $t$-set of its column vectors is linearly independent over $\fq$.
\index{Orthogonal array}
\end{theorem}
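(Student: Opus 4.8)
The plan is to connect both the orthogonal-array property and the column-independence property to a single linear map determined by a generator matrix, so that they emerge as two faces of the same rank condition.

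First I would fix a generator matrix $G$ for the code $C$, an $n\times k$ matrix whose rows form a basis of $C$, so that the rows of $A$ are exactly the words $xG$ with $x\in\fq^n$. Writing $G_j$ for the $j$-th column of $G$, the $j$-th column of $A$ is the function $x\mapsto x\cdot G_j$ on $\fq^n$. A short computation then shows that columns $A_{j_1},\dots,A_{j_t}$ of $A$ are linearly dependent over $\fq$ if and only if $G_{j_1},\dots,G_{j_t}$ are: indeed $\sum_i a_i A_{j_i}=0$ says $x\cdot\sum_i a_i G_{j_i}=0$ for every $x\in\fq^n$, which forces $\sum_i a_i G_{j_i}=0$. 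Thus independence of $t$ columns of $A$ is equivalent to independence of the corresponding $t$ columns of $G$, and this reduces the whole question to a statement about $G$.

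Next, for a fixed $t$-set of columns $J=\{j_1,\dots,j_t\}$ I would analyze the $q^n\times t$ subarray $A_J$ through the linear map $\phi_J\colon\fq^n\to\fq^t$, $x\mapsto(x\cdot G_{j_1},\dots,x\cdot G_{j_t})$, whose matrix has columns $G_{j_1},\dots,G_{j_t}$. The rows of $A_J$ are precisely the images $\phi_J(x)$ as $x$ ranges over $\fq^n$, so a given tuple of $\fq^t$ occurs as a row of $A_J$ exactly as many times as the size of its fibre under $\phi_J$. The decisive observation is that $\phi_J$ is surjective if and only if $G_{j_1},\dots,G_{j_t}$ are linearly independent, i.e.\ $\rank\phi_J=t$; and when $\phi_J$ is surjective, rank--nullity gives a kernel of dimension $n-t$, so every fibre is a coset of it and has the same cardinality $q^{n-t}$. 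Hence each of the $v^t=q^t$ tuples appears exactly $q^{n-t}$ times and $J$ is uniformly $q^{n-t}$-covered; conversely, if the columns are dependent then $\phi_J$ misses some tuple and $J$ fails to be covered at all. Ranging over all $t$-sets $J$ and invoking \Cref{definition:OrthogonalArray} yields the claimed equivalence, and linearity of the array is immediate since its rows form the subspace $C$.

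The step I expect to carry the weight is the middle one: the passage, valid precisely because $C$ is \emph{linear}, from the statement that every $t$-tuple appears at least once to the statement that every $t$-tuple appears exactly $q^{n-t}$ times. This is where the uniform index $\lambda=q^{n-t}$ is produced for free, and where the hypothesis that the rows form a code of dimension $n$ is essential. Alternatively, one could bypass the direct computation by combining \Cref{theorem:LinearCodesAndLinearOAsAreEquivalent} with \Cref{proposition:MinWeightAndLinearIndependence} applied to $C^{\perp}$, whose parity-check matrix is $G$: the latter identifies the condition that every $t$ columns of $G$ are independent with the inequality $d^{\perp}\ge t+1$, and the former converts this into the orthogonal-array statement, the index $q^{n-t}$ following by reducing the strength $d^{\perp}-1$ down to $t$.
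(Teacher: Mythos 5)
Your proof is correct, and it is essentially the argument this thesis itself relies on: the paper states this theorem without proof (citing Hedayat--Sloane--Stufken), but its proof of the analogous \Cref{proposition:SebastianExtended} uses exactly your strategy, with the map $\varphi(x)=\left(\Tt(x\a^{i_0}),\dots,\Tt(x\a^{i_{s-1}})\right)$ playing the role of your $\phi_J$, surjectivity shown equivalent to linear independence of the columns via a nondegeneracy/dual-space argument, and equal fibre sizes $q^{n-t}$ from rank--nullity yielding the uniform coverage. Your secondary route through \Cref{theorem:LinearCodesAndLinearOAsAreEquivalent} and \Cref{proposition:MinWeightAndLinearIndependence} applied to $C^{\perp}$ is also sound, provided one keeps your column-of-$A$ versus column-of-$G$ equivalence to translate between the two statements.
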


The structure of the rest of the chapter is as follows.
In \Cref{section:CyclicTraceArraysAndCombinatorialProperties} we give a method for constructing arrays from maximal sequences, with the property that their rows are the words of a linear code.
We study these arrays and present results closely related to \Cref{theorem:LinearCodesAndLinearOAsAreEquivalent,theorem:OANecessaryAndSufficientConditionRelatedToLI} applied to these arrays, that also demonstrate how certain properties of maximal sequences can be translated into the language of orthogonal arrays, linear codes, divisibility of polynomials, and finite geometry.
In \Cref{section:ConstructionOfOrthogonalArraysFromCyclicTraceArrays} we show how arrays from maximal sequences can be used to construct several previously known families of orthogonal arrays.
In \Cref{section:ConstructionOfCoveringArraysFromCyclicTraceArrays} we present a method due to Raaphorst et al.~\cite{raaphorst2014construction} that uses arrays from maximal sequences as building blocks to construct larger covering arrays of strength 3.

\pagebreak
The main contribution of this chapter is a framework that allows us to express previous results in a consistent manner and emphasize the ties between maximal sequences and other areas of discrete mathematics.
Although these ties have been previously explored or implied, we offer a detailed overview of all the different points of view that, to the best of our knowledge, has not been given before as explicitly.

\section{Cyclic trace arrays and their properties}
\label{section:CyclicTraceArraysAndCombinatorialProperties}

We begin with the definition of a type of array that plays a fundamental role for the results in this thesis.
\begin{definition}
{Cyclic trace array of a primitive element}
{cyclicAlphaSArray}
\index{Cyclic trace array}
\index{Cyclic trace array!principal}
\index{Array!cyclic trace array|see{Cyclic trace array}}
\index{Principal cyclic trace array|see{Cyclic trace array}}
    Let $t,k$ be positive integers, $\a$ be a primitive element of $\fqt$ and $C=\{ c_0, \dots, c_{k-1} \}$ be an ordered subset of $[0,q^t-2]$.
    Then, the \emph{cyclic trace array corresponding to $\a$ and $C$}, denoted $\mathcal{A}_{q^t/q}(\a, C)$, is the $(q^t-1)\times k$ array with elements 
    \[ \mathcal{A}_{q^t/q}(\a, C)_{ij} = \Tt(\a^{i+c_j}), \quad (i,j) \in [0,q^t-2] \times [0,k-1].  \]
    We denote by $\mathcal{A}_{0,\,q^t/q}(\a, C)$ the $q^t\times k$ array obtained by appending a row of zeros to $\mathcal{A}_{q^t/q}(\a, C)$, i.e.
    \[
        \mathcal{A}_{0,\, q^t/q}(\a, C)_{ij} =
        \begin{cases}
            \mathcal{A}_{q^t/q}(\a, C)_{ij}= \Tt(\a^{i+c_j}), & \text{ if } i \in [0,q^t-2], j \in [0,k-1]; \\
            0, &   \text{ if } i = q^t-1, j \in [0,k-1].
        \end{cases}
    \]
    When it is clear from the context that the fields are $\fqt$ and $\fq$, we simply write $\mathcal{A}$ instead of $\mathcal{A}_{q^t/q}$.
    Furthermore, we simply write $\A{\a}$ to denote $\A{\a,[0,\w{t}-1]}$, $\ZA{\a}$ to denote $\ZA{\a,[0,\w{t}-1]}$, and we refer to $\ZA{\a}$ as the \emph{principal cyclic trace array of $\a$}.
\end{definition}
We use a dedicated name and notation for $\ZA{\a}=\ZA{\a, [0,\w{t}-1]}$ for reasons that we discuss at the end of this section.
Furthermore, there is a close connection between cyclic trace arrays and maximal sequences which we give in \Cref{remark:ColumnsOfCyclicTraceArrayAreCyclicShiftsOfMseq}.
First, we need to introduce some notation related to sequences.

\begin{definition}
{Sequence associated with finite field elements}
{SequenceAssociatedWithFFElement}
\index{LFSR sequence!associated to element}
    Let $q$ be a prime power and $t$ be a positive integer.
    For $\a \in \fqt$, the \emph{(LFSR) sequence over $\fq$ associated to $\a$} is
    \[
        \mathrm{Seq}_{q^t/q}(\a)=
        \left( \mathrm{Seq}_{q^t/q}(\a)_i \right)_{i \geq 0} =
        \left( \Tt(\a^i) \right)_{i\geq 0}.
    \]
    When it is clear that the underlying field is $\fq$, we simply write $\sq{\a}$ instead of $\mathrm{Seq}_{q^t/q}(\a)$.
\end{definition}

We note that, by \Cref{theorem:TraceRepresentationOfMSequence}, if $\a$ is primitive, then $\seq{\a}$ is a maximal sequence with period $q^t-1$.

\begin{definition}
{Left shift operator}
{LeftShiftOperator}
\index{LFSR sequence!shift}
\index{Left shift operator}
    For a sequence $\bfs=(s_i)_{i\geq 0}$ we define the \emph{left shift operator on $\bfs$}, denoted $L$, by 
    \[ L^j(\bfs)= (s_{i+j})_{i\geq 0}, \quad j \in \Z.  \]
    We say that $L^j(\bfs)$ is the \emph{left cyclic shift of $\bfs$ by $j$} and we simply write $L(\bfs)$ to denote $L^1(\bfs)$.
    Furthermore, for a positive integer $n$, we denote
    \[ L^j_n(\bfs)= (s_{i+j})_{i=0}^{n-1}, \quad j \in \Z.  \]
\end{definition}

\begin{remark}
{}
{ColumnsOfCyclicTraceArrayAreCyclicShiftsOfMseq}
    The next statements are straightforward implications of \Cref{definition:cyclicAlphaSArray,definition:SequenceAssociatedWithFFElement,definition:LeftShiftOperator}.
    \begin{enumerate}
        \item The columns of $\A{\a, C}$ are the left cyclic shifts of $\seq{\a}$ by the elements in $C$, that is,
        \[
            \A{\a, C}=
            \left[ 
                L^{c_0}_{q^t-1}\left( \seq{\a}\right) \mid
                L^{c_1}_{q^t-1}\left( \seq{\a} \right)\mid
                \dots\mid
                L^{c_{k-1}}_{q^t-1} \left( \seq{\a}\right)
            \right].
        \]
        \item If $C=\{ c_0, \dots, c_{k-1} \}$, then for $(i,j) \in [0,q^t-2]\times[0,k-1]$, the $(i,j)$-th element of $\M{\a, C}$ is
            \begin{equation*}\label{equation:IJthElementOfTraceArrayInTermsOfSeqq}
                \M{\a, C}_{i,j}= \seq{\a}_{i+c_j}.
            \end{equation*}
        \item 
        The rows of $\A{\a}$ are the vectors $L_{\w{t}}^i( \seq{\a} )$, $i \in [0,q^t-2]$.
    \item 
        For $C\subseteq C'$, we have that $\ZA{\a, C}$ is a subarray of $\ZA{\a, C'}$.
    \end{enumerate}
\end{remark}

\newcolumntype{b}{>{\columncolor{\backgroundshade}}r}
\def\ph{\phantom{1}}
\setlength{\tabcolsep}{2pt}
\renewcommand{\arraystretch}{0.8}
\begin{table}
\small
\centering
\begin{tabular}{rrrrrrrbrbrrrbrrrrrrrrrrrrrr}
\rowcolor{\tableheadcolor}
\cellcolor{white} &\ph0&\ph1&\ph2&\ph3&\ph4&\ph5& \cellcolor{\tableheadcolor!50!beach}\ph6 &\ph7 &\cellcolor{\tableheadcolor!50!beach}\ph8 &\ph9&10&11 &\cellcolor{\tableheadcolor!50!beach}12 &13  & 14 & 15 & 16 & 17 & 18 & 19 & 20 & 21 & 22 & 23 & 24 & 25\\
                    &0&0&1&0&1&2&1&1&2&0&1&1&1 & 0&0&2&0&2&1&2&2&1&0&2&2&2\\
                    &0&1&0&1&2&1&1&2&0&1&1&1&0 & 0&2&0&2&1&2&2&1&0&2&2&2&0\\
                    &1&0&1&2&1&1&2&0&1&1&1&0&0 & 2&0&2&1&2&2&1&0&2&2&2&0&0\\
                    &0&1&2&1&1&2&0&1&1&1&0&0&2 & 0&2&1&2&2&1&0&2&2&2&0&0&1\\
                    &1&2&1&1&2&0&1&1&1&0&0&2&0 & 2&1&2&2&1&0&2&2&2&0&0&1&0\\
                    &2&1&1&2&0&1&1&1&0&0&2&0&2 & 1&2&2&1&0&2&2&2&0&0&1&0&1\\
                    &1&1&2&0&1&1&1&0&0&2&0&2&1 & 2&2&1&0&2&2&2&0&0&1&0&1&2\\
                    &1&2&0&1&1&1&0&0&2&0&2&1&2 & 2&1&0&2&2&2&0&0&1&0&1&2&1\\
                    &2&0&1&1&1&0&0&2&0&2&1&2&2 & 1&0&2&2&2&0&0&1&0&1&2&1&1\\
                    &0&1&1&1&0&0&2&0&2&1&2&2&1 & 0&2&2&2&0&0&1&0&1&2&1&1&2\\
                    &1&1&1&0&0&2&0&2&1&2&2&1&0 & 2&2&2&0&0&1&0&1&2&1&1&2&0\\
                    &1&1&0&0&2&0&2&1&2&2&1&0&2 & 2&2&0&0&1&0&1&2&1&1&2&0&1\\
                    &1&0&0&2&0&2&1&2&2&1&0&2&2 & 2&0&0&1&0&1&2&1&1&2&0&1&1\\
$\A{\a,[0,q^t-2]}=$ &0&0&2&0&2&1&2&2&1&0&2&2&2 & 0&0&1&0&1&2&1&1&2&0&1&1&1\\
                    &0&2&0&2&1&2&2&1&0&2&2&2&0 & 0&1&0&1&2&1&1&2&0&1&1&1&0\\
                    &2&0&2&1&2&2&1&0&2&2&2&0&0 & 1&0&1&2&1&1&2&0&1&1&1&0&0\\
                    &0&2&1&2&2&1&0&2&2&2&0&0&1 & 0&1&2&1&1&2&0&1&1&1&0&0&2\\
                    &2&1&2&2&1&0&2&2&2&0&0&1&0 & 1&2&1&1&2&0&1&1&1&0&0&2&0\\
                    &1&2&2&1&0&2&2&2&0&0&1&0&1 & 2&1&1&2&0&1&1&1&0&0&2&0&2\\
                    &2&2&1&0&2&2&2&0&0&1&0&1&2 & 1&1&2&0&1&1&1&0&0&2&0&2&1\\
                    &2&1&0&2&2&2&0&0&1&0&1&2&1 & 1&2&0&1&1&1&0&0&2&0&2&1&2\\
                    &1&0&2&2&2&0&0&1&0&1&2&1&1 & 2&0&1&1&1&0&0&2&0&2&1&2&2\\
                    &0&2&2&2&0&0&1&0&1&2&1&1&2 & 0&1&1&1&0&0&2&0&2&1&2&2&1\\
                    &2&2&2&0&0&1&0&1&2&1&1&2&0 & 1&1&1&0&0&2&0&2&1&2&2&1&0\\
                    &2&2&0&0&1&0&1&2&1&1&2&0&1 & 1&1&0&0&2&0&2&1&2&2&1&0&2\\
                    &2&0&0&1&0&1&2&1&1&2&0&1&1 & 1&0&0&2&0&2&1&2&2&1&0&2&2\\
\rowcolor{white}
                    & & & & & & & & & & & & &  &  & & & & & & & & & & & & 
\end{tabular}

\hspace{3.7em}
\begin{tabular}{rrrrrrrrrrrrrr}
\arrayrulecolor{\backgroundshade}
\rowcolor{white}
          &\ph0&\ph0&\ph1&\ph0&\ph1&\ph2&\ph1&\ph1&\ph2&\ph0&\ph1&\ph1&\ph1\\
          &0&1&0&1&2&1&1&2&0&1&1&1&0\\
          &1&0&1&2&1&1&2&0&1&1&1&0&0\\
          &0&1&2&1&1&2&0&1&1&1&0&0&2\\
          &1&2&1&1&2&0&1&1&1&0&0&2&0\\
          &2&1&1&2&0&1&1&1&0&0&2&0&2\\
          &1&1&2&0&1&1&1&0&0&2&0&2&1\\
          &1&2&0&1&1&1&0&0&2&0&2&1&2\\
          &2&0&1&1&1&0&0&2&0&2&1&2&2\\
          &0&1&1&1&0&0&2&0&2&1&2&2&1\\
          &1&1&1&0&0&2&0&2&1&2&2&1&0\\
          &1&1&0&0&2&0&2&1&2&2&1&0&2\\
          &1&0&0&2&0&2&1&2&2&1&0&2&2\\
$\ZA{\a}=$&0&0&2&0&2&1&2&2&1&0&2&2&2\\
          &0&2&0&2&1&2&2&1&0&2&2&2&0\\
          &2&0&2&1&2&2&1&0&2&2&2&0&0\\
          &0&2&1&2&2&1&0&2&2&2&0&0&1\\
          &2&1&2&2&1&0&2&2&2&0&0&1&0\\
          &1&2&2&1&0&2&2&2&0&0&1&0&1\\
          &2&2&1&0&2&2&2&0&0&1&0&1&2\\
          &2&1&0&2&2&2&0&0&1&0&1&2&1\\
          &1&0&2&2&2&0&0&1&0&1&2&1&1\\
          &0&2&2&2&0&0&1&0&1&2&1&1&2\\
          &2&2&2&0&0&1&0&1&2&1&1&2&0\\
          &2&2&0&0&1&0&1&2&1&1&2&0&1\\
          &2&0&0&1&0&1&2&1&1&2&0&1&1\\
          &0&0&0&0&0&0&0&0&0&0&0&0&0\\
\end{tabular}
\quad
$\A{\a,C}=$
\begin{tabular}{bbb}
1&2&1\\
1&0&0\\
2&1&0\\
0&1&2\\
1&1&0\\
1&0&2\\
1&0&1\\
0&2&2\\
0&0&2\\
2&2&1\\
0&1&0\\
2&2&2\\
1&2&2\\
2&1&2\\
2&0&0\\
1&2&0\\
0&2&1\\
2&2&0\\
2&0&1\\
2&0&2\\
0&1&1\\
0&0&1\\
1&1&2\\
0&2&0\\
1&1&1\\
2&1&1\\
\rowcolor{white}
 & &  
\end{tabular}
\quad
$\ZA{\a,C}=$
\begin{tabular}{bbb}
1&2&1\\
1&0&0\\
2&1&0\\
0&1&2\\
1&1&0\\
1&0&2\\
1&0&1\\
0&2&2\\
0&0&2\\
2&2&1\\
0&1&0\\
2&2&2\\
1&2&2\\
2&1&2\\
2&0&0\\
1&2&0\\
0&2&1\\
2&2&0\\
2&0&1\\
2&0&2\\
0&1&1\\
0&0&1\\
1&1&2\\
0&2&0\\
1&1&1\\
2&1&1\\
\rowcolor{white}
0&0&0
\end{tabular}

\caption[Examples of cyclic trace arrays]{The arrays $\A{\a,[0,q^t-2]}$, $\ZA{\a}$, $\A{\a,C}$, $\ZA{\a,C}$, with $\a$ and $C=[6,8,12]$, as described in \Cref{example:cyclicarray}. 
         The highlighted columns of $\A{\a}$ are those with indexes in $C$.}
\label{table:fullcyclicarray}
\end{table}

\begin{example}{}{cyclicarray}
    For a root $\a$ of the primitive polynomial $x^3+2x+1$ over $\f_{3}$, we compute
    \[\seq{\a}=00101211201110020212210222.\]
    For $C=\{ 6,8,12\}$, the arrays $\A{\a,[0,q^t-2]}$, $\ZA{\a}$, $\A{\a, C}$ and $\ZA{\a, C}$, are shown in \Cref{table:fullcyclicarray}. 
\end{example}

The rows of a cyclic trace array form the words of a linear code. 
To prove this we need the next lemma.

\begin{lemma}
{}
{TraceBetaXIsZeroForAllXIffBetaIsZero}
    Let $t$ be a positive integer and $\beta \in \fqt$.
    Then, we have that $\Tt(\beta x)=0$ for all $x\in \fqt$, if and only if $\beta=0$.
\end{lemma}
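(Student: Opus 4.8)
The plan is to prove the two directions separately, treating the nontrivial one by its contrapositive.

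The reverse implication is immediate: if $\beta=0$, then $\Tt(\beta x)=\Tt(0)=0$ for every $x\in\fqt$, since $\Tt$ is a linear map (\Cref{theorem:PropertiesOfTrace}) and hence sends $0$ to $0$.

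For the forward implication I would argue the contrapositive: assuming $\beta\neq 0$, I would produce some $x\in\fqt$ with $\Tt(\beta x)\neq 0$. There are two interchangeable routes. The first invokes \Cref{proposition:TraceDefinesQMinusOneToOneMapping} directly: because $\beta\neq 0$, for every $c\in\fq$ the equation $\Tt(\beta x)=c$ has exactly $q^{t-1}$ solutions $x\in\fqt$; choosing any nonzero $c\in\fq$ (such a $c$ exists since $q\geq 2$) yields an $x$ with $\Tt(\beta x)=c\neq 0$, so $\Tt(\beta x)$ does not vanish for all $x$. The second route observes that multiplication by the nonzero element $\beta$ is a bijection of $\fqt$, so $\{\beta x \mid x\in\fqt\}=\fqt$; hence $\Tt(\beta x)=0$ for all $x$ would force $\Tt$ to vanish identically on $\fqt$, contradicting the fact that $\Tt$ maps $\fqt$ \emph{onto} $\fq$, again by \Cref{theorem:PropertiesOfTrace}.

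I expect essentially no obstacle here; the single point worth keeping in mind is that the argument rests on $\fq$ possessing a nonzero element, equivalently on $\Tt$ being surjective rather than the zero map, which holds because $q\geq 2$. I would lean towards the second route for self-containedness, since it uses only the surjectivity clause of \Cref{theorem:PropertiesOfTrace} together with the elementary fact that scaling by a unit permutes the field.
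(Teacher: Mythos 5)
Your proposal is correct, and the route you prefer (multiplication by the nonzero $\beta$ permutes $\fqt$, so $\Tt(\beta x)$ inherits the surjectivity of $\Tt$ onto $\fq$ and hence cannot vanish identically) is exactly the paper's own argument, with the reverse direction dismissed as immediate in both. Your alternative first route via \Cref{proposition:TraceDefinesQMinusOneToOneMapping} is also valid but is not needed.
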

\begin{proof}
    From \Cref{theorem:PropertiesOfTrace}, the trace is a linear transformation from $\fqt$ onto $\fq$ and, if $\beta\neq 0$, the mapping $x\mapsto \beta x$ permutes the elements of $\fqt$.
    Therefore, $\Tt(\beta x )$ maps $\fqt$ onto $\fq$ as well, hence it is not true that $\Tt(\beta x ) = 0$ for all $x \in \fqt$.
    The other direction is clear.
\end{proof}

\begin{theorem}
{The rows of $\ZA{\a, C}$ as a $q$-ary linear code}
{RowsOfAAreCode}
\index{Cyclic trace array}
\index{Linear code}
    Let $t$ be a positive integer, $\a$ be a primitive element of $\fqt$, and $C$ be a nonempty subset of $[0,q^t-2]$, with $|C|=k$.
    Then, the set of rows of $\ZA{\a, C}$ is a linear code of length $k$ and dimension 
    $d=\dim( \spn\{ \a^i\mid i \in C \} )$.
\end{theorem}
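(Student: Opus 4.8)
The plan is to exhibit the rows of $\ZA{\a,C}$ as the image of a single $\fq$-linear map and then compute the dimension of that image using the nondegeneracy of the trace form supplied by \Cref{lemma:TraceBetaXIsZeroForAllXIffBetaIsZero}.

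First I would define the map $\phi\colon \fqt \to \fq^k$ by $\phi(x)=\left(\Tt(x\a^{c_0}),\dots,\Tt(x\a^{c_{k-1}})\right)$, where $C=\{c_0,\dots,c_{k-1}\}$. By \Cref{theorem:PropertiesOfTrace} the trace is $\fq$-linear, and multiplication by the fixed element $\a^{c_j}$ is $\fq$-linear, so $\phi$ is an $\fq$-linear transformation; its image $\im(\phi)$ is therefore a subspace of $\fq^k$, i.e.\ a linear code of length $k$. Next I would identify the rows with this image: for $i\in[0,q^t-2]$ we have $\phi(\a^i)=\left(\Tt(\a^{i+c_j})\right)_j$, which is exactly row $i$ of $\ZA{\a,C}$, while $\phi(0)$ is the zero vector, matching the appended row of zeros. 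Because $\a$ is primitive, $\{\a^i\mid i\in[0,q^t-2]\}=\fqtstar$, so the inputs $\{\a^i\}\cup\{0\}$ range over all of $\fqt$. Hence the set of rows of $\ZA{\a,C}$ equals $\im(\phi)$, confirming it is a linear code.

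It remains to compute $\dim\im(\phi)$ and show it equals $d$. Let $W=\spn\{\a^{c_j}\mid j\in[0,k-1]\}=\spn\{\a^i\mid i\in C\}$, so $d=\dim W$, where the span is taken over $\fq$ and $\fqt$ is viewed as a $t$-dimensional $\fq$-vector space. The trace form $B(x,y)=\Tt(xy)$ is a symmetric $\fq$-bilinear form on $\fqt$, and \Cref{lemma:TraceBetaXIsZeroForAllXIffBetaIsZero} states precisely that it is nondegenerate (no nonzero $\beta$ satisfies $\Tt(\beta x)=0$ for all $x$). I would then observe that $x\in\ker(\phi)$ if and only if $\Tt(x\a^{c_j})=0$ for every $j$; since $B(x,\cdot)$ vanishes on the spanning set $\{\a^{c_j}\}$ exactly when it vanishes on all of $W$, this says $\ker(\phi)=W^{\perp}$, the orthogonal complement of $W$ under $B$. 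Nondegeneracy of $B$ gives $\dim W^{\perp}=t-\dim W=t-d$, and rank--nullity then yields $\dim\im(\phi)=t-(t-d)=d$, as required.

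The main obstacle is the dimension count, and the crux is the passage $\ker(\phi)=W^{\perp}$ together with the dimension identity $\dim W+\dim W^{\perp}=t$: both rely essentially on the nondegeneracy of the trace form, which is exactly the content of \Cref{lemma:TraceBetaXIsZeroForAllXIffBetaIsZero}. The remaining care points are minor but worth stating explicitly: one must take the span over $\fq$ (consistent with treating $\fqt$ as an $\fq$-space, so that $d\le t$), and one must invoke primitivity of $\a$ (\Cref{definition:PrimitiveElement}) to guarantee that the inputs $\a^i$, together with $0$, sweep out all of $\fqt$ and thus produce every row of $\ZA{\a,C}$.
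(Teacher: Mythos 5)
Your proof is correct, and it reaches the dimension count by a different mechanism than the paper. The paper first verifies by hand that the row set $\Rcal$ is closed under $\fq$-linear combinations (using linearity of the trace), and then computes the dimension through the \emph{column} space: it shows, for every subset of columns, that a linear dependence among the columns of $\ZA{\a,C}$ is equivalent --- via \Cref{lemma:TraceBetaXIsZeroForAllXIffBetaIsZero} --- to the same dependence among the elements $\a^{c_i}$, so the column rank equals $\dim(\spn\{ \a^c \mid c\in C \})$. You instead package everything into the single linear map $\phi(x)=\left(\Tt(x\a^{c_0}),\dots,\Tt(x\a^{c_{k-1}})\right)$, identify the row set with $\im(\phi)$ (primitivity of $\a$ enters here, exactly as in the paper's description of $\Rcal$), and obtain the dimension from rank--nullity after identifying $\ker(\phi)=W^{\perp}$ under the trace form $B(x,y)=\Tt(xy)$. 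Both arguments spend the same key fact (\Cref{lemma:TraceBetaXIsZeroForAllXIffBetaIsZero}, i.e.\ nondegeneracy of $B$), but they spend it differently: the paper uses it to transfer dependency relations between columns and field elements, while you use it for the identity $\dim W+\dim W^{\perp}=t$ --- a standard fact about nondegenerate bilinear forms, but one the paper never states, so your route imports one extra piece of standard linear algebra. In exchange, your argument is shorter and unifies the subspace claim and the dimension claim under one map, whereas the paper needs a separate closure computation plus a row-rank-equals-column-rank step.
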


\begin{proof}
    Let $C=\{ c_0,\dots, c_{k-1}\}$.
    The set of rows of $\ZA{\a, C}$ is given by
    \begin{align*}
        \Rcal
        &= 
        \left\{ \left( \T(\a^{i}\a^{c_0}), \dots, \T(\a^{i}\a^{c_{k-1}}) \right) \mid i \in [0,q^t-2] \right\}
        \cup
        \left\{ (0,\dots,0) \right\}\\
        &=
        \left\{ \left( \T(x\a^{c_0}), \dots, \T(x\a^{c_{k-1}} \right) \mid x \in \fqt \right\}.
    \end{align*}
    Let $r_1, r_2 \in \Rcal$ and $c \in \fq$.
    Then there exist $x,y \in \fqt$ such that
    \begin{align*}
        r_1&=\left(\T(x\a^{c_0}), \dots, \T(x\a^{c_{k-1}})\right)\\
        r_2&=\left(\T(y\a^{c_0}), \dots, \T(y\a^{c_{k-1}})\right)
    \end{align*}
    and hence
    \begin{align*}
        r_1+cr_2
        &=
        \left(\T(x\a^{c_0}), \dots, \T(x\a^{c_{k-1}})\right) +c\left(\T(y\a^{c_0}), \dots, \T(y\a^{c_{k-1}})\right)\\
        &=
        \left( \T(x\a^{c_0})+c\T(y\a^{c_0}), \dots, \T(x\a^{c_{k-1}})+c\T(y\a^{c_{k-1}}) \right).
    \end{align*}
    By \Cref{item:TraceIsLinearTransformation} of \Cref{theorem:PropertiesOfTrace}, the trace is linear, so the latter is equal to 
    \[\left( \T((x+cy)\a^{c_0}), \dots, \T((x+cy)\a^{c_{k-1}}) \right),\]
    which is an element of $\Rcal$.
    We conclude that $\Rcal$ is a subspace of $\fq^k$, that is, linear code of length~$k$.

    Now, we observe that the dimension of $\Rcal$ is the dimension of the row space of $\ZA{\a, C}$, which is also the dimension of its column space.
    Hence, to complete the proof it suffices to show that the dimension of the column space is equal to $\dim( \spn\{ \a^c \mid c\in C \} )$.
    To do that, we show that for every $I \subseteq [0,k-1]$, the column vectors of $\ZA{\a, C}$ indexed by $c_i, i \in I$, are linearly dependent if and only if the elements $\a^{c_i}$, $i \in I$, are linearly dependent.
    We denote the $i$-th column vector of $\ZA{\a, C}$ by $C_i$, and consider nonempty $I \subseteq [0,k-1]$ and elements $y_i \in \fq$, $i \in I$ that are not all zero.
    Then, we have that $\sum_{i\in I}y_i C_i=0$ if and only if every row of the array consisting of the columns $C_i$, $i \in I$, satisfies the same linear dependence relation, that is, $\sum_{i\in I}y_i \T(x\a^{c_i})=0$, for all $x\in \fqt$.
    From the linearity of the trace, as per \Cref{theorem:PropertiesOfTrace}, the latter is equivalent to $\T(x\sum_{i\in I}y_i\a^{c_i})=0$, for all $x \in \fqt$.
    Furthermore, by \Cref{lemma:TraceBetaXIsZeroForAllXIffBetaIsZero}, this is equivalent to $\sum_{i \in I}y_i\a^{c_i}=0$.
    We conclude that $\sum_{i\in I}y_iC_i=0$ if and only if $\sum_{i \in I}y_i\a^{c_i}=0$.
    From our earlier discussion, this completes the proof.
\end{proof}

It follows from \Cref{theorem:LinearCodesAndLinearOAsAreEquivalent,theorem:RowsOfAAreCode} that a cyclic trace array $\ZA{\a, C}$ is a $q^t \times |C|$ orthogonal array, whose strength depends on the minimum distance of the dual code of the linear code whose codewords are the rows of the orthogonal array.
By \Cref{theorem:OANecessaryAndSufficientConditionRelatedToLI}, its strength is also the largest number $t$ such that every $t$ of its column vectors are linearly independent over $\fq$.
Therefore, to specify the strength it is useful to characterize the subsets of column vectors of cyclic trace arrays that are linearly independent.  This is done in the next proposition, which is a generalization of a result due to \mbox{Raaphorst et al.} \mbox{\cite[Theorem 2]{raaphorst2014construction}}.
We have added \Cref{item:b,item:d,item:e}, as well as the index $q^{t-s}$ in \Cref{item:c}, which is missing from \cite[Theorem 2]{raaphorst2014construction}.

\begin{proposition}
{}
{SebastianExtended}
    Let $t,s$ be positive integers with $s\leq t$, $\a$ be a primitive element of $\fqt$ with minimal polynomial $m_{\a}$, and $I=\{ i_0, \dots, i_{s-1} \}$ be a nonempty subset of $[0,q^t-2]$.
    Then, the following statements are equivalent:
    \begin{enumerate}
        \item \label{item:b}
            The $q^t\times s$ array $\ZA{\a,I}$ is an $\OA_{q^{t-s}}(t, s,q)$.
        \item \label{item:a}
            The set $\{ \a^{i_j} \mid j \in [0,s-1] \}$ is linearly independent over $\fq$.  
        \item \label{item:c}
          The set $\{ L_{q^t-1}^{i_j}(\seq{\a}) \mid j \in [0,s-1] \}$ is linearly independent over $\fq$.
        \item \label{item:d}
              For every $d_0, \dots, d_{s-1} \in \fq$ not all zero, $m_{\a}(x)$ does not divide $\sum_{j=0}^{s-1}d_j x^{i_j}$.
          \item \label{item:e}
              There is no $(s-2)$-flat of $PG(t-1,q)$ that contains the points $[\a^{i_0}], \dots, [\a^{i_{s-1}}]$.
    \end{enumerate}
    For the special case when $s=t$, the following is also equivalent to the statements above.
    \begin{enumerate}[resume]
        \item \label{item:f}
        There is no zero row in $\A{\a, C}$.
    \end{enumerate}
\end{proposition}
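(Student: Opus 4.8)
The plan is to designate \Cref{item:a} --- linear independence of $\{\a^{i_j}\}$ over $\fq$ --- as the hub and prove each of the other statements equivalent to it, rather than chaining them in a cycle. Most of these equivalences are repackagings of results already established, so the work is to identify the right tool in each case and then to treat \Cref{item:f} separately, since it is the only genuinely new ingredient.

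For \Cref{item:a} $\Leftrightarrow$ \Cref{item:c}: by \Cref{remark:ColumnsOfCyclicTraceArrayAreCyclicShiftsOfMseq} the vectors $L^{i_j}_{q^t-1}(\seq{\a})$ are exactly the columns of $\A{\a, I}$, and these are the columns of $\ZA{\a, I}$ with a common zero coordinate appended; appending a shared zero entry does not affect linear (in)dependence over $\fq$. The proof of \Cref{theorem:RowsOfAAreCode} already shows that a subset of the columns of $\ZA{\a,I}$ is linearly dependent iff the corresponding $\a^{i_j}$ are, which yields both \Cref{item:a} $\Leftrightarrow$ \Cref{item:c} and the column analysis needed for \Cref{item:b}. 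For \Cref{item:a} $\Leftrightarrow$ \Cref{item:b}: \Cref{theorem:RowsOfAAreCode} identifies the rows of $\ZA{\a,I}$ with a linear code of dimension $\dim(\spn\{\a^{i_j}\})$, and combining this with the independence criterion of \Cref{theorem:OANecessaryAndSufficientConditionRelatedToLI}, the array is an orthogonal array of the stated strength iff its full set of $s$ columns is independent, with the index $q^{t-s}$ arising because each codeword is then repeated $q^{t-s}$ times among the $q^t$ rows. For \Cref{item:a} $\Leftrightarrow$ \Cref{item:d}: writing $f(x) = \sum_{j=0}^{s-1} d_j x^{i_j}$ gives $\sum_j d_j \a^{i_j} = f(\a)$, so a nontrivial dependence among the $\a^{i_j}$ is exactly a nonzero $f$ with $f(\a) = 0$, which by \Cref{lemma:AlphaRootOfPolynomialIffItIsDivisibleByMinimal} means $m_{\a} \mid f$; taking contrapositives gives the equivalence. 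Finally \Cref{item:a} $\Leftrightarrow$ \Cref{item:e} is immediate from \Cref{lemma:IndependenceAndGeneralPosition} with $n = s$.

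The new content is the case $s = t$ and \Cref{item:f}. Here I would consider the $\fq$-linear map $\Phi \colon \fqt \to \fq^{s}$ given by $\Phi(x) = (\Tt(x\a^{i_0}), \dots, \Tt(x\a^{i_{s-1}}))$; by the definition of $\A{\a, I}$ its rows are precisely $\Phi(\a^i)$ for $i \in [0,q^t-2]$, that is, the images $\Phi(x)$ of the nonzero $x \in \fqt$. Thus $\A{\a,I}$ has a zero row iff $\Phi(x) = 0$ for some nonzero $x$, i.e. iff $\ker\Phi \neq \{0\}$. By linearity of the trace, $\ker\Phi$ is the orthogonal complement of $\spn\{\a^{i_j}\}$ with respect to the bilinear form $(x,y) \mapsto \Tt(xy)$, whose nondegeneracy is exactly \Cref{lemma:TraceBetaXIsZeroForAllXIffBetaIsZero}. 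Hence $\dim\ker\Phi = t - \dim\spn\{\a^{i_j}\}$, and when $s = t$ this vanishes iff the $t$ elements $\a^{i_j}$ span $\fqt$, i.e. iff they are linearly independent, which is \Cref{item:a}. Therefore $\A{\a,I}$ has no zero row iff \Cref{item:a} holds.

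The main obstacle is the \Cref{item:f} step: unlike the others it is not a direct citation, and it requires framing the zero-row condition as triviality of $\ker\Phi$ and invoking nondegeneracy of the trace form (via \Cref{lemma:TraceBetaXIsZeroForAllXIffBetaIsZero}) together with the rank--nullity / orthogonal-complement dimension count --- and it is essential here that $s = t$, so that a trivial kernel forces a spanning, hence independent, set. A secondary point to get right is the bookkeeping of the index $q^{t-s}$ in \Cref{item:b}, which comes from the multiplicity with which each codeword appears among the rows rather than from the dimension of the code alone.
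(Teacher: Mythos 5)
Your proof is correct, and while it proves the same hub-and-spokes structure (everything equivalent to \Cref{item:a}), it gets there by a more modular route than the paper. The paper proves \Cref{item:a} $\Leftrightarrow$ \Cref{item:b} from scratch: it introduces $\varphi(x)=\left(\Tt(x\a^{i_0}),\dots,\Tt(x\a^{i_{s-1}})\right)$, shows surjectivity of $\varphi$ is equivalent to linear independence by producing a nonzero vector in $\im(\varphi)^{\perp}$ and applying \Cref{lemma:TraceBetaXIsZeroForAllXIffBetaIsZero}, and then counts fibers to get the index $q^{t-s}$; it proves \Cref{item:a} $\Leftrightarrow$ \Cref{item:c} by a direct computation with the shifted sequences; and it handles \Cref{item:f} as an equivalence with \Cref{item:b}, arguing that no zero row forces $\ker\varphi=\{0\}$, hence $\varphi$ injective, hence surjective. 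You instead recycle \Cref{theorem:RowsOfAAreCode} (whose proof establishes that column dependences of $\ZA{\a,I}$ mirror dependences among the $\a^{i_j}$, and whose dimension statement plus row rank equals column rank gives the rest) to dispatch \Cref{item:b} and \Cref{item:c} together, and you treat \Cref{item:f} by identifying $\ker\Phi$ with the orthogonal complement of $\spn\{\a^{i_j}\}$ under the trace pairing, invoking nondegeneracy (\Cref{lemma:TraceBetaXIsZeroForAllXIffBetaIsZero}) and the formula $\dim W^{\perp}=t-\dim W$. Both routes rest on the same two facts --- nondegeneracy of the trace form and rank--nullity --- but yours is shorter and leans on already-proved results, whereas the paper's self-contained fiber count is the version it reuses later (e.g., in \Cref{corollary:NumberOfTraceTuplesThatAgree}). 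The parts \Cref{item:d} and \Cref{item:e} coincide with the paper's.

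One seam to tighten: \Cref{theorem:OANecessaryAndSufficientConditionRelatedToLI} is stated for a $q^n\times k$ array whose rows form a code of dimension $n$, i.e., whose rows are the $q^n$ distinct codewords. For $\ZA{\a,I}$ the row code has dimension $d=\dim\left(\spn\{\a^{i_j}\}\right)\le s\le t$ while the array has $q^t$ rows, so that theorem does not apply verbatim unless $d=t$. This is harmless, because the claim that actually carries your argument is the one you state yourself: when the columns are independent, each codeword appears exactly $q^{t-s}$ times. That uniform multiplicity needs a one-line justification --- the rows are the values $\Phi(x)$ for $x\in\fqt$, so the fibers over the image are cosets of $\ker\Phi$ and all have size $q^{t-\dim\im\Phi}$ --- which is precisely the rank--nullity bookkeeping you already set up for \Cref{item:f}. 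Making that explicit (and then dropping the appeal to \Cref{theorem:OANecessaryAndSufficientConditionRelatedToLI}, which becomes redundant) closes the only gap.
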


\begin{proof} 
    ``\ref{item:a} $\Rightarrow$ \ref{item:b}''.  
    We denote $\T=\Tt$.
    We assume that \Cref{item:a}  holds and we need to show that, for every $(b_0, \dots, b_{s-1}) \in \fq^s$, there exist exactly $q^{t-s}$ rows of $\ZA{\a,I}$ equal to $(b_0, \dots, b_{s-1})$, i.e. there exist exactly $q^{t-s}$  elements $i \in [0, q^t-1]$ such that 
    \begin{equation}
        \label{equation:RowEqualToBNTuple}
        \left( \ZA{\a, I}_{i,0}, \dots, \ZA{\a, I}_{i, {s-1}} \right) = (b_0, \dots, b_{s-1}).
    \end{equation}
    From \Cref{definition:cyclicAlphaSArray}, for all $(i,j) \in [0,q^t-1]\times [0,s-1]$ we have that 
    \[
        \ZA{\a}_{i,j}=
        \begin{cases}
            \T(\a^{i}\a^{i_j}), & \text{ if } i \in [0, q^t-2], j \in [0,s-1];\\
            0=\T(0\cdot\a^{i_j}), & \text{ if } i = q^t-1, j \in [0,s-1].
        \end{cases}
    \]
    Furthermore, since $\a$ is a primitive element of $\fqt$, we have that 
    \[ \fqt= \left\{ \a^i \mid i \in [0, q^t-2] \right\} \cup \left\{ 0 \right\}.\]
    Therefore, we equivalently need to show that, for every $(b_0, \dots, b_{s-1}) \in \fq^s$, there exist exactly $q^{t-s}$ elements $x \in \fqt$ such that 
    \begin{equation}\label{equation:TraceNTupleXEqualsBNtuple}
        \left( \T(x\a^{i_0}), \dots, \T(x\a^{i_{s-1}}) \right)
        = (b_0, \dots, b_{s-1}).
    \end{equation}
    We consider the mapping 
    \begin{align}\label{equation:RaaphorstMappingProof}
        \nonumber
        \varphi &: \fqt \longrightarrow \fq^s\\ 
            x   &\mapsto \left( \T(x\a^{i_0}), \dots, \T(x\a^{i_{s-1}}) \right).
    \end{align}
    From the linearity of the trace over $\fq$, it follows that $\varphi$ is a linear transformation between vector spaces over $\fq$.
    We show that $\varphi$ is surjective; assume by means of contradiction that this is not the case.
    Then $\im(\varphi)$ is a proper subspace of $\fq^s$ and therefore 
    $\im(\varphi)^{\perp} \neq \{ \bm{0} \}$.
    Let $\mathbf{d}=(d_0, \dots, d_{s-1}) \in \im(\varphi)^{\perp} \setminus \{ \bm{0} \}$, so we have that 
    \begin{align}
        \label{equation:DualElementTimesPhiIsZero}
        0 
        = \mathbf{d}\cdot\varphi(x) 
        = \sum_{k=0}^{s-1} d_k \T(x\a^{i_{k}})
        = \T\left(x\sum_{k=0}^{s-1}i_k \a^{i_k} \right), \quad \text{ for all } x \in \fqt.
    \end{align}
    \Cref{equation:DualElementTimesPhiIsZero} along with \Cref{lemma:TraceBetaXIsZeroForAllXIffBetaIsZero} implies that $\sum_{i=0}^{s-1}d_i\a^{s_i}=0$.
    Since $\mathbf{d} \neq \bm{0}$, this contradicts the assumption that $\a^{i_0}, \dots, \a^{i_{s-1}}$ are linearly independent, and completes the proof that $\varphi$ is surjective.
    It follows that the rank of $\varphi$ is $s$ and, hence, for all $(b_0, \dots, b_{s-1}) \in \fq^s$, there exist exactly $q^{t-s}$ elements $x \in \fq $ such that $\varphi(x) = (b_0, \dots, b_{s-1})$ or, equivalently, such that \Cref{equation:TraceNTupleXEqualsBNtuple} holds, as we needed to prove.
    \todoii
    {Daniel's comment: Not clear from surjectivity of how to get exact split and $q^{t-s}$ values. I think it needs the trace perfect split, right?}
    {Daniel's comment haven't looked yet.}
    
    ``\ref{item:b} $\Rightarrow$ \ref{item:a}'' 
    We assume that $\Cref{item:b}$ holds, that is, the columns of $\ZA{\a, I}$ are uniformly $q^{t-s}$-covered.
    From the previous part of the proof, this implies that $\varphi$ is surjective, hence we have $\im(\varphi) = \fq^s$ and, therefore, $\im(\varphi)^{\perp} = \{ \bm{0} \}$.
    Now, we assume by means of contradiction that $\{ \a^{i_j} \mid j \in [0,s-1] \}$ is linearly dependent over $\fq$.
    Then, there exist $d_0, \dots, d_{s-1} \in \fq$ not all zero, such that $\sum_{j=0}^{s-1}d_i \a^{i_j}=0$, which implies that $\sum_{j=0}^{s-1}d_i \a^{i_j} x = 0$ for all $x \in \fqt$.
    From the linearity of the trace over $\fq$ we have that, for all $x \in \fqt$,
    \begin{align*}
        0 &= \T(0) = \T\left( \sum_{j=0}^{s-1} d_j \a^{i_j} x \right)
           = \sum_{j=0}^{s-1} d_j \T\left(x\a^{i_j}\right)
        \\&= (d_0, \dots, d_{s-1}) \cdot (\T(x\a^{i_0}), \dots, \T(x\a^{i_{s-1}}))
        \\&= (d_0, \dots, d_{s-1}) \cdot \varphi(x).
    \end{align*}
    The above means that $(d_0, \dots, d_{s-1}) \in \im(\varphi)^{\perp}=\{ \bm{0} \}$, which contradicts the assumption that $d_0, \dots, d_{s-1}$ are not all zero.

    ``\ref{item:a} $\Leftrightarrow$ \ref{item:c}''
    We prove the contrapositive of this equivalence.
    We assume that the set in \Cref{item:a} is linearly dependent, i.e. there exist $d_0, \dots, d_{s-1} \in \fq$ not all zero, such that $\sum_{j=0}^{s-1}d_j\a^{i_j}=0$.
    By \Cref{lemma:TraceBetaXIsZeroForAllXIffBetaIsZero}, this is equivalent to $\T( x\sum_{j=0}^{s-1}d_j\a^{i_j} )=0$, for all $x \in \fqt$.
    Since $\a$ is a primitive element, the latter holds if and only if $\T( \a^i\sum_{j=0}^{s-1}d_j\a^{i_j})=0$, for all $i \in [0,q^t-2]$.
    From the linearity of the trace, as per \Cref{theorem:PropertiesOfTrace}, this is equivalent to 
    \begin{equation}\label{equation:lqlkjoijojis}
        \sum_{j=0}^{s-1}d_j \T(\a^{i_j+i})=0, \text{ for all } i \in [0,q^t-2].
    \end{equation}
    However, by \Cref{definition:LeftShiftOperator} we have that 
    \[
          \left( \T(\a^{i_j+i}) \right)_{i=0}^{q^t-2}
         =L_{q^t-1}^{i_j}\left(  \left( \T(\a^i) \right)_{i\geq 0} \right)
         =L_{q^t-1}^{i_j}\left(\seq{\a}\right),
    \]
    hence \Cref{equation:lqlkjoijojis} is equivalent to 
    \[ \sum_{j=0}^{s-1}d_jL_{q^t-1}^{i_j}\left(\seq{\a}\right)=0, \]
    i.e. the set in \Cref{item:c} is linearly dependent.

    ``\ref{item:a} $\Leftrightarrow$ \ref{item:d}''
    We show the contrapositive of this equivalence.
    We have that $\{ \a^{i_j} \mid j \in [0,s-1] \}$ is linearly dependent over $\fq$ if and only if there exist $d_0, \dots, d_{s-1} \in \fq$, not all zero, satisfying $\sum_{j=0}^{s-1} d_j \a^{i_j}=0$.
    From \Cref{lemma:AlphaRootOfPolynomialIffItIsDivisibleByMinimal} the latter holds if and only if there exists a nonzero polynomial $f(x) = \sum_{j=0}^{s-1}d_j x^{i_j} \in \fqx$ such that $m_{\a}(x) | f(x)$, where $m_{\a}$ is the minimal polynomial of $\a$.

    ``\ref{item:a} $\Leftrightarrow$ \ref{item:e}''
    We recall that in view of \Cref{proposition:ConstructionOfPGDQFromFF},
    $\{ [\a^i] \mid i \in [0,q^t-2] \}$
    is the set of points of $PG(t-1,q)$.
    Then, the proof for this part is a direct consequence of \Cref{lemma:IndependenceAndGeneralPosition} and \Cref{remark:ArcsTracksCapsAndLI}.

    ``\ref{item:b} $\Rightarrow$ \ref{item:f}''
    We assume that $s=t$ and \Cref{item:b} holds.
    Then, the rows of the $q^t\times t$ array $\ZA{\a, C}$ are precisely all the vectors of $\fq^t$, without repetitions.
    Since $\ZA{\a, C}$ is constructed by adding a row of zeros to $\A{\a, C}$,
    \Cref{item:b} implies that the rows of $\A{\a, C}$ are precisely all the nonzero vectors of $\fq^t$, hence \Cref{item:f} holds.

    ``\ref{item:f} $\Rightarrow$ \ref{item:b}''
    We assume that \Cref{item:f} holds.
    Then, there exists no $i \in [0,q^t-2]$ such that 
    \[
        \left( \T(\a^i\a^{i_0}), \ldots \T(\a^i\a^{i_{t-1}}) \right)
        = (0,\dots, 0).
    \]
    Since $\a$ is primitive, equivalently, there is no $x \in \fqtstar$ such that 
    \[
         \left( \T(x\a^{i_0}), \ldots \T(x\a^{i_{t-1}}) \right) 
        =(0,\dots, 0).
    \]
    We consider again the mapping 
    \[
    \begin{array}[]{ll}
    \varphi: &\fqt \longrightarrow \fq^t\\
    &x\mapsto 
    \left( \T(x\a^{i_0}), \dots, \T(x\a^{i_{t-1}}) \right).
    \end{array}
    \]
    From the above, there is no $x \in \fqstar$ such that $\varphi(x)=\bm{0} \in \fq^t$, hence $\ker(\varphi)=\{0\}$, which implies that $\varphi$ is injective, and hence also surjective.
    As we have argued in the proof for \ref{item:a} $\Rightarrow$ \ref{item:b}, the surjectivity of $\varphi$ implies that \Cref{item:b} holds.
\end{proof}

\begin{example}
{}
{DemonstrationOfSebastianExtended}
    Let $\a$ be a root of the primitive polynomial $x^3+2x+1\in\fthree[x]$, so that $\a$ is a primitive element in $\f_{3^3}$, and let $C=\{ 6,8,12 \}$.
    Considering \Cref{example:ProjectiveSpacePG23}, we have that the set $\{ \a^c\mid c\in C \}= \{ \a^6, \a^8, \a^{12}\}$ is linearly independent, therefore \Cref{item:a} holds for this case.
    We also observe that $\ZA{\a, C}=\ZA{\a,\{ 6,8,12 \}}$ which we show in \Cref{table:fullcyclicarray} is an $\OA(3, 3,3)$, as per \Cref{item:b}.
    Furthermore, the set $\{L^{c}_{3^3-1}\seq{\a}) \mid c \in C\}$ is the set of columns of $\A{\a, C}$ that is also shown in \Cref{table:fullcyclicarray}, and is also linearly independent, so \Cref{item:c} holds.
    Finally, the points $\a^{c}$, $c\in C=\{ 6,8,12 \}$ are the points $a,b$ and $e$ in \Cref{figure:ProjectiveSpacePG23}, which are not contained in the same $1$-flat (line), as per \Cref{item:e}.
\end{example}

In \Cref{theorem:EquivalenceOAandNTsetsExtended} that follows, we give several necessary and sufficient conditions for a cyclic trace array to be a linear orthogonal array of a given strength. 
The proof relies on \Cref{proposition:SebastianExtended} and the theorem is an extension of \Cref{theorem:OANecessaryAndSufficientConditionRelatedToLI} applied to cyclic trace arrays.  First, we need to introduce the following notion.

\begin{definition}{$(k,s)$-set}{NTSet}
\index{Set!$(k,s)$-set}
    Let $k,s$ be positive integers such that $s\leq k$, and $S$ be a set of $k$ elements from a vector space over a field $F$.
    If every subset of $S$ of size $s$ is linearly independent, then $S$ is a \emph{$(k,s)$-set over $F$}, or simply a $(k,s)$-set when it is clear from the context that the underlying field is $F$.
\end{definition}

\begin{theorem}
{}
{EquivalenceOAandNTsetsExtended}
Let $t,s$ be positive integers, $\a$ be a primitive element of $\fqt$ with minimal polynomial $m_{\a}$, and $C$ be a nonempty subset of $[0,q^t-2]$.
    Then, the following statements are equivalent.
    \begin{enumerate}
        \item \label{item:C}
          The $q^t\times |C|$ array $\ZA{\a, C}$ is a linear $\OA_{q^{t-s}}(s, |C|,q)$.
        \item \label{item:A}
            The set $\{ \a^c \mid c\in C \}$ is a $(|C|,s)$-set over $\fq$.
        \item \label{item:B}
              The set $\{ L_{q^t-1}^{c}(\seq{\a}) \mid c \in C \}$ is a $(|C|,s)$-set over $\fq$.
        \item \label{item:D}
            For every $c_0, \dots, c_{s-1} \in C$ and $d_0, \dots, d_{s-1} \in \fq$ not all zero, $m_{\a}(x)$ does not divide $\sum_{j=0}^{s-1}d_jx^{c_j}$.
        \item \label{item:E}
            For every $s$ points in $\{ [\a^c] \mid c \in C \}$, there is no $(s-2)$-flat in $PG(t-1,q)$ that contains them.
    \end{enumerate}
\end{theorem}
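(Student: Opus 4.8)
The plan is to reduce each global statement about the set $C$ to the corresponding statement about every $s$-element subset of $C$, and then to invoke the already-established local equivalences of \Cref{proposition:SebastianExtended} one subset at a time. The engine of this reduction is \Cref{definition:NTSet}: saying that $\{\a^c \mid c\in C\}$ is a $(|C|,s)$-set is, by definition, exactly the assertion that $\{\a^c \mid c\in I\}$ is linearly independent over $\fq$ for every $s$-subset $I\subseteq C$, and similarly for the shifted sets in \ref{item:B}. Throughout I assume the natural ranges $s\leq |C|$ (required by \Cref{definition:NTSet}) and $s\leq t$; outside these the statements degenerate simultaneously, since no $s$ vectors in the $t$-dimensional space $\fqt$ can be independent once $s>t$.

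First I would establish the anchor equivalence \ref{item:C} $\Leftrightarrow$ \ref{item:A}. By \Cref{definition:OrthogonalArray}, the $q^t\times |C|$ array $\ZA{\a,C}$ is an $\OA_{q^{t-s}}(s,|C|,q)$ precisely when every $s$-set of its columns is uniformly $q^{t-s}$-covered. The crucial observation is that, for an $s$-subset $I=\{i_0,\dots,i_{s-1}\}\subseteq C$, the subarray of $\ZA{\a,C}$ on the columns indexed by $I$ is exactly the array $\ZA{\a,I}$ (cf. \Cref{remark:ColumnsOfCyclicTraceArrayAreCyclicShiftsOfMseq}); hence those $s$ columns are uniformly $q^{t-s}$-covered if and only if $\ZA{\a,I}$ is itself an $\OA_{q^{t-s}}(s,s,q)$. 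Applying \ref{item:b} $\Leftrightarrow$ \ref{item:a} of \Cref{proposition:SebastianExtended} to $I$, this holds if and only if $\{\a^i \mid i\in I\}$ is linearly independent over $\fq$. Quantifying over all $s$-subsets $I\subseteq C$ then yields \ref{item:C} $\Leftrightarrow$ \ref{item:A}. Linearity of the orthogonal array is automatic, since \Cref{theorem:RowsOfAAreCode} guarantees that the rows of $\ZA{\a,C}$ form a subspace of $\fq^{|C|}$.

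Next I would dispatch the three remaining equivalences uniformly. For \ref{item:A} $\Leftrightarrow$ \ref{item:B}, both sides unwind through \Cref{definition:NTSet} into statements quantified over $s$-subsets $I\subseteq C$; for a fixed such $I$, linear independence of $\{\a^c \mid c\in I\}$ is equivalent to that of $\{L^{c}_{q^t-1}(\seq{\a}) \mid c\in I\}$ by \ref{item:a} $\Leftrightarrow$ \ref{item:c} of \Cref{proposition:SebastianExtended}, so the two quantified statements coincide. The equivalences \ref{item:A} $\Leftrightarrow$ \ref{item:D} and \ref{item:A} $\Leftrightarrow$ \ref{item:E} are handled in exactly the same way, via \ref{item:a} $\Leftrightarrow$ \ref{item:d} and \ref{item:a} $\Leftrightarrow$ \ref{item:e} respectively; here \ref{item:D} and \ref{item:E} are already phrased with an explicit quantifier ``for every $c_0,\dots,c_{s-1}\in C$'', so they are already in per-subset form and need no further unwinding.

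I expect the only delicate point to be the anchor equivalence \ref{item:C} $\Leftrightarrow$ \ref{item:A}, and within it the bookkeeping that the uniform-coverage multiplicity for an $s$-column subarray of $\ZA{\a,C}$ is the same index $q^{t-s}$ appearing in \Cref{proposition:SebastianExtended}. This matches because both $\ZA{\a,C}$ and each subarray $\ZA{\a,I}$ have exactly $q^t$ rows, so the count per $s$-tuple is $q^{t}/q^{s}=q^{t-s}$ in both. Once the identification of each $s$-column subarray with the array $\ZA{\a,I}$ is made precise, the remainder is a routine quantification over $s$-subsets built directly on the local equivalences already proved in \Cref{proposition:SebastianExtended}.
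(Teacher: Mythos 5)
Your proposal is correct and takes essentially the same route as the paper: both anchor the argument at the equivalence of the orthogonal-array statement and the $(|C|,s)$-set statement by reducing uniform coverage to each $s$-subset of columns and invoking \Cref{proposition:SebastianExtended} there, and both obtain the remaining equivalences subset-by-subset from the corresponding items of that proposition. Your extra care about the multiplicity $q^{t-s}$ and about linearity (via \Cref{theorem:RowsOfAAreCode}) only tightens points the paper's own write-up glosses over (it even contains the typos ``uniformly $q^{s-2}$-covered'' and ``$|I|=t$'' at exactly those spots).
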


\begin{proof}
    In the following, we index the columns of $\ZA{\a, C}$ by the elements of $C$ and the columns of $\ZA{\a}$ by $[0,t-1]$.

    ``\ref{item:C} $\Leftrightarrow$ \ref{item:A}''
    We have that $\ZA{\a, C}$ is a linear $\OA_{q^{t-s}}(s, k,q)$ if and only if any $s$-set of its columns is uniformly $q^{s-2}$-covered.
    In other words, if and only if for any $I\subseteq C$ with $|I|=s$, the $q^t\times s$ array $\ZA{\a,I}$ is an $\OA_{q^{t-s}}(s, s,q)$.
    We conclude that $\ZA{\a, C}$ is an $\OA_{q^{t-s}}(s, |C|,q)$ if and only if for every $I\subseteq C$ with $|I|=t$ we have that 
    $\{\a^i \mid i \in I\}$ is linearly independent.
    However, the second statement is the definition of $\{\a^c\mid c\in C\}$ being a $(|C|,s)$-set, hence we have proven the equivalence of the first two statements.
    \todoii
    {Daniel: Problems: we need to prove 2 (not 1) and we need to prove 1 (not 2)}
    {Review Daniel's comment}

%
    ``\ref{item:C} $\Leftrightarrow$ \ref{item:D}''
    We show the contrapositive of this equivalence.
    From the equivalence of the first two statements, we have that \Cref{item:C} does not hold if and only if there exists a subset $\{ c_0, \dots, c_{s-1} \}\subseteq C$ so that $\{ \a^{c_j} \mid j\in [0,s-1] \}$ is linearly dependent.
    From \Cref{item:A,item:D} of \Cref{proposition:SebastianExtended}, this is true if and only if there also exist $d_0, \dots, d_{s-1} \in \fq$ not all zero such that $m_{\a}(x)$ divides $\sum_{j=0}^{s-1}d_jx^{c_j}$, which is the negation of \Cref{item:D}.

    Finally, the equivalence of \Cref{item:A} with \Cref{item:B,item:D,item:E} is a straightforward implication of the equivalence of the statements with the same numbers in \Cref{proposition:SebastianExtended}.
\end{proof}

\begin{remark}
{}
{WhyWeChoseColumnsFrom0W}
    For a primitive element $\a \in \fqt$ and integers $i,j$, \Cref{lemma:CharacterizationOfConstantMultiplesInFQM} states that $\a^i$ and $\a^j$ are constant multiples of each other if and only if $i\equiv j \Mod{\w{t}}$.
    In other words, for every $j \in [0,q^t-2]$ there exists some $i \in [0,\w{t}-1]$ such that $\a^i$ and $\a^j$ are linearly dependent.
    It follows from \Cref{item:a} of \Cref{proposition:SebastianExtended} and \Cref{theorem:EquivalenceOAandNTsetsExtended} that, for any $C$ that is a subset of $[0,q^t-2]$, there exists some $C'$ that is a subset of $[0,\w{t}-1]$ such that $\ZA{\a, C}$ and $\ZA{\a, C'}$ have the same number of column subsets with the orthogonal array property.
    Therefore, for the construction of an orthogonal array of the form $\ZA{\a, C}$, without loss of generality the set $ C$ can be considered to be a subset of $[0,\w{t}-1]$.
    This also justifies the definition of the principal cyclic trace array in \Cref{definition:cyclicAlphaSArray}.
\end{remark}

\section{Orthogonal arrays from cyclic trace arrays}
\label{section:ConstructionOfOrthogonalArraysFromCyclicTraceArrays}

In this section we present two classic orthogonal array constructions, as well as several more recent orthogonal and covering array constructions.
All these results are adapted in the context of the previous section; in particular, the orthogonal and covering arrays are defined using the notation of \Cref{definition:cyclicAlphaSArray}, and the results are justified using \Cref{theorem:EquivalenceOAandNTsetsExtended}.
We note that every proof that we present is one of several equivalent proofs that result by focusing on different statements of the theorem.

\subsection{Two classic orthogonal array constructions}
\label{section:TwoClassicOAConstructions}

In the next corollary we show that a principal cyclic trace array is equivalent to the Rao-Hamming construction that we discuss in \Cref{section:OAsAndCAsCurrentStateOfResearch}.  Rao was the first to study these arrays \cite{rao1946hypercubes,rao1947factorial,rao1949class} whose rows are exactly the words of the dual of $q$-ary Hamming codes \cite{hedayat2012orthogonal}, thus the references in the name.
An in-depth discussion of this result that includes its history and several different constructions is given in \cite[Section 3.4]{hedayat2012orthogonal}.

\begin{corollary}
{The Rao-Hamming construction using maximal sequences}
{RaoHammingOAs}
\index{Orthogonal array!Rao-Hamming}
    Let $t$ be a positive integer and $\a$ be a primitive element of $\fqt$.  Then $\ZA{\a}$ is an $\OA_{q^{t-2}}(2, \w{t},q)$.
    Furthermore, there is no $C\subset [0,q^t-2]$ with $|C|>\w{t}$ such that $\ZA{\a, C}$ is a linear orthogonal array of strength 2.
\end{corollary}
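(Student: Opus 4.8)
The plan is to deduce both assertions from \Cref{theorem:EquivalenceOAandNTsetsExtended} applied with $s=2$, which translates the orthogonal-array property into a linear-independence condition on the set $\{\a^c\mid c\in C\}$, and then to combine this with the description of scalar multiples in \Cref{lemma:CharacterizationOfConstantMultiplesInFQM} and the point count of $PG(t-1,q)$ from \Cref{proposition:ConstructionOfPGDQFromFF}. Throughout I assume $t\geq 2$, so that $\w{t}\geq 2$ and a strength-$2$ statement is meaningful; I also note that linearity is automatic here, since the rows of any $\ZA{\a, C}$ form a linear code by \Cref{theorem:RowsOfAAreCode}, so ``orthogonal array of strength $2$'' and ``linear orthogonal array of strength $2$'' coincide.

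For the first assertion, recall that $\ZA{\a}=\ZA{\a,[0,\w{t}-1]}$, so that $|C|=\w{t}$. By the equivalence of \Cref{item:C} and \Cref{item:A} in \Cref{theorem:EquivalenceOAandNTsetsExtended}, it suffices to check that $\{\a^c\mid c\in[0,\w{t}-1]\}$ is a $(\w{t},2)$-set over $\fq$, i.e.\ that any two of these elements are linearly independent. Two nonzero elements of $\fqt$ are linearly dependent exactly when one is a scalar multiple of the other, which by \Cref{lemma:CharacterizationOfConstantMultiplesInFQM} occurs if and only if their exponents are congruent modulo $\w{t}$. Distinct exponents $i,j\in[0,\w{t}-1]$ cannot satisfy $i\equiv j\Mod{\w{t}}$, so the set is a $(\w{t},2)$-set and $\ZA{\a}$ is a linear $\OA_{q^{t-2}}(2,\w{t},q)$.

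For the second assertion I would argue by contradiction using a pigeonhole count. Suppose $C\subseteq[0,q^t-2]$ satisfies $|C|>\w{t}$ and $\ZA{\a, C}$ has strength $2$. Then \Cref{theorem:EquivalenceOAandNTsetsExtended} forces $\{\a^c\mid c\in C\}$ to be a $(|C|,2)$-set, so the elements $\a^c$, $c\in C$, are pairwise linearly independent; by \Cref{lemma:CharacterizationOfConstantMultiplesInFQM} this is equivalent to the exponents in $C$ being pairwise incongruent modulo $\w{t}$. Since there are only $\w{t}$ residue classes modulo $\w{t}$, a set of more than $\w{t}$ integers must contain two congruent elements, a contradiction. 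Equivalently, in projective language, the points $[\a^c]$ would then be $|C|>\w{t}$ distinct points of $PG(t-1,q)$, exceeding the $\w{t}$ points it contains by \Cref{proposition:ConstructionOfPGDQFromFF}.

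The computations are routine once \Cref{theorem:EquivalenceOAandNTsetsExtended} is invoked, so the only real care needed is to keep two notions of ``distinct'' apart: distinct \emph{indices} $c\in C$ versus distinct \emph{points} $[\a^c]$, since many indices can yield the same point. Making explicit that strength $2$ forces the indices of $C$ to be pairwise distinct modulo $\w{t}$ is the crux of the optimality claim; once that reduction is in place the bound $|C|\leq\w{t}$ is immediate, and this mild bookkeeping is the only obstacle in the proof.
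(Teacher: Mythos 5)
Your proof is correct, and at the top level it is the same argument as the paper's: both parts are read off from \Cref{theorem:EquivalenceOAandNTsetsExtended} with $s=2$. The difference is which of the equivalent statements you invoke. You route everything through the linear-independence statement, reducing it to \Cref{lemma:CharacterizationOfConstantMultiplesInFQM} (scalar multiples correspond to exponents congruent modulo $\w{t}$) plus a pigeonhole count on the $\w{t}$ residue classes; the paper instead routes through the finite-geometry statement, citing \Cref{proposition:ConstructionOfPGDQFromFF} to say that $[\a^i]$, $i \in [0,\w{t}-1]$, are exactly the $\w{t}$ points of $PG(t-1,q)$, so its first part reads ``no two of these points lie in a common $0$-flat'' and its optimality claim is that more than $\w{t}$ pairwise-distinct points cannot fit inside $PG(t-1,q)$. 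These are the same facts in two languages---indeed \Cref{proposition:ConstructionOfPGDQFromFF} is itself proved from \Cref{lemma:CharacterizationOfConstantMultiplesInFQM}, and the paper remarks at the start of the section that each proof given there is one of several equivalent proofs obtained by focusing on different statements of the theorem. Your congruence-plus-pigeonhole route is marginally more self-contained and elementary; the paper's projective phrasing buys the geometric picture (columns of $\ZA{\a}$ correspond to points of $PG(t-1,q)$) that it reuses later for arcs, caps and ovoids. Your closing caveat about distinguishing distinct \emph{indices} from distinct \emph{points} is exactly the right care point, and it is handled identically (via the same equivalence) in both versions.
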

\begin{proof}
    By \Cref{proposition:ConstructionOfPGDQFromFF}, the points $[\a^i]$, $i \in [0,\w{t}-1]$ are exactly all the points in $PG(t-1,q)$, which means that every two of them are distinct.
    In other words, any two points do not belong in the same $0$-flat.
    Then, by \Cref{item:b,item:e} of \Cref{theorem:EquivalenceOAandNTsetsExtended}, we have that $\ZA{\a,[0,\w{t}-1]}=\ZA{\a}$ is an $\OA_{q^{t-2}}(2, \w{t},q)$.

    Now, we assume by means of contradiction that there exists $C \subset [0,q^{t}-2]$ such that $|C|>\w{t}$ and $\ZA{\a, C}$ is a linear orthogonal array of strength $2$.
    Then, the points $[\a^c]$, $c \in C$ are all distinct elements of $PG(t-1,q)$ which contradicts the fact that $PG(t-1,q)$ contains exactly $\w{t}$ elements.
\end{proof}

The Rao-Hamming construction using maximal sequences is the cornerstone of many previously established results.
In \cite{munemasa1998orthogonal,dewar2007division,panario2012divisibility} the authors consider subarrays of Rao-Hamming orthogonal arrays, generated by maximal sequences whose minimal polynomials generated certain conditions, and push the strength to $3$ (or ``nearly'' $3$ in the case of \cite{munemasa1998orthogonal}, as we discuss later). 
In \cite{raaphorst2013variable} (see also \cite{raaphorst2014construction}) Raaphorst et al.\ explore in more depth the properties of Rao-Hamming arrays using maximal sequences and use them as building blocks to construct covering arrays of strength $3$. 
We discuss these results in detail in the subsequent sections.

Next, we show a construction for orthogonal arrays of strength $3$.
To formulate this construction in terms of cyclic trace arrays, we rely on a theorem due to Ebert \cite{ebert1985partitioning}.
\begin{theorem}
{Partition of finite projective spaces into caps \cite[Theorem 3]{ebert1985partitioning}}
{EbertMainTheorem}
\index{Cap}
    Let $q$ be a prime power, $t$ be a positive integer divisible by $4$, and $\a$ be a primitive element of $\fqt$.
    We define 
    \[
        \Omega_i = \left\{ \a^{i+ j \w{t/2}} \mid j \in [0,q^{t/2}] \right\}, \quad i \in [0,\w{t/2}-1].
    \]
    Then, for every $i \in [0,\w{t/2}-1]$ we have that the set $\Omega_i$ is a $(q^{t/2}+1)$-cap and the points in $PG(t-1,q)$ are partitioned by
    \[
        PG(t-1,q)= \dot{\bigcup}_{i=0}^{\w{t/2}-1}\Omega_i.
    \]
\end{theorem}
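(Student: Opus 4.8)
The plan is to work inside $\fqt=\mathbb{F}_{q^{2m}}$ with $m=t/2$, which is \emph{even} because $4\mid t$; this parity is what will ultimately drive the cap estimate. Write $w=\w{t}$ and $w'=\w{t/2}$, and note the elementary identity $w'(q^m+1)=w$. Setting $\gamma=\a^{w'}$, we then have $\gamma^{q^m+1}=\a^{w}$, which is a primitive element of $\fq$ by \Cref{lemma:CharacterizationOfConstantMultiplesInFQM}; denote it $\mu$, so $\ord(\mu)=q-1$. Consequently $\langle\gamma\rangle$ is the cyclic subgroup of $\fqtstar$ of order $(q^m+1)(q-1)$, it contains $\fqstar=\langle\mu\rangle$, and each $\Omega_i$ consists of the points $[\a^i\gamma^j]$ for $j\in[0,q^m]$.

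For the partition I would argue by counting. There are $w'$ sets $\Omega_i$, each with at most $q^m+1$ points, and $w'(q^m+1)=w$ is exactly the number of points of $PG(t-1,q)$ by \Cref{proposition:ConstructionOfPGDQFromFF}; hence it suffices to prove that the points $[\a^{i+jw'}]$ with $i\in[0,w'-1]$, $j\in[0,q^m]$ are pairwise distinct. By \Cref{lemma:CharacterizationOfConstantMultiplesInFQM}, $[\a^{i+jw'}]=[\a^{i'+j'w'}]$ holds iff $w\mid (i-i')+(j-j')w'$. Reducing this congruence modulo $w'$ (which divides $w$) forces $i=i'$, and the remaining condition $w\mid(j-j')w'$, i.e.\ $(q^m+1)\mid(j-j')$, forces $j=j'$. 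This simultaneously yields distinctness, the exact cardinality $q^m+1$ of each $\Omega_i$, and the disjoint decomposition of $PG(t-1,q)$.

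For the cap property, multiplication by $\a^i$ is an $\fq$-linear bijection of $\fqt$, hence a collineation of $PG(t-1,q)$ taking $\Omega_0$ to $\Omega_i$; since collineations preserve collinearity it suffices to show $\Omega_0$ is a cap. By \Cref{remark:ArcsTracksCapsAndLI} this means any three representatives $y_1,y_2,y_3$ of distinct points of $\Omega_0$ are $\fq$-linearly independent. I would choose the representatives in $\langle\gamma\rangle$ (possible since $\fqstar\subseteq\langle\gamma\rangle$), so that $n_\ell:=y_\ell^{q^m+1}\in\langle\gamma^{q^m+1}\rangle=\fqstar$. Suppose instead an $\fq$-dependence holds; no coefficient can vanish (that would make two of the points equal), so after scaling by elements of $\fqstar$ (which keeps the $y_\ell$ in $\langle\gamma\rangle$) we may assume $y_1+y_2+y_3=0$. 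Applying the Frobenius $x\mapsto x^{q^m}$ and using $y_\ell^{q^m}=n_\ell/y_\ell$ gives $n_1y_2y_3+n_2y_1y_3+n_3y_1y_2=0$; substituting $y_3=-(y_1+y_2)$ turns this into a quadratic over $\fq$ with nonzero leading coefficient $n_2$ satisfied by the ratio $z=y_1/y_2$, so $z\in\mathbb{F}_{q^2}$, and by symmetry every pairwise ratio $z_{\ell k}=y_\ell/y_k$ lies in $\mathbb{F}_{q^2}$.

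To finish, observe that $z_{\ell k}\in\langle\gamma\rangle\cap\mathbb{F}_{q^2}^{\times}=:G$, a cyclic group of order $\gcd\big((q^m+1)(q-1),\,q^2-1\big)=(q-1)\gcd(q+1,2)$, where the last equality uses $\gcd(q^m+1,q+1)=\gcd(2,q+1)$, valid precisely because $m$ is even. Distinctness of the points forces $z_{\ell k}\notin\fqstar$. If $q$ is even then $G=\fqstar$, an immediate contradiction. If $q$ is odd then $G\setminus\fqstar=\gamma^{(q^m+1)/2}\fqstar$, and the product identity $z_{12}z_{23}z_{31}=1$ yields a contradiction: each factor lies in the coset $\gamma^{(q^m+1)/2}\fqstar$, so, using $\gamma^{q^m+1}=\mu\in\fqstar$, the product lies in $\gamma^{(q^m+1)/2}\fqstar$, which does not contain $1$. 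Either way no three points of $\Omega_0$ are collinear. I expect the cap step to be the main obstacle — extracting the quadratic satisfied by the pairwise ratios and then exploiting the even/odd dichotomy — while the partition follows from the short counting argument above built on \Cref{lemma:CharacterizationOfConstantMultiplesInFQM}.
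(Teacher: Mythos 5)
The thesis never proves this theorem: it is imported as \cite[Theorem 3]{ebert1985partitioning} and used as a black box (e.g.\ in \Cref{corollary:OAsOfStrength3FromMSequences}), so there is no in-paper proof to compare yours against; what you have written is a correct, self-contained proof, which I have checked step by step. The partition half is right: since $\w{t}=\w{t/2}(q^{t/2}+1)$ and $PG(t-1,q)$ has $\w{t}$ points by \Cref{proposition:ConstructionOfPGDQFromFF}, it suffices to show the listed points are pairwise distinct, and \Cref{lemma:CharacterizationOfConstantMultiplesInFQM} reduces $[\a^{i+j\w{t/2}}]=[\a^{i'+j'\w{t/2}}]$ to $\w{t}\mid (i-i')+(j-j')\w{t/2}$, which forces $i=i'$ (reduce modulo $\w{t/2}$) and then $j=j'$ (since $q^{t/2}+1\nmid j-j'$ unless $j=j'$). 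The cap half is also sound: the collineation $x\mapsto\a^ix$ reduces everything to $\Omega_0$; the normalization $y_1+y_2+y_3=0$ with all $y_\ell\in\langle\gamma\rangle$ is legitimate because $\fqstar\subseteq\langle\gamma\rangle$; writing $y_\ell^{q^{t/2}}=n_\ell/y_\ell$ with $n_\ell=y_\ell^{q^{t/2}+1}\in\fqstar$ and eliminating $y_3$ gives the quadratic $n_2z^2+(n_1+n_2-n_3)z+n_1=0$ over $\fq$ for $z=y_1/y_2$, so all pairwise ratios lie in $\langle\gamma\rangle\cap\mathbb{F}_{q^2}^{\times}$; and your computation $\gcd\bigl((q^{t/2}+1)(q-1),q^2-1\bigr)=(q-1)\gcd(2,q+1)$ is exactly where the hypothesis $4\mid t$ enters --- for $t\equiv2\pmod4$ that intersection is all of $\mathbb{F}_{q^2}^{\times}$ and the proof (correctly) breaks down, as it must, since the statement fails there. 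The endgame is also correct: for $q$ even the ratios would be forced into $\fqstar$, contradicting distinctness of the three points, while for $q$ odd they lie in the nontrivial coset $\gamma^{(q^{t/2}+1)/2}\fqstar$, whose triple product cannot equal $1$ because $\gamma^{q^{t/2}+1}\in\fqstar$. Your argument is essentially a reconstruction of Ebert's norm-and-subgroup proof, but it has the virtue of being phrased entirely with tools the thesis already develops (\Cref{lemma:CharacterizationOfConstantMultiplesInFQM}, \Cref{proposition:ConstructionOfPGDQFromFF}, \Cref{remark:ArcsTracksCapsAndLI}), so it could be inserted verbatim to make \Cref{section:TwoClassicOAConstructions} self-contained.
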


\begin{corollary}
{Orthogonal arrays of strength $3$ from maximal sequences}
{OAsOfStrength3FromMSequences}
    Let $q$ be a prime power, $t$ be a positive integer divisible by $4$, and $\a$ be a primitive element of $\fqt$. 
    For 
    \[C = \left\{ j \w{t/2} \mid j \in  [ 0, q^{t/2}] \right\},\] we have that $\ZA{\a, C}$ is an $\OA_{q^{t-3}}(3, q^{t/2}+1,q)$.
\end{corollary}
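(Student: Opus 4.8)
The plan is to reduce the claim to the cap structure furnished by Ebert's theorem and then to invoke the equivalences already established for cyclic trace arrays. First I would observe that, taking $i=0$ in \Cref{theorem:EbertMainTheorem}, the set $\Omega_0 = \{ \a^{j\w{t/2}} \mid j \in [0,q^{t/2}] \}$ coincides exactly with $\{ \a^c \mid c \in C \}$, since $C = \{ j\w{t/2} \mid j \in [0,q^{t/2}] \}$. Before applying the theorem I would dispatch the mild technical point that $C$ is a genuine $(q^{t/2}+1)$-element subset of $[0,q^t-2]$: the exponents $j\w{t/2}$, $j\in[0,q^{t/2}]$, are distinct integers, and the largest of them equals $\frac{q^t - q^{t/2}}{q-1}$, which is at most $q^t - q^{t/2} \le q^t - 2$ because $q\ge 2$ and $t\ge 4$ forces $q^{t/2}\ge 2$.

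With this identification in hand, \Cref{theorem:EbertMainTheorem} tells me that $\Omega_0 = \{ [\a^c] \mid c \in C \}$ is a $(q^{t/2}+1)$-cap in $PG(t-1,q)$. By \Cref{remark:ArcsTracksCapsAndLI}, a set of points being a cap is equivalent to every $3$ of its representative vectors being linearly independent over $\fq$. Hence every $3$ of the vectors in $\{ \a^c \mid c \in C \}$ are linearly independent, which by \Cref{definition:NTSet} says precisely that $\{ \a^c \mid c \in C \}$ is a $(q^{t/2}+1,3)$-set over $\fq$.

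Finally I would feed this into \Cref{theorem:EquivalenceOAandNTsetsExtended} with $s=3$: the equivalence of \Cref{item:A} and \Cref{item:C} there upgrades the $(q^{t/2}+1,3)$-set property to the conclusion that $\ZA{\a, C}$ is a linear $\OA_{q^{t-3}}(3, q^{t/2}+1, q)$. As a consistency check, such an array has $q^{t-3}\cdot q^3 = q^t$ rows, matching the $q^t \times |C|$ size of $\ZA{\a, C}$, and exactly $q^{t/2}+1 = |C|$ columns.

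In truth there is no deep obstacle here: the argument is essentially a translation between three equivalent languages — caps in $PG(t-1,q)$, linear independence of the $\a^c$, and the orthogonal-array property — with Ebert's partition theorem supplying the single nontrivial geometric input. The only places demanding a little care are confirming that $C$ indexes the correct number of distinct columns inside the admissible range $[0,q^t-2]$, and ensuring that the cap property (a statement about points) transfers faithfully to linear independence of the chosen representative vectors $\a^c$ — which is exactly what \Cref{remark:ArcsTracksCapsAndLI} guarantees.
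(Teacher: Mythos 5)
Your proposal is correct and follows essentially the same route as the paper: identify $\{\a^c \mid c \in C\}$ with the cap $\Omega_0$ from Ebert's theorem and then invoke \Cref{theorem:EquivalenceOAandNTsetsExtended} with $s=3$. The only (immaterial) difference is that you pass through the linear-independence formulation (\Cref{remark:ArcsTracksCapsAndLI} and item \ref{item:A}), whereas the paper uses the flats formulation (item \ref{item:E}, no $1$-flat contains $3$ of the points); since these items are equivalent by the very theorem cited, the two arguments coincide.
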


\begin{proof}
    We observe that $\{ \a^c \mid c \in C \}= \Omega_0$, where $\Omega_0$ is defined in \Cref{theorem:EquivalenceOAandNTsetsExtended}.
    By the same theorem, we have that $\Omega_0$ is a $(q^{\w{t/2}+1})$-cap, which means that no $3$ points in $\Omega_0$ are collinear. 
    In other words, there is no $1$-flat in $PG(t-1,q)$ that contains any $3$ points from $\Omega_0$, so the proof is complete by \Cref{item:C,item:E} of \Cref{theorem:EquivalenceOAandNTsetsExtended}.
\end{proof}

\index{Ovoid}
\index{Projective space!ovoid}
The special case of \Cref{corollary:OAsOfStrength3FromMSequences} for $t=4$ is of particular interest.
A $k$-cap in $PG(3,q)$ is an \emph{ovoid}.
Early on, Bose \cite{bose1947mathematical} showed that the maximum size of an ovoid is $q^2+1$ when $q$ is odd, and soon after Qvist \cite{qvist1952some} showed that the same is true for even $q$.
It follows that the $\OA_q(3,q^2+1,q)$ obtained in \Cref{corollary:OAsOfStrength3FromMSequences} for $t=4$ has the maximum number of columns among linear orthogonal arrays.
We refer to \cite[Section 5.9]{hedayat2012orthogonal} for further discussion about this orthogonal array, and to \cite[Section 4]{hirschfeld2001packing} and \cite[Section VII.2.11]{colbourn2006handbook} for details about the problem of finding maximum $k$-caps in $PG(d,q)$.

\subsection{Orthogonal arrays from polynomials with few nonzero terms}

In this section we present three constructions of orthogonal arrays that are of the form $\ZA{\a, [0,2t]}$, where $\a$ is a primitive element of $\fqt$ with a minimal polynomial that satisfies certain divisibility properties. 

\subsubsection{Divisibility of trinomials by trinomials and orthogonal arrays of strength close to 3 in $\ftwo$}
A \emph{trinomial}
\index{Trinomial}
is a polynomial with exactly three nonzero coefficients, i.e. a polynomial of the form $ax^t+bx^l+cx^k$, where $a,b,c$ are nonzero.
The next theorem due to Munemasa \cite{munemasa1998orthogonal}, characterizes all the polynomials of degree up to $2t$ over $\ftwo$ that are divisible by trinomials of degree $t$ that satisfy certain conditions.

\begin{theorem}
{{\cite[Corollary 3.3]{munemasa1998orthogonal}}}
{MunemasaTrinomialDivision}
    Let $f(x)=x^t+x^l+1$ be a primitive trinomial over $\ftwo$ with $t\geq 4$.
    If $g(x)$ is a trinomial of degree at most $2t$ divisible by $f(x)$, then $g(x)=f(x)^2$ or $g(x)=x^{t-\deg(g)}f(x)$.
\end{theorem}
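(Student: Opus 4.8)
The plan is to translate the divisibility condition into a relation among powers of a root of $f$ and then classify all such relations of bounded exponent. Since $f$ is primitive, it is the minimal polynomial of a primitive element $\a\in\ftwot$, and by \Cref{lemma:AlphaRootOfPolynomialIffItIsDivisibleByMinimal} a polynomial is divisible by $f$ exactly when $\a$ is a root of it. Writing $g=x^a+x^b+x^c$ with $2t\ge a>b>c\ge 0$ and using $f(0)=1$, I would first factor out $x^c$: since $\gcd(f,x)=1$, $f\mid g$ if and only if $f\mid x^{a-c}+x^{b-c}+1$, so it suffices to treat normalized trinomials $x^a+x^b+1$ with $0<b<a\le 2t$ and restore the factor $x^c$ at the end. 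Equivalently, writing $g=fh$ with $\deg h\le t$, the goal becomes to show that $h$ is either a monomial $x^j$ (yielding the shifts $x^jf$, $0\le j\le t$) or $h=f$ (yielding $g=f^2$).

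Thus everything reduces to classifying the pairs $(a,b)$ with $0<b<a\le 2t$ and $\a^a+\a^b+1=0$, and showing the only solutions are $(t,l)$ and $(2t,2l)$. I would argue in the $\ftwo$-basis $\{1,\a,\dots,\a^{t-1}\}$, reducing powers via $\a^t=\a^l+1$. If $a\le t-1$ then $\a^a,\a^b,1$ are three distinct basis monomials and cannot sum to zero, so $a\ge t$. The case $a=t$ gives $\a^t+\a^b+1=\a^l+\a^b$, which vanishes if and only if $b=l$; the case $a=2t$ uses the Frobenius relation $\a^{2t}=(\a^l+1)^2=\a^{2l}+1$, reducing the equation to $\a^{2l}+\a^b=0$, i.e. $b\equiv 2l\Mod{2^t-1}$, and since $0<b,2l<2t\le 2^t-1$ this forces $b=2l$. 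Restoring $x^c$ then produces exactly the shifts $x^jf$ and the square $f^2$ (the latter forcing $c=0$ by the degree bound), as claimed.

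The main obstacle is ruling out the intermediate range $t<a<2t$. Here I would set $a=t+s$ with $1\le s\le t-1$ and expand $\a^a=\a^s\a^t=\a^{s+l}+\a^s$, so that the equation becomes $\a^{s+l}+\a^s+\a^b+1=0$. A support argument in the basis $\{1,\dots,\a^{t-1}\}$ disposes of the situation where no further reduction is needed, since the coefficient of $1$ cannot cancel when all other exponents are positive. The remaining work is to control the reductions of $\a^{s+l}$ (when $s+l\ge t$) and of $\a^b$ (when $b\ge t$) and to track which monomials survive; I expect this to be the delicate, case-heavy part of the proof. One clean reduction that removes a whole parity class at no cost is the inverse-Frobenius trick: if $a$ and $b$ are both even, then $\a^a+\a^b+1=(\a^{a/2}+\a^{b/2}+1)^2$, so $\a^{a/2}+\a^{b/2}+1=0$ is a solution with top exponent $a/2\le t-1$, contradicting the $a\ge t$ step. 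What remains is the analysis when $a$ or $b$ is odd, which I would finish by the finite bookkeeping of supports described above; the hypothesis $t\ge 4$ is what guarantees that the distinguished exponents $t,l,2t,2l$ are distinct and lie in range, preventing the degenerate coincidences that can occur for small $t$.
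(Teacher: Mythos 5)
Your preparatory steps are sound: the normalization that strips off $x^c$ (legitimate since $f(0)=1$), the impossibility of a solution with top exponent $a\le t-1$, the identification of the unique solutions at $a=t$ and $a=2t$, and the Frobenius square-root trick that eliminates the case where both exponents are even are all correct, and your uses of primitivity there (powers $\a^i$ with $0\le i\le 2t$ are pairwise distinct because $2t<2^t-1$) are valid. But the theorem is not proved, because everything you actually establish leaves open exactly the range $t<a<2t$ with $a$ or $b$ odd, and that range is where the entire content of the statement lives. Your treatment of it is a declaration of intent ("track which monomials survive", "finite bookkeeping"), not an argument: after writing $\a^{t+s}=\a^{s+l}+\a^s$ the reduction cascades (again when $s+l\ge t$, and possibly again after that, and similarly for $\a^b$ when $b\ge t$), so the equation becomes a sum of five to eight basis monomials, and one must enumerate every pattern of exponent coincidences under which these cancel in pairs and kill each pattern using primitivity and $t\ge 4$. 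That enumeration \emph{is} the proof of the theorem, and none of it is carried out.

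The gap is not cosmetic, as one sees from the fact that the statement is false for $t=3$: the trinomial $x^5+x^4+1=(x^3+x+1)(x^2+x+1)$ has degree $5\le 2t$ and is divisible by the primitive trinomial $x^3+x+1$, yet is neither a monomial multiple of $f$ nor $f^2$; it sits precisely in the range you deferred (here $a=t+2$, $b=t+1$, and the six reduced monomials cancel under the coincidences $r=l$, $s=l+r$, $s+l=t$, which force $t=3l$). This also shows that your closing claim --- that $t\ge 4$ is needed only so that ``$t,l,2t,2l$ are distinct and lie in range'' --- misidentifies where the hypothesis does its work: those four exponents are already distinct when $t=3$. A correct proof must deploy $t\ge4$ and primitivity \emph{inside} the skipped case analysis; for instance, the pattern above forces $f=x^{3l}+x^l+1$, whose roots satisfy $\beta^{7l}=1$ and hence cannot be primitive once $t=3l\ge 6$, and every other pairing pattern needs an argument of this kind. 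For what it is worth, the thesis itself gives no proof of this statement either --- it is quoted from Munemasa, whose published proof consists essentially of the intermediate-range classification that is missing from your proposal --- so the proposal as it stands is an accurate reduction to the hard part of Munemasa's theorem rather than a proof of it.
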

Using this theorem, Munemasa constructs orthogonal arrays of strength $2$ that are close to being strength $3$, in the sense that most but not all triples of columns have the orthogonal array property. 
To formalize this notion and give Munemasa's result, we need the following definition.

\begin{definition}
{}
{LambdaB}
    Let $A$ be a $n\times k$ array with elements $A_{ij}$ from a set $V$ with $|V|=v$.
    For an ordered set $J=\{ j_0, \dots, j_{s-1} \} \subseteq [0,k-1]$ and $s$-tuple $\bfb=(b_0, \dots, b_{s-1})\in V^s$, we define
    \[
        \lambda_{\bfb}^{J}(A)
        =\left| \left\{ i \mid (A_{ij_0}, \dots, A_{i{j_{s-1}}})= (b_0, \dots, b_{s-1}),\; i\in [0,n-1] \right\} \right|.
    \]
\end{definition}

\begin{theorem}
{\cite[Main Theorem]{munemasa1998orthogonal}}
{MunemasaOAConstruction}
    Let $t$ be a positive integer, $\a$ be a primitive element of $\ftwot$ with minimal polynomial $m_{\a}(x)=x^t+x^l+1$, and define
    \[ U= \left\{ (i,i+l,i+t) \mid i \in [0,t]\right \} \cup \left\{ (0,2l,2t) \right\}.\]
    Then, for any $J=\{ j_0, j_1, j_2 \} \subseteq [0,2t]$ and $(b_0, b_1, b_2) \in \ftwo^3$, we have that
    \[
        \lambda_{b}^J(\ZA{\a,[0,2t]})=
        \begin{cases}
            2^{t-3} & \text{ if } J \not\in U; \\
            2^{t-2} & \text{ if } J \in U \text{ and } b_1+b_2+b_3=0; \\
            0       & \text{ if } J \in U \text{ and } b_1+b_2+b_3\neq 0.
        \end{cases}
    \]
\end{theorem}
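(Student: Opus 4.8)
The plan is to translate the counting quantity $\lambda_{b}^{J}$ into a question about the linear dependence of the three field elements $\a^{j_0},\a^{j_1},\a^{j_2}$, and then to let Munemasa's trinomial-division result (\Cref{theorem:MunemasaTrinomialDivision}) decide exactly when this dependence occurs. Writing $J=\{j_0,j_1,j_2\}$ with $j_0<j_1<j_2$, the value $\lambda_{b}^{J}(\ZA{\a,[0,2t]})$ counts the rows of the subarray $\ZA{\a,J}$ equal to $(b_0,b_1,b_2)$, so I would work with $\ZA{\a,J}$ directly throughout. The first observation to record is that, since $j_0,j_1,j_2$ are distinct elements of $[0,2t]$ and $2t<2^t-1$ for $t\ge 4$, the exponents are pairwise distinct modulo $2^t-1$; hence no two of $\a^{j_0},\a^{j_1},\a^{j_2}$ coincide, and over $\ftwo$ any two of them are therefore linearly independent.

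Next I would analyse a possible dependence among all three. Over $\ftwo$ a nontrivial relation $\sum_m d_m\a^{j_m}=0$ cannot be a one-term relation (an element $\a^{j_m}$ is nonzero) nor a two-term relation (by the previous paragraph), so it must have all three coefficients equal to $1$; that is, the columns indexed by $J$ are linearly dependent precisely when $\a^{j_0}+\a^{j_1}+\a^{j_2}=0$. By \Cref{lemma:AlphaRootOfPolynomialIffItIsDivisibleByMinimal} this is equivalent to $m_{\a}(x)=x^t+x^l+1$ dividing the trinomial $x^{j_0}+x^{j_1}+x^{j_2}$, whose degree $j_2$ is at most $2t$. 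This is where \Cref{theorem:MunemasaTrinomialDivision} does the decisive work: the trinomials of degree at most $2t$ divisible by $f=m_{\a}$ are exactly $f(x)^2=x^{2t}+x^{2l}+1$ and $x^{a}f(x)=x^{a+t}+x^{a+l}+x^{a}$ for $a\in[0,t]$, with exponent sets $\{0,2l,2t\}$ and $\{a,a+l,a+t\}$ respectively — which is precisely the set $U$. I would thereby conclude the key dichotomy: the columns indexed by $J$ are linearly dependent if and only if $J\in U$, and otherwise $\{\a^{j_0},\a^{j_1},\a^{j_2}\}$ is a linearly independent $(3,3)$-set.

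With the dichotomy in hand, the two cases follow from the equivalences of \Cref{theorem:EquivalenceOAandNTsetsExtended}. If $J\notin U$, then $\{\a^{j_0},\a^{j_1},\a^{j_2}\}$ is independent, so by \Cref{theorem:EquivalenceOAandNTsetsExtended} with $s=3$ the array $\ZA{\a,J}$ is a linear $\OA_{2^{t-3}}(3,3,2)$; every triple of $\ftwo^3$ then occurs exactly $2^{t-3}$ times, giving the first case. If $J\in U$, the relation $\a^{j_0}+\a^{j_1}+\a^{j_2}=0$ together with the linearity of the trace (\Cref{theorem:PropertiesOfTrace}) forces every row $(A_{i,j_0},A_{i,j_1},A_{i,j_2})$ — including the appended zero row — to satisfy $A_{i,j_0}+A_{i,j_1}+A_{i,j_2}=0$. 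Hence no row can equal a triple with $b_0+b_1+b_2\neq 0$, which gives the value $0$. For a triple with $b_0+b_1+b_2=0$ I would instead use the two columns $\{j_0,j_1\}$, which are independent by the first step: \Cref{theorem:EquivalenceOAandNTsetsExtended} with $s=2$ makes $\ZA{\a,\{j_0,j_1\}}$ a linear $\OA_{2^{t-2}}(2,2,2)$, so each pair $(b_0,b_1)$ appears exactly $2^{t-2}$ times, and since $b_2=b_0+b_1$ is then determined, the full triple appears $2^{t-2}$ times as well.

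The genuinely substantive step is the second paragraph: reducing the three-column dependence to divisibility of a trinomial by $f$ and then invoking \Cref{theorem:MunemasaTrinomialDivision} to match the resulting exponent sets exactly with $U$; the rest is bookkeeping with the orthogonal-array equivalences already available. Two minor points I would be careful about are the convention in the statement — the sum written as $b_1+b_2+b_3$ should be read as the sum $b_0+b_1+b_2$ of the three entries — and the implicit hypothesis $t\ge 4$ inherited from \Cref{theorem:MunemasaTrinomialDivision}, which is also exactly what guarantees $2t<2^t-1$ and hence the pairwise-distinctness used at the very start.
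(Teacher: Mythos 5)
Your proof is correct and follows essentially the same route as the paper's: Munemasa's trinomial-division theorem (\Cref{theorem:MunemasaTrinomialDivision}) decides exactly when the three columns are linearly dependent (namely when $J\in U$), and the orthogonal-array equivalences then yield the counts $2^{t-3}$, $2^{t-2}$, and $0$, with the $J\in U$ case handled via the two independent columns $j_0,j_1$ and the forced relation on traces. The only differences are cosmetic — you explicitly rule out one- and two-term dependences via pairwise distinctness of the exponents (a point the paper glosses over) and you work inside the subarray $\ZA{\a, J}$ citing \Cref{theorem:EquivalenceOAandNTsetsExtended} rather than \Cref{proposition:SebastianExtended} — so the key ideas coincide.
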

\begin{proof}
    In this proof we denote $\T=\T_{2^3/2}$.
    First we consider the case when $t\geq 4$.
    If $J\not \in U$, then by \Cref{theorem:MunemasaTrinomialDivision}, the polynomial $g(x)=x^{j_0}+x^{j_1}+x^{j_2}$ is not divisible by $m_{\a}(x)$.
    By Statements 3 and 4 of \Cref{proposition:SebastianExtended} this means that the column vectors of $\ZA{\a}$ with indexes in $J$ are uniformly $2^{t-3}$-covered, so in particular $\bfb=(b_0,b_1,b_2)$ appears at those columns in exactly $q^{t-3}$ rows; in other words, $\lambda_{\bfb}^{J}(\ZA{\a,[0,2t]})=2^{t-3}$.

    If $J\in U$, then $m_{\a}(x)$ divides $x^{j_0}+x^{j_1}+x^{j_2}$ which, by \Cref{lemma:AlphaRootOfPolynomialIffItIsDivisibleByMinimal}, means that $\a^{j_0}+\a^{j_1}+\a^{j_2}=0$, or
    \begin{equation}
        \label{equation:sjslsksjs}
        \a^{j_2}=\a^{j_0}+\a^{j_1}.
    \end{equation}
    We note that since $j_0, j_1< \w{t}$, the elements $\a^{j_0}, \a^{j_1}$ are linearly independent over $\ftwo$, hence, by \Cref{proposition:SebastianExtended} the columns of $\ZA{\a,[0,2t]}$ are uniformly $2^{t-2}$-covered. 
    From the definition of the entries of $\ZA{\a,[0,2t]}$ this means that for every triple $(b_0,b_1,b_2) \in \ftwo^3$, there exist exactly $2^{t-2}$ elements $x \in \ftwo^3$ such that 
    \[(\T(\a^{j_0}x),\T(\a^{j_1}x))=(b_0,b_1).\]
    By \Cref{equation:sjslsksjs}, we have that 
    \[\T(\a^{j_2}x)= \T(\a^{j_0}x+\a^{j_1}x)= \T(\a^{j_0}x)+\T(\a^{j_1}x)= b_0+b_1.\] 
    We conclude that 
    \[(\T(\a^{j_0}x),\T(\a^{j_1}x),\T(\a^{j_2}x))=(b_0,b_1,b_2)\] 
    if and only if $b_2=\T(\a^{j_2}x)$, and the proof follows.
\end{proof}

\subsubsection{Divisibility of trinomials by pentanomials and orthogonal arrays of strength 3 in $\ftwo$}

A \emph{pentanomial}
\index{Pentanomial}
is a polynomial with exactly five nonzero coefficients.  Dewar et al.\ \cite{dewar2007division} extend the idea of Munemasa to pentanomials over $\ftwo$, by describing the polynomials $g(x)$ of degree up to $2t$ that are divisible by pentanomials $f(x)=x^t+x^l+x^k+x^j+1$ with $\gcd(t,l,k,j)=1$.
The following theorem is a special case of \cite[Theorem 1.2]{dewar2007division}, where $f(x)$ is primitive and $g(x)$ is a trinomial.

\begin{theorem}
{A special case of \cite[Theorem 1.2]{dewar2007division}}
{DewarPentanomialDivision}
    Let $f(x)= x^t+x^l+x^k+x^j+1\in \ftwox$ be primitive with $\gcd(t,l,k,j)=1$.
    Then, $f(x)$ divides a trinomial $g(x)$ of degree up to $2t$ if and only if $g(x)=f(x)h(x)$ with $f(x)$ and $h(x)$ as shown in the table below.
\[
    \renewcommand{\arraystretch}{\genarraystretch}
    \rowcolors{1}{beach}{\backgroundshade}
    \begin{array}[]{ll}
        \rowcolor{\theocolor!70}
        \multicolumn{1}{c}{f(x)}&
        \multicolumn{1}{c}{h(x)}\\
        x^5 +x^4 +  x^3 + x^2 +1&   x^3  + x^2 + 1 \\
        x^5 +x^3 +  x^2 + x   +1&   x^3  + x   + 1 \\
        x^5 +x^3 +  x^2 + x   +1&   x^4  + x   + 1 \\
        x^5 +x^4 +  x^3 + x   +1&   x^2  + x   + 1 \\
        x^6 +x^4 +  x^3 + x   +1&   x^2  + x   + 1 \\
        x^7 +x^4 +  x^3 + x^2 +1&   x^3  + x^2 + 1 
    \end{array}
\]
\end{theorem}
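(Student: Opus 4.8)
The plan is to prove the two directions separately, treating the forward (``if'') direction as a finite verification and the reverse (``only if'') direction as the substantive combinatorial classification. For the ``if'' direction I would form the product $f(x)h(x)$ for each of the six rows of the table and check, by direct expansion over $\ftwo$, that all but three coefficients cancel, yielding a trinomial of degree $t+\deg h\le 2t$; for instance $(x^5+x^4+x^3+x^2+1)(x^3+x^2+1)=x^8+x^5+1$. I would also confirm that each listed $f$ is primitive over $\ftwo$ and satisfies $\gcd(t,l,k,j)=1$, so that the six cases genuinely satisfy the hypotheses and the asserted equivalence is not vacuous.

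For the ``only if'' direction, suppose $f=x^t+x^l+x^k+x^j+1$ is primitive with $\gcd(t,l,k,j)=1$ and $f\mid g$ for a trinomial $g$ of degree at most $2t$. Since $f$ is primitive it is irreducible (\Cref{remark:PrimitivePolysAreIrreducible}) and $f(0)=1$, so $\gcd(f,x)=1$; dividing $g$ by the appropriate power of $x$ I may assume $g=x^a+x^b+1$ with $0<b<a\le 2t$. Irreducibility forces $h:=g/f\in\ftwox$ to be uniquely determined, with $h(0)=1$ and $d:=\deg h=a-t\in[1,t]$. The core of the argument is then a cancellation (``peeling'') analysis of the identity $fh=g$: reading off coefficients from the bottom, for every exponent $m$ below the second-smallest exponent $j$ of $f$ one has $[x^m]g=[x^m]h$, which pins down the low-order support of $h$; symmetrically, the reciprocal $f^{*}$ is again primitive of degree $t$ and divides the reflected trinomial $1+x^{a-b}+x^{a}$, so the same analysis from the top constrains the high-order support of $h$. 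Iterating these two constraints produces a rigid system of relations among $j,k,l,t$ and the support of $h$, in the spirit of the method behind \Cref{theorem:MunemasaTrinomialDivision}, now with a weight-$5$ divisor.

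The hard part is turning this system into a \emph{finite} classification, i.e.\ bounding $t$ (here $t\le 7$) rather than merely constraining it. The peeling relations on their own admit infinite families of ``inflated'' solutions of the form $\tilde f(x^e)\mid\tilde g(x^e)$; these are exactly what the hypothesis $\gcd(t,l,k,j)=1$ excludes, since it prevents $f$ from being a polynomial in $x^e$ for any $e>1$. Primitivity does double duty: besides forcing the quotient $h$, the order condition $\ord(\alpha)=2^t-1$ on a root $\alpha$ (equivalently the non-divisibility criterion of \Cref{proposition:SebastianExtended}, read through \Cref{lemma:AlphaRootOfPolynomialIffItIsDivisibleByMinimal}) rules out the sparse exponent patterns that survive the cancellation but would correspond to reducible or low-order $f$. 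I expect this bounding step to be the main obstacle, as it is precisely where the combinatorial bookkeeping must be leveraged against the number-theoretic constraints to cap the length of the peeling recursion.

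Once $t$ is bounded, the proof concludes by a finite search: for each admissible $t$ enumerate the primitive pentanomials $x^t+x^l+x^k+x^j+1$ with $\gcd(t,l,k,j)=1$, and for each test divisibility into the finitely many trinomials $x^a+x^b+1$ of degree at most $2t$. The reciprocal symmetry noted above (the set of valid pairs is closed under $f\mapsto f^{*}$, $g\mapsto g^{*}$) roughly halves this enumeration. The surviving pairs are exactly the six rows of the table, which completes the proof.
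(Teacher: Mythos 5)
First, a point of order: the thesis does not prove this statement at all --- it is imported as a special case of \cite[Theorem 1.2]{dewar2007division}, so the benchmark your attempt must be measured against is the original proof of Dewar et al., which is an unconditional coefficient-cancellation case analysis valid for every degree $t$ at once.

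Your forward direction is fine: it is a finite computation, and your sample expansion $(x^5+x^4+x^3+x^2+1)(x^3+x^2+1)=x^8+x^5+1$ is correct over $\ftwo$. The genuine gap is in the only-if direction, and it is exactly the step you flag yourself: you never prove the bound $t\le 7$. Everything downstream of that bound (enumerating primitive pentanomials of degree at most $7$, testing divisibility into the finitely many trinomials of degree at most $2t$, halving the work by reciprocal symmetry) is routine; everything upstream of it is the entire content of the theorem, and your proposal replaces it with two assertions that do not hold up. (i) You claim the hypothesis $\gcd(t,l,k,j)=1$ is what excludes the ``inflated'' families $\tilde f(x^e)\mid\tilde g(x^e)$; but primitivity already excludes them (a root of $\tilde f(x^e)$ has multiplicative order dividing $e(2^{t/e}-1)<2^t-1$), so this observation removes no case that primitivity had not already removed, and in particular it yields no bound on $t$. (ii) You claim primitivity ``rules out the sparse exponent patterns that survive the cancellation,'' but no argument is given, and this is precisely the hard combinatorial work: one must show that for every $t\ge 8$ the peeling relations extracted from $fh=g$ are inconsistent with $f$ having exactly five terms and being primitive, i.e.\ one must show the cancellation recursion cannot be continued indefinitely. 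Nothing in the proposal indicates how that termination is forced. Note also that Dewar et al.\ do not argue the way you propose: there is no a priori bound on $t$ followed by a search; the cancellation patterns are classified for arbitrary $t$ (in the spirit of \Cref{theorem:MunemasaTrinomialDivision}, but much longer, and without assuming primitivity), and the fact that the surviving trinomial cases all have $t\le 7$ is the \emph{conclusion} of that analysis, not an intermediate lemma available to make the problem finite. As it stands, your proposal is a research plan whose pivotal step is labelled ``the main obstacle'' rather than proved.
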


\begin{corollary}
{}
{DewarOAConstruction}
    Let $\a$ be a primitive element of $\ftwot$ with minimal polynomial $m_{\a}(x)=x^t+x^l+x^k+x^j+1$, such that $t>l,k,j$ and $\gcd(t,l,k,j)=1$.
    If $m_{\a}(x)$ is not one of the polynomials $f(x)$ listed in \Cref{theorem:DewarPentanomialDivision}, then $\ZA{\a,[0,2t]}$ is an $\OA_{2^{t-3}}(3,2t+1,2)$.
\end{corollary}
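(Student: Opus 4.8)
The plan is to verify the divisibility criterion in \Cref{item:D} of \Cref{theorem:EquivalenceOAandNTsetsExtended}, applied with $q=2$, $s=3$ and $C=[0,2t]$. Since $|C|=2t+1$, establishing \Cref{item:D} for $s=3$ immediately yields, through its equivalence with \Cref{item:C}, that $\ZA{\a,[0,2t]}$ is a linear $\OA_{2^{t-3}}(3,2t+1,2)$, which is exactly the assertion. Concretely, I must show that for every three-element subset $\{c_0,c_1,c_2\}\subseteq[0,2t]$ and every nonzero $(d_0,d_1,d_2)\in\ftwo^3$, the minimal polynomial $m_{\a}$ does not divide $g(x)=d_0x^{c_0}+d_1x^{c_1}+d_2x^{c_2}$.

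First I would record that, because $m_{\a}$ is a pentanomial, its five nonzero exponents $t,l,k,j,0$ are distinct, which forces $l,k,j$ to be three distinct elements of $[1,t-1]$ and hence $t\geq 4$, so that $2^t-1>2t$. Because all coefficients lie in $\ftwo$, the polynomial $g$ is a nonzero monomial, binomial, or trinomial of degree at most $2t$, and I would split into these three cases. The monomial case is immediate: $m_{\a}(0)=1$, so $x\nmid m_{\a}$, and as $m_{\a}$ is irreducible of positive degree it cannot divide any power $x^{c}$. For the binomial case $g=x^{a}+x^{b}$ with $a>b$, I factor out $x^{b}$ (which is coprime to $m_{\a}$) to reduce to $m_{\a}\mid x^{a-b}+1$; since a root $\a$ of the primitive polynomial $m_{\a}$ has order $2^t-1$, this would force $(2^t-1)\mid(a-b)$, which is impossible because $0<a-b\leq 2t<2^t-1$. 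In passing, this also confirms that the elements $\a^{c}$, $c\in[0,2t]$, are pairwise distinct.

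The trinomial case is the crux, and it is precisely where the hypotheses on $m_{\a}$ enter. Here $g$ is a genuine trinomial of degree at most $2t$, and \Cref{theorem:DewarPentanomialDivision} asserts that a primitive pentanomial with $\gcd(t,l,k,j)=1$ divides such a trinomial only if it coincides with one of the six polynomials $f(x)$ listed in that theorem. Since $m_{\a}$ is assumed to satisfy $\gcd(t,l,k,j)=1$ and to be none of those six, it divides no trinomial of degree at most $2t$, so $m_{\a}\nmid g$. Combining the three cases establishes \Cref{item:D}, and the equivalence in \Cref{theorem:EquivalenceOAandNTsetsExtended} completes the proof.

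I expect the only delicate points to be bookkeeping rather than deep. One is confirming $t\geq 4$ (and thus $2^t-1>2t$) so that the binomial case closes cleanly. The other is citing the ``only if'' direction of \Cref{theorem:DewarPentanomialDivision} correctly, namely that the six tabulated $f(x)$ exhaust all primitive pentanomials (with $\gcd(t,l,k,j)=1$) dividing a trinomial of degree up to $2t$. The substantive mathematical content is entirely outsourced to the divisibility theorem of Dewar et al.; the remaining work is the routine reduction of the $(2t+1,3)$-set condition to monomial, binomial, and trinomial divisibility.
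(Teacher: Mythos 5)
Your proof is correct and takes essentially the same approach as the paper's: both reduce the statement to the non-divisibility criterion (\Cref{item:D} of \Cref{theorem:EquivalenceOAandNTsetsExtended}), dispose of polynomials with at most two terms, and then invoke the ``only if'' direction of \Cref{theorem:DewarPentanomialDivision} to rule out trinomial divisors of degree up to $2t$. The only divergence is minor: you eliminate monomials and binomials directly from $\ord(\a)=2^t-1>2t$, whereas the paper obtains that same intermediate fact by noting that $\ZA{\a,[0,2t]}$ inherits strength $2$ from $\ZA{\a}$ (\Cref{corollary:RaoHammingOAs}) and then applying the same equivalence in the other direction.
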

\begin{proof}
    First, we observe that $\ZA{\a, [0,2t]}$ is an orthogonal array of strength $2$, as a subarray of $\ZA{\a}$ which has strength $2$ by \Cref{corollary:RaoHammingOAs}.
    By \Cref{item:C,item:D} of \Cref{theorem:EquivalenceOAandNTsetsExtended}, we have that for every $c_0,c_1 \in [0,2t]$, and $d_0,d_1\in \ftwo$ not all zero, $m_{\a}(x)$ does not divide $d_0x^{c_0}+d_{1}x^{c_1}$.
    Furthermore, since $m_{\a}(x)$ is not one of the polynomials listed in the table, it does not divide any trinomials of degree up to $2t$, i.e. for any $c_0,c_1,c_2 \in [0,2t]$, we have that $m_{\a}(x)$ does not divide $x^{c_0}+x^{c_1}+x^{c_2}$.

    From the above, we conclude that for every $c_0,c_1,c_2\in [0,2t]$ and $d_0,d_1,d_2\in \ftwo$ not all zero, we have that $m_{\a}(x)$ does not divide $d_0x^{c_0}+d_1x^{c_1}+d_2x^{c_2}$ and hence, by \Cref{item:C,item:D} of \Cref{proposition:SebastianExtended}, we have that $\ZA{\a,[0,2t]}$ is an $\OA_{2^{t-3}}(3, 2t+1,2)$.
\end{proof}

\subsubsection{Divisibility of trinomials by trinomials and orthogonal arrays of strength 3 in $\fthree$}
Another extension of Munemasa's work is given by Panario et al.~\cite{panario2012divisibility}, where they study the divisibility of trinomials over $\fthree$ and completely describe the trinomials that divide other trinomials of up to three times their degree, as follows.
\begin{theorem}
{\cite[Theorem 8]{panario2012divisibility}}
{OlgaTrinomialDivisibility}
    Let $f(x)=x^t+bx^k+a \in \fthree[x]$ with $t>k$, $a, b\neq 0$, and $g(x)$ be a monic trinomial of degree up to $3t$ divisible by $f(x)$.
    Then $g(x)=f(x)^3$, or $f(x),g(x)$ are among the polynomials listed below, or the reciprocals of the polynomials listed below, i.e.  $x^nf(1/x)$ and $x^ng(1/x)$.
\[
    \renewcommand{\arraystretch}{\genarraystretch}
    \rowcolors{1}{beach}{\backgroundshade}
    \begin{array}[]{ll!{\color{beach!90!black}\vrule}ll}
        \rowcolor{\theocolor!70}
        \multicolumn{1}{c}{f(x)}& \multicolumn{1}{c!{\color{beach!90!black}\vrule}}{g(x)}& \multicolumn{1}{c}{f(x)}& \multicolumn{1}{c}{g(x)}\\
        x^t + bx^{t/2} - 1 &x^{3t}  - bx^{t/2} -1 & x^t-  x^{t/3} +a &x^{8t/3} + x^{2t/3}  +1 \\
        x^t + bx^{t/2} + 1 &x^{5t/2}+  x^{t/2}  b & x^t+  x^{t/3} +a &x^{7t/3} + ax^{2t/3} +a \\
        x^t + bx^{t/2} - 1 &x^{5t/2}- bx^t      b & x^t-  x^{t/3} +a &x^{7t/3} - ax^{4t/3} +a \\
        x^t + bx^{t/2} - 1 &x^{5t/2}-  x^{3t/2}-b & x^t-  x^{t/3} +a &x^{7t/3} + x^{5t/3}  -a \\
        x^t + bx^{t/2} + 1 &x^{5t/2}+ bx^{4t/2} b & x^t+  x^{t/3} +a &x^{2t}   + ax^{5t/3} +1 \\
        x^t + bx^{t/2} + 1 &x^2t    +  x^t      1 & x^t-  x^{t/3} +a &x^{5t/3} + ax^{4t/3} +a \\
        x^t + bx^{t/2} - 1 &x^{3t/2}+  x^t      b & x^t+ bx^{t/4} -1 &x^{11t/4}+ bx^{6t/4} -b \\
        x^t + bx^{t/2} - 1 &x^{3t/2}- bx^t     -b & x^t+ bx^{t/4} +1 &x^{10t/4}+ bx^{9t/4} +1 \\
        x^t -  x^{t/3} + a &x^{3t}  -  x^{t/3} -a &                  & 
    \end{array}
\]
\end{theorem}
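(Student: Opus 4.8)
The plan is to write $g(x) = f(x)h(x)$ with $h \in \fthreex$ monic of degree $\deg g - t \le 2t$, and to reduce the problem to a combinatorial constraint on the coefficient sequence of $h$. Writing $f(x) = x^t + bx^k + a$ and letting $h_i$ denote the coefficient of $x^i$ in $h$ (set to zero outside $[0,\deg h]$), the coefficient of $x^j$ in $g$ is $g_j = h_{j-t} + b\,h_{j-k} + a\,h_j$. The requirement that $g$ be a trinomial is exactly that $g_j = 0$ for all $j$ outside a three-element set $\{0, m, n\}$, i.e. that the sequence $(h_i)$ satisfies the order-$t$ linear recurrence $a h_j + b h_{j-k} + h_{j-t} = 0$ at every index except three. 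Since $h$ has finite support, this ``almost-recurrence'' is extremely rigid, and the bulk of the work is extracting its consequences.

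Before the main analysis I would record two simplifications. First, by the Frobenius identity in characteristic $3$, $f(x)^3 = x^{3t} + b x^{3k} + a$ is automatically a trinomial; this accounts for the solution $g = f^3$ and lets me set it aside. Second, the reciprocal map $f(x) \mapsto x^t f(1/x)$ sends an admissible trinomial to an admissible one after monic normalisation, preserves divisibility, and therefore yields a symmetry that halves the case analysis; this is precisely why the statement closes the list under taking reciprocals.

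The core step is to analyse the finitely-supported solutions of the almost-recurrence. Here I would track the exponents $0 = i_0 < i_1 < \cdots < i_r = \deg h$ carrying the nonzero coefficients of $h$ and study how the three shifts $\{0, k, t\}$ of this support interlock: a monomial $x^j$ of $g$ survives only if the contributions $h_j$, $h_{j-k}$, $h_{j-t}$ either fail to collide or collide and sum to zero in $\fthree$, where one must remember that over $\fthree$ three equal contributions can cancel via $1+1+1 = 0$, not merely two opposite ones. Forcing all but three of these sums to vanish pins down the spacing of the support; it forces the middle exponent $k$ to be one of $t/2$, $t/3$, $t/4$ (the source of the fractions in the table) and at the same time bounds $\deg g$. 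With $k/t$ restricted to these few ratios and $\deg g \le 3t$, only finitely many possibilities remain, which I would finish by directly determining the compatible coefficients $a,b$ and the exponent pattern of $g$, matching the table and its reciprocals.

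The main obstacle is precisely this combinatorial case analysis: because cancellation in $\fthree$ can occur both by pairing opposite signs and by triples summing to zero, one must exhaustively enumerate how the three translates of $\operatorname{supp}(h)$ can overlap so that exactly three coefficients of $g$ remain nonzero. Keeping this bookkeeping complete — rather than exhibiting only the ``expected'' configurations — while simultaneously ruling out the infinitely many a priori possible degrees for $h$, is where the real effort lies; the Frobenius and reciprocal reductions trim the work but do not remove this central difficulty.
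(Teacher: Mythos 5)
You should first note that the thesis itself contains no proof of this theorem to compare against: it is quoted verbatim from Panario et al.\ \cite{panario2012divisibility}, and the surrounding text explicitly remarks that the proof there ``requires examining a rather large number of cases for trinomial forms.'' Your setup is consistent with the method of that literature (Munemasa's $\ftwo$ result and its $\fthree$ extension): write $g = fh$, observe that the trinomial condition on $g$ is an ``almost-recurrence'' $a h_j + b h_{j-k} + h_{j-t} = 0$ holding at all but three indices, and exploit two correct, easy reductions --- the Frobenius identity $f(x)^3 = x^{3t} + b x^{3k} + a$ (valid since $a^3=a$, $b^3=b$ in $\fthree$) and closure of the problem under the reciprocal map, which preserves divisibility because $f(0)=a\neq 0$ lets one strip powers of $x$ from $g$.

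The genuine gap is that everything after this setup is asserted rather than proved, and that remainder \emph{is} the theorem. The claim that the finite-support constraint ``forces the middle exponent $k$ to be one of $t/2$, $t/3$, $t/4$'' (more precisely, one of $t/2, t/3, 2t/3, t/4, 3t/4$ before reciprocal normalisation) and simultaneously pins down the exponent pattern and coefficients of $g$ is exactly the classification stated in the table; no mechanism is given for deriving it. Your own text concedes this: ``the bulk of the work,'' ``where the real effort lies.'' A referee cannot accept a proof whose conclusion is the table when the argument stops at the point where the table would have to be generated. To close the gap you would need to actually carry out the support analysis: enumerate how the three translates $\mathrm{supp}(h)$, $\mathrm{supp}(h)+k$, $\mathrm{supp}(h)+t$ can overlap so that all but three coefficient sums vanish in $\fthree$ (tracking both pairwise cancellation $c + (-c) = 0$ and triple cancellation $c+c+c=0$), show this forces the stated rational ratios $k/t$ and bounds $\deg h \leq 2t$ case by case, and then verify coefficient compatibility to produce each of the seventeen listed pairs and exclude all others --- i.e.\ the large case analysis that the cited paper performs and that your outline postpones.
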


The following corollary is not stated explicitly in \cite{panario2012divisibility} although the application of \Cref{theorem:OlgaTrinomialDivisibility} for the construction of orthogonal arrays is discussed.
The proof is along the same lines with that of \Cref{corollary:DewarOAConstruction}.
\begin{corollary}
{}
{OlgaOAConstruction}
Let $\a$ be a primitive element of $\f_{3^t}$ with minimal polynomial $m_{\a}(x)=x^t+bx^k+a \in \fthree[x]$, where $t>k$ and $a,b \neq 0$.
    If $m_{\a}(x)$ is not among the polynomials $f(x)$ listed in \Cref{theorem:OlgaTrinomialDivisibility} or their reciprocals, then $\A{\a,[0,3t-1]}$ is an $\OA_{3^{t-3}}(3,3t,3)$.
\end{corollary}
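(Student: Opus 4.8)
The plan is to follow the template of the proof of \Cref{corollary:DewarOAConstruction}, reducing the orthogonal-array property to a divisibility statement about low-degree trinomials and then applying \Cref{theorem:OlgaTrinomialDivisibility}. By \Cref{item:C,item:D} of \Cref{theorem:EquivalenceOAandNTsetsExtended} (taken with $s=3$), the array is an $\OA_{3^{t-3}}(3,3t,3)$ if and only if for all distinct exponents $c_0,c_1,c_2\in[0,3t-1]$ and all $d_0,d_1,d_2\in\fthree$ not all zero, $m_{\a}(x)$ does not divide $g(x)=d_0x^{c_0}+d_1x^{c_1}+d_2x^{c_2}$; this is what I would establish. (Since an $\OA_{3^{t-3}}(3,3t,3)$ has $3^t$ rows, the array in question is understood to carry the appended zero row, i.e. it is $\ZA{\a,[0,3t-1]}$, exactly as in \Cref{corollary:DewarOAConstruction}.)

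First I would clear the degenerate cases where fewer than three of the $d_j$ are nonzero. A single monomial is never divisible by $m_{\a}$, because $m_{\a}(0)=a\neq0$. For a binomial I would use that $[0,3t-1]\subseteq[0,\w{t}-1]$ for every $t\geq3$ (equivalently $3t\leq(3^t-1)/2$), so that $\ZA{\a,[0,3t-1]}$ is a column-subarray of the principal array $\ZA{\a}$; the latter has strength $2$ by \Cref{corollary:RaoHammingOAs}, hence so does the subarray, and \Cref{item:C,item:D} of \Cref{theorem:EquivalenceOAandNTsetsExtended} then give that no binomial $d_0x^{c_0}+d_1x^{c_1}$ is divisible by $m_{\a}$.

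The main case is when all three $d_j$ are nonzero, so $g$ is a genuine trinomial of degree at most $3t-1$. Over $\fthree$ the nonzero scalars are $\{1,2\}$, so dividing $g$ by its leading coefficient yields a monic trinomial $\tilde g$ of the same degree with $m_{\a}\mid g\iff m_{\a}\mid\tilde g$. Then $\tilde g$ is a monic trinomial of degree at most $3t-1<3t$, so \Cref{theorem:OlgaTrinomialDivisibility} applies: if $m_{\a}\mid\tilde g$, then $\tilde g=m_{\a}^3$, or $(m_{\a},\tilde g)$ is one of the tabulated pairs, or it is the reciprocal of a tabulated pair. I would rule out the first branch by degree, noting that in characteristic $3$ one has $m_{\a}^3=(x^t+bx^k+a)^3=x^{3t}+bx^{3k}+a$ (using $(u+v+w)^3=u^3+v^3+w^3$ and $a^3=a$, $b^3=b$), which has degree exactly $3t>\deg\tilde g$. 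In the two remaining branches $m_{\a}$ would be one of the listed polynomials $f(x)$ or the reciprocal of one, which the hypothesis excludes. Hence $m_{\a}\nmid\tilde g$, so $m_{\a}\nmid g$, and combining the three cases yields the claim via \Cref{theorem:EquivalenceOAandNTsetsExtended}.

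The step I expect to need the most care is precisely this trinomial branch, and it is where the argument genuinely departs from the binary case of \Cref{corollary:DewarOAConstruction}: because $\fthree$ has two nonzero scalars, the relevant trinomials need not be monic and must first be normalized, and because the self-cube $m_{\a}^3$ is itself a trinomial over $\fthree$ (unlike the binary pentanomial setting), the branch $\tilde g=m_{\a}^3$ of \Cref{theorem:OlgaTrinomialDivisibility} must be excluded explicitly by the degree bound $3t-1<3t$ rather than automatically.
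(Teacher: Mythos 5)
Your proof is correct and follows exactly the route the paper intends: the paper gives no separate argument for this corollary, stating only that the proof is along the same lines as that of \Cref{corollary:DewarOAConstruction}, which is precisely the template you follow (reduction via \Cref{item:C,item:D} of \Cref{theorem:EquivalenceOAandNTsetsExtended}, strength $2$ from \Cref{corollary:RaoHammingOAs} for the binomial/monomial cases, and \Cref{theorem:OlgaTrinomialDivisibility} for the genuine trinomial case). The two ternary-specific details you supply beyond the binary template — normalizing $g$ to a monic trinomial before invoking the divisibility theorem, and excluding the branch $\tilde g=m_{\a}^3$ via the degree bound $3t-1<3t$ (which also explains why the column set is $[0,3t-1]$ rather than $[0,3t]$) — are exactly the adaptations the paper leaves implicit, and you handle both correctly, including the observation that the stated array should be read as $\ZA{\a,[0,3t-1]}$ so that the row count matches $3^t$.
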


The proof of \Cref{theorem:OlgaTrinomialDivisibility} requires examining a rather large number of cases for trinomial forms, and it is stated in \cite{panario2012divisibility} that the method becomes increasingly complicated to apply to larger finite fields.
The same method has been applied recently to study the binary tetranomials divisible by pentanomials with consecutive inner coefficients \cite{kim2016division}.
However, it is suggested in \cite{panario2012divisibility} that the next important step in this area of research would be to find a different method of studying the divisibility of polynomials of a certain weight that applies to arbitrary fields, rather than extending the method in \cite{panario2012divisibility} to other specific finite fields and polynomial weights.

\subsection{Orthogonal arrays from AMDS codes}

It follows from \Cref{theorem:LinearCodesAndLinearOAsAreEquivalent} that linear codes and orthogonal arrays are equivalent objects.
In this section we focus on a certain type of linear code and study the corresponding orthogonal arrays.
These orthogonal arrays are also relevant to \Cref{chapter:CAsFromMSequencesAndCharacterSums}, where we use them to construct an infinite family of covering arrays.

We begin with two auxiliary results related to linear codes.
The first is a connection between the minimum distance of a linear code and the sets of \Cref{definition:NTSet}.
\begin{proposition}
{}
{MinDistanceAndNTSets}
    A linear code $C$ of length $n$ and dimension $k$ has minimum distance $d$ if and only if the columns of its parity check matrix form an $(n,d-1)$-set of vectors in $\f_{q^{n-k}}$ that is not a $(n,d)$-set.
\end{proposition}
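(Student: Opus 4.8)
The plan is to reduce the statement directly to \Cref{proposition:MinWeightAndLinearIndependence}, which already characterizes the minimum distance of $C$ as the smallest number of linearly dependent columns of a parity check matrix. So the whole argument is essentially a translation of that characterization into the $(n,s)$-set vocabulary of \Cref{definition:NTSet}. First I would fix an $(n-k)\times n$ parity check matrix $H$ for $C$, write $H_0, \dots, H_{n-1}$ for its columns viewed as vectors over $\fq$, and set $S=\{H_0, \dots, H_{n-1}\}$. By \Cref{proposition:MinWeightAndLinearIndependence}, saying that $C$ has minimum distance $d$ is exactly saying that $d$ is the smallest integer $s$ for which some $s$ of the columns $H_0, \dots, H_{n-1}$ are linearly dependent.

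For the forward direction I would unpack this minimality in two halves. On one hand there is no linearly dependent subset of $S$ of size strictly below $d$; since every subset of size at most $d-1$ sits inside some $(d-1)$-subset and subsets of independent sets are independent, this is equivalent to every $(d-1)$-subset being linearly independent, i.e. to $S$ being an $(n,d-1)$-set. On the other hand, minimality asserts that some $d$-subset of $S$ \emph{is} linearly dependent, so not every $d$-subset is independent and hence $S$ is not an $(n,d)$-set. The converse runs the same equivalences backwards: if $S$ is an $(n,d-1)$-set then no dependent subset has size below $d$, and if $S$ fails to be an $(n,d)$-set then some $d$-subset is dependent; together these say the least size of a dependent subset is exactly $d$, so \Cref{proposition:MinWeightAndLinearIndependence} returns minimum distance $d$.

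The routine content is therefore light; the care goes into the bookkeeping around \Cref{definition:NTSet}. I would first check admissibility of the parameters: since $k\geq 1$ the code has a nonzero word, so $d\leq n$, and the full column set of size $n$ is linearly dependent because $H$ has only $n-k<n$ rows, which keeps both $(n,d-1)$ and $(n,d)$ meaningful. The genuine subtlety — the step I expect to be the main obstacle — is that \Cref{definition:NTSet} speaks of a \emph{set} of $n$ vectors, whereas the columns of $H$ are an indexed family that could contain repetitions; a repeated column already gives a dependent pair and forces $d\leq 2$, and the degenerate case $d=1$ (a zero column, making the vacuous ``$(n,0)$-set'' condition meaningless) must be excluded separately. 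I would resolve this either by reading ``$(n,d-1)$-set'' for the indexed family of columns, as is standard in coding theory, or by restricting to codes with $d\geq 2$ and distinct parity-check columns, which is the situation relevant in the sequel where these codes arise from cyclic trace arrays.
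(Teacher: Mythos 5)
Your proposal is correct and follows essentially the same route as the paper: both reduce the statement to \Cref{proposition:MinWeightAndLinearIndependence} and then observe that ``$(n,d-1)$-set but not $(n,d)$-set'' is just the $(n,s)$-set vocabulary for ``every $d-1$ columns are linearly independent but some $d$ columns are not,'' i.e.\ for $d$ being the least size of a dependent set of columns. Your extra bookkeeping (extending small dependent subsets to $(d-1)$-subsets, and the caveat about repeated columns versus the set-based wording of \Cref{definition:NTSet}) is sound and slightly more careful than the paper, but it does not change the argument.
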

\begin{proof}
    Let $H$ be a parity check matrix of $C$.
    The columns of $H$ are a $(n,d-1)$-set that is not a $(n,d)$-set if and only if every $d-1$ columns are linearly independent, but some $d$ columns are not.
    This is equivalent to the following equation:
    \[ d= \min\left\{ x\mid \text{there exist $x$ linearly dependent columns in $H$} \right\},\]
    which is equivalent to $d$ being the minimum distance, by \Cref{proposition:MinWeightAndLinearIndependence}.
\end{proof}
The second auxiliary result gives an explicit method of using the parity check matrix of a linear code with minimum distance $d$ to construct a cyclic trace array that is an orthogonal array of strength $d-1$.

\begin{theorem}
{Orthogonal arrays from maximal sequences and linear codes}
{OAsFromMSequencesAndLinearCodes}
    Let $C$ be a linear code with parameters $[n,k,d]_q$ and parity check matrix $H=(H_{ij})$, $i\in [0,n-k-1]$, $j\in[0,n-1]$, and let $\a$ be a primitive element of $\f_{q^{n-k}}$.
    We define 
    \[C= \left\{ \log_{\a}\left( \sum_{i=0}^{k-1} H_{ij}\a^i \right) \mid j\in [0,n-1]\right\}.\]
    Then $\ZA{\a, C}$ is an $\OA_{q^{n-k-d+1}}(d-1, n,q)$ that is not an $\OA_{q^{n-k-d}}(d,n,q)$.
\end{theorem}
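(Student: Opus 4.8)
The plan is to read this theorem as a direct translation of the two auxiliary results that immediately precede it, namely \Cref{proposition:MinDistanceAndNTSets} and \Cref{theorem:EquivalenceOAandNTsetsExtended}, with the discrete logarithm supplying the dictionary between columns of $H$ and exponents of $\a$. First I would write $t=n-k$ for the degree of $\fqt=\f_{q^{n-k}}$ over $\fq$, so that $\a$ is a primitive element of $\fqt$ and the notation of \Cref{definition:cyclicAlphaSArray} applies. The key identification is to regard the $j$-th column of the parity check matrix $H$, a vector in $\fq^{t}$, as the field element $h_j=\sum_{i=0}^{n-k-1}H_{ij}\a^i\in\fqt$ determined by the basis $1,\a,\dots,\a^{n-k-1}$ of \Cref{equation:FundamentalRepresentationOfFiniteField}; I note in passing that the upper summation index printed in the statement should read $n-k-1$ rather than $k-1$. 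Since $c_j=\log_{\a}h_j$ gives $\a^{c_j}=h_j$, the set $\{\a^c\mid c\in C\}$ is exactly the collection of columns of $H$ viewed inside $\fqt$.

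Before invoking the equivalences I would dispose of the bookkeeping that makes the logarithms legitimate and forces $|C|=n$. Each column of $H$ is nonzero, since a zero column would place the weight-one vector $e_j$ in the code and force $d\le 1$; hence every $\log_{\a}h_j$ is defined once $d\ge 2$. Moreover, equal columns $h_{j_1}=h_{j_2}$ would make $e_{j_1}-e_{j_2}$ a weight-two codeword, so when $d\ge 3$ the columns are pairwise distinct and the $c_j$ are distinct, giving $|C|=n$. This is the step I expect to be the only real nuisance: it is routine but is where the well-definedness of $\log_{\a}$ and the ``set versus indexed family'' ambiguity in \Cref{definition:NTSet} must be pinned down, the small-distance cases being degenerate.

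The mathematical content is then immediate. By \Cref{proposition:MinDistanceAndNTSets}, because the code has minimum distance exactly $d$, the columns of $H$ form an $(n,d-1)$-set in $\fqt$ that is not an $(n,d)$-set; under the identification above this says precisely that $\{\a^c\mid c\in C\}$ is an $(n,d-1)$-set over $\fq$ but not an $(n,d)$-set. Applying the equivalence of \Cref{item:A,item:C} of \Cref{theorem:EquivalenceOAandNTsetsExtended} with $s=d-1$ (and $t=n-k$) converts the first half into the assertion that $\ZA{\a,C}$ is a linear $\OA_{q^{t-s}}(s,n,q)=\OA_{q^{n-k-d+1}}(d-1,n,q)$. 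Applying the same equivalence with $s=d$, now using that $\{\a^c\mid c\in C\}$ fails to be an $(n,d)$-set, shows that $\ZA{\a,C}$ is not a linear $\OA_{q^{n-k-d}}(d,n,q)$, completing the proof. In the MDS case $d=n-k+1$ both the failure of the $(n,d)$-set property and the impossibility of a strength-$d$ orthogonal array hold for trivial dimension and index reasons, so that extreme requires no separate treatment.
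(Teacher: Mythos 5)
Your proposal is correct and follows essentially the same route as the paper's proof: identify the columns of $H$ with elements of $\f_{q^{n-k}}$ via the basis $1,\a,\dots,\a^{n-k-1}$ (the paper phrases this as a vector space isomorphism $\varphi:\fq^{n-k}\rightarrow\f_{q^{n-k}}$), invoke \Cref{proposition:MinDistanceAndNTSets} to get an $(n,d-1)$-set that is not an $(n,d)$-set, and conclude by the equivalence of the orthogonal-array and $(|C|,s)$-set statements of \Cref{theorem:EquivalenceOAandNTsetsExtended} applied with $s=d-1$ and $s=d$. Your extra bookkeeping---the summation bound should indeed read $n-k-1$ rather than $k-1$, and nonzero, pairwise distinct columns are needed for the logarithms to be defined and for $|C|=n$---is correct and settles details that the paper's proof passes over silently.
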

\begin{proof}
    Denoting the $j$-th column of $H$ by $H_j$, we have by \Cref{proposition:MinDistanceAndNTSets} that the set 
    \[S'=\left\{ H_j\mid j \in [0,n-1]\right\}\]
    is an $(n,d-1)$-set that is not an $(n,d)$-set over $\fq$.
    Consider the vector space isomorphism $\varphi$, defined by
    \[
        \begin{array}[]{ll}
        \varphi:&\fq^{n-k}\rightarrow \f_{q^{n-k}}\\
        &(x_0, \dots, x_{n-k-1})\mapsto\sum_{i=0}^{n-k-1}x_i\a^i.
        \end{array}
    \]
    Then, we have that 
    \[S=\left\{ \varphi(H_j)\mid j \in [0,n-1] \right\}=\varphi(S').\]
    This implies that $S$ is an $(n,d-1)$-set that is not an $(n,d)$-set, since this is true for $S'$ and $\varphi$ respects linear independence, as a vector space isomorphism.
    The proof is complete by Statements 1 and 3 of \Cref{theorem:EquivalenceOAandNTsetsExtended}.
\end{proof}

The following is a classic bound for linear codes.
\begin{lemma}
{Singleton bound \cite{singleton1964maximum}}
{SingletonBound}
For a $q$-ary linear $[n,k,d]$-code we have that \[n-k-d+1\geq 0.\]
\end{lemma}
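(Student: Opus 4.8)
The statement to prove is the Singleton bound: for a $q$-ary linear $[n,k,d]$-code, we have $n-k-d+1\geq 0$, equivalently $d\leq n-k+1$.

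The plan is to argue directly from the parity check matrix and the characterization of minimum distance already established in the excerpt. Recall from \Cref{proposition:MinWeightAndLinearIndependence} that the minimum distance $d$ equals the smallest number of columns of a parity check matrix $H$ that are linearly dependent. Since $C$ has dimension $k$, the dual code $C^{\perp}$ has dimension $n-k$, so a parity check matrix $H$ is an $(n-k)\times n$ matrix. The key observation is that any collection of vectors living in the $(n-k)$-dimensional space $\fq^{n-k}$ can contain at most $n-k$ linearly independent members. Consequently, any set of $n-k+1$ columns of $H$ must be linearly dependent.

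First I would invoke \Cref{proposition:MinWeightAndLinearIndependence} to write
\[
    d = \min\left\{ s \mid \text{there exist $s$ linearly dependent columns of } H \right\}.
\]
Then I would note that the $n$ columns of $H$ are vectors in $\fq^{n-k}$, a vector space of dimension $n-k$. Any subset of $n-k+1$ of these columns is a set of $n-k+1$ vectors in a space of dimension $n-k$, hence is necessarily linearly dependent. Therefore the minimum number of linearly dependent columns is at most $n-k+1$, which gives $d\leq n-k+1$, i.e. $n-k-d+1\geq 0$. (One should also check $n\geq k$, which holds since $\dim C = k$ forces $k\leq n$, guaranteeing $n-k+1\geq 1$ so that selecting $n-k+1$ columns is meaningful.)

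There is no real obstacle here; the argument is a one-line application of the pigeonhole-style fact that more than $\dim V$ vectors in $V$ must be linearly dependent, combined with the column characterization of minimum distance. The only point requiring mild care is the degenerate boundary: if $n-k = n$ (i.e. $k=0$), or if the code has no nonzero codewords, the notion of minimum distance is vacuous; but for a genuine code with $1\leq k$, the bound follows immediately as above, so I would simply assume $C$ is a nontrivial code so that $d$ is well defined.
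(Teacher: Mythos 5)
Your proof is correct. Note, however, that the paper itself does not prove this lemma at all: it is stated as a classical result and dispatched with a citation to Singleton's original paper, so there is no "paper proof" to match against. Your argument is the standard one and it is a good fit for this thesis specifically, because it reuses machinery the paper has already established: \Cref{proposition:MinWeightAndLinearIndependence} identifies $d$ with the least number of linearly dependent columns of a parity check matrix $H$, and since $H$ is an $(n-k)\times n$ matrix (the paper defines it as a generator matrix of $C^{\perp}$, which has dimension $n-k$), any $n-k+1$ of its columns live in $\fq^{n-k}$ and are forced to be linearly dependent, giving $d\leq n-k+1$. Your side remarks are also sound: $k\leq n$ is automatic because $C$ is a subspace of $\fq^n$, and the only degenerate case is $k=0$, where $d$ is undefined under the paper's definition of minimum distance (which requires distinct codewords); excluding it, as you do, is the right call. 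In short, what the paper buys with the citation is brevity; what your argument buys is self-containedness at essentially zero cost, since the needed proposition is already proved a page earlier.
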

\index{Singleton!bound}
\index{Singleton!defect}
\index{Linear code!MDS}
\index{Linear code!AMDS}
\index{Linear code!NMDS}
\index{AMDS|see{Linear code}}
\index{NMDS|see{Linear code}}
\index{MDS|see{Linear code}}
Linear codes with parameters that achieve or are close to equality in \Cref{lemma:SingletonBound} are of special interest.

\begin{definition}
{Singleton defect and related codes}
{SingletonDefectAndRelatedCodes}
    The \emph{Singleton defect} of a linear $[n,k,d]$-code is the number 
    \[s(C)=n-k+1-d.\]
    If $s(C)=0$, then $C$ is \emph{maximum distance separable (MDS)}.
    If $s(C)=1$, then $C$ is \emph{almost maximum distance separable (AMDS)}.
    If both $C$ and $C^{\perp}$ are AMDS, then $C$ is \emph{near maximum distance separable (NMDS)}.
\end{definition}

From \Cref{definition:SingletonDefectAndRelatedCodes} we have that a code is AMDS if it has parameters $[n,n-d,d]_q$ for some $n,d$.
Then, by \Cref{proposition:MinDistanceAndNTSets}, the columns $H_0, \dots, H_{n-1}$ of a parity check matrix form an $(n,d-1)$-set that is not an $(n,d)$-set, which by \Cref{remark:ArcsTracksCapsAndLI} is equivalent to $\{ [H_i] \mid i \in [0, n-1] \}$ being an $n$-track in $PG(d-1,q)$.
We conclude that the following are equivalent objects:
\begin{itemize}
    \item An $n$-track in $PG(d-1,q)$. \index{Track} \index{Linear code!AMDS}
    \item An AMDS code with parameters $[n,n-d,d]_q$.
    \item An $\OA_q(d-1,n,q)$.
    \label{item:computer}
\end{itemize}

Let $M(r,q)$ be the maximum number $n$ such that a maximum $n$-track exists in $PG(r,q)$.
Some exact values of $M(r, q)$ for general prime power $q$ are shown in \Cref{table:ExactValuesMDQgeneral}. 
In particular, we have that $M(2,q)=q^2+q+1$ and that $M(3,q)=q^2+1$ for $q>2$, which means that the orthogonal arrays in \Cref{corollary:RaoHammingOAs,corollary:OAsOfStrength3FromMSequences} have the maximum number of columns that can be obtained from our type of construction.
However, finding exact values for $M(r,q)$ for $r\geq 4$ is a difficult problem \cite{hirschfeld2001packing}.
In \Cref{table:ExactValuesMDQsporadic} we show the known values and bounds for small values of $r$ and $q$.
For other bounds on $M(r,q)$ that rely on bounds on the size of other objects in finite geometry, we refer the reader to the survey by Hirschfeld and Storme \cite{hirschfeld2001packing}.

Other than that, we have an infinite class of AMDS codes from elliptic curves. 
We recall that an NMDS code is an AMDS code whose dual is also AMDS.
The original rather challenging proof of the next theorem appears in \cite{tsfasman1991algebraic}; see \cite{giulietti2002near} for a more accessible proof.

\begin{theorem}
{\cite[Theorem 1.1]{giulietti2002near}}
{NMDSfromEllipticCurves}
    Let $q$ be a prime power and suppose that there exists an elliptic curve with $n$ rational points over $\fq$.
    Then, for every $k\in [2,n-1]$, there exists an $[n,k,d]_q$ NMDS code.
\end{theorem}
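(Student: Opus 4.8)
The plan is to realise the desired code as a functional algebraic--geometry (Goppa) code on the given elliptic curve and then pin down its exact parameters using Riemann--Roch together with the geometry of the group law. Write $\mathcal{X}$ for the elliptic curve, fix one of its $\fq$-rational points as the origin of the group law, and let $P_1,\dots,P_n$ be its $n$ rational points. Set $D=P_1+\dots+P_n$. For a target dimension $k\in[2,n-1]$, choose a divisor $G$ of degree $k$ whose support avoids the $P_i$, and form the evaluation code $C=\{(f(P_1),\dots,f(P_n))\mid f\in\mathcal{L}(G)\}$.

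First I would compute the dimension. Since $\mathcal{X}$ has genus $1$ and $\deg G=k\ge 1$, Riemann--Roch gives $\ell(G)=\deg G=k$; and because $\deg(G-D)=k-n<0$ we have $\ell(G-D)=0$, so the evaluation map $\mathcal{L}(G)\to\fq^n$ is injective and $\dim C=k$. Next I would bound the minimum distance: the standard Goppa (designed-distance) bound gives $d\ge n-\deg G=n-k$, while \Cref{lemma:SingletonBound} gives $d\le n-k+1$. Hence $d\in\{n-k,\,n-k+1\}$, i.e. the Singleton defect of $C$ is either $0$ or $1$. The same reasoning applied to the dual $C^{\perp}=C_{\Omega}(D,G)$ --- which is again an AG code on the genus-$1$ curve $\mathcal{X}$, of dimension $n-k$ --- yields $d^{\perp}\in\{k,\,k+1\}$, so its defect too is $0$ or $1$. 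That both defects are at most $1$ is precisely the reflection of the curve having genus $1$.

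To conclude that $C$ is NMDS in the sense of \Cref{definition:SingletonDefectAndRelatedCodes}, it remains to force both defects to equal $1$, that is $d=n-k$ and $d^{\perp}=k$, ruling out the MDS degeneration on either side. I would translate the condition $d=n-k$ into geometry: embedding $\mathcal{X}$ in $PG(k-1,q)$ by the complete linear system $|G|$ realises $P_1,\dots,P_n$ as the columns of a generator matrix of $C$, so that by \Cref{proposition:MinDistanceAndNTSets} and \Cref{remark:ArcsTracksCapsAndLI}, $C$ is AMDS exactly when this point set is an $n$-track, i.e. every $k-1$ of the points are linearly independent but some $k$ are dependent. A hyperplane meets the degree-$k$ embedded curve in at most $k$ points, and the governing group-law fact is that $k$ of the points are coplanar (lie in a common hyperplane) precisely when their sum in $\mathcal{X}(\fq)$ equals the fixed class cut out by hyperplanes, while any $k-1$ points are always independent. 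Thus $d=n-k$ is attained as soon as some $k$ of the $n\ge k+1$ rational points realise that prescribed sum. Since $C^{\perp}$ is again an AG code on the same genus-$1$ curve, the identical argument applied to its defining linear system gives $d^{\perp}=k$.

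The main obstacle is exactly this last step: showing that the AMDS case genuinely occurs for every $k\in[2,n-1]$, rather than the code accidentally being MDS. This reduces to exhibiting, for a suitable choice of the divisor class of $G$, a $k$-subset of $\mathcal{X}(\fq)$ with the prescribed group sum together with its dual analogue; the genus-$1$ hypothesis caps both Singleton defects at $1$, while the abundance of rational points should force them up to $1$. Everything else --- the dimension count and the two-sided defect bound --- is a routine application of Riemann--Roch and \Cref{lemma:SingletonBound}.
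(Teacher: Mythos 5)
First, a point of reference: the paper itself does not prove this theorem --- it states it as a citation, pointing to the original ``rather challenging'' proof in \cite{tsfasman1991algebraic} and the more accessible one in \cite{giulietti2002near} --- so your proposal has to be judged against the statement itself rather than an in-paper argument. Your routine steps are correct: for $G$ of degree $k$ with support disjoint from $D=P_1+\cdots+P_n$, Riemann--Roch on a genus-$1$ curve gives $\ell(G)=k$, the evaluation map is injective because $\deg(G-D)<0$, and the Goppa bound together with \Cref{lemma:SingletonBound} caps both Singleton defects at $1$ (the dual being again an AG code, attached to $K+D-G$ of degree $n-k$ since $\deg K=0$). The genuine gap is exactly where you say it is: you never prove that the defects equal $1$ rather than $0$, i.e.\ that the code is not MDS. ``The abundance of rational points should force them up to $1$'' is a hope, not an argument, and that step is the entire content of the theorem beyond Riemann--Roch. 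Worse, the route you sketch for it --- fix the class of $G$ first and then search for a $k$-subset of $\mathcal{X}(\fq)$ with a \emph{prescribed} group sum --- is a nontrivial subset-sum problem in a finite abelian group, which can even fail in particular instances (in $(\Z/2\Z)^r$ the element $0$ is not a sum of two distinct elements); this is essentially why the original proof is considered hard.

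The gap closes immediately once you reverse the order of quantifiers, which you are free to do because $G$ is yours to choose. Pick any $k$ distinct rational points $P_{i_1},\dots,P_{i_k}$ (possible since $k\leq n-1$), and take $G$ linearly equivalent to $P_{i_1}+\cdots+P_{i_k}$ with support disjoint from all the $P_i$ (weak approximation). Then $G-\sum_{j} P_{i_j}\sim 0$, so $\mathcal{L}(G)$ contains a function whose divisor is exactly $\sum_{j} P_{i_j}-G$; its evaluation is a codeword of weight exactly $n-k$, whence $d=n-k$ and $C$ is AMDS, not MDS. For the dual you then need no separate geometric argument at all: since $C$ is not MDS, $C^{\perp}$ cannot be MDS (the dual of an MDS code is MDS), and your own bound already gives $s(C^{\perp})\leq 1$; hence $s(C^{\perp})=1$ and $C$ is NMDS. (Alternatively: $K\sim 0$ on an elliptic curve, so $C^{\perp}$ is equivalent to the evaluation code of $D-G$, and the complementary $n-k$ points sum to a divisor linearly equivalent to $D-G$, giving $d^{\perp}=k$ directly.) One small correction to your geometric dictionary: the points embedded by $|G|$ in $PG(k-1,q)$ are the columns of a \emph{generator} matrix of $C$, i.e.\ of a parity check matrix of $C^{\perp}$, so by \Cref{proposition:MinDistanceAndNTSets} the $n$-track condition there characterizes $C^{\perp}$ being AMDS, not $C$; the slip is harmless here only because, with $K\sim 0$, both conditions reduce to the same statement that some $k$ of the points sum to the class of $G$.
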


It is known, see for example \cite[Theorem 2.3.17]{tsfasman1991algebraic} that if $q=p^r$ then there exists and elliptic curve with $N_q$ rational points over $\fq$, where
\begin{equation}
    N_q=
    \begin{cases}
        q+\lfloor 2\sqrt{q}\rfloor,
        & \text{if $p|2\sqrt{q}$ and $r\geq 3$, $r$ odd};\\
        q+\lfloor 2\sqrt{q}\rfloor+1, & \text{otherwise}.
    \end{cases}
    \label{equation:RationalPointsNQ}
\end{equation}
Hence, from \Cref{theorem:NMDSfromEllipticCurves} there exists an AMDS (that is moreover NMDS) code with parameters $[N_q,N_q-r,r]_q$ which, from the previously mentioned equivalence of AMDS codes and orthogonal arrays, yields the following result.

\begin{corollary}
{}
{OAsFromNMDS}
    Let $q$ be a prime power and $\a$ be a primitive element of $\fqt$.
    Then there exists $C\subset [0, \w{t}-1]$ with $|C|=N_q$, such that $\ZA{\a, C}$ is an $\OA_q(t-1,N_q,q)$.
\end{corollary}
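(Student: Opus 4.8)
The plan is to feed a suitably parametrised almost-MDS code into \Cref{theorem:OAsFromMSequencesAndLinearCodes}, and then replace the resulting column indices by representatives in $[0,\w{t}-1]$. First I would produce the code. By \Cref{equation:RationalPointsNQ} there is an elliptic curve over $\fq$ having $N_q$ rational points, so \Cref{theorem:NMDSfromEllipticCurves}, applied with dimension $k=N_q-t$ (which lies in the admissible range $[2,N_q-1]$ for the values of $t$ under consideration), yields an NMDS code $D$ with parameters $[N_q,\,N_q-t,\,t]_q$. In particular $D$ is AMDS, since its Singleton defect in the sense of \Cref{definition:SingletonDefectAndRelatedCodes} is $s(D)=n-k+1-d=N_q-(N_q-t)+1-t=1$.

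The essential point is to align parameters with the field over which $\a$ lives. A parity check matrix $H$ of $D$ has $n-k=t$ rows, so its columns are vectors of $\fq^{t}$, which is exactly the space we identify with $\fqt$ when building a cyclic trace array from the primitive element $\a\in\fqt$. Applying \Cref{theorem:OAsFromMSequencesAndLinearCodes} to $D$ and $\a$ therefore yields an explicit index set $C'=\{\log_{\a}(\sum_{i=0}^{t-1}H_{ij}\a^{i})\mid j\in[0,N_q-1]\}\subset[0,q^{t}-2]$ for which $\ZA{\a,C'}$ is an $\OA_{q^{\,n-k-d+1}}(d-1,N_q,q)$. Substituting $n-k=t$ and $d=t$ makes the index $q^{\,t-t+1}=q$ and the strength $d-1=t-1$, so $\ZA{\a,C'}$ is precisely an $\OA_q(t-1,N_q,q)$.

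It remains to move from $C'\subset[0,q^{t}-2]$ to a set $C\subset[0,\w{t}-1]$ of the same cardinality without losing the orthogonal array property. For this I would invoke \Cref{remark:WhyWeChoseColumnsFrom0W}: reducing each element of $C'$ modulo $\w{t}$ alters each $\a^{c}$ only by a nonzero scalar (\Cref{lemma:CharacterizationOfConstantMultiplesInFQM}), hence preserves every linear (in)dependence relation among the columns, and thus the full orthogonal array property. That $|C|=N_q$ is retained follows because the points $[\a^{c}]$, $c\in C'$, form an $N_q$-track in $PG(t-1,q)$ and are in particular pairwise distinct; by \Cref{lemma:CharacterizationOfConstantMultiplesInFQM} the elements of $C'$ are then pairwise distinct modulo $\w{t}$, so the reduction is injective.

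The main obstacle I anticipate is not a hard computation but the parameter bookkeeping: one must verify that the parity-check space has dimension $n-k=t$, so that the construction genuinely takes place over $\fqt$, and that the AMDS equality $d=n-k$ is exactly what forces the resulting index $q^{\,n-k-d+1}$ down to $q$ and the strength $d-1$ up to $t-1$. A secondary subtlety is confirming that no column sum $\sum_{i}H_{ij}\a^{i}$ vanishes, so that each discrete logarithm defining $C'$ is well defined; this is guaranteed by the track property, since every column of $H$ is nonzero.
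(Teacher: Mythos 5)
Your proof is correct and takes essentially the same route as the paper: the elliptic-curve NMDS code of \Cref{theorem:NMDSfromEllipticCurves} with dimension $k=N_q-t$ (hence AMDS with $d=n-k=t$) is fed into \Cref{theorem:OAsFromMSequencesAndLinearCodes}, giving index $q^{n-k-d+1}=q$ and strength $d-1=t-1$, after which the column indices are reduced into $[0,\w{t}-1]$ as in \Cref{remark:WhyWeChoseColumnsFrom0W}. You additionally spell out two points the paper leaves implicit --- that the discrete logarithms are well defined (no column of $H$ maps to zero) and that the reduction modulo $\w{t}$ is injective via \Cref{lemma:CharacterizationOfConstantMultiplesInFQM} --- which only strengthens the argument.
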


\begin{table}[t] 
\renewcommand{\arraystretch}{\genarraystretch}
\begin{subtable}{1\linewidth}\centering
{
\rowcolors{1}{\backgroundshade}{white}
    \centering
    \[
    \begin{array}{ccc}
    \rowcolor{\tableheadcolor}
        d&q&M(d,q)\\
        2   & \text{any}  &   q^2+q+1\\
        3   & q>2  &   q^2+1\\
        2q - 2& q > 3 & 2q+1\\
        2q - 1& q > 3 & 2q+2\\
        \geq 2q &\text{any} & 0
    \end{array}
    \]
    \caption{Known exact values of $M(d, q)$ \cite[Table 8.1]{hirschfeld2001packing}.}
    \label{table:ExactValuesMDQgeneral}
}
\end{subtable}

\begin{subtable}{1\linewidth}\centering
{
\rowcolors{1}{white}{\backgroundshade}
    \[
    \begin{array}{rrcccccccc}
    \rowcolor{\tableheadcolor}
    &q&2&3&4&5&7&8&9&11\\
    \rowcolor{\tableheadcolor}
    N&&&&&&&&&\\
    2&&7&13&21&31&57&73&91&133\\
    3&&8&10&17&26&50&65&82&122\\
    4&&&11&11&12-20&16-30&14-36&16-43&22-57\\
    5&&&12&12&12-14&15-31&15-37&17-44&23-58\\
    6&&&&9&10-15&13-28&14-34&17-39&18-49\\
    7&&&&10&11-16&13-20&14-35&18-40&18-50\\
    8&&&&&11&13-21&14-23&19-36&19-50\\
    9&&&&&12&13-22&14-24&20-26&20-51\\
    10&&&&&&14-23&14-25&16-27&18-44\\
    11&&&&&&15-24&15-26&16-28&18-32\\
    12&&&&&&15&15-27&16-29&18-33\\
    13&&&&&&16&16-28&17-30&18-34
    \end{array}
    \]
    \caption{Known values of $M(d, q)$ for small $d$ and $q$ \cite[Table 8.4]{hirschfeld2001packing}.}
    \label{table:ExactValuesMDQsporadic}
}
\end{subtable}
\label{table:ValuesMDQ}
\caption{Values of $M(d,q)$.}
\end{table}

\subsection{Orthogonal arrays from arcs}
\label{section:OAsFromArcs}

Every result that we have presented so far follows from \Cref{theorem:EquivalenceOAandNTsetsExtended} for some $s<t$.
In this section we briefly discuss the case $s=t$.
From the equivalence of \Cref{item:C,item:A} of \Cref{theorem:EquivalenceOAandNTsetsExtended} for $s=t$ and the definition of an arc, we have the following.
\begin{corollary}
    {Orthogonal arrays from arcs}
    {OAsFromArcs}
    \index{Arc}
    Let $\a$ be a primitive element of $\fqt$ and $C$ be a nonempty subset of $[0,q^t-2]$.
    Then, the following statements are equivalent:
    \begin{itemize}
        \item The $q^t\times |C|$ array $\ZA{\a, C}$ is a linear $\OA(t, |C|,q)$.
        \item The set $\{ [\a^c]\mid c\in C \}$ is a $|C|$-arc in $PG(t-1,q)$.
    \end{itemize}
\end{corollary}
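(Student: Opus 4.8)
The plan is to obtain this directly by specializing \Cref{theorem:EquivalenceOAandNTsetsExtended} to $s=t$ and then rephrasing the resulting linear-independence condition in the geometric language of arcs. No new machinery is required: the entire content is a matching of parameters and definitions, so I would present the argument as a short deduction rather than a self-contained proof.

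First I would set $s=t$ in \Cref{theorem:EquivalenceOAandNTsetsExtended}. The index appearing in \Cref{item:C} of that theorem is $q^{t-s}$, which for $s=t$ equals $q^0=1$, so \Cref{item:C} reads exactly as the statement that $\ZA{\a, C}$ is a linear $\OA_1(t,|C|,q)=\OA(t,|C|,q)$ of index unity; this is the first bullet of the corollary. The equivalent \Cref{item:A} of the theorem then says that $\{ \a^c \mid c\in C \}$ is a $(|C|,t)$-set over $\fq$, which by \Cref{definition:NTSet} means precisely that every $t$-element subset of $\{ \a^c \mid c\in C \}$ is linearly independent over $\fq$.

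It remains to recognize this last condition as the defining property of a $|C|$-arc. By \Cref{remark:ArcsTracksCapsAndLI}, the point set $\{ [\a^c] \mid c\in C \}$ is a $|C|$-arc in $PG(t-1,q)$ exactly when every $t$ of the representative vectors $\a^c$ are linearly independent over $\fq$, which is verbatim the $(|C|,t)$-set condition obtained above. Chaining the equivalence $\text{\Cref{item:C}}\Leftrightarrow\text{\Cref{item:A}}$ with this reformulation yields the two equivalent statements of the corollary.

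The main point to check carefully, rather than a genuine obstacle, is that the arc property is truly a property of the points $[\a^c]$ and not of the chosen representatives: since scaling vectors by nonzero scalars does not affect linear independence of a set, the translation through \Cref{remark:ArcsTracksCapsAndLI} is well defined and independent of representative choice. I would also flag the implicit constraint $t\leq|C|$, under which both the orthogonal-array strength in the first statement and the existence of $t$-subsets (hence the arc condition) in the second are meaningful; outside this range the two statements degenerate in parallel, so the equivalence is preserved.
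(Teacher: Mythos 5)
Your proposal is correct and follows exactly the paper's own route: the paper derives this corollary by specializing \Cref{theorem:EquivalenceOAandNTsetsExtended} to $s=t$ (so the index $q^{t-s}$ becomes $1$) and identifying the resulting $(|C|,t)$-set condition with the arc condition via \Cref{remark:ArcsTracksCapsAndLI}. Your additional remarks on representative-independence and the implicit constraint $t\leq|C|$ are sensible but not needed beyond what the paper already records.
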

Therefore, to obtain an orthogonal array with the maximum number of columns, we equivalently need to obtain a $k$-arc in $PG(t-1,q)$ where $k$ is the maximum possible.

Let $m(d,q)$ be the maximum $k$ for which a $k$-arc exists in $PG(d,q)$.
Determining $m(d,q)$ for various values of $d,q$ is a well-researched problem of finite geometry.
However, the currently known values of $m(d,q)$ correspond to orthogonal arrays with a very small number of columns compared to the number of rows.
In fact, a well-known conjecture \cite{hirschfeld2001packing} is that
\[
    m(d,q)=
    \begin{cases}
        d+2& \text{ if } d\geq q-1; \\
        q+2& \text{ if $q$ is even and } d\in [2,q-2];\\
        q+1& \text{ otherwise.}
    \end{cases}
    \label{fornomenclaturearc}
\]
We refer to Sections 2 and 3 of \cite{hirschfeld2001packing} for a survey on the currently known constructions and sizes of arcs, all of which support the above conjecture.
A discussion of the problem in the context of orthogonal arrays and linear codes can be found in Section 5.6 of \cite{hedayat2012orthogonal}.

\section{Covering arrays from cyclic trace arrays}
\label{section:ConstructionOfCoveringArraysFromCyclicTraceArrays}
Other than our work in the next chapters, there is only one previous construction of covering arrays that are not orthogonal arrays which uses maximal sequences.
In \cite{raaphorst2014construction} (see also \cite{raaphorst2013variable}) Raaphorst et al.\ study the combinatorial properties of arrays of the form $\ZA{\a}$ where $\a$ is a primitive element of $\fqt$.
Moreover, for the case when $t=3$, they exploit these properties to provide a method to construct covering arrays of strength $3$.
One of their most powerful results is a connection between the positions of the zero entries in the array and the blocks of a BIBD.

\begin{theorem}
{A BIBD from $\A{\a}$ \cite[Theorem 3]{raaphorst2014construction}}
{CyclicArraysIsABIBD}
\index{BIBD}
    For a primitive element $\a \in \fqt$, each row of $\ZA{\a}$ has exactly $z=\w{t-1}$ zeros, and the set
    \[ 
        \mathcal{B}=
        \left\{ 
            \left\{ a_0, \dots, a_{z-1} \right\} 
             \mid \A{\a}_{i,a_0}= \dots = \A{\a}_{i,a_{z-1}}=0
            \text{ for some } i \in [0,k-1]
        \right\}
    \] 
    is the set of blocks of a $(\w{t},\w{t-1},\w{t-2})$-BIBD.
\end{theorem}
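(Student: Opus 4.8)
The plan is to reinterpret the zero pattern of $\A{\a}$ through the projective geometry $PG(t-1,q)$ built from $\fqt$, using that $\ker\Tt$ is a hyperplane. First I would fix the correspondence between columns and points: by \Cref{proposition:ConstructionOfPGDQFromFF} the map $j\mapsto[\a^j]$ sends $[0,\w{t}-1]$ bijectively onto the $\w{t}$ points of $PG(t-1,q)$. Since $\Tt$ is a nonzero $\fq$-linear form onto $\fq$ (\Cref{theorem:PropertiesOfTrace}), its kernel $H_0=\ker\Tt$ is a $(t-1)$-dimensional subspace of $\fqt$; the associated hyperplane $\widehat H_0=\{[x]\mid x\in H_0\setminus\{0\}\}$ therefore has exactly $\w{t-1}$ points.

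Next I would analyze a single row of $\A{\a}$ (the $(q^t-1)\times\w{t}$ array). The entry $\A{\a}_{ij}=\Tt(\a^{i+j})$ vanishes iff $\a^{i+j}\in H_0$, iff $[\a^{i+j}]\in\widehat H_0$. Writing $\mu_i$ for the collineation $[x]\mapsto[\a^i x]$ induced by the $\fq$-linear bijection $x\mapsto\a^i x$, this says $\mu_i([\a^j])\in\widehat H_0$, i.e. $[\a^j]\in\mu_{-i}(\widehat H_0)=:\widehat H_i$, again a hyperplane. Because $j\mapsto[\a^j]$ is a bijection onto all points (\Cref{proposition:ConstructionOfPGDQFromFF}, with $[\a^m]$ depending only on $m\bmod\w{t}$ by \Cref{lemma:CharacterizationOfConstantMultiplesInFQM}), the zero set of row $i$ is carried exactly onto the point set of $\widehat H_i$. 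In particular every row has precisely $\w{t-1}$ zeros, which settles the first assertion and pins down the common block size $s=\w{t-1}$.

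The key step is to show that, as $i$ runs over $[0,q^t-2]$, the hyperplanes $\widehat H_i$ exhaust \emph{all} hyperplanes of $PG(t-1,q)$. I would let $\fqtstar=\langle\a\rangle$ act on hyperplanes by $c\cdot H=cH$ and compute the stabilizer of $H_0$: if $cH_0=H_0$ then $\Tt(c^{-1}\,\cdot\,)$ and $\Tt(\,\cdot\,)$ have the same kernel, hence are $\fq$-proportional, and \Cref{lemma:TraceBetaXIsZeroForAllXIffBetaIsZero} forces $c\in\fqstar$. Thus the orbit of $H_0$ has size $(q^t-1)/(q-1)=\w{t}$, which is the total number of hyperplanes, so the orbit is everything; moreover each hyperplane is produced by exactly $q-1$ rows. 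Since $\mathcal{B}$ is defined as a \emph{set}, these repetitions collapse, and $\mathcal{B}$ equals precisely the collection of point sets of the $\w{t}$ hyperplanes of $PG(t-1,q)$.

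Finally I would read off the BIBD parameters, either directly or by invoking \Cref{proposition:FlatsAsBlocksOfABIBD} with hyperplanes viewed as $(t-2)$-flats: the point set has size $\w{t}=v$, each hyperplane carries $\w{t-1}=s$ points, and any two distinct points span a unique line, which lies on exactly $\w{t-2}$ hyperplanes, so every $2$-subset sits in $\lambda=\w{t-2}$ blocks. I expect the main obstacle to be the transitivity/stabilizer computation combined with the bookkeeping forced by $\mathcal{B}$ being a set rather than a multiset: obtaining $\lambda=\w{t-2}$ instead of the inflated $(q-1)\w{t-2}$ hinges on recognizing that the per-hyperplane multiplicity $q-1$ is absorbed when duplicate zero sets are identified.
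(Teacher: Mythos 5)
The thesis itself gives no proof of this theorem---it is imported verbatim from Raaphorst et al.\ \cite[Theorem 3]{raaphorst2014construction}---so there is no in-paper argument to compare yours against; judged on its own merits, your proof is correct and complete. Your route is the natural geometric one, and it meshes well with the framework the thesis does build: the zero set of row $i$ of $\A{\a}$ is carried by the bijection $j\mapsto[\a^j]$ of \Cref{proposition:ConstructionOfPGDQFromFF} onto the hyperplane $\widehat H_i$, the projectivization of $\a^{-i}\ker\Tt$; your stabilizer computation (if $c\ker\Tt=\ker\Tt$ then $\Tt(c^{-1}x)=\lambda\Tt(x)$ for some $\lambda\in\fqstar$ and all $x$, so $\Tt\left((c^{-1}-\lambda)x\right)=0$ for all $x$, and \Cref{lemma:TraceBetaXIsZeroForAllXIffBetaIsZero} forces $c\in\fqstar$) is exactly what shows that the $q^t-1$ rows sweep out all $\w{t}$ hyperplanes of $PG(t-1,q)$, each appearing $q-1$ times---which is also the content of \Cref{proposition:ConnectionOfSeqMatrixAndDifferenceSets}, where the zero sets are the $\w{t}$ translates of a Singer difference set; and your final parameter count is \Cref{proposition:FlatsAsBlocksOfABIBD} specialized to hyperplanes as $(t-2)$-flats. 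Your closing observation---that defining $\mathcal{B}$ as a set absorbs the per-hyperplane multiplicity $q-1$, yielding $\lambda=\w{t-2}$ rather than $(q-1)\w{t-2}$---is precisely the bookkeeping point that makes the statement true as written. Two small items to spell out in a polished write-up: first, the theorem's ``$\ZA{\a}$'' must be read as $\A{\a}$ (the appended zero row of $\ZA{\a}$ trivially has $\w{t}$ zeros) and ``$i\in[0,k-1]$'' as $i\in[0,q^t-2]$, as you implicitly did; second, you should justify that $PG(t-1,q)$ has exactly $\w{t}$ hyperplanes (duality, or counting $(t-1)$-dimensional subspaces of $\fqt$) and that a hyperplane contains two distinct points if and only if it contains the line they span, since these two standard facts are what carry the orbit argument and the count $\lambda=\w{t-2}$, respectively.
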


The positions of the zero entries are also connected to another combinatorial object, as given in the next proposition.
For positive integers $n,k$ and a set $S\subseteq \Z_n$, the set $\{ k+s \bmod n \mid s \in S \}$ is a \emph{translate} of $S$.
A \emph{$(v,k,\lambda)$-difference set} is a $k$-subset $H$ of an Abelian group $G$ (written additively) of order $v$ such that for every $i\in G\setminus\{ 0 \}$, we can write $i=w-v$ for $\lambda$ distinct choices of $w,v\in H$.

\begin{proposition}
{\cite{raaphorst2014construction}}
{ConnectionOfSeqMatrixAndDifferenceSets}
    Let $q$ be a prime power, $t$ be an integer with $t\geq 2$ and $\a$ be a primitive element of $\fqt$.
    We define
    \[ H_i=\left\{ j\mid j \in [0, \w{t}-1] \text{ such that } \A{\a}_{i,j}=0 \right\}, \quad i \in [0,q^t-1].  \]
    Then, for every $i \in [0,q^t-1]$, we have that $H_i$ is a $( \w{t},\w{t-1}, \w{t-2} )$-difference set.
    Furthermore, these are all the translates of $H_0$ as a subset of $\Z_{\w{t}}$.
\end{proposition}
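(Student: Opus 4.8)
The plan is to convert the vanishing of the trace into incidence with a fixed hyperplane of $PG(t-1,q)$ and then read off the difference-set parameters from how hyperplanes intersect. Write $\mathcal{H}=\ker\Tt$; by \Cref{theorem:PropertiesOfTrace} the trace is a surjective $\fq$-linear map $\fqt\to\fq$, so $\mathcal{H}$ is a $(t-1)$-dimensional $\fq$-subspace of $\fqt$, i.e.\ a hyperplane of $PG(t-1,q)$. The first step is to observe that whether $\Tt(\a^{k})=0$ depends only on $k\bmod\w{t}$: by \Cref{lemma:CharacterizationOfConstantMultiplesInFQM} the element $\a^{\w{t}}$ is a primitive element of $\fq$, hence lies in $\fqstar$, and writing $k=(k\bmod\w{t})+m\w{t}$ gives $\a^{k}=\a^{\,k\bmod\w{t}}\,(\a^{\w{t}})^{m}$, a nonzero scalar multiple of $\a^{\,k\bmod\w{t}}$; since $\mathcal{H}$ is a subspace, $\a^{k}\in\mathcal{H}\iff\a^{\,k\bmod\w{t}}\in\mathcal{H}$. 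Setting $H_{0}=\{r\in[0,\w{t}-1]\mid\a^{r}\in\mathcal{H}\}$, the entry $\A{\a}_{i,j}=\Tt(\a^{i+j})$ therefore vanishes exactly when $(i+j)\bmod\w{t}\in H_{0}$, so
\[ H_{i}=(H_{0}-i)\bmod\w{t}, \quad i\in[0,q^{t}-2]. \]
As $-i$ runs over all of $\Z_{\w{t}}$, this already delivers the ``furthermore'' statement: every $H_{i}$ is a translate of $H_{0}$ and these exhaust the translates. It then suffices to prove that $H_{0}$ is a $(\w{t},\w{t-1},\w{t-2})$-difference set, since a translate of a difference set is a difference set with the same parameters.

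By \Cref{proposition:ConstructionOfPGDQFromFF} the points $[\a^{r}]$, $r\in[0,\w{t}-1]$, are exactly the $\w{t}$ points of $PG(t-1,q)$, each occurring once, so $H_{0}$ is in bijection with the set of points lying on $\mathcal{H}$; as $\mathcal{H}$ has dimension $t-1$ it contains $\w{t-1}$ projective points, giving $|H_{0}|=\w{t-1}$, the required block size. For the difference count I would fix a nonzero $d\in\Z_{\w{t}}$ and count ordered pairs $(a,b)\in H_{0}\times H_{0}$ with $a-b\equiv d$, which equals $|H_{0}\cap(H_{0}-d)|$. Running the same scalar-invariance argument, $b\in H_{0}-d$ iff $\a^{b}\in\a^{-d}\mathcal{H}$, so $H_{0}-d$ corresponds to the hyperplane $\mathcal{H}_{d}:=\a^{-d}\mathcal{H}$ (the image of $\mathcal{H}$ under multiplication by $\a^{-d}$, an $\fq$-linear automorphism of $\fqt$). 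Hence $H_{0}\cap(H_{0}-d)$ is in bijection with the points lying on both $\mathcal{H}$ and $\mathcal{H}_{d}$. Provided $\mathcal{H}_{d}\neq\mathcal{H}$, these are distinct hyperplanes, so $\mathcal{H}\cap\mathcal{H}_{d}$ has dimension $(t-1)+(t-1)-t=t-2$ and thus contains exactly $\w{t-2}$ points, yielding the constant multiplicity $\lambda=\w{t-2}$.

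The main obstacle is precisely justifying $\mathcal{H}_{d}\neq\mathcal{H}$ for every nonzero $d$; if some nontrivial translate of $H_{0}$ coincided with $H_{0}$, the corresponding difference would occur $\w{t-1}$ times rather than $\w{t-2}$ and the count would fail to be constant. I would settle this by a divisibility argument. Suppose $H_{0}-d=H_{0}$ for some $0<d<\w{t}$ and set $g=\gcd(d,\w{t})$; then $H_{0}$ is invariant under translation by $g$, hence is a union of cosets of the subgroup of $\Z_{\w{t}}$ of order $\w{t}/g$, so $\w{t}/g$ divides $|H_{0}|=\w{t-1}$ as well as $\w{t}$. But $\w{t}=q\,\w{t-1}+1$, so $\gcd(\w{t},\w{t-1})=1$, forcing $\w{t}/g=1$, i.e.\ $g=\w{t}$, contradicting $0<d<\w{t}$. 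Thus all translates of $H_{0}$ are distinct, $\mathcal{H}_{d}\neq\mathcal{H}$ for every nonzero $d$, and $H_{0}$ — and therefore each $H_{i}$ — is a $(\w{t},\w{t-1},\w{t-2})$-difference set.

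As a cross-check, once $H_{i}=(H_{0}-i)\bmod\w{t}$ is established one may instead invoke \Cref{theorem:CyclicArraysIsABIBD}: the sets $H_{i}$ are the blocks of a symmetric $(\w{t},\w{t-1},\w{t-2})$-BIBD (the parameters satisfy $r=k$), and the constant pairwise intersection number $\lambda$ of blocks of a symmetric design then supplies the difference multiplicity directly, with the distinctness of the translates again coming from $\gcd(\w{t},\w{t-1})=1$. I expect the self-contained hyperplane-intersection route above to be the cleaner one to write in full, since it avoids importing the symmetric-design intersection theorem and keeps the whole argument inside the finite-field/geometry framework already set up.
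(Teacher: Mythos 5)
Your proof is correct. Note, however, that the thesis never proves this proposition: it is stated with a citation to Raaphorst et al.\ \cite{raaphorst2014construction}, so there is no in-paper proof to compare against. Your argument is the classical Singer difference-set construction, and it fits squarely inside the framework the thesis has already set up: the zero entries of row $i$ correspond, via \Cref{proposition:ConstructionOfPGDQFromFF} and \Cref{lemma:CharacterizationOfConstantMultiplesInFQM}, to the points of the hyperplane $\a^{-i}\ker\Tt$ of $PG(t-1,q)$; the translate identity $H_i=(H_0-i)\bmod \w{t}$ gives the ``furthermore'' claim; and the constant pairwise intersection of two distinct hyperplanes in $\w{t-2}$ points gives $\lambda$. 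Each step checks out, including the one genuinely delicate point, ruling out $\a^{-d}\ker\Tt=\ker\Tt$ for $0<d<\w{t}$: your divisibility argument is sound, since invariance of $H_0$ under translation by $d$ forces $\w{t}/\gcd(d,\w{t})$ to divide both $|H_0|=\w{t-1}$ and $\w{t}$, while $\w{t}=q\,\w{t-1}+1$ gives $\gcd(\w{t},\w{t-1})=1$. Two cosmetic remarks: the proposition's row range $i\in[0,q^t-1]$ is off by one (the array $\A{\a}$ has rows indexed by $[0,q^t-2]$); you silently use the correct range, which is harmless since $H_i$ depends only on $i\bmod\w{t}$. Also, your closing ``cross-check'' via \Cref{theorem:CyclicArraysIsABIBD} would need the extra standard fact that a symmetric design admitting a regular cyclic automorphism group yields a difference set, so your self-contained hyperplane-intersection route is indeed the better one to write in full.
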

For the case $t=3$, the sets $H_i$ of \Cref{proposition:ConnectionOfSeqMatrixAndDifferenceSets} are $(q^2+q+1,q+1,1)$-difference sets.
Raaphorst et al.\ use properties of this type of a difference set in order to show that if a triple of columns with indexes $a,b,c \subset [0,q^2+q]$ are not covered in $\ZA{\a}$, then the columns of $\ZA{\a^{-1}}$ with indexes $a,b,c$ are covered. 
This leads to the following result.

\begin{theorem}
{\cite[Theorem 6]{raaphorst2014construction}}
{RaaphorstCAConstruction}
    Let $q$ be a prime power and $\a$ be a primitive element of $\f_{q^3}$.
    Then, the vertical concatenation of $\A{\a}$ with $\A{\a^{-1}}$, as well as a row of zeros, is a $\CA(2q^3-1; 3, q^2+q+1,q)$.
\end{theorem}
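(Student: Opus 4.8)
The plan is to verify the covering-array property one triple of columns at a time, splitting into two cases according to whether the three relevant projective points are collinear. First I would record the bookkeeping: with $t=3$ the array $\A{\a}$ has $q^3-1$ rows and $\w{3}=q^2+q+1$ columns, so the concatenation of $\A{\a}$, $\A{\a^{-1}}$ and one zero row has $2(q^3-1)+1=2q^3-1$ rows and $q^2+q+1$ columns over an alphabet of size $q$. Since $\a$ primitive forces $\a^{-1}=\a^{q^3-2}$ to be primitive as well ($\gcd(q^3-2,q^3-1)=1$), both $\ZA{\a}$ and $\ZA{\a^{-1}}$ are defined, and each is obtained from the corresponding $\A{\cdot}$ by appending the zero row. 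Hence both $\ZA{\a}$ and $\ZA{\a^{-1}}$ occur as row-subarrays of the concatenation (they share its single zero row), and it suffices to prove that for every $3$-set of columns $\{a,b,c\}$, at least one of $\ZA{\a}$, $\ZA{\a^{-1}}$ already covers all of $\fq^3$ on those columns.

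Next I would dispose of the independent case. By \Cref{corollary:RaoHammingOAs}, $\ZA{\a}$ is an $\OA_{q}(2,q^2+q+1,q)$. For three columns, the equivalence of \Cref{item:b,item:a,item:e} in \Cref{proposition:SebastianExtended} (with $s=t=3$) shows that columns $a,b,c$ of $\ZA{\a}$ form a strength-$3$ orthogonal array of index $q^{t-3}=1$ exactly when $\a^a,\a^b,\a^c$ are linearly independent over $\fq$, equivalently when $[\a^a],[\a^b],[\a^c]$ are not collinear in $PG(2,q)$; in that case every triple of $\fq^3$ occurs, so the columns are covered. When the three points are collinear the rows span only a $2$-dimensional code, so strictly fewer than $q^3$ triples appear and $\ZA{\a}$ fails to cover; there the goal becomes to show the columns are covered by $\ZA{\a^{-1}}$, i.e. that $[\a^{-a}],[\a^{-b}],[\a^{-c}]$ are \emph{not} collinear.

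The crux is therefore the following lemma, which I would prove from the difference-set structure. Let $D=\{\,m\in[0,\w{3}-1]\mid \Tt(\a^m)=0\,\}$; by \Cref{proposition:ConnectionOfSeqMatrixAndDifferenceSets} this is a $(q^2+q+1,q+1,1)$-difference set in $\Z_{\w{3}}$, and the zero-patterns of the rows are exactly the translates of $D$. Collinearity of $[\a^a],[\a^b],[\a^c]$ is equivalent to $\{a,b,c\}$ lying in a single translate of $D$, while collinearity of the three inverse points is equivalent to $\{a,b,c\}$ lying in a translate of $-D$ (replacing $\a$ by $\a^{-1}$ replaces $D$ by $-D$). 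Suppose for contradiction that $\{a,b,c\}\subseteq D+i$ and $\{a,b,c\}\subseteq -D+j$ for some $i,j$. Then $a-i,b-i,c-i\in D$ and $j-a,j-b,j-c\in D$. The nonzero element $a-b$ now has the two difference representations $(a-i)-(b-i)$ and $(j-b)-(j-a)$ with all four terms in $D$; since $\lambda=1$, uniqueness of ordered differences forces $a-i=j-b$, i.e. $a+b=i+j$. Applying the same argument to $a-c$ gives $a+c=i+j$, whence $b=c$, contradicting that $a,b,c$ are distinct (distinctness of the indices, hence of the points, follows from \Cref{lemma:CharacterizationOfConstantMultiplesInFQM}). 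This proves the lemma, and assembling the two cases shows every triple of columns is covered.

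The main obstacle is precisely this lemma: that collinearity under $\a$ rules out collinearity under $\a^{-1}$. Everything else reduces to the Rao--Hamming property and the linear-independence characterization of strength in \Cref{proposition:SebastianExtended}. The subtlety is that the point map $[\a^m]\mapsto[\a^{-m}]$ is \emph{not} a collineation, so collinearity is generically destroyed, but the exceptional coincidences must be excluded; the planar difference-set counting with $\lambda=1$ is exactly the tool that forbids a $3$-set from simultaneously lying in a translate of $D$ and a translate of $-D$. I would take particular care to establish the distinctness of the three projective points at the outset, since the final contradiction $b=c$ depends on it.
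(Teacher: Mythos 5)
Your proof is correct and follows exactly the strategy the paper attributes to Raaphorst et al.: reduce to the linear-independence/collinearity dichotomy via \Cref{proposition:SebastianExtended}, and then use the $(q^2+q+1,q+1,1)$-difference-set structure of the zero positions (\Cref{proposition:ConnectionOfSeqMatrixAndDifferenceSets}) to show that a triple collinear under $\a$ cannot also be collinear under $\a^{-1}$. The paper itself only sketches this argument and defers to the citation, so your write-up is a faithful and correctly detailed filling-in of that same approach, with the $\lambda=1$ uniqueness-of-differences contradiction serving as the key lemma.
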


Focusing on the fact that $\A{\a^{-1}}$ consists of the columns of $\A{\a}$ in reverse order, the authors of \cite{raaphorst2014construction} considered a generalization where $\A{\a}$, with $\a$ is a primitive element of $\fqt$, $t\geq 4$, is vertically concatenated with copies of $\A{\a}$ with the columns permuted in various ways.
More precisely, search algorithms were ran to find a permutation group of smallest order $s(t,q)$, such that vertically concatenating the $s(t,q)$ permuted copies of $\A{\a}$ and adding a zero row yields a $\CA(s(t,q)(q^t-1)+1; t, \w{t},q)$.
Since $s(2,q)=1$ from \Cref{corollary:RaoHammingOAs}, and $s(3,q)=2$ from \Cref{corollary:OAsOfStrength3FromMSequences} the hope was that $s$ would be a function depending only on $t$ and not $q$, however the experiments seemed to indicate otherwise.
Moreover, in all cases that were examined for $t\geq 4$, due to the large size of $s(t,q)$, the resultant arrays did not improve the known best upper bounds for covering array numbers.
We refer to \cite[Section 4.4]{raaphorst2013variable} for details on the experiments.

\chapter{New covering arrays from maximal sequences and backtracking}
\label{chapter:CAsFirstPaper}

\textsc{In \Cref{section:ConstructionOfCoveringArraysFromCyclicTraceArrays} we discussed} a covering array construction due to Raaphorst et al.\ \cite{raaphorst2014construction} where, for a primitive element $\a\in \f_{q^3}$, the arrays $\A{\a}, \A{\a^{-1}}$ and a row of zeros are vertically concatenated to produce a covering array of strength $3$.
At the end of \Cref{chapter:Preliminaries}, we also discussed how the authors attempted to generalize this construction by considering a primitive element $\a \in \fqt$, $t\geq 4$, and the vertical concatenation of $\A{\a}$ with copies of itself with the columns permuted in various ways.
In this chapter, we consider a generalization where, instead of vertically concatenating copies of a cyclic trace array with the columns permuted, we use cyclic trace arrays corresponding to different primitive elements.
Then, we search for subarrays of that vertical concatenation with the covering array property.

The structure of this chapter is as follows.
In \Cref{section:CAsFirstPaper_Introduction} we give some preliminary definitions and we express our objectives as solving two optimization problems that we state therein.
\Cref{section:ABacktrackAlgoForProblem1} is dedicated to the first problem, which is concerned with finding covering arrays among subarrays of the vertical concatenation of cyclic trace arrays.
We give an algorithmic solution to this problem based on backtracking with several optimizations.
\Cref{section:CAsFirstPaper_ChoiceOfPrimitiveElements} is dedicated to the second problem which is about finding the optimum way of choosing the cyclic trace arrays to concatenate in the first problem.
We also give an algorithmic solution that uses finite field theory.
In \Cref{section:CAsFirstPaper_ImplementationAndNewBounds} we discuss our computer implementation of the algorithms of the previous sections and our experimental results,  that include the improvement of 38 previously best known covering array numbers.

The results of this chapter appear in \cite{tzanakis2016constructing}.

\section{Problem statement}
\label{section:CAsFirstPaper_Introduction}

We begin by generalizing the concept of a cyclic trace array.
\index{Cyclic trace array!generalization}
\index{Cyclic trace array!concatenation}
\begin{definition}
{}
{cyclicAlphaSArrayExtended}
    Let $t,k$ be positive integers, $P=\left\{ \a_0, \dots, \a_{l-1} \right\}$ be a set of primitive elements of $\fqt$, and $C=\left\{ c_0, \dots, c_{k-1} \right\}$ be a nonempty subset of $[0,q^t-2]$.
    We define the following arrays.
    \begin{itemize}
        \item
            $\AA_{q^t/q}(P,C)$ is the $l(q^t-1)\times k$ array that is the vertical concatenation of $\AA_{q^t/q}(\a_i,C)$, $i\in [0,l-1]$.
        \item
            $\AA_{q^t/q, \bm{0}}(P,C)$ is the $\left(l(q^t-1)+1\right)\times k$ array that we obtain by appending a row of zeros to $\AA_{q^t/q}(P,C)$.  
        \item
            We simply write $\AA_{q^t/q}(P)$ to denote $\AA_{q^t/q}(P,[0,\w{t}-1])$ and $\AA_{q^t/q, \bm{0}}(P)$ to denote $\AA_{q^t/q, \bm{0}}(P,[0,\w{t}-1])$.
    \end{itemize}
    When it is clear from the context that the fields are $\fqt$ and $\fq$, we simply write $\mathcal{A}$ instead of $\mathcal{A}_{q^t/q}$ in the above definitions.
\end{definition}

\Cref{definition:cyclicAlphaSArrayExtended} generalizes \Cref{definition:cyclicAlphaSArray} in the sense that, for a singleton $P=\left\{ \a \right\}$, the arrays $\A{P,C}$ (resp. $\ZA{P,C}$) and $\A{\a,C}$ (resp. $\ZA{\a,C}$) are identical.

\begin{table}
\renewcommand{\arraystretch}{0.9}
\small
\centering
\rowcolors{27}{\backgroundshade}{\backgroundshade}
\begin{tabular}{cccccccccccccc}
0&0&1&0&1&2&1&1&2&0&1&1&1 &\\
0&1&0&1&2&1&1&2&0&1&1&1&0 &\\
1&0&1&2&1&1&2&0&1&1&1&0&0 &\\
0&1&2&1&1&2&0&1&1&1&0&0&2 &\\
1&2&1&1&2&0&1&1&1&0&0&2&0 &\\
2&1&1&2&0&1&1&1&0&0&2&0&2 &\\
1&1&2&0&1&1&1&0&0&2&0&2&1 &\\
1&2&0&1&1&1&0&0&2&0&2&1&2 &\\
2&0&1&1&1&0&0&2&0&2&1&2&2 &\\
0&1&1&1&0&0&2&0&2&1&2&2&1 &\\
1&1&1&0&0&2&0&2&1&2&2&1&0 &\\
1&1&0&0&2&0&2&1&2&2&1&0&2 &\\
1&0&0&2&0&2&1&2&2&1&0&2&2 &\\
0&0&2&0&2&1&2&2&1&0&2&2&2 & $\A{\a}$\\
0&2&0&2&1&2&2&1&0&2&2&2&0 &\\
2&0&2&1&2&2&1&0&2&2&2&0&0 &\\
0&2&1&2&2&1&0&2&2&2&0&0&1 &\\
2&1&2&2&1&0&2&2&2&0&0&1&0 &\\
1&2&2&1&0&2&2&2&0&0&1&0&1 &\\
2&2&1&0&2&2&2&0&0&1&0&1&2 &\\
2&1&0&2&2&2&0&0&1&0&1&2&1 &\\
1&0&2&2&2&0&0&1&0&1&2&1&1 &\\
0&2&2&2&0&0&1&0&1&2&1&1&2 &\\
2&2&2&0&0&1&0&1&2&1&1&2&0 &\\
2&2&0&0&1&0&1&2&1&1&2&0&1 &\\
2&0&0&1&0&1&2&1&1&2&0&1&1 &\\
0&2&1&2&2&2&1&0&0&2&2&0&2 &\cellcolor{white}\\
2&1&2&2&2&1&0&0&2&2&0&2&0 &\cellcolor{white}\\
1&2&2&2&1&0&0&2&2&0&2&0&1 &\cellcolor{white}\\
2&2&2&1&0&0&2&2&0&2&0&1&2 &\cellcolor{white}\\
2&2&1&0&0&2&2&0&2&0&1&2&1 &\cellcolor{white}\\
2&1&0&0&2&2&0&2&0&1&2&1&1 &\cellcolor{white}\\
1&0&0&2&2&0&2&0&1&2&1&1&1 &\cellcolor{white}\\
0&0&2&2&0&2&0&1&2&1&1&1&2 &\cellcolor{white}\\
0&2&2&0&2&0&1&2&1&1&1&2&0 &\cellcolor{white}\\
2&2&0&2&0&1&2&1&1&1&2&0&0 &\cellcolor{white}\\
2&0&2&0&1&2&1&1&1&2&0&0&1 &\cellcolor{white}\\
0&2&0&1&2&1&1&1&2&0&0&1&1 &\cellcolor{white}\\
2&0&1&2&1&1&1&2&0&0&1&1&0 &\cellcolor{white}\\
0&1&2&1&1&1&2&0&0&1&1&0&0 &\cellcolor{white} $\A{\a^5}$\\
1&2&1&1&1&2&0&0&1&1&0&0&2 &\cellcolor{white}\\
2&1&1&1&2&0&0&1&1&0&0&2&1 &\cellcolor{white}\\
1&1&1&2&0&0&1&1&0&0&2&1&2 &\cellcolor{white}\\
1&1&2&0&0&1&1&0&0&2&1&2&2 &\cellcolor{white}\\
1&2&0&0&1&1&0&0&2&1&2&2&2 &\cellcolor{white}\\
2&0&0&1&1&0&0&2&1&2&2&2&1 &\cellcolor{white}\\
0&0&1&1&0&0&2&1&2&2&2&1&0 &\cellcolor{white}\\
0&1&1&0&0&2&1&2&2&2&1&0&0 &\cellcolor{white}\\
1&1&0&0&2&1&2&2&2&1&0&0&2 &\cellcolor{white}\\
1&0&0&2&1&2&2&2&1&0&0&2&2 &\cellcolor{white}\\
0&0&2&1&2&2&2&1&0&0&2&2&0 &\cellcolor{white}\\
\rowcolor{white}
0&0&0&0&0&0&0&0&0&0&0&0&0 &$\mathbf{0}$
\end{tabular}
\caption[Example of cyclic trace array corresponding to two elements]{The array $\ZA{\{\a,\a^5\}}$ described in \Cref{example:GeneralizedCyclicTraceArray}.}
\label{table:GeneralizedCyclicTraceArray}
\end{table}


\begin{remark}
{}
{ElementsOfCyclicTraceArray}
    Let $(i,j)\in [0,l(q^t-1)-1] \times [0,k-1]$.
    Then, there exists $m \in [0,l-1]$ such that $m(q^t-1) \leq i < (m+1)(q^t-1)$, and it follows from \Cref{definition:cyclicAlphaSArray,definition:cyclicAlphaSArrayExtended} and the fact that $\a_m^{q^m-1}=1$, that the $(i,j)$-th element of $\A{P,C}$ is given by
    $\A{P,C}_{ij} = \A{\a_m,C}_{ij} = \Tt(\a_m^{i+c_j}).$
\end{remark}

\begin{example}
{}
{GeneralizedCyclicTraceArray}
    For a root $\a$ of the primitive polynomial $x^3+2x+1$ over $\fthree$, we compute
    \[\seq{\a}=00101211201110020212210222.\]
    We have already presented $\ZA{\a}$ in \Cref{table:fullcyclicarray}.
    Now, we observe that $\a^5$ is also a primitive element of $\f_{3^3}$, since the order of $\f_{3^3}^{\times}$ is $3^3-1=26$ and $\gcd(5,26)=1$.
    We have
    \[\seq{\a^5}= 021222100220201211120011010,\]
    and $\ZA{ \left\{ \a,\a^5 \right\}}$ is as shown in \Cref{table:GeneralizedCyclicTraceArray}.
\end{example}

\begin{table}
\rowcolors{2}{\backgroundshade}{white}
\renewcommand{\arraystretch}{1.2}

\[
\begin{array}{llll}
\rowcolor{\tableheadcolor}
\multicolumn{1}{c}{P}&
\multicolumn{1}{c}{C}&
\multicolumn{1}{c}{\ZA{P,C}}&
\multicolumn{1}{c}{\text{Reference}}
\\
\{ \a \}, \a \in \fqt, t\geq 2  
&   [0,\w{t}-1] 
&   \OA_{q^{t-2}}(2, \w{t},q)
&   \text{ \Cref{corollary:RaoHammingOAs}}
\\
\{ \a\}, \a \in \fqt, 4|t
&   \{ \a^{i+j \w{t/2}} \mid j \in [0,q^{t/2}] \}
&   \OA_{q^{t-3}}(3, q^{t/2}+1,q)
&   \text{ \Cref{corollary:OAsOfStrength3FromMSequences}}
\\
\shortstack[l]{$\{ \a\}, \a \in \fqt$ as\\ in \Cref{corollary:DewarOAConstruction}} 
&   [0,2t]
&   \OA_{2^{t-3}}(3, 2t+1,2)
&   \text{ \Cref{corollary:DewarOAConstruction}}
\\
\shortstack[l]{$\{ \a\}, \a \in \fqt$ as\\ in
\Cref{corollary:OlgaOAConstruction}} 
&   [0,3t-1]
&   \OA_{3^{t-3}}(3, 3t,3)
&   \text{ \Cref{corollary:OlgaOAConstruction}}
\\
\{ \a,\a^{-1} \}, \a \in \f_{q^3},
&   [0,q^2+q] 
&   \CA(2q^3-1; 3, q^2+q+1,q)
&   \text{ \Cref{theorem:RaaphorstCAConstruction}}
\end{array}
\]
\caption[The orthogonal and covering arrays described in 
\Cref{section:ConstructionOfOrthogonalArraysFromCyclicTraceArrays,section:ConstructionOfCoveringArraysFromCyclicTraceArrays}.]{The orthogonal and covering arrays described in 
\Cref{section:ConstructionOfOrthogonalArraysFromCyclicTraceArrays,section:ConstructionOfCoveringArraysFromCyclicTraceArrays}.
In every case, $\a$ is a primitive element.
}
\label{table:PreviousOAsAndCAsUsingGeneralizedDefOfA}
\end{table}

The orthogonal and covering arrays described in \Cref{section:ConstructionOfOrthogonalArraysFromCyclicTraceArrays,section:ConstructionOfCoveringArraysFromCyclicTraceArrays} can all be expressed in terms of the cyclic trace arrays in \Cref{definition:cyclicAlphaSArrayExtended}, as shown in \Cref{table:PreviousOAsAndCAsUsingGeneralizedDefOfA}.
In this chapter we extend this table by finding sets $P$ and $C$ such that $\ZA{P,C}$ is a $\CA(|P|(q^t-1)+1; t, |C|,q)$.
Searching for such sets is naturally related to the following two problems.

\begin{problem}
    {}
    {FindMaxSubset}
        Let $q$ be a prime power, $t\geq 2$, $l\geq 1$ and $\a_0, \dots, \a_{l-1}$ be primitive elements of $\fqt$. 
        Find $C\subseteq [0,\w{t}-1]$ of maximum size with the property that, for every $I\subseteq C$ with $|I|=t$, there exists $i \in [0,l-1]$ such that $\ZA{\a_i,I}$ is an $\OA(t, t,q)$.
\end{problem}
\begin{remark}
    {}
    {SolutionToProblemOneIsCA}
    For a solution $C$ of \Cref{problem:FindMaxSubset}, $\ZA{ \{\a_0, \dots, \a_{l-1}\}, C}$ is a $\CA(l(q^t-1)+1; t, |C|,q)$.
\end{remark}

\begin{problem}
    {}
    {ChoiceOfPrimElements}
        Let $q$ be a prime power, $t\geq 2$ and $l\geq 1$.
        Find an $l$-set of primitive elements $P=\{\a_0, \dots, \a_{l-1}\}$ of $\fqt$ such that there exists $C\subseteq [0, \w{t}-1]$ that has the following properties:
        \begin{enumerate}[label=\roman*.]
            \item $C$ is a solution to \Cref{problem:FindMaxSubset} for $P$, and
            \item for every $l$-set $P'$ of primitive elements of $\fqt$ and for every solution $C'$ to \Cref{problem:FindMaxSubset} for $P'$, we have that $|C'|\leq |C|$.
        \end{enumerate}
\end{problem}

For some values of $l$ and $t$, these problems have already been addressed.
For $l=1$, \Cref{problem:FindMaxSubset} is about finding a set of columns of $\ZA{\{\a\}}=\ZA{\a}$ of maximum size such that the subarray they define is a covering array (in this case, an orthogonal array) of strength $t$.
This is equivalent to the problem of finding arcs of maximum size in $PG(t-1,q)$, as we discuss in \Cref{section:OAsFromArcs}.
Moreover, this is independent of the choice of $\a$ and thus \Cref{problem:ChoiceOfPrimElements} is trivial when $l=1$.
For $t=2$, both problems are trivial since $\ZA{\a}=\ZA{\a,C}$ for $C=[0,\w{t}-1]$ is a a covering array of strength 2.
For $t=3$ and $l\geq 2$, the problems are settled from \Cref{proposition:SebastianExtended}. 
Indeed, we have that $\ZA{\{\a,\a^{-1}\}}$ is a covering array (in this case, an orthogonal array) of strength $3$ for any primitive $\a\in\fqthree$, hence the answer to \Cref{problem:FindMaxSubset} is 
\[
    C=[0, \w{3}-1]=[0,q^2+q],
\]
which is of the maximum possible size.
This also implies that, for any primitive $\a\in \fqthree$, the pair $\{\a,\a^{-1}\}$ is an answer to \Cref{problem:ChoiceOfPrimElements}.

From the above we conclude that we can focus on the cases $t\geq 4$ and $l\geq 2$.
In this chapter we give algorithmic answers for these cases that rely on finite field theory and combinatorial exhaustive generation.
We dedicate 
\Cref{section:ABacktrackAlgoForProblem1,section:CAsFirstPaper_ChoiceOfPrimitiveElements}
to 
\Cref{problem:FindMaxSubset,problem:ChoiceOfPrimElements},
respectively.
In
\Cref{section:CAsFirstPaper_ImplementationAndNewBounds}
we discuss the computer implementation of our algorithms, which resulted to $38$ new covering arrays that improve upon previously best upper bounds for covering array numbers of strength $4$, as well as one covering array that improves upon a previously best upper bound \cite{colbournwebsite} of covering arrays of strength $5$.

\begin{algorithm}[t]
    \caption{Generic backtracking algorithm with optimal solution saved globally}
    \label{algorithm:GenericBacktracking}
    \begin{algorithmic}
        \State \textbf{global} $X_{best}$
        \Procedure{Backtracking}{$x_0, \dots, x_{r-1}$}
        \If{$(x_0, \dots, x_{r-1})$ is a feasible solution and it is better than $X_{best}$}
            \State $X_{best} \gets (x_0, \dots, x_{r-1})$
            \label{line:BestIsStoredGlobally}
        \EndIf
        \State Compute $\C(x_0, \dots, x_{r-1})$
        \For{ $x\in \C(x_0, \dots, x_{r-1})$}
            \State \textsc{Backtracking}$(x_0, \dots, x_{r-1},x)$
        \EndFor
        \EndProcedure
        \State \textbf{Main;}
        \State $X_{best}\gets(x_0)$
        \State \textsc{Backtracking}($x_0$)
    \end{algorithmic}
\end{algorithm}

\section{A backtracking algorithm for \Cref{problem:FindMaxSubset}}
\label{section:ABacktrackAlgoForProblem1}
\subsection{Preliminaries}
\label{section:BacktrackingAlgorithms}
In this section, we present the terminology and minimum background related to backtracking algorithms, which is necessary for \Cref{chapter:CAsFirstPaper}.

An \emph{optimization problem} is one that involves finding an optimal solution among the set of all feasible solutions, which we refer to as the \emph{search space}.
Often, the optimal solution
can be represented as a list $S=(s_0, s_1, \dots, s_{n-1})$ in which each $s_i$ is chosen from a finite \emph{possibility set $\mathcal{P}_i$}.
A \emph{backtracking algorithm}
\index{Backtracking algorithm}
is a recursive method for solving such problems, that incrementally builds feasible candidate solutions and abandons (``backtracks'') the ones that do not lead to an optimal solution, as soon as it determines that this is the case.
At every recursion, a backtracking algorithm extends a feasible solution $(s_0, \dots, s_{r-1})$ to a feasible solution $(s_0, \dots, s_{r-1}, x)$, where $x$ is restricted to a subset $\C(s_0, \dots, s_{r-1})\subseteq \mathcal{P}_{r-1}$, according to the problem's constraints.
The set $\C(s_0, \dots, s_{r-1})$ is a \emph{set of candidates}
\index{Set!of candidates}
(or \emph{choice set})
\index{Set!choice}
and its computation is referred to as \emph{pruning}.
\index{pruning}
For all $y \in \mathcal{P}_{r}\setminus \C(s_0, \dots, s_{r-1})$, the cases $(s_0, \dots, s_{r-1},y)$ are not considered.
\index{Solution!feasible}
\index{Feasible solution|see{Solution}}
\index{Optimization problem}
\index{Search space}
\index{Optimal solution|see{Solution}}

In \Cref{algorithm:GenericBacktracking} we show a generic backtracking algorithm where an optimal solution candidate is saved in a global variable $X_{best}$.
The efficiency of this algorithm greatly depends on reducing the size of $\C(s_0,\dots, s_{r-1})$, as well as finding an efficient way of calculating it.

\Cref{algorithm:GenericBacktracking} gives rise to an ordered tree as follows.
\index{Tree}
\index{Backtracking algorithm!tree}
The nodes of the tree are all the feasible solutions, where $(s_0)$ is the root of the tree and the children of node $(s_0, \dots, s_{r-1})$ are precisely the nodes $(s_0, \dots, s_{r-1},x)$, for $x \in \C(s_0, \dots, c_{r-1})$, in the order they are processed by the algorithm. 
A visualization of this is shown in \Cref{figure:GenericBacktrackingTree}.
We often use this tree terminology for convenience.

\Cref{problem:FindMaxSubset} can be formulated as an optimization problem.
A straightforward way to do this is as follows.
Let $P$ be a fixed set of primitive elements of $\fqt$.
A feasible solution to \Cref{problem:FindMaxSubset} is any subset $S$ of $[0,\w{t}-1]$ such that $\ZA{P,S}$ is a covering array of strength $t$.
The corresponding candidate set can be defined as
\begin{equation}
    \label{equation:ChoiceSet}
    \C(S)=
    \left\{ x \mid x> \max(S) \text{ and } \ZA{P,S\cup \{x\}} \text{ is a covering array of strength $t$} \right\}.
\end{equation}
A feasible solution of the maximum size is an answer to \Cref{problem:FindMaxSubset}.
In the following sections we greatly reduce the size of the candidate set in \Cref{equation:ChoiceSet} and determine an efficient way of computing it. 

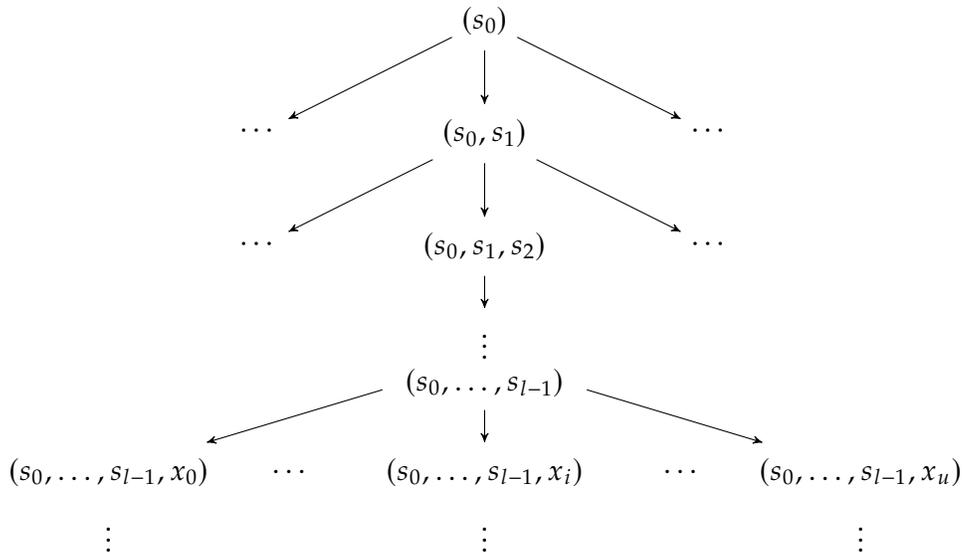
\begin{figure}[t]
\centering
\begin{tikzpicture}[
        align=center,
        ->,
        >=stealth',
        minimum size=2em,
        level/.style={sibling distance = 3cm, level distance = 1.5cm},
        level 4/.style={sibling distance=5cm}
    ] 
  \node [] {$(s_0)$}
      child{node [] {$\cdots$}}
      child{node [] (pair) {$(s_0, s_1)$}
          child{node [] {$\cdots$}}
          child{node [] (triple) {$(s_0, s_1, s_2)$} 
              child{node [] {$\begin{array}{c}\vdots\\(s_0,\ldots,s_{l-1})\end{array}$}
                  child{node [] (pakis) {$(s_0, \dots, s_{l-1},x_0)$}}
                  child{node [] (toulis) {$(s_0, \dots, s_{l-1},x_i)$}}
                  child{node [] (koulis) {$(s_0, \dots, s_{l-1},x_u)$}}
              }
          }
          child{node [] {$\cdots$}}
      }
      child{ node [] {$\cdots$}}
; 
\node [left = 2em of toulis] {$\ldots$};
\node [right = 2em of toulis] {$\ldots$};
\node [below = 0em of toulis] {$\vdots$};
\node [below = 0em of pakis] {$\vdots$};
\node [below = 0em of koulis] {$\vdots$};
\end{tikzpicture}
\caption[Visual representation of the nodes of a backtracking tree]{A visual representation of the nodes of the tree corresponding to \Cref{algorithm:GenericBacktracking} that are at distance $l$ from the root, where $\C(s_0, \dots, s_{l-1})= \{x_0, \dots, x_u\}$.}
\label{figure:GenericBacktrackingTree}
\end{figure}

\subsection{Isomorphism pruning for \Cref{problem:FindMaxSubset}}
\label{section:IsomorphismPruning}

In this section we establish an equivalence relation on sets of positive integers and a criterion for choosing a canonical representative of every equivalence class.
For the special case when the sets of integers represent feasible solutions to the optimization problem that we discuss previously, we show that the elements of these equivalence classes yield equivalent feasible solutions.
As a result, we can reduce the candidate set in \Cref{equation:ChoiceSet} by only considering elements that yield canonical feasible solutions. 

\subsubsection{An equivalence relation on feasible solutions}

The equivalence relation that we want to establish relies on the following notion.
\index{Set!shift modulo $n$ of}
\index{Shift!of set modulo $n$|see{Set}}
\index{Set!shift-equivalent}
\begin{definition}{Shift of a set modulo $n$}{ShiftOfASet}
    Let $n\geq 1$ be an integer and $S\subseteq [0,n-1]$.  
    For any integer $i$, the \emph{shift of $S$ by $i$ modulo $n$} is the set
    \begin{equation*}
        S+_n i=\left\{ (s+i) \bmod{n} \mid s \in S \right\}.
    \end{equation*}
    Two sets $S,T\subseteq [0,n-1]$ are \emph{shift-equivalent modulo $n$} if $S=T+_n i$, for some $i \in [0,n-1]$.
\end{definition}
The relation shift-equivalence modulo $n$ is an equivalence relation on the power set of $[0,n-1].$
Next, we show that this gives rise to a notion of equivalence of feasible solutions to \Cref{problem:FindMaxSubset}.

\begin{lemma}{}{ShiftsOfcolumnsSameCoverage}
    Let $q$ be a prime power, $t$ be an integer with $t\geq 2$ and $\a$ be a primitive element of $\fqt$.
    Suppose that $I,J\subseteq [0, \w{t}-1]$ where $|I|=|J|=t$ and $I$ and $J$ are shift-equivalent modulo $\w{t}$.
    Then, we have that $\ZA{\a,I}$ is an $\OA(t, t,q)$ if and only if $\ZA{\a, J}$ is an $\OA(t,t,q)$.
\end{lemma}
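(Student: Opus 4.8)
The plan is to reduce the statement to the linear-independence characterization of the orthogonal-array property, and then to observe that shift-equivalence of index sets corresponds, at the level of field elements, to multiplication by a fixed nonzero element together with rescaling by scalars in $\fqstar$ --- both of which preserve linear independence over $\fq$.

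First I would invoke the equivalence of \Cref{item:C,item:A} of \Cref{theorem:EquivalenceOAandNTsetsExtended} with $s=t$: since $|I|=|J|=t$, the array $\ZA{\a,I}$ is an $\OA(t,t,q)$ if and only if $\{\a^i \mid i \in I\}$ is linearly independent over $\fq$, and likewise $\ZA{\a,J}$ is an $\OA(t,t,q)$ if and only if $\{\a^j \mid j \in J\}$ is linearly independent over $\fq$. Thus it suffices to prove that $\{\a^i \mid i \in I\}$ is linearly independent over $\fq$ if and only if $\{\a^j \mid j \in J\}$ is.

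Next, writing $J = I +_{\w{t}} i_0$ for the shift parameter $i_0$, I would set up the bijection $s \mapsto (s+i_0) \bmod \w{t}$ from $I$ onto $J$; this is a bijection because shifting by $i_0$ is a bijection of $\Z_{\w{t}}$, which is consistent with $|I|=|J|=t$. For each $s \in I$ let $j_s = (s+i_0)\bmod \w{t}$ be the corresponding element of $J$. Since $\w{t}$ divides $q^t-1$, we have $j_s \equiv s+i_0 \pmod{\w{t}}$, so \Cref{lemma:CharacterizationOfConstantMultiplesInFQM} provides a scalar $c_s \in \fqstar$ with $\a^{j_s} = c_s\,\a^{s+i_0} = c_s\,\a^{i_0}\a^{s}$. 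I would then conclude by preservation of linear independence: setting $\beta=\a^{i_0}$, a nonzero element, the map $x \mapsto \beta x$ is an $\fq$-linear automorphism of $\fqt$, and scaling each vector of a set by a nonzero scalar $c_s$ leaves its linear (in)dependence unchanged. Hence $\{\a^{j_s} \mid s \in I\} = \{c_s\beta\a^s \mid s \in I\}$ is linearly independent over $\fq$ if and only if $\{\a^s \mid s \in I\}$ is, which is exactly what is required.

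The main obstacle --- really the only nonroutine point --- is the index bookkeeping in the third step. Because $\a$ has order $q^t-1$ rather than $\w{t}$, a shift of the index set modulo $\w{t}$ does \emph{not} translate directly into a multiplication of the powers $\a^s$, and one must pass through \Cref{lemma:CharacterizationOfConstantMultiplesInFQM} to absorb the $\fqstar$-scalars $c_s$ produced by reducing $s+i_0$ modulo $\w{t}$. Once this correspondence is established, the invariance of linear independence under the rescalings $c_s$ and under the automorphism $x \mapsto \beta x$ is immediate, and the equivalence follows.
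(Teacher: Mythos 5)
Your proposal is correct and takes essentially the same route as the paper's proof: both reduce the orthogonal-array property to linear independence of the sets $\{\a^i \mid i \in I\}$ and $\{\a^j \mid j \in J\}$ (the paper via \Cref{proposition:SebastianExtended}, you via \Cref{theorem:EquivalenceOAandNTsetsExtended} with $s=t$, which is the same characterization), and both then use \Cref{lemma:CharacterizationOfConstantMultiplesInFQM} to write each $\a^{j_s}$ as an $\fqstar$-multiple of $\a^{i_0}\a^{s}$. The only difference is presentational: the paper manipulates an explicit linear dependence relation by contradiction, while you package the identical computation as invariance of linear independence under rescaling by elements of $\fqstar$ and under the $\fq$-linear automorphism $x \mapsto \a^{i_0}x$.
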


\begin{proof}
    From \Cref{item:b,item:a} of \Cref{proposition:SebastianExtended}, it suffices to show that $\{ \a^i \mid i \in I \}$ is linearly independent if and only if $\{\a^j \mid j \in J \}$ is linearly independent.
    Suppose by means of contradiction that the former is linearly independent but the latter is not.
    Let $I=\{ i_0, \dots, i_{t-1} \}$ and $J=\{ j_0, \dots, j_{t-1} \}$.
    From our assumptions, there exist $c_0, \dots, c_{t-1} \in \fq$ not all zero such that 
    \begin{equation}\label{equation:klanies}
        c_0\a^{j_0} + \dots+c_{t-1}\a^{j_{t-1}}=0.
    \end{equation}
    Now, since $I$ and $J$ are shift-equivalent modulo $\w{t}$, then there exists integer $k$ such that 
    \[
        j_s=(k+i_s) \bmod \w{t},
    \]
    for every $s\in [0,t-1]$. 
    In other words, for every $s \in [0,t-1]$ there exists integer $l_s$ such that
    \[
        j_s = k+i_s+l_s \w{t}.
    \]
    Hence, we have that $\a^{j_s}=\a^k\a^{l_s \w{t}}\a^{i_s}$ and \Cref{equation:klanies} implies that
    \begin{align*}
        0 & = c_0 \a^k \a^{l_0 \w{t}}\a^{i_0}+ \cdots +c_{t-1} \a^k \a^{l_{t-1} \w{t}}\a^{i_{t-1}}\\
        & = \a^k\left(c_0 \a^{l_0 \w{t}}\a^{i_0}+ \cdots +c_{t-1} \a^{l_{t-1} \w{t}}\a^{i_{t-1}}\right).
    \end{align*}
    Thus, denoting $c_s' = c_s \a^{l_s \w{t}}$, we have that
    \begin{equation}\label{equation:papares}
        c_0'\a^{i_0} + \dots +c_{t-1}'\a^{i_{t-1}}=0.
    \end{equation}
    From \Cref{lemma:CharacterizationOfConstantMultiplesInFQM}, we have that $\a^{l_s \w{t}} \in \fqstar$, for every $s$ which means that $c_0', \dots, c_{t-1}'$ are elements of $\fq$, not all zero.
    This, along with \Cref{equation:papares}, implies that $\{\a^i \mid i \in I\}$ is linearly dependent, contradicting our assumptions.
    The proof for the other direction is identical.
\end{proof}
For $S\subseteq [0,n-1]$, we denote by $\eqcl{n}{S}$ the equivalence class of the shift-equivalence modulo $n$ that contains $S$.
In other words,
\begin{equation}
    \label{equation:DefinitionES}
    \eqcl{n}{S}
    = \left\{ S +_n i \mid i \in [0,n-1] \right\}.
\end{equation}

\begin{proposition}
    {}
    {EquivalentClassesSimilarOAProperties}
    Let $q$ be a prime power, $t$ be an integer with $t\geq 2$ and let  $\a_0, \dots, \a_{l-1}$ be primitive elements of $\fqt$.
    Let $S\subseteq [0, \w{t}-1]$ with the property that, for every $I\subseteq S$ with $|I|=t$, there exists $i \in [0, l-1]$ such that $\ZA{\a_i, I}$ is a $\OA(t, t,q)$.
    Then, every $T \in \eqcl{\w{t}}{S}$ has the same property; that is, for every $I\subseteq T$ with $|I|=t$, there exists $i \in [0, l-1]$ such that $\ZA{\a_i, I}$ is a $\OA(t, t,q)$.
\end{proposition}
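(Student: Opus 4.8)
The plan is to reduce the statement about equivalence classes to the single-shift case already handled by Lemma~\ref{lemma:ShiftsOfcolumnsSameCoverage}. The key observation is that every $T\in\eqcl{\w{t}}{S}$ is, by the definition in \Cref{equation:DefinitionES}, of the form $T=S+_{\w{t}} j$ for some $j\in[0,\w{t}-1]$. So it suffices to prove the claim for an arbitrary single shift $T=S+_{\w{t}}j$ and then observe that this covers all of $\eqcl{\w{t}}{S}$.

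First I would fix such a $j$ and take any $t$-subset $I\subseteq T$. The crucial point is that shifting commutes with taking subsets: since $T=S+_{\w{t}}j$, there is a uniquely determined $t$-subset $I'\subseteq S$ with $I=I'+_{\w{t}}j$ (concretely, $I'=I-_{\w{t}}j=\{(i-j)\bmod\w{t}\mid i\in I\}$, and $|I'|=|I|=t$ because shifting modulo $\w{t}$ is a bijection on $[0,\w{t}-1]$). By the hypothesis applied to the subset $I'\subseteq S$, there exists $i\in[0,l-1]$ such that $\ZA{\a_i,I'}$ is an $\OA(t,t,q)$.

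Now I would invoke \Cref{lemma:ShiftsOfcolumnsSameCoverage} with the primitive element $\a_i$ and the two $t$-sets $I'$ and $I$. These are shift-equivalent modulo $\w{t}$ by construction (they differ by the shift $j$), and both lie in $[0,\w{t}-1]$ with cardinality $t$, so the lemma's hypotheses are met. The lemma then gives that $\ZA{\a_i,I}$ is an $\OA(t,t,q)$ as well, which is exactly what is required for the subset $I$ of $T$. Since $I$ was an arbitrary $t$-subset of $T$, and $T$ an arbitrary element of $\eqcl{\w{t}}{S}$, the proof is complete.

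I do not anticipate a genuine obstacle here: the substance of the argument is already contained in \Cref{lemma:ShiftsOfcolumnsSameCoverage}, and this proposition is essentially its promotion from ``a single pair of shift-equivalent $t$-sets'' to ``an entire shift-equivalence class of candidate sets.'' The only point requiring a little care is the bookkeeping that a $t$-subset of the shifted set $T$ corresponds, via the inverse shift, to a $t$-subset of $S$ of the same size --- this uses that $x\mapsto(x+j)\bmod\w{t}$ is a bijection of $[0,\w{t}-1]$, and in particular that the index $i\in[0,l-1]$ furnished by the hypothesis may depend on $I'$ (equivalently on $I$), which is harmless since we only need existence of such an $i$ for each fixed $t$-subset.
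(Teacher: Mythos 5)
Your proposal is correct and follows essentially the same route as the paper's own proof: take an arbitrary $t$-subset of $T$, pull it back under the shift to a $t$-subset of $S$, apply the hypothesis to obtain an index $i$, and then invoke \Cref{lemma:ShiftsOfcolumnsSameCoverage} to transfer the orthogonal array property to the shifted subset. Your version merely spells out the bookkeeping (the inverse shift being a bijection on $[0,\w{t}-1]$) that the paper leaves implicit.
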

\begin{proof}
    Let $T\in \eqcl{\w{t}}{S}$ and $J\subseteq T$ with $|T|=t$.
    Then, $J$ is shift-equivalent modulo $\w{t}$ to some $I\subseteq S$, since $S$ and $T$ are shift-equivalent modulo $\w{t}$ as elements of the same equivalence class $\eqcl{\w{t}}{S}$.
    From the assumptions about $S$, we have that $\ZA{\a_i,I}$ is an $\OA(t, q,q)$, by \Cref{lemma:ShiftsOfcolumnsSameCoverage}, $\ZA{\a_i,J}$ is also a covering array of strength $t$.
    This is a subarray of $\ZA{P,J}$, and hence the latter is a covering array of strength $t$.
\end{proof}
It follows from \Cref{proposition:EquivalentClassesSimilarOAProperties} that $S$ is a feasible solution to \Cref{problem:FindMaxSubset} if and only if every set in the equivalence class $\S_{\w{t}}(S)$ is also a feasible solution.
Therefore, we can reduce the search space of the problem to only representatives of these canonical sets.

\subsubsection{Canonical representatives of the equivalent classes}
\label{section:CanonicalRepresentativesForTheEquivalentClasses}
In the following, we establish a criterion that allows us to choose a unique representative from each class of the shift-equivalence modulo $n$.
We do this using the notion of a binary necklace, which we define next.
\index{Necklace}
\index{Binary necklace}
\index{Set!binary representation}
\index{Binary representation|see{Set}}
\begin{definition}{Necklace}{necklace}
    Let $A$ be an ordered set and $\bfa$ be a string of elements from $A$.
    The \emph{necklace of} $\bfa$, denoted by $\neck(\bfa)$, is the
    lexicographically smallest of all cyclic shifts of $\bfa$.
    If $\bfa=\neck(\bfa)$, then $\bfa$ is a necklace.
    If $A = \{0,1\}$, then a necklace is a \emph{binary necklace}; we denote $\Bcal_n$ the set of all binary necklaces of length $n$.
\end{definition}

\begin{example}{}{BinaryNecklaces}
    Let $\bfa=10101$.
    The following are all the cyclic shifts of $\bfa$, listed in lexicographical order:
    \[ 01011 < 01101 < 10101 < 10110 < 11010,\]
    hence, $\neck(\bfa)=01011$.
    Let $\bfb=101010$.
    All the (distinct) shifts of $\bfb$ are $010101$ and $101010$, therefore $\neck(\bfb)=010101$.
\end{example}

    \index{Characteristic vector}
\begin{definition}{The characteristic vector of a set}
    {CharacteristicVectorOfSet}
    Let $n$ be a positive integer, $S\subseteq [0,n-1]$ and, for every $i \in [0,n-1]$ let
    \[
        b_i= 
        \begin{cases}
            1 &  \text{ if } i \in S\\
            0 &  \text{ otherwise.}
        \end{cases}
    \]
    Then, the binary string $\charv_n(S)=b_0 b_1 \dots b_{n-1}$ is the \emph{characteristic vector of $S$}.
\end{definition}

Let $\bfb=b_0 \dots b_{n-1}$ be a binary string.
Similarly to \Cref{definition:LeftShiftOperator}, we denote
\[
    L^i(\bfb)= b_i \dots b_{n-1} b_0 \dots b_{i-1},
\]
\index{Shift!of string}
and refer to $L^i(\bfb)$ as the left cyclic shift of $\bfb$ by $i$.
Furthermore, for $b\in \{0,1\}$, we denote $b^n$ the binary string consisting of the digit $b$ repeated $n$ times.
Apart from the characteristic vector, we need to introduce another binary representation for sets.

\pagebreak
\begin{definition}{Binary representation of sets}{BinaryRepresentationOfSets}
    Let $n$ be a positive integer, $S$ be a nonempty subset of $[0,n-1]$ and, for every $i \in [0, \max(S)]$, let 
    \[
        b_i= 
        \begin{cases}
            1 &  \text{ if } i \in S\\
            0 &  \text{ otherwise.}
        \end{cases}
    \]
    We define the \emph{binary representation of $S$}, denoted by $\bin_n(S)$, to be the binary string
    \begin{align*}
        \bin_n(S)
        &= 0^{n-\max(S)-1} b_0 \dots b_{\max(S)}\\
        &= L^{\max(S)+1}(\charv_n(S)).
    \end{align*}
Moreover, we define $\bin_n(\emptyset)=0^n$.
\end{definition}

Our focus now becomes to show that we can use the above binary representation and the notion of a binary necklace to choose a unique representative from each class of the shift-equivalence modulo $n$.
We begin with an auxiliary result.
\begin{lemma}
    {}
    {EquivalentClassEltsAreTheOnesWithCyclicShiftsOfCharVector}
    Let $n$ be a positive integer and $S\subseteq [0,n-1]$.
    Then, 
    \[
        \left\{\charv_n(T) \mid T \in \eqcl{n}{S} \right\}
        =
        \left\{ L^{i}(\charv_n(S))\mid i \in [0,n-1] \right\}.
    \]
\end{lemma}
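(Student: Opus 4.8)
The plan is to show directly that shifting the set $S$ by $i$ modulo $n$ corresponds, at the level of characteristic vectors, to a single left cyclic shift of $\charv_n(S)$, and then to observe that the two index sets range over the same collection of shifts. This reduces the set equality to a bookkeeping computation with modular indices.

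First I would fix notation by writing $\charv_n(S)=b_0\cdots b_{n-1}$, where $b_j=1$ precisely when $j\in S$, as in \Cref{definition:CharacteristicVectorOfSet}. For a fixed $i\in[0,n-1]$, I would then compute the characteristic vector of $T=S+_n i$ entry by entry: its $k$-th coordinate equals $1$ if and only if $k\in T$, which by \Cref{definition:ShiftOfASet} holds if and only if $(k-i)\bmod n\in S$, i.e.\ if and only if $b_{(k-i)\bmod n}=1$. Hence the $k$-th coordinate of $\charv_n(S+_n i)$ is exactly $b_{(k-i)\bmod n}$.

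Next I would compare this with a left cyclic shift of $\charv_n(S)$. From the binary-string shift notation introduced just after \Cref{definition:LeftShiftOperator}, namely $L^{j}(\bfb)=b_j\cdots b_{n-1}b_0\cdots b_{j-1}$, the $k$-th coordinate of $L^{j}(\charv_n(S))$ is $b_{(k+j)\bmod n}$. Matching the two expressions, I choose $j=(n-i)\bmod n$ and conclude
\[ \charv_n(S+_n i)=L^{(n-i)\bmod n}\bigl(\charv_n(S)\bigr). \]
Finally, since $\eqcl{n}{S}=\{\,S+_n i\mid i\in[0,n-1]\,\}$ by \Cref{equation:DefinitionES}, the left-hand set of the lemma equals $\{\,\charv_n(S+_n i)\mid i\in[0,n-1]\,\}$, which by the displayed identity is $\{\,L^{(n-i)\bmod n}(\charv_n(S))\mid i\in[0,n-1]\,\}$. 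As $i$ runs over $[0,n-1]$ the exponent $(n-i)\bmod n$ runs bijectively over $[0,n-1]$, so this set is exactly $\{\,L^{j}(\charv_n(S))\mid j\in[0,n-1]\,\}$, the right-hand side; duplicate sets inside $\eqcl{n}{S}$ cause no trouble since both sides are sets of strings.

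The only delicate point—hardly a genuine obstacle—is keeping the direction of the shift consistent with the modular reduction, that is, verifying that shifting the underlying set by $+i$ corresponds to a left string-shift by $n-i$ rather than by $i$. A single small instance (for example $n=4$ and $S=\{0\}$, where $\charv_4(\{1\})=0100=L^{3}(1000)$ and indeed $3=(4-1)\bmod 4$) pins down the sign, after which the general argument is routine.
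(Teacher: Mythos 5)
Your proof is correct, and it rests on the same underlying correspondence as the paper's: shifting the set by $+i$ modulo $n$ equals a left cyclic shift of the characteristic vector by $n-i$. The difference is in how you conclude the set equality. The paper proves the two inclusions separately: first it computes (with an explicit case split on $j \in [0,i-1]$ versus $j \in [i,n-1]$, corresponding to $k=-1$ versus $k=0$ in the congruence) that $T = S +_n i$ forces $\charv_n(T) = L^{n-i}(\charv_n(S))$, and then it runs a second, independent index computation in the reverse direction, showing that $\charv_n(T) = L^{i}(\charv_n(S))$ forces $T = S +_n (n-i)$. You instead prove the single coordinate-level identity $\charv_n(S +_n i) = L^{(n-i)\bmod n}(\charv_n(S))$ — with uniform modular indexing, no case split — and then observe that $i \mapsto (n-i)\bmod n$ permutes $[0,n-1]$, so both sides of the lemma are the same set of strings. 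Your packaging is more economical: the bijection argument makes the paper's entire converse computation unnecessary, and working with $(k-i)\bmod n$ throughout avoids the two-range bookkeeping. What the paper's version buys in exchange is an explicit inverse statement (which $T$ a given shift $L^i$ comes from), which is mildly more informative but not needed for the lemma or its later uses. Your sign-check example correctly pins down the only delicate point.
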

\begin{proof}
    Let $T\subseteq [0, n-1]$; for the rest of the proof, we denote
    \begin{align}
        \label{equation:panteloni}
        \notag
        \charv_n(S) &= b_0 \dots b_{n-1}\\
        \charv_n(T) &= c_0 \dots c_{n-1}.
    \end{align}
    
    First, we assume that $T\in \eqcl{n}{S}$, so that $T=S+_n i$ for some $i \in [0, n-1]$.
    Then, the following equivalencies hold for all $j\in [0, n-1]$:
    \begin{align}
        \notag
        c_j = 1 & \Leftrightarrow j \in T \\ 
        \notag
                & \Leftrightarrow j = (s+i) \bmod n, \text{ for some } s \in S\\ 
                & \Leftrightarrow j = s+i+kn, \text{ for some } s \in S \text{ and } k \in \Z.
        \label{equation:kwlos}
    \end{align}
    For $j \in [0,i-1]$, we have that \Cref{equation:kwlos} holds if and only if $k=-1$ and thus $c_j=1$ is equivalent to $n-i+j \in S$, or $b_{n-i+j}=1$;
    this shows that
    \begin{equation}
        \label{equation:pelargos}
        c_0 \dots c_{i-1} = b_{n-i} \dots b_{n-1}.
    \end{equation}
    For  $j \in [i, n-1]$, we have that \Cref{equation:kwlos} holds if and only if $k=0$ and thus $c_j=1$ is equivalent to $j-i \in S$, or $b_{j-i} =1$;
    this shows that
    \begin{equation}
        \label{equation:gaidaros}
        c_{i} \dots c_{n-1} = b_{0} \dots b_{n-1-i}.
    \end{equation}
    From \Cref{equation:pelargos,equation:gaidaros} we conclude that 
    \allowdisplaybreaks[0] 
    \begin{align*}
          \charv_n(T)
        =& c_{0} \dots c_{n-1} \\
        =& b_{n-i} \dots b_{n-1}b_{0} \dots b_{n-1-i} \\
        =& L^{n-i}(b_0 \dots b_{n-1})\\
        =& L^{n-i}(\charv_n(S)),
    \end{align*}
    \allowdisplaybreaks 
    which proves that
    \[
        \left\{\charv_n(T) \mid T \in \eqcl{n}{S} \right\}
        \subseteq
        \left\{ L^{i}(\charv_n(S))\mid i \in [0,n-1] \right\}.
    \]

    Conversely, we assume that $\charv_n(T)=L^i(\charv_n(S))$ for some $i \in [0,n-1]$.
    Then, considering \Cref{equation:panteloni}, we have that 
    \[
        c_0 \dots c_{n-1}=
        b_i \dots b_{n-1} b_0 \dots b_{i-1}
    \]
    or, equivalently,
    \begin{equation}\label{equation:kafes}
        c_j =
        \begin{cases}
            b_{i+j}  &\text{ if } j \in [0, n-1-i]\\
            b_{j-n+i}&\text{ if } j \in [n-i, n-1].
        \end{cases}
    \end{equation}
    \Cref{equation:kafes} for $j \in [0, n-1-i]$, implies that 
    \begin{align*}
        j \in T & \Leftrightarrow i+j \in S\\
        & \Leftrightarrow j = s-i, \text{ for some } s \in S\\
        & \Leftrightarrow j \in S+_n (n-i), \text{ for some } s \in S.
    \end{align*}
    For $j \in [n-i, n-1]$, \Cref{equation:kafes} implies that 
    \begin{align*}
        j \in T & \Leftrightarrow j-n+i \in S\\
        & \Leftrightarrow j \in S+_n (n-i), \text{ for some } s \in S.
    \end{align*}
    We conclude that, for all $j \in [0, n-1]$, we have $j \in T$ if and only if $j\in S+_n (n-i)$, and thus $T=S+_n (n-i)$.
\end{proof}

Let $\bfb=0^s w$, where $w$ is either a binary string that starts with
$1$ or the empty string.
We define $\gtst(\bfb)$ to be the set with characteristic vector $w0^s$.
\begin{proposition}{}{CanonicalWellDefined}
    Let $S$ be a nonempty subset of $[0,n-1]$.
    Then, there exists a unique $T\in \eqcl{n}{S}$ such that $0\in T$ and
    $bin_n(T)$ is a binary necklace.
\end{proposition}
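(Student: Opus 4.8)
The plan is to reduce everything to statements about the single binary string $\mathbf{w}=\charv_n(S)$ and its cyclic shifts, using \Cref{lemma:EquivalentClassEltsAreTheOnesWithCyclicShiftsOfCharVector}. That lemma tells us that $\{\charv_n(T)\mid T\in\eqcl{n}{S}\}$ is exactly the set of cyclic shifts $\{L^i(\mathbf{w})\mid i\in[0,n-1]\}$. Since $\bin_n(T)=L^{\max(T)+1}(\charv_n(T))$ is itself a cyclic shift of $\charv_n(T)$, it follows that for every $T\in\eqcl{n}{S}$ the string $\bin_n(T)$ is a cyclic shift of $\mathbf{w}$. Because the set of cyclic shifts of a string is invariant under shifting, we get $\neck(\bin_n(T))=\neck(\mathbf{w})$ for all such $T$. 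Hence the condition ``$\bin_n(T)$ is a binary necklace'' is equivalent to the single equation $\bin_n(T)=\neck(\mathbf{w})$, a target common to the whole class.

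For existence, I would first record the elementary fact that $\neck(\mathbf{w})$ ends in $1$: if its last symbol were $0$, rotating that symbol to the front would produce a cyclic shift with strictly more leading zeros, contradicting lex-minimality (here we use $S\neq\emptyset$, so $\mathbf{w}$, and thus $\neck(\mathbf{w})$, contains a $1$). Writing $\neck(\mathbf{w})=0^s v$ with $v$ starting, and by the above also ending, in $1$, I would set $T_0=\gtst(\neck(\mathbf{w}))$, the set whose characteristic vector is $v0^s$. A direct check then gives $0\in T_0$ (the first symbol of $v$ is $1$), $\max(T_0)=n-s-1$, and $\bin_n(T_0)=0^s v=\neck(\mathbf{w})$, so $\bin_n(T_0)$ is a necklace. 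Finally $\charv_n(T_0)=v0^s$ is a cyclic shift of $\neck(\mathbf{w})$, hence of $\mathbf{w}$, so $T_0\in\eqcl{n}{S}$ by \Cref{lemma:EquivalentClassEltsAreTheOnesWithCyclicShiftsOfCharVector}.

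For uniqueness, the key point, and the step I expect to be the main obstacle, is that $\bin_n$ is \emph{not} injective on subsets of $[0,n-1]$ (a set and a suitable shift of it can share a binary representation), so the hypothesis $0\in T$ must be exploited to pin $T$ down. I would prove that whenever $0\in T$ one has $\gtst(\bin_n(T))=T$: when $0\in T$, the symbol immediately following the $n-\max(T)-1$ leading zeros of $\bin_n(T)$ is $b_0=1$, so $\gtst$ strips exactly those zeros and returns the set with characteristic vector $b_0\cdots b_{\max(T)}0^{\,n-\max(T)-1}=\charv_n(T)$, namely $T$ itself. Consequently, if $T,T'\in\eqcl{n}{S}$ both contain $0$ and both have $\bin_n$ a necklace, then by the first paragraph $\bin_n(T)=\bin_n(T')=\neck(\mathbf{w})$, and applying $\gtst$ yields $T=\gtst(\neck(\mathbf{w}))=T'$. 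This simultaneously identifies the representative as $\gtst(\neck(\mathbf{w}))$ and establishes its uniqueness, completing the proof.
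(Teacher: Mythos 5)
Your proof is correct and follows essentially the same route as the paper's: both rest on \Cref{lemma:EquivalentClassEltsAreTheOnesWithCyclicShiftsOfCharVector}, both construct the representative as $\gtst$ applied to the lexicographically smallest cyclic shift of $\charv_n(S)$, and both derive uniqueness from the fact that two necklaces which are cyclic shifts of each other must coincide. The only difference is one of explicitness: you state and prove the identity $\gtst(\bin_n(T))=T$ for sets with $0\in T$, which the paper's uniqueness step (where it writes $T'=\gtst(\bin_n(T'))$) uses without comment.
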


\begin{proof}
    By \Cref{lemma:EquivalentClassEltsAreTheOnesWithCyclicShiftsOfCharVector}, the set $\{\charv_n(T) \mid T\in \eqcl{n}{S}\}$ consists of all the cyclic shifts of $\charv_n(S)$, therefore there exists a necklace $\bfb$ among them.
    Since there does not exist a lexicographically smaller cyclic shift of $\bfb$, this must be of the form $\bfb=0^sw$, where $s\in [0,n-2]$ and $w$ starts and ends with $1$.
    Let 
    $T=\gtst(\bfb)=\gtst(0^s w).$
    Then $T$ is the set with characteristic vector $w 0^s$, which implies that $0 \in T$ and that 
    $\bin_n(T)=0^s w = \bfb,$
    so we conclude that $\bin_n(T)$ is a necklace. 
    Next, we show that $T\in \eqcl{n}{S}$.
    From the definition of $\bfb$, there exists some $U\in \eqcl{n}{S}$ such that 
    \[\charv_n(U)=\bfb=0^sw.\]
    This implies that the minimum element in $U$ is $s$ and thus
    \[\charv_n(U+_n (-s))=w0^s=\charv_n(T),\]
    which means that 
    \begin{equation}\label{equation:makaronia}
        T=U+_n (-s).
    \end{equation}
    Since $U\in \eqcl{n}{S}$, by \Cref{equation:DefinitionES} there exists some $i$ such that 
    \begin{equation}\label{equation:kimas}
        U= S+_n i.
    \end{equation}
    From \Cref{equation:makaronia,equation:kimas}, we conclude that
    $ T=S+_n (i-s) \in \eqcl{n}{S}.  $
    We have shown the existence of the set $T$ in question; it remains to show
    its uniqueness.
    Let $T'\in \eqcl{n}{S}$ such that $0\in T'$ and $\bin_n(T')$ is a necklace.
    Since $T$ and $T'$ are in $\eqcl{n}{S}$, we have that $T'$ we have that $\bin_n(T)$ and
    $\bin_n(T')$ are cyclic shifts of each other and since they are both necklaces, we have $\bin_n(T)=\bin_n(T')$ and
    \[T'=\gtst(\bin_n(T'))=\gtst(\bin_n(T))=T,\]
    which completes the proof
\end{proof}
\Cref{proposition:CanonicalWellDefined} gives a criterion for choosing a unique element from $\eqcl{n}{S}$, for every $S\subseteq [0,n-1]$. 
In other words, it provides a notion of a canonical representative from each equivalent class.

\index{Set!canonical}
\index{Canonical set|see{Set}}
\begin{definition}{Canonical set}{CanonicalSet}
    A set $S\subseteq [0,n-1]$ is \emph{canonical} if either $S=\emptyset$, or
    $0\in S$ and $\bin_n(S)$ is a necklace.
    Furthermore, we denote by $\C_n$ the set that contains all the canonical subsets of $[0,n-1]$.
\end{definition}

\subsubsection{Algorithmic generation of canonical representatives}
\label{section:generatingcanonicalsubsets}

Next, we exploit a correspondence between canonical sets and binary necklaces to efficiently generate all the canonical subsets of $[0,n-1]$.
We recall that $\Bcal_n$ is the set of all the binary necklaces of length $n$.
\begin{proposition}{}{OneToOneNecklacesAndCanonicalSets}
    The mapping $\bin_n: \C_n \rightarrow \Bcal_n$ is a bijection, and its inverse is the mapping $\gtst$.
\end{proposition}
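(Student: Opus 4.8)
The plan is to prove that the two maps are mutually inverse by verifying directly that both composites $\gtst\circ\bin_n$ and $\bin_n\circ\gtst$ are identities; together with the fact that $\bin_n$ and $\gtst$ are well-defined maps with the claimed domains and codomains, this yields that each is a bijection and is the inverse of the other. First I would record that $\bin_n$ really does send $\C_n$ into $\Bcal_n$: if $S=\emptyset$ then $\bin_n(S)=0^n$, which coincides with all its cyclic shifts and is therefore a necklace, while if $S\neq\emptyset$ is canonical then $\bin_n(S)$ is a necklace by \Cref{definition:CanonicalSet}.

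The core of the argument is the identity $\gtst(\bin_n(S))=S$ for every canonical $S$. For $S=\emptyset$ this is immediate from $\bin_n(\emptyset)=0^n$ and $\gtst(0^n)=\emptyset$. For nonempty canonical $S$, write $m=\max(S)$ and $\charv_n(S)=b_0\dots b_{n-1}$; since $0\in S$ and no element of $S$ exceeds $m$, we have $b_0=b_m=1$ and $b_i=0$ for $i>m$, so $\charv_n(S)=w0^{\,n-m-1}$ with $w=b_0\dots b_m$. By \Cref{definition:BinaryRepresentationOfSets}, $\bin_n(S)=0^{\,n-m-1}w$, which is exactly the decomposition $0^s w$ with $s=n-m-1$ read off by $\gtst$; applying $\gtst$ then returns the set whose characteristic vector is $w0^{\,n-m-1}=\charv_n(S)$, namely $S$ itself.

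For the reverse composite I would first establish the auxiliary fact that every binary necklace $\bfb\neq 0^n$ ends in $1$: if its last bit were $0$, then rotating that trailing $0$ to the front produces a cyclic shift with a strictly longer run of leading zeros, which is lexicographically smaller, contradicting that $\bfb$ is its own least cyclic shift. Granting this, take $\bfb\in\Bcal_n$ and write $\bfb=0^s w$, with $s$ the number of leading zeros and $w$ its (possibly empty) remainder. If $\bfb=0^n$ then $\gtst(\bfb)=\emptyset$ and $\bin_n(\emptyset)=0^n=\bfb$. Otherwise $w$ starts with $1$ and, by the auxiliary fact, ends with $1$; the set $T=\gtst(\bfb)$ has characteristic vector $w0^s$, so $0\in T$ and $\max(T)=|w|-1=n-s-1$. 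Then \Cref{definition:BinaryRepresentationOfSets} gives $\bin_n(T)=0^{\,n-\max(T)-1}w=0^s w=\bfb$. This computation simultaneously shows $\bin_n\circ\gtst=\mathrm{id}_{\Bcal_n}$ and that $\gtst$ indeed lands in $\C_n$, since $T$ contains $0$ and $\bin_n(T)=\bfb$ is a necklace.

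With both composites equal to the identity, $\bin_n$ is injective (from $\gtst\circ\bin_n=\mathrm{id}$) and surjective onto $\Bcal_n$ (from $\bin_n\circ\gtst=\mathrm{id}$), hence a bijection, and $\gtst$ is its two-sided inverse. The main obstacle I anticipate is purely the index bookkeeping — keeping the leading-zero count $s$, the value $\max(S)=n-s-1$, and the padded tails aligned across the two representations $\charv_n$ and $\bin_n$ — together with the small but essential lemma that a nonzero necklace terminates in $1$, which is precisely what guarantees that $\max(T)$ falls exactly where the padding formula of \Cref{definition:BinaryRepresentationOfSets} expects it.
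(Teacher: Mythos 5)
Your proposal is correct and follows essentially the same route as the paper: both arguments rest on the decomposition $\bfb=0^s w$ and the observation that $\gtst$ and $\bin_n$ exchange the padded strings $0^s w$ and $w0^s$, with well-definedness on both sides checked via \Cref{definition:CanonicalSet}. The only difference is thoroughness: you verify both composites explicitly and isolate the lemma that a nonzero necklace ends in $1$ (which the paper only establishes implicitly, inside the proof of \Cref{proposition:CanonicalWellDefined}), so your write-up is a more complete version of the paper's terser proof.
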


\begin{proof}
    We have that $\bin_n(\emptyset)=0^n$ and $\gtst(0^n)=\emptyset$, so we only need to examine nonempty canonical subsets $S\subseteq [0,n-1]$.
    By \Cref{definition:CanonicalSet}, $\bin_n(S)$ is a nonzero necklace of length $n$, hence $\bin_n$ is indeed a mapping from $C_n$ to $B_n$.
    Let $\bfb\in B_n$.
    Then $\bfb=0^s w$, for some $s\in [0,n-2]$ and $w$ starting with $1$.
    Hence $T=\gtst(\bfb)$ has characteristic vector $w0^s$, which implies that $0\in T$.
    Furthermore, we have that $\bin_n(T)=\bfb$, which is a necklace.
    We conclude that $\gtst$ is a mapping from $B_n$ to $C_n$ and it is the inverse is $\bin_n$ when restricted to $C_n$.
\end{proof}

We take advantage of \Cref{proposition:OneToOneNecklacesAndCanonicalSets} to generate the sets in $\C_n$ relying on an algorithm for the generation of the binary necklaces of length $n$ due to Ruskey et al.\ \cite{ruskey1992generating}, which we present next.
For $b\in {0,1}$, let $\overline{b}$ denote the binary complement of $b$.
For a binary string $\bfb=b_0 \cdots b_{n-1}$ we define
\[
    \label{equation:DefTau}
    \tau(\bfb)= b_0 \cdots b_{n-2} \overline{b_{n-1}}.
\]
In \cite{ruskey1992generating}, Ruskey et al.\ consider a tree 
\index{Tree!of binary necklaces}
$T_{\Bcal_n}$ as follows:
\begin{itemize}
    \item the root of $T_{\Bcal_n}$ is $0^{n-1}1$, and
    \item for every node $\bfb$ of $T_{\Bcal_n}$, its children ---if any--- are the necklaces 
        \[\tau( L^i(\bfb)), i \in [1,r],\] 
    where $r+1\leq n$ is the smallest $i$ such that $L^i(\bfb)$ is not a binary necklace.
\end{itemize}
\begin{algorithm}[t]
    \caption[Generation of binary necklaces]{Generation of all the nonzero elements in $\Bcal_n$ \cite{ruskey1992generating}.}
    \label{algorithm:RuskeyBinaryNecklaces}
    \begin{algorithmic}[1]
        \Procedure{BinaryNecklaces}{$\mathbf{b}$}
        \State Output $\mathbf{b}$ 
        \State $done\gets$ {\bf false}
        \While{not $done$}
            \State $\mathbf{b}\gets  L(\mathbf{b})$
            \State $\mathbf{b}' \gets \tau(\mathbf{b})$
            \If{$b'$ is a necklace}
                \label{line:checkingifnecklace}
                \State \textsc{BinaryNecklaces}($\mathbf{b}'$)
            \Else
                \State $done \gets$ {\bf true}
            \EndIf
        \EndWhile
        \EndProcedure
        \State \textbf{main;}
        \State \textsc{BinaryNecklaces}($0^{n-1}1$)
    \end{algorithmic}
\end{algorithm}

From the definition of $T_{\Bcal_n}$, it follows that its nodes are binary necklaces of length $n$; conversely, every binary necklace is a node of $T_{\Bcal_n}$ \cite[Lemma 7]{ruskey1992generating}.
Furthermore, every necklace appears as a node exactly once \cite[Theorem 5]{ruskey1992generating}.
This is reflected in the fact that a node $\bfb$ does not have children among $\tau (L^i(\bfb))$, $i \in [r,n-1]$, since it is proven that a string of this form is either not a necklace, or it is a necklace but it is also the child of a previous node.

The procedure \textsc{BinaryNecklaces} in \Cref{algorithm:RuskeyBinaryNecklaces}, which is presented in \cite{ruskey1992generating}, is the algorithmic implementation of the definition of $T_{\Bcal_n}$. 
The algorithm traverses the tree recursively from the root and outputs every nonzero binary necklace exactly once.
While generating the children of a necklace $\bfb$ in the tree, at most one is examined which is \emph{not} a necklace.
Thus, the total number of nodes examined is at most $2|\Bcal_n|$.

\begin{theorem}
    {\cite{ruskey1992generating}}
    {RuskeyAlgorithmWorks}
    The output of \Cref{algorithm:RuskeyBinaryNecklaces} consists of all the nonzero binary necklaces of length $n$, without repetitions.
\end{theorem}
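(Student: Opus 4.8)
The plan is to recognize \Cref{algorithm:RuskeyBinaryNecklaces} as nothing more than a faithful depth-first traversal of the tree $T_{\Bcal_n}$ defined just above it, and then to import the two structural facts about $T_{\Bcal_n}$ that the excerpt already records from \cite{ruskey1992generating}: its node set is exactly the set of nonzero binary necklaces of length $n$, and each such necklace occurs as a node exactly once. Granting these, the theorem reduces to the purely operational claim that the recursive procedure \textsc{BinaryNecklaces} visits and outputs every node of $T_{\Bcal_n}$ exactly once.

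To establish this operational claim, first I would pin down the semantics of a single call \textsc{BinaryNecklaces}$(\bfb)$ for a node $\bfb$. The procedure immediately outputs $\bfb$, and then enters the while-loop, in which the local variable is advanced to $L(\bfb), L^2(\bfb), \dots$ on successive iterations; at the $i$-th iteration it forms $\bfb' = \tau(L^i(\bfb))$ and recurses on $\bfb'$ precisely when $\bfb'$ is a necklace (the test on \Cref{line:checkingifnecklace}), halting at the first $i$ for which $\bfb'$ is not a necklace. I would then invoke the definition of $T_{\Bcal_n}$, whose children of $\bfb$ are the necklaces $\tau(L^i(\bfb))$ for $i \in [1,r]$, together with the cited property that $\tau(L^i(\bfb))$ is a necklace exactly for $i \in [1,r]$ and fails to be one at $i = r+1$. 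This matches the loop's behaviour step for step, so the recursive calls issued by \textsc{BinaryNecklaces}$(\bfb)$ are exactly the calls on the children of $\bfb$, in order, each made once.

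With the per-call behaviour understood, I would close the argument by well-founded induction on the finite tree $T_{\Bcal_n}$, proving the invariant that a call on a node $\bfb$ outputs, exactly once each, $\bfb$ together with every node of the subtree rooted at $\bfb$. The base case is a leaf, where the loop halts after examining a single non-necklace and only $\bfb$ is output; the inductive step applies the invariant to each child subtree, which are pairwise disjoint and jointly exhaust the descendants of $\bfb$. Applying the invariant to the root $0^{n-1}1$ then shows that the top-level call outputs every node of $T_{\Bcal_n}$ exactly once. Combined with the two imported facts about the node set, this is precisely the assertion that the output is all nonzero binary necklaces of length $n$, without repetition.

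The one genuinely delicate point is the correspondence used in the second paragraph between the loop's halting criterion -- that $\bfb' = \tau(L^i(\bfb))$ ceases to be a necklace -- and the tree's definition of children, which is phrased in terms of the least $i$ with $L^i(\bfb)$ not a necklace. Reconciling these two thresholds, and thereby ruling out that the loop either stops early or overshoots, is exactly the combinatorial heart of Ruskey's construction. Since the excerpt states these necklace-preservation properties as established results of \cite{ruskey1992generating}, I would cite them rather than reprove them; were they not available, establishing them -- by analysing how cyclic rotation and toggling of the final bit interact with the characterisation of a necklace as the lexicographically least of its rotations -- would be the main obstacle of the proof.
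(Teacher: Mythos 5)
Your proposal is correct and takes essentially the same route as the paper: the paper also treats this as a cited result of Ruskey et al., justifying it by the structural facts that the nodes of $T_{\Bcal_n}$ are precisely the nonzero binary necklaces, each occurring exactly once (their Lemma 7 and Theorem 5), together with the observation that \textsc{BinaryNecklaces} is a faithful recursive traversal of that tree. Your explicit tree induction, and your flagging of the threshold subtlety (the tree is described via the least $i$ with $L^i(\bfb)$ not a necklace, while the algorithm halts on the least $i$ with $\tau(L^i(\bfb))$ not a necklace, a gap both you and the paper defer to the cited reference), merely make rigorous what the paper leaves implicit in its surrounding prose.
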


\begin{example}
    {\Cref{algorithm:RuskeyBinaryNecklaces} for $n=6$}
    {RuskeyRunExample}
    The output of \textsc{BinaryNecklaces}($000001$) is
    \begin{align*}
        000001, 000011, 000111, 001111, 011111, 001101\\
        011011, 000101, 001011, 010111, 010101, 001001.
    \end{align*}
    The tree $T_{\Bcal_6}$ is shown \Cref{figure:necklacetree}; the sets underneath the binary strings are discussed in a later example and can be ignored for now.
    The crossed out strings are \emph{not} nodes of $T_{\Bcal_6}$; they are the strings that were found not to be necklaces in \Cref{line:checkingifnecklace} of \Cref{algorithm:RuskeyBinaryNecklaces}.
    When encountering such strings, the algorithm backtracks.
%
    We observe that there are $13$ binary necklaces of length $6$, also counting the all-zero $6$-tuple, so $|\Bcal_6|=13$.
    A total of $23$ checks were performed, so indeed the number of checks is less than $2|\Bcal_6|=26$.
\end{example}

\begin{sidewaysfigure}

\def\nodeA{$000001$\\$\{0\}$}
\def\nodeBa{$000011$\\$\{0,1\}$}
\def\nodeBb{$000101$\\$\{0,2\}$}
\def\nodeBc{$001001$\\$\{0,3\}$}
\def\nodeBd{$010001$\\$\{0,4\}$}
\def\nodeCa{$000111$\\$\{0,1,2\}$}
\def\nodeCb{$001101$\\$\{0,1,3\}$} 
\def\nodeCc{$011001$\\$\{0,1,4\}$}
\def\nodeCd{$001011$\\$\{0,2,3\}$}
\def\nodeCe{$010101$\\$\{0,2,4\}$} 
\def\nodeCf{$010011$\\$\{0,3,4\}$}
\def\nodeDa{$001111$\\$\{0,1,2,3\}$} 
\def\nodeDb{$011101$\\$\{0,1,2,4\}$}
\def\nodeDc{$011011$\\$\{0,1,3,4\}$}
\def\nodeDd{$110101$\\$\{0,1,3,5\}$}
\def\nodeDe{$010111$\\$\{0,2,3,4\}$}
\def\nodeDf{$101101$\\$\{0,2,3,5\}$}
\def\nodeDg{$101011$\\$\{0,2,4,5\}$}
\def\nodeEa{$011111$\\$\{0,1,2,3,4\}$} 
\def\nodeEb{$111101$\\$\{0,1,2,3,5\}$}
\def\nodeEc{$110111$\\$\{0,1,3,4,5\}$}
\def\nodeEd{$101111$\\$\{0,2,3,4,5\}$}
\def\nodeFa{$111111$\\$\{0,1,2,3,4,5\}$}

\begin{forest}
for tree={
    align=center,
    l sep+=2em 
}
[\nodeA, s sep=28pt
    [\nodeBa
        [\nodeCa
            [\nodeDa
                [\nodeEa 
                    [\nodeFa, cross out,draw=goldfish]
                ]
                [\nodeEb, cross out,draw=goldfish]
            ]
            [\nodeDb, cross out,draw=goldfish]
        ]
        [\nodeCb
            [\nodeDc [\nodeEc, cross out,draw=goldfish]]
            [\nodeDd, cross out,draw=goldfish]
        ]
        [\nodeCc, cross out,draw=goldfish]
    ]
    [\nodeBb
        [\nodeCd
            [\nodeDe [\nodeEd, cross out,draw=goldfish]]
            [\nodeDf, cross out,draw=goldfish]
        ]
        [\nodeCe [\nodeDg, cross out,draw=goldfish]]
    ]
    [\nodeBc
        [\nodeCf, cross out,draw=goldfish]
    ]
    [\nodeBd, cross out,draw=goldfish] 
]
\end{forest}
\caption{The trees $T_{\Bcal_6}$ and $T_{\C_6}$} 
\label{figure:necklacetree}

\end{sidewaysfigure}

We now turn our focus on applying \Cref{algorithm:RuskeyBinaryNecklaces} to generate the sets in $\C_n$ uniquely.
Since \Cref{algorithm:RuskeyBinaryNecklaces} generates all the elements in $\Bcal_n$ exactly once and, by \Cref{proposition:OneToOneNecklacesAndCanonicalSets}, $\gtst$ is a bijection between $\Bcal_n$ and $\C_n$, we can simply run \Cref{algorithm:RuskeyBinaryNecklaces}, and for each $\bfb$ in the output, obtain the canonical set $\gtst(\bfb)$.
However, we show a slightly different but equivalent way of achieving our goal that is more useful later in the section.
First, we consider a tree $T_{\C_n}$ whose nodes are nonempty canonical subsets of $[0,n-1]$, as follows:
\index{Tree!of canonical sets}
\begin{itemize}
    \item the root is $\{0\}$, and
    \item for every node $S$, its children ---if any--- are the sets \begin{equation}
            \label{equation:noxzema}
            S\cup \{i\}, i \in [\max(S)+1, r]
    \end{equation}
    where $r+1$ is the smallest $j\leq n-1$ such that $S\cup \{i\}$ is not canonical.
\end{itemize}
We show that the nodes of $T_{\C_n}$ are precisely the elements of $\C_n$ and that each node appears exactly once.
\begin{lemma}{}{TranslationNecklaceToSets}
    Let $S$ be a nonempty subset of $[0,n-1]$.
    Then, for all $j \in [1,n-\max(S)-1]$, we have that
    \begin{equation}
        \label{equation:mickey}
        S\cup \{ \max(S)+j \}= \gtst(\tau ( L^j (\bin_n(S))).
    \end{equation}
\end{lemma}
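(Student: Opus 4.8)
The plan is to establish \Cref{equation:mickey} by a direct computation that tracks the binary strings on both sides. Throughout I would set $m=\max(S)$ and write $\charv_n(S)=b_0b_1\cdots b_{n-1}$, noting that $b_m=1$ and $b_i=0$ for all $i>m$. Since $\gtst$ is, by its definition, the operation that strips a string's leading zeros and reattaches them at the end, the whole argument reduces to following how the blocks of zeros in $\bin_n(S)$ migrate under the successive maps $L^j$, $\tau$, and $\gtst$.

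First I would record the shape of $\bin_n(S)$. Because $b_i=0$ for $i>m$, \Cref{definition:BinaryRepresentationOfSets} gives $\bin_n(S)=0^{\,n-m-1}b_0b_1\cdots b_m$, a string of $n-m-1$ leading zeros followed by $b_0,\dots,b_m$. For $j\in[1,n-m-1]$ the left shift $L^j$ moves the last $j$ symbols, which all lie inside the leading block of zeros (here is where $j\le n-m-1$ enters), to the front, producing
\[
    L^j(\bin_n(S))=0^{\,n-m-1-j}\,b_0b_1\cdots b_m\,0^{\,j}.
\]
The final symbol is then a genuine $0$ (here is where $j\ge 1$ enters), so $\tau$, which complements only the last bit, returns
\[
    \tau(L^j(\bin_n(S)))=0^{\,n-m-1-j}\,b_0b_1\cdots b_m\,0^{\,j-1}\,1.
\]

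The last step is to apply $\gtst$ and compare with the left-hand side, and I expect this to be the main obstacle: the behaviour of $\gtst$ hinges on the exact number of leading zeros of the string above, and that count is precisely $n-m-1-j$ only when $b_0=1$, i.e. when $0\in S$. Granting $0\in S$, $\gtst$ rotates the block $0^{\,n-m-1-j}$ to the back and returns the set whose characteristic vector is $b_0\cdots b_m\,0^{\,j-1}\,1\,0^{\,n-m-1-j}$. On the other side, since $m+j$ exceeds every element of $S$ and lies in $[0,n-1]$, the set $S\cup\{m+j\}$ has characteristic vector $b_0\cdots b_m\,0^{\,j-1}\,1\,0^{\,n-m-j-1}$; as $n-m-j-1=n-m-1-j$ these agree, which gives \Cref{equation:mickey}. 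Beyond this count the remaining work is pure bookkeeping, namely keeping the exponents of the zero-blocks consistent across the three operations. The hypothesis $b_0=1$ costs nothing in practice, since the lemma is applied in building the tree $T_{\C_n}$, all of whose nodes are canonical sets and hence contain $0$.
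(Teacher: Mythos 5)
Your computation is correct and it is essentially the computation the paper performs: both arguments write $\bin_n(S)=0^{n-\max(S)-1}b_0\cdots b_{\max(S)}$, track the zero blocks through $L^j$ and $\tau$ to obtain $0^{n-\max(S)-j-1}b_0\cdots b_{\max(S)}0^{j-1}1$, and then undo $\bin_n$. The only divergence is at the last step: the paper recognizes this string as $\bin_n(S\cup\{\max(S)+j\})$ and appeals to \Cref{proposition:OneToOneNecklacesAndCanonicalSets} to invert, whereas you unwind $\gtst$ by hand and compare characteristic vectors.

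Your remark about needing $0\in S$ is not a defect of your proof but a genuine observation about the lemma itself: as stated, for an arbitrary nonempty $S$, \Cref{equation:mickey} is false. For instance, with $n=4$, $S=\{1\}$, $j=1$ one gets $\bin_4(S)=0001$, $\tau(L^1(0001))=\tau(0010)=0011$, and $\gtst(0011)=\{0,1\}$, whereas $S\cup\{\max(S)+j\}=\{1,2\}$. The paper's proof passes over this by citing \Cref{proposition:OneToOneNecklacesAndCanonicalSets}, which only asserts that $\gtst$ inverts $\bin_n$ on canonical sets; under the lemma's hypotheses the set $S\cup\{\max(S)+j\}$ need not be canonical, nor even contain $0$. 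The fact both proofs actually need is that $\gtst(\bin_n(T))=T$ whenever $0\in T$, which here holds exactly when $0\in S$. Since the lemma is only ever applied to nodes of the tree $T_{\C_n}$, which are canonical and hence contain $0$, the omission is harmless in context --- but you were right to flag the hypothesis, and on this point your write-up is the more careful of the two.
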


\begin{proof}
    For a nonempty subset $S\subseteq [0,n-1]$, its image under $\bin_n$ is given by
    \[ \bin_n(S) = 0^{n-\max(S)-1} b_0 \cdots b_{\max(S)},\]
    where $b_i = 1$ if and only if $i \in S$.
    Then, for every $j\in [1, n-\max(S)-1]$, we have that
    \begin{align} 
        \notag
        \tau ( L^j (S)) &=0^{n-(\max(S) +j) -1}b_0 \cdots b_{\max(S)} 0^{j-1} 1\\
                         =\bin_n(S\cup \{ \max(S)+j \}).
        \label{equation:goofy}
    \end{align}
    By \Cref{proposition:OneToOneNecklacesAndCanonicalSets}, $\gtst$ is the inverse of $\bin_{n}$, therefore \Cref{equation:goofy} is equivalent to \Cref{equation:mickey}.
\end{proof}
The mapping $\bin_n$ and its inverse $\gtst$ is a bijection between the nodes of the trees $T_{\Bcal_n}$ and $T_{\C_n}$ which, by \Cref{lemma:TranslationNecklaceToSets}, preserves the parent-child relation of nodes.
It follows that, since the nodes of $T_{\Bcal_n}$ are the elements of $\Bcal_n$ without repetitions, then the nodes of $T_{\C_n}$ are all the elements of $\C_n$ without repetitions.

The procedure \textsc{CanonicalSubsets} in \Cref{algorithm:GeneratingCanonicalSubsets} is the algorithmic implementation of the definition of $T_{\C_n}$.
It follows from the above discussion that \textsc{CanonicalSubsets} generates all the nonempty elements of $S_n$ exactly once.


\begin{algorithm}[t]
    \caption[Generation of canonical sets]{Generation of the nonempty sets in $\C_n$}
    \label{algorithm:GeneratingCanonicalSubsets}
    \begin{algorithmic}[1]
        \Procedure{CanonicalSubsets}{$S$,$n$}
        \State Output $S$
        \label{line:OutputS}
        \State $done \gets $ False
        \State $j=\max(S)+1$
            \While{ not $done$ and $j\leq n-1$} 
                \label{line:jfrommaxSandup}
                \If{$\bin(S\cup \left\{ j \right\})$ is a necklace}
                    \label{line:ifbinScupJisanecklace}
                    \State \textsc{CanonicalSubsets}($S\cup \left\{j \right\}$,$n$)
                    \label{line:runcanonicalsubsetsScupJ}
                    \State $j\gets j+1$
                \Else
                \label{line:canonical:Else}
                    \State $done\gets$ True
                    \label{line:canonical:break}
                \EndIf
            \EndWhile
        \EndProcedure
        \State \textbf{Main;}
        \State \textsc{CanonicalSubsets}($\left\{ 0 \right\}$, $n$)
    \end{algorithmic}
\end{algorithm}

    The tree $T_{\Bcal_6}$ is shown \Cref{figure:necklacetree}; the sets underneath the binary strings are discussed in a later example and can be ignored for now.
    The crossed out strings are \emph{not} nodes of $T_{\Bcal_6}$; they are the strings that were found not to be necklaces in \Cref{line:checkingifnecklace} of \Cref{algorithm:RuskeyBinaryNecklaces}.
    When encountering such strings, the algorithm backtracks.

\begin{example}
    {\Cref{algorithm:GeneratingCanonicalSubsets} for $n=6$}
    {RuskeyRunExample2}
    The output of \textsc{CanonicalSubsets}($\{0\}$) is
    \begin{gather*}
        \{0\},\{0,1\}, \{0,1,2\}, \{0,1,2,3\}, \{0,1,2,3,4\}, \{0,1,3\},\\
        \{0,1,3,4\}, \{0,2\}, \{0,2,3\}, \{0,2,3,4\}, \{0,2,4\}, \{0,3\}.
    \end{gather*}
    The tree $T_{\C_6}$ is shown in \Cref{figure:necklacetree}, overlapping with the tree $T_{\Bcal_6}$, which demonstrates how $\bin_6$ and $\gtst$ are bijections between the nodes that respect the parent-child relations.
    The crossed out sets correspond to the sets that are shown not to be canonical in \Cref{line:ifbinScupJisanecklace} of \Cref{algorithm:GeneratingCanonicalSubsets}.
\end{example}

\subsection{A backtracking algorithm to search for covering subarrays}
\label{section:TheBacktrackingAlgorithm}
\subsubsection{Overview}
It follows from \Cref{proposition:EquivalentClassesSimilarOAProperties} that a subset $S$ of $[0, \w{t}-1]$ is a feasible solution to \Cref{problem:FindMaxSubset} if and only if every set in $\S_{\w{t}}(S)$ is also a feasible solution. 
Therefore, considering that all the sets in an equivalence class have the same size, we have for an optimal solution $S$ that the canonical representative in $\S_{\w{t}}(S)$ is also an optimal solution. 
We conclude that there exists a solution of \Cref{problem:FindMaxSubset} in $\C_{\w{t}}$.
In this section, we use this fact and the framework of \Cref{algorithm:GeneratingCanonicalSubsets} to give an algorithm that solves \Cref{problem:FindMaxSubset}.
To introduce this algorithm we need several auxiliary results.

First, we state the optimization problem that we want to solve.
From now on, we fix a set $P=\{\a_0, \dots, \a_{l-1}\}$ of primitive elements of $\fqt$ and use the following terms.
\begin{itemize}
    \item A \emph{feasible solution}
        \index{Solution!feasible}
        is a set $S\in \C_{\w{t}}$ that either has size at most $t-1$, or it has size at least $t$ and for every $I\subseteq S$ with $|I|=t$, there exists $i \in [0, l-1]$ such that $\ZA{\a_i, I}$ is an $\OA(t, t,q)$.
    \item An \emph{optimal solution}
        \index{Solution!optimal}
        is a feasible solution of maximum size; this is a solution to \Cref{problem:FindMaxSubset}.
    \item The \emph{set of candidates}
        \index{Set!of candidates}
        for a feasible solution $S$, denoted $\cand{S}$, is given by 
    \begin{equation*}
    \label{equation:CMdefinition}
    \cand{S}=
       \left\{
           x \in [\max(S)+1, \w{t}-1]
           \mid
           S\cup \{x\} \text{ is a feasible solution}
       \right\}.
    \end{equation*}
    We observe that if $S$ is a feasible solution with size at most $t-2$, then \[\cand{S}=[\max(S)+1, \w{t}-1].\]
\end{itemize}
\subsubsection{Calculation of the set of candidates}
In order to use a backtracking algorithm to find an optimum solution, we first need to determine a way of calculating efficiently the set $\cand{S}$, for a feasible solution $S$.
For this, we use a recursive algorithm that is based on \Cref{lemma:ExtendCM}, that follows.
For $I\subset [1, \w{t}-1]$ we define
\begin{equation*}
    \label{equation:DefUP}
    U_P(I)=
    \left\{
        j \in [\max(I)+1, \w{t}-1] \mid I\cup\{0,j\} \text{ is not a feasible solution}
    \right\}.
\end{equation*}
Furthermore, for a positive integer $j$, we denote
\[
    \label{equation:CandidatesGreaterThanJ}
    \cand{S}_{>j}=\{i \in \cand{S} | i > j\}.
\]

%


\begin{lemma}{}{ExtendCM}
    Let $S$ be a feasible solution with $|S|\geq t-1$, and $j \in \cand{S}$. 
    Furthermore, we denote
    \begin{equation*}
        R(j)=
        \bigcup_{I \in \binom{S}{t-2}}
        \left( 
            U_P(I +_{\w{t}} (-j))
            +_{\w{t}} j
        \right).
    \end{equation*}
    Then, we have that 
    \[\cand{S\cup\{j\}}= \cand{S}_{>j} \setminus R(j).\]
\end{lemma}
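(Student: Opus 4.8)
The plan is to prove the set equality by double inclusion, unwinding the definition of $\cand{\cdot}$ through the feasibility condition. Recall that $\cand{S\cup\{j\}}$ consists of those $x\in[\max(S\cup\{j\})+1,\w{t}-1]=[j+1,\w{t}-1]$ such that $S\cup\{j,x\}$ is a feasible solution. Since $S$ is a feasible solution with $|S|\geq t-1$ and $j\in\cand{S}$, the set $S\cup\{j\}$ is feasible, so by \Cref{proposition:EquivalentClassesSimilarOAProperties} and the definition of feasibility the only possible obstruction to $S\cup\{j,x\}$ being feasible comes from the $t$-subsets of $S\cup\{j,x\}$ that actually involve the new element $x$. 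The first observation is therefore that $x\in\cand{S\cup\{j\}}$ if and only if $x\in\cand{S}_{>j}$ (ensuring $x>j$ and that $S\cup\{x\}$ alone is feasible) \emph{and} every $t$-subset $I'\subseteq S\cup\{j,x\}$ containing both $j$ and $x$ is ``good,'' i.e. admits some $i$ with $\ZA{\a_i,I'}$ an $\OA(t,t,q)$.

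The key step is to characterize those bad $t$-subsets containing $\{j,x\}$ in terms of the sets $U_P(\cdot)$. A $t$-subset of $S\cup\{j,x\}$ containing $j$ and $x$ has the form $I\cup\{j,x\}$ where $I\in\binom{S}{t-2}$. The plan is to translate everything by $-j$ modulo $\w{t}$ so as to normalize $j$ to $0$: using \Cref{lemma:ShiftsOfcolumnsSameCoverage}, the array $\ZA{\a_i,I\cup\{j,x\}}$ is an $\OA(t,t,q)$ if and only if $\ZA{\a_i,(I\cup\{j,x\})+_{\w{t}}(-j)}$ is, and the latter index set equals $(I+_{\w{t}}(-j))\cup\{0,\,x-j\bmod\w{t}\}$. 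Hence $I\cup\{j,x\}$ is bad (for all $i$) precisely when $x-j\bmod\w{t}\in U_P(I+_{\w{t}}(-j))$, which by definition of $U_P$ is the statement that $(I+_{\w{t}}(-j))\cup\{0,\,x-j\}$ is not a feasible solution. Re-shifting by $+j$, this says exactly that $x\in U_P(I+_{\w{t}}(-j))+_{\w{t}}j$. Taking the union over all $I\in\binom{S}{t-2}$ shows that $x$ creates \emph{some} bad $t$-subset through $\{j,x\}$ if and only if $x\in R(j)$.

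Combining the two observations: $x\in\cand{S\cup\{j\}}$ iff $x\in\cand{S}_{>j}$ and $x\notin R(j)$, which is the asserted equality $\cand{S\cup\{j\}}=\cand{S}_{>j}\setminus R(j)$. The main obstacle I anticipate is bookkeeping around the modular shift: one must verify that the shift by $-j$ genuinely sends the index set $I\cup\{j,x\}$ to $(I+_{\w{t}}(-j))\cup\{0,x-j\}$ while keeping $j$ at position $0$ and that no wraparound corrupts the ``$x>j$'' ordering constraint inherited from $\cand{S}_{>j}$. Care is also needed to confirm that feasibility of $S\cup\{x\}$ (packaged in $x\in\cand{S}$) together with feasibility of $S\cup\{j\}$ accounts for all $t$-subsets \emph{not} containing both $j$ and $x$, so that the only new constraints are exactly those captured by $R(j)$; this relies on $S$ being a feasible solution of size at least $t-1$, which guarantees that every $t$-subset omitting $x$ or omitting $j$ was already checked.
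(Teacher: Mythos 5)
Your proposal is correct and takes essentially the same approach as the paper's own proof: both reduce the feasibility of $S\cup\{j,x\}$ to the $t$-subsets through $\{j,x\}$, shift by $-j$ modulo $\w{t}$ to normalize $j$ to $0$, and invoke shift-invariance of feasibility (\Cref{lemma:ShiftsOfcolumnsSameCoverage,proposition:EquivalentClassesSimilarOAProperties}) to translate ``some $t$-subset $I\cup\{j,x\}$ is bad'' into ``$x\in U_P(I+_{\w{t}}(-j))+_{\w{t}}j$''. The only difference is presentational — you argue by a chain of equivalences, while the paper proves the two inclusions separately by contradiction — and your explicit case split on $t$-subsets (avoiding $x$, containing $x$ but not $j$, containing both) is exactly what the paper's argument uses implicitly.
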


\begin{proof}
    ``$\subseteq$''
    Let $d \in \cand{S \cup \{j\}}$.
    Then, $d>j$ and $S \cup \{ j,d \}$ is a feasible solution.
    Hence, $S \cup \{ d \}$ is a feasible solution as a subset of a feasible solution, which means that $d\in \cand{S}_{>j}$.
    It remains to show that $d\not\in R(j)$; assume by means of contradiction that this is not the case.
    Then, there exists $I \subset S$ with $|I|=t-2$ and $i \in U_P(I+_{\w{t}}(-j))$ such that $d=(i+j) \bmod{\w{t}}$.
    Let \[J=\left(I+_{\w{t}}(-j)\right) \cup \{0,i\}.\]
    Then, from the definition of $i$ and $U_P$ we have that $J$ is not a feasible solution.
    By \Cref{proposition:EquivalentClassesSimilarOAProperties}, this implies that $ J+_{\w{t}}j$ is not a feasible solution either.
    However, we have that
    \begin{align*}
        J+_{\w{t}} j
        &= \left(\left(I+_{\w{t}}(-j)\right)\cup \{0,i\}\right)+_{\w{t}}j\\
        &= I\cup \left\{ j, (i+j) \bmod{\w{t}} \right\}\\
        &= I \cup \{j,d\}\\
        &\subseteq S\cup \{j,d\},
    \end{align*}
    hence $J+_{\w{t}} j$ is a feasible solution as a subset of the feasible solution $S\cup \{j,d\}$, a contradiction.

    ``$\supseteq$''
    Let $d\in \cand{S}_{>j}\setminus R(j)$, and assume by means of contradiction that $d\not\in \cand{S\cup \{j\}}$.
    Then, there exists $I\subset S$ with $|I|=t-2$ such that
    $I \cup \{j,d\}$ is not a feasible solution.
    Let 
    \begin{align*}
        J   &= \left(I \cup \{j,d\}\right)+_{\w{t}}(-j)\\
        &= \left(I+_{\w{t}}(-j)\right) \cup \{0,d-j \}
    \end{align*}
    Since $J$ is shift-equivalent to $I\cup \{j,d\}$, then by \Cref{proposition:EquivalentClassesSimilarOAProperties} it is not a feasible solution either and hence
    \[
        d-j  \in U_P(I+_{\w{t}}(-j)), 
    \]
    which implies that $d \in R(j)$, contradicting our initial assumption about $d$.
\end{proof}
\Cref{lemma:ExtendCM} gives a recursive way of calculating the set of candidates for a feasible solution, which we implement in the procedure \textsc{ComputeCand} shown in \Cref{algorithm:ExtendCM}. 
This takes as input a feasible solution $S$, an element $j\in \cand{S}$ as well as the set $\cand{S}$ and outputs $\cand{S\cup \{j\}}$.

\begin{algorithm}[t]
    \caption[Update of the set $\cand{S}$]{Update of the set $\cand{S}$ as per \Cref{lemma:ExtendCM}} \label{algorithm:ExtendCM}
    \begin{algorithmic}[1]
       \State $j \in \cand{S}$ 
       \State Returns $\cand{S\cup \{j\}}$
       \Procedure{ComputeCand}{$S,j,\cand{S}$}
       \algrenewcommand\algorithmicindent{2.0em}%
       \If{$\cand{S}_{>j}=\emptyset$}
            \State \Return $\emptyset$
        \EndIf 
       \If{$|S|<t-1$}
           \State \Return $\cand{S}_{>j}$
        \Else
            \State $C  \gets \cand{S}_{>j}$ 
            \For{ $I\in \binom{S}{t-2}$}
               \State $X\gets \{(i-j) \bmod \w{t} | i \in I\}$
               \Comment $X=I+_{\w{t}}(-j)$
               \State $Y\gets \{(u+j) \bmod \w{t} | u \in U_P(X)\}$
               \Comment $Y=U_P(X)+_{\w{t}}j$
               \label{line:WeNeedUP}
               \State $C\gets C\setminus Y$
            \If{$C=\emptyset$}
                \State \Return $\emptyset$
            \EndIf 
            \EndFor
        \State \Return $C$
        \EndIf
        \EndProcedure
    \end{algorithmic}
\end{algorithm}

\begin{algorithm}[t]
    \caption{Backtracking algorithm for solving \Cref{problem:FindMaxSubset}}
    \label{algorithm:Backtracking}
    \begin{algorithmic}[1] 
    \Procedure{FindCA}{$S,\cand{S}$}
    \Comment Let $\cand{S}= \{s_0, \dots, s_{r-1}\}$
    \State \textbf{global} $Best$	
    \If{$r > |Best|$} 
        \label{line:BegTestIfBest}
        \State $Best \gets S$
    \EndIf
        \label{line:EndTestIfBest}
    \If{$|Best|\geq |S|+|\cand{S}|$}
        \label{line:BegIsThereHope}
        \State\Return 
        \label{line:EndIsThereHope}
    \EndIf

    \While{not $done$ and $i\leq |S|+|\cand{S}| - |Best|$}
    \label{line:While}
        \If{$\bin(S\cup \left\{ s_i \right\})$ is a necklace}
    \label{line:NecklaceCheck}
        \State $C\gets \extend(S, s_i, \cand{S})$
            \label{line:ExtendCM}
            \Comment $C= \cand{S\cup \{s_i\}}$  
            \State \textsc{FindCA}$\left( S\cup\{s_i\}, C \right) $
            \State $i\gets i+1$
        \Else
            \State $done\gets$ True
        \EndIf
    \EndWhile
    \EndProcedure
    \State \textbf{global} $Best \gets \left\{ 0 \right\}$
    \State \textsc{FindCA}($\left\{ 0 \right\}, \left\{ 1,\dots, \w{t}-1 \right\}$)
    \State Output $Best$
    \end{algorithmic}
\end{algorithm}

\subsubsection{The algorithm}
We now review \Cref{algorithm:Backtracking}.
The procedure \textsc{FindCA} takes as input $(S, \cand{S})$, where $S$ is a feasible solution.
It is initiated with input $(\{0\},[1,\w{t}-1])$; by the time it terminates, the global variable $Best$ holds an optimum solution; this is also the solution to \Cref{problem:FindMaxSubset}.
Next, we go through the procedure in more detail.
\begin{itemize}
    \item In every recursive run, the size of the input $S$ is compared to that of the largest feasible solution found at the time, which is stored in the global variable $Best$.
    If the size of $S$ is larger, then it is the new best feasible solution and it is stored in the variable $Best$.
    This is shown in \Cref{line:BegTestIfBest,line:EndTestIfBest}.
\item All the successive runs of the procedure with input $S$ have inputs that are subsets of $S\cup \cand{S}$ and thus have size at most $|S|+|\cand{S}|$.
    If this is no larger than the feasible solution stored in $Best$, then none of the inputs in the successive runs are a better feasible solution and therefore the algorithm backtracks.
    This is accomplished in \Cref{line:BegIsThereHope,line:EndIsThereHope}.
\item In \Cref{line:While} we perform pruning as follows.
    For a feasible solution $S$ with $\cand{S}=\{s_0, \dots, s_{r-1}\}$, the algorithm is not run recursively for any of the children $S\cup \{s_i\}$ with 
    \[i > |S|+|\cand{S}| -|Best|.\]
    This is because, for every such $i$, all the subsequent runs of the procedure with input $S\cup \{s_i\}$ have inputs subsets of $S\cup \{s_{i+1}, \dots, s_{r-1}\}$ which is no larger than the feasible solution stored in $Best$.
\item The check in \Cref{line:NecklaceCheck} causes the algorithm to recurse only for the children that are canonical. 
    This is done to follow framework of \Cref{algorithm:GeneratingCanonicalSubsets}, which guarantees that the search is restricted to canonical sets.
\end{itemize}
The results of our experimental runs of the algorithm are discussed in \Cref{section:CAsFirstPaper_ImplementationAndNewBounds}.

We close the section with a rough estimate for the number of candidate solutions that are examined in \Cref{algorithm:ExtendCM}.
First, we note that for any optimal solution $S$ we have that $\cand{S}= \emptyset$.
Therefore, the nodes of the tree $T_{\C_{\w{t}}}$ visited by the algorithm are among the nodes of size up to $m$, where $m$ is the size of an optimal solution.
Let $N_i$ be the number of nodes of size $i$ in $T_{\C_{\w{t}}}$ ---which is also the number of binary necklaces of length $\w{t}$ and \mbox{weight $i$}--- and let $N_{\leq m}=\sum_{i=0}^{m}N_i$ be the number of nodes with weight at most $m$.
It is known (see for example \cite{gilbert1961symmetry,ruskey1999efficient}) that the number of binary necklaces of length $n$ and weight~$i$ is
\[
    \frac{1}{n}
    \sum_{j\mid \gcd(i,n-i)}\varphi(j)
    \binom{n/j}{i/j},
\]
where $\varphi$ is Euler's phi function, that is, $\varphi(j)$ is the number of integers in $[1,j-1]$ that are coprime to $j$.
It follows that
\begin{equation}
    \label{equation:teleia}
    N_{\leq m}
    = \sum_{i=0}^{m}N_i
    = \frac{1}{\w{t}} \sum_{j\mid \gcd(i,\w{t}-i)}
        \varphi(j) 
        \binom{\w{t}/j}{i/j}.
\end{equation}
This provides an upper bound for the number of candidate solutions in the search space.
Since our algorithm follows the framework of \Cref{algorithm:RuskeyBinaryNecklaces}, this implies that at most $2N_{\leq m}$ nodes are visited by \Cref{algorithm:Backtracking}.
This bound is not tight since not all nodes are feasible solutions and, furthermore, feasible candidates are also eliminated in 
\Cref{line:BegIsThereHope,line:While} of \Cref{algorithm:Backtracking}.
Finally, we note that in comparison a very naive algorithm would require checking $2^{\w{t}}$ subsets of columns, whereas the less naive approach of our algorithm without the isomorphism pruning that uses binary necklaces, would require checking up to $\sum_{i=0}^m \binom{\w{t}}{i}$ nodes, where $m$ is the size of an optimal solution.


\section{The choice of primitive elements}
\label{section:CAsFirstPaper_ChoiceOfPrimitiveElements}

In this section we give an algorithmic answer to \Cref{problem:ChoiceOfPrimElements}.
A straightforward approach is to consider every possible set $P=\left\{ \a_0, \dots, \a_{l-1} \right\}$ of $l$ distinct primitive elements of $\fqt$, and run \Cref{algorithm:Backtracking} in order to find the solution $S_P$ to \Cref{problem:FindMaxSubset}.Then, for a set $S_P$ with maximum size, we have that $P$ is a solution to \Cref{problem:ChoiceOfPrimElements}.
By \Cref{lemma:NumberOfPrimitiveElementsOfFqm} there exist exactly $\phi(q^t-1)$ distinct primitive elements of $\fqt$, hence this approach requires running \Cref{algorithm:Backtracking} for all the ${\binom{\phi(q^t-1)}{l}}$ choices for the set $P$.

In this section we show that by considering a particular subset of primitive elements, it suffices to run \Cref{algorithm:Backtracking} at most $\binom{\phi(\w{t}/te)}{l}$ times, where $p$ is a prime such that $q=p^e$.
More precisely, we create $\varphi(\w{t}/te)$ classes of elements of $\fqt$ with the property that two primitive elements are in the same class if and only if their corresponding cyclic trace arrays have equivalent coverage properties in a sense that we strictly define later.
Then we prove that it suffices to carry out \Cref{algorithm:Backtracking} only for $l$-sets of representatives from the classes that contain primitive elements, which means that 
at most $\binom{\phi(\w{t}/te)}{l}$ runs are required.
To define these classes and their representatives, we need the notion of cyclotomic cosets.

\index{Cyclotomic coset|see{Coset}}
\index{Coset!cyclotomic}
\begin{definition}{Cyclotomic cosets}{CyclotomicCosets}
    Let $p$ be a prime, $w$ be a positive integer coprime to $p$, and $i \in \ZWx$.
    Then, the \emph{cyclotomic coset of $p$ modulo $w$ that contains $i$}
    is the set
    \begin{equation*}
        \cipW
        =
        \left\{
            ip^r \bmod{w} \mid r \in \Z, r\geq 0
        \right\}.
    \end{equation*}
\end{definition}
We can use cyclotomic cosets to partition $\ZWx$ as follows.

\index{Coset!partition into}
\begin{lemma}{Partition of $\ZWx$ using cyclotomic cosets}
    {PartitionZwxwithCyclotomicCosets}
    For a prime power $p$ and positive integer $w$ coprime to $p$, there exists
    a set $\gpW \subset \ZWx$ with the property that
    \begin{equation}
        \label{equation:partitioningofZwxintocosets}
        \ZWx = \bigcup_{i\in \gpW}\cipW,
    \end{equation}
    and $\cipW \cap \cjpW = \emptyset$ for all $i,j \in \gpW$, $i \neq j$.
\end{lemma}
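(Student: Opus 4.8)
The plan is to show that the cyclotomic cosets of $p$ modulo $w$ form a partition of $\ZWx$, which is a standard orbit-partition argument disguised in elementary number-theoretic language. First I would observe that the defining relation $\cipW = \{ip^r \bmod w \mid r\geq 0\}$ is precisely the orbit of $i$ under the map $\mu_p : \ZWx \to \ZWx$ given by $\mu_p(x) = px \bmod w$. Since $\gcd(p,w)=1$, multiplication by $p$ is a bijection on $\ZWx$ (indeed $p$ has a multiplicative inverse modulo $w$), so $\mu_p$ is a well-defined permutation of the finite set $\ZWx$, and its powers generate a cyclic group acting on $\ZWx$. The cyclotomic cosets are then exactly the orbits of this action, and it is a general fact that the orbits of a group action partition the underlying set.

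To make this self-contained without invoking the group-action machinery explicitly, I would verify directly that the relation $i \sim j$ defined by ``$j \in \cipW$'' is an equivalence relation on $\ZWx$. Reflexivity is immediate by taking $r=0$. For symmetry, suppose $j \equiv i p^r \Mod{w}$; because $p$ is invertible modulo $w$, let $p^{-1}$ denote its inverse and let $d$ be the multiplicative order of $p$ modulo $w$, so that $p^{d}\equiv 1\Mod w$ and hence $p^{-r}\equiv p^{d-r}\Mod w$ for any $r$ reduced modulo $d$; then $i \equiv j p^{-r} \equiv j p^{(d-r)\bmod d}\Mod w$, which exhibits $i \in \cjpW$. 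Transitivity follows by composing the two exponent shifts: if $j\equiv ip^{r}$ and $k \equiv j p^{s}$, then $k \equiv i p^{r+s}\Mod w$, so $k\in\cipW$. Having established that $\sim$ is an equivalence relation, its equivalence classes partition $\ZWx$, and these classes are exactly the sets $\cipW$.

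Finally I would conclude the statement as written. By choosing one representative $i$ from each equivalence class and collecting them into a set $\gpW \subset \ZWx$, the partition into equivalence classes gives immediately the disjoint union
\[
    \ZWx = \bigcup_{i \in \gpW} \cipW,
\]
with $\cipW \cap \cjpW = \emptyset$ whenever $i,j \in \gpW$ and $i \neq j$, since distinct representatives lie in distinct classes. This is precisely \Cref{equation:partitioningofZwxintocosets} together with the disjointness claim.

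I do not expect a genuine obstacle here, as the result is routine; the only point requiring a little care is the symmetry step, where one must use that $\gcd(p,w)=1$ to guarantee that $p$ is invertible modulo $w$ and thus that negative powers $p^{-r}$ make sense as positive powers via the order of $p$. The lemma as stated asserts only the existence of such a set $\gpW$, so it is enough to let $\gpW$ be any transversal of the equivalence classes; no canonical choice of representatives (for instance, the minimal element of each coset) is required for this statement, though such a choice could be specified if a definite $\gpW$ is wanted later.
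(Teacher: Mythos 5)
Your proof is correct. It takes a somewhat different route from the paper's: you verify that the relation $i \sim j$ defined by $j \in \cipW$ is an equivalence relation on $\ZWx$ (reflexivity via $r=0$, transitivity by adding exponents) and then let $\gpW$ be any transversal of the classes, handling the one delicate point — symmetry, where a negative power of $p$ is needed — by passing to the multiplicative order $d$ of $p$ modulo $w$ and writing $p^{-r} \equiv p^{(d-r)\bmod d} \Mod{w}$. The paper instead proves disjointness directly by contradiction: if $j \notin \cipW$ but $\cipW \cap \cjpW \neq \emptyset$, then $ip^{r_1} \equiv jp^{r_2} \Mod{w}$ for some nonnegative $r_1, r_2$ with (WLOG) $r_1 \geq r_2$, and since $\gcd(p,w)=1$ one may cancel $p^{r_2}$ to obtain $j \equiv ip^{r_1-r_2} \Mod{w}$, contradicting $j \notin \cipW$; the set $\gpW$ is then obtained implicitly by choosing representatives. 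Both arguments rest on the same essential fact, the invertibility of $p$ modulo $w$, but package it differently: the paper's cancellation trick is shorter and never needs the order of $p$, while your equivalence-relation (orbit-of-a-permutation) formulation makes the group action explicit, which dovetails naturally with \Cref{proposition:CipwCardinality}, where the common size of the cosets is computed precisely as the order of $p$ in $\ZWx$.
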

\begin{proof}
    First, for any $i\in \ZWx$ and for every nonnegative integer $r$, we have that $\gcd(ip^r, w)=1$, since both $i$ and $p$ are coprime to $w$, which shows that indeed $\cipW \subset \ZWx$.
    Now, let $j \in \ZWx \setminus \cipW$ and suppose by means of contradiction
    that $\cipW \cap \cjpW \neq \emptyset$.
    Then, there exist nonnegative integers $r_1, r_2$ such that 
    \begin{equation}
        \label{equation:othoni}
        ip^{r_1} \equiv jp^{r_2} \Mod w.
    \end{equation}
    Without loss of generality, we assume that $r_1 \geq r_2$. 
    Then, \Cref{equation:othoni} implies that
    \[p^{r_2}(j-ip^{r_1-r_2}) \equiv 0 \Mod w,\]
    and since $\gcd(p,w)=1$, it follows that $j\equiv ip^{r_1-r_2} \Mod w$. 
    The latter means that $j \in \cipW$, which contradicts our initial assumption about $j$.
\end{proof}

\index{Coset!leaders}
\index{Leader|see{Coset}}
\begin{definition}{Cyclotomic coset leaders}{CyclotomicCosetLeaders}
    The elements of $\gpW$ in
    \Cref{lemma:PartitionZwxwithCyclotomicCosets}
    are \emph{cyclotomic coset leaders of $p$ modulo $w$.}
\end{definition}
Next, we define the notion of equivalent coverage that we mention earlier.

\index{Same coverage}
\begin{definition}
{Arrays with equivalent coverage}
{ArraysWithEquivalentCoverage}
    Let $t\ge 2$, and $A_1$, $A_2$ be $N\times k$ arrays with elements from an alphabet of the same finite size.
    We denote $C_i, D_i$, $i\in [0,k-1]$ their columns, respectively.
    Then, \emph{$A_1$ and $A_2$ have the same $t$-coverage} if for every $I\subseteq [0,k-1]$ with $|I|=t$, we have that $\left\{ C_i\mid i \in I \right\}$ is covered if and only if $\left\{ D_i\mid i\in I \right\}$ is covered.
\end{definition}

The main theorem of this section is the following.
\begin{theorem}{}{CoverageAndCosets}
    Let $q$ be a power of a prime $p$, $t$ be a positive integer with $t\geq 2$, and $\a$ be a primitive element of $\fqt$.
    Then, the following hold.
    \begin{enumerate}
        \item
            For any primitive element $\beta \in \fqt$, there exists $i \in \gpw$ such that $\a^i$ is also primitive and $\ZA{\beta}$ and $\ZA{\a^i}$ have the same $t$-coverage.
        \item
            For all $i, j \in \gpw$ with $i \neq j$, we have that $\ZA{\a^i}$ and $\ZA{\a^j}$ do not have the same $t$-coverage.
    \end{enumerate}
\end{theorem}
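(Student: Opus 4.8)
The plan is to translate ``having the same $t$-coverage'' into a statement about permutations of $\Z_{\w{t}}$ and then to pin down exactly which permutations occur. By \Cref{theorem:EquivalenceOAandNTsetsExtended} applied with $s=t$, a $t$-subset $I\subseteq[0,\w{t}-1]$ of columns of $\ZA{\gamma}$ is covered if and only if $\{\gamma^{c}\mid c\in I\}$ is linearly independent over $\fq$. Writing a primitive $\gamma$ as $\gamma=\a^{m}$ and using \Cref{lemma:CharacterizationOfConstantMultiplesInFQM} and \Cref{proposition:ConstructionOfPGDQFromFF}, this depends only on the points $[\a^{mc\bmod\w{t}}]$ of $PG(t-1,q)$. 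Let $\mu_{u}$ be multiplication by $u$ modulo $\w{t}$ (a permutation of $\Z_{\w{t}}$, since $\gcd(m,q^{t}-1)=1$ forces $\gcd(m,\w{t})=1$), and let $\mathcal{D}$ be the family of $t$-subsets $J$ for which $\{[\a^{j}]\mid j\in J\}$ is dependent. Then the covered $t$-subsets of $\ZA{\a^{m}}$ are precisely the $\mu_{m}$-preimages of the non-members of $\mathcal{D}$, so $\ZA{\a^{i}}$ and $\ZA{\a^{j}}$ have the same $t$-coverage exactly when $\mu_{ji^{-1}}(\mathcal{D})=\mathcal{D}$. Both parts of the theorem thus reduce to a single claim: $\mu_{u}$ preserves $\mathcal{D}$ if and only if $u$ is a power of $p$ modulo $\w{t}$.

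For the ``if'' direction, which drives Part 1, I would use that the Frobenius $x\mapsto x^{p}$ is additive and satisfies $(cx)^{p}=c^{p}x^{p}$ with $c\mapsto c^{p}$ an automorphism of $\fq$; hence it is $\fq$-semilinear and preserves $\fq$-linear dependence, while sending $\a^{c}$ to $\a^{pc}$. Therefore every $\mu_{p^{r}}$ preserves $\mathcal{D}$. Given a primitive $\beta=\a^{m}$, the elements $\beta^{p^{r}}=\a^{mp^{r}}$ are all primitive and all share the $t$-coverage of $\beta$, and their reduced exponents $mp^{r}\bmod\w{t}$ sweep out the cyclotomic coset $C_{p,\w{t}}^{\bar m}$ of $\bar m:=m\bmod\w{t}$ (\Cref{definition:CyclotomicCosets}, \Cref{lemma:PartitionZwxwithCyclotomicCosets}). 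Taking the leader $i\in\gpw$ of this coset gives $\ZA{\a^{i}}$ with the same coverage as $\ZA{\beta}$. The one remaining point is to arrange that $\a^{i}$ is primitive, i.e.\ $\gcd(i,q-1)=1$ (coprimality to $\w{t}$ being automatic for coset members); I would settle this with a short number-theoretic lemma showing that each coset arising from a primitive exponent contains such a residue -- primes $\ell\mid q-1$ that also divide $\w{t}$ divide no member of $\ZWx$, while for $\ell\nmid\w{t}$ the residues $mp^{r}\bmod\w{t}$ are not all divisible by $\ell$ -- and then choosing that residue as the leader.

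The substance of the theorem is the ``only if'' direction needed for Part 2, and this is where I expect the main difficulty. I would argue matroid-theoretically: a set of points has rank at most $t-1$ precisely when all of its $t$-subsets lie in $\mathcal{D}$, so $\mathcal{D}$ determines the rank-$(t-1)$ flats and, as their maximal instances, the hyperplanes (this uses \Cref{lemma:IndependenceAndGeneralPosition}). Consequently any $\mu_{u}$ preserving $\mathcal{D}$ permutes the hyperplanes of $PG(t-1,q)$, hence is an automorphism of the symmetric design of points and hyperplanes -- the development of the classical Singer difference set underlying \Cref{proposition:ConnectionOfSeqMatrixAndDifferenceSets}. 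Since $\mu_{u}$ conjugates the Singer cycle $[\a^{j}]\mapsto[\a^{j+1}]$ into its $u$-th power, $u$ is a numerical multiplier of this difference set, and the key classical fact is that the multiplier group of the $PG(t-1,q)$ Singer difference set inside $\ZWx$ is exactly $\langle p\rangle$; this forces $u\equiv p^{r}\pmod{\w{t}}$. I would either invoke this multiplier theorem directly or reprove the required direction through the structure of the Singer-cycle normalizer in $P\Gamma L(t,q)$, whose quotient by the Singer cycle is cyclic and generated by the Frobenius. With $u\equiv p^{r}$ established, two distinct coset leaders $i\neq j$ can never satisfy $j\equiv ip^{r}\pmod{\w{t}}$, so $\ZA{\a^{i}}$ and $\ZA{\a^{j}}$ do not have the same $t$-coverage, which is Part 2. (The whole argument is meaningful only for $t\geq 3$; when $t=2$ every principal cyclic trace array is a strength-$2$ orthogonal array, so $\mathcal{D}=\emptyset$ and the coverage relation degenerates.)
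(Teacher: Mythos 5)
Your proposal takes a genuinely different route from the paper's. The paper never leaves the field: \Cref{lemma:SameCoverageOfPrimitiveElements} shows that equal $t$-coverage of $\ZA{\a}$ and $\ZA{\beta}$ is equivalent to the existence of $\gamma\in\fqt$ matching the trace-zero patterns ($\Tt(\a^s)=0$ iff $\Tt(\gamma\beta^s)=0$ for all $s$); \Cref{lemma:SameZerosThenGammInFqAndLOutSide} then pins such pairs down by a splitting/factorization argument for the polynomial $\Tt(\gamma x^l)$ together with the coefficient comparison of \Cref{lemma:LGoesOutThenIsPowerOfP}, and \Cref{lemma:ijEquivalentIffJIsInCosetOfi} converts this into the coset criterion. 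You instead pass to $PG(t-1,q)$: coverage becomes invariance of the family $\mathcal{D}$ of dependent $t$-sets under the multiplication maps $\mu_u$ of $\Z_{\w{t}}$, the ``if'' direction is Frobenius semilinearity (essentially the same computation as the paper's easy direction), and the ``only if'' is outsourced to the classical determination of the Singer multipliers (equivalently, the theorem that the normalizer of a Singer cycle in $P\Gamma L(t,q)$, modulo the cycle, is generated by Frobenius). Your route is more modular but rests on a heavy imported theorem that the paper in effect tries to reprove by hand. On one substantive point the comparison favors you: your key claim is correctly normalized as $u\equiv p^r\pmod{\w{t}}$, whereas the paper's \Cref{lemma:SameZerosThenGammInFqAndLOutSide} asserts the stronger conclusion $\beta=\a^{p^r}$ (equality in $\fqt$, not congruence modulo $\w{t}$), which is false: in $\f_{7^3}$, with $\w{t}=(7^3-1)/6=57$, the elements $\a$ and $\a^{115}$ are both primitive and give arrays with identical coverage, since $115\equiv 1\pmod{57}$ makes $\a^{115}$ an $\fq$-multiple of $\a$, yet $115$ is not a power of $7$ modulo $342$. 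The gap in that proof is the inference from $y^l\in\fqt$ to $y\in\fqt$, which ignores $l$-th roots of unity outside $\fqt$; your formulation avoids this pitfall.

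There is, however, a genuine gap in your handling of the primitivity clause of Part 1: the ``short number-theoretic lemma'' you propose is false. Take $q=p=7$ and $t=3$, so $\w{t}=57$ and $q^t-1=342$. The cyclotomic coset of $8$ is $\{8,56,50\}$ (indeed $8\cdot 7=56$, $56\cdot 7\equiv 50$ and $50\cdot 7\equiv 8\pmod{57}$); all three residues are even, although $2\mid q-1$ and $2\nmid 57$, which kills your sub-claim that for $\ell\nmid\w{t}$ the residues $mp^r\bmod\w{t}$ cannot all be divisible by $\ell$. Moreover this coset does arise from a primitive exponent: $65=5\cdot 13$ is coprime to $342$ and $65\equiv 8\pmod{57}$, so $\beta=\a^{65}$ is primitive. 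Consequently no member $i\in\{8,56,50\}$ of the coset makes $\a^i$ primitive, and Part 1 is in fact false as literally stated for this $\beta$; no lemma can close the gap because the defect sits in the statement itself. To be fair, the paper's own proof is silent on exactly this point: it never verifies that $\a^u$ or $\a^i$ is primitive, and it applies \Cref{lemma:SameCoverageOfPrimitiveElements} and \Cref{lemma:ijEquivalentIffJIsInCosetOfi} to exponents (here $u=8$) that violate their hypotheses. The correct repair is to read Part 1 at the level of residue classes modulo $\w{t}$, allowing a coset's leader to be any integer representative of its class (here $65$ itself); with that reading your reduction goes through and your number-theoretic lemma becomes unnecessary.

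Finally, your parenthetical restriction to $t\geq 3$ is not a cosmetic caveat but a necessary correction. For $t=2$ every principal cyclic trace array is an $\OA(2,q+1,q)$, so any two of them trivially have the same $2$-coverage, while $\Z_{q+1}^{\times}$ can split into several cyclotomic cosets with distinct leaders giving primitive elements (e.g.\ $q=13$: cosets $\{1,13\},\{3,11\},\{5,9\}$ modulo $14$, with $1$, $11$, $5$ all coprime to $168$). Hence Part 2 fails at $t=2$, a restriction the paper's hypothesis $t\geq 2$ overlooks.
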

\begin{remark}
   {}
   {ImplicationsOfMainTheorem}
    Before we give the proof of \Cref{theorem:CoverageAndCosets}, we discuss its implications regarding \Cref{problem:ChoiceOfPrimElements}.
    First, we observe that we can write $\gpw=X\cup X'$ where
    \begin{align*}
        X&=\{i \in \gpw \mid \text{ there exists } j \in \cipw \text{ such that } \a^j \text{ is primitive}\}\\
        &=\{i \in \gpw \mid \text{ there exists } j \in \cipw \text{ such that } \gcd(j,q^t-1)=1\},
    \end{align*}
    and $X'=\gpw \setminus X$.
    Without loss of generality, we can choose the elements of $\gpw$ so that $\a^i$ is primitive for every $i \in X$.
    Then, it follows from \Cref{theorem:CoverageAndCosets} that in order to solve \Cref{problem:ChoiceOfPrimElements}, it suffices to run \Cref{algorithm:Backtracking} for $P=\{\a^{i_0}, \dots, \a^{i_{l-1}}\}$, for all the $l$-sets $\{i_0, \dots, i_{l-1}\}\in \binom{X}{l}$.
    From the next proposition we have that $|\gpw|=\phi(\w{t})/te$, where $q=p^e$ for a prime $p$. 
    Hence, we conclude that to solve \Cref{problem:ChoiceOfPrimElements}, it suffices to run \Cref{algorithm:Backtracking} at most $\binom{\phi{\w{t}}/te}{l}$ times.
\end{remark}

\begin{proposition}{}{CipwCardinality}
    Let $p$ be a prime, and $e,q,t$ be integers with $e>0$, $q=p^e$, $t\geq 2$.
    Then, we have that $|\cipw|=te$, for all $i \in \Zwx$.
\end{proposition}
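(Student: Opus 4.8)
The plan is to reduce the claim to the computation of a single multiplicative order and then control that order by an elementary size estimate. Write $w=\w{t}$ and $q=p^{e}$. Since $i\in\Zwx$ we have $\gcd(i,w)=1$, so $i$ is a unit modulo $w$; consequently $ip^{r}\equiv i\Mod{w}$ holds if and only if $p^{r}\equiv 1\Mod{w}$. This shows that the distinct elements of $\cipw$ are exactly $i,\,ip,\,\dots,\,ip^{m-1}$, where $m=\ord_w(p)$ is the multiplicative order of $p$ modulo $w$. Note that this order is well defined because $w\mid q^{t}-1$ and $q^{t}-1\equiv -1\Mod{p}$, so $\gcd(p,w)=1$. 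Hence $|\cipw|=\ord_w(p)$ for \emph{every} $i\in\Zwx$, and the proposition becomes the single assertion $\ord_w(p)=te$.

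First I would observe that $\ord_w(p)\mid te$: by the definition of $w$ we have $w=(p^{te}-1)/(p^{e}-1)$, so $w\mid p^{te}-1$, i.e.\ $p^{te}\equiv 1\Mod{w}$. It therefore remains only to rule out every proper divisor of $te$ as a possible order. The key quantitative ingredient is the strict lower bound
\[
    w=1+p^{e}+p^{2e}+\dots+p^{(t-1)e}>p^{(t-1)e},
\]
which is strict precisely because $t\geq 2$ guarantees at least two summands.

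Next, let $m$ be any proper divisor of $te$. Since $te/m\geq 2$ we have $m\leq te/2$, and the inequality $te/2\leq e(t-1)$ holds exactly because $t\geq 2$. Combining these gives $m\leq e(t-1)$, whence $p^{m}\leq p^{e(t-1)}<w$ by the displayed bound, so that $1\leq p^{m}-1<w$ and therefore $w\nmid p^{m}-1$, i.e.\ $p^{m}\not\equiv 1\Mod{w}$. Thus no proper divisor of $te$ can be a multiple of $\ord_w(p)$; in particular $\ord_w(p)$ itself is not a proper divisor of $te$. Together with $\ord_w(p)\mid te$ this forces $\ord_w(p)=te$, which proves $|\cipw|=te$.

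The only delicate point is the boundary case $t=2$, where both inequalities $m\leq te/2$ and $te/2\leq e(t-1)$ degenerate to equalities; I would verify it separately, noting that even then $p^{m}\leq p^{e}<1+p^{e}=w$, so $w\nmid p^{m}-1$ and the argument goes through unchanged. Apart from this check, the proof is routine, so I do not anticipate a genuine obstacle; the main thing to get right is the inequality chain $m\leq te/2\leq e(t-1)<\log_{p}w$ and the observation that $\ord_w(p)$ is the same for all units $i$, which is what lets the statement hold uniformly over $i\in\Zwx$.
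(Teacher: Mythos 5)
Your proof is correct and takes essentially the same route as the paper's: both reduce the claim to showing that the multiplicative order of $p$ modulo $\w{t}$ equals $te$, observe that this order divides $te$, and then exclude smaller values via the same two ingredients, namely the size estimate $\w{t} > p^{(t-1)e}$ and the fact that a proper divisor of $te$ is at most $te/2 \leq (t-1)e$ when $t\geq 2$. The paper organizes this as a contradiction argument on the order $r$ itself (deriving $r > te-e$ from $\w{t}\mid p^r-1$ against $r\leq te/2$), whereas you rule out every proper divisor directly, but the underlying inequalities are identical.
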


\begin{proof}
    The size of $\cipw$ is equal to the smallest positive integer $r$ such that
    $ip^r\equiv i \Mod {\w{t}}$, which is equivalent to ${\w{t}}| p^r-1$, since
    $\gcd(i,\w{t})=1$.
    Now, ${\w{t}}| q^t-1 = p^{te}-1$, and thus $p^{te}\equiv 1 \Mod {\w{t}}$.
    We note that, by its definition, $r$ is the order of $p$ in $\Zwx$ and
    therefore  $p^{te}\equiv 1 \Mod {\w{t}}$ implies that $r | te$, hence $r \leq te$.
    We assume by means of contradiction that $r<te$.
    Then, since $r| te$, it must be $r \leq te/2$.
    On the other hand, since ${\w{t}} | p^r-1$, we have that 
    \[p^r-1 \geq {\w{t}} > p^{te-e}-1,\]
    and thus $r> te-e$.
    Then $te/2 > te-e$ which simplifies to $t<2$, contradicting our assumption that $t\geq 2$.
    We conclude that $r=te$.
\end{proof}

The rest of the section is dedicated to the proof of
\Cref{theorem:CoverageAndCosets}, which we present after
\Cref{lemma:SameCoverageOfPrimitiveElements,lemma:LGoesOutThenIsPowerOfP,lemma:SameZerosThenGammInFqAndLOutSide,lemma:ijEquivalentIffJIsInCosetOfi}.

\begin{lemma}{}{SameCoverageOfPrimitiveElements}
    Let $q$ be a prime power, $t$ be an integer with $t\geq 2$, and
    $\a, \beta$ primitive elements of $\fqt$.
    Then, $\ZA{\a}$ and $\ZA{\beta}$ have the same $t$-coverage if and only if
    there exists $\gamma \in \fqt$ such that, for all $s \in [0,q^t-2]$,
    $
        \Tt(\a^s)=0
        \text{ if and only if }
        \Tt(\gamma\beta^s)=0.
    $
\end{lemma}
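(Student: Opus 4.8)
The plan is to translate both sides of the claimed equivalence into a single combinatorial statement about the \emph{zero sets} $Z_\a=\{\,s\in[0,q^t-2]\mid \Tt(\a^s)=0\,\}$ and $Z_\beta=\{\,s\in[0,q^t-2]\mid \Tt(\beta^s)=0\,\}$, regarded as subsets of $\Z_{q^t-1}$. Three reductions accomplish this. First, for a $t$-subset $I\subseteq[0,\w{t}-1]$, the columns of $\ZA{\a}$ indexed by $I$ are covered if and only if $\{\a^i\mid i\in I\}$ is linearly independent over $\fq$, by \Cref{item:a,item:b} of \Cref{proposition:SebastianExtended}; hence, via \Cref{definition:ArraysWithEquivalentCoverage}, $\ZA{\a}$ and $\ZA{\beta}$ have the same $t$-coverage exactly when, for every such $I$, the sets $\{\a^i\}_{i\in I}$ and $\{\beta^i\}_{i\in I}$ are simultaneously dependent or independent. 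Second, rerunning the surjectivity argument of \Cref{proposition:SebastianExtended} for the square map $\varphi\colon\fqt\to\fq^t$, $x\mapsto(\Tt(x\a^i))_{i\in I}$, and using that a linear endomorphism is surjective iff injective, $\{\a^i\}_{i\in I}$ is dependent iff $\ker\varphi\neq\{0\}$, i.e.\ iff there is $s\in\Z_{q^t-1}$ with $I+_{q^t-1}s\subseteq Z_\a$ (take $x=\a^s$). Third, writing $\gamma=\beta^u$ (note $\gamma=0$ is impossible, since $Z_\a$ is a proper subset of $\Z_{q^t-1}$ whereas $\gamma=0$ would force every $s$ into the zero set), the right-hand condition says precisely $Z_\a=Z_\beta+_{q^t-1}(-u)$, i.e.\ that $Z_\beta$ is a cyclic shift of $Z_\a$. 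Thus the lemma becomes the assertion that the ``fits-by-a-shift'' data $\{I:\exists s,\ I+_{q^t-1}s\subseteq Z_\a\}$ and its analogue for $Z_\beta$ agree on all $t$-subsets of $[0,\w{t}-1]$ if and only if $Z_\beta$ is a shift of $Z_\a$.

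The backward direction is then immediate: if $Z_\beta=Z_\a+_{q^t-1}c$, then $I+_{q^t-1}s\subseteq Z_\a$ iff $I+_{q^t-1}(s+c)\subseteq Z_\beta$, so a shift of $I$ fits in $Z_\a$ iff one fits in $Z_\beta$, and by the reductions the two arrays have the same $t$-coverage.

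For the forward direction I would reconstruct the hyperplanes of $PG(t-1,q)$ from the coverage data. Since $\Tt$ is $\fq$-linear and scalar multiples have proportional traces (\Cref{lemma:CharacterizationOfConstantMultiplesInFQM}), membership $s\in Z_\a$ depends only on $s\bmod\w{t}$; hence $Z_\a$ is the full preimage, under reduction mod $\w{t}$, of $\bar Z_\a:=Z_\a\cap[0,\w{t}-1]$, the index set of the $\w{t-1}$ points of $PG(t-1,q)$ lying on the hyperplane $\ker\Tt$ (\Cref{proposition:ConstructionOfPGDQFromFF}). The cyclic shifts of $\bar Z_\a$ in $\Z_{\w{t}}$ are exactly the point-index sets of the $\w{t}$ hyperplanes. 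The crux is the rigidity statement: a set $M$ of points of $PG(t-1,q)$ lies in a common hyperplane if and only if every $t$-subset of $M$ does. One direction is trivial; for the other, if $M$ spanned the whole space it would contain $t$ independent points (a basis), whose own $t$-subset would then be covered rather than dependent (\Cref{lemma:IndependenceAndGeneralPosition}), a contradiction, so $M$ lies in a proper flat and hence in a hyperplane. Consequently the maximal subsets of $[0,\w{t}-1]$ all of whose $t$-subsets are \emph{not} covered are precisely the hyperplane patterns, i.e.\ the shifts of $\bar Z_\a$, and these maximal sets are determined by the coverage relation alone.

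Since that relation is assumed identical for $\a$ and $\beta$, the two families of hyperplane patterns coincide, so the set of cyclic shifts of $\bar Z_\a$ equals that of $\bar Z_\beta$; in particular $\bar Z_\beta=\bar Z_\a+_{\w{t}}c$ for some $c$. Lifting through the coset structure gives $s\in Z_\beta$ iff $(s-c)\bmod\w{t}\in\bar Z_\a$ iff $s-c\in Z_\a$, so $Z_\beta=Z_\a+_{q^t-1}c$, which is the desired shift and yields the required element $\gamma=\beta^{c}$. I expect the main obstacle to be exactly the rigidity lemma, and more precisely the verification that the maximal ``all $t$-subsets uncovered'' sets are full hyperplanes rather than larger configurations; this is where the spanning/basis argument and the bijection between $[0,\w{t}-1]$ and the points of $PG(t-1,q)$ are indispensable.
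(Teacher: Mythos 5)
Your proposal is correct, but its forward direction follows a genuinely different route from the paper's. The paper argues locally: it picks exponents $i_1,\dots,i_{t-1}\in[0,\w{t}-1]$ so that $\{\a^{i_1},\dots,\a^{i_{t-1}}\}$ is a basis of $\ker\Tt$, notes that the corresponding columns of $\A{\a}$ share a zero row, transfers this uncovered configuration to $\A{\beta}$ via the coverage hypothesis to produce $\gamma$ with $\gamma\beta^{i_j}\in\ker\Tt$, and then checks pointwise, for each $s$ with $\Tt(\a^s)=0$, that the uncovered $t$-set $\{i_1,\dots,i_{t-1},s\}$ forces $\gamma\beta^s\in\ker\Tt$ by a span argument inside $\ker\Tt$. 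You instead argue globally: you recast both sides as statements about the zero sets $Z_\a,Z_\beta\subseteq\Z_{q^t-1}$, prove a rigidity lemma (if every $t$-subset of a point set is linearly dependent, the whole set lies in a hyperplane), deduce that the maximal ``all $t$-subsets uncovered'' column sets are exactly the hyperplane patterns, and conclude from coverage equality that $\bar Z_\beta$ is a cyclic shift of $\bar Z_\a$. Your version is conceptually cleaner --- it exhibits the lemma as the statement that the two zero sets are cyclic shifts of one another, which matches the difference-set picture of \Cref{proposition:ConnectionOfSeqMatrixAndDifferenceSets} --- while the paper's version is more elementary and never needs to identify the full hyperplane family. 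That identification is the one step you assert without proof: that the shifts $\bar Z_\a+_{\w{t}}j$ exhaust \emph{all} hyperplane patterns, equivalently that every hyperplane of $PG(t-1,q)$ equals $\a^{j}\ker\Tt$ for some $j$. You genuinely need this, because coverage equality only tells you that $\bar Z_\beta$ is the pattern of \emph{some} hyperplane in the $\a$-labelling. Fortunately it follows in one line from \Cref{lemma:TraceBetaXIsZeroForAllXIffBetaIsZero}: the $\fq$-linear map $\lambda\mapsto\Tt(\lambda\,\cdot\,)$ from $\fqt$ to its $\fq$-dual is injective, hence bijective by dimension count, so every hyperplane is $\ker\Tt(\lambda\,\cdot\,)=\lambda^{-1}\ker\Tt=\a^{j}\ker\Tt$ for suitable $j$, and its pattern is $\bar Z_\a+_{\w{t}}j$ by \Cref{lemma:CharacterizationOfConstantMultiplesInFQM}. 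With that fact supplied your argument is complete; your backward direction is essentially the paper's, phrased through zero sets rather than through zero rows and \Cref{proposition:SebastianExtended}.
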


\begin{proof}
    We denote the column vectors of $\ZA{\a}$ and
    $\ZA{\beta}$ by
    $A_0, \dots, A_{\w{t}-1}$, and $B_0, \dots, B_{\w{t}-1}$, respectively.

    ``$\Leftarrow$''
    Let $I\subseteq [0,\w{t}-1]$ such that $|I|=t$ and
    $\left\{ A_i\mid i\in I \right\}$
    is covered; we need to show that
    $\left\{ B_i\mid i\in I \right\}$
    is also covered.
    We suppose by means of contradiction that
    this is not the case.
    Then, by \Cref{item:f} of
    \Cref{proposition:SebastianExtended},
    there exists a row of $\A{\beta}$ that has zeros in the entries
    corresponding to the columns $B_i$, $i\in I$.
    In other words, there exists some $r \in [0, q^t-2]$ such that
    \[
        \left( \A{\beta}_{r,i} \right)_{i \in I}
        =\left(  \Tt(\a^r\a^{i})\right)_{i \in I}
        =(0,\dots, 0).
    \]
    From the definition of $\A{\beta}$, the latter means that there
    exists $\gamma\in \fqt^*$ such that $\T(\gamma\beta^i)=0$ for all $i \in I$.
    Setting $\gamma = \a^r$, from our assumptions it follows that
    $\Tt(\a^i)=0$ for all $i\in I$,
    which means that the first row of the columns
    $A_i$, $i \in I$ is a row of zeros.
    Then, by \Cref{item:f} of
    \Cref{proposition:SebastianExtended},
    the columns $A_i$, $i \in I$ is uncovered; this contradicts our
    assumptions.

    ``$\Rightarrow$''
    By
    \Cref{lemma:CharacterizationOfConstantMultiplesInFQM}, 
    $\a^{\w{t}}$ is a primitive element of $\fqstar$, thus for every $s\in [0,
    q^t-2]$ we have that
    $\a^s=c\a^u$, where $u \in [0,\w{t}-1]$, $u\equiv s\Mod w$, and $c \in \fq$.
    Hence, from the linearity of the trace over $\fq$, it is sufficient  to
    prove this direction for all $s \in [0,\w{t}-1]$.
    Now, $\ker(\T)$ is a vector space over
    $\fq$ with dimension $t-1$, as implied by
    \Cref{proposition:TraceDefinesQMinusOneToOneMapping}.
    Since $\a$ is primitive, it follows from the above that there exist $i_1,
    \dots, i_{t-1} \in [0,\w{t}-1]$ such that
    $\mathcal{B}=\{ \a^{i_1}, \dots, \a^{i_{t-1}}\}$
    is also a basis for $\ker(\T)$.
    This means that $\a^{i_1}, \dots, \a^{i_{t-1}}$ are linearly
    independent and the first row of $\A{\a}$ has zeros at the columns
    $A_i$, $i \in \{ i_1, \dots, i_{t-1} \}$.
    Then, from
    \Cref{proposition:SebastianExtended}
    and our assumption that $\A{\a}$ and $\A{\beta}$ have the same
    $t$-coverage, we have that $\beta^{i_1}, \dots, \beta^{i_{t-1}}$ are also
    linearly independent, and there exists a row of $M(\beta)$ with zeros at
    the columns
    $B_i$, $i \in \{ i_1,\dots, i_{t-1} \}$.
    The latter means that there exists $\gamma \in \fqt^*$ such that
    $\T(\gamma\beta^i)=0$ for all $i \in \{ i_1, \dots, i_{t-1} \}$.
    We conclude that $\mathcal{B}'=\{ \gamma\beta^{i_1}, \dots,
    \gamma\beta^{i_{t-1}}\}$ is a basis for $\ker(\T)$.

    Now, suppose that $\T(\a^s)=0$ for some $s \in [0,\w{t}-1]$.
    Then, by
    \Cref{proposition:SebastianExtended},
    the set of columns
    $\{ A_{i_1}, \dots, A_{i_{t-1}}, A_s\}$ of
    $\A{\a}$ is not covered and thus the set of columns
    $\{B_{i_1}, \dots, B_{i_{t-1}}, B_s\}$ of
    $\A{\beta}$ is also not covered, from our assumption that
    $\A{\a}$ and $\A{\beta}$ have the same $t$-coverage.
    Hence there exists a row of $\A{\a}$ with zeros at the columns
    $B_{i_1}, \dots, B_{i_{t-1}}, B_s$, and so there exists
    $\delta \in \fqt^*$ such that
    $\delta\beta^i \in \ker(\T)$ for all
    $i\in \{i_1, \dots, i_{t-1}, s\} $.
    Since $\mathcal{B}'$ is a basis for $\ker(\T)$, we have that $\delta =
    c\gamma$ for some $c \in \fqstar$.
    Then $\delta \beta^s \in \ker(\T)$ implies that
    $\gamma\beta^s \in \ker(\T)$ from the linearity of the trace.
\end{proof}

\begin{lemma}{}{LGoesOutThenIsPowerOfP}
    Let $p$ be a prime, $q,e,t,l$ be integers with $e>0$, $q=p^e$, $t\geq 2$,
    and $l\in [1,q^t-2]$.
    Then, $\Tt(x^l) = \Tt(x)^l$ for all $x \in \fqt$ if and only if
    $l=p^r$ for some $r \in [0,te-1]$.
\end{lemma}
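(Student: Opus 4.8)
The plan is to treat both sides as polynomial functions on $\fqt$ and to use the standard fact that every function $\fqt \to \fqt$ is represented by a \emph{unique} polynomial of degree less than $q^t$; then the hypothesis $\Tt(x^l)=\Tt(x)^l$ becomes an identity of reduced polynomials, which I can compare monomial by monomial. Throughout, write $q=p^e$ and let $w_q(n)$ denote the sum of the base-$q$ digits of $n$.

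The easy direction is immediate from Frobenius: if $l=p^r$, then since $x\mapsto x^p$ is additive in characteristic $p$, repeated application gives
\[
    \Tt(x)^{p^r}=\Bigl(\textstyle\sum_{j=0}^{t-1}x^{q^j}\Bigr)^{p^r}=\sum_{j=0}^{t-1}x^{p^r q^j}=\Tt(x^{p^r}),
\]
and the range $r\in[0,te-1]$ is exactly where the $p^r$ are distinct and lie in $[1,q^t-2]$ (using $q^t\ge 4$).

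For the converse the first key step is to simplify the right-hand side. Let $m$ be the unique integer in $[1,q-1]$ with $m\equiv l \Mod{q-1}$. Since $c^l=c^m$ for every $c\in\fq$ (including $c=0$, as $l,m\ge 1$) and $\Tt(x)\in\fq$, the function $\Tt(x)^l$ equals $\Tt(x)^m$. The payoff is that, because $m\le q-1$, expanding $\Tt(x)^m=\bigl(\sum_{i=0}^{t-1}x^{q^i}\bigr)^m$ by the multinomial theorem produces only exponents $\sum_i a_iq^i$ with $\sum_i a_i=m\le q-1$ and each $a_i\le q-1$; these are genuine base-$q$ digit expansions, so the exponents are pairwise distinct, lie in $[0,q^t-2]$, and require no reduction modulo $q^t-1$. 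Hence $\Tt(x)^m$ is \emph{already} reduced, and the coefficient of $x^{\sum_i a_iq^i}$ is exactly $\binom{m}{a_0,\dots,a_{t-1}}\bmod p$, which by Lucas' theorem is nonzero precisely when the base-$p$ addition of $a_0,\dots,a_{t-1}$ has no carries. On the other side, $\Tt(x^l)=\sum_{j=0}^{t-1}x^{lq^j}$ reduces modulo $q^t-1$ to monomials whose exponents are the cyclic rotations of the base-$q$ digit vector of $l$, so every exponent on the left has the same digit sum $w_q(l)$.

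The comparison then finishes as follows. The composition $(m,0,\dots,0)$ contributes $x^{m}$ with coefficient $1$ to $\Tt(x)^m$, so $x^m$ must occur in $\Tt(x^l)$; as the latter's exponents are rotations of the digit vector of $l$, this forces $l$ to have a single nonzero base-$q$ digit equal to $m$, i.e. $l=mq^s$ for some $s\in[0,t-1]$. Then $\Tt(x^l)=\sum_{j=0}^{t-1}x^{mq^j}$ is exactly the $t$ monomials with one nonzero base-$q$ digit equal to $m$. Finally, I would show $m$ is a power of $p$: if not, then either some base-$p$ digit of $m$ is at least $2$ or $m$ has two distinct nonzero base-$p$ digits, and in either case one can write $m=a+b$ with $a,b\ge 1$ and no base-$p$ carry; the carry-free composition $(a,b,0,\dots,0)$ gives a monomial $x^{a+bq}$ with two nonzero base-$q$ digits, nonzero coefficient in $\Tt(x)^m$ by Lucas, and matching no monomial of $\Tt(x^l)$, a contradiction. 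Hence $m=p^i$ with $i\in[0,e-1]$, and $l=p^iq^s=p^{\,i+es}$ with $i+es\in[0,te-1]$, as required. The main obstacle is precisely the bookkeeping on the right-hand side: it is the reduction to the exponent $m\le q-1$ that kills all carries and all reductions modulo $q^t-1$ and turns coefficient identification into a clean Lucas computation, and getting the carry/digit-sum analysis exactly right—in particular checking no exponent ever collapses to $q^t-1$, which is where the hypothesis $l\le q^t-2$ enters—is the delicate point.
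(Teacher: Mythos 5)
Your proof is correct, but it takes a genuinely different route from the one in the thesis. The thesis proof is a descent argument: it compares the coefficient of the single monomial $x^{1+(l-1)q}$ in the expansions of $\Tt(x^l)$ and $\Tt(x)^l$, concluding $l\equiv 0\Mod{p}$ whenever $l>1$; it then writes $l=kp^r$ with $p\nmid k$, uses the Frobenius to take $p^r$-th roots of the identity and obtain $\Tt(x^k)=\Tt(x)^k$ for all $x$, and repeats the coefficient comparison to force $k=1$. You instead normalize the exponent on the right-hand side first (replacing $l$ by $m\equiv l\Mod{q-1}$ with $1\le m\le q-1$, legitimate because $\Tt(x)\in\fq$), which makes the multinomial expansion of $\Tt(x)^m$ collision-free and reduction-free, and then you read off the structure of $l$ from base-$q$ digit rotations and Lucas/Kummer: first $l=mq^s$, then $m=p^i$, so $l=p^{i+es}$. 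What each approach buys: the thesis argument is shorter and needs no digit combinatorics, but its coefficient extraction is performed on the \emph{unreduced} expansions — functional equality on $\fqt$ only gives equality of polynomials modulo $x^{q^t}-x$ — and the claimed value $[x^{1+(l-1)q}]\,\Tt(x)^l=l$ silently ignores both exponent aliasing modulo $q^t-1$ and the additional compositions of $l$ (those involving base-$q$ carries, which exist once $l\ge q+2$ and $t\ge 3$) that land on the same exponent. Your normalization to $m\le q-1$ is exactly what eliminates these two issues, so your version is the more airtight of the two, at the cost of invoking Lucas' theorem and a longer structural analysis.
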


\begin{proof}
    In this proof we denote $\T=\Tt$.
    If $l=p^r$,$r\in [0,te-1]$, then $\T(x^l) = \T(x)^l$ from the properties of
    the Frobenius automorphism in $\fq$.
    Conversely, suppose that $\T(x^l) =(\T(x))^l$ for all $x \in \fqt$.
    For a polynomial $f$ on $x$, we denote by $[x^n]f(x)$ the coefficient of
    $x^n$ in $f$.  We observe that
    \begin{equation}
        \label{equation:coefficientoftr_x_l}
        [x^{1+(l-1)q}]\T(x^l)=
        \begin{cases}
            1,  & \text{ if } l = 1 \\
            0,  & \text{ otherwise},
        \end{cases}
    \end{equation}
    and
    \begin{equation}
        \label{equation:coefficientof_trx_l}
        [x^{1+(l-1)q}]\left(\T(x)\right)^l=
        \begin{cases}
            1,  & \text{ if } l = 1 \\
            l,  & \text{ otherwise}.
        \end{cases}
    \end{equation}
    If $l=1$, then $l=p^r$ with $r=0$.
    If $l>1$, then it follows from
    \Cref{equation:coefficientoftr_x_l,equation:coefficientof_trx_l},
    and our assumption that $\T(x^l) = \T(x)^l$, that $l \equiv 0 \Mod p$.
    Hence $l= k p^r$ for some positive integers $k,r$, with $0 < r < te$, and $p \nmid k$.
    We have that, for all $x \in \fqt$,
   \begin{align}
   \left(T(x^k)\right)^{p^r}
        =  \T(x^{kp^r})
        =  \T(x^l)
    =  \left(\T(x)\right)^l
        =  \left( \T(x)^k \right)^{p^r}.
   \label{equation:kgoesoutofthetracelikel}
   \end{align}
    Taking $p^r$-th roots in
    \Cref{equation:kgoesoutofthetracelikel}
    yields that
$\T(x^k) = (\T(x))^k$ for all $x \in \fqt$.
    By comparing the coefficients of $\T(x^k)$ and $\left(\T(x)\right)^k$ in the same way as
    we did for $\T(x^l)$ and $(\T(x))^l$, we have that either $k=1$, or
    $k\equiv 0 \Mod p$.
    Since we have assumed that $p \nmid k$, it must be $k=1$, and thus $l =p^r$.
\end{proof}

\begin{lemma}{}{SameZerosThenGammInFqAndLOutSide}
    Let $p$ be a prime, $q,e,t$ be integers such that $e>0$, $q=p^e$, $t\geq 2$,
    and $\a,\beta$ be primitive elements of $\fqt$.
    Then $\ZA{\a}$ and $\ZA{\beta}$ have the same coverage if and only if
    $\beta=\a^{p^r}$, for some $r\in [0, te-1]$.
\end{lemma}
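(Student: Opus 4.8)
The plan is to prove both implications by translating the ``same coverage'' hypothesis, via Lemma~\ref{lemma:SameCoverageOfPrimitiveElements}, into a statement purely about the kernel of the trace map, and then to exploit the multiplicative description $\beta=\a^l$ together with the characterization of $p$-power exponents supplied by Lemma~\ref{lemma:LGoesOutThenIsPowerOfP}. Throughout I write $q=p^e$ and $n=q^t-1$, and I set $H=\ker\Tt$, an $\fq$-hyperplane of $\fqt$ of dimension $t-1$.

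For the easy direction ($\Leftarrow$) I would assume $\beta=\a^{p^r}$ with $r\in[0,te-1]$. Since $\gcd(p^r,n)=1$, the element $\a^{p^r}$ is indeed primitive. By Lemma~\ref{lemma:LGoesOutThenIsPowerOfP} we have $\Tt(x^{p^r})=\Tt(x)^{p^r}$ for all $x\in\fqt$, and since $z\mapsto z^{p^r}$ is injective on $\fq$ this gives $\Tt(x^{p^r})=0$ iff $\Tt(x)=0$. Taking $x=\a^s$ yields $\Tt(\beta^s)=\Tt((\a^s)^{p^r})=\Tt(\a^s)^{p^r}$, so $\Tt(\a^s)=0\iff\Tt(\beta^s)=0$ for every $s$. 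Hence the condition of Lemma~\ref{lemma:SameCoverageOfPrimitiveElements} holds with $\gamma=1$, and $\ZA{\a}$ and $\ZA{\beta}$ have the same $t$-coverage.

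For the hard direction ($\Rightarrow$) I would apply Lemma~\ref{lemma:SameCoverageOfPrimitiveElements} to obtain $\gamma\in\fqt$ with $\Tt(\a^s)=0\iff\Tt(\gamma\beta^s)=0$ for all $s$; since $H$ is a proper subset of $\fqt$ we must have $\gamma\neq0$. Writing $\beta=\a^l$ with $\gcd(l,n)=1$ (as $\beta$ is primitive) and letting $x=\a^s$ range over $\fqtstar$, the condition becomes $\Tt(x)=0\iff\Tt(\gamma x^l)=0$ for all $x\in\fqt$. As $x\mapsto x^l$ is a bijection of $\fqtstar$, this says exactly that the $l$-th power map carries $H$ onto the hyperplane $\gamma^{-1}H$. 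The target is to show that such exponent $l$ must be a power of $p$, i.e.\ $l\equiv p^r\pmod{n}$. Once this is established, the distinctness of $p^0,\dots,p^{te-1}$ modulo $n$ (the order of $p$ modulo $p^{te}-1$ is exactly $te$, by the same computation as in the proof of Proposition~\ref{proposition:CipwCardinality}) lets me take $r\in[0,te-1]$, giving $\beta=\a^{p^r}$ as claimed.

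The main obstacle is precisely this last step: deducing from ``$x\mapsto x^l$ sends the single hyperplane $H$ to a hyperplane'' that $l$ is a power of $p$. I would attack it by passing to the reduced polynomial $g(x)=\Tt(\gamma x^l)=\sum_{i=0}^{t-1}\gamma^{q^i}x^{(lq^i)\bmod n}$, whose support is a single Frobenius orbit of exponents and which vanishes exactly on the $(t-1)$-dimensional $\fq$-subspace $H$. The plan is to compare the indicator functions $\Tt(x)^{q-1}$ and $g(x)^{q-1}$ of $\fqt\setminus H$, which coincide as functions and hence as reduced polynomials, and to argue that this forces $g$ to be $\fp$-additive, a $p$-linearized polynomial; then every exponent occurring in $g$, in particular $l$ itself, is a power of $p$, so $l=p^r$, and Lemmas~\ref{lemma:TraceBetaXIsZeroForAllXIffBetaIsZero} and~\ref{lemma:LGoesOutThenIsPowerOfP} identify the situation with the one in the easy direction. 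Care will be needed with the exponent reductions modulo $n$ and with the degenerate small-$t$ behaviour, where a power map sends hyperplanes to hyperplanes more freely and the constraint is correspondingly weaker.
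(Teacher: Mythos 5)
Your easy direction and the opening of your hard direction coincide with the paper's: both pass through \Cref{lemma:SameCoverageOfPrimitiveElements}, write $\beta=\a^l$ with $\gcd(l,q^t-1)=1$, and reduce everything to the condition that $\Tt(x)=0$ if and only if $\Tt(\gamma x^l)=0$ for all $x\in\fqt$. The problem is what comes next: the passage from this condition to ``$l$ is a power of $p$'' is the entire mathematical content of the lemma, and your proposal only announces a plan for it (``compare the indicator functions $\Tt(x)^{q-1}$ and $g(x)^{q-1}$ \dots and argue that this forces $g$ to be $\fp$-additive'') without carrying it out. Everything you actually prove is bookkeeping that the paper also does; the step you defer is the one on which the paper spends the second half of its proof, where it argues that the polynomial $\Tt(\gamma x^l)$ splits over $\fqt$, factors it as $\gamma^{q^{t-1}}\Tt(x)^l$, deduces $\gamma\in\fq$, and only then applies \Cref{lemma:LGoesOutThenIsPowerOfP} to get $l=p^r$. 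So the proposal, as written, is not a proof.

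Moreover, the inference you sketch cannot be completed from the hypotheses you allow yourself, because it is false under them. Take $q=3$, $t=2$, $\a$ primitive in $\f_9$, $l=5$, $\gamma=1$. Then $\ker \T_{3^2/3}=\{0,\a^2,\a^6\}$ and $(\a^2)^5=\a^{10}=\a^2$, $(\a^6)^5=\a^{30}=\a^6$, so $\T(x)=0$ if and only if $\T(x^5)=0$ for every $x\in\f_9$; consequently $\T(x)^{q-1}$ and $\T(x^5)^{q-1}$ agree as functions, hence as polynomials reduced modulo $x^9-x$. Yet the reduction of $g(x)=\T(x^5)=x^5+x^{15}$ is $x^5+x^7$, which is not $p$-linearized, and $5$ is not a power of $3$ modulo $8$. (In fact this example shows the statement itself fails at $t=2$: by \Cref{corollary:RaoHammingOAs} both $\ZA{\a}$ and $\ZA{\a^5}$ are orthogonal arrays of strength $2$, hence trivially have the same $2$-coverage, while $\a^5\notin\{\a,\a^3\}$. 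Any correct argument must therefore use $t\geq 3$ in an essential way; your closing remark that ``care will be needed \dots with the degenerate small-$t$ behaviour'' is exactly where the proof has to live, not a detail to check at the end. The same example also exposes the delicate point of the paper's own argument, whose splitting step concludes $y\in\fqt$ from $y^l\in\fqt$ and $\gcd(l,q^t-1)=1$, overlooking the $l$-th roots of unity lying outside $\fqt$.)
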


\begin{proof}
    If $\beta=\a^{p^r}$ for some $r\in [ 0,te-1]$, then for every
    $s \in [0,q^t-2]$
    we have that
    $\T(\beta^s)=\T(\a^{sp^r})=\T(\a^s)^{p^r}$.
    Hence, $\T(\beta^s)=0$ if and only if $\T(\a^s)=0$, and thus
    $\ZA{\a}$ and $\ZA{\beta}$ have the same $t$-coverage,
    from \Cref{lemma:SameCoverageOfPrimitiveElements}.

    For the converse, we assume that $\ZA{\a}$ and $\ZA{\beta}$
    have the same $t$-coverage.
    Since $\a$ is primitive, there exists $l \in \Z_{q^t-1}^{*}$ such that $\beta=\a^l$.
    Then, from
    \Cref{lemma:SameCoverageOfPrimitiveElements},
    there exists $\gamma\in \fqt$ such that, for all $s \in [ 0,q^t-2 ]$, we
    have $\T(\a^s)=0$ if and only if $\T(\gamma\a^{ls})=0$.
    Again from the primitivity of $\a$, we have that
    \[\fqt^*=\left\{ \a^s \mid s \in [0, q^t-2] \right\},\]
    so we conclude from the above that there exists $\gamma \in \fqt$ such
    that, for all $x \in \fqt^{*}$, we have
    \begin{equation}
        \label{equation:tracexzeroifftracegammaxlzero}
        \T(x)=0 \quad \mbox{if and only if}\quad \T(\gamma x^{l})=0.
    \end{equation}

    Let $y$ be an element in some extension of $\fqt$ such that $\T(\gamma y^l)= 0$.
    Then $\gamma y^l = z \in \ker(\T)\subseteq \fqt$, and $y^l = z/\gamma \in \fqt$.
    Since $\gcd(l, q^t-1)=1$, the $l$-th root of $z/\gamma$ exists, and $y = (z/\gamma)^{1/l} \in \fqt$.
    We have proved that $\T(\gamma x^l)$ splits in $\fqt$.
    Now,
    \begin{equation*}
        \T(\gamma x^l) =
        \prod_{a \in \ker{\left( \T \right)}}
        \left( \gamma x^l -a \right).
    \end{equation*}
    Because $\T(\gamma x^l)$ splits in $\fqt$, so does $\gamma x^l - a$ for all
    $a \in \ker(\T)$.
    Furthermore, the only root of $\gamma x^l -a$ is $(a/\gamma)^{1/l}$, and
    its degree is $l$; it follows that it must be
    \[\gamma x^l -a = \gamma\left(x - (a/\gamma)^{1/l}\right)^l,\]
    and hence
    \begin{equation}
        \label{equation:productfactorshaveloutside}
        \T(\gamma x^l) =
        \prod_{a \in \ker{\left( \T \right)}}
        \gamma\left(x - (a/\gamma)^{1/l}\right)^l.
    \end{equation}
    By
    \Cref{equation:tracexzeroifftracegammaxlzero}
    we have that
    \begin{equation*}
        \ker(\T) =
        \left\{
            \left( a/\gamma \right)^{1/l} \mid \; a \in \ker(\T)
        \right\},
    \end{equation*}
    and by
    \Cref{proposition:TraceDefinesQMinusOneToOneMapping}
    we have that $|\ker(\T)|=q^{t-1}$, hence
    \Cref{equation:productfactorshaveloutside}
    becomes
    \begin{align}
        \label{equation:gqmminus1goesoutside}
        \T(\gamma x^l)
        & = \gamma^{q^{t-1}}
            \prod_{a \in \ker(\T)}
            (x-a)^l
        \nonumber\\
        & = \gamma^{q^{t-1}}
        \left(
        \prod_{a \in \ker(\T)}(x-a)
        \right)^{l}
        \nonumber\\
        & = \gamma^{q^{t-1}}
        \T(x)^l.
    \end{align}
    By comparing the coefficient of $x^l$ in $\T(\gamma x^l)$ and
    $\gamma^{q^{t-1}}(\T(x))^l$, we have that $\gamma = \gamma^{q^{t-1}}$,
    which means that $\gamma \in \f_{q^{t-1}}$.
    However $\gamma \in \fqt$, hence $\gamma \in \fqt \cap \f_{q^{t-1}} = \fq$,
    and from the linearity of the trace over $\fq$,
    $\T(\gamma x^l) = \gamma \T(x^l)$.
    \Cref{equation:gqmminus1goesoutside}
    then implies that $\T(x^l) = (\T(x))^l$, and by
    \Cref{lemma:LGoesOutThenIsPowerOfP}
    we have that $l=p^r$, for some integer $r$ such that $r\in[0,te-1]$.
\end{proof}

\begin{lemma}{}{ijEquivalentIffJIsInCosetOfi}
    Let $p$ be prime, and $q,e,t$ integers with $e>0$ and $q=p^e$.
    For all $i,j \in \Z_{q^t-1}^{*}$, we have that $\ZA{\a^i}$ and
    $\ZA{\a^j}$ have the same $t$-coverage if and only if
    $j\bmod {\w{t}} \in \cipw$.
\end{lemma}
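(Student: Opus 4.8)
The plan is to combine the exact characterisation of equal coverage from \Cref{lemma:SameZerosThenGammInFqAndLOutSide} with two ``invariance'' observations about how the $t$-coverage of $\ZA{\a^m}$ depends on the exponent $m$. Throughout, note that since $i,j\in\Zwx$ we have $\gcd(i,q^t-1)=\gcd(j,q^t-1)=1$, so $\a^i$ and $\a^j$ are both primitive; moreover $\gcd(p,q^t-1)=1$ gives that $\a^{ip^r}$ is primitive for every $r$. Hence every array appearing below is of the form $\ZA{\mu}$ for a primitive $\mu$, and \Cref{proposition:SebastianExtended} applies: for a set $I$ with $|I|=t$, the columns of $\ZA{\mu}$ indexed by $I$ are covered if and only if $\{\mu^m\mid m\in I\}$ is linearly independent over $\fq$.

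The first observation is a \emph{scaling invariance}: if $m\equiv m'\Mod{\w{t}}$ and $\a^m,\a^{m'}$ are primitive, then $\ZA{\a^m}$ and $\ZA{\a^{m'}}$ have the same $t$-coverage. To see this I would use \Cref{lemma:CharacterizationOfConstantMultiplesInFQM} to write $\a^m=c\,\a^{m'}$ with $c\in\fqstar$, so that $(\a^m)^u=c^u(\a^{m'})^u$ for every exponent $u$. Since each $c^u$ is nonzero, rescaling the vectors $(\a^{m'})^u$ by the scalars $c^u$ does not affect linear independence; thus $\{(\a^m)^u\mid u\in I\}$ is linearly independent exactly when $\{(\a^{m'})^u\mid u\in I\}$ is, and the two arrays have identical coverage by the reformulation above. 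The second observation is the \emph{Frobenius invariance} already available in \Cref{lemma:SameZerosThenGammInFqAndLOutSide}: for primitive $\beta$, the arrays $\ZA{\beta}$ and $\ZA{\beta^{p^r}}$ have the same $t$-coverage.

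With these in hand both directions are short. For the forward direction, if $\ZA{\a^i}$ and $\ZA{\a^j}$ have the same $t$-coverage, then \Cref{lemma:SameZerosThenGammInFqAndLOutSide} (applied to the primitive pair $\a^i,\a^j$) gives $\a^j=(\a^i)^{p^r}=\a^{ip^r}$ for some $r\in[0,te-1]$, whence $j\equiv ip^r\Mod{q^t-1}$; since $\w{t}\mid q^t-1$ this forces $j\equiv ip^r\Mod{\w{t}}$, i.e. $j\bmod\w{t}\in\cipw$. For the converse, suppose $j\bmod\w{t}\in\cipw$, so $j\equiv ip^r\Mod{\w{t}}$ for some $r\geq0$; using $p^{te}\equiv1\Mod{\w{t}}$ (from the proof of \Cref{proposition:CipwCardinality}) I may take $r\in[0,te-1]$. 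The scaling invariance applied to $j\equiv ip^r\Mod{\w{t}}$ shows that $\ZA{\a^j}$ and $\ZA{\a^{ip^r}}$ have the same coverage, while the Frobenius invariance with $\beta=\a^i$ shows that $\ZA{\a^{ip^r}}=\ZA{(\a^i)^{p^r}}$ and $\ZA{\a^i}$ have the same coverage; chaining these (``same $t$-coverage'' being transitive) yields the claim.

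The step I expect to require the most care is reconciling the two different moduli: \Cref{lemma:SameZerosThenGammInFqAndLOutSide} naturally produces a congruence modulo $q^t-1$, whereas membership in $\cipw$ is a statement modulo $\w{t}$. The forward direction loses no information because $\w{t}\mid q^t-1$, but the converse genuinely needs the scaling invariance to upgrade a mere congruence modulo $\w{t}$ into equality of coverage; this is exactly the role of \Cref{lemma:CharacterizationOfConstantMultiplesInFQM} together with the elementary fact that nonzero diagonal rescaling preserves linear independence. I would also take care to record at the outset that all exponents involved are coprime to $q^t-1$, so that every array in sight is genuinely built from a primitive element and \Cref{proposition:SebastianExtended} is applicable.
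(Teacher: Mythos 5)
Your proof is correct, and its skeleton matches the paper's: the direction ``same $t$-coverage $\Rightarrow j \bmod \w{t} \in \cipw$'' is word-for-word the paper's argument (apply \Cref{lemma:SameZerosThenGammInFqAndLOutSide} to the primitive pair $\a^i,\a^j$, then reduce the congruence modulo $\w{t}$ using $\w{t}\mid q^t-1$). Where you diverge is in the converse. The paper writes $j=ip^r+h\w{t}$ and performs a single trace computation, $\Tt(\a^{js})=c^s\,\Tt(\a^{is})^{p^r}$ with $c=\a^{h\w{t}}\in\fq$, so that the zero patterns of the two sequences coincide and \Cref{lemma:SameCoverageOfPrimitiveElements} applies directly (with $\gamma=1$). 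You instead factor the same content into two coverage-preserving moves chained by transitivity: a scaling step for $j\equiv ip^r\Mod{\w{t}}$, which you justify through the linear-independence criterion of \Cref{proposition:SebastianExtended} (multiplying the vectors $(\a^{m'})^u$ by the nonzero scalars $c^u$ cannot change linear dependence), and a Frobenius step, which you get for free by quoting the easy half of \Cref{lemma:SameZerosThenGammInFqAndLOutSide} rather than recomputing it. The underlying facts are identical -- a power of $\a^{\w{t}}$ lands in $\fqstar$, and the trace commutes with the Frobenius -- but your packaging is more modular: it reuses lemmas already proved, makes explicit that ``same $t$-coverage'' is transitive, and avoids a second trace computation, at the cost of routing through \Cref{proposition:SebastianExtended} where the paper stays entirely at the level of zero patterns. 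Your attention to the two details that could trip this up -- reducing $r$ modulo $te$ so the quoted lemma applies with $r\in[0,te-1]$, and checking that $\gcd(ip^r,q^t-1)=1$ so every array in the chain is built from a primitive element -- is exactly right and closes the argument.
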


\begin{proof}
    Suppose that $j\bmod {\w{t}} \in \cipw$.
    Then there exist integers $r,h$ such that $j=ip^r+h\w{t}$, and thus
    $\a^j=c\a^{ip^r}$ with $c=\a^{\w{t}h}$.
    We have that 
    \[c^{q-1}=\a^{\w{t}(q-1)h}=\a^{(q^t-1)h}=1,\] 
    which means that $c \in \fq$.
    Then, by
    \Cref{theorem:PropertiesOfTrace} and the properties of the Frobenius
    automorphism
    we have that, for all positive integers $s$,
    \begin{equation*}
        \T(\a^{js})
        =\T(c^s \a^{isp^r})
        =c^s\T(\a^{is})^{p^r}.
    \end{equation*}
    We conclude that $\T(\a^{js}) = 0$ if and only if $\T(\a^{is}) = 0$, which implies from
    \Cref{lemma:SameCoverageOfPrimitiveElements}
    that $\ZA{\a}$ and $\ZA{\beta}$ have the same $t$-coverage.

    Conversely, assume that $\ZA{\a}$ and $\ZA{\beta}$ have the
    same $t$-coverage.
    Then, from
    \Cref{lemma:SameZerosThenGammInFqAndLOutSide}
    we have that $\a^{j}= \a^{ip^r}$ for some $r \in [0, te-1]$, and thus
    $j \equiv ip^r \Mod{q^t-1}$.
    Since $\w{t}|q^t-1$, we also have $j\equiv ip^r \Mod{\w{t}}$, which means that $j
    \bmod {\w{t}} \in \cipw$.
\end{proof}

We now have the necessary background to give the proof of the main result of this
section.
\\

\begin{proof}[Proof of \Cref{theorem:CoverageAndCosets}]
    We begin with the first part.
    Let $\beta$ be a primitive element of $\fqt$.
    From the primitivity of $\a$, we have that there exists $l \in
    \Z_{q^t-1}^{*}$ such that $\beta=\a^l$.
    Let $u=l \bmod {\w{t}}$.
    Then $u=l+h{\w{t}}$ for some integer $h$,
    and thus $\a^{u}=c\a^l$, with $c =\a^{h\w{t}}$.
    From
    \Cref{lemma:CharacterizationOfConstantMultiplesInFQM}
    we have that $c \in \fq$.
    Hence, for any positive integer $s$, we have that
    $\T(\a^{us})=c^s\T(\a^{ls})$ and therefore $\T(\a^{ls})=0$ if and only if
    $\T(\a^{us})=0$.
    It follows from
    \Cref{lemma:SameCoverageOfPrimitiveElements}
    that $\ZA{\a}$ and $\ZA{\beta}$ have the same $t$-coverage.
    Since $\gcd(l,q^t-1)=1$, then also $\gcd(l, \w{t})=1$, hence
    $\gcd(u,\w{t})=1$ as
    well.
    This means $u \in \Zwx$ and thus, from
    \Cref{equation:partitioningofZwxintocosets}
    there exists $i \in \gpw$ such that $u \in \cipw$.
    From \Cref{lemma:ijEquivalentIffJIsInCosetOfi},
    $\ZA{\a^u}$ has the same coverage with $\ZA{\a^i}$
    Since $\ZA{\a^u}$ was shown above to also have the same coverage as
    $\ZA{\beta}$, we conclude that $\ZA{\beta}$  has the same
    coverage with $\ZA{\a^i}$.

    We now prove the second part.
    Suppose by means of contradiction that $i,j\in \gpw$, $i\neq j$, and
    $\ZA{\a^i}$
    has the same $t$-coverage with
    $\ZA{\a^j}$.
    Then, from
    \Cref{lemma:ijEquivalentIffJIsInCosetOfi}
    we have that $j \in \cipw$.
    Thus, $\cipw \cap \cjpw \neq \emptyset$ which means that $\cipw=\cjpw$, as discussed just before
    \Cref{equation:partitioningofZwxintocosets}.
    This contradicts our assumption that $i,j \in \gpw$, and we conclude that
    $\ZA{\a^i}$ and $\ZA{\a^j}$ do not have the same coverage.
\end{proof}

We close this section by giving the steps for solving \Cref{problem:ChoiceOfPrimElements} as a method.

\begin{method}
    {Solving \Cref{problem:ChoiceOfPrimElements} using \Cref{algorithm:Backtracking}}
    {MethodForSolvingProblemTwo}
    Let $p$ be prime, $q=p^e$ where $e$ is a positive integer,  $t\geq 2$, and $l\geq 2$.
        \begin{enumerate}
            \item Calculate the cyclotomic cosets $\cipw$ for all $i \in \Zwx$, and find a set $X$ of coset representatives such that $\cipw$, $i \in X$ are all the distinct cosets with the property that, for every $i \in X$, there exists $j \in \cipw$ with $\gcd(j,q^t-1)$.
                Without loss of generality, choose the elements of $X$ so that they are all coprime to $q^t-1$.
            \item
            Pick \emph{any} primitive polynomial of degree $t$ over $\fq$ and let $\a$ be
            \emph{any} one of its roots.
            \item
                For every $\{i_0, \dots, i_{l-1}\} \in \binom{X}{l}$, set $P=\{ \a^{i_0}, \dots, \a^{i_{l-1}} \}$ and find a solution $S_P$ to \Cref{problem:FindMaxSubset} for $P$, using \Cref{algorithm:Backtracking}.
            \item
                Any $P$ in the previous step that corresponds to a set $S_P$ of the maximum size, is a solution to \Cref{problem:ChoiceOfPrimElements}.
        \end{enumerate}
\end{method}
As discussed before, this method requires running \Cref{algorithm:Backtracking} $\binom{|X|}{l}$ times, $|X|\leq \phi(\w{t}/te)$.
\allowdisplaybreaks

\newpage
\section{Implementation and experimental results}
\label{section:CAsFirstPaper_ImplementationAndNewBounds}

In this section we discuss our implementation of \Cref{method:MethodForSolvingProblemTwo} and present our experimental results.
These are explicit constructions of $CA(l(q^t-1);t,k,q)$ of the form $\ZA{P,S}$, where $P$ is a set of $l$ primitive elements of $\fqt$, and $S$ is a subset of $[0, \w{t}-1]$ of size $k$.
Our results are for $t=4$, with the exception of a few cases for $t=5$. 

We used version 6.8 of the Sage mathematical software \cite{stein2008sage} for the precomputations that involved finite fields, whereas \Cref{algorithm:Backtracking} was implemented in version 2.7 of Python using the Cython extension \cite{behnel2011cython} that translates Python code to C for increased speed.
We used six computers each with an Intel Xeon CPU E5-2667 processor and 16 GB of memory, running the Scientific linux operating system.

Our experiments were as follows.
We ran \Cref{method:MethodForSolvingProblemTwo} for every $q$ and $l$ shown in \Cref{table:ResultsOverview}. 
For the cases marked with an asterisk, the method was complete and the entries in the column denoted $\CA(N;4,k,q)$ are the covering array parameters corresponding to the solution of \Cref{problem:ChoiceOfPrimElements}.
For all the other cases, due to the large search space of the problem, the fourth step was not complete. 
More precisely, it was not possible to consider all the $l$-sets in $\binom{X}{l}$ and for the cases that were considered, \Cref{algorithm:Backtracking} did not terminate.
For these cases, we examined up to $30$ $l$-tuples from $\binom{X}{l}$ at random, and for each of them \Cref{algorithm:Backtracking} ran for up to a month.
By the end of that period, for every choice of $q$ and $l$ we compared the results stored in the global variable $Best$ for all the $l$-sets that were tested; the entries in column denoted $\CA(N;4,k,q)$ are the parameters of the corresponding arrays.

\begin{table}
\renewcommand{\arraystretch}{\genarraystretch}
\centering
\begin{subtable}{1\textwidth}
\rowcolors{1}{\backgroundshade}{white}
\centering
\setlength{\tabcolsep}{7.8pt}
\begin{tabular}{clllrlcllr}
\rowcolor{\tableheadcolor}
 & $q$&$l$ &  $\CA(N;4,k,q)$                  & PrevN         &\,&$q$&$l$& $\CA(N;4,k,q)$                    & PrevN         \\
*& 2  & 2  &  $\CA(        31    ; 4, 6,  2 )$& 21            &\,& 9 & 2 & $\CA(        13121  ; 4, 18, 9 )$ & 13113         \\
*& 3  & 2  &  $\CA(        161   ; 4, 10, 3 )$& 159           &\,& 9 & 3 & $\CA(\textbf{19681} ; 4, 42, 9 )$ &\textbf{30537 }\\
*& 3  & 3  &  $\CA(        241   ; 4, 12, 3 )$& 189           &\,& 9 & 4 & $\CA(\textbf{26241} ; 4, 50, 9 )$ &\textbf{30537 }\\
*& 3  & 4  &  $\CA(        321   ; 4, 12, 3 )$& 189           &\,& 9 & 5 & $\CA(\textbf{32801} ; 4, 82, 9 )$ &\textbf{33129 }\\
*& 4  & 2  &  $\CA(\textbf{511}  ; 4, 17, 4 )$&\textbf{760   }&\,& 11& 2 & $\CA(        29281  ; 4, 21, 11)$ & 29271         \\
 & 4  & 3  &  $\CA(        766   ; 4, 20, 4 )$& 760           &\,& 11& 3 & $\CA(\textbf{43921} ; 4, 37, 11)$ &\textbf{69091 }\\
 & 4  & 4  &  $\CA(        1021  ; 4, 20, 4 )$& 760           &\,& 11& 4 & $\CA(\textbf{58561} ; 4, 77, 11)$ &\textbf{69091 }\\
 & 5  & 2  &  $\CA(\textbf{1249} ; 4, 16, 5 )$&\textbf{1865  }&\,& 11& 5 & $\CA(\textbf{73201} ; 4,125, 11)$ &\textbf{73931 }\\
 & 5  & 3  &  $\CA(\textbf{1873} ; 4, 25, 5 )$&\textbf{2845  }&\,& 13& 2 & $\CA(        57121  ; 4, 24, 13)$ & 57109         \\
 & 5  & 4  &  $\CA(        2497  ; 4, 23, 5 )$& 1865          &\,& 13& 3 & $\CA(\textbf{85681} ; 4, 45, 13)$ &\textbf{136045}\\
 & 7  & 2  &  $\CA(        4801  ; 4, 15, 7 )$& 4795          &\,& 13& 4 & $\CA(\textbf{114241}; 4, 98, 13)$ &\textbf{136045}\\
 & 7  & 3  &  $\CA(        7201  ; 4, 26, 7 )$& 7189          &\,& 13& 5 & $\CA(\textbf{142801}; 4,170, 13)$ &\textbf{146185}\\
 & 7  & 4  &  $\CA(        9601  ; 4, 43, 7 )$& 9583          &\,& 16& 2 & $\CA(\textbf{131071}; 4, 28, 16)$ &\textbf{188401}\\
 & 7  & 5  &  $\CA(        12001 ; 4, 47, 7 )$& 9583          &\,& 16& 3 & $\CA(\textbf{196606}; 4, 55, 16)$ &\textbf{315136}\\
 & 8  & 2  &  $\CA(        8191  ; 4, 17, 8 )$& 8184          &\,& 16& 4 & $\CA(\textbf{262141}; 4,129, 16)$ &\textbf{315136}\\
 & 8  & 3  &  $\CA(        12286 ; 4, 30, 8 )$& 12272         &\,& 17& 2 & $\CA(\textbf{167041}; 4, 29, 17)$ &\textbf{240721}\\
 & 8  & 4  &  $\CA(\textbf{16381}; 4, 48, 8 )$&\textbf{18880 }&\,& 17& 3 & $\CA(\textbf{250561}; 4, 61, 17)$ &\textbf{402577}\\
 & 8  & 5  &  $\CA(        20476 ; 4, 65, 8 )$& 19776         &\,& 17& 4 & $\CA(\textbf{334081}; 4,141, 17)$ &\textbf{402577}\\
 & 8  & 6  &  $\CA(        24571 ; 4, 67, 8 )$& 19776         &\,& 19& 2 & $\CA(\textbf{260641}; 4, 30, 19)$ &\textbf{377227}\\
 &    &    &                                  &               &\,& 23& 2 & $\CA(\textbf{781249}; 4, 35, 23)$ &\textbf{815167}
\end{tabular}
\caption[Overview of experimental results]{Overview of our results, where $N=l(q^4-1)+1$.
The columns denoted PrevN contain the previous smallest upper bounds for $\CAN(4,k,v)$; bold indicates improvement.} 
\label{table:ResultsOverview}
\end{subtable}

\vspace{2em}

\begin{subtable}{1\textwidth}
\centering
\renewcommand{\arraystretch}{\genarraystretch}
\rowcolors{1}{\backgroundshade}{white}
\setlength{\tabcolsep}{6.2pt}
\begin{tabular}{rlllrllllr}
\rowcolor{\tableheadcolor}
$q$ & $l$ & $r$ & $\CA(N-2r;4,k,q-r)$                & PrevN          &$q$ & $l$ & $r$ & $\CA(N-2r;4,k,q-r)$ & PrevN \\
11  & 3   &1    & $\CA(\textbf{43919  }; 4, 37, 10 )$&\textbf{57486}  &17  &3    &2    &$\CA(\textbf{250560}; 4, 61,15 )$ &\textbf{278181} \\
11  & 4   &1    & $\CA(\textbf{58559  }; 4, 77, 10 )$&\textbf{66545}  &16  &2    &1    &$\CA(\textbf{131069}; 4, 28,15 )$ &\textbf{173727} \\
13  & 3   &1    & $\CA(\textbf{85679  }; 4, 45, 12 )$&\textbf{114186} &16  &3    &1    &$\CA(\textbf{196604}; 4, 55,15 )$ &\textbf{277827} \\
13  & 4   &1    & $\CA(\textbf{114239 }; 4, 98, 12 )$&\textbf{129345} &16  &4    &1    &$\CA(\textbf{262139}; 4,129,15 )$ &\textbf{315134} \\
16  & 2   &2    & $\CA(\textbf{131067 }; 4, 28, 14 )$&\textbf{147753} &17  &2    &1    &$\CA(\textbf{167039}; 4, 29,16 )$ &\textbf{188401} \\
16  & 3   &2    & $\CA(\textbf{196602 }; 4, 55, 14 )$&\textbf{226647} &17  &3    &1    &$\CA(\textbf{250559}; 4, 61,16 )$ &\textbf{315136} \\
16  & 4   &2    & $\CA(\textbf{262137 }; 4,129, 14 )$&\textbf{283193} &17  &4    &1    &$\CA(\textbf{334079}; 4,141,16 )$ &\textbf{315136} \\
17  & 2   &2    & $\CA(\textbf{167037 }; 4, 29, 15 )$&\textbf{173800} &19  &2    &1    &$\CA(\textbf{260639}; 4, 30,18 )$ &\textbf{355669} 
\end{tabular}  

\caption[New covering arrays from the fusion operation]{The results of the fusion operation on the arrays in \Cref{table:ResultsOverview}, that improve upon previously best bounds \cite{colbournwebsite}, where $N=l(q^4-1)+1-2r$.
    For every $q$ and $l$, the $\CA(N-2r;4,k,q-r)$ is obtained by applying the fusion operation $r$ times to the $\CA(N;4,k,q)$ for the corresponding $q$ and $l$ in \Cref{table:ResultsOverview}.
\index{Fusion operation}
\index{Covering array!fusion operation on}
}
\label{table:FusionResults}
\end{subtable}
\caption{Overview of our results}
\end{table}

Next, we evaluate our results by comparing them with the state of art as of the time of their publication.
We recall that the covering array number $\CAN(t,k,q)$ is the smallest
$n$ such that a $\CA(n; t,k,q)$ exists.
Hence, a $\CA(N; t,k,q)$ implies that $N$ is an upper bound for $\CAN(t,k,q)$.
In \Cref{table:ResultsOverview}, the columns labeled PrevN contain the previously
smallest known \cite{colbournwebsite} upper bounds for $\CAN(4,k,q)$, for the
$k$ and $q$ of the corresponding array.
Bounds in bold indicate that they are improved by our results, and the numbers
of rows of the corresponding arrays, also indicated in bold, are the new
smallest known upper bounds for $\CAN(4,k,q)$.

More results follow recursively from the fusion operation
\index{Fusion operation}
\index{Covering array!fusion operation on}
\cite{colbourn2010covering}
(see also \cpageref{equation:FusionOperationInequality}), where a $\CA(N; t, k, q)$, a $\CA(N-2r; t, k, q-r)$ can be constructed for any $r < q$.
In several cases, the result of the fusion operation on the arrays in \Cref{table:ResultsOverview} improve upon the previously best bounds in \cite{colbournwebsite};
we list these cases in \Cref{table:FusionResults}.
Although the fusion operation for all possible $r$ was tested, the results that improved upon previous ones were only for $r$ equal to $1$ or $2$.

In \Cref{table:LongTableNewarrays} we give the essential elements for the construction of the 21 arrays displayed in \Cref{table:ResultsOverview} that improve upon previous results.
Every covering array is of the form $\ZA{P,C}$, where $C$ is a subset of $[0,\w{4}-1]$ and $P= \{\a^{i_0}, \dots, \a^{i_{l-1}}\}$, where $\a$ and $\a^{i_0}, \dots, \a^{i_{l-1}}$ are primitive elements of $\fqfour$.
The three columns of \Cref{table:LongTableNewarrays} contain the covering array parameters, the powers $i_0, \dots, i_{l-1}$ and the subset $C$, respectively.
The primitive element $\a\in \fqfour$ depends on $q$; for every $q$ considered in \Cref{table:LongTableNewarrays}, the primitive element $\a$ that is used is the root of the primitive polynomial $P_q(x)\in \fqx$ given in \Cref{table:primitiveelementsused}.
In the next example we demonstrate how the covering arrays of \Cref{table:LongTableNewarrays} can be constructed in practice.
\begin{table}[t]
\centering
\rowcolors{2}{\backgroundshade}{white}
\renewcommand{\arraystretch}{\genarraystretch}
\begin{tabular}{lll}
\rowcolor{\tableheadcolor}
$q$  & 
Minimal polynomial in $\mathbb{F}_q[x]$ of $\a \in \fqfour$ &\\
4  & $P_4(x)= x^4 + (\b + 1)x^3 + \b x^2 + \b$, & $\ffour=\ftwo(\b), \b^2 = \b + 1  $      \\
5  & $P_5(x)= x^4 + x^3 + 2x^2 + 2$         &                                      \\
8  & $P_8(x)= x^4 + \b x^3 + \b$,              & $\feight=\ftwo(\b), \b^3 = \b + 1 $     \\ 
9  & $P_9(x)= x^4 + \b x^3 + \b$,              & $\fnine=\fthree(\b), \b^2 =  \b + 1 $   \\
11 & $P_{11}(x)= x^4 + 4x^3 + 2$               &                                      \\
13 & $P_{13}(x)= x^4 + 6x^3 + 2x^2 + 2$        &                                      \\
16 & $P_{16}(x)= x^4 + \b^2x^3 + \b x^2 + \b$,     & $\fsixteen = \ftwo(\b), \b^4 = \b + 1$  \\
17 & $P_{17}(x)= x^4 + 6x^3 + 3$               &                                      \\
19 & $P_{19}(x)= x^4 + x^3 + 2$                &                                      \\
23 & $P_{23}(x)= x^4 + 9x^3 + 5$               &                                      \\
\end{tabular}
\caption{Minimal polynomials of the primitive elements used in \Cref{table:LongTableNewarrays}.}
\label{table:primitiveelementsused}
\end{table}

\begin{example}
    {Construction of $\CA(1249;4,16,5)$}
    {ConstructionOfCAFromResultsTable}
    To obtain the $\CA(1249; 4, 16,5)$ in the second row of \Cref{table:LongTableNewarrays} we need to construct $\ZA{P,C}$ for $P=\{\a,\a^7\}$ where $\a$ is a root of $P_5(x)=x^4+x^3+2x^2+2$ as in \Cref{table:primitiveelementsused}, and
    \[
        C=\{0,6,9,15,39,45,48,54,78,84,87,93,117,123,126,132\}.
    \]
    To do that, we construct the $(5^4-1)\times 16$ arrays $\AA_{5^4/5}(\a,C)$ and $\AA_{5^4/5}(\a^7,C)$.
    Let $\T=\T_{5^4/5}$.
    Then, for every $i \in [0, 5^4-2]$, the $i$-th row of $\AA_{5^4/5}(\a,C)$ is the vector
    \[
        \left(\T(\a^{i+c})\right)_{c \in C},
    \]
    whereas the $i$-th row of $\AA_{5^4/5}(\a^{7},C)$ is given by
    \[
        \left(\T(\a^{7(i+c)})\right)_{c \in C}.
    \]
    The vertical concatenation of these rows and a row of zeros is a $\CA(1249; 4, 16, 5)$.
\end{example}

Although the results in \Cref{section:ABacktrackAlgoForProblem1,section:CAsFirstPaper_ChoiceOfPrimitiveElements} can be used to search for covering
arrays of any strength, the running time increases significantly for strengths
$t\geq 5$.
We were able to run a few cases of strength $t=5,6$ for small $q$.
One notable result was a $\CA(485; 5, 11, 3)$ which improves the upper bound of $\CAN(5,11,3)$ from $546$ to $485$.
This array can be constructed as $\ZA{ \{\a, \a^{17}\}, C}$, where $\a$ is a root of $x^5+2x^4+1 \in \fthree[x]$, and $C=\{ 11i\mid i =0, \dots, 10 \}$.

\newpage
{
\rowcolors{2}{\backgroundshade}{white}
\renewcommand{\arraystretch}{\genarraystretch}
\begin{longtable}{llP{8.5cm}}
\caption{Components of the new covering arrays.}\label{table:LongTableNewarrays}\\
\rowcolor{\tableheadcolor}
  $\ZA{ \{ \a^{i_1}, \dots, \a^{i_l}\}, C}$ 
& $i_1, \dots, i_l$ 
& \multicolumn{1}{c}{$C$}\\
\endfirsthead
\hiderowcolors
\multicolumn{3}{c}
{\tablename\ \thetable\ -- \textit{Continued from the previous page}} \\
\rowcolor{\tableheadcolor}
  $\ZA{ \{ \a^{i_1}, \dots, \a^{i_l}\}, C}$ 
& $i_1, \dots, i_l$ 
& \multicolumn{1}{c}{$C$}\\
\hiderowcolors
\endhead
\multicolumn{3}{r}{\textit{Continued on the next page}} \\
\endfoot
\endlastfoot
\showrowcolors

$ \CA(511; 4, 17, 4) $  &1, 31  & 
$5i$, $i=0,1,\dots, 16$
\\
$ \CA(1249; 4, 16, 5)$  &1, 7   & 
0, 6, 9, 15, 39, 45, 48, 54, 78, 84, 87, 93, 117, 123, 126, 132
\\
$ \CA(1873; 4, 25, 5) $  &1, 7,17   & 
0, 9, 12, 21, 24, 33, 36, 45, 48, 57, 60, 69, 72, 81, 84, 93, 96, 105, 108, 117, 120, 129, 132, 141, 144
\\
$ \CA(16381 ; 4,  48,  8  )$  &1,  43,  421,  1324  &
0-14, 16, 18, 20, 22, 24, 26, 28, 31, 33, 34, 37, 41, 48, 52, 124, 125, 128, 176, 226, 230, 240, 251, 275, 279, 285, 321, 365, 432, 433, 440, 444, 452, 510, 
\\
$ \CA(19681 ; 4,  42,  9  )$  &1, 7,13 &
$10i$, $i=0, 1, \dots, 41$
\\
$ \CA(26241 ; 4,  50,  9  )$  &1, 1129,  1273,  1329 &
0-3, 5, 6, 8, 9, 11, 13, 15, 16, 18, 19, 22-24, 27-29, 32-34, 38, 43, 46, 49, 54, 56, 57, 60, 65, 67, 70, 80, 97, 102, 117, 168, 201, 226, 310, 335, 358, 367, 369, 391, 458, 468, 482
\\
$ \CA(32801 ; 4,  82,  9  )$  & 1, 29, 43, 47, 139 &
0-81
\\
$ \CA(43921 ; 4,  37,  11 )$  &1, 271, 3491 &
0, 1, 12, 13, 24, 25, 36, 37, 48, 49, 60, 61, 72, 73, 84, 85, 96, 97, 108, 109, 180, 349, 360, 409, 589, 601, 613, 660, 685, 709, 925, 937, 949, 997, 1020, 1189, 1237
\\
$ \CA(58561 ; 4,  77,  11 )$ &1, 271, 3491, 5861 &
0-3, 5, 6, 8, 9, 11, 12, 14, 15, 17, 18, 20, 21, 23, 24, 26, 27, 29, 30, 32, 33, 35, 36, 38, 39, 41, 42, 44, 45, 47, 48, 50, 51, 53, 54, 56, 57, 59, 60, 63, 66, 73, 76, 79, 80, 83, 86, 92, 95, 98, 101, 104, 107, 110, 192, 236, 352, 412, 423, 447, 507, 528, 546, 623, 650, 662, 694, 697, 700, 859, 921, 925, 1078, 1254
\\
$ \CA(73201 ; 4, 125,  11 )$  &1,119, 181, 245, 397 &
0-50, 57-62, 69-74, 81-86, 93-107, 111-114, 176, 177, 197, 230-232, 243, 283, 300, 311, 312, 323, 324, 360-362, 418, 419, 443, 455, 469, 539, 566, 603, 673, 674, 675, 798, 824, 945, 1018, 1066, 1174, 1198, 1308, 1339, 1340
\\
$ \CA(85681 ; 4,  45,  13 )$  &1,  313,  357 &
0, 1, 14, 15, 28, 29, 42, 43, 56, 57, 70, 71, 84, 85, 98, 99, 112, 113, 126, 127, 140, 141, 154, 168, 182, 238, 336, 532, 574, 686, 714, 742, 798, 1051, 1092, 1162, 1387, 1695, 1737, 1792, 1820, 1862, 1946, 1974, 2030
\\
$ \CA(114241 ; 4,  98,  13 )$  &1,  3, 213, 503 &
0-38, 42-44, 48, 72-74, 79-81, 83, 84, 123-126, 131, 132, 149, 150, 159, 164, 165, 183, 197, 203, 223, 225, 227, 229, 237, 240, 247, 273, 274, 292, 327, 333, 403, 406, 572, 601, 609, 617, 625, 776, 847, 966, 1115, 1288, 1299, 1359, 1386, 1480, 1669, 1750, 1866, 1952, 2098
\\
$ \CA(142801 ; 4, 170,  13 )$  &1, 79, 109, 171, 421 &
0-86, 150-169, 243, 245, 247, 264-266, 268, 273, 280, 281, 454, 456, 458-462, 464, 466, 468, 502, 611, 614-619, 642, 773, 782, 797, 803, 810, 811, 828, 829, 965, 975, 977, 979, 983-987, 997, 1158, 1160, 1162, 1163, 1165, 1331, 1447, 1504, 1506, 1643, 1788, 1790, 1792, 2009, 2028, 2152
\\
$ \CA(131071; 4, 28, 16) $ &1, 601  & 
0-3, 5, 6, 8, 11, 12, 17, 22, 23, 25, 36, 45, 46, 50, 157, 184, 352, 661, 1316, 2236, 2736, 3028, 3102, 3126, 3443
\\

$ \CA(196606; 4,  55,  16 )$  &1,  4636,  11086 &
0-3, 5, 6, 8, 11, 12, 17, 20, 22, 26, 29, 34, 35, 39, 40, 45, 49, 54, 69, 73, 78, 91, 100, 102, 105, 111, 120, 122, 137, 146, 155, 164, 184, 208, 239, 332, 333, 395, 399, 404, 537, 598, 858, 1746, 1754, 2020, 2279, 2743, 2751, 2810, 2816, 3189
\\
$ \CA(262141; 4, 129,  16 )$  &1, 295, 475, 883 &
0-53, 87-98, 108-110, 123-125, 129-131, 135-137, 170-173, 182, 185-187, 189-194, 199-201, 210, 223, 308, 337-340, 342, 383, 385, 412, 422, 455, 617, 635, 812, 817, 839, 841, 847, 849, 911, 933, 1438, 1499, 1929, 1938, 1994, 2239, 2758, 2782, 3328, 3383, 3675
\\
$ \CA(167041; 4, 29, 17) $ &1, 18929 &
0-3, 5, 6, 8, 9, 11, 12, 14, 15, 17, 23, 24, 27, 35, 36, 134, 252, 367, 877, 952, 1771, 1871, 2171, 2239, 3184, 4154
\\
$ \CA(250561; 4,  61,  17 )$  &1, 6481, 18929 &
0-3, 5, 6, 8, 9, 11, 12, 14, 15, 17, 18, 20, 21, 23, 24, 26, 27, 29, 30, 32, 33, 35, 36, 38, 40, 41, 43, 46, 49, 52, 54, 57, 60, 82, 93, 98, 110, 115, 120, 123, 151, 168, 194, 219, 248, 264, 371, 709, 910, 1220, 1371, 1428, 1778, 2004, 2324, 2446, 2921, 3623
\\
$ \CA(334081; 4, 141,  17 )$  &1, 707, 739, 989 &
0-61, 63, 65, 67, 69, 71, 73, 102-112, 114, 116, 118, 120, 122, 124, 126, 128, 140, 240, 242, 244, 246, 248, 250, 252, 254, 256-265, 281, 283, 285, 423, 426, 484, 494, 496, 696, 726, 804, 1049, 1127, 1131, 1147, 1149, 1224, 1232, 1237, 1241, 1242, 1245, 1375, 1582, 1913, 2142, 2863, 3061, 3098, 3541, 3576, 3629, 3633, 3863, 3933
\\
$ \CA(260641; 4, 30, 19) $  &1, 32689 &
0-3, 6, 9, 12, 15, 18, 21, 24, 27, 30, 33, 36, 39, 43, 51, 53, 62, 72, 248, 357, 1470, 1779, 2660, 3200, 4355, 5378, 5756
\\
$ \CA(781249; 4,  35,  23) $  &1, 89  &
0-3, 5, 6, 8, 9, 11, 12, 14, 18, 19, 21, 22, 24, 25, 27, 28, 31, 35, 41, 45, 118, 347, 586, 1397, 2394, 2505, 4479, 5556, 6315, 8126, 9124, 9954\\
\hiderowcolors
\multicolumn{3}{c}{}\\
\multicolumn{3}{c}{}
\end{longtable}
}

\chapter{Covering arrays from maximal sequences and character sums}
\label{chapter:CAsFromMSequencesAndCharacterSums}

\textsc{In \Cref{chapter:CombinatorialArraysFromMSequences} we explored} the combinatorial properties of arrays constructed from cyclic shifts of maximal sequences over finite fields.
We showed that cyclic trace arrays have the property that certain subsets of columns -which we characterize in various ways- are uniformly covered.
This is the foundation of several previously established constructions of orthogonal and covering arrays (see \Cref{section:ConstructionOfOrthogonalArraysFromCyclicTraceArrays,section:ConstructionOfCoveringArraysFromCyclicTraceArrays}).

In \Cref{chapter:CAsFirstPaper} we exploited the above-mentioned property to give new covering array constructions using the vertical concatenation of cyclic trace arrays.
In this chapter we exploit that property using a different approach: we reduce the alphabet size (and the number of rows) using the discrete logarithm and remainders modulo some integer. 
We do that by means of character-theoretic arguments and techniques similar to the ones used in the construction of covering arrays from cyclotomy \cite{colbourn2010covering}.
\index{Covering array!from cyclotomy}
Our results include new infinite families of covering arrays of strength $3$ and $4$, as well as an infinite family of covering arrays of arbitrary strength.
To the best of our knowledge, the latter and the covering arrays from cyclotomy, are the only currently known direct constructions that provide a $CA(N;t,k,v)$ for arbitrary $t,k$ and $v$.

The contents of this chapter are as follows.
In \Cref{section:CoveringArraysFromCyclotomy} we give a brief overview of covering arrays from cyclotomy.
In \Cref{section:CharactersChapterMainResults} we present a new type of array that is the result of reducing the alphabet size of a cyclic trace array using the discrete logarithm and taking remainders modulo some integer.
We also state the main theorem of this chapter, which gives a sufficient condition for such an array to be a covering array, and we present the new covering array families that follow.
\Cref{section:ProofOfMainTheoremSecondPaper} is dedicated to the proof of the main theorem.
Finally, in \Cref{section:EvaluatingOurConstruction} we evaluate our construction, which includes a comparison of our covering array families to covering arrays from cyclotomy and a comparison between our theoretical results and computer experiments.

The results from this chapter appear in \cite{tzanakis2017covering}.

\section{Covering arrays from cyclotomy}
\label{section:CoveringArraysFromCyclotomy}
The new covering array constructions that we present in the next section are established using similar techniques to the ones used by Colbourn in \cite{colbourn2010covering}.
In this section we give a brief overview of Colbourn's construction.

Let $q$ be a prime power and $\o$ be a primitive element of $\fq.$
Then, for every $x\in \fqstar$, there exists a unique $i \in [0,q-1]$ such that $x=\o^i$. 
This $i$ is the \emph{discrete logarithm of $x$ with base $\o$}, denoted $i=\logw(x)$.
\index{Discrete logarithm}
\index{Logarithm|see{Discrete logarithm}}
For a divisor $v\geq 2$ of $q-1$, Colbourn considers the $q\times q$ array $M(q,\o,v)$ whose $(i,j)$-th element is given by 
\[
    M(q,\o,v)_{ij}=
    \begin{cases}
        \logw(j-i) \Mod v   & \text{ if } j\neq i,\\
        0                   & \text{ otherwise,}
    \end{cases}
\]
where $(i,j) \in [0,q-1]\times [0,q-1]$.
This is the \emph{cyclotomic matrix associated to $q,v$ and $\o$.}
We note that, for a different primitive element $\o'$, the matrix $M(q,\o',v)$ is either identical to $M(q,\o,v)$, or it is the result of multiplying $M(q,\o,v)$ by some $m$ coprime to $v$ and then reducing modulo $v$. 
For the purposes of constructing covering arrays, such arrays are equivalent; thus the choice of the primitive element $\o$ is irrelevant and we simply write $M(q,v)$ instead of $M(q,\o,v)$.

Cyclotomic matrices are a special case of a type of array with interesting statistical properties.
Let $X=(X_1, \dots, X_k)$ be a random variable over a set $\Omega\subseteq \Z_v^k$.
Then, for any vector $A=(a_1, \dots, a_k)\in \Z_v^k$, the bias of $A$ is
\[
    \mathrm{bias}(A)
    = \frac{1}{g}
    \max_{0\leq l < v/g}\left| \mathrm{Pr}\left[\sum_{i=1}^{k}a_iX_i\equiv lg\Mod v \right]-\frac{g}{v}\right|,
\]
where $g=\gcd(a_1, \dots, a_k, v)$.
For $0\leq \varepsilon \leq 1$, a set $\Omega\subseteq \Z_v^k$ is \emph{$\varepsilon$-biased} if the random variable $X=(X_1, \dots, X_k)$ has the properties that 
\begin{enumerate}
    \item for all $i \in [1,k]$, $X_i$ is distributed uniformly over $\Z_v$, and
    \item $\mathrm{bias}(A) \leq \varepsilon$ for all $A\in \Z_v^k$.
\end{enumerate}
Then, an \emph{$\varepsilon$-based array} is one whose rows are the elements of $\Omega$.
The connection to covering arrays is that a $(2/v^{2t})$-biased array is also a covering array of strength $t$ \cite{colbourn2010covering}.
It turns out that when 
\begin{equation}
    \label{equation:SufficientConditionEpsilonBased}
    q>t^2v^{4t},
\end{equation}
then $M(q,v)$ is a ($2/v^{2t}$)-biased array, and thus a $\CA(q; t, q,v)$
\cite{azar1998approximating}.

A stronger result is known for the binary case:
$M(q,2)$ is a \emph{Paley matrix} which is known to be a $\CA(q; t, q,2)$ when $q>t^2 2^{2t-2}$
\cite{ananchuen1993adjacency,blass1981paley,bollobas1998random,graham1971constructive}.
This weaker condition is due to the fact that the proof does not rely on $M(q,v)$ being an $\varepsilon$-biased array, which is an object with more restrictions than a covering array; instead, different character-theoretic arguments are used.
Similarly, Colbourn employs characters over finite fields to give a sufficient condition for $M(q,v)$ to be a covering array of strength $t$ that is better (weaker) than \Cref{equation:SufficientConditionEpsilonBased}.
\begin{theorem}
    {Covering arrays from cyclotomy \cite[Theorem 3.3.]{colbourn2010covering}}
    {CyclotomicCoveringArray}
    Let $q$ be a prime power and $v\geq 2$ be a divisor of $q-1$.
    If $q>t^2v^{2t}$, then $M(q,v)$ is a $\CA(q; t, q,v)$.
\end{theorem}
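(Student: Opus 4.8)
The plan is to verify the covering array property directly. Fix any $t$ distinct columns, indexed by distinct field elements $c_1,\dots,c_t\in\fq$, and any target tuple $(b_1,\dots,b_t)\in[0,v-1]^t$; I must produce a row realizing it. Since the diagonal entries of $M(q,v)$ are all $0$, they can only help cover tuples, so it suffices to find a row $x\in\fq$ with $x\neq c_s$ for every $s$ and $\logw(c_s-x)\equiv b_s\Mod v$ for every $s$. Writing $N$ for the number of such $x$, the whole theorem reduces to showing $N>0$ whenever $q>t^2v^{2t}$.

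To count $N$ I would detect each congruence condition with multiplicative characters. Let $\chi$ be a multiplicative character of $\fq$ of order exactly $v$ (which exists because $v\mid q-1$), normalised so that $\chi(\omega)=\zeta:=e^{2\pi i/v}$. Then for nonzero $y\in\fq$ the quantity $\tfrac{1}{v}\sum_{l=0}^{v-1}\chi^{l}(y)\zeta^{-lb}$ equals $1$ when $\logw(y)\equiv b\Mod v$ and $0$ otherwise, by the orthogonality relations of \Cref{theorem:CharSumIsZeroClassicResult}. Substituting the product of these indicators into the definition of $N$ and expanding gives
\[
    N=\frac{1}{v^{t}}\sum_{(l_1,\dots,l_t)\in[0,v-1]^t}\Big(\prod_{s=1}^{t}\zeta^{-l_sb_s}\Big)\sum_{\substack{x\in\fq\\ x\neq c_1,\dots,c_t}}\prod_{s=1}^{t}\chi^{l_s}(c_s-x).
\]
The term with $l_1=\dots=l_t=0$ contributes the main term $(q-t)/v^{t}$, because every factor $\chi^{0}(c_s-x)=1$ on the restricted range of $x$. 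The remaining $v^{t}-1$ terms form the error, and the task becomes bounding them.

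The heart of the argument is the estimate of the inner sum $S_{(l)}=\sum_{x\neq c_1,\dots,c_t}\prod_{s}\chi^{l_s}(c_s-x)$ for a fixed nonzero vector $(l_s)$. Let $T=\{s:l_s\neq 0\}\neq\emptyset$; on the restricted range only the factors with $s\in T$ survive, so $S_{(l)}=\sum_{x\neq c_1,\dots,c_t}\prod_{s\in T}\chi^{l_s}(c_s-x)$. I would compare this with the complete sum $\Sigma_{(l)}=\sum_{x\in\fq}\prod_{s\in T}\chi^{l_s}(c_s-x)$, using the convention $\chi(0)=0$: the two differ only by the terms $x=c_j$ with $j\notin T$, each of modulus $1$, so $|S_{(l)}-\Sigma_{(l)}|\leq t-1$. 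The complete sum is a multiplicative character sum of the polynomial $g(x)=\prod_{s\in T}(c_s-x)^{l_s}$, which has exactly $|T|$ distinct roots and is \emph{not} a perfect $v$-th power, since each exponent satisfies $0<l_s<v$. Weil's bound for such sums then gives $|\Sigma_{(l)}|\leq(|T|-1)\sqrt q\leq(t-1)\sqrt q$, whence $|S_{(l)}|\leq(t-1)(\sqrt q+1)$. I expect this step — invoking the Weil bound correctly and reconciling the restricted summation range with the complete character sum — to be the main obstacle. In particular the Jacobi sum estimate of \Cref{theorem:JacobiSumBound} is the wrong shape here, so the genuinely new analytic input is the Weil estimate for a product of distinct linear factors.

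Finally I would assemble the pieces. Bounding all $v^t-1$ error terms uniformly yields $\big|N-(q-t)/v^{t}\big|\leq(t-1)(\sqrt q+1)$, so $N>0$ as soon as $(q-t)/v^{t}>(t-1)(\sqrt q+1)$. The hypothesis $q>t^{2}v^{2t}$ forces $\sqrt q>tv^{t}$, and a short computation (using $v\geq 2$ and $t\geq 1$, so that $v^{t}\geq t$) shows this inequality holds comfortably: the difference of the two sides rearranges into $\sqrt q(\sqrt q-tv^{t})+v^{t}(\sqrt q-t)+(v^{t}-t)$, a sum of nonnegative quantities whose first term is strictly positive. This produces a row covering the prescribed tuple on the chosen columns, and since both the columns and the tuple were arbitrary, $M(q,v)$ is a $\CA(q;t,q,v)$.
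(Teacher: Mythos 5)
Your proof is correct and follows exactly the route the paper attributes to Colbourn's original argument: detect each congruence condition with multiplicative characters, isolate the main term, bound the nontrivial terms via Weil's theorem for multiplicative character sums of polynomials that are not perfect $v$-th powers, and compare the resulting lower bound with the target value to extract the condition $q>t^2v^{2t}$. The thesis itself does not reprove this statement (it cites Colbourn's Theorem 3.3 and sketches precisely this character-sum/Weil strategy), so your write-up is a faithful, correctly executed version of the same approach.
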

In the proof of \Cref{theorem:CyclotomicCoveringArray}, Colbourn provides a character sum which, if larger than a target value, guarantees that a cyclotomic matrix is a covering array.
He then uses a standard technique that uses Weil's theorem (see \cite[Theorem 5.38]{lidl1997finite}) to give a lower bound for that sum.
Comparing that lower bound with the target value yields the sufficient condition of \Cref{theorem:CyclotomicCoveringArray}.

In the following sections we adapt this method by constructing similar arrays, built using discrete logarithms of selected elements from maximal sequences and finding lower bounds for character sums that imply the covering array property.
Using the balanced nature of tuples of elements of maximal sequences, we are able to make connections to Jacobi sums for which we know exact values (cf.\ \Cref{theorem:JacobiSumBound}).
This results in constructions of covering arrays of strength $3$ and $4$, as well as an infinite family of general strength covering arrays.

%

\section{New covering array constructions}
\label{section:CharactersChapterMainResults}
\subsection{Reducing the alphabet size of cyclic trace arrays}
In this chapter we study arrays that resemble those in \Cref{definition:cyclicAlphaSArray}, and which we describe in the next definition.
Before we state the definition we recall that, as per \Cref{lemma:CharacterizationOfConstantMultiplesInFQM}, if $\a$ is a primitive element of $\fqt$, then $\o=\a^{\w{t}}$ is a primitive element of $\fq$.
Thus, for every nonzero $x \in \fqstar$, there exists some $i\in [0,q-2]$ such that $\o^i=x$.
This $i$ is the \emph{discrete logarithm of $x$ with base $\o$}, denoted $\logw{x}$. 
\index{Discrete logarithm}

\index{Cyclic trace array!modulo $v$}
\begin{definition}
{Cyclic trace array modulo $v$}
{ArrayModV}
    Let $t,k$ be positive integers, $\a$ be a primitive element of $\fqt$, $\o=\a^{\w{t}}$, and $C=\{ c_0, \dots, c_{k-1} \}$ be an ordered subset of $[0,q^t-2]$.
    Then, the \emph{cyclic trace array modulo $v$ corresponding to $\a$ and $C$}, denoted $\MV{\a,C}$, is the $v \w{t}\times k$ array with elements 
    \[
        \MV{\a,C}_{ij}, \quad (i,j) \in [0,v \w{t}-1] \times [0,k-1]
    \]
    given by
    \begin{align}
        \label{equation:ElementOfArrayModVWRTA}
        \MV{\a,C}_{ij} &=
        \begin{cases}
            \logw \left(\M{\a,C}_{ij}\right) \bmod v, &\text{ if }\; \M{\a,C}_{ij}\neq0;\\
            0,   & \text{ otherwise;}
        \end{cases}
        \\&=
        \label{equation:ElementOfArrayModVWRTTrace}
        \begin{cases}
            \logw \left(\Tt(\a^{i+c_j})\right) \bmod v, & \text{ if }\; \Tt(\a^{i+c_j})\neq 0;\\
            0,   & \text{ otherwise.}
        \end{cases}
    \end{align}
    Furthermore, we simply write $\MV{\a}$ to denote $\MV{\a, [0, \w{t}-1]}$.
\end{definition}

The construction of $\MV{\a,C}$ in \Cref{definition:ArrayModV} essentially gives a method for reducing the alphabet size of $\M{\a,C}$ from $q$ to $v$.
Indeed, the discrete logarithm in \Cref{equation:ElementOfArrayModVWRTA,equation:ElementOfArrayModVWRTTrace} maps $\fqstar$ to $[0,q-2]$, which is then reduced to $[0,v-1]$ by considering remainders modulo $v$.
As we see in this chapter, this reduction of the alphabet size preserves some of the properties of the cyclic trace arrays.
Apart from the alphabet size, another difference between the arrays in \Cref{definition:cyclicAlphaSArray} and \Cref{definition:ArrayModV} is their dimensions; while $\M{\a,C}$ has $q^t-1$ rows, $\MV{\a,C}$ has $v \w{t}$ rows.
To justify the choice for this number of rows in our definition, we make a connection between cyclic trace arrays modulo $v$ and a new type of sequence.

\begin{lemma}
{}
{MSequenceModVPeriod}
    Let $m$ be a positive integer, $\a$ be a primitive element of $\fqt$, $v\geq 2$ be a divisor of $q-1$, and let $\o= \a^{\w{t}}$.
    Then, the sequence $\seqv{\a} = ( \seqv{\a}_i )_{i\geq 0}$ defined below, has period $v \w{t}$:
    \begin{align}
        \label{equation:DefinitionOfMSeqModV}
        \seqv{\a}_i 
        &=
        \begin{cases}
            \logw \left(\seq{\a}_i\right) \bmod v, & \text{ if \;} \seq{\a}_i\neq 0;\\\notag
            0,& \text{ otherwise;}
        \end{cases}
        \\
        &=
        \begin{cases}
            \logw \left(\Tt(\a^i) \right) \bmod v, & \text{ if \;} \Tt(\a^i)\neq0;\\
            0, & \text{ otherwise.}
        \end{cases}
    \end{align}
\end{lemma}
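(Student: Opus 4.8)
The plan is to establish the period of $\seqv{\a}$ by reducing the claim to the known period of the underlying maximal sequence $\seq{\a}$, which is $q^t-1$, together with the projective property of maximal sequences (\Cref{proposition:ProjectivePropertyOfMaximalSequences}) and the characterization of constant multiples (\Cref{lemma:CharacterizationOfConstantMultiplesInFQM}). The key observation is that passing from $\seq{\a}$ to $\seqv{\a}$ via the discrete logarithm modulo $v$ should \emph{decrease} the period from $q^t-1$ down to $v\w{t}$, and we must verify both that $v\w{t}$ is indeed a period and that it is the least such.

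First I would show that $v\w{t}$ is a period, i.e.\ that $\seqv{\a}_{i+v\w{t}} = \seqv{\a}_i$ for all $i\geq 0$. For this, recall that $\seq{\a}_i = \Tt(\a^i)$, and consider the effect of shifting the index by $\w{t}$. By \Cref{proposition:ProjectivePropertyOfMaximalSequences} (or directly by \Cref{lemma:CharacterizationOfConstantMultiplesInFQM}), we have $\a^{i+\w{t}} = \a^{\w{t}}\a^i = \o \a^i$, and since the trace is $\fq$-linear, $\Tt(\a^{i+\w{t}}) = \o\,\Tt(\a^i)$. Hence whenever $\Tt(\a^i)\neq 0$, applying the discrete logarithm gives $\logw(\Tt(\a^{i+\w{t}})) = \logw(\o\,\Tt(\a^i)) = 1 + \logw(\Tt(\a^i))$, while the zero positions of the sequence are preserved under this shift. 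Iterating this $v$ times, shifting the index by $v\w{t}$ multiplies $\Tt(\a^i)$ by $\o^v$ and increases the discrete logarithm by exactly $v$, which is $\equiv 0 \pmod v$; the zero positions also return to themselves since $v\w{t}$ is a multiple of $\w{t}$ and $q^t-1 = (q-1)\w{t}$ divides $v\w{t}$ is not needed—rather the relevant fact is that $\o$ has order $q-1$ and $v\mid q-1$. Thus $\seqv{\a}_{i+v\w{t}} = \seqv{\a}_i$, establishing that $v\w{t}$ is a period.

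Next I would argue minimality: suppose $r$ is the least period of $\seqv{\a}$, so $r\mid v\w{t}$. The shift-by-$\w{t}$ analysis above shows that the sequence $\seqv{\a}$ has a clean block structure: the zero positions recur with period $\w{t}$ (these are governed by the pattern of zeros of $\Tt(\a^i)$ over one window $[0,\w{t}-1]$, which by the balance properties has exactly $\w{t-1}$ zeros per window), while on the nonzero positions the value increments by $1$ modulo $v$ each time the index advances by $\w{t}$. Therefore any period must be a multiple of $\w{t}$, say $r = m\w{t}$ with $m\mid v$, and additionally the increment condition forces $m\equiv 0\pmod v$, i.e.\ $v\mid m$; combined with $m\mid v$ this gives $m=v$ and hence $r=v\w{t}$. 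I would make the increment argument precise by fixing any index $i_0$ with $\Tt(\a^{i_0})\neq 0$ (such exists by the balance property, \Cref{proposition:BalancePropertyMSequences}) and tracking $\seqv{\a}_{i_0+s\w{t}} = (s + \logw(\Tt(\a^{i_0})))\bmod v$ for $s=0,1,\dots$, which cycles with least period exactly $v$.

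The main obstacle I anticipate is the careful bookkeeping around the zero positions and ensuring that the two constraints—recurrence of zeros (forcing $\w{t}\mid r$) and the cyclic increment of the logarithm on nonzero entries (forcing $v\mid m$)—are correctly combined without circular reasoning. In particular one must confirm that a candidate period $r$ that is \emph{not} a multiple of $\w{t}$ is genuinely impossible; this follows because a nonzero entry would have to align with either another nonzero entry carrying a potentially different logarithm value or with a zero entry, and the rigid structure of where zeros occur (determined by the linear-algebraic kernel of the trace, which is fixed across windows up to scaling by $\o$) rules this out. Handling this cleanly is where the projective property and the exact count of zeros per window do the real work, so I would state those consequences as a short preliminary lemma before assembling the minimality argument.
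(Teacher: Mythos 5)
Your opening paragraph proves exactly what the lemma asserts, and it does so by the same route as the paper. Note that by \Cref{definition:PeriodicSequence} a ``period'' is any $r$ with $\seqv{\a}_{i+r}=\seqv{\a}_i$ for all $i$, not necessarily the least one, so the lemma claims only that $v\w{t}$ is \emph{a} period; the paper's proof consists precisely of your first computation, done in a single step rather than $v$ iterated ones: for $i=j+nv\w{t}$ it writes $\a^{nv\w{t}}=\o^{nv}\in\fqstar$ (via \Cref{lemma:CharacterizationOfConstantMultiplesInFQM}), uses the $\fq$-linearity of the trace (\Cref{theorem:PropertiesOfTrace}) to get $\Tt(\a^i)=\o^{nv}\Tt(\a^j)$, and concludes that zeros are preserved while $\logw$ changes by a multiple of $v$. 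Up to this packaging, your argument and the paper's are the same.

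Everything after that aims at minimality of the period, which the statement does not claim and which the paper never uses (all later proofs need only $\seqv{\a}_i=\seqv{\a}_{i\bmod v\w{t}}$). Moreover, your minimality sketch has a genuine gap. In $\seqv{\a}$ the symbol $0$ conflates two different kinds of positions: those with $\Tt(\a^i)=0$, and those with $\Tt(\a^i)\neq 0$ but $\logw(\Tt(\a^i))\equiv 0\Mod{v}$. Positions of the first kind do recur with period $\w{t}$, but positions of the second kind do not: for each residue $u$ modulo $\w{t}$ at which the trace is nonzero, exactly one of the $v$ positions $u+k\w{t}$, $k\in[0,v-1]$, in a period carries such a zero. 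So your assertion that ``the zero positions recur with period $\w{t}$'' is false for $\seqv{\a}$, the trace-zero pattern cannot be read off from $\seqv{\a}$ as directly as you suggest, and the pivotal step ``any period must be a multiple of $\w{t}$'' is unsupported as written. The conclusion can be salvaged, but it needs an argument absent from your sketch: for $v\geq 2$, a residue class modulo $\w{t}$ contains $v$ zeros of $\seqv{\a}$ per period when the trace vanishes on it and exactly one zero otherwise, so the trace-zero set $Z\subseteq\Z_{\w{t}}$ \emph{is} determined by $\seqv{\a}$; a period $r$ then forces $Z+r=Z$ in $\Z_{\w{t}}$, and since $Z$ is a $(\w{t},\w{t-1},\w{t-2})$-difference set (\Cref{proposition:ConnectionOfSeqMatrixAndDifferenceSets}), a nontrivial translation symmetry would give the difference $r$ at least $\w{t-1}>\w{t-2}$ representations, a contradiction; hence $\w{t}\mid r$, and only then does your increment argument apply to force $r=v\w{t}$.
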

\begin{proof}
    We denote $\T=\Tt$.
    Let $i$ be a positive integer and $j=i \bmod v \w{t}$; we need to prove that $\seqv{\a}_i=\seqv{\a}_j$.
    Let $n$ be an integer such that $i=j+nv \w{t}$.
    Then, $\T(\a^i) = \T(\a^j\a^{nv\w{t}})$.
    By \Cref{lemma:CharacterizationOfConstantMultiplesInFQM}, we have that $\o=\a^{\w{t}}$ is a primitive element of $\fq$, hence $\a^{\w{t}nv}=\o^{nv} \in \fq$.
    From the linearity of the trace over $\fq$, as per \Cref{theorem:PropertiesOfTrace}, we have 
    \begin{equation}\label{equation:lkjacncnc}
        \T(\a^i) = \a^{\w{t}nv}\T(\a^j)=\o^{nv}\T(\a^j).
    \end{equation}
    It follows from \Cref{equation:lkjacncnc} that
    \begin{equation}
        \label{equation:qwdclkj}
        \T(\a^i)=0 \text{ if and only if } \T(\a^j)=0.
    \end{equation}
    For the case when $\T(\a^i)$ and $\T(\a^j)$ are nonzero, applying $\logw$ in  \Cref{equation:lkjacncnc} yields
    \[
         \logw(\T(\a^i))
        =\logw(\o^{nv}\T(\a^j))
        =nv+\logw(\T(\a^j)).
    \]
    Now, considering remainders modulo $v$ in the above, we have
    \begin{equation}\label{equation:alkkncnc}
        \logw(\T(\a^i)) \bmod{v} =\logw(\T(\a^j)) \bmod{v}.
    \end{equation}
    We recall that $\T(\a^i)$ is the $i$-th element $\seq{\a}_i$ of $\seq{\a}$, therefore it follows from \Cref{definition:MSequenceModV} and \Cref{equation:qwdclkj,equation:alkkncnc} that $\seqv{\a}_i=\seqv{\a}_j$.
\end{proof}

\index{Maximal sequence!modulo $v$}
\begin{definition}{Maximal sequence modulo $v$}{MSequenceModV}
    The sequence $\seqv{\a}$ described in \Cref{lemma:MSequenceModVPeriod} is the \emph{maximal sequence modulo $v$ corresponding to $\a$}.
\end{definition}

\begin{example}
{}
{SeqModVFirstExample}
Let $q=4$, $t=2$ and $\b$ be a primitive element of $\fqt$.
    We recall that in \Cref{example:ConstructionFFourAndFSixteen} we construct the finite field with $16$ elements as follows.
    \begin{itemize}
        \item We first identify $\ffour$  as $\ftwo(\a)$, where $\a$ is a root of the (primitive) polynomial $x^2+x+1 \in \ftwox$.
        \item We then identify $\fsixteen$ as $\ffour(\b)$, where $\b$ is a root of the (primitive) polynomial $x^2+x+\a \in \ffour[x]$.
        \item We list the nonzero elements of $\fqt=\ffour(\b)$ in \Cref{table:PowersOfBetaInExample}.
    \end{itemize}
    Now, we have 
    \[\w{t}=[2]_4=\frac{2^4-1}{4-1}=5,\]
    so $\o=\b^{\w{t}}=\b^5$ is a primitive element of $\fq$. 
    In \Cref{table:PowersOfBetaInExample} we see that $\b^{5}=\a$, which is indeed primitive as mentioned previously.
    We have that 
    \[\Tt(\b^i)=\b^i+\b^{4i} \in \fq,\]
    for all integers $i$.
    We calculate these values for $i \in [0,15]$ using \Cref{table:PowersOfBetaInExample} and we list them in the row labeled $\Tt(\b^i)$ in \Cref{table:ExampleSequenceModV}.
    Since $\o=\a$ is a root of $x^2+x+1$, we have that $\o^2=\o+1$ thus $\logw(\o+1)=2$; furthermore, $\logw(\o)=1$ and $\logw(1)=0$.
    Using the above, we list the values of $\logw(\Tt(\b^i))$ for $i \in [0,15]$ in \Cref{table:ExampleSequenceModV} when $\T(\b^i)$ is nonzero.
    Then, setting $v=3$, which is a divisor of $q-1$, we also show the corresponding values of  $\seqv{\b}_i$ for $i \in [0,15]$, where the zeros in bold are the zeros that are correspond to $i$ such that  $\Tt(\b^i)=0$. 
    Furthermore, $\seqv{\b}$ has period $v\w{t}=3\cdot 5=15$; we compare the elements in a period of the two sequences below, omitting the parentheses and commas in the sequence notation:
    \[
        \begin{array}{r*{15}c}
        \seq{\b}:&\o&\o&1&\o&0&\o+1&\o+1&\o&\o+1&0&1&1&\o+1&1&0\\
        \seqv{\b}:&1&1&0&1&\mathbf{0}&2&2&1&2&\mathbf{0}&1&1&2&1&\mathbf{0}
        \end{array}
    \]
\end{example}
\begin{table}[t]
    \[
    \renewcommand{\arraystretch}{\genarraystretch}
    \begin{array}{llllll}
    \rowcolor{\tableheadcoloralt}
    \multicolumn{1}{c}{i} & 0     & 1         & 2         & 3         & 4 \\
    \Tt(\b^i)        &  \o &\o       &1 &\o &0   \\
    \logw(\Tt(\b^i)) & 	1 &1       &0 &1 & -   \\
    \seqv{\b}_i      & 	1 &1       &0 & 1 & \mathbf{0}   \\
    \rowcolor{\tableheadcoloralt}
     \multicolumn{1}{c}{i} &    	5    &6    &7    &8    &9 \\
    \Tt(\b^i)        &  \o+1 &\o+1       &\o &\o+1 &0   \\
    \logw(\Tt(\b^i)) & 	2 &2       &1 &2 &-   \\
    \seqv{\b}_i      & 	2 &2       &1 &2 &\mathbf{0}   \\
    \rowcolor{\tableheadcoloralt}
    \multicolumn{1}{c}{i} &10 &11 &12 &13 &14  \\
    \Tt(\b^i)        &  1 &1       &\o+1 &1 &0   \\
    \logw(\Tt(\b^i)) & 	0 &0       &2 &0 &-   \\
    \seqv{\b}_i      &  0 & 0      &2 &0 & \mathbf{0}
    \end{array}
    \]
    \caption{Demonstration of the sequences in \Cref{example:SeqModVFirstExample}}
    \label{table:ExampleSequenceModV}
\end{table}

In \Cref{example:SeqModVFirstExample}, $\seqv{\b}$ and $\seq{\b}$ have the same period. 
This is not always the case, as we show in the following less detailed example.

\begin{table}[t]
    \[
    \renewcommand{\arraystretch}{\genarraystretch}
    \begin{array}{*{9}l}
    \rowcolor{\tableheadcoloralt}
            \multicolumn{1}{c}{i} & 0 & 1 & 2 & 3 & 4 & 5 & 6 & 7 \\
            \seq{\a}_i & 2& 1& 2& 6& 0& 3& 3& 1\\
            \logw(\seq{\a}_i) & 2& 0& 2& 3& -& 1& 1& 0\\
            \seqv{\a}_i & 2& 0& 2& 0& \mathbf{0}& 1& 1& 0\\
    \rowcolor{\tableheadcoloralt}
            \multicolumn{1}{c}{i} & 8 & 9 & 10 & 11 & 12 & 13 & 14 & 15 \\
            \seq{\a}_i & 6& 3& 6& 4& 0& 2& 2& 3\\
            \logw(\seq{\a}_i) & 3& 1& 3& 4& -& 2& 2& 1\\
            \seqv{\a}_i & 0& 1& 0& 1& \mathbf{0}& 2& 2& 1\\
    \rowcolor{\tableheadcoloralt}
            \multicolumn{1}{c}{i} & 16 & 17 & 18 & 19 & 20 & 21 & 22 & 23 \\
            \seq{\a}_i & 4& 2& 4& 5& 0& 6& 6& 2\\
            \logw(\seq{\a}_i) & 4& 2& 4& 5& -& 3& 3& 2\\
            \seqv{\a}_i & 1& 2& 1& 2& \mathbf{0}& 0& 0& 2\\
    \rowcolor{\tableheadcoloralt}
            \multicolumn{1}{c}{i} & 24 & 25 & 26 & 27 & 28 & 29 & 30 & 31 \\
            \seq{\a}_i & 5& 6& 5& 1& 0& 4& 4& 6\\
            \logw(\seq{\a}_i) & 5& 3& 5& 0& -& 4& 4& 3\\
            \seqv{\a}_i & 2& 0& 2& 0& \mathbf{0}& 1& 1& 0\\
    \rowcolor{\tableheadcoloralt}
            \multicolumn{1}{c}{i} & 32 & 33 & 34 & 35 & 36 & 37 & 38 & 39 \\
            \seq{\a}_i & 1& 4& 1& 3& 0& 5& 5& 4\\
            \logw(\seq{\a}_i) & 0& 4& 0& 1& -& 5& 5& 4\\
            \seqv{\a}_i & 0& 1& 0& 1& \mathbf{0}& 2& 2& 1\\
    \rowcolor{\tableheadcoloralt}
            \multicolumn{1}{c}{i} & 40 & 41 & 42 & 43 & 44 & 45 & 46 & 47\\
            \seq{\a}_i & 3& 5& 3& 2& 0& 1& 1& 5\\
            \logw(\seq{\a}_i) & 1& 5& 1& 2& -& 0& 0& 5\\
            \seqv{\a}_i & 1& 2& 1& 2& \mathbf{0}& 0& 0& 2
        \end{array}
    \]
    \caption{Demonstration of the sequences in \Cref{example:SeqModVSecondExample}}
    \label{table:ExampleSequenceModV2}
    \end{table}

\begin{example}
{}
{SeqModVSecondExample}
    Let $q=7$, $t=2$, and $\a$ be a root of the primitive polynomial $x^2+6x+3 \in \fseven[x]$.
    Then, $\seq{\a}$ is a maximal sequence with period $7^2-1=48$; we list the first $48$ elements in \Cref{table:ExampleSequenceModV2}.
    Let $v=3$, which is a divisor of $q-1$.
    We calculate $\w{t}=[2]_7=(7^2-1)/(7-1)=8$, so we have that $\o=\a^8=3$ is a primitive element of $\fseven$. 
    We have that $3^2=2$, $3^3=6$, $3^4=4$, $3^5=5$, and $3^6=1$; using this information we list the first $48$ elements of $\seqv{\a}$ in \Cref{table:ExampleSequenceModV2}.
    As before, the zeros in bold correspond to zero elements of $\seq{\a}$.
    Furthermore, by \Cref{lemma:MSequenceModVPeriod}, $\seqv{\a}$ has period $v \w{t}=3\cdot 8=24$, which is indeed reflected in the table.
\end{example}

The reader may have noticed that in \Cref{table:ExampleSequenceModV,table:ExampleSequenceModV2},  the rows labeled $\seq{\a}_i$ are nonzero constant multiples of each other. 
This pattern is due to the projective property of maximal sequences, that we describe in \Cref{proposition:ProjectivePropertyOfMaximalSequences}.

We recall that, as per \Cref{definition:LeftShiftOperator}, for integers $j,n$, we denote
\[
    L^{j}_n\left(\seqv{\a}\right) = \left(\seqv{\a}_{i+j}\right)_{i=0}^{n-1}.
\]

\begin{remark}
{}
{ArraysModVAndCyclicShifts}
The following analogues of the statements in \Cref{remark:ColumnsOfCyclicTraceArrayAreCyclicShiftsOfMseq} are straightforward implications of \Cref{definition:ArrayModV,definition:MSequenceModV}.

    \begin{enumerate}
        \item 
            The columns of $\MV{\a,C}$ are the left cyclic shifts of $\seqv{\a}$ by the elements in $C$, as follows:
            \[
                \MV{\a,C}=
                \left[ 
                    L^{c_0}_{v\w{t}}\left( \seq{\a; v}\right) \mid
                    \dots\mid
                    L^{c_{k-1}}_{v\w{t}} \left( \seq{\a; v}\right)
                \right].
            \]
        \item If $C=\{ c_0, \dots, c_{k-1} \}$, then for $(i,j) \in [0,v\w{t}-1]\times[0,k-1]$, the $(i,j)$-th element of $\MV{\a,C}$ is
            \begin{equation}\label{equation:IJthElementOfModVArrayInTermsOfSeqq}
                \MV{\a,C}_{i,j}= \seqv{\a}_{i+c_j}.
            \end{equation}
        \item 
            The rows of $\MV{\a}$ are the vectors 
            $L_{\w{t}}^i( \seqv{\a} )$, $i \in [0,v \w{t}-1]$.
        \item 
            For $C\subseteq C'$, we have that $\MV{\a,C}$ is a subarray of $\MV{\a,C'}$.
    \end{enumerate}
\end{remark}

\begin{table}
        \[
            \M{\a}=
            \begin{array}{lllll}
               \o	&\o	 &1	   &\o	 &0	 	\\
               \o	&1	 &\o   &0	 &\o+1 	\\
               1	&\o	 &0	   &\o+1 &\o+1 	\\
               \o	&0	 &\o+1 &\o+1 &\o	\\
               0	&\o+1&\o+1 &\o	 &\o+1 	\\
               \o+1	&\o+1&\o   &\o+1 &0	 	\\
               \o+1	&\o	 &\o+1 &0	 &1	 	\\
               \o	&\o+1&0	   &1	 &1	 	\\
               \o+1	&0	 &1	   &1	 &\o+1 	\\
               0	&1	 &1	   &\o+1 &1	 	\\
               1	&1	 &\o+1 &1	 &0	 	\\
               1	&\o+1&1	   &0	 &\o	\\
               \o+1	&1	 &0	   &\o	 &\o	\\
               1	&0	 &\o   &\o	 &1	 	\\
               0	&\o	 &\o   &1	 &\o	
            \end{array}
            \quad
            \quad
            \quad
            \MV{\a}=
            \begin{array}{lllll}
               1	&1	 &0	   &1	 &\mathbf{0}	 	\\
               1	&0	 &1   &\mathbf{0}	 &2    	\\
               0	&1	 &\mathbf{0}	   &2    &2    	\\
               1	&\mathbf{0}	 &2    &2    &1	\\
               \mathbf{0}	&2   &2    &1	 &2    	\\
               2   	&2   &1   &2    &\mathbf{0}	 	\\
               2   	&1	 &2    &\mathbf{0}	 &0	 	\\
               1	&2   &\mathbf{0}	   &0	 &0	 	\\
               2   	&\mathbf{0}	 &0	   &0	 &2    	\\
               \mathbf{0}	&0	 &0	   &2    &0	 	\\
               0	&0	 &2    &0	 &\mathbf{0}	 	\\
               0	&2   &0	   &\mathbf{0}	 &1	\\
               2   	&0	 &\mathbf{0}	   &1	 &1	\\
               0	&\mathbf{0}	 &1   &1	 &0	 	\\
               \mathbf{0}	&1	 &1   &0	 &1	
            \end{array}
        \]
        \caption[Demonstration of a cyclic trace array modulo $v$]{The arrays $\M{\a}$ and $\MV{\a}$ for $q=t=4$, $v=3$ and $\a$ as in \Cref{example:SeqModVFirstExample}.}
        \label{table:ArrayModVFirstExample}
\end{table}

\begin{table}
\renewcommand{\arraystretch}{0.8}
\[
        \M{\a}=
        \begin{array}{llllllll}
2&1&2&6&0&3&3&1\\
1&2&6&0&3&3&1&6\\
2&6&0&3&3&1&6&3\\
6&0&3&3&1&6&3&6\\
0&3&3&1&6&3&6&4\\
3&3&1&6&3&6&4&0\\
3&1&6&3&6&4&0&2\\
1&6&3&6&4&0&2&2\\
6&3&6&4&0&2&2&3\\
3&6&4&0&2&2&3&4\\
6&4&0&2&2&3&4&2\\
4&0&2&2&3&4&2&4\\
0&2&2&3&4&2&4&5\\
2&2&3&4&2&4&5&0\\
2&3&4&2&4&5&0&6\\
3&4&2&4&5&0&6&6\\
4&2&4&5&0&6&6&2\\
2&4&5&0&6&6&2&5\\
4&5&0&6&6&2&5&6\\
5&0&6&6&2&5&6&5\\
0&6&6&2&5&6&5&1\\
6&6&2&5&6&5&1&0\\
6&2&5&6&5&1&0&4\\
2&5&6&5&1&0&4&4\\
5&6&5&1&0&4&4&6\\
6&5&1&0&4&4&6&1\\
5&1&0&4&4&6&1&4\\
1&0&4&4&6&1&4&1\\
0&4&4&6&1&4&1&3\\
4&4&6&1&4&1&3&0\\
4&6&1&4&1&3&0&5\\
6&1&4&1&3&0&5&5\\
1&4&1&3&0&5&5&4\\
4&1&3&0&5&5&4&3\\
1&3&0&5&5&4&3&5\\
3&0&5&5&4&3&5&3\\
0&5&5&4&3&5&3&2\\
5&5&4&3&5&3&2&0\\
5&4&3&5&3&2&0&1\\
4&3&5&3&2&0&1&1\\
3&5&3&2&0&1&1&5\\
5&3&2&0&1&1&5&2\\
3&2&0&1&1&5&2&1\\
2&0&1&1&5&2&1&2\\
0&1&1&5&2&1&2&6\\
1&1&5&2&1&2&6&0\\
1&5&2&1&2&6&0&3\\
5&2&1&2&6&0&3&3\\
\end{array}
    \quad\quad
        \begin{array}{rllllllll}
            \multirow{21}{*}{$\MV{\a}=$\hspace{0.8em}}    
            2&0&2&0&\mathbf{0}&1&1&0\\
            0&2&0&\mathbf{0}&1&1&0&0\\
            2&0&\mathbf{0}&1&1&0&0&1\\
            0&\mathbf{0}&1&1&0&0&1&0\\
            \mathbf{0}&1&1&0&0&1&0&1\\
            1&1&0&0&1&0&1&\mathbf{0}\\
            1&0&0&1&0&1&\mathbf{0}&2\\
            0&0&1&0&1&\mathbf{0}&2&2\\
            0&1&0&1&\mathbf{0}&2&2&1\\
            1&0&1&\mathbf{0}&2&2&1&1\\
            0&1&\mathbf{0}&2&2&1&1&2\\
            1&\mathbf{0}&2&2&1&1&2&1\\
            \mathbf{0}&2&2&1&1&2&1&2\\
            2&2&1&1&2&1&2&\mathbf{0}\\
            2&1&1&2&1&2&\mathbf{0}&0\\
            1&1&2&1&2&\mathbf{0}&0&0\\
            1&2&1&2&\mathbf{0}&0&0&2\\
            2&1&2&\mathbf{0}&0&0&2&2\\
            1&2&\mathbf{0}&0&0&2&2&0\\
            2&\mathbf{0}&0&0&2&2&0&2\\
            \mathbf{0}&0&0&2&2&0&2&0\\
            0&0&2&2&0&2&0&\mathbf{0}\\
            0&2&2&0&2&0&\mathbf{0}&1\\
            2&2&0&2&0&\mathbf{0}&1&1\\
             & & & & & & & \\
             & & & & & & & \\
             & & & & & & & \\
             & & & & & & & \\
             & & & & & & & \\
             & & & & & & & \\
             & & & & & & & \\
             & & & & & & & \\
             & & & & & & & \\
             & & & & & & & \\
             & & & & & & & \\
             & & & & & & & \\
             & & & & & & & \\
             & & & & & & & \\
             & & & & & & & \\
             & & & & & & & \\
             & & & & & & & \\
             & & & & & & & \\
             & & & & & & & \\
             & & & & & & & \\
             & & & & & & & \\
             & & & & & & & \\
             & & & & & & & \\
             & & & & & & &
        \end{array}
    \]
    \caption[Demonstration of a cyclic trace array modulo $v$]{The arrays $\M{\a}$ and $\MV{\a}$ for $q=7$, $t=2$, $v=3$, and $\a$ as in 
    \Cref{example:SeqModVSecondExample}.}
    \label{table:ArrayModVSecondExample}
\end{table}

In \Cref{table:ArrayModVFirstExample,table:ArrayModVSecondExample}, we give $\M{\a}$ and $\MV{\a}$ for $q,t,v$ and $\a$ as in \Cref{example:SeqModVFirstExample,example:SeqModVSecondExample}, respectively;
the statements in \Cref{remark:ArraysModVAndCyclicShifts} can be verified for these arrays using \Cref{table:ExampleSequenceModV,table:ExampleSequenceModV2}. 

We close this section commenting on the choice of the number of rows in
\Cref{definition:ArrayModV}.
Let $j$ be a positive integer with $j\geq v \w{t}$ and $i=j \bmod{v \w{t}}$.
Since $\seqv{\a}$ has period $v \w{t}$, then, for any positive integer $n$, we have that
\[
    L_{n}^{j}\left( \seqv{\a} \right)=L_{n}^{i}\left( \seqv{\a} \right).
\]
In view of the second statement of \Cref{remark:ArraysModVAndCyclicShifts}, this means that if $\MV{\a}$ had been defined with more than $v \w{t}$ rows, the rows with indexes greater than $v \w{t}$ would be identical to some rows with indexes less than $v \w{t}$.
For the purpose of constructing covering arrays, such rows  are redundant.

\subsection{Main results}
In this section we state the main theorem of 
\Cref{chapter:CAsFromMSequencesAndCharacterSums} and present the implied results. 
The main theorem's proof relies on several lengthy results and is the topic of
\Cref{section:ProofOfMainTheoremSecondPaper}.

\begin{theorem}
    {Main theorem of \Cref{chapter:CAsFromMSequencesAndCharacterSums}}
    {MainTheoremSecondPaper}
    Let $q$ be a prime power, $t\geq 2$, $\a$ be a primitive element of $\fqt$, and $v$ be a positive divisor of $q-1$.
    Let $C\subseteq [0,q^t-2]$, such that the following equivalent (cf.\ \Cref{theorem:EquivalenceOAandNTsetsExtended} for $s=t-1$) statements hold:
    \begin{enumerate}
        \item The $q^t \times |C|$ array $\ZA{\a,C}$ is a linear $\OA_q(t-1, |C|,q)$.
        \item The set $\{ \a^c\mid c \in C \}$ is a $(|C|,t-1)$-set over $\fq$.
        \item The set $ \{L_{q^t-1}^c(\seq{\a}) \mid c \in C  \}$ is a $(|C|,t-1)$-set over $\fq$.
        \item For every $c_0, \dots, c_{t-2} \in C$ and $d_0, \dots, d_{t-2} \in \fq$ not all zero, the minimal polynomial of $\a$ does not divide $\sum_{j=0}^{t-2}d_j x^{c_j}$.
        \item For every $t-1$ points in $\{[\a^c] \mid c \in C\}$, there is no $(t-3)$-flat in $PG(t-1,q)$ that contains them.
    \end{enumerate}
    If furthermore
    \begin{equation}
    \label{equation:MainTheoremSecondPaper}
    q^{\frac{t}{2}-2}(q-tv) \geq v^{t-1},
    \end{equation}
    then $\MV{\a,C}$ is a 
    $\CA(v \w{t}; t, |C|,v)$.
\end{theorem}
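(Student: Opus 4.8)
The plan is to fix a $t$-subset of columns $c_0,\dots,c_{t-1}\in C$ and an arbitrary target tuple $\mathbf b=(b_0,\dots,b_{t-1})\in[0,v-1]^t$, and to show that the number $N$ of rows of $\MV{\a,C}$ realizing $\mathbf b$ in these columns is strictly positive. Since $v\mid q-1$ and $\o=\a^{\w t}$ is a primitive element of $\fq$ (\Cref{lemma:CharacterizationOfConstantMultiplesInFQM}), by \Cref{corollary:DescriptionOfMultiplicativeCharactersOfAFiniteField} the assignment $\psi(\o^k)=\zeta^k$, where $\zeta=e^{2\pi i/v}$, defines a multiplicative character $\psi$ of $\fq$ of order exactly $v$, with $\psi^m(x)=\zeta^{m\logw(x)}$ for $x\in\fqstar$. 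By the orthogonality relations (\Cref{theorem:CharSumIsZeroClassicResult}) and the convention $\psi(0)=0$, the sum $\frac1v\sum_{m=0}^{v-1}\zeta^{-mb}\psi^m(y)$ is $1$ when $y\neq0$ and $\logw(y)\equiv b\Mod v$, and $0$ otherwise. Counting only rows in which every trace is nonzero gives a lower bound for the number of covering rows, so $N>0$ still forces coverage; expanding the product of these indicators I would write
\[
 N=\frac1{v^t}\sum_{\mathbf m\in[0,v-1]^t}\Big(\prod_{j}\zeta^{-m_jb_j}\Big)\sum_{i=0}^{v\w t-1}\prod_{j=0}^{t-1}\psi^{m_j}\big(\Tt(\a^{i+c_j})\big).
\]

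Next I would convert the inner sum over one reduced period into a sum over $\fqt$. The identity $\Tt(\a^{i+nv\w t})=\o^{nv}\Tt(\a^i)$ from the proof of \Cref{lemma:MSequenceModVPeriod}, together with $\psi^{m_j}(\o^{nv})=\zeta^{m_jnv}=1$, shows the summand is periodic in $i$ with period $v\w t$; as $q^t-1=\tfrac{q-1}{v}\,v\w t$, summing over $[0,v\w t-1]$ yields $\tfrac{v}{q-1}$ times the sum over a full period, and substituting $x=\a^i$ (the $x=0$ term vanishing) gives
\[
 \sum_{i=0}^{v\w t-1}\prod_j\psi^{m_j}\big(\Tt(\a^{i+c_j})\big)=\frac{v}{q-1}\,S(\mathbf m),\qquad S(\mathbf m)=\sum_{x\in\fqt}\prod_{j=0}^{t-1}\psi^{m_j}\big(\Tt(x\a^{c_j})\big),
\]
so that $N=\tfrac{1}{v^{t-1}(q-1)}\sum_{\mathbf m}\big(\prod_j\zeta^{-m_jb_j}\big)S(\mathbf m)$, the $\mathbf m=\bm0$ term being the main term.

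I would then evaluate $S(\mathbf m)$ through the $\fq$-linear map $x\mapsto(\Tt(x\a^{c_0}),\dots,\Tt(x\a^{c_{t-1}}))$, splitting along the arc/track dichotomy of \Cref{remark:ArcsTracksCapsAndLI}. The hypothesis that $\{\a^{c}\mid c\in C\}$ is a $(|C|,t-1)$-set (\Cref{proposition:SebastianExtended}) means any $t-1$ of the $\a^{c_j}$ are independent. If all $t$ are independent (arc), the map is a bijection onto $\fq^t$, $S(\mathbf m)=\prod_j\sum_{y\in\fq}\psi^{m_j}(y)$ factorises and vanishes unless $\mathbf m=\bm0$, giving $N=(q-1)^{t-1}/v^{t-1}>0$. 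If the $t$ columns are dependent (track), \Cref{lemma:TraceBetaXIsZeroForAllXIffBetaIsZero} furnishes $\sum_j\lambda_j\a^{c_j}=0$ with all $\lambda_j\neq0$; the image is the hyperplane $\{\mathbf y:\sum_j\lambda_jy_j=0\}$, the map is $q$-to-one onto it, and the substitution $u_j=\lambda_jy_j$ gives
\[
 S(\mathbf m)=q\,\Big(\prod_j\psi^{m_j}(\lambda_j^{-1})\Big)\sum_{\substack{u_0,\dots,u_{t-1}\in\fq\\ u_0+\cdots+u_{t-1}=0}}\prod_{j}\psi^{m_j}(u_j).
\]
A scaling argument $u_j\mapsto cu_j$ shows this restricted sum vanishes unless $v\mid\sum_jm_j$, and otherwise equals $-(q-1)\,\J(\psi^{m_0},\dots,\psi^{m_{t-1}})$.

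Finally I would bound the Jacobi sum. Each trivial factor $\psi^{m_j}=\chi_0$ contributes only a sign (the standard reduction $\J(\dots,\chi_0)=-\J(\dots)$, immediate from \Cref{definition:JacobiSum}); the $t-r$ surviving nontrivial characters have trivial product, and one further reduction expresses $\J$ as $\pm\chi(-1)$ times a Jacobi sum with all characters \emph{and} product nontrivial, to which \Cref{theorem:JacobiSumBound} applies. This yields $|\J(\psi^{m_0},\dots,\psi^{m_{t-1}})|=q^{(t-r-2)/2}$ where $r$ counts the vanishing $m_j$, hence the uniform bound $|S(\mathbf m)|\le(q-1)q^{t/2}$. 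Since exactly $v^{t-1}-1$ nonzero frequencies satisfy $v\mid\sum m_j$ (the rest contributing $0$) and $S(\bm0)=(q-1)^t+(-1)^t(q-1)$ by inclusion--exclusion, I obtain
\[
 N\ \ge\ \frac{(q-1)^{t-1}+(-1)^t-(v^{t-1}-1)q^{t/2}}{v^{t-1}},
\]
and using $(q-1)^{t-1}\ge q^{t-1}-(t-1)q^{t-2}$ together with $q^{t/2}(v^{t-1}-1)\le q^{t-1}-tvq^{t-2}-q^{t/2}$ (which is exactly where the hypothesis $q^{t/2-2}(q-tv)\ge v^{t-1}$ enters) makes the numerator at least $(t(v-1)+1)q^{t-2}+q^{t/2}-1>0$, completing the proof. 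The main obstacle is the track case: correctly reducing $S(\mathbf m)$ to a hyperplane sum, identifying the vanishing condition $v\mid\sum m_j$, and pinning down the Jacobi-sum magnitude $q^{(t-r-2)/2}$ through the trivial-factor and trivial-product reductions; once the uniform bound $|S(\mathbf m)|\le(q-1)q^{t/2}$ is in hand, the final comparison with the main term is a short estimate.
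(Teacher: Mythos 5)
Your proof is correct, and it arrives at the sufficient condition \Cref{equation:MainTheoremSecondPaper} from the same two pillars as the paper — the dichotomy between a linearly independent $t$-subset and a linearly dependent $(t,t-1)$-subset of $\{\a^c \mid c\in C\}$, and the Jacobi-sum evaluation of \Cref{theorem:JacobiSumBound} applied after the dependence relation collapses the field sum onto a hyperplane — but your bookkeeping is genuinely different, and in one respect more economical. The paper's weight $\hr$ in \Cref{equation:DefinitionHr} runs the character index $j$ from $0$ with the convention $\xv^0(0)=1$ (see \Cref{lemma:EvaluateHr}), so rows containing a zero trace still contribute positively; as a consequence the paper needs a separate threshold lemma, \Cref{lemma:SufficientConditionForSigma}, to show the spurious contributions $\sigma_n$, $n<t$, are dominated once $\sigma\geq tvq^{t-1}-1$, and only then does \Cref{proposition:LDcolumnsarecovered} bound $\sigma$ by character sums. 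Your indicator instead extends every power $\psi^m$, including $m=0$, by $\psi^m(0)=0$, so your $N$ counts exactly the full-match rows and positivity of $N$ is all you need: the threshold lemma disappears. The frequency analysis is dual to the paper's: the paper annihilates every frequency vector supported on a proper subset of the $t$ columns (linear independence of proper subsets plus \Cref{lemma:directproductofcharacters}) and is left with $\big((v-1)^t+(-1)^t(v-1)\big)/v$ full-support Jacobi terms, whereas you annihilate every frequency with $v\nmid\sum_j m_j$ by the scaling argument on the hyperplane sum and bound all $v^{t-1}-1$ survivors — including partial-support ones, handled by the trivial-factor reduction $\J(\dots,\chi_0)=-\J(\dots)$ — uniformly by $(q-1)q^{t/2}$. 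Both frequency counts are at most $v^{t-1}$ and both final comparisons reduce to \Cref{equation:MainTheoremSecondPaper}, so the two routes are quantitatively equivalent; yours trades the paper's independence-based annihilation for a divisibility-based one and eliminates an entire lemma, at the cost of carrying the trivial-factor Jacobi reductions explicitly.

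Two small repairs. First, the relation $\sum_j\lambda_j\a^{c_j}=0$ with all $\lambda_j\neq 0$ is not furnished by \Cref{lemma:TraceBetaXIsZeroForAllXIffBetaIsZero}: the relation exists because the $t$ elements are dependent, and its coefficients are all nonzero because any $t-1$ of them are independent — this is exactly the argument in the proof of \Cref{corollary:NumberOfTraceTuplesThatAgree}; the cited lemma is only relevant to identifying the image of $x\mapsto\left(\Tt(x\a^{c_j})\right)_j$. Second, when quoting $|\J(\psi^{m_0},\dots,\psi^{m_{t-1}})|=q^{(t-r-2)/2}$ you should note that $t-r=1$ cannot occur among surviving frequencies (a single nonzero $m_j$ violates $v\mid\sum_j m_j$), so the reduction to a Jacobi sum with all characters and product nontrivial never degenerates; with that remark your uniform bound $|S(\mathbf{m})|\leq(q-1)q^{t/2}$, and hence the final estimate, is airtight.
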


\begin{remark}{}{AsymptoticBehaviour}
The bound on $q$ improves with respect to the number of columns as $t$
increases.
Indeed, in 
\Cref{equation:MainTheoremSecondPaper}
it is required that $q>tv$, so a sufficient condition for 
\Cref{equation:MainTheoremSecondPaper}
to hold is $q^{\frac{t}{2}-2} \geq v^{t-1}$, which is equivalent to
$ q\geq v^{2+\frac{6}{t-4}} $.
We conclude that a sufficient condition for 
\Cref{theorem:MainTheoremSecondPaper} is 
\[q>\max\{ tv, v^{2+\frac{6}{t-4}} \},\]
which means that $q$ needs to be larger than $tv$.
\end{remark}

We now focus on special cases of $t$ in the main theorem.
We consider some of the orthogonal arrays that we present in \Cref{chapter:CombinatorialArraysFromMSequences}, of strength $t-1$ for the cases when $t$ is equal to $3$, $4$, or it is arbitrary.
Then, for each case, we give a corollary of the main theorem that describes a family of covering arrays of strength $t$.

\begin{corollary}{Covering arrays of strength 3}
    {CharactersCAsOfStrength3}
    Let $q$ be a prime power, $\a$ be a primitive element of $\fqthree$, and $v$ be a positive divisor of $q-1$, such that $q\geq v^4+6v^3+9v^2$.
    Then, $\mathcal{A}_{q^3/q}(\a;v)$ is a $\CA(v(q^2+q+1); 3, q^2+q+1,v)$.
\end{corollary}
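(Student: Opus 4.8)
The plan is to derive this corollary as a direct instantiation of the main theorem (\Cref{theorem:MainTheoremSecondPaper}) with $t=3$. The main theorem requires two ingredients: first, that the chosen column set $C$ satisfies the orthogonal array condition of strength $t-1=2$; and second, that the numerical inequality \Cref{equation:MainTheoremSecondPaper} holds. So the proof splits into verifying each of these for the present parameters.

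\textbf{Verifying the orthogonal array condition.} For $t=3$, we take $C=[0,\w{3}-1]=[0,q^2+q]$, so that $\mathcal{A}_{q^3/q}(\a;v)=\MV{\a,[0,\w{3}-1]}$ in the notation of \Cref{definition:ArrayModV}. The main theorem needs $\ZA{\a,C}$ to be a linear $\OA_q(t-1,|C|,q)=\OA_q(2,q^2+q+1,q)$. But this is precisely the Rao--Hamming construction: by \Cref{corollary:RaoHammingOAs}, the principal cyclic trace array $\ZA{\a}=\ZA{\a,[0,\w{3}-1]}$ is an $\OA_{q^{t-2}}(2,\w{t},q)=\OA_{q}(2,q^2+q+1,q)$. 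Thus the first hypothesis of \Cref{theorem:MainTheoremSecondPaper} holds automatically, with $|C|=\w{3}=q^2+q+1$. (Equivalently, one could invoke statement~2 of the theorem: the points $[\a^c]$, $c\in[0,\w{3}-1]$, are all $q^2+q+1$ distinct points of $PG(2,q)$ by \Cref{proposition:ConstructionOfPGDQFromFF}, so every two of the corresponding field elements are linearly independent, i.e.\ $\{\a^c\mid c\in C\}$ is a $(q^2+q+1,2)$-set over $\fq$.)

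\textbf{Verifying the numerical condition.} With $t=3$, inequality \Cref{equation:MainTheoremSecondPaper} reads
\[
q^{\frac{3}{2}-2}(q-3v)\geq v^{3-1},
\quad\text{i.e.}\quad
q^{-1/2}(q-3v)\geq v^{2}.
\]
Multiplying through by $q^{1/2}$, this is equivalent to $q-3v\geq v^{2}q^{1/2}$, hence $q-v^2 q^{1/2}-3v\geq 0$. Treating this as a quadratic inequality in the variable $q^{1/2}$, the relevant root analysis shows it holds once $q^{1/2}$ exceeds the larger root of $X^2-v^2 X-3v$, and a clean sufficient closed form is obtained by squaring a suitable bound. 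The hypothesis $q\geq v^4+6v^3+9v^2=v^2(v+3)^2=(v^2+3v)^2$ is exactly engineered to give $q^{1/2}\geq v^2+3v$, from which $q\geq (v^2+3v)q^{1/2}=v^2 q^{1/2}+3v\,q^{1/2}\geq v^2 q^{1/2}+3v$ (using $q^{1/2}\geq 1$), yielding $q-v^2q^{1/2}-3v\geq 0$ as required. I would carry out this short algebraic reduction explicitly, since it is the only computational content.

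\textbf{Conclusion.} Having checked both hypotheses, \Cref{theorem:MainTheoremSecondPaper} applies directly and gives that $\MV{\a,C}=\mathcal{A}_{q^3/q}(\a;v)$ is a $\CA(v\w{t};t,|C|,v)=\CA(v(q^2+q+1);3,q^2+q+1,v)$, which is the claim. I do not anticipate a genuine obstacle here: the corollary is a specialization, so the only real work is translating the generic bound $q^{t/2-2}(q-tv)\geq v^{t-1}$ into the stated polynomial threshold in $v$, and confirming that $q\geq(v^2+3v)^2$ is indeed sufficient (as opposed to merely necessary). The mild subtlety to watch is that \Cref{equation:MainTheoremSecondPaper} implicitly requires $q>tv=3v$ for the left side to be positive, but this is automatically implied by $q\geq v^2(v+3)^2$, so no separate case need be made.
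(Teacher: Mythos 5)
Your proposal is correct and matches the paper's own proof essentially step for step: both take $C=[0,\w{3}-1]$, invoke \Cref{corollary:RaoHammingOAs} to satisfy the orthogonal array hypothesis of \Cref{theorem:MainTheoremSecondPaper}, and reduce the bound $q^{-1/2}(q-3v)\geq v^2$ to the sufficient condition $q^{1/2}\geq v^2+3v$, squared to give the stated threshold. The only (immaterial) difference is cosmetic: the paper rewrites the inequality as $q^{1/2}\geq v^2+3v/\sqrt{q}$ and drops the $\sqrt{q}$ in the denominator, while you multiply through by $\sqrt{q}$ and use $q^{1/2}\geq 1$; both algebraic routes are valid.
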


\begin{proof}
    Let $C=[0, \w{3}-1]=[0,q^2+q]$. 
    By \Cref{corollary:RaoHammingOAs}, we have that $\mathcal{A}_{q^3/q,\mathbf{0}}(\a,C)$ is an $\OA_{q^{3-2}}(2, \w{3},q)= \OA_q(2, q^2+q+1,q)$.
    Hence the second statement of \Cref{theorem:MainTheoremSecondPaper} holds for $t=3$.
    Furthermore, we claim that \Cref{equation:MainTheoremSecondPaper} holds for $t=3$.
    Indeed, substituting $t=3$ in \Cref{equation:MainTheoremSecondPaper}, we have that an equivalent condition in that case is $q^{3/2-2}(q-3v)\geq v^2$, which is equivalent to $q^{1/2} \geq v^2+3v/\sqrt{q}$.
    A sufficient condition for the latter to hold is $q^{1/2}\geq v^2+3v$; squaring both sides gives $q\geq (v^2+3v)^{2}=v^4+6v^3+9v^2$, which holds by our assumptions. 
    We conclude that all the conditions of \Cref{theorem:MainTheoremSecondPaper} hold for $t=3$, and thus $\MV{\a,C}=\MV{\a}$ is a $\CA(v \w{t}; t, |C|,v)$ for $t=3$, i.e. a $\CA(v(q^2+q+1); 3, q^2+q+1,v)$.
\end{proof}

We note that the bound in \Cref{corollary:CharactersCAsOfStrength3} can be improved to $q \geq v^4 + 6v^3$ using the better but more complicated conditions used in the proofs in \Cref{section:ProofOfMainTheoremSecondPaper}.
We discuss the details in the end of that section.
\begin{corollary}
    {Covering arrays of strength 4}
    {CharactersCAsOfStrength4}
    Let $q$ be a prime power, $\a$ be a primitive element of $\fqfour$, and 
    \[C=\left\{ j(q+1) \mid j \in [0,q^{2}] \right\}.\]
    Furthermore, let $v$ be a positive divisor of $q-1$, such that $q \geq v^3+4v$.
    Then, $\mathcal{A}_{q^4/q}(\a, C ; v)$ is a $\CA(v(q^3+q^2+q+1); 4, q^2+1,v)$.
\end{corollary}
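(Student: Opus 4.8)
The plan is to apply \Cref{theorem:MainTheoremSecondPaper} with $t=4$, exactly mirroring the structure of the proof of \Cref{corollary:CharactersCAsOfStrength3}. There are two things to verify: first, that one of the equivalent hypotheses (1)--(5) of the main theorem holds for the given set $C$, and second, that the arithmetic condition \Cref{equation:MainTheoremSecondPaper} holds for $t=4$ under the assumption $q\geq v^3+4v$.

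For the first part I would invoke \Cref{corollary:OAsOfStrength3FromMSequences}, which is the $t=4$ instance of the strength-$3$ orthogonal array construction. With $C=\{ j(q+1)\mid j\in[0,q^2]\}=\{ j\w{t/2}\mid j\in[0,q^{t/2}]\}$ for $t=4$ (noting $\w{4/2}=\w{2}=q+1$ and $q^{4/2}=q^2$), that corollary gives that $\ZA{\a,C}$ is an $\OA_{q^{t-3}}(3,q^{t/2}+1,q)=\OA_q(3,q^2+1,q)$. This is precisely statement (1) of \Cref{theorem:MainTheoremSecondPaper} for $t=4$ and $s=t-1=3$, so the hypothesis is satisfied and $|C|=q^2+1$.

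For the second part I would substitute $t=4$ into \Cref{equation:MainTheoremSecondPaper}, which reads $q^{\frac{t}{2}-2}(q-tv)\geq v^{t-1}$. At $t=4$ the exponent $\frac{t}{2}-2$ vanishes, so the condition collapses to the clean linear-in-$q$ inequality
\[
q-4v\geq v^{3},
\]
that is, $q\geq v^3+4v$, which is exactly the stated assumption. Hence all hypotheses of the main theorem hold, and I would conclude that $\MV{\a,C}=\mathcal{A}_{q^4/q}(\a,C;v)$ is a $\CA(v\w{t}; t, |C|, v)$ with $t=4$ and $|C|=q^2+1$; since $\w{4}=q^3+q^2+q+1$, this is a $\CA(v(q^3+q^2+q+1); 4, q^2+1, v)$, as claimed.

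I do not expect a genuine obstacle here, since the $t=4$ case is where the exponent in \Cref{equation:MainTheoremSecondPaper} conveniently disappears and the condition becomes transparent; the only point requiring a little care is matching the combinatorial set $C$ against the indexing in \Cref{corollary:OAsOfStrength3FromMSequences}, namely verifying that $q^{t/2}=q^2$ and $\w{t/2}=q+1$ so that the two descriptions of $C$ agree. The substantive content of the result is of course buried in \Cref{theorem:MainTheoremSecondPaper} itself (and its proof via Jacobi sums in \Cref{section:ProofOfMainTheoremSecondPaper}); this corollary is a direct specialization.
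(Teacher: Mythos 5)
Your proposal is correct and follows essentially the same route as the paper's own proof: invoke \Cref{corollary:OAsOfStrength3FromMSequences} to get the $\OA_q(3,q^2+1,q)$ hypothesis of \Cref{theorem:MainTheoremSecondPaper}, then specialize \Cref{equation:MainTheoremSecondPaper} at $t=4$, where the exponent vanishes and the condition becomes $q-4v\geq v^3$. In fact your arithmetic is the correct one, $q\geq v^3+4v$, matching the corollary's statement; the paper's proof text contains a sign typo, writing $q\geq v^3-4v$ at this step.
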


\begin{proof}
    By \Cref{corollary:OAsOfStrength3FromMSequences} we have that $\mathcal{A}_{q^4/q}(\a,C)$ is an $\OA_{q^{4-3}}(3, q^2+1,q)=\OA_q(3, q^2+1,q)$.
    Hence, the second statement of \Cref{theorem:MainTheoremSecondPaper} holds for $t=4$.
    Furthermore, we claim that \Cref{equation:MainTheoremSecondPaper} holds for $t=4$.
    Indeed, substituting $t=4$ in \Cref{equation:MainTheoremSecondPaper}, we have that an equivalent condition in that case is $q \geq v^3-4v$.
    We conclude that all the conditions of \Cref{theorem:MainTheoremSecondPaper} hold for $t=4$, and thus $\MV{\a,C}$ is a $\CA(v \w{t}; t, |C|,v)$ for $t=4$, i.e. a $\CA(v(q^3+q^2+q+1); 4, q^2+1,v)$.
\end{proof}

Suppose that $q=p^r$, for a prime $p$ and some positive integer $r$.
We recall that in \Cref{equation:RationalPointsNQ}, we define
\begin{equation*}
    N_q=
    \begin{cases}
        q+\lfloor 2\sqrt{q}\rfloor,
        & \text{if $p|2\sqrt{q}$ and $r\geq 3$, $r$ odd};\\
        q+\lfloor 2\sqrt{q}\rfloor+1, & \text{otherwise}.
    \end{cases}
\end{equation*}

\begin{corollary}
    {Covering arrays of strength $t$}
    {CharactersCAsOfStrengthT}
    Let $q$ be a prime power, $t\geq 2$, $\a$ be a primitive element of $\fqt$, and $v$ be a positive divisor of $q-1$, such that 
    \[q^{t/2-2}(q-tv) \geq v^t-1.\]
    Then, there exists $C\subset [0, \w{t}-1]$ with $|C|=N_q$, such that $\MV{\a,C}$ is a $\CA(v \w{t}; t, N_q, v)$.
\end{corollary}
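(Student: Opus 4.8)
The plan is to treat this corollary as a direct application of the main theorem, \Cref{theorem:MainTheoremSecondPaper}, once a suitable index set $C$ of size $N_q$ is in hand. The only structural input the corollary requires beyond the main theorem is a set $C\subset[0,\w{t}-1]$ with $|C|=N_q$ for which the columns of $\ZA{\a,C}$ carry an orthogonal array of strength $t-1$; but precisely such a set was already produced in \Cref{corollary:OAsFromNMDS}, which yields $C\subset[0,\w{t}-1]$ with $|C|=N_q$ and $\ZA{\a,C}$ an $\OA_q(t-1,N_q,q)$. Hence the first hypothesis of \Cref{theorem:MainTheoremSecondPaper} — equivalently, that $\{\a^c\mid c\in C\}$ is a $(N_q,t-1)$-set over $\fq$ — is satisfied for this $C$ at no extra cost.

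First I would fix this $C$ from \Cref{corollary:OAsFromNMDS}, recalling that it arises from an AMDS (indeed NMDS) code with parameters $[N_q,N_q-t,t]_q$, whose existence is guaranteed by \Cref{theorem:NMDSfromEllipticCurves} together with the lower bound on the number of rational points of an elliptic curve over $\fq$ recorded in \Cref{equation:RationalPointsNQ}. With the orthogonal-array hypothesis of the main theorem secured, the sole remaining task is to check the numerical condition \Cref{equation:MainTheoremSecondPaper}, namely $q^{\frac{t}{2}-2}(q-tv)\ge v^{t-1}$.

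Next I would deduce \Cref{equation:MainTheoremSecondPaper} from the corollary's standing hypothesis $q^{t/2-2}(q-tv)\ge v^t-1$. Since $v\ge 2$ and $t\ge 2$, we have $v^t-v^{t-1}=v^{t-1}(v-1)\ge 1$, so $v^t-1\ge v^{t-1}$; the assumed inequality therefore already implies the weaker bound $q^{t/2-2}(q-tv)\ge v^{t-1}$ required by \Cref{theorem:MainTheoremSecondPaper}. Feeding this $C$ and this verified inequality into the main theorem gives that $\MV{\a,C}$ is a $\CA(v\w{t};t,|C|,v)$, and since $|C|=N_q$ this is exactly the asserted $\CA(v\w{t};t,N_q,v)$.

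I do not anticipate any genuine obstacle within the corollary itself: it is an assembly step, and its whole content is the pairing of the existence result \Cref{corollary:OAsFromNMDS} with \Cref{theorem:MainTheoremSecondPaper}. The real difficulty lives upstream, in the character-sum estimate underlying the main theorem (where the balance properties of maximal sequences are converted into Jacobi sums of known absolute value via \Cref{theorem:JacobiSumBound}) and in the deep guarantee that an elliptic curve with $N_q$ rational points exists. The only point in the present argument that demands a moment's care is the elementary comparison $v^t-1\ge v^{t-1}$, which is what lets the stated hypothesis subsume \Cref{equation:MainTheoremSecondPaper}.
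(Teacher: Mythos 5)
Your proposal is correct and follows essentially the same route as the paper's own proof: invoke \Cref{corollary:OAsFromNMDS} to obtain $C$ with $|C|=N_q$ satisfying the orthogonal-array hypothesis of \Cref{theorem:MainTheoremSecondPaper}, then verify the numerical condition and conclude. In fact you are slightly more careful than the paper, which simply asserts that \Cref{equation:MainTheoremSecondPaper} follows ``from our assumptions''; your explicit observation that $v^t-1\geq v^{t-1}$ (since $v^{t-1}(v-1)\geq 1$ for $v\geq 2$) is exactly the bridge needed between the corollary's stated hypothesis and the main theorem's bound.
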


\begin{proof}
    By \Cref{corollary:OAsFromNMDS}, there exists $C\subset [0, \w{t}-1]$ with $|C|=N_q$, such that $\M{\a,C}$ is an $\OA_{q}(t-1, N_q,q)$.
    Hence, the second statement of \Cref{theorem:MainTheoremSecondPaper} holds.
    From our assumptions, \Cref{equation:MainTheoremSecondPaper} also holds. 
    We conclude that all the conditions of \Cref{theorem:MainTheoremSecondPaper} are satisfied, and hence $\MV{\a,C}$ is a $\CA(v \w{q}; t, N_q,v)$.
\end{proof}

\section{Proof of the main theorem}
\label{section:ProofOfMainTheoremSecondPaper}

\subsection{Overview}
As we mention earlier, the proof of \Cref{theorem:MainTheoremSecondPaper} relies on several results; before we delve into this, we give an overview of the strategy of our proof.
For convenience, we denote 
\begin{equation}
    \label{equation:AiForConvenience}
    S_i= S_i(\a,q,v,t) = L_{v\w{t}}^i\left( \seqv{\a; q} \right),\quad i \in [0, \w{t}-1].
\end{equation}
Let $C\subseteq [0, \w{t}-1]$. 
By \Cref{remark:ArraysModVAndCyclicShifts}, the $v \w{t} \times |C|$ array $\MV{\a,C}$ consists of precisely the columns $S_c, c\in C$.
Thus, to prove that this is a covering array of strength $t$, we need to show that, for every $I \subseteq C$ with $|I|=t$, the set $\{ S_i \mid i \in I \}$ is covered.
In view of that, we establish the following two results:

\begin{itemize}
    \item \textbf{\Cref{proposition:LinearlyIndependentMeansCoveredSecondPaper}:}
        For every $t$-set $I$, if $\{ \a^i\mid i\in I \}$ is linearly independent, then $\{ S_i\mid i \in I \}$ is covered.
    \item \textbf{\Cref{proposition:LDcolumnsarecovered}:}
        For every $t$-set $I$, if $\{ \a^i\mid i\in I \}$ is a linearly dependent $(t,t-1)$-set and \Cref{equation:MainTheoremSecondPaper} is satisfied, then $\{ S_i\mid i \in I \}$ is covered.
\end{itemize}
We claim that establishing the above two propositions essentially proves \Cref{theorem:MainTheoremSecondPaper}.
Indeed, assume that the five equivalent statements of \Cref{theorem:MainTheoremSecondPaper} hold.
Then, in particular, $\{ \a^c \mid c\in C \}$ is a $(|C|,t-1)$-set.
This implies that for every $I\subseteq C$, since $\{ \a^i \mid i \in I \}$ is a subset of a $(|C|, t-1)$-set, there are two cases:
\begin{enumerate}
    \item $\{ \a^i\mid i \in I \}$ is linearly independent over $\fq$, or
    \item $\{ \a^i\mid i \in I \}$ is a linearly dependent $(t,t-1)$-set over $\fq$.
\end{enumerate}
If we further assume that \Cref{equation:MainTheoremSecondPaper} holds, then in either case we have from \Cref{proposition:LinearlyIndependentMeansCoveredSecondPaper,proposition:LDcolumnsarecovered} that the set $\{ S_i \mid i \in I\}$ is covered, which shows that $\MV{\a,q}$ is a covering array of strength $t$.

The next sections are as follows: 
In \Cref{section:APreliminaryResult} we prove an auxiliary result that we need often.
In \Cref{section:FirstPartOfProofMainThmSecondPaper}, we prove \Cref{proposition:LinearlyIndependentMeansCoveredSecondPaper}, which is the first (and most straightforward) part of the proof of \Cref{theorem:MainTheoremSecondPaper}.
In \Cref{section:SecondPartOfProofMainThmSecondPaper}, we give the second part of the proof, that consists of some preliminary results on multiplicative characters, and the proof of \Cref{proposition:LDcolumnsarecovered}, that relies on them.

\subsection{A preliminary result}
\label{section:APreliminaryResult}
We state a corollary of \Cref{proposition:SebastianExtended} that comes in handy throughout the rest of this chapter.

\begin{corollary}
{}
{NumberOfTraceTuplesThatAgree}
    Let $\a$ be a primitive element of $\fqt$, $i_0, \dots, i_{t-1}\in [0, \w{t}-1]$ and $b_0, \dots, b_{t-1} \in \fq.$
    We denote by $R$ the number of elements $r \in [0,q^t-2]$ such that 
    \begin{equation}
        \label{equation:klania}
    \left(
        \Tt(\a^{r+i_0}),\Tt(\a^{r+i_1}),\dots,\Tt(\a^{r+i_{t-1}})
    \right)
    = (b_0, b_1, \dots, b_{t-1}).
    \end{equation}
    Then, we have the following values for $R$.
    \begin{enumerate}
        \item If $\{\a^{i_0}, \dots, \a^{i_{t-1}}\}$ is linearly independent, then 
              \[R=
                  \begin{cases}
                      1& \text{ if } (b_0, \dots, b_{t-1})\neq (0, \dots, 0)\\
                      0& \text{ otherwise.}
                  \end{cases}
              \]
          \item If $\{\a^{i_0}, \dots, \a^{i_{t-1}}\}$ is a linearly dependent $(t,t-1)$-set, then there exist $y_0, \dots, y_{t-1} \in \fqstar$ such that $\sum_{k=0}^{t-1}\a^{i_k}=0$ and
            \[R=
                \begin{cases}
                    q   & \text{ if } \sum_{k=0}^{t-1}y_kb_k=0 \text{ and } (b_0, \dots, b_{t-1})\neq (0, \dots, 0)\\
                    q-1 & \text{ if } (b_0, \dots, b_{t-1}) =   (0, \dots, 0)\\
                    0   & \text{ otherwise.}
                \end{cases}
            \]
    \end{enumerate}
\end{corollary}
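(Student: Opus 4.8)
The plan is to deduce this corollary directly from \Cref{proposition:SebastianExtended}, which characterizes exactly when the columns of a cyclic trace array are linearly independent, together with a dimension count on the linear map $\varphi$ used in the proof of that proposition. The key observation is that \Cref{equation:klania} is precisely the statement $\varphi(\a^r) = (b_0,\dots,b_{t-1})$, where
\[
    \varphi : \fqt \longrightarrow \fq^t, \quad
    x \mapsto \left(\Tt(x\a^{i_0}), \dots, \Tt(x\a^{i_{t-1}})\right),
\]
and $R$ counts the solutions $x = \a^r$ with $r\in[0,q^t-2]$, i.e. the nonzero solutions $x\in\fqtstar$ of $\varphi(x) = (b_0,\dots,b_{t-1})$. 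Since $\a$ is primitive, $\fqtstar = \{\a^r \mid r\in[0,q^t-2]\}$, so counting valid $r$ is the same as counting nonzero preimages under $\varphi$.

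For the first case, I would invoke the equivalence \ref{item:a} $\Leftrightarrow$ \ref{item:b} of \Cref{proposition:SebastianExtended} (with $s=t$): linear independence of $\{\a^{i_0},\dots,\a^{i_{t-1}}\}$ means $\ZA{\a,I}$ is an $\OA(t,t,q)$, equivalently $\varphi$ is a bijection $\fqt\to\fq^t$. Hence every target $(b_0,\dots,b_{t-1})$ has exactly one preimage $x\in\fqt$; this preimage is nonzero iff the target is nonzero (as $\varphi(0)=\bm 0$), giving $R=1$ for nonzero tuples and $R=0$ for the zero tuple. For the second case, I would use that a linearly dependent $(t,t-1)$-set has $\dim\spn\{\a^{i_k}\}=t-1$, so by \Cref{theorem:RowsOfAAreCode} (or directly by the rank argument in the proof of \Cref{proposition:SebastianExtended}) the image of $\varphi$ is a $(t-1)$-dimensional subspace $W\subseteq\fq^t$, and each fibre $\varphi^{-1}(\bm b)$ for $\bm b\in W$ has size $q^{t-(t-1)}=q$. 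Writing the unique (up to scaling) dependence $\sum_{k=0}^{t-1}y_k\a^{i_k}=0$ with all $y_k\in\fqstar$ (all nonzero since every $t-1$ of the $\a^{i_k}$ are independent), linearity of the trace forces $\sum_k y_k\Tt(x\a^{i_k})=0$ for all $x$, so $W = \{\bm b : \sum_k y_k b_k = 0\}$; this is the hyperplane defining which targets are attainable.

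The counting then splits as follows. For $\bm b \in W$ with $\bm b\neq\bm 0$, the fibre has $q$ elements all of which are nonzero (since $\varphi(0)=\bm 0 \neq \bm b$), giving $R=q$. For $\bm b = \bm 0$, the fibre is $\ker\varphi$, which has $q$ elements including $0$ itself, so the number of nonzero $x$ is $q-1$, giving $R=q-1$. For $\bm b\notin W$ the fibre is empty, giving $R=0$. This matches the three cases of the stated formula exactly.

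I do not expect a serious obstacle here, since all the structural input is already established in \Cref{proposition:SebastianExtended}; the main point requiring care is the bookkeeping about whether the zero vector $0\in\fqt$ lies in a given fibre, which is what distinguishes the value $q$ from $q-1$. The only mild subtlety is justifying that the coefficients $y_k$ in the dependence relation are \emph{all} nonzero and that the relation is unique up to a scalar — both follow immediately from the $(t,t-1)$-set hypothesis, which guarantees any $t-1$ of the vectors are independent, so no $y_k$ can vanish and the kernel of the $(t-1)$-codimensional relation is one-dimensional.
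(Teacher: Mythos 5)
Your proof is correct, and it reaches the same counts via a slightly different decomposition than the paper. Both arguments rest on \Cref{proposition:SebastianExtended}, and your case 1 (bijectivity of $\varphi$, with $\varphi(0)=\bm{0}$ accounting for the missing zero tuple) is essentially identical to the paper's (index-$1$ orthogonal array plus the appended zero row — these are the same fact in two languages). The difference is in case 2: the paper projects onto the first $t-1$ coordinates, invokes the $\OA_q(t-1,t-1,q)$ property of $\ZA{\a,\{i_0,\dots,i_{t-2}\}}$ to get exactly $q$ (or $q-1$, for the zero prefix) admissible values of $r$, and then uses the dependence relation $\a^{r+i_{t-1}}=-\sum_{k=0}^{t-2}\frac{y_k}{y_{t-1}}\a^{r+i_k}$ to force the last coordinate. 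You instead stay with the full map $\varphi$, compute $\dim\ker\varphi=1$ by rank--nullity (or by citing \Cref{theorem:RowsOfAAreCode}), identify $\im(\varphi)$ with the hyperplane $W=\{\bm{b}:\sum_k y_kb_k=0\}$, and read off the fibre sizes as $|\ker\varphi|=q$. Your packaging is arguably cleaner — it makes the three cases of the formula transparently correspond to $\bm{b}\in W\setminus\{\bm{0}\}$, $\bm{b}=\bm{0}$, and $\bm{b}\notin W$, and it avoids the asymmetric treatment of the last index — while the paper's projection argument has the advantage of reusing the orthogonal-array vocabulary that the surrounding chapters are built on. Your handling of the two subtleties (all $y_k$ nonzero by the $(t,t-1)$-set hypothesis, and the zero vector lying only in the fibre over $\bm{0}$) matches the paper's and is sound.
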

\begin{proof}We prove each case separately.
    \begin{enumerate}
        \item 
            If $\{\a^{i_0}, \dots, \a^{i_{t-1}} \}$ is linearly independent, then by \Cref{proposition:SebastianExtended} we have that $\AA=\ZM{\a, \{i_0, \dots, i_{t-1}\}}$ is an $\OA_{\lambda}(t, t,q)$ with $\lambda=q^{t-t}=1$.
            This means that, for every $b_0, \dots, b_{t-1}\in \fq$ there exists a unique row of $\AA$ equal to $(b_0, \dots, b_{t-1}).$
            Indexing the rows of $\AA$ by $[0,q^t-1]$ and considering \Cref{definition:cyclicAlphaSArray}, we have that the row of index $r\in [0,q^t-2]$ is given by the left hand side of \Cref{equation:klania}, while the row with index $r=q^t-1$ is the last row that contains only zeros.
            Therefore, if $(b_0, \dots, b_{t-1})$ is not the zero $t$-tuple, then there exists a unique $r\in [0,q^t-2]$ such that \Cref{equation:klania} holds; otherwise, such an $r$ does not exist.
        \item If $\{ \a^{i_0}, \dots, \a^{i_{t-1}} \}$ is a linearly dependent $(t,t-1)$-set, then from its linear dependence we have that there exist $y_0, \dots, y_{t-1}\in \fq$ not all zero such that $\sum_{k=0}^{t-1}y_k\a^{i_k}=0$.
        Suppose by means of contradiction that the coefficients are not all nonzero and, without loss of generality, assume that $y_{t-1}=0$.
    Then, we have that $\sum_{k=0}^{t-2}y_k\a^{i_k}=0$, which means that $\a^{i_0}, \dots, \a^{i_{t-2}}$ is a linearly dependent subset of $\{ \a^{i_0}, \dots, \a^{i_{t-1}} \}$ of size $t-1$, contradicting the fact that the latter is a $(t,t-1)$-set.
    We conclude that $y_0, \dots, y_{t-1}$ are all nonzero, i.e. $y_0, \dots, y_{t-1}\in \fqstar$.

    Indexing the rows of $\AA'$ by $[0, q^t-1]$, we have two cases:
    \begin{enumerate}
        \item If $(b_0, \dots, b_{t-2})\neq(0, \dots, 0)$, these $q$ rows are among the ones with indexes in $[0,q^t-2]$, as the last row is the all-zero row.
        In other words, by \Cref{definition:cyclicAlphaSArray}, there exist exactly $q$ elements $r \in [0,q^t-2]$ such that 
        \begin{equation}
            \label{equation:pordes}
            \left( \Tt(\a^{r+i_0}),\Tt(\a^{r+i_1}),\dots,\Tt(\a^{r+i_{t-2}}) \right)
            = (b_0, b_1, \dots, b_{t-2}).
        \end{equation}
        \item If $(b_0, \dots, b_{t-2})=(0, \dots, 0)$, one among these $q$ rows is the last, with index $q^t-1$, while the remaining $q-1$ rows are among those with indexes in $[0,q^t-2]$.
        Hence, there exist exactly $q-1$ elements $r \in [0,q^t-2]$ such that \Cref{equation:pordes} holds.
    \end{enumerate}
    Multiplying the equation $\sum_{k=0}^{t-1}y_k\a^{i_k}=0$ by $\a^r$ and solving for $\a^r\a^{i_{t-1}}=\a^{r+i_{t-1}}$ yields
    \begin{equation}
        \label{equation:VanHalen}
        \a^{r+i_{t-1}}= - \sum_{k=0}^{t-2} \frac{y_k}{y_{t-1}}\a^{r+i_k}.
    \end{equation}
    For all $r \in [0,q^t-2]$ satisfying \Cref{equation:pordes} in any of the above cases, we have that \Cref{equation:klania} holds if and only if
    \[
        b_{t-1} 
        = \Tt(\a^{r+i_{t-1}})
        = \Tt\left(- \sum_{k=0}^{t-2} \frac{y_k}{y_{t-1}}\a^{r+i_k}\right)
        =    - \sum_{k=0}^{t-2} \frac{y_i}{y_{t-1}}\T(\a^{r+i_k})
        =    - \sum_{k=0}^{t-2} \frac{y_i}{y_{t-1}}b_k,
    \]
    that is, $\sum_{k=0}^{t-1}y_kb_k=0$.
    We conclude that if this last equation holds, the number of $r\in [0,q^t-2]$ satisfying \Cref{equation:klania} is $q$ if $(b_0, \dots, b_{t-1})$ is not the all-zero $t$-tuple, and $q-1$ if it is.
    If that equation does not hold, then there are no such $r$.
    \end{enumerate}
\end{proof}

\subsection{First part of the proof: the linear independence case}
\label{section:FirstPartOfProofMainThmSecondPaper}

\begin{proposition}{}
    {LinearlyIndependentMeansCoveredSecondPaper}
    Let $t$ be a positive integer, $\a$ be a primitive element of $\fqt$, $v\geq 2$ be a divisor of $q-1$, and $S_i=S_i(\a,q,v,t)$, $i \in [0, \w{q}-1]$ as defined in \Cref{equation:AiForConvenience}.
    For $ I \subseteq [0, \w{t}-1]$ such that $\{ \a^i \mid i \in I \}$ is linearly independent over $\fq$, we have that $\{ S_i \mid i \in I \}$ is covered.
\end{proposition}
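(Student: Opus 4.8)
The plan is to show that the columns $\{S_i \mid i \in I\}$ are not just covered but in fact \emph{uniformly} covered, which immediately gives coverage. The key observation is that the array $\MV{\a,C}$ arises from $\M{\a,C}$ by applying the discrete-logarithm-mod-$v$ reduction entry-by-entry, and by \Cref{remark:ArraysModVAndCyclicShifts} its columns are the cyclic shifts $S_i = L^i_{v\w{t}}(\seqv{\a})$. So a row of the $t$-column subarray with columns indexed by $I = \{i_0, \dots, i_{t-1}\}$ is obtained by applying the reduction map to a corresponding row of the full cyclic trace array. First I would reduce the counting over $[0, v\w{t}-1]$ to counting over $[0, q^t-2]$: since $\seqv{\a}$ has period $v\w{t}$ and is itself obtained from the period-$(q^t-1)$ sequence $\seq{\a}$ by the reduction, I can count occurrences by summing over a full period $r \in [0, q^t-2]$ of the underlying trace values and dividing by the appropriate multiplicity $\tfrac{q^t-1}{v\w{t}} = \tfrac{q-1}{v}$.

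Next I would fix a target tuple $(e_0, \dots, e_{t-1}) \in [0,v-1]^t$ and count how many $r \in [0,q^t-2]$ satisfy
\[
    \left(\seqv{\a}_{r+i_0}, \dots, \seqv{\a}_{r+i_{t-1}}\right) = (e_0, \dots, e_{t-1}).
\]
By \Cref{definition:MSequenceModV}, the $k$-th coordinate equals $e_k$ precisely when $\Tt(\a^{r+i_k})$ is a nonzero field element whose discrete logarithm base $\o = \a^{\w{t}}$ is congruent to $e_k$ modulo $v$. Here I use the linear-independence hypothesis together with part~(1) of \Cref{corollary:NumberOfTraceTuplesThatAgree}: since $\{\a^{i_0}, \dots, \a^{i_{t-1}}\}$ is linearly independent, as $r$ ranges over $[0,q^t-2]$ the trace tuple $(\Tt(\a^{r+i_0}), \dots, \Tt(\a^{r+i_{t-1}}))$ realizes each \emph{nonzero} vector of $\fq^t$ exactly once, and the all-zero vector zero times. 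In particular, every coordinate is forced to be nonzero if we want to produce a tuple in $[0,v-1]^t$ via the log-mod-$v$ map, so I can restrict to tuples of nonzero trace values, each of which is hit exactly once.

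The combinatorial core is then a clean counting step. By \Cref{proposition:TraceDefinesQMinusOneToOneMapping} the set of nonzero vectors of $\fq^t$ that map to a prescribed residue tuple $(e_0, \dots, e_{t-1})$ modulo $v$ is the product over $k$ of the fibers $\{x \in \fqstar \mid \logw(x) \equiv e_k \Mod v\}$; since $v \mid q-1$ each such fiber has exactly $\tfrac{q-1}{v}$ elements. Because the linearly independent trace map hits each nonzero vector exactly once, the number of $r \in [0,q^t-2]$ producing the target residue tuple is exactly $\left(\tfrac{q-1}{v}\right)^t$, independent of the choice of $(e_0, \dots, e_{t-1})$. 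Dividing by the period multiplicity $\tfrac{q-1}{v}$ shows that over a single period $[0, v\w{t}-1]$ each of the $v^t$ tuples appears exactly $\left(\tfrac{q-1}{v}\right)^{t-1}$ times, so the subarray is uniformly $\left(\tfrac{q-1}{v}\right)^{t-1}$-covered and hence covered. I expect the main obstacle to be bookkeeping the passage between the two periods correctly and confirming that the all-zero-coordinate cases (where some $\Tt(\a^{r+i_k}) = 0$, forcing $\seqv{\a}_{r+i_k} = 0$) are accounted for consistently; these contribute residue $0$ by the convention in \Cref{definition:MSequenceModV} but do \emph{not} add to the count of tuples built from genuinely nonzero trace values, and one must verify that the clean uniform count above already incorporates every way of producing residue $0$ in a coordinate, namely only via nonzero traces with logarithm divisible by $v$.
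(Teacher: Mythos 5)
Your core computation is sound and, read as a lower bound, it proves the proposition by essentially the same mechanism as the paper; but the route you announce --- establishing \emph{uniform} coverage --- is not available, because uniform coverage is false. The verification you defer at the end fails: a coordinate of $\seqv{\a}$ equal to $0$ does \emph{not} arise only from nonzero traces with $\logw$ divisible by $v$; by \Cref{definition:MSequenceModV} it also arises whenever $\Tt(\a^{r+i_k})=0$, and those rows are perfectly good rows of $\MV{\a,I}$ equal to the target tuple. Consequently, for a target tuple with $z\geq 1$ zero entries, the number of $r\in[0,q^t-2]$ realizing it is
\[
\left(\frac{q-1}{v}\right)^{t-z}\left(\frac{q+v-1}{v}\right)^{z}
\]
(minus $1$ when $z=t$, for the excluded all-zero trace tuple), which is strictly larger than your claimed $\left(\frac{q-1}{v}\right)^{t}$. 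You can see this in the paper's own data: in \Cref{table:ArrayModVFirstExample} (where $q=4$, $v=3$, $t=2$) the pair $(0,0)$ occurs three times in the first two columns of $\MV{\a}$ while $(1,1)$ occurs once. This non-uniform count is precisely what \Cref{lambda-linind} records later in the thesis.

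The repair is one word: replace ``exactly'' by ``at least.'' The $\left(\frac{q-1}{v}\right)^{t}$ rows you count --- those in which every trace value is nonzero with the prescribed logarithm residues --- genuinely exist, by part~1 of \Cref{corollary:NumberOfTraceTuplesThatAgree} together with the fact that each fiber of $x\mapsto \logw(x)\bmod v$ on $\fqstar$ has size $(q-1)/v$; and your period-division step is correct, since $q^t-1=\frac{q-1}{v}\cdot v\w{t}$, so every residue tuple occurs at least $\left(\frac{q-1}{v}\right)^{t-1}\geq 1$ times among the rows indexed by $[0,v\w{t}-1]$. With that change your argument becomes a counting refinement of the paper's proof, which is a pure existence argument: it chooses $b_0,\dots,b_{t-1}\in\fqstar$ with $\logw(b_k)\equiv l_k \Mod v$, invokes \Cref{corollary:NumberOfTraceTuplesThatAgree} to produce a single $r$ with $\Tt(\a^{r+i_k})=b_k$ for all $k$, and then reduces $r$ modulo $v\w{t}$ exactly as you do.
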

\begin{proof}
    Let $I=\{ i_0, \dots, i_{t-1} \} \subseteq [0,\w{t}-1]$, such that $\{ \a^i\mid i\in I \}$ is linearly independent.
    We recall that, for $i\in I$, $S_i$ is a vector of length $v \w{t}$ and elements from $[0,v-1]$.
    By \Cref{definition:CoveredColumns}, in order to show that $\{ S_i\mid i \in I \}$ is covered, we need to prove that, for every $(l_0, \dots, l_{t-1}) \in [0,v-1]^t$, there exists some row of the $v \w{t}\times t$ array with columns $S_i$, $i \in I$, that is equal to $(l_0,\ldots, l_{t-1})$.

    Let $(l_0, \dots, l_{t-1}) \in [0, v-1]^t$.
    By \Cref{lemma:CharacterizationOfConstantMultiplesInFQM}, we have that $\o = \a^{\w{t}}$ is a primitive element of $\fq$.
    We observe that, since $v|q-1$, then for every $l \in [0,v-1]$ there exists some $y \in [0,q-2]$ such that $l=y\bmod v$.
    Furthermore, since $x \mapsto \logw(x)$ is a bijection between $\fqstar$ and $[0,q-2]$, we have that, for every $y \in [0,q-2]$, there exists $b\in \fqstar$ such that $\logw(b)=y$.
    We conclude that, for every $l \in [0,v-1]$, there exists $b \in \fqstar$ such that $l=\logw(b)\bmod v$.
    In particular, there exist $b_0, \ldots, b_{t-1} \in \fqstar$ such that 
    \begin{equation}
        \label{equation:sjwjdjcj}
        (l_0, \dots, l_{t-1})
        = \left( \logw(b_0)\bmod v, \dots, \logw(b_{t-1})\bmod v \right).
    \end{equation}
    Now, since $\{ \a^i \mid i \in I \}$ is linearly independent, and $(b_0, \dots, b_{t-1})$ is not the zero $t$-tuple, then,  by \Cref{corollary:NumberOfTraceTuplesThatAgree} we have that there exists $r\in [0,q^t-2]$ such that 
    \[
        (b_0, \dots, b_{t-1}) 
        = \left( \Tt(\a^{r+i_0}), \dots, \Tt(\a^{r+i_{t-1}}) \right).
    \]
    Since $b_0, \dots, b_{t-1} \in \fqstar$, there are no zeros among the elements of the $t$-tuples in the last equation and thus we can apply $\logw$, which yields
    \[
        (\logw(b_0), \dots, \logw(b_{t-1})) = \left( \logw(\Tt(\a^{r+i_0})), \dots, \logw(\Tt(\a^{r+i_{t-1}})) \right).
    \]
    Taking remainders modulo $v$, we have
    \begin{align*}
            & (\logw(b_0)\bmod v, \dots, \logw(b_{t-1})\bmod v) \\
        =   &\left( \logw(\Tt(\a^{r+i_0}))\bmod v, \dots, \logw(\Tt(\a^{r+i_{t-1}})) \bmod v\right),
    \end{align*}
    and from \Cref{equation:DefinitionOfMSeqModV,equation:sjwjdjcj}, we have
    \begin{equation}\label{equation:cjcamlk}
        (l_0, \dots, l_{t-1})
        = \left( \seqv{\a}_{r+i_0}, \dots, \seqv{\a}_{r+i_{t-1}} \right).
    \end{equation}
    Now, let $s\in [0, v \w{t}-1]$ such that $s=r\bmod{v \w{t}}$.
    By \Cref{lemma:MSequenceModVPeriod}, we know that $\seqv{\a}$ has period $v \w{t}$, and therefore $\seqv{\a}_{r+i}=\seqv{\a}_{s+i}$ for all positive integers $i$.
    This, along with \Cref{equation:cjcamlk}, implies that
    \begin{equation*}
        (l_0, \dots, l_{t-1})
        = \left( \seqv{\a}_{s+i_0}, \dots, \seqv{\a}_{s+i_{t-1}} \right).
    \end{equation*}
    The right hand side is the row with index $s$ of the $v \w{t} \times t$ array with columns $S_i, i \in I$; 
    from our discussion in the beginning, this completes the proof.
\end{proof}

\subsection{Second part of the proof: the linear dependence case}
\label{section:SecondPartOfProofMainThmSecondPaper}
\subsubsection{Preliminaries on characters}
We start with the study of some useful character sums that the proof of \Cref{proposition:LDcolumnsarecovered} relies on.
Let $q$ be a prime power, $\o$ be a primitive element of $\fq$ and $v$ be a positive divisor of $q-1$.
We denote by $\xov$, or simply $\xv$ when $\o$ is clear from the context, the multiplicative character of $\fq$ defined by 
\[
    \label{equation:DefinitionXV}
    \xv(\o^j)=\chi_{\o,v}(\o^j)=e^{\frac{2\pi i}{v}j}, \quad j \in \Z.
\]
\index{Character!of order $v$}
We know from \Cref{corollary:DescriptionOfMultiplicativeCharactersOfAFiniteField} that the set $\widehat{\fqstar}$ of multiplicative characters of $\fq$ is a cyclic group of order $q-1$ and generator $\chi$, defined by
\begin{equation}\label{equation:DefinitionOfChiV}
    \chi(\o^j)=e^{\frac{2\pi i}{q-1}j}, \quad j \in \Z.
\end{equation}
\begin{lemma}
    {}
    {OrderOfChiV}
    For a prime power $q$ and $v$ a positive divisor of $q-1$, the order of $\xv$ in $\widehat{\fqstar}$ is $v$.
\end{lemma}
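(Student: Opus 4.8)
The plan is to identify $\xv$ with an explicit power of the generator $\chi$ of $\widehat{\fqstar}$ and then invoke the standard formula for the order of an element in a cyclic group. Since $v$ divides $q-1$ by hypothesis, the integer $m=(q-1)/v$ is well defined and positive. The first step is to verify that $\xv=\chi^m$. This follows by comparing the two characters on powers of the primitive element $\o$: for every $j\in\Z$ we have
\[
    \chi^m(\o^j)
    = e^{\frac{2\pi i m j}{q-1}}
    = e^{\frac{2\pi i j}{v}}
    = \xv(\o^j),
\]
using the defining formula for $\chi$ and the relation $m/(q-1)=1/v$. Since $\o$ is primitive, its powers exhaust $\fqstar$, so this agreement on all $\o^j$ shows $\xv=\chi^m$ as elements of $\widehat{\fqstar}$.

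Next I would recall from \Cref{corollary:DescriptionOfMultiplicativeCharactersOfAFiniteField} that $\widehat{\fqstar}$ is a cyclic group of order $q-1$ with generator $\chi$; in particular $\ord(\chi)=q-1$. The order of a power $\chi^m$ in a cyclic group of order $q-1$ is then $(q-1)/\gcd(q-1,m)$. The final step is the gcd computation: because $m=(q-1)/v$ is itself a divisor of $q-1$, we have $\gcd(q-1,m)=m=(q-1)/v$, whence
\[
    \ord(\xv)=\ord(\chi^m)=\frac{q-1}{(q-1)/v}=v,
\]
which is the claim.

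Alternatively, and just as cleanly, one can argue directly from the definition of order without passing through $\chi$: the equation $\xv^n=\chi_0$ holds precisely when $e^{2\pi i n j/v}=1$ for all $j\in\Z$, and taking $j=1$ forces $v\mid n$, while any multiple of $v$ clearly works, so the least such $n$ is $v$. There is no real obstacle here; the only point requiring a little care is to justify that checking equality of characters on the powers $\o^j$ suffices, which is immediate since $\o$ generates $\fqstar$ and a multiplicative character is determined by its value on a generator. I would present the power-of-$\chi$ argument as the main line, as it makes the role of the divisibility condition $v\mid q-1$ most transparent and reuses the structure already established for $\widehat{\fqstar}$.
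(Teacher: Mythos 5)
Your proposal is correct, but your main line of argument takes a genuinely different route from the paper. The paper's proof is exactly your ``alternative'' one-liner: it works directly from the definition of order, observing that the smallest positive $s$ with $\xv^s(\o^k)=e^{\frac{2\pi i}{v}sk}=1$ for all $k$ is $s=v$ (taking $k=1$ rules out $s\in[1,v-1]$). Your preferred argument instead identifies $\xv$ with the explicit power $\chi^{m}$, $m=(q-1)/v$, of the generator $\chi$ of $\widehat{\fqstar}$, and then applies the standard formula $\ord(\chi^m)=(q-1)/\gcd(q-1,m)=v$. Both are sound; the identification $\xv=\chi^m$ is justified correctly (agreement on all powers of $\o$ suffices since $\o$ generates $\fqstar$), and the gcd computation uses $m\mid q-1$ exactly where the hypothesis $v\mid q-1$ enters. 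What your approach buys is transparency about the group structure: it exhibits $\xv$ concretely inside the cyclic group $\widehat{\fqstar}$ and reuses the same order-of-a-power formula the paper itself employs elsewhere (e.g.\ in showing $\a^{\w{t}}$ is primitive in $\fq$). What the paper's approach buys is brevity and self-containedness: it needs nothing beyond the defining formula for $\xv$, not even the fact that $\widehat{\fqstar}$ is cyclic of order $q-1$.
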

\begin{proof}
    The order of $\xv$ is the smallest positive integer $s$ such that $\xv^s$ is the identity mapping, that is,  such that for every integer $k$ we have
    \[ \xv^s(\o^k)=e^{\frac{2\pi i}{v}sk}=1.  \]
    This is satisfied for $s=v$, but not for $s \in [1,v-1]$, hence the order of $\xv$ is indeed $v$.
\end{proof}
\begin{lemma}{}{CharSumColbourn}
    Let $q$ be a prime power, $v$ be a divisor of $q-1$ and $\o$ be a primitive element of $\fq$.
    Then, for $y \in \fqstar$ and integer $l$, we have that
    \[
        \sum_{j=0}^{v-1}\xvj(\o^{v-l})\xvj(y)=
        \begin{cases}
            v & \text{ if } \logw(y) \equiv l \Mod v;\\
            0 & \text{ otherwise.}
        \end{cases}
    \]
\end{lemma}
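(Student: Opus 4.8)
The statement to prove is Lemma \ref{lemma:CharSumColbourn}, which evaluates the character sum
\[
\sum_{j=0}^{v-1}\xvj(\o^{v-l})\xvj(y)
\]
and shows it equals $v$ when $\logw(y)\equiv l \Mod v$ and $0$ otherwise. This is a standard orthogonality computation, and the plan is to reduce it to the orthogonality relations for characters already available as Theorem \ref{theorem:CharSumIsZeroClassicResult}.

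First I would rewrite each summand explicitly using the definition of $\xv$ in Equation \eqref{equation:DefinitionXV}. Since $\xv(\o^j)=e^{2\pi i j/v}$, for an integer exponent $m$ we have $\xvj(\o^m)=e^{2\pi i jm/v}$; in particular $\xvj(\o^{v-l})=e^{2\pi i j(v-l)/v}=e^{-2\pi i jl/v}$, using that $e^{2\pi i j}=1$. Writing $y=\o^{\logw(y)}$, we also get $\xvj(y)=e^{2\pi i j\logw(y)/v}$. Multiplying, the summand becomes $e^{2\pi i j(\logw(y)-l)/v}$, so the whole sum is the geometric-type sum
\[
\sum_{j=0}^{v-1} e^{\frac{2\pi i}{v} j(\logw(y)-l)}.
\]

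The key step is then to recognize this as the sum over $\widehat{G}$ for the cyclic group $G$ of order $v$, evaluated at a fixed group element. Concretely, let $\zeta=e^{2\pi i(\logw(y)-l)/v}$; the sum is $\sum_{j=0}^{v-1}\zeta^{j}$. If $\logw(y)\equiv l \Mod v$, then $\zeta=1$ and every term is $1$, giving $v$. Otherwise $\zeta$ is a nontrivial $v$-th root of unity, and $\sum_{j=0}^{v-1}\zeta^{j}=(\zeta^{v}-1)/(\zeta-1)=0$ since $\zeta^v=1$ but $\zeta\neq 1$. Alternatively, and more in keeping with the character-theoretic framing of the section, I would phrase this via Theorem \ref{theorem:CharSumIsZeroClassicResult}: the map $j\mapsto \xvj$ identifies $[0,v-1]$ with the character group of the cyclic group generated by $\o^{\logw(y)-l}$ modulo the relevant subgroup, and the orthogonality relation gives $v$ when the argument is the identity and $0$ otherwise.

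I do not anticipate a genuine obstacle here, as the lemma is a direct orthogonality computation. The only point requiring minor care is the bookkeeping of exponents modulo $v$: one must check that $\xvj(\o^{v-l})$ depends on $l$ only modulo $v$ and that the condition $\logw(y)\equiv l \Mod v$ is exactly the condition $\zeta=1$, which follows immediately from the fact that $\xv$ has order $v$ (Lemma \ref{lemma:OrderOfChiV}). Thus the cleanest route is the elementary geometric-sum argument above, with Lemma \ref{lemma:OrderOfChiV} invoked to justify that the nontrivial case truly yields a nontrivial root of unity.
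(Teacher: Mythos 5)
Your proposal is correct and takes essentially the same route as the paper's proof: both combine the two character values into a single root of unity $e^{2\pi i j(\logw(y)-l)/v}$ (the paper writes the exponent as $s=v-l+\logw(y)$, which is the same modulo $v$) and then evaluate the resulting finite geometric sum, getting $v$ when the exponent vanishes modulo $v$ and $0$ otherwise. Your explicit remark that the nontrivial case yields a genuinely nontrivial $v$-th root of unity is, if anything, a slightly cleaner statement of the case split than the paper's wording.
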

\begin{proof}
    We have that
    \[ \xv^{j}(\o^{v-l})\xv^{j}(y)= \xv^{j}(\o^{v-l})\xv^{j}(\o^{\logw(y)})= \xv^{j}(\o^{v-l+\logw(y)}).\]
    Let $s=v-l+\logw(y)$ and
    \[ S=
        \sum_{j=0}^{v-1}\xvj(\o^{s})=
        \sum_{j=0}^{v-1}e^{\frac{2\pi i }{v}sj}.
    \]
    If $s\equiv 0 \Mod v$ then
    $e^{\frac{2\pi i }{v}sj}=1$ and $S=v$. 
    Otherwise, we have that $e^{\frac{2\pi i}{v}sj} \neq 1$ and thus we can apply the formula for a finite geometric sum, which yields
    \[
        S= \sum_{j=0}^{v-1}e^{\frac{2\pi i}{v}sj}
        = \frac{(e^{\frac{2\pi i}{v}s})^v-1}{e^{\frac{2\pi i}{v}s}-1}
        = \frac{1-1}{e^{\frac{2\pi i}{v}s}-1}
        = 0.
    \]
\end{proof}

\subsubsection{A useful function}
We now introduce a function that is used to indicate when a given set of column vectors of a cyclic trace array modulo $v$ is covered.
Let $\a$ be a primitive element of $\fqt$, $v$ be a positive divisor of $q-1$, $\o=\a^{\w{t}}$, and $I=\{ i_0, \dots, i_{t-1} \}\subseteq [0,\w{t}-1]$.
For $L=(l_0, \dots, l_{t-1})\in [0, v-1]^{t}$ and $r \in [0,q^t-2]$, we define

\begin{equation}
    \label{equation:DefinitionHr}
    \hr=\prod_{k=0}^{t-1} \left( \sum_{j=0}^{v-1}\xvj(\o^{v-l_k})\xvj\left(\Tt(\a^{r+i_k})\right) \right). 
\end{equation}
The values of this function, which we give in the next lemma, can be expressed using the set
\begin{equation}
    \label{equation:DefinitionNr}
    \nr=\left\{ k \in [0,t-1] \mid \Tt(\a^{r+i_k})\neq 0  \right\}.
\end{equation}

\begin{lemma}{}{EvaluateHr}
    Let $q$ be a prime power, $v$ a positive divisor of $q-1$, $\o=\a^{\w{t}}$, $I=\{ i_0, \dots, i_{t-1} \}$, $L=(l_0, \dots, l_{t-1})$, and $r\in [0,q^t-2]$.
    Then, we have 
    \[
        \hr=
        \begin{cases}
            v^{|\nr|} & 
            \text{ if }\, \logw(\T(\a^{r+i_k})) \equiv l_k \Mod v, \text{ for all } k\in \nr; \\
            0   & \text{ otherwise.}
        \end{cases}
    \]
    In particular, if $\nr=\emptyset$ then $\hr=1$.
\end{lemma}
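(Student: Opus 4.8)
The plan is to evaluate the product in \Cref{equation:DefinitionHr} factor by factor, separating the factors according to whether the corresponding trace value is zero or nonzero, and then to apply \Cref{lemma:CharSumColbourn} to each nonzero factor. Recall that for $k \in \nr$ we have $\Tt(\a^{r+i_k}) \neq 0$, while for $k \notin \nr$ we have $\Tt(\a^{r+i_k})=0$. The key observation is that each of the $t$ factors in \Cref{equation:DefinitionHr} is a sum of the form $\sum_{j=0}^{v-1}\xvj(\o^{v-l_k})\xvj(\Tt(\a^{r+i_k}))$, which is precisely the kind of character sum that \Cref{lemma:CharSumColbourn} evaluates.

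First I would handle the factors with $k \notin \nr$. For such $k$, the trace is zero, and since a multiplicative character $\xvj$ is extended by setting its value at $0$ equal to $0$ for $j\ge 1$ (as per \Cref{definition:MultiplicativeCharacterOfAFiniteField}), but $\chi_v^0=\chi_0$ is the trivial character with $\chi_0(0)=1$, I must be careful: the factor becomes $\sum_{j=0}^{v-1}\xvj(\o^{v-l_k})\xvj(0)$. The $j=0$ term contributes $\chi_0(\o^{v-l_k})\chi_0(0)=1\cdot 1=1$, while every term with $j\in[1,v-1]$ contributes $\xvj(\o^{v-l_k})\cdot 0 = 0$. Hence each factor with $k\notin\nr$ equals $1$. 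In particular, when $\nr=\emptyset$ every factor equals $1$ and the whole product is $1$, which establishes the final assertion of the lemma.

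Next I would treat the factors with $k \in \nr$, where $\Tt(\a^{r+i_k})=y$ is a genuine nonzero element of $\fqstar$. Applying \Cref{lemma:CharSumColbourn} with $y = \Tt(\a^{r+i_k})$ and $l=l_k$, each such factor equals $v$ if $\logw(\Tt(\a^{r+i_k})) \equiv l_k \Mod v$ and equals $0$ otherwise. Multiplying the factors together, the product over $k\in\nr$ is therefore $v^{|\nr|}$ if the congruence $\logw(\Tt(\a^{r+i_k})) \equiv l_k \Mod v$ holds for every $k\in\nr$, and it is $0$ if the congruence fails for even a single such $k$ (since one zero factor annihilates the product). Combining this with the fact that the factors for $k\notin\nr$ all equal $1$, the total product $\hr$ equals $v^{|\nr|}$ under the stated congruence condition and $0$ otherwise, which is exactly the claimed formula.

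This proof is essentially a direct computation, so there is no deep obstacle; the only subtle point I expect to need care with is the bookkeeping at zero trace values, namely correctly accounting for the trivial character $\chi_v^0 = \chi_0$ which takes the value $1$ at $0$, in contrast to the nontrivial characters $\xvj$, $j\in[1,v-1]$, which vanish at $0$. Getting this distinction right is what guarantees that the $k\notin\nr$ factors contribute exactly $1$ rather than $0$, and hence that the exponent of $v$ in the final answer is $|\nr|$ and not $t$.
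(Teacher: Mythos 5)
Your proposal is correct and follows essentially the same route as the paper's proof: both evaluate the product in \Cref{equation:DefinitionHr} factor by factor, apply \Cref{lemma:CharSumColbourn} to the factors with $k\in\nr$, and handle the factors with $k\notin\nr$ via the convention that the trivial character $\chi_v^0$ takes the value $1$ at $0$ while the nontrivial characters $\chi_v^j$, $j\in[1,v-1]$, vanish there, so that each such factor contributes exactly $1$. The subtle point you flag about $\chi_v^0(0)=1$ is precisely the same bookkeeping the paper performs, so there is nothing to add.
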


\begin{proof}
    We denote $\T=\Tt$.
    For $k \in [0,t-1]$, we show that
    \begin{equation}
    \label{equation:evaluateproducttermsofhr}
        \sum_{j=0}^{v-1}\xvj(\o^{v-l_k})\xvj\left(\T(\a^{r+i_k})\right)
        =
        \begin{cases}
            v& \text{ if } \T(\a^{r+i_k}) \neq 0 \text{ and } \logw(\T(\a^{r+i_k})) \equiv l_k \Mod{v} ; \\
            0& \text{ if } \T(\a^{r+i_k}) \neq 0 \text{ and } \logw(\T(\a^{r+i_k})) \not\equiv l_k \Mod{v}; \\
            1& \text{ if } \T(\a^{r+i_k}) =    0.
        \end{cases}
    \end{equation}
    The first two cases are a straightforward application of \Cref{lemma:CharSumColbourn} for $l=l_k$ and $y=\T(\a^{r+i_k})$.
    For the case when $\T(\a^{r+i_k})=0$ we recall that, by \Cref{definition:MultiplicativeCharacterOfAFiniteField}, we have 
    \[
    \xv^j(0)=
    \begin{cases}
        0 & \text{ if } j \not\equiv 0 \Mod v;\\
        1 & \text{ otherwise},
    \end{cases}
    \]
    and thus
    \[
    \sum_{j=0}^{v-1}\xvj(\o^{v-l_k})\xvj\left(\T(\a^{r+i_k})\right)
    =
    \sum_{j=0}^{v-1} \xv^j(\o^{v-l_k}) \xv^j(0)
    =
    \xv^0(\o^{v-l_k})\xv^0(0) 
    = 1,
    \]
    which proves the third case of \Cref{equation:evaluateproducttermsofhr}.
    Now, denoting $N_r=\nr$, we have
    \begin{align*}
        \hr
        & =
        \prod_{\substack{k=0\\k\not\in N_r}}^{t-1}\left( \sum_{j=0}^{v-1}\xvj(\o^{v-l_k})\xvj\left(\Tt(\a^{r+i_k})\right) \right)
        \prod_{\substack{k=0\\k\in N_r}}^{t-1}\left( \sum_{j=0}^{v-1}\xvj(\o^{v-l_k})\xvj\left(\Tt(\a^{r+i_k})\right) \right).
    \end{align*}
    By \Cref{equation:evaluateproducttermsofhr}, the terms of the first product are all equal to $1$, therefore,  
    \begin{align*}
        \hr
        =
        \prod_{\substack{k=0\\k\in N_r}}^{t-1}\left( \sum_{j=0}^{v-1}\xvj(\o^{v-l_k})\xvj\left(\Tt(\a^{r+i_k})\right) \right).
    \end{align*}
    If for every $k \in \nr$ we have $\logw(\T(\a^{r+i_k})) \equiv l_k\Mod v$, then, by \Cref{equation:evaluateproducttermsofhr}, every term of the product is equal to $v$ and hence $\hr=v^{|\nr|}$.
    Otherwise, if for some $k\in \nr$ we have $\logw(\T(\a^{r+i_k}))\not \equiv l_k\Mod v$, then, again by \Cref{equation:evaluateproducttermsofhr}, the corresponding term of the product is equal to $0$ and hence $\hr=0$.
\end{proof}

\subsubsection{The proof of \Cref{proposition:LDcolumnsarecovered}}
It follows from \Cref{lemma:EvaluateHr} that the values of $\hr$ are nonnegative integers.
Next, we show that a specific value implies that one of the rows of $\MV{\a,I}$ is equal to $L$.

\begin{lemma}
    {}
    {WhenHrIsVToTheTThenLIsCovered}
    Let $q$ be a prime power, $v$ be a positive divisor of $q-1$, $I\subseteq [0,\w{t}-1]$ with $|I|=t$, and $L\in [0,v-1]^t$.
    If $\hr =v^t$ for some $r\in [0,q^t-2]$, then there exists a row of $\MV{\a,I}$ equal to $L$.
\end{lemma}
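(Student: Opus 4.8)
The statement to prove is \Cref{lemma:WhenHrIsVToTheTThenLIsCovered}: if $\hr = v^t$ for some $r \in [0, q^t-2]$, then some row of $\MV{\a,I}$ equals $L$.

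The plan is to unpack what the hypothesis $\hr = v^t$ forces, using the explicit evaluation of $\hr$ given in \Cref{lemma:EvaluateHr}. That lemma tells us $\hr$ is either $v^{|\nr|}$ (when the congruence conditions hold for all $k \in \nr$) or $0$. Since $\hr = v^t$ and $|\nr| \leq t$, the only way to achieve the value $v^t$ is to have $|\nr| = t$, which by \Cref{equation:DefinitionNr} means $\Tt(\a^{r+i_k}) \neq 0$ for every $k \in [0,t-1]$; and moreover we must be in the nonzero case of \Cref{lemma:EvaluateHr}, so that $\logw(\Tt(\a^{r+i_k})) \equiv l_k \Mod v$ for all $k \in \nr = [0,t-1]$.

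Next I would translate these two conclusions directly into the entries of the array. By \Cref{equation:ElementOfArrayModVWRTTrace} in \Cref{definition:ArrayModV}, since every $\Tt(\a^{r+i_k})$ is nonzero, the corresponding entry of $\M{\a,I;v}$ is precisely $\logw(\Tt(\a^{r+i_k})) \bmod v$, and we have just shown this equals $l_k$ for each $k$. Thus the vector $\left( \MV{\a,I}_{r,0}, \dots, \MV{\a,I}_{r,t-1} \right)$ — indexed naively by $r$ — equals $(l_0, \dots, l_{t-1}) = L$. The only subtlety is that $r$ ranges over $[0,q^t-2]$ while $\MV{\a,I}$ has only $v\w{t}$ rows, so $r$ may exceed the number of rows. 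To handle this I would set $s = r \bmod v\w{t}$ and invoke the periodicity of $\seqv{\a}$ from \Cref{lemma:MSequenceModVPeriod}, exactly as was done at the end of the proof of \Cref{proposition:LinearlyIndependentMeansCoveredSecondPaper}: since $\seqv{\a}$ has period $v\w{t}$, we have $\seqv{\a}_{r+i_k} = \seqv{\a}_{s+i_k}$, and by the second statement of \Cref{remark:ArraysModVAndCyclicShifts} the entry $\MV{\a,I}_{s,k} = \seqv{\a}_{s+i_k}$. Therefore row $s$ of $\MV{\a,I}$ equals $L$, giving an honest row index in $[0, v\w{t}-1]$.

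I do not expect a genuine obstacle here, as this lemma is essentially a bookkeeping step that reads off the consequences of \Cref{lemma:EvaluateHr}. The only point requiring care is the combinatorial argument that $\hr = v^t$ pins down $|\nr| = t$ rather than some smaller power of $v$ coinciding with $v^t$ — but since $v \geq 2$ and $|\nr| \leq t$, the equation $v^{|\nr|} = v^t$ forces $|\nr| = t$, so there is no ambiguity. The real content of the section lies in the subsequent \Cref{proposition:LDcolumnsarecovered}, where one must actually produce an $r$ with $\hr = v^t$ via character-sum estimates and Jacobi sums; this lemma merely certifies that such an $r$ suffices for coverage.
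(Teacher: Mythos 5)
Your proposal is correct and follows essentially the same route as the paper's proof: deduce from $\hr=v^t$ via \Cref{lemma:EvaluateHr} that $|\nr|=t$ and that the congruences $\logw(\Tt(\a^{r+i_k}))\equiv l_k \Mod v$ hold, translate this into the entries of the array via \Cref{equation:DefinitionOfMSeqModV}, and pass from the index $r$ to $s=r\bmod v\w{t}$ using the periodicity of $\seqv{\a}$ from \Cref{lemma:MSequenceModVPeriod}. The only difference is cosmetic ordering (the paper reduces to the index $s$ first, you do it last), and your explicit remark that $v\geq 2$ rules out $v^{|\nr|}=v^t$ with $|\nr|<t$ is a small point the paper leaves implicit.
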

\begin{proof}
    We denote $\sqv{\a}=\seqv{\a}$ and $\T=\Tt$.
    Let $r \in [0,q^t-2]$ so that $\hr=v^t$.
    We index the rows of $\MV{\a,I}$ by $[0,v \w{t}-1]$ and let $s=r \bmod v \w{t}$.
    We show that the row of $\MV{\a,I}$ with index $s$ is equal to $L$; in other words, if $I=\{ i_0, \dots, i_{t-1} \}$ and $L=\{ l_0, \dots, l_{t-1} \}$, we show that 
    \[
        \left( \sqv{\a}_{s+i_0}, \dots, \sqv{\a}_{s+i_{t-1}} \right)
        =\left( l_0, \dots, l_{t-1} \right).
    \]
    By \Cref{lemma:MSequenceModVPeriod}, $\sqv{\a}$ has period $v \w{t}$, which implies that $\sqv{\a}_{s+i}= \sqv{\a}_{r+i}$ for every $i$, therefore we can equivalently show that 
    \[
        \left( \sqv{\a}_{r+i_0}, \dots, \sqv{\a}_{r+i_{t-1}} \right)
        =\left( l_0, \dots, l_{t-1} \right).
    \]
    Since $h(r)=v^t$, by \Cref{lemma:EvaluateHr}, we can make two deductions:
    first, $|\nr|=t$, and therefore $\nr=[0,t-1]$; secondly, 
    \begin{equation}
        \label{equation:prasinaloga}
        \logw(\T(\a^{r+i_k})) \equiv l_k \Mod v, \text{ for all } k\in [0,t-1].
    \end{equation}
    By the definition of $\nr$ in \Cref{equation:DefinitionNr}, the fact that $\nr=[0,t-1]$ means that $\T(\a^{r+i_k})\neq 0$, for all $k\in[0,t-1]$.
    By \Cref{equation:DefinitionOfMSeqModV}, this in turn implies that 
    \begin{equation}
        \label{equation:patousa}
        \sqv{\a}_{r+i_k}= \logw(\T(\a^{r+i_k})), \text{ for all } k \in [0,t-1].
    \end{equation}
    Combining \Cref{equation:prasinaloga,equation:patousa}, we conclude that $\sqv{\a}_{r+i_k}=l_k$, for all $k \in [0,t-1]$; from our previous discussion, this completes the proof.
\end{proof}

\begin{lemma}{}{SufficientConditionForSigma}
    Let $q$ be a prime power, $v$ be a positive divisor of $q-1$, $I\subseteq [0, \w{t}-1]$ with $|I|=t$, and $S_i=S_i(\a,q,v,t)$, $i \in [0, \w{t}-1]$ as defined in \Cref{equation:AiForConvenience}.
    Furthermore, we denote
    \[
        \sigma(\a,I,v,L) =
        \sum_{r=0}^{q^t-2} \hr. 
    \]
    If for every $L \in [0,v-1]^t$ we have that  
    \begin{equation}\label{equation:SufficientConditionForSigma}
        \sigma(\a,I,v,L) \geq tv q^{t-1}-1,
    \end{equation}
    then the set $\{ S_i \mid i \in I \}$ is covered.
\end{lemma}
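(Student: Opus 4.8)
The goal is to show that the lower bound $\sigma(\a,I,v,L) \geq tvq^{t-1}-1$ forces $\{S_i \mid i \in I\}$ to be covered, i.e. for every target $L \in [0,v-1]^t$ there is a row of $\MV{\a,I}$ equal to $L$. By \Cref{lemma:WhenHrIsVToTheTThenLIsCovered}, it suffices to exhibit some $r \in [0,q^t-2]$ with $\hr = v^t$, so the entire argument reduces to a counting/averaging estimate: if the total sum $\sigma$ is large enough, then the summand $\hr$ cannot fail to attain its maximal value $v^t$ at least once.

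The plan is to argue by contraposition. Suppose that \emph{no} row of $\MV{\a,I}$ equals $L$; then by \Cref{lemma:WhenHrIsVToTheTThenLIsCovered} we have $\hr \neq v^t$ for every $r \in [0,q^t-2]$. The first step is to record, from \Cref{lemma:EvaluateHr}, that $\hr \in \{0\} \cup \{v^{|\nr|} \mid \nr \subseteq [0,t-1]\}$, so every summand is a nonnegative integer and the only way to reach the value $v^t$ is to have $|\nr| = t$ together with all the congruences $\logw(\T(\a^{r+i_k})) \equiv l_k \Mod v$ holding. Under the assumption that $v^t$ is never attained, each term satisfies $\hr \leq v^{t-1}$, since the next largest admissible value of $\hr$ below $v^t$ is $v^{t-1}$ (attained when $|\nr| = t-1$ and the relevant congruences hold, or when $|\nr|=t$ but we intentionally discard this by the contradiction hypothesis). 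The second step is then to bound the number of indices $r$ that can contribute a nonzero term. The key observation here is that a term is nonzero only when, for each $k$ in $\nr$, the congruence matches; and crucially, a term at the maximal surviving value $v^{t-1}$ requires at least $t-1$ of the traces $\T(\a^{r+i_k})$ to be nonzero and to land in the correct residue class modulo $v$. I would count these $r$ using the orthogonal-array structure: since $\{\a^i \mid i \in I\}$ is a $(t,t-1)$-set (the linear-dependence case of the main theorem), any $t-1$ of the columns of $\ZA{\a,I}$ form a strength-$(t-1)$ orthogonal array, so \Cref{corollary:NumberOfTraceTuplesThatAgree} (case 1 applied to $t-1$ columns) governs exactly how many $r$ realize a prescribed nonzero $(t-1)$-tuple of trace values.

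The crux of the calculation is to assemble these pieces into the inequality $\sigma < tvq^{t-1}-1$, contradicting the hypothesis. Concretely, I would split the sum $\sigma = \sum_r \hr$ according to the value of $|\nr|$ and bound each block. The terms with $|\nr| \leq t-2$ are harmless because they are dwarfed, and one can either absorb them or bound their count directly; the genuinely dangerous terms are those with $|\nr| = t-1$ (value at most $v^{t-1}$ after discarding the $v^t$ terms) and $|\nr|=t$ with a mismatched congruence (value $0$, hence safe). Counting the surviving $|\nr|=t-1$ terms: for each of the $t$ choices of which index $k \notin \nr$ (i.e. which $\T(\a^{r+i_k})=0$), and for each of the $q^{t-1}$ total nonzero trace-tuples permitted on the remaining $t-1$ coordinates by \Cref{corollary:NumberOfTraceTuplesThatAgree}, only a $1/v$ fraction match the target residues, while the vanishing coordinate imposes $\T(\a^{r+i_k})=0$; carefully tracking these multiplicities should yield a bound of the shape $\sigma \leq t \cdot v^{t-1} \cdot (\text{count of matching } r)$. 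The main obstacle I anticipate is precisely this bookkeeping: correctly counting the $r$ with a prescribed vanishing pattern and matching residues, using \Cref{corollary:NumberOfTraceTuplesThatAgree} in its linearly-dependent-$(t,t-1)$-set form (case 2, where the count is $q$ or $q-1$) rather than double-counting. Once the count is pinned down, the hypothesis \Cref{equation:SufficientConditionForSigma} that $\sigma \geq tvq^{t-1}-1$ should be exactly the threshold that rules out the ``never attains $v^t$'' scenario, delivering the desired $r$ with $\hr=v^t$ and hence a row equal to $L$. The delicate arithmetic balancing the $-1$ correction term (coming from the all-zero trace tuple, where the orthogonal array contributes $q^{t-1}-1$ rather than $q^{t-1}$) is where I expect to spend the most care, and it is presumably the reason the bound is stated with that precise offset.
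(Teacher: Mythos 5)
Your plan is the paper's own proof read contrapositively: the paper splits $\sigma=\sum_{n=0}^{t}\sigma_n$, where $\sigma_n$ sums $h(\a,I,v,L;r)$ over those shifts $r$ whose vanishing pattern $N_r=\{k : \Tt(\a^{r+i_k})\neq 0\}$ has size $n$, notes that every $|N_r|=t$ term equals $0$ or $v^t$ (so $\sigma_t>0$ already produces the wanted row via \Cref{lemma:WhenHrIsVToTheTThenLIsCovered}), and then bounds the remaining blocks; whether one then argues directly or by contradiction is immaterial. The genuine gap is in the counting that you yourself flag as the crux, precisely where you commit to numbers. For the dominant block $|N_r|=t-1$ you allow ``$q^{t-1}$ total nonzero trace-tuples on the remaining $t-1$ coordinates'', thin them by a factor $1/v$ per coordinate, and multiply by the $q$ shifts per tuple from case 2 of \Cref{corollary:NumberOfTraceTuplesThatAgree}; that gives about $t\cdot q^{t-1}v^{-(t-1)}\cdot q\cdot v^{t-1}=tq^t$, which \emph{exceeds} the threshold $tvq^{t-1}-1$ (recall $v\leq q-1<q$), so the contradiction never materializes. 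The fact that rescues the count is the dependence relation itself: since $\{\a^{i_0},\dots,\a^{i_{t-1}}\}$ is a linearly dependent $(t,t-1)$-set, there is a relation $\sum_{k=0}^{t-1}y_k\a^{i_k}=0$ with \emph{every} $y_k\in\fqstar$; multiplying by $\a^r$ and applying the trace shows that whenever the excluded coordinate's trace vanishes, the remaining nonzero traces $(b_k)_{k\in N_r}$ are forced onto the hyperplane $\sum_{k\in N_r}y_kb_k=0$. Thus one coordinate is determined by the others, leaving at most $\left((q-1)/v\right)^{t-2}$ residue-compatible tuples, each realized by exactly $q$ shifts, whence $\sigma_{t-1}\leq t\,q\,v\,(q-1)^{t-2}$ --- the paper's bound $\sigma_n\leq\binom{t}{n}(q-1)^{n-1}qv$ --- which sits strictly below $tvq^{t-1}$.

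The same relation is also indispensable for a block you wave off as ``dwarfed''. For $|N_r|=1$ it reads $y_kb_k=0$ with $y_k,b_k\in\fqstar$, which is impossible, so $\sigma_1=0$ \emph{exactly}; this is not a cosmetic refinement. The natural direct bound --- the $t-1$ excluded columns are linearly independent, so their traces vanish simultaneously for exactly $q-1$ shifts, giving $\sigma_1\leq tv(q-1)$ --- is already fatal at $t=3$: there the paper's accounting leaves slack $\sigma_t\geq\sigma-(q-1)-3vq(q-1)\geq q(3v-1)$, and subtracting $3v(q-1)$ leaves $3v-q$, which is negative throughout the regime of interest $q>3v$. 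Finally, a small correction: the $-1$ in $tvq^{t-1}-1$ is not the all-zero-tuple correction you speculate it is; it is cosmetic, chosen so that the condition pairs cleanly with the $q^t-1$ appearing in the character-sum lower bound of \Cref{proposition:LDcolumnsarecovered}, the exact sufficient condition extracted from the proof being the messier $\sigma > q-1+qv\bigl(\frac{q^t-1}{q-1}-(q-1)^{t-1}-t\bigr)$.
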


\begin{proof}
    Let $I\subseteq [0,\w{t}-1]$, $|I|=t$.
    By \Cref{definition:CoveredColumns}, the set $\{ S_i\mid i \in I \}$ is covered if, for every $L\in [0, v-1]^t$, there exists a row of the $v \w{t}\times t$ array with columns $S_i$, $i \in I$, that is equal to $L$.
    However, by \Cref{remark:ArraysModVAndCyclicShifts} and the definition of $S_i$, this array is $\MV{\a,I}$.
    Hence, by \Cref{lemma:WhenHrIsVToTheTThenLIsCovered}, a sufficient condition for $\{ S_i \mid i \in I \}$ to be covered is that, for every $L$, there exists some $r$ with $\hr=v^t$.
    Let 
    \begin{equation}
        \label{equation:SigmaNDefinition}
        \sigma_n(\a,I,v,L)=\sum_{\substack{r=0\\|\nr|=n}}^{q^t-2}\hr, \quad n \in [0,t].
    \end{equation}
    We observe that if $|\nr|=t$ for some $r$, then by \Cref{lemma:EvaluateHr} we have that $\hr$ is equal to either $0$ or $v^t$.
    Therefore, if $\sigma_t(\a,I,v,L)>0$, then there exists $r$ such that $\hr=v^t$.
    It follows from the above discussion that, to prove this lemma, it is sufficient to show for arbitrary $L$ that, if \Cref{equation:SufficientConditionForSigma} holds, then $\sigma_t(\a,I,v,L)>0$.

    Let $I=\{ i_0, \dots, i_{t-1}\} \subseteq [0, \w{t}-1]$, and $L=(l_0, \dots, l_{t-1}) \in [0,v-1]^t$.
    For the rest of the proof, we fix $\a, I, v$ and $L$, and we denote
    $ h(r) = \hr$, $N_r  = \nr$, $\sigma=\sigma(\a,I,v,L)$, and $\sigma_n=\sigma_n(\a,I,v,L)$.
    We observe that $\sigma=\sum_{n=0}^t\sigma_n$, and hence 
    \[
        \sigma_t=\sigma-\sum_{n=0}^{t-1}\sigma_n.
    \]
    We evaluate the terms of the right hand side.
    First, for every $r$ such that $|N_r|=0$, we have that $N_r=\emptyset$ and thus $h(r)=1$, by \Cref{lemma:EvaluateHr}.
    This implies that $\sigma_0$ is equal to the number of $r$ such that
    $N_r=\emptyset$ or, equivalently, the number of $r$ such that
    \[
        \left( \T(\a^{r+i_0}), \dots, \T(\a^{r+i_{t-1}}) \right) = (0,\dots, 0),
    \]
    which is $q-1$, by \Cref{corollary:NumberOfTraceTuplesThatAgree}.
    We conclude that $\sigma_0=q-1$.

    Next we examine $\sigma_{n}$ for $n \in [1,t]$.
    We rewrite $\sigma_n$ as
    \begin{equation}
        \sigma_n 
        =
        \sum_{\substack{r=0\\|N_r|=n}}^{q^t-2}h(r)
        \label{equation:sigman}
        =\sum_{\substack{J\subseteq [0,t-1]\\|J|=n}}
         \sum_{\substack{r=0\\ N_r = J}}^{q^t-2} h(r).
    \end{equation}
    We focus on the inner sum of the right hand side; for clarity, we assume
    without loss of generality that $J=[0,n-1]$.  
    Then,
    \begin{align*}
        \sum_{\substack{r=0 \\ N_r = J}}^{q^t-2} h(r) 
        = \sum_{\substack{ 
                    r=0 \\
                    \T(\a^{r+i_k}) \neq 0, \text{ all } k\in J \\
                    \T(\a^{r+i_k}) =    0, \text{ all } k \not\in J \\
                 }}
                 ^{q^t-2}
             h(r)
        &=
        \sum_{b_0, \dots, b_{n-1} \in \fqstar}
        \;
        \sum_{\substack{ 
                r=0 \\
                \T(\a^{r+i_k}) =  b_i, \text{ all } k\in I \\
                \T(\a^{r+i_k}) =    0, \text{ all } k \not\in I \\
              }}
              ^{q^t-2}
             h(r).
    \end{align*}
    Since $\{ \alpha^{i_0}, \ldots, \alpha^{i_{t-1}} \}$ is a linearly dependent $(t,t-1)$-set, there exist $y_i \in \fqstar$ such that $\sum_{k=0}^{t-1} y_k \alpha^{i_k}=0$ (see \Cref{corollary:NumberOfTraceTuplesThatAgree}). 
    Defining $b_k=0$ for $k\in [n,t-1]$, we have from \Cref{corollary:NumberOfTraceTuplesThatAgree} that there are no $r$ satisfying the conditions of the inner sum unless
    \begin{equation}
        \label{equation:patates}
        \sum_{k=0}^{t-1} y_kb_k=\sum_{k=0}^{n-1} y_kb_k = 0.
    \end{equation}
    Now, since $J=[0,n-1]$ and $n\geq 1$, we have that $b_0\neq 0$; since $y_0$ is also nonzero, \Cref{equation:patates} does not hold when $n=1$.
    We conclude that there are no $r$ satisfying the condition of the inner sum of the right hand side of \Cref{equation:sigman}, and hence $\sigma_1=0$.

    We have shown that $\sigma_0=q-1$ and $\sigma_1=0$; next, we consider the case $n\in [2,t]$.
    We have
    \begin{align*}
        \sum_{\substack{r=0 \\ N_r = J}}^{q^t-2} h(r) 
        =
        \sum_{\substack{b_0, \dots, b_{n-1} \in \fqstar \\ \sum_{k=0}^{n-1} y_kb_k=0}}
        \;
        \sum_{\substack{ 
                r=0 \\
                \T(\a^{r+i_k}) = b_k, \text{ all } k\in J \\
                \T(\a^{r+i_k}) = 0,   \text{ all } k \not\in J \\
              }}
              ^{q^t-2}
             h(r).
    \end{align*}
    We can break the sum on the right hand side as
    \begin{align*}
        \sum_{\substack{b_0, \dots, b_{n-1} \in \fqstar 
                        \\ \sum_{k=0}^{n-1} y_kb_k=0
                        \\ \text{ and }
                        \\ \logw(b_k) \equiv l_k \!\!\Mod v
                        \\ \text{ for all } k \in [0,n-1]
                    }\quad}
        \sum_{\substack{ 
                r=0 \\
                \T(\a^{r+i_k}) = b_k, \text{ all } k\in J \\
                \T(\a^{r+i_k}) = 0,   \text{ all } k \not\in J \\
              }}
              ^{q^t-2}
             h(r)
         +
         \sum_{\substack{b_0, \dots, b_{n-1} \in \fqstar 
                     \\ \sum_{k=0}^{n-1} y_kb_k=0
                        \\ \text{ and }
                        \\ \logw(b_k) \not\equiv l_k\!\!\Mod v
                        \\ \text{ for some } k \in [0,n-1]
                    }\quad}
        \sum_{\substack{ 
                r=0 \\
                \T(\a^{r+i_k}) = b_k, \text{ all } k\in J \\
                \T(\a^{r+i_k}) = 0,   \text{ all } k \not\in J \\
              }}
              ^{q^t-2}
             h(r).
    \end{align*}
    From \Cref{lemma:EvaluateHr}, we have that $h(r)=v^n$ for all $r$ in the
    first double sum, and $h(r)=0$ for all $r$ in the second double sum. 
    Therefore
    \begin{align*}
        \sum_{\substack{r=0 \\ N_r = J}}^{q^t-2} h(r) 
        &=
        \sum_{\substack{b_0, \dots, b_{n-1} \in \fqstar 
                        \\ \sum_{k=0}^{n-1} y_kb_k=0
                        \\ \text{ and }
                        \\ \logw(b_k) \equiv l_k \!\!\Mod v
                        \\ \text{ for all } k \in [0,n-1]
                    }\quad}
        \sum_{\substack{ 
                r=0 \\
                \T(\a^{r+i_k}) = b_k, \text{ all } k\in J \\
                \T(\a^{r+i_k}) = 0,   \text{ all } k \not\in J \\
              }}
              ^{q^t-2}
             v^n.
    \end{align*}
    It follows from \Cref{corollary:NumberOfTraceTuplesThatAgree} that there exist exactly $q$ elements $r\in [0,q^t-2]$ that satisfy the conditions of the inner sum, and therefore the equality becomes
    \begin{align}
        \label{equation:aggouri}
        \sum_{\substack{r=0 \\ N_r = J}}^{q^t-2} h(r) 
        &=
        \sum_{\substack{b_0, \dots, b_{n-1} \in \fqstar 
                        \\ \sum_{k=0}^{n-1} y_kb_k=0
                        \\ \text{ and }
                        \\ \logw(b_k) \equiv l_k \!\!\Mod v
                        \\ \text{ for all } k \in [0,n-1]
                    }\quad}
                q v^n.
    \end{align}
    We claim that there are at most $((q-1)/v)^{n-1}$ $n$-tuples $(b_0, \dots, b_{n-1})$ satisfying the conditions of the sum on the right hand side.
    Since $v|q-1$, for every $l \in [0,v-1]$ there exist exactly $(q-1)/v$ elements $y \in [0,q-2]$ such that $y=l\bmod v$.
    For each such $y$, there exists a unique $b \in \fqstar$ such that $\logw(b)=y$.
    From the above, we conclude that for every $l \in [0,v-1]$ there exist precisely $(q-1)/v$ elements $b \in \fqstar$ such that $l=\logw(b)\bmod v$.
    Thus, there exist $((q-1)/v)^{n-1}$ choices of $b_0, \ldots, b_{n-2} \in \fqstar$ such that 
    \begin{equation}
        \label{equation:kakakia}
        (l_0, \dots, l_{n-2})
        = \left( \logw(b_0)\bmod v, \dots, \logw(b_{n-2})\bmod v \right).
    \end{equation}
    For every such choice, consider the unique $b_{n-1} \in \fqstar$ such that $\sum_{k=0}^{n-1}y_kb_k=0$.
    Then, the condition of the outer sum is satisfied if and only if $\logw(b_{n-1})\equiv l_{n-1} \Mod v$, which holds if and only if
    \[
        \logw(b_{n-1})\logw\left( -\sum_{k=0}^{n-2} \frac{y_k}{y_{n-1}}b_k\right)\equiv l_{n-1}\Mod v.
    \]
    This does not necessarily hold for all $((q-1)/v)^{n-1}$ choices of $b_0, \dots, b_{n-2}$, therefore there are indeed at most that many choices of $n$-tuples $(b_0, \dots, b_{n-1})$ satisfying the conditions of the last sum, as we claimed.
    It follows from the above and \Cref{equation:aggouri} that
    \[
    \sum_{\substack{r=0 \\ N_r = J}}^{q^t-2} h(r) \leq \left( \frac{q-1}{v} \right)^{n-1} qv^n.
    \]
    From this and \Cref{equation:sigman} we conclude that, for all $n \in [2,t-1]$, we have
    \[
        \sigma_n 
        \leq 
        \sum_{\substack{J\subseteq [0,t-1]\\|J|=n}}
        \left( \frac{q-1}{v} \right)^{n-1} qv^n
        =
        \binom{t}{n}(q-1)^{n-1}qv.
    \] 
    Now, solving for $\sigma_t$ in $\sigma = \sum_{n=0}^{t}\sigma_n$, yields
    \begin{align*}
        \sigma_t 
        &=
        \sigma - \sigma_0 - \sigma_1 - \sum_{n=2}^{t-1}\sigma_n
        \\&=
        \sigma - (q-1) - 0  - \sum_{n=2}^{t-1}\sigma_n
        \\&\geq 
        \sigma - (q-1) - \sum_{n=2}^{t-1} \binom{t}{n}(q-1)^{n-1}qv.
    \end{align*}
    Thus, a sufficient condition for $\sigma_t > 0$ to hold is for the last expression to be positive, or equivalently
    \begin{align*}
        \sigma  &> q-1 + \sum_{n=2}^{t-1} \binom{t}{n}(q-1)^{n-1}qv\\
                &= q-1 + qv\left(\frac{q^t-1}{q-1}-(q-1)^{t-1}-t\right).
    \end{align*}
    We observe that $tvq^{t-1}-1 > q-1 + qv(\frac{q^t-1}{q-1}-(q-1)^{t-1}-t)$, so a simpler sufficient condition for $\sigma_t$ to be positive is $\sigma \geq tvq^{t-1}-1$ as given in \Cref{equation:SufficientConditionForSigma}.
    From the discussion in the beginning, this completes the proof.
\end{proof}

\pagebreak
We now have everything we need in order to prove the analogue of
\Cref{proposition:LinearlyIndependentMeansCoveredSecondPaper}
for the case of a linearly dependent $(t,t-1)$-set.

\begin{proposition}{}
    {LDcolumnsarecovered}
    Let $t$ be a positive integer, $\a$ be a primitive element of $\fqt$, $v\geq 2$ be a divisor of $q-1$, and $S_i=S_i(\a,q,v,t)$, $i \in [0, \w{t}-1]$ as defined in \Cref{equation:AiForConvenience}.
    For $I\subseteq [0, \w{t}-1]$ with $|I|=t$ such that $\{ \a^i\mid i \in I \}$ is a linearly dependent $(t,t-1)$-set.
    If furthermore
    \begin{equation}
        \label{equation:LDcolumnsarecovered}
        q^{\frac{t}{2}-2}(q-tv) > v^{t-1},
    \end{equation}
    then the set $\{ S_i\mid i \in I \}$ is covered.
\end{proposition}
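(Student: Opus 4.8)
The plan is to apply the reduction already in hand and then supply the one missing ingredient: a lower bound on the character sum $\sigma(\a,I,v,L)$. By \Cref{lemma:SufficientConditionForSigma}, it suffices to show that for every $L=(l_0,\dots,l_{t-1})\in[0,v-1]^t$ we have $\sigma(\a,I,v,L)\geq tvq^{t-1}-1$. First I would write $I=\{i_0,\dots,i_{t-1}\}$, expand the product in the definition of $\hr$ (\Cref{equation:DefinitionHr}) over the $t$ factors, and interchange the order of summation, obtaining
\[
\sigma(\a,I,v,L)=\sum_{\mathbf{j}\in[0,v-1]^t}\Bigl(\prod_{k=0}^{t-1}\xv^{j_k}(\o^{v-l_k})\Bigr)\,W(\mathbf{j}),\qquad
W(\mathbf{j})=\sum_{r=0}^{q^t-2}\prod_{k=0}^{t-1}\xv^{j_k}\bigl(\Tt(\a^{r+i_k})\bigr),
\]
where $\mathbf{j}=(j_0,\dots,j_{t-1})$. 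Using the convention $\xv^{0}\equiv 1$ from the proof of \Cref{lemma:EvaluateHr}, the index $\mathbf{j}=\bm 0$ contributes the main term $W(\bm 0)=q^t-1$ with coefficient $1$. Since $\bigl|\prod_{k}\xv^{j_k}(\o^{v-l_k})\bigr|=1$, it follows that $\sigma(\a,I,v,L)\geq (q^t-1)-\sum_{\mathbf j\neq\bm 0}|W(\mathbf{j})|$, so everything reduces to bounding the error sum.

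To evaluate $W(\mathbf{j})$ for $\mathbf j\neq\bm 0$, I would group the sum over $r$ by the value $b=(b_0,\dots,b_{t-1})$ of the trace vector $\bigl(\Tt(\a^{r+i_k})\bigr)_k$ and invoke \Cref{corollary:NumberOfTraceTuplesThatAgree} (the linearly dependent $(t,t-1)$-set case): choosing $y_0,\dots,y_{t-1}\in\fqstar$ with $\sum_k y_k\a^{i_k}=0$, the number of admissible $r$ equals $q$ when $b\neq\bm0$ lies on the hyperplane $H=\{b:\sum_k y_kb_k=0\}$, equals $q-1$ when $b=\bm 0$, and is $0$ otherwise. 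Because some $j_k\neq0$ forces $\prod_k\xv^{j_k}(\bm 0)=0$, the $b=\bm 0$ term drops and $W(\mathbf j)=q\,\Sigma_0(\mathbf j)$ with $\Sigma_0(\mathbf j)=\sum_{b\in H}\prod_k\xv^{j_k}(b_k)$. I would then dispose of the vanishing cases: if some but not all $j_k$ are zero, summing out the free coordinates on $H$ factors off a sum $\sum_{b_k\in\fq}\xv^{j_k}(b_k)=0$ over a nonzero index, so $\Sigma_0(\mathbf j)=0$; if all $j_k\neq0$ but $\sum_k j_k\not\equiv 0\pmod v$, the substitution $b\mapsto\lambda b$ (a bijection of the subspace $H$) multiplies $\Sigma_0$ by the nontrivial character value $\xv^{\sum_k j_k}(\lambda)$, again forcing $\Sigma_0(\mathbf j)=0$ (here I use \Cref{lemma:OrderOfChiV} to know $\xv$ has order $v$). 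The surviving indices are exactly those with all $j_k\neq0$ and $\sum_k j_k\equiv0\pmod v$; for these I would change variables $s_k=c_k/(-c_{t-1})$ inside the defining relation to rewrite $\Sigma_0(\mathbf j)=(q-1)\,\xv^{-j_{t-1}}(-1)\,\J(\xv^{j_0},\dots,\xv^{j_{t-2}})$, a Jacobi sum of $t-1$ characters, each nontrivial and with nontrivial product $\xv^{-j_{t-1}}$. By \Cref{theorem:JacobiSumBound} this has absolute value $q^{(t-2)/2}$, so $|W(\mathbf j)|=(q-1)q^{t/2}$.

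Finally I would count the surviving indices: a tuple in $[1,v-1]^t$ with $\sum_k j_k\equiv0\pmod v$ is determined by its first $t-1$ entries, so there are at most $(v-1)^{t-1}$ of them, giving $\sum_{\mathbf j\neq\bm 0}|W(\mathbf j)|\leq (v-1)^{t-1}(q-1)q^{t/2}$. It then remains to verify the arithmetic: from the hypothesis \Cref{equation:LDcolumnsarecovered}, $q^{t/2-2}(q-tv)>v^{t-1}$, multiplying by $q^{t/2+1}$ yields $q^{t-1}(q-tv)>q^{t/2+1}v^{t-1}$, and since $qv^{t-1}\geq(q-1)(v-1)^{t-1}$ one gets $q^{t-1}(q-tv)\geq (v-1)^{t-1}(q-1)q^{t/2}\geq\sum_{\mathbf j\neq\bm0}|W(\mathbf j)|$. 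Hence $\sigma(\a,I,v,L)\geq(q^t-1)-q^{t-1}(q-tv)=tvq^{t-1}-1$, and \Cref{lemma:SufficientConditionForSigma} concludes that $\{S_i\mid i\in I\}$ is covered. I expect the main obstacle to be the change of variables turning the homogeneous sum $\Sigma_0$ over $H$ into a genuine Jacobi sum over $\{s_0+\dots+s_{t-2}=1\}$ while correctly tracking which characters and which products are trivial, since the whole bound hinges on landing in the regime where \Cref{theorem:JacobiSumBound} applies to $t-1$ (rather than $t$) characters.
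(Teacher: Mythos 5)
Your proof is correct and follows essentially the same route as the paper's: reduce via \Cref{lemma:SufficientConditionForSigma} to the lower bound $\sigma \geq tvq^{t-1}-1$, expand $\sigma$ over character indices, show every cross term vanishes by orthogonality, and control the surviving full-support terms through Jacobi sums and \Cref{theorem:JacobiSumBound}. The only deviations are mechanical and harmless: you dispose of the mixed-support terms by eliminating a coordinate on the hyperplane $\sum_{k} y_k b_k = 0$ rather than via the sub-tuple uniformity argument the paper draws from \Cref{proposition:SebastianExtended}, your Jacobi-sum formula omits some unimodular constants depending on the $y_k$ (irrelevant since only absolute values are used), and your count $(v-1)^{t-1}$ of surviving index tuples is in fact slightly sharper than the paper's $v^{t-1}$.
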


\begin{proof}
To prove this proposition, it is sufficient to show that if \Cref{equation:LDcolumnsarecovered} holds, then \Cref{equation:SufficientConditionForSigma} holds for every $L \in [0,v-1]^t$; the proof then is complete by \Cref{lemma:SufficientConditionForSigma}.
Throughout the proof we fix $\a, v, I=\{i_0, \dots, i_{t-1}\}$, $L=(l_0, \dots, l_{t-1})$, and we denote $\sigma = \sigma(\a, I, v, L)$, $h(r)=\hr$, $\T=\Tt$, and $\o_k=\o^{v-l_k}$ where, as usual, $\o=\a^{\w{t}}$.
We rewrite $\sigma$ as 
\begin{align}
    \notag
    \sigma&= 
          \sum_{r=0}^{q^t-2}h(r)
          = \sum_{r=0}^{q^t-2}
                \prod_{k=0}^{t-1}
                \left( 
                    \sum_{j=0}^{v-1}\xvj(\o_{k})\xvj\left(\T(\alpha^{r+i_k})\right)
                \right)\\
          \notag
          &= \sum_{r=0}^{q^t-2}
                \prod_{k=0}^{t-1}
                \left( 
                    1+\Sj\xvj(\o_{k})\xvj\left(\T(\alpha^{r+i_k})\right)
                \right)\\
          \notag
          &=  q^t-1+ 
          \sum_{\substack{U\subseteq [0,t-1] \\U\neq \emptyset}}
                \sum_{r=0}^{q^t-2}
                \prod_{u\in U} \Sj \xvj\left( \o_{u} \right)\xvj\left(\T(\a^{r+i_u})\right)
          \\&= q^t-1 +
              \sum_{n=1}^{t}
              \sum_{ \substack{
                      U\subseteq [0,t-1]\\
                      U=\left\{u_0, \dots, u_{n-1} \right\}
                  }}
              \sum_{r=0}^{q^t-2}
              \prod_{s=0}^{n-1}
              \sum_{j=1}^{v-1}
                  \xvj(\o_{u_s})
                  \xvj(\a^{r+i_{u_s}})
          \notag
          \\&= q^t-1 +
              \sum_{n=1}^{t}
              \sum_{ \substack{
                      U\subseteq [0,t-1]\\
                      U=\left\{u_0, \dots, u_{n-1} \right\}
                  }}
              \sum_{ \substack{
                      j_0, \dots, j_{n-1} \\
                      \in [1,v-1]
                  }}
              \sum_{r=0}^{q^t-2}
              \prod_{s=0}^{n-1}
                  \xv^{j_s}(\o_{u_s})
                  \xv^{j_s}\left(\T( \a^{r+u_s})\right)
          \notag
          \\&= q^t-1 +
              \sum_{n=1}^{t}
              \sum_{ \substack{
                      U\subseteq [0,t-1]\\
                      U=\left\{u_0, \dots, u_{n-1} \right\}
                  }}
              \sum_{ \substack{
                      j_0, \dots, j_{n-1} \\
                      \in [1,v-1]
                  }}
              \prod_{s=0}^{n-1}
                  \xv^{j_s}(\o_{u_s})
              \sum_{r=0}^{q^t-2}
              \prod_{s=0}^{n-1}
                  \xv^{j_s}\left(\T( \a^{r+u_s})\right)
              \label{equation:biglast}
\end{align}
We first examine the terms of the expression in \Cref{equation:biglast} corresponding to $n\leq t-1$.
We focus on the last sum of the right hand side of \Cref{equation:biglast}.
Let $N(b_0, \dots, b_{n-1})$ be the number of $r\in [0,q^t-2]$ such that
\begin{equation}
    \label{equation:baklavas}
    \left( \T(\a^{r+i_{u_0}}), \dots, \T(\a^{r+i_{u_{n-1}}}) \right)
    =
    (b_0, \dots, b_{n-1}).
\end{equation}
Then, we can rewrite the right hand side of \Cref{equation:biglast} as 
\begin{align}
    \label{equation:orange}
    \sum_{r=0}^{q^t-2}
    \prod_{s=0}^{n-1}
        \xv^{j_s}\left(\T( \a^{r+i_{u_s}})\right)
        \notag
    &=
    \sum_{r=0}^{q^t-2}
    \xv^{j_0}\left(\T(\a^{r+i_{u_0}})\right) 
    \cdots 
    \xv^{j_{n-1}}\left(\T(\a^{r+i_{u_{n-1}}})\right) \\
    &=
    N(b_0, \dots, b_{n-1})
    \sum_{\substack{b_0,\dots, b_{n-1}\in \fq\\ \text{Eq. (\ref{equation:baklavas}), some }r}}
    \xv^{j_0}(b_0) \cdots \xv^{j_{n-1}}(b_{n-1}),
\end{align}
where the sum runs over all $b_0, \dots, b_{n-1}\in \fq$ for which there exists some $r\in [0,q^t-2]$ such that \Cref{equation:baklavas} holds.

Since $\xvj(0)=0$ for all $j\in [1,v-1]$, all the sum terms in \Cref{equation:orange} where any of $b_0, \dots, b_{n-1}$ is zero, vanish. 
Hence, we have that 
\begin{equation}
    \label{equation:ahladi}
    \sum_{r=0}^{q^t-2}
    \prod_{s=0}^{n-1}
        \xv^{j_s}\left(\T( \a^{r+i_{u_s}})\right)
    =
    N(b_0, \dots, b_{n-1})
    \sum_{\substack{b_0,\dots, b_{n-1}\in \fqstar\\ \text{Eq. (\ref{equation:baklavas})}}}
    \xv^{j_0}(b_0) \cdots \xv^{j_{n-1}}(b_{n-1}),
\end{equation}
where the sum runs over all $b_0, \dots, b_{n-1} \in \fqstar$ for which there exists $r\in [0,q^t-2]$ such that \Cref{equation:baklavas} holds.
Now, we observe that the left hand side of \Cref{equation:baklavas} is the row of index $r$ of $\M{\a, \{ i_{u_0}, \dots, i_{u_{n-1}} \}}$.
This is an $\OA_{q^{t-n}}(n, n,q)$ by \Cref{proposition:SebastianExtended}, since $\{ \a^{i_{u_0}}, \dots, \a^{i_{u_{n-1}}} \}$ is linearly independent as a subset of size $n\leq t-1$ of $\{ \a^{i_0}, \dots, \a^{i_{t-1}} \}$, which is a $(t,t-1)$-set from our assumptions.
We conclude that, for every $b_0, \dots, b_{n-1}\in \fqstar$, there exist precisely $N(b_0, \dots, b_{n-1})=q^{n-t}$ elements $r \in [0,q^t-2]$ such that \Cref{equation:baklavas} holds, and therefore \Cref{equation:ahladi} becomes
\begin{equation}
    \label{equation:germas}
    \sum_{r=0}^{q^t-2}
    \prod_{s=0}^{n-1}
        \xv^{j_s}\left(\T( \a^{r+i_{u_s}})\right)
    =
    q^{n-t}
    \sum_{b_0,\dots, b_{n-1} \in \fqstar}
    \xv^{j_0}(b_0) \cdots \xv^{j_{n-1}}(b_{n-1}),
\end{equation}
where this time the sum runs over all elements $b_0, \dots, b_{n-1} \in \fqstar$.
By \Cref{lemma:directproductofcharacters}, we have 

\begin{equation}
    \label{equation:pontikia}
    \sum_{b_0,\dots, b_{n-1} \in \fqstar}
    \xv^{j_0}(b_0) \cdots \xv^{j_{n-1}}(b_{n-1})=0,
\end{equation}
hence, from \Cref{equation:germas,equation:pontikia} we have that, for all $n\in [1,t-1]$, 
\[
    \sum_{r=0}^{q^t-2}
    \prod_{s=0}^{n-1}
        \xv^{j_s}\left(\T( \a^{r+i_{u_s}})\right)
        =0.
\]
This shows that the terms corresponding to $n\in [1,t-1]$ of the sum in \Cref{equation:biglast} vanish.
Therefore, the only term remaining is for $n=t$, which means that $U=[0,t-1]$, and
\begin{equation}
\label{equation:vothros}
  \sigma = q^t -1 +
  \sum_{ \substack{
          j_0, \dots, j_{t-1} \\
          \in [1,v-1]
      }}
      \prod_{s=0}^{t-1}
      \xv^{j_s}(\o_{i_s})
      \sum_{r=0}^{q^t-2}
      \prod_{s=0}^{t-1}
      \xv^{j_s}\left(\T( \a^{r+i_s})\right).
\end{equation}
Now, since $\{ \a^{i_0}, \dots, \a^{i_{t-1}} \}$ is a linearly dependent
$(t,t-1)$-set, there exist $y_0, \dots, y_{t-1} \in \fqstar$ such that
$\sum_{s=0}^{t-1} y_s\a^{i_s}=0$ (see \Cref{corollary:NumberOfTraceTuplesThatAgree}).
Multiplying the equation by $\a^r$ yields $\sum_{s=0}^{t-1} y_s\a^{r+i_s}=0$,
and applying the trace, which is linear over $\fq$, yields
$\sum_{s=0}^{t-1} y_s\T(\a^r\a^{i_s})=0$. 
Solving for the term corresponding to $s=t-1$, we get

\begin{equation}
    \label{equation:mykses}
    \T(\a^{r+i_{t-1}}) = -\sum_{s=0}^{t-2} \frac{y_s}{y_{t-1}} \T(\a^{r+i_s}).
\end{equation}
Hence, \Cref{equation:vothros} becomes
\[
  \sigma - q^t +1 
  =
  \sum_{ \substack{
          j_0, \dots, j_{t-1} \\
          \in [1,v-1]
      }}
      \prod_{s=0}^{t-1}
      \xv^{j_s}(\o_{s})
  \sum_{r=0}^{q^t-2}
  \xv^{j_{t-1}}\left( 
          \T\left( \a^{r+i_{t-1}}\right)
            \right)
  \prod_{s=0}^{t-2}
  \xv^{j_s}\left(\T( \a^{r+i_s})\right),
\]
and by
\[
  \sigma - q^t +1 
  =\sum_{ \substack{
          j_0, \dots, j_{t-1} \\
          \in [1,v-1]
      }}
      \prod_{s=0}^{t-1}
      \xv^{j_s}(\o_{s})
  \sum_{r=0}^{q^t-2}
  \xv^{j_{t-1}}\left( -\sum_{s=0}^{t-2} \frac{y_s}{y_{t-1}} \T(\a^{r+i_s})\right)
  \prod_{s=0}^{t-2}
  \xv^{j_s}\left(\T( \a^{r+i_s})\right).
  \]
We note that $\{ \a^{i_0}, \dots, \a^{i_{t-2}} \}$ is linearly independent  as a subset of size $t-1$ of the $(t,t-1)$-set $\{ \a^{i_0}, \dots, \a^{i_{t-1}} \}$.
Hence, by \Cref{proposition:SebastianExtended},  $\ZM{\a, \{i_0, \dots, i_{t-1}\}}$ is an $\OA_{q}(t-1, t-1,q)$.
This implies that, for every $b_0, \dots, b_{t-2}\in \fqstar$, there exist exactly $q$ rows equal to $(b_0, \dots, b_{t-2})$, i.e. there exist exactly exactly $q$ values of $r\in [0, q^t-2]$ such that
\[
    \left( \T(\a^{r+i_0}), \dots, \T(\a^{r+i_{t-2}}) \right)
    =
    (b_0, \dots, b_{t-2}).
\]
Thus, we have that 
\begin{align*}
  \sigma - q^t +1 
  &=
  \sum_{ \substack{
          j_0, \dots, j_{t-1} \\
          \in [1,v-1]
      }}
      \prod_{s=0}^{t-1}
      \xv^{j_s}\left(\o_{s}\right)
  \;q
  \sum_{b_0, \dots, b_{t-2} \in \fqstar}
  \xv^{j_{t-1}}\left( 
              -\sum_{s=0}^{t-2} 
              \frac{y_s}{y_{t-1}} b_s
            \right)
  \prod_{s=0}^{t-2}
  \xv^{j_s}\left(b_s\right)
  \\&=
  \sum_{ \substack{
          j_0, \dots, j_{t-1} \\
          \in [1,v-1]
      }}
      \prod_{s=0}^{t-1}
      \xv^{j_s}\left(\o_{s}\right)
  \;q
  \sum_{z\in \fqstar}
  \xv^{j_{t-1}}\left(z\right)
  \sum_{\substack{
      b_0, \dots, b_{t-2} \in \fqstar\\
      z = -\sum_{s=0}^{t-2} 
      \frac{y_s}{y_{t-1}}b_s
      }
  }
  \prod_{s=0}^{t-2}
  \xv^{j_s}\left(b_s\right).
\end{align*}
We recall that $y_s\neq 0$ for all $s$, so we can normalize the last sum by substituting $b_s$ with $-\frac{zy_{t-1}}{y_s}b_s$. This yields
\begin{equation}\label{equation:ksylo}
\sigma-q^t+1=
  \sum_{ \substack{
          j_0, \dots, j_{t-1} \\
          \in [1,v-1]
      }}
      \prod_{s=0}^{t-1}
      \xv^{j_s}(\o_{s})
  \;q
  \sum_{z\in \fqstar}
  \xv^{j_{t-1}}(z)
  \sum_{\substack{
      b_0, \dots, b_{t-2} \in \fqstar\\
      b_0+\dots+b_{t-2} = 1
      }
  }
  \prod_{s=0}^{t-2}
  \xv^{j_s}\left(-\frac{zy_{t-1}}{y_s}b_s\right).
\end{equation}
Since the characters are multiplicative, we have that 
\begin{align*}
    \prod_{s=0}^{t-2}
    \xv^{j_s}\left(-\frac{zy_{t-1}}{y_s}b_s\right)
    &=
    \prod_{s=0}^{t-2}
    \xv^{j_s}\left(z\right)
    \xv^{j_s}\left(\frac{-y_t}{y_s}\right)
    \xv^{j_s}\left(b_s\right)\\
    &=
    \xv^{j_0+\cdots + j_{t-2}}(z)
    \prod_{s=0}^{t-2}
    \xv^{j_s}\left(\frac{-y_{t-2}}{y_s}\right)
    \xv^{j_s}\left(b_s\right),
\end{align*}
Substituting this in \Cref{equation:ksylo}, we get
\[
  \sigma - q^t +1 = \;q
  \sum_{ \substack{
          j_0, \dots, j_{t-1} \\
          \in [1,v-1]
      }}
      \xv^{j_{t-1}}(\o_{t-1})
  \prod_{s=0}^{t-2}
  \xv^{j_s}\left( \frac{-y_{t-1}\o_{s}}{y_s} \right)
  \sum_{z\in \fqstar}
  \xv^{j_0+\cdots+ j_{t-1}}(z)
  \sum_{\substack{
      b_0, \dots, b_{t-2} \in \fqstar\\
      b_0+\dots+b_{t-2} = 1
      }
  }
  \prod_{s=0}^{t-2}
  \xv^{j_s}(b_s).
\]
By \Cref{definition:JacobiSum}, the last sum is a Jacobi sum and we have
\[
  \sigma - q^t +1 = \;q
  \sum_{ \substack{
          j_0, \dots, j_{t-1} \\
          \in [1,v-1]
      }}
      \xv^{j_{t-1}}(\o_{t-1})
  \prod_{s=0}^{t-2}
      \xv^{j_s}\left( \frac{-y_{t-1}\o_s}{y_s} \right)
  \sum_{z\in \fqstar}
      \xv^{j_0+\cdots+ j_{t-1}}(z)
      \J(\xv^{j_0}, \dots, \xv^{j_{t-2}}).
\]
Since $\xv$ is a character of order $v$, we have that $\xv^{n}$ is the trivial character if and only if $n\equiv 0\Mod v$.
Thus, from \Cref{theorem:CharSumIsZeroClassicResult} we have that
\[ 
    \sum_{z\in \fqstar} \xv^{j_0+\cdots+ j_{t-1}}(z)=
    \begin{cases}
        q-1 & \text{ if } j_0+\dots+j_{t-1} \equiv 0 \Mod v;\\
      0 & \text{ otherwise}.
    \end{cases}
\]
Therefore our equation becomes
\begin{equation}
  \sigma - q^t +1
  =
  q(q-1)
  \sum_{ \substack{
          j_0, \dots, j_{t-1} \in [1,v-1]     \\
          j_0 + \dots + j_{t-1} \equiv\, 0\!\! \Mod v
      }}
      \xv^{j_{t-1}}(\o_{t-1})
  \prod_{s=0}^{t-2}
  \xv^{j_s}\left( \frac{-y_{t-1}\o_{s}}{y_s} \right)
  \J(\xv^{j_0}, \dots, \xv^{j_{t-2}}).
  \label{equation:sumofjsiszero}
\end{equation}
For all $1\leq j_0, \dots, j_{t-1}\leq v-1$ such that $j_0+\dots+j_{t-1}\equiv 0 \Mod v$, we have 
\[j_0+\dots+j_{t-2} \equiv -j_{t-1} \not \equiv 0 \Mod v,\] since $j_{t-1} \in [1, v-1]$.
Hence $\prod_{i=0}^{t-1} \xv^{j_i}$ is a nontrivial character, and by \Cref{theorem:JacobiSumBound} we have that 
$|\J(\xv^{j_0}, \dots, \xv^{j_{t-2}})|=q^{\frac{t}{2}-1}$.
Furthermore, the codomain of characters is $\mathbb{C}^{\times}$, so they are absolutely bounded by $1$ and hence
\[ 
  \left|
  \xv^{j_{t-1}}(\o_{t-1})
  \prod_{s=0}^{t-2}
      \xv^{j_s}\left( \frac{-y_{t-1}\o_s}{y_s} \right)
  \right|
  \leq 1.
\]
Thus, applying absolute values to \Cref{equation:sumofjsiszero} yields
\begin{equation*}
  \left|
  \sigma - q^t +1
  \right|
  \leq
  q(q-1)
  \sum_{ \substack{
          j_0, \dots, j_{t-1} \in [1,v-1]     \\
          j_0 + \dots + j_{t-1} \equiv 0 \Mod v
      }}
  q^{\frac{t}{2}-1}.
\end{equation*}
Using inclusion-exclusion we deduce that the number of $j_0, \dots, j_{t-1}$ satisfying the conditions of the sum above is 
\[
    S=\sum_{n=1}^{t-1}(-1)^{t-n-1}(v-1)^n.
\] 
Thus
\begin{align*}
    |\sigma - q^t+1|    &\leq q(q-1) q^{\frac{t}{2}-1}S\\
                        &\leq q^{\frac{t}{2}}(q-1)\sum_{n=1}^{t-1}(-1)^{t-n-1}(v-1)^n\\
                        & =
                        q^{\frac{t}{2}}(q-1)\frac{(v-1)^t+(-1)^t(v-1)}{v},
\end{align*}
which implies that
\begin{equation}
    \sigma \geq q^t-1 -
    q^{\frac{t}{2}}(q-1)\frac{(v-1)^t+(-1)^t(v-1)}{v}.
    \label{equation:lowerboundforsigma}
\end{equation}
From the lower bound for $\sigma$ in \Cref{equation:lowerboundforsigma}, it
follows that \Cref{equation:SufficientConditionForSigma} is satisfied if
\[
    q^t-1 - q^{\frac{t}{2}}(q-1)\frac{(v-1)^t+(-1)^t(v-1)}{v}
    \geq tvq^{t-1}-1.
\]
Using the fact that 
\[v^{t-1} \geq \frac{(v-1)^t+(-1)^t(v-1)}{v},\]
we have that a simpler sufficient condition is $q^{\frac{t}{2}-1}(q-tv) \geq v^{t-1}$.
\Cref{equation:SufficientConditionForSigma} is satisfied.
From the discussion in the beginning, this completes the proof.
\end{proof}

\section{Evaluation of our construction}
\label{section:EvaluatingOurConstruction}
\subsection{Comparison with covering arrays from cyclotomy}
Colbourn \cite{colbourn2010covering} constructs covering arrays using cyclotomic generators over finite fields. 
These are $\CA( q; t,q,v )$ for $t,q,v$ satisfying $v|q-1$ and $q>t^2v^{2t}$, and are referred to as  \emph{covering arrays from cyclotomy}.
\index{Covering array!from cyclotomy}
The techniques used to provide covering arrays from cyclotomy and {\em covering arrays from maximal sequences} are similar and, as far as we know, they are the only direct constructions that provide a $\CA(N;t,k,v)$ for any arbitrary $t,k$ and $v$. 

The main similarity of the two constructions is that they start with a function whose values are uniformly distributed over the field, and take the logarithm.
Furthermore, in both constructions, the conditions that guarantee the covering array property rely on the evaluation of similar character sums.  
On the other hand, the resulting arrays have different dimensions and existence conditions.

In what follows, we consider $v$ and $t$ fixed and analyze when maximal sequences yield ``better" covering arrays than covering arrays from cyclotomy. 
Let $\ktv{q}$ and $\Ntv{q}$ be the number of columns and number of rows in the maximal sequence construction, respectively, that is
\begin{align*}
    \ktv{q}
           &=
        \begin{cases} 
            q^2+q+1 & \text{ if } t=3,\\
            q^2+1 & \text{ if } t=4,\\
            \sim q+\lfloor \sqrt{2q}\rfloor & \text{ if } t\geq 5,
        \end{cases}
         \qquad \mbox{and} \qquad \Ntv{q}=v\frac{q^t-1}{q-1}.
\end{align*}
We define
\begin{align*}
\qmin(t,v)&=
\min\left\{ 
        q \mid
        \text{ a $\CA(\Ntv{q}; t, \ktv{q},v)$ can be constructed from maximal sequences}
    \right\},\\
\qcol(t,v)&=
\min\left\{ 
        q \mid
        \text{ a $\CA(q; t,q,v)$ can be constructed from cyclotomy}
    \right\}.
\end{align*}    
If $\Ntv{\qmin(t,v)} < \qcol(t,v)$, we further define   
\begin{align*}
\qmax(t,v)&=
\max\left\{ 
        q \mid
        q\geq \qmin(t,v) \text{ and } \Ntv{q} < \qcol(t,v)
    \right\}.
\end{align*}
and note that for all prime powers $q$ such that 
$\qmin(t,v) \leq q \leq \qmax(t,v)$, covering arrays from maximal sequences have
fewer rows than
covering arrays from cyclotomy for the same number of columns.

\begin{figure}[t]
\centering
\includegraphics[scale=0.5]{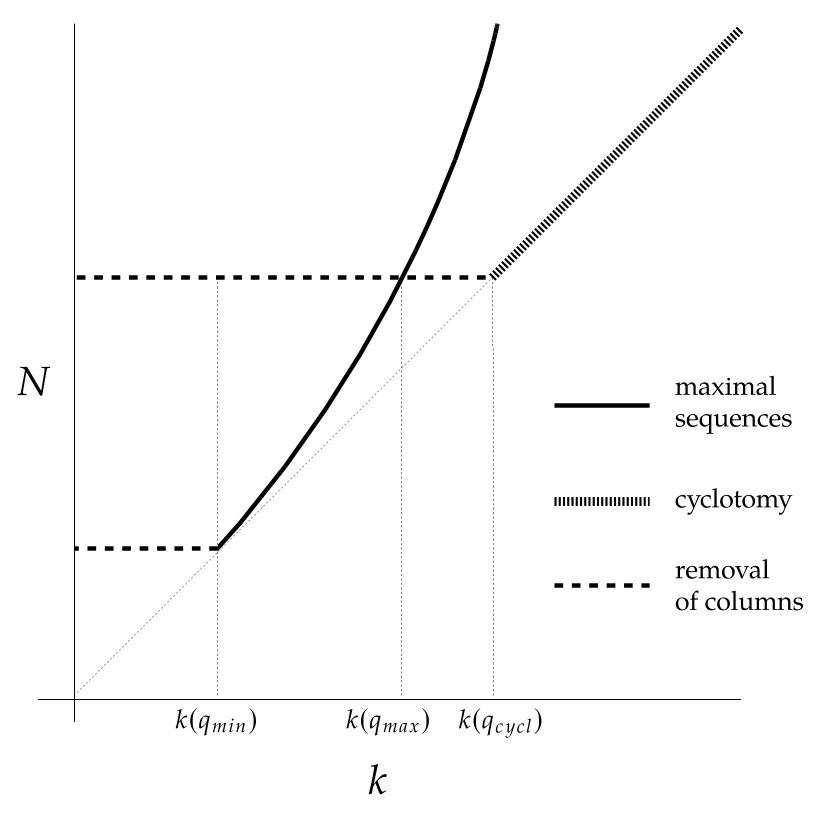}
\caption[Comparison of covering arrays from maximal sequences and cyclotomy]{Comparison of the sizes of covering arrays from maximal sequences and covering arrays from cyclotomy, for fixed $t$ and $v$.} 
\label{figure:comparison}
\end{figure}
In \Cref{figure:comparison}, we give the graph of $N$ as a function of $k$ for the $\CA(N;t,k,v)$ that can be constructed from cyclotomy and maximal sequences. 
The horizontal doted lines are justified since when we remove some columns from a covering array we obtain another covering array.

\begin{table}
    \centering
\renewcommand{\arraystretch}{\genarraystretch}
\rowcolors{2}{\backgroundshade}{white}
\setlength{\tabcolsep}{8.5pt}
\begin{tabular}{rrrrcrrrrcrrrr}
\rowcolor{\tableheadcolor}
$v$ & $t$ & $\qmin$ & $\qmax$ && 
$v$ & $t$ & $\qmin$ & $\qmax$ &&
$v$ & $t$ & $\qmin$ & $\qmax$ \\
3&3&16&16    && 12&8&313&337&    & 12&9&277&313\\                 
4&4&37&37    && 12&9&277&313&    & 12&10&277&289\\                           
4&5&37&37    && 12&10&277&289&   & 13&7&443&443\\                
4&6&29&29    && 13&7&443&443&    & 13&10&313&313\\               
5&5&61&61    && 13&10&313&313&   & 14&8&449&463\\                
6&5&97&97    && 14&8&449&463&    & 14&9&421&421\\                
6&6&73&79    && 14&9&421&421&    & 14&10&379&379\\                
6&7&67&79    && 14&10&379&379&   & 15&7&601&601\\                 
6&8&67&73    && 15&7&601&601&    & 15&8&541&541\\                 
6&9&67&67    && 15&8&541&541&    & 15&10&421&421\\                
6&10&67&67   && 15&10&421&421&   & 16&8&593&641\\                 
7&6&113&127  && 16&8&593&641&    & 16&9&529&593\\                 
7&7&113&113  && 16&9&529&593&    & 16&10&529&529\\                
8&5&193&193  && 16&10&529&529&   & 17&9&613&647\\                 
8&6&169&169  && 17&9&613&647&    & 17&10&613&613\\                
8&7&137&137  && 17&10&613&613&   & 18&8&811&829\\               
8&8&121&137  && 18&8&811&829&    & 18&9&739&757\\               
8&9&113&121  && 18&9&739&757&    & 18&10&631&631\\              
8&10&113&121 && 18&10&631&631&   & 19&10&761&761\\                   
9&7&181&199  && 19&10&761&761&   & 20&8&1021&1021\\     
9&8&163&181  && 20&8&1021&1021&  & 20&9&881&961\\       
9&9&163&163  && 20&9&881&961&    & 20&10&821&881\\      
9&10&163&163 && 20&10&821&881&   & 21&9&1009&1051\\      
10&6&271&281 && 21&9&1009&1051&  & 21&10&967&967\\       
10&7&241&251 && 21&10&967&967&   & 22&8&1277&1277\\      
10&8&211&211 && 22&8&1277&1277&  & 22&9&1123&1123\\      
10&9&181&211 && 22&9&1123&1123&  & 22&10&1013&1013\\     
10&10&181&191&& 22&10&1013&1013& & 23&9&1289&1289\\       
11&6&353&353 && 23&9&1289&1289&  & 23&10&1151&1151\\     
11&8&243&243 && 23&10&1151&1151& & 24&9&1369&1369\\      
11&9&243&243 && 24&9&1369&1369&  & 24&10&1249&1321\\     
11&10&243&243&& 24&10&1249&1321& & &&&\\
12&7&349&373 && 12&8&313&337&    & &&&                
\end{tabular}

    \caption[Sample of parameter values for which covering arrays from maximal sequences have fewer rows that covering arrays from cyclotomy]{A sample of values of $v,t$ where $v\leq 24$, $t\leq 10$, and the corresponding $\qmin=\qmin(v,t)$ and $\qmax=\qmax(v,t)$.
    For any prime power $q$ such that $q|v-1$ and $\qmin\leq q \leq \qmax$, covering arrays from maximal sequences can be constructed with fewer rows than covering arrays from cyclotomy with the same number of columns.
}
\label{table:comparisoncyclotomy}
\end{table}
In \Cref{table:comparisoncyclotomy}, we list for $2\leq v \leq 25$ and $2\leq t \leq 10$, the nonempty intervals $q_{min}(v,t) \leq q \leq q_{max}(v,t)$, $q$ a prime power, where  a covering array from maximal sequences yields fewer rows than the corresponding covering array from cyclotomy.
We note that both in this work and in \cite{colbourn2010covering}, the conditions that guarantee the existence of the covering arrays  can be replaced with slightly better (weaker) but more complicated conditions found in the proofs.
The compilation of \Cref{table:comparisoncyclotomy} was done by taking into account these better conditions for both constructions.

\subsection{Experimental results}
\label{section:ExperimentalResults}
\Cref{equation:MainTheoremSecondPaper} gives a theoretical lower bound on $q$ to guarantee the existence of a covering array.
Values of $q$ smaller than this theoretical bound can yield covering arrays.
To test this, we created the arrays described in \Cref{corollary:CharactersCAsOfStrength3,corollary:CharactersCAsOfStrength4} for values of $q$ and $v$, $v|q-1$, for which experiments were computationally feasible and checked if they were covering arrays.
This was often true for values of $q$ smaller than the required condition, as demonstrated in 
\Cref{table:experiments_strength3,table:experiments_strength4}.
As before, the theoretical guarantee is calculated using the weaker, more complicated conditions for $q$ that follow from the proofs in \Cref{section:ProofOfMainTheoremSecondPaper}, rather than the conditions in
\Cref{corollary:CharactersCAsOfStrength3,corollary:CharactersCAsOfStrength4}.

\begin{table}
\begin{subtable}{1\textwidth}
    \centering
\[
\renewcommand{\arraystretch}{\genarraystretch}
    \rowcolors{2}{\backgroundshade}{white}
    \begin{array}{rlll}
    \rowcolor{\tableheadcolor}
    v
    &\multicolumn{1}{l}{\text{Successful $q$ below theoretical guarantee}}
    &\multicolumn{1}{l}{\text{Values of $q$ tested}}
    &\multicolumn{1}{l}{\text{Theoretical guarantee}}
    \\
    2   & \text{None}                                      &q <    7   & q\geq 7         \\
    3   & \text{None}                                      &q <    19  & q\geq 19        \\
    4   & q\geq 37,  \text{ except } 41                    &q <    61  & q\geq 61        \\
    5   & q\geq 81,  \text{ except } 101                   &q <    181 & q\geq 181       \\
    6   & q\geq 103, \text{ except } 109,139,157,223,277   &q <    439 & q\geq 439       \\
    7   & q \geq 169                                       &q \leq 379 & q\geq 953       \\
    8   & 193,241,281,313                                  &q \leq 337 & q\geq 1849      \\
    9   & 289                                              &q \leq 523 & q\geq 3259      \\
    10  & 361                                              &q \leq 491 & q\geq 5261      \\
    11  & 243,397                                          &q \leq 463 & q\geq 8273      \\
    12  & \text{None}                                      &q \leq 457 & q\geq 12241     
    \end{array}
\]
    \caption[Values for $q$ for which we obtain covering arrays of strength $3$]{Ranges for $q$ such that $v|q-1$ and $q$
        being a prime power is sufficient for the
        construction of a 
        $\CA(v(q^2+q+1); 3,q^2+q+1,v)$ as per
        \Cref{corollary:CharactersCAsOfStrength3}}
    \label{table:experiments_strength3}
\end{subtable}

\vspace{2em}
 
\begin{subtable}{1\textwidth}
    \centering
\[
    \renewcommand{\arraystretch}{1.1}
    \rowcolors{2}{\backgroundshade}{white}
    \begin{array}{rlll}
    \rowcolor{\tableheadcolor}
    v
    &\multicolumn{1}{l}{\text{Successful $q$ below theoretical guarantee}}
    &\multicolumn{1}{l}{\text{Values of $q$ tested}}
    &\multicolumn{1}{l}{\text{Theoretical guarantee}}
    \\
    2 & q \geq 5                                         &q <    9   & q\geq 9         \\
    3 & q \geq 13                                        &q <    19  & q\geq 19        \\
    4 & q \geq 25                                        &q <    37  & q\geq 37        \\
    5 & 31 \leq q \leq 61                                &q \leq    61  & q\geq 81       \\
    6 & 49                                               &q \leq 49 & q\geq 139       \\
    7 & \text{None}                                      &q \leq 43  & q\geq 239       \\
    8 & \text{None}                                      &q \leq 49  & q\geq 337
    \end{array}
\]
    \caption[Values for $q$ for which we obtain covering arrays of strength $4$]{Ranges for $q$ such that $v|q-1$ and $q$ being a prime power is sufficient for the construction of a $\CA(v(q^3+q^2+q+1); 4,q^2+1,v)$ as per \Cref{corollary:CharactersCAsOfStrength4}.}
    \label{table:experiments_strength4}
\end{subtable}
\caption{Experimental results and comparison with theoretical guarantee}
\end{table}

\chapter[A preliminary computation of the number of $t$-tuples in cyclic trace arrays $\bmod v$]{A preliminary computation of the number of $t$-tuples in cyclic trace arrays modulo $v$}
\label{chapter:Index}

\def\Sj{\sum_{j=1}^{v-1}}
\def\Sjone{\sum_{j_1=1}^{v-1}}
\def\Sjtwo{\sum_{j_2=1}^{v-1}}
\def\Sjthree{\sum_{j_3=1}^{v-1}}
\def\nr{N_r}
\def\xj{\chi^{j}}
\def\xji{\chi^{j_i}}
\def\xjone{\chi^{j_1}}
\def\xjtwo{\chi^{j_2}}
\def\xjthree{\chi^{j_3}}

\textsc{Let $q$ be a prime power}, $t>2$ be an integer, $v\geq 2$ be a divisor of $q-1$, $\a$ be a primitive element of $\fqt$, and $C\subseteq [0,\w{t}-1]$ such that $\{\a^c\mid c\in C\}$ is a $(|C|,t-1)$-set.
In \Cref{chapter:CAsFromMSequencesAndCharacterSums} we showed by means of a character sum argument that when 
\begin{equation}
    \label{equation:goma}
    q^{\frac{t}{2}-2}(q-tv)\geq v^{t-1},
\end{equation}
then $\MV{\a,C}$ is a $\CA(v \w{t}; t, |C|,v)$.
As we demonstrated in \Cref{section:ExperimentalResults}, the bound in \Cref{equation:goma} appears to be weak, that is, covering arrays are produced for $q$ smaller than this bound.
In this chapter, we develop another approach to determine when covering arrays are obtained, based on bounding the number of rows that contain different $t$-tuples.
In \Cref{section:PreliminariesLambdas} we give the necessary preliminaries and in \Cref{section:UpperLowerBoundsOnLambda} we give upper and lower bounds for this number, that we express as the solutions of a linear program.

The contents of this chapter are preliminary work which has also been submitted as an extended abstract \cite{kalamata}.

\section{Preliminaries}
\label{section:PreliminariesLambdas}
For the rest of the chapter, we fix the notation of $q,t,v$ and $\a$ as in the introduction and let $\o=\a^{\w{t}}$, which is a primitive element of $\fq$ as shown in \Cref{lemma:CharacterizationOfConstantMultiplesInFQM}.
Furthermore, we define $M$ to be the $(q-1)\times (q-1)$ array, whose $(i,j)$-th element for $(i,j)\in [0,q^t-2]^2$ is given by 
\begin{align*}
    M_{ij}=
    \begin{cases}
        \logw(\Tt(\a^{i+j})) \bmod v,& \text{ if } \Tt(\a^{i+j})\neq 0;\\
        0, & \text{ otherwise.}
    \end{cases}
\end{align*}
We index the rows and columns of $M$ by $[0,q^t-2]$.

We start with some preliminary observations.
Let $I\subseteq [0,q^t-2]$ with $|I|=s$ and let $M_I$ be the $(q^t-1)\times s$ subarray of $M$ defined by the columns with indexes in $I$.
For $\bfb \in \Z_v^s$, let $\lambda_{\bfb}$ denote the number of rows in $M_I$ equal to $\bfb$.
From a similar argument as the one we used for \Cref{equation:kakakia} we observe that, if there are $r$ zeros in $\bfb$, then there are $(\frac{q-1}{v})^{s-r} (\frac{q+v-1}{v})^{r}$ pre-images under the $\logw$.
It follows that
\begin{equation}
    \label{lambda-linind}
    \lambda_{\bfb} = \left( \frac{1}{v} \right)^s (q-1)^{s-r} (q+v-1)^{r}
\end{equation}
if $\{\a^i \mid i \in I\}$ is linearly independent, and
\[
  0 \leq \lambda_{\bfb} \leq 
    q\left(\frac{1}{v}\right)^s (q-1)^{s-r} (q+v-1)^{r} 
\]
otherwise.
Our goal is to improve these bounds.
Let $C\subseteq [0,q^t-2]$ such that $\{\a^c\mid c\in C\}$ is a $(t,t-1)$-set.
We recall that $d(\bfx,\bfy)$ denotes the Hamming distance between the vectors $\bfx$ and~$\bfy$.
\begin{lemma}
{}{}
Let $q$ be a prime power, $v | q-1$, and $\bfb=(b_1,\ldots,b_t) 
\in \Z_v^t$ with $r$ $0$-entries. Let $\lambda_{\bfb}$ be the number of rows equal to $(b_1,\ldots,b_t)$ of the subarray of $M$ defined by the columns in $C$.
Then, we have
\begin{align} 
  \lambda_{\bfb} &= \left( \frac{1}{v} \right)^{t} (q-1)^{t-r} (v+q-1)^{r} \nonumber\\
 &\qquad -  (-1)^t \left( \frac{1}{v} \right)^{t} 
    (q-1)^{r} (v-1)^{r} \left(v + (q-1)(v-1)\right)^{t-r} \nonumber \\
    &\qquad +  (-1)^t \sum_{\scriptsize \begin{array}{l}\bfc \in \Z_v^t \\ d(\bfb,\bfc)=t\end{array}} \lambda_{\bfc},\label{lambda-exact}.
\end{align}
\end{lemma}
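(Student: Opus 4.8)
The plan is to compute $\lambda_{\bfb}$ through the discrete Fourier transform of the counting function $\bfc\mapsto\lambda_{\bfc}$ over the group $\Z_v^t$, and then to exploit the fact that the particular combination appearing in \eqref{lambda-exact} is engineered to annihilate exactly those terms that cannot be evaluated in closed form. First I would set up the framework: writing $C=\{c_0,\dots,c_{t-1}\}$ and $\mathbf{j}=(j_c)_{c\in C}\in\Z_v^t$, put $\widehat\lambda(\mathbf{j})=\sum_{\bfc\in\Z_v^t}e^{\frac{2\pi i}{v}\langle\mathbf{j},\bfc\rangle}\lambda_{\bfc}$. Grouping the rows of the subarray by the tuple they produce, and using that $e^{\frac{2\pi i}{v}j_cM_{\rho,c}}$ equals $\xvj(\Tt(\a^{\rho+c}))$ when $\Tt(\a^{\rho+c})\neq0$ and equals $1$ otherwise, this becomes a character sum over rows,
\[
    \widehat\lambda(\mathbf{j})=\sum_{\rho}\ \prod_{\substack{c\in C\\ \Tt(\a^{\rho+c})\neq0}}\xvj\bigl(\Tt(\a^{\rho+c})\bigr),
\]
and Fourier inversion gives $\lambda_{\bfb}=\tfrac1{v^t}\sum_{\mathbf{j}}e^{-\frac{2\pi i}{v}\langle\mathbf{j},\bfb\rangle}\widehat\lambda(\mathbf{j})$.

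Next I would evaluate $\widehat\lambda(\mathbf{j})$ whenever $\mathbf{j}$ has at least one zero coordinate; let $z(\mathbf{j})$ denote the number of zero coordinates. The positions $c$ with $j_c\neq0$ form a set $J$ of size $t-z(\mathbf{j})\le t-1$, so since $\{\a^c:c\in C\}$ is a $(t,t-1)$-set, $\{\a^c:c\in J\}$ is linearly independent. By \Cref{proposition:SebastianExtended} and \Cref{corollary:NumberOfTraceTuplesThatAgree} the trace tuples on $J$ are then uniformly distributed with index $q^{z(\mathbf{j})}$, and combining this with the orthogonality relation $\sum_{b\in\fqstar}\xvj(b)=0$ for the nontrivial characters $\xvj$, $j_c\in[1,v-1]$ (\Cref{theorem:CharSumIsZeroClassicResult}), the sum collapses to the constant value $\widehat\lambda(\mathbf{j})=q^{z(\mathbf{j})}$, independent of the actual nonzero exponents. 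Here the all-zero row must be bookkept consistently, exactly the same adjustment already implicit in \eqref{lambda-linind}.

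I would then assemble the target combination. Summing $\lambda_{\bfc}$ over all $\bfc$ with $d(\bfb,\bfc)=t$ through the inversion formula, the inner sums factor coordinatewise as $\sum_{c_k\neq b_k}e^{-\frac{2\pi i}{v}j_kc_k}$, which equals $v-1$ when $j_k=0$ and $-e^{-\frac{2\pi i}{v}j_kb_k}$ otherwise; over all coordinates this produces the weight $(1-v)^{z(\mathbf{j})}$ relative to $\lambda_{\bfb}$, whence
\[
    \lambda_{\bfb}-(-1)^t\!\!\sum_{d(\bfb,\bfc)=t}\!\!\lambda_{\bfc}=\frac1{v^t}\sum_{\mathbf{j}}\widehat\lambda(\mathbf{j})\bigl(1-(1-v)^{z(\mathbf{j})}\bigr)e^{-\frac{2\pi i}{v}\langle\mathbf{j},\bfb\rangle}.
\]
The decisive point is that for full-support $\mathbf{j}$ (that is $z(\mathbf{j})=0$) the factor $1-(1-v)^{0}$ vanishes. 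These are precisely the terms in which $J=C$ may be linearly dependent and for which $\widehat\lambda(\mathbf{j})$ would be an intractable Jacobi sum of the kind met in the proof of \Cref{proposition:LDcolumnsarecovered}; the combination in \eqref{lambda-exact} makes them drop out, leaving only terms with $z(\mathbf{j})\ge1$, where $\widehat\lambda(\mathbf{j})=q^{z(\mathbf{j})}$ is known.

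Finally I would carry out the elementary evaluation of the surviving sum by grouping according to the zero-set $S_0$ of $\mathbf{j}$, summing each $j_c$ over $[1,v-1]$ on $C\setminus S_0$, and factoring over coordinates by the binomial theorem. Writing $q^{z}(1-(1-v)^{z})=q^{z}-(q(1-v))^{z}$ splits the expression into two coordinatewise products $\tfrac1{v^t}\prod_{c}(w+\mathrm{out}_c)$, where $\mathrm{out}_c$ equals $v-1$ if $b_c=0$ and $-1$ if $b_c\neq0$, and the two $S_0=\emptyset$ contributions cancel automatically. The product with $w=q$ yields $(\tfrac1v)^t(q-1)^{t-r}(v+q-1)^r$, matching the first explicit term of \eqref{lambda-exact}; the product with $w=q(1-v)$, after the simplifications $q(1-v)+v-1=(q-1)(1-v)$ and $q(1-v)-1=-(v+(q-1)(v-1))$, yields $-(-1)^t(\tfrac1v)^t(q-1)^r(v-1)^r(v+(q-1)(v-1))^{t-r}$, matching the second. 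I expect the genuine difficulty to be conceptual rather than computational: recognizing that the weighting $1-(1-v)^{z}$ kills exactly the full-support Jacobi-sum terms, and tracking the all-zero-row convention so that $\widehat\lambda(\mathbf{j})=q^{z(\mathbf{j})}$ holds on the nose; everything downstream is routine.
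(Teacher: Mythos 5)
You are correct, and your argument takes a genuinely different route from the one in the thesis. The thesis proves \Cref{lambda-exact} by inclusion--exclusion at the level of the counts themselves: it sums over column subsets $C'\subseteq C$ and over tuples disagreeing with $\bfb|_{C'}$ in every position, uses the fact that every \emph{proper} $C'$ indexes linearly independent powers of $\a$ so that \Cref{lambda-linind} evaluates those counts in closed form, keeps the single non-evaluable term $C'=C$ symbolic (this is precisely $(-1)^t\sum_{d(\bfb,\bfc)=t}\lambda_{\bfc}$), and then grinds through binomial simplifications. You instead work on the Fourier side of $\Z_v^t$: the coefficients $\widehat\lambda(\mathbf{j})$ at frequencies with at least one zero coordinate are forced by character orthogonality (\Cref{theorem:CharSumIsZeroClassicResult}) together with the uniform distribution of trace tuples on linearly independent column subsets (\Cref{proposition:SebastianExtended}), while the full-support frequencies --- exactly the ones that would be genuine Jacobi-type sums as in \Cref{chapter:CAsFromMSequencesAndCharacterSums} --- are annihilated by the weight $1-(1-v)^{z(\mathbf{j})}$ attached to the combination $\lambda_{\bfb}-(-1)^t\sum_{d(\bfb,\bfc)=t}\lambda_{\bfc}$. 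I checked your weight computation and the two coordinatewise products (with $w=q$ and $w=q(1-v)$); they reproduce the two explicit terms of \Cref{lambda-exact} exactly. The trade-off: the thesis argument is elementary and self-contained, whereas yours explains \emph{why} this particular left-hand side is the computable combination, ties the lemma to the character-sum machinery of the previous chapter, and would additionally allow one to bound the discarded full-support coefficients via \Cref{theorem:JacobiSumBound}, giving direct bounds on $\lambda_{\bfb}$ complementary to the linear-programming bounds of this chapter.

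One point should be made explicit in a final write-up. With $M$ as literally defined (rows indexed by $[0,q^t-2]$, no all-zero row), the all-zero trace tuple occurs $q^{z}-1$ rather than $q^{z}$ times on a linearly independent column subset, so in fact $\widehat\lambda(\mathbf{j})=q^{z(\mathbf{j})}-1$; your identity, exactly like \Cref{lambda-linind} and hence \Cref{lambda-exact} itself, is exact only under the convention that the all-zero row is adjoined. You flag this, and the thesis silently carries the same discrepancy, so this is not a gap in your argument relative to the paper's; but note that under the literal definition both proofs would produce \Cref{lambda-exact} with extra corrections, namely $-1$ when $\bfb=\bm{0}$ and $+(-1)^t$ when $\bfb$ has no zero entry.
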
 

\begin{proof}We start by computing $\lambda_{\bfb}$ using an inclusion-exclusion argument.
    \[
    \lambda_{\bfb} = \sum_{s=0}^{t} (-1)^s \sum_{\scriptsize \begin{array}{l}C' \subseteq C\\ |C'| = s\end{array}} \sum_{\scriptsize \begin{array}{l}\bfc \in \Z_v^s\\d(\bfc,\bfb|_{C'}) = s\end{array}} \lambda_{\bfc}
  \]
  Now we use the fact that if $C' \subsetneq C$ then $C'$ is a linearly independent set of columns and therefore \Cref{lambda-linind} allows us to explicitly compute this sum for all but $C'=C$.  To use \Cref{lambda-linind} to compute $\lambda_{\bfc}$, with $d(\bfc,\bfb|_{C'})=s$ we need to know the number of zero entries in $\bfc$.  We count these by supposing that $\bfb$ has $j$ zero entries and $i$ non-zero entries among the $C'$ with $i+j=s \leq t$.  If $\bfc$ disagrees everywhere with $\bfb|_{C'}$ then for all the zero entries in $\bfb|_{C'}$, $\bfc$ can be any of the $v-1$ non-zero elements of $\Z_{v}$.  For a position where $\bfb|_{C'}$ is non-zero, say $z$, $\bfc$ could have a zero entry or one of the $v-2$ other non-zero entries of $\Z_{v}\setminus\{z\}$.  We count the number of zero entries in $\bfc$ with $k \leq i$.  Thus 
\begin{align*}
    \lambda_{\bfb} &= \sum_{\scriptsize \begin{array}{c}0 \leq j \leq r\\0 \leq i \leq t-r\\ i+j<t\end{array}} 
    {r \choose j} {t-r \choose i} (-1)^{i+j} \\
 &\qquad \sum_{k=0}^{i} {i \choose k} (v-1)^j (v-2)^{i-k} 
    \left( \frac{q-1}{v} \right)^{i+j-k}
    \left( \frac{q+v-1}{v} \right)^{k} q^{t-i-j} \\
  &\qquad + (-1)^t \sum_{\scriptsize \begin{array}{l}\bfc \in \Z_v^t \\ d(\bfb,\bfc)=t\end{array}} \lambda_{\bfc}
\end{align*}
To simplify the first sum we include $i+j = t$ in the sum and remove it afterwards.

\begin{align*}
    \lambda_{\bfb} 
&= \sum_{j=0}^{r} \sum_{i=0}^{t-r} 
    {r \choose j} {t-r \choose i} (-1)^{i+j} \\
 &\qquad \sum_{k=0}^{i} {i \choose k} (v-1)^j (v-2)^{i-k} 
    \left( \frac{q-1}{v} \right)^{i+j-k}
    \left( \frac{q+v-1}{v} \right)^{k} q^{t-i-j} \\
&\qquad - (-1)^t \sum_{k=0}^{t-r} {t-r \choose k} 
    (v-1)^r (v-2)^{t-r-k} \left( \frac{q-1}{v} \right)^{t-k}
    \left( \frac{q+v-1}{v} \right)^{k} \\
&\qquad + (-1)^t \sum_{\scriptsize \begin{array}{l}\bfc \in \Z_v^t \\ d(\bfb,\bfc)=t\end{array}} \lambda_{\bfc}
\end{align*}
Manipulating the above expression we get
\begin{align*}
    \lambda_{\bfb} 
&= \sum_{j=0}^{r} \sum_{i=0}^{t-r} 
    {r \choose j} {t-r \choose i} (-1)^{i+j}
    (v-1)^j \left( \frac{q-1}{v} \right)^{i+j} q^{t-i-j}\\
 &\qquad \sum_{k=0}^{i} {i \choose k} (v-2)^{i-k} 
    \left( \frac{q-1}{v} \right)^{-k}
    \left( \frac{q+v-1}{v} \right)^{k} \\
&\qquad - (-1)^t \left( \frac{1}{v} \right)^{t} (q-1)^{t} 
    (v-1)^{r} (v-2)^{t-r} \sum_{k=0}^{t-r} {t-r \choose k} 
    \left( \frac{q+v-1}{(q-1)(v-2)} \right)^{k} \\
&\qquad + (-1)^t \sum_{\scriptsize \begin{array}{l}\bfc \in \Z_v^t \\ d(\bfb,\bfc)=t\end{array}} \lambda_{\bfc}\\
&= \sum_{j=0}^{r} \sum_{i=0}^{t-r} 
    {r \choose j} {t-r \choose i} (-1)^{i+j}
    (v-1)^j \left( \frac{q-1}{v} \right)^{i+j} q^{t-i-j}
    \left(\frac{qv}{q-1}-1\right)^i \\
&\qquad - (-1)^t \left( \frac{1}{v} \right)^{t} (q-1)^{t} (v-1)^{r} 
    (v-2)^{t-r} \left( 1+ \frac{q+v-1}{(q-1)(v-2)} \right)^{t-r} \\
&\qquad + (-1)^t \sum_{\scriptsize \begin{array}{l}\bfc \in \Z_v^t \\ d(\bfb,\bfc)=t\end{array}} \lambda_{\bfc}\\
&= q^{t}\sum_{i=0}^{t-r} {t-r \choose i} (-1)^i 
    \left(1-\frac{q-1}{qv}\right)^i \sum_{j=0}^{r} {r \choose j}
    (-1)^j (v-1)^j 
    \left( \frac{q-1}{qv} \right)^{j} \\
&\qquad - (-1)^t \left( \frac{1}{v} \right)^{t} (q-1)^{t} 
    (v-1)^{r} (v-2)^{t-r} 
    \left( \frac{v+ (q-1)(v-1)}{(q-1)(v-2)} \right)^{t-r} \\
&\qquad + (-1)^t \sum_{\scriptsize \begin{array}{l}\bfc \in \Z_v^t \\ d(\bfb,\bfc)=t\end{array}} \lambda_{\bfc}\\
\end{align*}
Finally we obtain 
\begin{align*}
  \lambda_{\bfb} 
&= \left( \frac{1}{v} \right)^{t} (q-1)^{t-r} (v+q-1)^r \\
&\qquad - (-1)^t \left( \frac{1}{v} \right)^{t} (q-1)^{r} 
    (v-1)^{r} \left( v+(q-1)(v-1) \right)^{t-r} \\
&\qquad + (-1)^t \sum_{\scriptsize \begin{array}{l}\bfc \in \Z_v^t \\ d(\bfb,\bfc)=t\end{array}} \lambda_{\bfc}.
\end{align*}
\end{proof}

To simplify our expression in the remainder of the paper we define
\begin{align} 
  c_r &= \left( \frac{1}{v} \right)^{t} (q-1)^{t-r} (v+q-1)^r, \label{CRlambda} \\
  c_{r,t-1} &= (-1)^{t-1} q \left( \frac{1}{v} \right)^{t-1} ((v-1)(q-1))^{r-1}(qv-q+1)^{t-r-1} \nonumber \\ &\qquad (tvq-tq-tv+t+rv) \label{CRlambda_penultimate}, \\
  c_{r,t} &= (-1)^t \left( \frac{1}{v} \right)^{t} (q-1)^{r} 
  (v-1)^{r} \left( v+(q-1)(v-1) \right)^{t-r}, \label{CRlambda_last} \\
  d_{r,k}   &= (-1)^t {t-r \choose k} (v-1)^r (v-2)^{t-r-k}. \label{DKlambda}
\end{align}
Note that $c_{r,t-1}$ and $c_{r,t}$ are the $i+j=t-1$ and $i+j=t$ summands of $c_r$ respectively.

\section{Upper and lower bounds on $\lambda$} \label{linprog}
\label{section:UpperLowerBoundsOnLambda}

In this section we compute more precise upper lower and upper bounds on $\lambda_{\bfb}$. We define
\[
  \lambda_{r}^{-} = \min\{\lambda_{\bfb} : \bfb\in \Z_v^t, \bfb \mbox{ has $r$ zeros} \}
\]
and
\[
  \lambda_{r}^{+} = \max\{\lambda_{\bfb} : \bfb\in \Z_v^t, \bfb \mbox{ has $r$ zeros} \}.
\]
The very simplest bounds we know when $\bfb$ has $r$ zeros, are
\[
  0 \leq \lambda_r^- \leq \lambda_{\bfb} \leq \lambda_r^+ \leq qc_r.
\]

One improvement on this is to take advantage of the fact that the successive terms of a inclusion-exclusion computation are each upper and lower bounds depending on the sign of the last term in the summation. We use the fact that truncating before the last and before the second-to-last terms are the best possible of these upper and lower bounds.

\begin{lemma}
{}{}
  Let $\{A_i: i \in I\}$ be subsets of $U$ with $|I|= t$ and for $0 \leq s \leq t$, define
  \[
    c_s = \sum_{i=0}^{s} \sum_{\begin{array}{c} S \subseteq I \\ |S| = i\end{array}} \left | \bigcap_{i \in S} A_i \right |
  \]
  Then for any  $0\leq j \leq t$ we have

  \begin{equation}\label{truncation-bounds}
(-1)^{t-j} c_t \leq (-1)^{t-j} c_{t-j},
  \end{equation}
  for all odd $1 \leq j \leq t$ we have 
  \begin{equation}\label{penultimate-bound}
  (-1)^{t-1} c_{t-1} \leq (-1)^{t-j} c_{t-j},
  \end{equation}
  and for all even $2 \leq j < t$ we have
  \begin{equation}\label{antepenultimate-bound}
  (-1)^{t-2} c_{t-2} \leq (-1)^{t-j} c_{t-j}.
  \end{equation}
\end{lemma}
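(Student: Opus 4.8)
The plan is to prove all three inequalities as immediate consequences of the standard Bonferroni inequalities for inclusion-exclusion, applied to the quantity $\left|\bigcup_{i\in I}A_i\right|$. First I would recall the inclusion-exclusion identity, which in the notation of the lemma reads
\[
\left|\bigcup_{i\in I}A_i\right| = \sum_{s=1}^{t}(-1)^{s-1}\!\!\sum_{\substack{S\subseteq I\\|S|=s}}\left|\bigcap_{i\in S}A_i\right|,
\]
so that if we set $c_s = \sum_{i=0}^{s}\sum_{|S|=i}\left|\bigcap_{i\in S}A_i\right|$ as in the statement (with the $i=0$ term equal to $|U|$ or, more naturally, equal to the size of the ambient count being partitioned), the partial sums $c_{t-j}$ are precisely the truncations of this alternating sum. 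The key classical fact is the \emph{Bonferroni inequalities}: the truncated inclusion-exclusion sums alternately over- and under-estimate the true value, and the sign of the error is governed by the parity of the last term included.

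The main step would be to phrase the three claimed inequalities in terms of differences $c_{t-j} - c_t$ and verify that these differences have the sign dictated by Bonferroni. For \eqref{truncation-bounds}, the quantity $(-1)^{t-j}(c_{t-j}-c_t)$ is, up to sign, a sum of terms $\sum_{s>t-j}(-1)^{s-1}\sum_{|S|=s}|\cap A_i|$; grouping consecutive pairs of these tail terms and invoking the standard combinatorial identity that governs Bonferroni (each such grouped remainder is nonnegative, a fact provable by a double-counting or generating-function argument on how many times each element of $\bigcup A_i$ is counted) yields the claim. The inequalities \eqref{penultimate-bound} and \eqref{antepenultimate-bound} are the sharper statements that among all truncations of a given parity, stopping at $t-1$ (resp.\ $t-2$) gives the \emph{tightest} bound; these follow from the monotonicity of the Bonferroni remainders, namely that $(-1)^{t-1}c_{t-1}$ is the largest lower bound of its parity class and $(-1)^{t-2}c_{t-2}$ the largest among the even truncations strictly before $t$.

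Concretely, I would carry out the proof as follows. First, establish the parity-sign statement \eqref{truncation-bounds} by a single clean argument: fix an element $x\in\bigcup_{i\in I}A_i$ lying in exactly $m\ge 1$ of the sets, and show that its net contribution to $(-1)^{t-j}(c_{t-j}-c_t)$ equals $(-1)^{t-j}$ times a tail of the binomial expansion of $(1-1)^m$, which one checks has the correct sign via the identity $\sum_{s=p}^{m}(-1)^s\binom{m}{s} = (-1)^p\binom{m-1}{p-1}\ge 0$. Summing over all $x$ gives the inequality. Then \eqref{penultimate-bound} and \eqref{antepenultimate-bound} would follow by comparing two truncations of the same parity using the same element-wise binomial-tail estimate, reducing each to the nonnegativity of a single binomial coefficient.

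The hard part will be bookkeeping the indexing so that the $i=0$ term in the definition of $c_s$ is interpreted consistently — the lemma's $c_s$ starts its inner sum at $i=0$, which shifts the role of the constant term — and making sure the element-wise binomial-tail identity is applied with the correct cutoff $p$ matching each of $j$, $t-1$, and $t-2$. Once the sign of each binomial tail $(-1)^p\binom{m-1}{p-1}$ is pinned down against the parities $t-j$, $t-1$, $t-2$, the three inequalities drop out uniformly, so the genuine mathematical content is entirely contained in this one binomial identity and the rest is careful sign-tracking.
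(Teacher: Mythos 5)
Your approach coincides with the paper's: both are element-wise counting arguments in which one evaluates, for each point lying in exactly $m\ge 1$ of the sets, its contribution to the truncated alternating sums via the binomial tail identity (the paper uses $\sum_{i=0}^{s}(-1)^i\binom{m}{i}=(-1)^s\binom{m-1}{s}$, you use the equivalent tail form), and then sums over points. Your treatment of Inequality~(\ref{truncation-bounds}) is complete and correct: a point in $m\ge 1$ sets contributes $0$ to $(-1)^{t-j}c_t$ (because $\binom{m-1}{t}=0$) and $\binom{m-1}{t-j}\ge 0$ to $(-1)^{t-j}c_{t-j}$, which is exactly the paper's argument. (A minor quibble: your claim that $\sum_{s=p}^m(-1)^s\binom{m}{s}=(-1)^p\binom{m-1}{p-1}\ge 0$ is wrong as literally stated, since the signed quantity need not be nonnegative; what is true, and what you then implicitly use, is that after multiplying by $(-1)^{t-j}$ the contribution becomes $\binom{m-1}{t-j}\ge 0$.)

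The gap is in your last step, where you assert that Inequalities~(\ref{penultimate-bound}) and (\ref{antepenultimate-bound}) reduce ``to the nonnegativity of a single binomial coefficient'' and that ``the three inequalities drop out uniformly.'' They do not. Running your own element-wise computation to the end, a point in exactly $m$ sets contributes $\binom{m-1}{t-j}-\binom{m-1}{t-1}$ to $(-1)^{t-1}\bigl(c_{t-j}-c_{t-1}\bigr)$ for odd $j$, and $\binom{m-1}{t-j}-\binom{m-1}{t-2}$ to $(-1)^{t-2}\bigl(c_{t-j}-c_{t-2}\bigr)$ for even $j$. The subtracted binomial coefficient vanishes only when $m<t$ (resp.\ $m<t-1$); in the extremal cases one must verify the genuine comparisons $\binom{t-1}{t-j}\ge 1$ (penultimate bound, points in all $t$ sets) and $\binom{t-2}{t-j}\ge 1$, $\binom{t-1}{t-j}\ge t-1$ (antepenultimate bound, points in $t-1$ and in $t$ sets). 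The last comparison is precisely where the hypothesis $j<t$ enters: for $j=t$ it fails, since $\binom{t-1}{0}=1<t-1$, and the paper shows that Inequality~(\ref{antepenultimate-bound}) is then genuinely false whenever points lying in all $t$ sets are sufficiently numerous (one can have $c_{t-2}>|U|=c_0$). As written, your uniform reduction would equally ``prove'' that false case, so the case analysis on $m$ --- which is the actual content of the paper's proof of the two sharper bounds --- cannot be compressed into sign-tracking plus nonnegativity.
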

\begin{proof}
We observe that  $c_0 = |U|$ and $c_t = |U \setminus \bigcup_{i \in I} A_i|$
and $c_s$ is just the computation of $|U \setminus \bigcup_{i \in I} A_i|$
by inclusion-exclusion truncated at step $s$.  The first conclusion of the
theorem is simply that the successive truncations are upper and lower bounds
depending on the sign of the last summand.  It is proven by considering the
contribution of any single point of $U$ to each side of the equation.  If a
point $x$ is in none of the $A_i$ then it contributes 1 to the computation
of $c_s$ for all $0 \leq s \leq t$ and thus contributes equally to both
sides of Inequality~(\ref{truncation-bounds}).  If a point $x$ is in
exactly $r>0$ of the subsets then its contribution to $c_s$ is
  \[
    \sum_{i=0}^{s} (-1)^i \binom{r}{i} = (-1)^{s} \binom{r-1}{s}.
  \]
This last equality is easily established. (Also recall that
$(-1)^{s} \binom{-1}{s} = 1$.) Then Inequality~(\ref{truncation-bounds})
is established by noting that the contribution of a point in
$\bigcup_{i \in I} A_i$ (and thus $r \geq 1$) is
  \[
    (-1)^{t-j} (-1)^t \binom{r-1}{t} = 0 \leq \binom{r-1}{t-j}
  = (-1)^{t-j} (-1)^{t-j} \binom{r-1}{t-j}
  \]
since $t > r-1 \geq 0$.

Now we prove Inequality~(\ref{penultimate-bound}). Again the contribution
of any point $x$ in none of the $A_i$ is 1 to both $c_{t-1}$ and to
$c_{t-j}$, so we can restrict our attention to points that belong to
$r>0$ of the $A_i$.  If point $x$ is in $0< r < t$ of the $A_i$ then
its contribution to $c_{t-1}$ is
  \[
    (-1)^{t-1} \binom{r-1}{t-1} = 0
  \]
and its contribution to $c_{t-j}$ is
\[
  (-1)^{t-j}\binom{r-1}{t-j}.
\]
The needed inequality follows from
\[
  (-1)^{t-1} 0  = 0 \leq \binom{r-1}{t-j}
= (-1)^{t-j} (-1)^{t-j} \binom{r-1}{t-j}.
\]
If $x$ is a point in all $t$ of the $A_i$ then its contribution to
$c_{t-1}$ is
  \[
    (-1)^{t-1} \binom{t-1}{t-1} = (-1)^{t-1}
  \]
and it contribution to $c_{t-j}$ is
\[
  (-1)^{t-j}\binom{t-1}{t-j}.
\]
The needed inequality follows from
\[
  (-1)^{t-1} (-1)^{t-1}  = 1 \leq \binom{t-1}{t-j}
= (-1)^{t-j} (-1)^{t-j} \binom{t-1}{t-j},
\]
unless $t-j<0$, that is $j>t$, or $j<1$ which are not possible.

Now we prove Inequality~(\ref{antepenultimate-bound}). Again the
contribution of any point $x$ in none of the $A_i$ is 1 to both
$c_{t-1}$ and to $c_{t-j}$, so we can restrict our attention to
points that belong to $r>0$ of the $A_i$.  If point $x$ is in
$0< r < t-1$ of the $A_i$ then its contribution to $c_{t-2}$ is
  \[
    (-1)^{t-2} \binom{r-1}{t-2} = 0
  \]
and its contribution to $c_{t-j}$ is
\[
  (-1)^{t-j}\binom{r-1}{t-j}.
\]
The needed inequality follows from
\[
  (-1)^{t-1} 0  = 0 \leq \binom{r-1}{t-j}
= (-1)^{t-j} (-1)^{t-j} \binom{r-1}{t-j}.
\]
If $x$ is a point in $t-1$ of the $A_i$ then its contribution
to $c_{t-2}$ is
  \[
    (-1)^{t-2} \binom{t-2}{t-2} = (-1)^{t-2}
  \]
and its contribution to $c_{t-j}$ is
\[
  (-1)^{t-j}\binom{t-2}{t-j}.
\]
The needed inequality follows from
\[
  (-1)^{t-2} (-1)^{t-2}
= 1 \leq \binom{t-2}{t-j} = (-1)^{t-j} (-1)^{t-j} \binom{t-2}{t-j},
\]
unless $j$ is out of range ($j>t$ or $j<2$). If $x$ is a point in
all $t$ of the $A_i$ then its contribution to $c_{t-2}$ is
  \[
    (-1)^{t-2} \binom{t-1}{t-2} = (-1)^{t-2}(t-1)
  \]
and its contribution to $c_{t-j}$ is
\[
  (-1)^{t-j}\binom{t-1}{t-j}.
\]
The needed inequality follows from
\[
  (-1)^{t-2} (-1)^{t-2}(t-1)  = t-1 \leq \binom{t-1}{t-j}
= (-1)^{t-j} (-1)^{t-j} \binom{t-1}{t-j},
\]
unless $j$ is out of range ($j<2$, $j\geq t$).  When $j=t$ this
inequality can be violated indicating that our method does not
prove the bound in Inequality~(\ref{antepenultimate-bound}).
However when $j=t$ and even, and letting $n_i$ be the number
of points that appear in exactly $i$ of the $A$, we get that
Inequality~(\ref{antepenultimate-bound}) holds except when
\[
n_t(t-2) > \sum_{i=1}^{t-2} n_i.
\]
It is also worth noting that when $j=t$,  $c_{t-j} = |U|$ and so
the only time that $c_{t-2}$ is a worse upper bound for $c_t$
than $c_0$ is when $c_{t-2} > |U|$.
\end{proof}

This lemma implies that the best bounds obtainable by truncating 
an inclusion-exclusion computation are obtained by the last two 
truncations, except possibly in the case that $t$ is even and 
there exist points in every $A_i$, when it might be the case that 
\[
  c_t < c_0=|U| < c_{t-2}.
\]
Thus when $t$ is even and $\bfb$ has $r$ zeros,
\begin{equation}
  c_r - c_{r,t} \leq \lambda_{\bfb}  \leq 
 \min(c_r -c_{r,t} - c_{r,t-1},q^t) \\
\end{equation}
and when $t$ is odd and $\bfb$ has $r$ zeros, 
\begin{equation}
  c_r - c_{r,t} - c_{r,t-1} \leq \lambda_{\bfb} \leq c_r - c_{r,t}.
\end{equation}

To derive even more precise bounds on $\lambda_{\bfb}$ we use 
Linear Programming by substituting $\lambda_r^-$ and 
$\lambda_r^+$ for $\lambda_{\bfc}$ in \Cref{lambda-exact}. 
How we do so will again depend on the sign of the last term 
which depends on the parity of $t$.

\subsection{Bounds on $\lambda_{\bfb}$ when $t$ is even}
When $t \bmod 2 = 0$ \Cref{lambda-exact} is
\begin{align*} \label{lambda-exact-t-0}
    \lambda_{\bfb} 
= \left( \frac{1}{v} \right)^{t} (q-1)^{t-r} (v+q-1)^{r}
- \left( \frac{1}{v} \right)^{t} 
(q-1)^{r} (v-1)^{r} \left(v + (q-1)(v-1)\right)^{t-r} 
+ \sum_{\scriptsize \begin{array}{c}\bfc \in \Z_v^t \\d(\bfb,\bfc)=t\end{array}} \lambda_{\bfc},
\end{align*}
Since $\lambda_k^- \leq \lambda_{\bfc} \leq \lambda_k^+$ for 
any $\bfc$ with $k$ zeros we have that   
\begin{align*}
  c_r - c_{r,t} + \sum_{k=0}^{t-r} {t-r \choose k} (v-1)^r (v-2)^{t-r-k} \lambda_k^- 
 \leq  \lambda_{\bfb} 
 \leq  c_r - c_{r,t} + \sum_{k=0}^{t-r} {t-r \choose k} 
  (v-1)^r (v-2)^{t-r-k} \lambda_k^+, 
\end{align*}
for any $\bfb$ with $r$ zeros, and thus
\begin{eqnarray*}
c_r - c_{r,t} &\leq &  \lambda_r^- - \sum_{k=0}^{t-r} d_{r,k} \lambda_k^-, \qquad 0\leq r\leq t \\
-c_r +c_{r,t} &\leq & -\lambda_r^+ + \sum_{k=0}^{t-r} d_{r,k} \lambda_k^+, \qquad 0\leq r\leq t \\
0 &\leq & \lambda_r^- \leq \lambda_r^+ \leq qc_r,     \qquad 0\leq r\leq t.
\end{eqnarray*}
Let $\bfl\in \Z^{2t+2}$,
$\bfC\in \Z^{5t+5}$ and 
$D\in \Z^{(t-1)\times (t+1)}$ defined by
\begin{align*}
   \bfl 
&= \left(-\lambda_0^-,\ldots,-\lambda_t^-,\lambda_0^+, \ldots,
   \lambda_t^+\right)\\
   \bfC 
&= \left (c_0-c_{0,t},\ldots,c_t-c_{t,t},-c_0+c_{0,t},\ldots,
-c_t+c_{t,t},0_1, \ldots,0_{2t+2},-qc_0,\ldots,-qc_{t}\right )\\
D_{i,j} &=   d_{i,j}={t-i \choose j} (v-1)^i (v-2)^{t-i-j},
\end{align*}
and $B \in \Z^{5t+5 \times 2t+1}$ defined in $(t+1) \times (t+1)$ blocks as 
\[ B = 
\left ( \begin{array}{c|c}
    D-I_{t+1}    &  {\bf 0} \\ \hline
    {\bf 0}      &  D-I_{t+1}\\ \hline
    I_{t+1}      &  I_{t+1} \\ \hline
    -I_{t+1}     &  {\bf 0} \\ \hline
    {\bf 0} &  -I_{t+1} \\ 
\end{array} \right ),
\]
We have the linear program

\begin{equation}
    \label{equation:LinearProgram1}
    B\bfl \geq \bfC.
\end{equation}

\subsection{Bounds on $\lambda_{\bfb}$ when $t$ is odd}
When $t \bmod 2 = 1$ \Cref{lambda-exact} is
\begin{align*} \label{lambda-exact-t-1}
    \lambda_{\bfb} 
= \left( \frac{1}{v} \right)^{t} (q-1)^{t-r} (v+q-1)^{r} 
+ \left( \frac{1}{v} \right)^{t} 
(q-1)^{r} (v-1)^{r} \left(v + (q-1)(v-1)\right)^{t-r}
- \sum_{\scriptsize \begin{array}{c}\bfc \in \Z_v^t\\ d(\bfb,\bfc)=t\end{array}} \lambda_{\bfc},
\end{align*}
Substituting in the bounds for $\lambda_{\bfc}$, we get    
\begin{align*}
 c_r - c_{r,t}  - \sum_{k=0}^{t-r} {t-r \choose k} 
  (v-1)^r (v-2)^{t-r-k} \lambda_k^+ 
  \leq  \lambda_{\bfb}
\leq  c_r - c_{r,t} - \sum_{k=0}^{t-r} {t-r \choose k} 
  (v-1)^r (v-2)^{t-r-k} \lambda_k^-, 
\end{align*}
and thus
\begin{eqnarray*}
c_r - c_{r,t} &\leq &  \lambda_r^- + \sum_{k=0}^{t-r} d_{r,k} \lambda_k^+, \qquad 0\leq r\leq t \\
-c_r + c_{r,t} &\leq & -\lambda_r^+ - \sum_{k=0}^{t-r} d_{r,k} \lambda_k^-, \qquad 0\leq r\leq t \\
0 &\leq & \lambda_r^- \leq \lambda_r^+ \leq qc_r,     \qquad 0\leq r\leq t.
\end{eqnarray*}
Let $\bfl\in \Z^{2t+2}$,
$\bfC\in \Z^{5t+5}$ and 
$D\in \Z^{(t-1)\times (t+1)}$ defined by
\begin{align*}
   \bfl 
&= \left(-\lambda_0^-,\ldots,-\lambda_t^-,\lambda_0^+, \ldots,
          \lambda_t^+\right)\\
   \bfC 
&= \left (c_0-c_{0,t},\ldots,c_t-c_{t,t},-c_0+c_{0,t},\ldots,-c_t+c_{t,t},0_1, \ldots,0_{2t+2},-qc_0,\ldots,-qc_{t}\right ) \\
D_{i,j} &=   d_{i,j}={t-i \choose j} (v-1)^i (v-2)^{t-i-j},
\end{align*}
and $B \in \Z^{5t+5 \times 2t+1}$ defined in $(t+1) \times (t+1)$ blocks as 
\[ B = 
\left ( \begin{array}{c|c}
    -I_{t+1}    &  D \\ \hline
    D      & -I_{t+1}\\ \hline
    I_{t+1}      &  I_{t+1} \\ \hline
    -I_{t+1}     &  {\bf 0} \\ \hline
    {\bf 0} &  -I_{t+1} \\ 
\end{array} \right ),
\]
We have the linear program
\begin{equation}
    \label{equation:LinearProgram2}
    B\bfl \geq \bfC.
\end{equation}

As we discuss in \Cref{chapter:FutureDirections}, we plan to implement a computer program to solve the above linear program.

\titlespacing*{\chapter}{0cm}{1.7cm}{2cm} 
\chapter{Future directions}
\label{chapter:FutureDirections}

\textsc{We conclude} with a number of directions for future research related to our thesis.

\subsubsection[Cyclic trace arrays and finite geometry]{Concatenation of cyclic trace arrays and finite geometry}
\label{section:fasolakia}
In \Cref{chapter:CAsFirstPaper} we consider the vertical concatenation of cyclic trace arrays and we search for subarrays of columns that are covering arrays.
Here we restate the problem in terms of finite geometry, we analyze some of our experimental results from this point of view and conclude with some questions that naturally arise.

For simplicity, let us assume the case of vertically concatenating two cyclic trace arrays. 
The problem is the following: let $\a$ be a primitive element of $\fqt$ and let $j$ be coprime to $q^t-1$, which means that $\a^j$ is also primitive.
We want to find $S\subseteq [0,\w{t}-1]$ with the property that, for every $I \subseteq S$ with $|I|=t$, we have that $\ZA{\a,I}$ or $\ZA{\a^j,I}$ is an $\OA(t, t,q)$.
For such $S$, we have that $\ZA{ \{\a,\a^j\}, S}$ is a $\CA(2(q^t-1)+1; t, |S|,q)$.
This problem is studied in \Cref{chapter:CAsFirstPaper} in more generality, and our results are presented in \Cref{section:CAsFirstPaper_ImplementationAndNewBounds}.

We now restate the problem in terms of finite geometry.
First, we note that as discussed in \Cref{section:FiniteGeometry}, we consider the points of $PG(t-1,q)$ to be 
\[
    \label{equation:OvoidFuture}
    PG(t-1,q)=
    \{[\a^i] \mid i \in [0, \w{t}-1]\}.
\]
However, since $\a^j$ is also a primitive element of $\fqt$, we also have that
\[
    PG(t-1,q)=
    \{[\a^{ji}] \mid i \in [0, \w{t}-1]\}, 
\]
which means that the mapping 
\[
\begin{array}{ll}
    \phi_j: & PG(t-1,q) \rightarrow PG(t-1,q)\\
            & [\a^i]  \mapsto  [\a^{ji}]
\end{array}
\]
is a permutation of the points of $PG(t-1,q)$.
Furthermore, we note that, by \Cref{item:b,item:e} of \Cref{proposition:SebastianExtended}, $\ZA{\a,I}$ (respectively $\ZA{\a^j,I}$) is an $\OA(t,t,q)$ if and only if $\{[\a^{i}] \mid i \in I\}$ (respectively $\{[\a^{ij}]\mid i \in I\}$) is a set of points of $PG(t-1,q)$ that are not contained in a hyperplane.
Hence, we can restate our problem as follows: we want to find a permutation $\phi_j$ of the points of $PG(t-1,q)$ and a subset $S\subseteq PG(t-1,q)$ with the property that, for every $I \subseteq S$ with $|I|=t$, we have that $I$ or $\phi_j(I)$ is a set of points that are not contained in a hyperplane. 

From the above point of view, some of the experimental results of \Cref{chapter:CAsFirstPaper} are of particular interest.
We define
$
    I(q)= \{i(q+1) \mid i \in [0,q^2]\}.
$
From our discussion after \Cref{corollary:OAsOfStrength3FromMSequences}, for a primitive element $\a \in \f_{q^4}$ the set $\Omega(\a,q) = \{[\a^i] \mid i \in I(q)\}$ is an ovoid
in $PG(3,q)$, which means that every three points from the set are not colinear.\index{Ovoid}
For $q=4$, we have that $I(4)=\{5i \mid i \in [0,16]\}$.
We observe that the first experimental result in \Cref{table:LongTableNewarrays} is the array $\ZA{ \{\a,\a^{31}\}, I(4)}$, where $\a$ is given in \Cref{table:primitiveelementsused}.
From the geometrical interpretation above, we have that $\phi_{31}$ is a permutation of the points of $PG(3,4)$ that maps the ovoid $\Omega(\a,4)$ to the ovoid $\Omega(\a^{31}, 4)$ and, moreover, for every set $I$ of points in $\Omega(\a,4)$ with $|I|=4$, we have that $I$ or $\phi_{31}(I)$ is a set of points that are not contained in the same plane (that is the same as a hyperplane in $PG(3,q))$.

Another interesting experimental result can be described in similar terms.
For $d \in [1,q]$ such that $d|q^2+1$ we define
$
    I_d(q) = \{id(q+1)\mid i \in [0,q^2]\}.
    $
One of our experimental results is the $\CA(161; 4, 10,3)$ shown in \Cref{table:ResultsOverview}. 
This is constructed as a $\ZA{ \{\a,\a^{17}\}, S}$, where
\begin{align*}
    S&= \{0,1,8,9,16,17,24,25,32,33\}\\
     &= \{0,8,16,24,32\} \cup \{1,9,17,25,33\}\\
     &= I_2(3)\cup \left( I_2(3)+1 \right).
\end{align*}
This could be described as a construction using two \emph{half ovoids}.

Inspired by the above, we constructed arrays $\ZA{P,S}$ where $P$ is a set of primitive elements of $\f_{q^4}$ and $S$ is an ovoid or the union of fractions of an ovoid.
Our tests were for $q$ up to $11$.
The successful cases are listed in \Cref{table:CAsFromOvoidParts}.
We note that the cases for $q=3$ and $q=4$ were found using \Cref{algorithm:Backtracking}, whereas in all the other cases the covering array definition was tested directly.

We propose investigating the above constructions in the context of finite geometry.
One aspect of this research would be to find permutations of the points of $PG(3,q)$ such that they map $4$-sets of coplanar points to $4$-sets of non-coplanar points, when restricted to parts of ovoids.
For support in this direction, we suggest studying two previous works for possible connections:
Ebert \cite{ebert1985partitioning} describes the partitioning of $PG(3,q)$ into disjoint ovoids, and Wilson and Qing \cite{wilson1997cyclotomy} use half ovoids for the construction of a family of linear codes.

\begin{table}
\renewcommand{\arraystretch}{\genarraystretch}
    \rowcolors{2}{\backgroundshade}{white}
    \centering
    \begin{tabular}{lllll}
    \rowcolor{\tableheadcolor}
$q$&$   P $&$   S $&$   \ZA{P,S}  $& Ovoids\\
$3 $&$\{\a,\a^{17}\} $&$ I_2(3)\cup \left( I_2(3)+1 \right) $&$ \CA(161; 4, 10,3)$&Two halfs\\
$4 $&$\{\a,\a^{31}\} $&$ I(4) $&$ \CA(511; 4, 17,4)$&Ovoid\\
$5 $&$\{\a,\a^{37}\} $&$ I_2(5) $&$ \CA(1249; 4, 13,5)$&Half\\
$5 $&$\{\a,\a^{37},\a^{71}\} $&$ I_2(5)\cup \left( I_2(5)+3 \right) $&$ \CA(1873; 4, 26,5)$&Two half\\
$7 $&$\{\a,\a^{223}\} $&$ I_{10}(7)\cup \left( I_{10}(7)+1 \right) \cup \left( I_{10}(7)+3 \right)$&$ \CA(4801; 4, 15,7)$&Three tenths\\
$7 $&$\{\a,\a^{37},\a^{223}\} $&$ I_2(7)$&$ \CA(7201; 4, 25,7)$&Half\\
$8 $&$\{\a,\a^{43}\} $&$ I_5(8)\cup \left( I_5(8)+1 \right)\cup \left( I_5(8)+3 \right) $&$ \CA(8191; 4, 39,8)$&Three fifths\\
$9 $&$\{\a,\a^{7},\a^{13}\} $&$ I_2(9) $&$ \CA(19681; 4, 41,9)$&Half 
\end{tabular}
    \caption[Strength $4$ covering arrays from concatenation of cyclic trace arrays and ovoids.]{Strength $4$ covering arrays from vertical concatenation of cyclic trace arrays and parts of ovoids.}
    \label{table:CAsFromOvoidParts}
\index{Cyclic trace array!concatenation}
\end{table}

\subsubsection{Alternative algorithmic search for covering arrays from concatenation of cyclic trace arrays}
In \Cref{chapter:CAsFirstPaper} we consider the problem of vertically concatenating cyclic trace arrays and finding subarrays of columns that yield covering arrays as an optimization problem to which we give an algorithmic solution that uses backtracking.
One problem with the backtracking approach is that, for large values of $q$ such as $q\geq 15$, due to the increased size of the search space, \Cref{algorithm:Backtracking} takes a very long time to fully traverse branches of the tree corresponding to the search space.
We suggest adapting \Cref{algorithm:Backtracking} so that branches with more potential have higher probability of being examined first.
This could be done by ranking the nodes of the tree according to the size of the corresponding candidate set.
Another option would be to use this type of ranking with metaheuristics such as tabu search or simulated annealing.

\subsubsection{Other covering array constructions from character sums}
In \Cref{chapter:CAsFromMSequencesAndCharacterSums} we establish a criterion for cyclic trace arrays using a character sum argument.
A crucial part of this is bound a character sum that involves the function $h$, defined in \Cref{equation:DefinitionHr}.
A similar function is used by Colbourn \cite{colbourn2010covering} in order to construct covering arrays from cyclotomy, where instead of values of the trace function, linear shifts of the base field elements are used.
We propose extending this idea further by substituting the trace function in $h$ with some other function that distributes the values uniformly over the field, and examine whether this yields another covering array construction.

\subsubsection{Randomness properties of maximal sequences modulo $v$}
In \Cref{chapter:CAsFromMSequencesAndCharacterSums} we introduce the sequence $\seqv{\a}$.
To the best of our knowledge, this type of sequence has not been studied before. 
We propose researching these sequences further for randomness criteria similar to the ones studied in \cite{golomb2005signal} for LFSR sequences.
For example, examine their autocorrelation and crosscorrelation properties and, if $v$ is prime, find values or bounds for their linear complexity.

\subsubsection{Counting the rows of cyclic trace arrays modulo $v$ by linear programming.}
In \Cref{chapter:Index} we consider arrays constructed by applying the discrete logarithm modulo $v$ to the nonzero elements of maximal sequences, similar to the cyclic trace arrays of \Cref{chapter:CAsFromMSequencesAndCharacterSums}.
We provide bounds for the number of occurrences of the different rows in subarrays of columns, by expressing this number as the solution of a linear program, as shown in \Cref{equation:LinearProgram1,equation:LinearProgram2}.
We propose investigating this further by solving these linear program for some feasible cases, and compare the lower bounds to the conditions established in \Cref{chapter:CAsFromMSequencesAndCharacterSums} for cyclic trace arrays modulo $v$ to be covering arrays.

\addcontentsline{toc}{chapter}{List of figures}
\listoffigures
\addcontentsline{toc}{chapter}{List of tables}
\listoftables
\addcontentsline{toc}{chapter}{List of algorithms}
\listofalgorithms
\addcontentsline{toc}{chapter}{Nomenclature}
{
\renewcommand{\chaptermark}[1]{\markboth{\MakeUppercase{#1}}{}}

\chapter*{Nomenclature}
\chaptermark{Nomenclature}
Symbols that appear only in a restricted context are not listed.
Wherever appropriate, a page reference is given.

\begin{center}
\renewcommand{\arraystretch}{1.1}
\begin{longtable}{p{0.18\textwidth}p{0.82\textwidth}}
& \endfirsthead & \endhead \endfoot \endlastfoot

%



$0^n$ &
The binary string consisting of the digit $0$ repeated $n$ times,
\pageref{definition:BinaryRepresentationOfSets} \\

$1^n$ &
The binary string consisting of the digit $1$ repeated $n$ times,
\pageref{definition:BinaryRepresentationOfSets} \\

$\mathcal{A}_{q^t/q}(\a,C)$ &
Cyclic trace array for primitive $\a \in \fqt$ and $C\subseteq [0, \w{t}-1]$,
\pageref{definition:cyclicAlphaSArray}\\

$\A{\a,C}$ &
Simplified notation for $\AA_{q^t/q}(\a,C)$,
\pageref{definition:cyclicAlphaSArray}\\

$\AA_{q^t/q}(\a)$ &
The cyclic trace array $\AA_{q^t/q}(\a,[0,\w{t}-1])$,
\pageref{definition:cyclicAlphaSArray}\\

$\A{\a}$ &
Simplified notation for $\AA_{q^t/q}(\a)$,
\pageref{definition:cyclicAlphaSArray}\\

$\AA_{q^t/q, \bm{0}}(\a,C)$ &
The array $\AA_{q^t/q}(\a,C)$ with a row of zeros appended,
\pageref{definition:cyclicAlphaSArray}\\

$\ZA{\a, C}$ &
Simplified notation for $\AA_{q^t/q, \bm{0}}(\a,C)$,
\pageref{definition:cyclicAlphaSArray}\\

$\AA_{q^t/q, \bm{0}}(\a)$ &
The cyclic trace array $\AA_{q^t/q, \bm{0}}(\a,[0,\w{t}-1])$,
\pageref{definition:cyclicAlphaSArray}\\

$\ZA{\a}$ &
Simplified notation for $\AA_{q^t/q, \bm{0}}(\a)$,
\pageref{definition:cyclicAlphaSArray}\\

$\MV{\a,C}$ &
Cyclic trace array modulo $v$ corresponding to $\a \in \fqt$,
\pageref{definition:ArrayModV}\\

$\MV{\a}$ &
Simplified notation for $\MV{\a, [0, \w{t}-1]}$,
\pageref{definition:ArrayModV}\\

$\Bcal_n$ &
Set of all the binary necklaces of length $n$
\pageref{definition:necklace}\\

$c_r$ &
See \Cref{CRlambda}, 
\pageref{CRlambda}\\

$c_{r,t-1}$ &
See \Cref{CRlambda_penultimate}, 
\pageref{CRlambda_penultimate}\\

$c_{r,t}$ &
See \Cref{CRlambda_last}, 
\pageref{CRlambda_last}\\

$\charv_n(S)$ &
Characteristic vector of set $S$,
\pageref{definition:CharacteristicVectorOfSet} \\

$d_{r,k}$ &
See \Cref{DKlambda},
\pageref{DKlambda}\\

$d(\mathbf{x},\mathbf{y})$ &
The Hamming distance of the vector $\mathbf{x}$ and $\mathbf{y}$,
\pageref{definition:HammingDistance}\\

$\cipW$ &
Cyclotomic coset of $p$ modulo $w$ that contains $i$,
\pageref{definition:CyclotomicCosets}\\

$\C_n$ &
Set containing all canonical subsets of $[0,n-1]$,
\pageref{definition:CanonicalSet}\\

$\CA_{\lambda}(N; t, k,v)$ &
A covering array of index $\lambda$,
\pageref{definition:CoveringArray}\\

$\CA(N; t, k,v)$ &
A covering array $\CA_{1}(N; t, k,v)$,
\pageref{definition:CoveringArray}\\

$\CAN(t, k,v)$ &
The covering array number for $t, k$ and $v$,
\pageref{definition:CAN}\\

\textsc{Candidates}$(S)$ &
The set of candidates for a feasible solution $S$, 
\pageref{equation:CMdefinition}\\

\textsc{Candidates}$(S)_{>j}$ &
The set of candidates for a feasible solution $S$, 
\pageref{equation:CandidatesGreaterThanJ}\\

$\fq$   & 
The finite field with $q$ elements,
\pageref{theorem:SubfieldCriterionOfFiniteFields} \\

$\fqstar$   &
The nonzero elements of $\fq$,
\pageref{definition:GroupsOfFiniteField} \\

$\fqx$   & 
The ring of polynomials on the variable $x$ over $\fq$\\

$\gtst(\bfb)$ &
See \cpageref{proposition:CanonicalWellDefined}\\

$\hr$ &
See \Cref{equation:DefinitionHr},
\pageref{equation:DefinitionHr}\\

$\logw(x)$ &
The discrete logarithm of $x$ with base $\o$,
\pageref{section:CharactersChapterMainResults}\\

$m(d,q)$ &
The size $n$ of a maximum $n$-arc in $PG(d,q)$,
\pageref{fornomenclaturearc}\\

$M(d,q)$ &
The size $n$ of a maximum $n$-track in $PG(d,q)$,
\pageref{item:computer}\\

$\neck(\bfa)$ &
Necklace of string $\bfa$,
\pageref{definition:necklace}\\

$\nr$ &
See \Cref{equation:DefinitionNr},
\pageref{equation:DefinitionNr}\\

$\ord(a)$&
The order of a group element $a$\\

$\bigo$    &
Big O notation\\

$\OA(t, k,v)$ &
An orthogonal array of index unity,
\pageref{definition:OrthogonalArray}\\

$\OA_{\lambda}(t, k,v)$ &
An orthogonal array of index $\lambda$,
\pageref{definition:OrthogonalArray}\\

$PG(d, q)$ &
The $d$-dimensional projective space over $\fq$,
\pageref{definition:TheFiniteProjectiveSpacePGDQ}\\

$\S_n(S)$ &
Equivalence class containing $S$ and its shift-equivalent sets modulo $n$,
\pageref{equation:DefinitionES}\\

$\seq{\a}$ &
Sequence associated with primitive element $\a \in \fqt$,
\pageref{definition:SequenceAssociatedWithFFElement}\\

$\sq{\a}$ &
Simplified notation for $\seq{\a}$,
\pageref{definition:SequenceAssociatedWithFFElement}\\

$\seqv{\a}$ &
Maximal sequence modulo $v$,
\pageref{equation:DefinitionOfMSeqModV} \\

$T_{\Bcal_n}$ &
The tree defined in \cpageref{equation:DefTau}\\

$T_{\C_n}$ &
The tree defined in \cpageref{equation:noxzema}\\

$\Tt$ &
The trace function on $\fqt$ over $\fq$,
\pageref{definition:TraceFunction} \\

$\J$    &
Jacobi sum,
\pageref{definition:JacobiSum} \\

$L^j$ &
Left shift operator on sequence or string,
\pageref{definition:LeftShiftOperator}, \pageref{definition:necklace}\\

$L^j_n$ &
Subinterval of length $n$ of left shift by $j$
\pageref{definition:LeftShiftOperator}\\

$U_P(I)$ &
See \cpageref{equation:DefUP}\\

$w(\mathbf{x})$ &
The Hamming weight of the vector $\mathbf{x}$,
\pageref{definition:HammingDistance}\\

$\Z_n^{\times}$   & 
Set of elements of $\Z_n$ that have a  multiplicative inverse\\

&\\

$\gpW$ &
Set of cyclotomic coset leaders of $p$ modulo $w$,
\pageref{definition:CyclotomicCosetLeaders}\\

$\lambda_{\bfb}$ &
Number of rows of array in context, equal to vector $\bfb$,
\pageref{lambda-linind}\\

$\lambda_{r}^{-}$ &
Lower bound for $\lambda_{\bfb}$, 
\pageref{section:UpperLowerBoundsOnLambda}\\

$\lambda_{r}^{+}$ &
Upper bound for $\lambda_{\bfb}$, 
\pageref{section:UpperLowerBoundsOnLambda}\\

$\sigma$ &
See \Cref{lemma:SufficientConditionForSigma}, 
\pageref{lemma:SufficientConditionForSigma}\\

$\sigma_n$ &
See \Cref{equation:SigmaNDefinition}, 
\pageref{equation:SigmaNDefinition}\\

$\tau(\bfb)$ &
The binary string $b_0 b_1 \cdots b_{n-2}\overline{b_{n-1}}$,
where $\bfb= b_0 b_1 \cdots b_{n-1}$,
\pageref{equation:DefTau} \\

$\phi$&
Euler's function,
\pageref{lemma:NumberOfPrimitiveElementsOfFqm}\\

$\chi_0$&
The trivial character of a group,
\pageref{definition:Character}\\

$\xv$ &
Multiplicative character of $\fq$ of order $v$, 
\pageref{equation:DefinitionXV}\\

&\\

$\overline{b}$ &
The binary complement of $b$,
\pageref{equation:DefTau} \\

$\overline{\chi}$&
The conjugate of a character $\chi$,
\pageref{definition:Character}\\

$\overline{(s_0, \dots, s_{r-1})}$  &
Infinite periodic sequence with period $r$, \pageref{example:SomeLinearSequencesOverFtwo}\\

$\widehat{G}$ &
The group of the characters of a group $G$\\

$C^{\perp}$ &
The dual code of the code $C$,
\pageref{definition:LinearCodeOverFF}\\

&\\
$(v,s,\lambda)$-BIBD    &
A balanced incomplete block design,
\pageref{definition:BIBD}\\

$(k,s)$-set &
Set of $k$ vectors, every $s$ of them linearly independent,
\pageref{definition:NTSet}\\

&\\

$x = y \bmod{n}$ & 
$x$ is the remainder of the Euclidean division of $y$ by $n$\\

$x \equiv y \Mod{n}$ & 
Congruence of integers $x$ and $y$ modulo $n$\\

$S +_n i $ &
Shift of the set $S$ modulo $n$,
\pageref{definition:ShiftOfASet}\\
&\\

$[v]$   &
The point of a projective space represented by the vector $v$,
\pageref{equation:RepresentationOfPointsInPGAsClassesOfVectors}\\

$\w{t}$ &
$(q^t-1)/(q-1)$ \pageref{equation:DefinitionW}\\

$\lceil x \rceil$   &
The smallest integer that is greater or equal to $x$\\

$\lfloor x \rfloor$   &
The largest integer that is smaller or equal to $x$\\

$[a,b]$ &
The set $\{a,a+1, \dots, b\}$ where $a,b$ are integers such that $a< b$\\

$[n,k,d]_q$ &
A linear code over $\fq$ of length $n$, dimension $k$ and minimum distance $d$,
\pageref{definition:LinearCodeOverFF}\\

$\binom{a}{b}$    &
The binomial coefficient $\frac{a!}{(a-b)!b!}$\\

${a \brack b}_q$    &
Gaussian binomial coefficient,
\pageref{proposition:FlatsAsBlocksOfABIBD}\\

&\\

AMDS &
Almost maximum distance separable code,
\pageref{definition:SingletonDefectAndRelatedCodes}\\

BIBD &
Balance incomplete block design,
\pageref{definition:BIBD}\\

LFSR & Linear Feedback Shift Registers\\

MDS &
maximum distance separable code,
\pageref{definition:SingletonDefectAndRelatedCodes}\\

NMDS &
near maximum distance separable code,
\pageref{definition:SingletonDefectAndRelatedCodes}\\

\end{longtable}
\end{center}
}

\printbibliography
\printindex

\backmatter

\afterpage{\thispagestyle{empty}}

\clearpage
\pagenumbering{gobble} 
\mbox{}

\newpage
\newpagecolor{beach!40}
\begin{tikzpicture}[remember picture, overlay, transform shape]
  \node [anchor=north west, inner sep=0pt]
    at (current page.north west)
    {
        \begin{tikzpicture}[scale=1.11]
        \node (first) at (0,0) [black!50,draw,minimum width=1.5cm,minimum height=1.5cm,fill=aoi] {};
        \node [right = 0cm of first,black!50,draw,minimum width=1.5cm,minimum height=1.5cm,fill=pond] {};
        \node [right = 1.5cm of first,black!50,draw,minimum width=1.5cm,minimum height=1.5cm,fill=goldfish] {};
        \node [right = 3cm of first,black!50,draw,minimum width=1.5cm,minimum height=1.5cm,fill=goldfish] {};
        \node [right = 4.5cm of first,black!50,draw,minimum width=1.5cm,minimum height=1.5cm,fill=aoi] {};
        \node [right = 6cm of first,black!50,draw,minimum width=1.5cm,minimum height=1.5cm,fill=goldfish] {};
        \node [right = 7.5cm of first,black!50,draw,minimum width=1.5cm,minimum height=1.5cm,fill=pond] {};
        \node [right = 9cm of first,black!50,draw,minimum width=1.5cm,minimum height=1.5cm,fill=beach] {};
        \node [right = 10.5cm of first,black!50,draw,minimum width=1.5cm,minimum height=1.5cm,fill=beach] {};
        \node [right = 12cm of first,black!50,draw,minimum width=1.5cm,minimum height=1.5cm,fill=goldfish] {};
        \node [right = 13.5cm of first,black!50,draw,minimum width=1.5cm,minimum height=1.5cm,fill=beach] {};
        \node [right =15cm of first,black!50,draw,minimum width=1.5cm,minimum height=1.5cm,fill=pond] {};
        \node [right =16.5cm of first,black!50,draw,minimum width=1.5cm,minimum height=1.5cm,fill=aoi] {};
        \node [right =18cm of first,black!50,draw,minimum width=1.5cm,minimum height=1.5cm,fill=aoi] {};
        \node [right =19.5cm of first,black!50,draw,minimum width=1.5cm,minimum height=1.5cm,fill=beach] {};

        \node (second) [below = 0cm of first,black!50,draw,minimum width=1.5cm,minimum height=1.5cm,fill=pond] {};
        \node [right =    0cm of second,black!50,draw,minimum width=1.5cm,minimum height=1.5cm,fill=goldfish] {};
        \node [right =  1.5cm of second,black!50,draw,minimum width=1.5cm,minimum height=1.5cm,fill=goldfish] {};
        \node [right =    3cm of second,black!50,draw,minimum width=1.5cm,minimum height=1.5cm,fill=aoi] {};
        \node [right =  4.5cm of second,black!50,draw,minimum width=1.5cm,minimum height=1.5cm,fill=goldfish] {};
        \node [right =    6cm of second,black!50,draw,minimum width=1.5cm,minimum height=1.5cm,fill=pond] {};
        \node [right =  7.5cm of second,black!50,draw,minimum width=1.5cm,minimum height=1.5cm,fill=beach] {};
        \node [right =    9cm of second,black!50,draw,minimum width=1.5cm,minimum height=1.5cm,fill=beach] {};
        \node [right = 10.5cm of second,black!50,draw,minimum width=1.5cm,minimum height=1.5cm,fill=goldfish] {};
        \node [right =   12cm of second,black!50,draw,minimum width=1.5cm,minimum height=1.5cm,fill=beach] {};
        \node [right = 13.5cm of second,black!50,draw,minimum width=1.5cm,minimum height=1.5cm,fill=pond] {};
        \node [right =   15cm of second,black!50,draw,minimum width=1.5cm,minimum height=1.5cm,fill=aoi] {};
        \node [right = 16.5cm of second,black!50,draw,minimum width=1.5cm,minimum height=1.5cm,fill=aoi] {};
        \node [right =   18cm of second,black!50,draw,minimum width=1.5cm,minimum height=1.5cm,fill=beach] {};

        \node (third) [below = 0cm of second,black!50,draw,minimum width=1.5cm,minimum height=1.5cm,fill=goldfish] {};
        \node [right =      0 of third,black!50,draw,minimum width=1.5cm,minimum height=1.5cm,fill=goldfish] {};
        \node [right =  1.5cm of third,black!50,draw,minimum width=1.5cm,minimum height=1.5cm,fill=aoi] {};
        \node [right =    3cm of third,black!50,draw,minimum width=1.5cm,minimum height=1.5cm,fill=goldfish] {};
        \node [right =  4.5cm of third,black!50,draw,minimum width=1.5cm,minimum height=1.5cm,fill=pond] {};
        \node [right =    6cm of third,black!50,draw,minimum width=1.5cm,minimum height=1.5cm,fill=beach] {};
        \node [right =  7.5cm of third,black!50,draw,minimum width=1.5cm,minimum height=1.5cm,fill=beach] {};
        \node [right =    9cm of third,black!50,draw,minimum width=1.5cm,minimum height=1.5cm,fill=goldfish] {};
        \node [right = 10.5cm of third,black!50,draw,minimum width=1.5cm,minimum height=1.5cm,fill=beach] {};
        \node [right =   12cm of third,black!50,draw,minimum width=1.5cm,minimum height=1.5cm,fill=pond] {};
        \node [right = 13.5cm of third,black!50,draw,minimum width=1.5cm,minimum height=1.5cm,fill=aoi] {};
        \node [right =   15cm of third,black!50,draw,minimum width=1.5cm,minimum height=1.5cm,fill=aoi] {};
        \node [right = 16.5cm of third,black!50,draw,minimum width=1.5cm,minimum height=1.5cm,fill=beach] {};

        \node (fourth) [below =      0 of third,black!50,draw,minimum width=1.5cm,minimum height=1.5cm,fill=goldfish] {};
        \node [right =    0cm of fourth,black!50,draw,minimum width=1.5cm,minimum height=1.5cm,fill=aoi] {};
        \node [right =  1.5cm of fourth,black!50,draw,minimum width=1.5cm,minimum height=1.5cm,fill=goldfish] {};
        \node [right =    3cm of fourth,black!50,draw,minimum width=1.5cm,minimum height=1.5cm,fill=pond] {};
        \node [right =  4.5cm of fourth,black!50,draw,minimum width=1.5cm,minimum height=1.5cm,fill=beach] {};
        \node [right =    6cm of fourth,black!50,draw,minimum width=1.5cm,minimum height=1.5cm,fill=beach] {};
        \node [right =  7.5cm of fourth,black!50,draw,minimum width=1.5cm,minimum height=1.5cm,fill=goldfish] {};
        \node [right =    9cm of fourth,black!50,draw,minimum width=1.5cm,minimum height=1.5cm,fill=beach] {};
        \node [right = 10.5cm of fourth,black!50,draw,minimum width=1.5cm,minimum height=1.5cm,fill=pond] {};
        \node [right =   12cm of fourth,black!50,draw,minimum width=1.5cm,minimum height=1.5cm,fill=aoi] {};
        \node [right = 13.5cm of fourth,black!50,draw,minimum width=1.5cm,minimum height=1.5cm,fill=aoi] {};
        \node [right =   15cm of fourth,black!50,draw,minimum width=1.5cm,minimum height=1.5cm,fill=beach] {};
        \node [right = 16.5cm of fourth,black!50,draw,minimum width=1.5cm,minimum height=1.5cm,fill=aoi] {};
%
    \end{tikzpicture}
    };
\end{tikzpicture}

\end{document}